%
%
%


\documentclass{pspum-l}

\usepackage{amssymb}

\usepackage{graphicx}

\usepackage[cmtip,all]{xy}

\usepackage{amsmath,mathrsfs,amsfonts,amsthm}
\usepackage{mathtools}
\usepackage{stmaryrd}
\usepackage{hyperref}
\usepackage{epic}
\usepackage{eepic}
\usepackage[english]{babel}
\usepackage[all]{xy}
\usepackage{enumerate}
\usepackage[T1]{fontenc}
\usepackage[latin1]{inputenc}
\usepackage{imakeidx}
\makeindex[intoc, title=Notation Index]

\newenvironment{paragr}[1][]{\refstepcounter{subsubsection} \noindent \textbf{\thesubsubsection . \ #1}}{\medskip}

\newcounter{keepeqno}

\newenvironment{num}
{\setcounter{keepeqno}{\value{equation}}%
	\begin{list}{(\theequation)}{\usecounter{equation}}%
		\setcounter{equation}{\value{keepeqno}}}
	{\end{list}}

\usepackage{mathabx}
\usepackage{dsfont}


\newtheorem{theo}{Theorem}[section]
\newtheorem{lem}[theo]{Lemma}
\newtheorem{prop}[theo]{Proposition}
\newtheorem{cor}[theo]{Corollary}
\newtheorem{conj}[theo]{Conjecture}

\theoremstyle{definition}

\newtheorem{exe}[theo]{Example}

\theoremstyle{remark}
\newtheorem{rem}[theo]{Remark}

\numberwithin{equation}{section}

\newcommand{\diag}{\mathrm{diag}}
\newcommand{\Mat}{\mathrm{Mat}}
\newcommand{\ad}{\mathrm{ad}}
\newcommand{\can}{\mathrm{can}}
\newcommand{\Pet}{\mathrm{Pet}}
\newcommand{\Tam}{\mathrm{Tam}}
\newcommand{\Std}{\mathrm{Std}}
\newcommand{\Pin}{\mathrm{Pin}}
\newcommand{\cH}{\mathcal{H}}

\newcommand{\bA}{\mathbb{A}}
\newcommand{\bQ}{\mathbb{Q}}

\newcommand{\bC}{\mathbb{C}}
\newcommand{\bZ}{\mathbb{Z}}
\newcommand{\bR}{\mathbb{R}}
\newcommand{\cZ}{\mathcal{Z}}
\newcommand{\cA}{\mathcal{A}}
\newcommand{\cP}{\mathcal{P}}
\newcommand{\cC}{\mathcal{C}}

\newcommand{\GL}{\mathrm{GL}}
\newcommand{\SL}{\mathrm{SL}}
\newcommand{\Irr}{\mathrm{Irr}}
\newcommand{\St}{\mathrm{St}}
\newcommand{\Norm}{\mathrm{Norm}}
\newcommand{\Hom}{\mathrm{Hom}}
\newcommand{\Tr}{\mathrm{Trace}}

\newcommand{\Sp}{\mathrm{Sp}}

\newcommand{\End}{\mathrm{End}}
\newcommand{\Frob}{\mathrm{Frob}}
\newcommand{\Ext}{\mathrm{Ext}}

\newcommand{\Res}{\mathrm{Res}}

\newcommand{\vol}{\mathrm{vol}}

\newcommand{\Gal}{\mathrm{Gal}}

\newcommand{\temp}{\mathrm{temp}}

\newcommand{\der}{\mathrm{der}}
\newcommand{\elli}{\mathrm{ell}}

\newcommand{\cusp}{\mathrm{cusp}}
\newcommand{\scusp}{\mathrm{scusp}}
\newcommand{\Ad}{\mathrm{Ad}}
\newcommand{\Aut}{\mathrm{Aut}}
\newcommand{\Supp}{\mathrm{Supp}}
\newcommand{\Temp}{\mathrm{Temp}}
\newcommand{\Cent}{\mathrm{Cent}}
\newcommand{\Unit}{\mathrm{Unit}}
\newcommand{\Ker}{\mathrm{Ker}}

\newcommand{\Ima}{\mathrm{Im}}
\newcommand{\Lie}{\mathrm{Lie}}
\newcommand{\Id}{\mathrm{Id}}
\newcommand{\BC}{\mathrm{BC}}
\newcommand{\disc}{\mathrm{disc}}

\newcommand{\unit}{\mathrm{unit}}
\newcommand{\aff}{\mathrm{aff}}

\newcommand{\SO}{\mathrm{SO}}

\newcommand{\PGL}{\mathrm{PGL}}
\newcommand{\Specmax}{\mathrm{Specmax}}

\newcommand{\cS}{\mathcal{S}}
\newcommand{\cB}{\mathcal{B}}

\newcommand{\cF}{\mathcal{F}}

\DeclarePairedDelimiter\ceil{\lceil}{\rceil}
\DeclarePairedDelimiter\floor{\lfloor}{\rfloor}

\begin{document}
	
	\title{Introduction to the relative Langlands program}
	
	
	\author{Rapha\"el Beuzart-Plessis}
	\address{Aix Marseille Univ \\
		CNRS , I2M  \\
		Marseille, France}
	\curraddr{}
	\email{raphael.beuzart-plessis@univ-amu.fr}
	\thanks{The author was funded by the European Union ERC Consolidator Grant, RELANTRA, project number 101044930. Views and opinions expressed are however those of the authors only and do not necessarily reflect those of the European Union or the European Research Council. Neither the European Union nor the granting authority can be held responsible for them.}

	\subjclass[2020]{Primary 11F70 Secondary 11F67, 22E50}
	
	\date{}
	
	\begin{abstract}
	The aim of these notes is to give an overview of several aspects of what has come to be called the relative Langlands program, a theme that takes its origin in the study of automorphic periods and their relations to particular cases of Langlands functoriality and special values of (automorphic) $L$-functions. Following the work of Sakellaridis and Sakellaridis-Venkatesh, we emphasize the unifying role played by spherical varieties and harmonic analysis on them. We also review various instances of the phenomena, discovered by Jacquet, Gan-Gross-Prasad, Ichino-Ikeda and many others, that motivated these developments.
	\end{abstract}

	\maketitle

 \setcounter{tocdepth}{2}
\tableofcontents

\section{Introduction}

\begin{paragr}[Global periods.]
	Let $F$ be a global field and $G$ be a connected reductive group over $F$. Denoting by $\bA$ the adele ring of $F$, we set $[G]=G(F)\backslash G(\bA)$ for the automorphic quotient of $G$ and $\cA(G)$ (resp. $\cA_{cusp}(G)$) for the space of automorphic forms (resp. cuspidal automorphic forms) on $[G]$. Let $H$ be a closed algebraic subgroup of $G$ defined over $k$. Then, for $\varphi\in \cA_{cusp}(G)$ we call the integral (provided it converges)
	$$\displaystyle \cP_H(\varphi):=\int_{[H]} \varphi(h) dh$$
	the {\it automorphic period} of $\varphi$ along the subgroup $H$. It has long been observed that automorphic periods often bear some relations to $L$-functions and/or Langlands functoriality. Let us give three examples of this phenomenon:
	\begin{itemize}
		\item In \cite{JacLang}, as a generalization of Hecke's well-known integral representation of the standard $L$-function of modular forms, Jacquet and Langlands show that for every cuspidal automorphic representation $\pi\subset \cA_{cusp}(\GL_2)$ of $\GL_{2,F}$, the automorphic period along the subgroup
		$$\displaystyle H=\mathbb{G}_m\hookrightarrow \GL_2, \; a\mapsto \begin{pmatrix} a & \\ & 1 \end{pmatrix}$$
		essentially represents the standard $L$-function of $\pi$. More precisely, for $\varphi\in \pi$, we have an identity of the form
		$$\displaystyle \cP_H(\varphi\cdot \lvert \det\rvert^s)=\int_{[\mathbb{G}_m]} \varphi\begin{pmatrix} a & \\ & 1 \end{pmatrix} \lvert a\rvert^s da\sim L(s+1/2,\pi)$$
		where $L(s,\pi)$ is the standard $L$-function of $\pi$ and $\sim$ means that equality holds up to more elementary factors (which in particular will depend on $\varphi$).
		
		\item In \cite{Waldtoric}, Waldspurger obtains a striking extension of the previous example to an elliptic maximal torus $T=\Res_{E/F} \mathbb{G}_m/\mathbb{G}_m\hookrightarrow \PGL_{2,F}$ (where $\Res_{E/F}$ stands for Weil's restriction of scalars) associated to a quadratic extension $E/F$. Slightly more precisely, Waldspurger's result gives, for $\varphi\in \pi\subseteq \cA_{cusp}(\PGL_2)$, a relation of the form
		$$\displaystyle \left\lvert \cP_T(\varphi) \right\rvert^2\sim L(\frac{1}{2},\pi_{E})$$
		where this time $L(s,\pi_{E})$ stands for the standard $L$-function of the base-change $\pi_{E}$ of $\pi$ to $\PGL_{2,E}$. Note that this formula doesn't admit any natural deformation with a complex parameter that would yield integral representation of the base-change $L$-function.
		
		\item Finally, in their paper \cite{HLR} on Tate's conjecture for Hilbert-Blumenthal surfaces, Harder, Langlands and Rapoport \cite{HLR} make use of the following characterization of the image of functorial base-change from unitary groups in two variables to $\GL_2$ (which, in the case of Hilbert modular forms, goes back to Asai): for $E/F$ a quadratic extension and $\pi\subseteq \cA_{cusp}(\GL_{2,E})$, $\pi$ is the base-change of some cuspidal representation $\sigma\subseteq \cA_{cusp}(\mathrm{U}(2,E/F))$ on a quasi-split unitary group $\mathrm{U}(2,E/F)$ in two variables if and only if the automorphic period $\cP_{H}$ along the subgroup $H=\GL_{2,F}\subset \Res_{E/F} \GL_{2,E}$ doesn't vanish identically on $\pi$.
	\end{itemize}
\end{paragr}

\begin{paragr}[Local periods.]
	Similar phenomena occur over a local field $k$ provided we replace the period integral $\cP_H$ by the functor $\Hom_H(.,\mathbf{1}_H)$ taking a smooth representation $\pi$ of $G(k)$\footnote{All representations in these notes will have complex coefficients. Moreover, in the case where $k$ is Archimedean, ``smooth'' should be taken to mean {\it admissible smooth Fr\'echet representations of moderate growth} in the sense of Casselman-Wallach (see Section\ref{Sect distinction and Plancherel}).} to the space of $H(k)$-invariant functionals\footnote{and continuous in the Archimedean case} on it. Let $\Irr(G)$ be the set of isomorphism classes of smooth irreducible representations of $G(k)$. We usually say that $\pi\in \Irr(G)$ is {\it $H$-distinguished} if $\Hom_H(\pi,\bC)\neq 0$. In analogy with the global setting, nonzero linear forms $\ell_H\in \Hom_H(\pi,\bC)$ are also sometimes called {\it local periods}. Here are two examples of results relating local distinction to Langlands functoriality or local arithmetic invariants that are  related to the previous ones (where $\ell/k$ stands for a quadratic extension of local fields):
	
	\begin{itemize}
		\item A theorem of Tunnell and Saito (see \cite{Saito}) gives a local counterpart of Waldspurger theorem. Namely, for $T=\Res_{\ell/k}(\mathbb{G}_{m,\ell})/\mathbb{G}_{m,k}\subset \PGL_{2,k}$, an irreducible generic representation $\pi\in \Irr(\PGL_{2,k})$ is $T$-distinguished if and only if $\epsilon(\pi_{\ell})=+1$ where $\epsilon(\pi_{\ell})$ denotes the local root number of the base-change $\pi_\ell$ of $\pi$ to $\PGL_2(\ell)$.
		
		\item Similarly a local analog of the aforementioned result appearing in the Harder-Langlands-Rapoport paper is: an irreducible generic representation $\pi\in \Irr(\GL_{2,\ell})$ is $\GL_{2,k}$-distinguished if and only if it is the local base-change $\sigma_\ell$ of some irreducible representation $\sigma \in \Irr(\mathrm{U}(2,\ell/k))$ (where $\mathrm{U}(2,\ell/k)$ denotes as before a quasi-split unitary group in $2$ variables).
	\end{itemize}
\end{paragr}

\begin{paragr}
	There are actually many more examples in the literature of such relations (known or conjectural) between automorphic periods/local distinctions and Langlands functoriality and/or $L$-functions. As a far from exhaustive list, we may cite for instance the works \cite{JPSS} \cite{Jacfactorization} \cite{JacquetYe} \cite{FriedJac} \cite{JRsymplectic} \cite{OSsymplectic} \cite{BFexteriorsquare} \cite{GP} \cite{GGP} \cite{IIk} \cite{LaMao} \cite{FLO}.
	
	Broadly interpreted, the Relative Langlands Program (RLP) is an attempt to unify those seemingly disparate results by systematically relating them to a dual Galois picture similar to that of the mainstream Langlands program. It arguably appears with the foundational work of Sakellaridis Venkatesh \cite{SV}, that proposes the first unified framework to deal with these questions, but also builds on the pioneering works of many which, through a number of seminal examples, have provided insights that have not all yet been incorporated in such a general framework. However, as we will try to argue in these notes, the RLP should probably be regarded as an enhancement of the usual Langlands Program and not merely as a separate but connected subject.
	
	As a first step, it turns out to be convenient to reformulate everything in terms of the homogeneous variety $X=H\backslash G$ (as a matter of convention, we will always consider right actions of the group $G$ on set/varieties).
\end{paragr}

\begin{paragr}[Local spectrum of a variety.]
	In the relative setting, we want to replace the main players on the automorphic side of the Langlands program, that is local or automorphic (irreducible) representations, by analogous $X$-related notions that might be broadly thought as the local or automorphic spectra of $X$. Let us make this slightly more specific.
	
	Assume first that the field $k$ is local. When $X=H\backslash G$, a first attempt to define the spectrum of $X(k)$ would be as the set of irreducible representations $\pi\in \Irr(G)$ that are $H$-distinguished. This can be more directly related to the variety $X$ as follows. Let $C^\infty(X)$ be the space of smooth functions on $X(k)$. Then, by Frobenius reciprocity, and assuming that the subset $H(k)\backslash G(k)\subset X(k)$ is open\footnote{Which is automatic if $H$ is connected or $\mathrm{char}(k)=0.$}, we have
	$$\displaystyle \Hom_H(\pi,\bC)\simeq \Hom_G(\pi,C^\infty(H(k)\backslash G(k)))\subset \Hom_G(\pi,C^\infty(X)).$$
	By duality this last space is also canonically isomorphic to $\Hom_G(\cS(X),\pi^\vee)$ where $\pi^\vee$ stands for the smooth contragredient of $\pi$ and $\cS(X)$ is the space of smooth compactly supported measures on $X(k)$ (whose smooth contragredient can be identified, via the natural pairing, with $C^\infty(X)$). Note that if $X(k)$ is equipped with a $G(k)$-invariant measure (which can often be achieved, e.g. if $H$ is reductive), then multiplication by this measure identifies $\cS(X)$ $G$-equivariantly with the more classical space $C_c^\infty(X)$ of smooth and compactly supported functions. In any case, when working with the variety $X$ instead of the subgroup $H$, it turns out to be more convenient to replace $\pi$ by its contragredient and we say that $\pi\in \Irr(G)$ is {\it $X$-distinguished} if $\Hom_G(\cS(X),\pi)\neq 0$. Thus, the set of all $X$-distinguished irreducible representations is one way through which we can make sense of the ``spectrum of $X(k)$''.
	
	There is yet at least one other way to make sense of this spectrum. It comes directly from the classical theory of unitary representations and is more tied to the analytic part of the theory of automorphic forms. More precisely, let $L^2(X)$ be the Hilbert space of square-integrable {\it half-densities} on $X(k)$ (once again, when $X(k)$ carries a $G(k)$-invariant measure $dx$, it can be identified with the more classical space $L^2(X(k),dx)$). Then, $L^2(X)$ carries a natural unitary representation of $G(k)$ by right translation and we might define the {\it $L^2$-spectrum} of $X(k)$ as the set of irreducible unitary representations weakly contained in $L^2(X)$ i.e.\ the support of its Plancherel decomposition (see Section \ref{Sect distinction and Plancherel} for a quick reminder on Plancherel decomposition). As will be made more precise in Section \ref{Sect distinction and Plancherel}, this $L^2$ point of view is very important as it also naturally produces a family of functionals (so-called {\it relative characters}) which will play a prominent role in the formulation of general global conjectures.
\end{paragr}

\begin{paragr}[Theta series and automorphic spectrum.]
	Let us now return to the global setting. Thus, $F$ is a global field, $G$ is a connected reductive group defined over $F$, $H\subset G$ a closed subgroup and $X=H\backslash G$ the corresponding homogeneous variety. In order to make sense of the automorphic spectrum of $X$ we will, essentially as in the local setting, dualize and replace period integrals by the {\it theta series map}
	$$\displaystyle \Theta_X: C_c^\infty(X(\bA))\to C^\infty([G])$$
	$$\displaystyle f\mapsto \Theta_f: g\in [G]\mapsto \sum_{x\in X(F)} f(xg).$$
	As a side remark, we actually need to impose some, mild, condition on $X$ in order to ensure the convergence of theta series (e.g. already the case of the flag variety $B\subset G$ is problematic) e.g. assuming that $X$ is {\it quasi-affine} is enough. (This entails in particular that $X(F)$ is discrete in $X(\bA)$ hence the sum defining $\Theta_f$ is locally finite.)
	
	The theta series relate to $H$-automorphic periods in the following way: if $f\in C_c^\infty(X(\bA))$ is supported in the open $G(\bA)$-orbit $H(\bA)\backslash G(\bA)\subseteq X(\bA)$, then for every $\varphi\in \cA_{\cusp}(G)$ we have
	$$\displaystyle \langle \Theta_f, \varphi\rangle_{[G]}=\int_{H(\bA)\backslash G(\bA)} f(x) \cP_H(R(x)\overline{\varphi}) dx$$
	where $\langle .,.\rangle_{[G]}$ stands for the $L^2$-scalar product with respect to some invariant measure on $[G]$ and $R(x)$ is the operator of right translation by $x$. However, adopting the theta series point of view has the notable advantage of allowing to consider non-homogeneous varieties as well. More precisely, if $X$ is a $G$-variety containing an open $G$-orbit $X^{\bullet}\subset X$ and we define the $X$-theta series by summing over rational points of $X^{\bullet}$\footnote{We can also define $\Theta_f$ by summing over all the rational points of $X$ but then the pairing integral $\langle \Theta_f, \varphi\rangle_{[G]}$ might diverge and has to be interpreted in a standard regularized way.}, the previous setting now includes examples such as Tate's Zeta integral
	$$\displaystyle \chi\mapsto Z(f,\chi):=\int_{[\mathbb{G}_m]} \Theta_f(x) \chi(x)dx=\int_{\bA^\times} f(x) \chi(x)dx$$
	for $f\in \mathcal{S}(\bA)$ and where $\chi$ runs over the set of all automorphic characters $[\mathbb{G}_m]\to \bC^\times$ (or at least a proper open subcone where the integral converges absolutely). Similarly theta series allows to include Godement-Jacquet theory in the framework.
	
	Similarly to the local story, a cuspidal automorphic representation of $G(\bA)$ is said to be {\it $X$-distinguished} if for some $f\in C_c^\infty(X(\bA))$, the orthogonal projection $\Theta_{f,\pi}$ to $\pi$ is nonzero. More generally, we would like to define the {\it automorphic spectrum} of $X$ as the ``support'' of the $G(\bA)$-representation $C^\infty_X([G]):=\Theta_X(C_c^\infty(X(\bA)))$. The problem is that in general $C^\infty_X([G])$ is not a subspace of $L^2([G])$ and therefore cannot be naturally completed to a unitary representation. One way to make sense of this is to try to regularize ``in a natural way'' the Petersson inner product $\langle \Theta_{f_1},\Theta_{f_2}\rangle_{Pet}$ for $f_1,f_2\in C_c^\infty(X(\bA))$. This however is far from obvious and is one way in which Jacquet's Relative Trace Formulas can be formulated. We refer to Pierre-Henri Chaudouard's article in this proceeding for a survey on relative trace formulas.
\end{paragr}

\begin{paragr}[What these notes are about.]
	The aim of this paper is to survey various aspects of the Relative Langlands Program (RLP) mainly following the seminal work of Sakellaridis and Venkatesh \cite{SV}, where the first unified framework to deal with these questions was proposed. In particular, we will follow {\em op.\ cit.}\ by putting ourself from the beginning in the setting of {\em spherical varieties}, that is $G$-varieties $X$ admitting an open orbit for any Borel subgroup $B\subset G$. Indeed, these varieties have particularly nice finiteness properties (which have been noticed long ago by Luna, Bust, Brion, Knop among others) that make them good candidates for a development of the RLP, e.g. they tend to have certain {\em multiplicity one} properties that force the corresponding global period integrals to factorize.  As the history of the subject that mainly evolved through the study of particular examples, we have tried to keep a balance between the exposition of a general theory and special instances of the phenomena that we want to discuss. In particular, we have included a discussion of the local and global Gross-Prasas conjectures \cite{GP} and its refinement by Ichino-Ikeda \cite{IIk}.
\end{paragr}

\begin{paragr}[Outline.]
	The paper is divided in three sections. The first section is purely algebraic and contains preliminary results on the structure of spherical varieties. We recall some important invariants that can be attached to those and that will be needed in the subsequent sections. A particularly important construction is that of the {\em dual group} $\widehat{G}_X$ of a $G$-spherical variety $X$. A first version of this dual group was obtained in the work of Gaitsgory-Nadler \cite{GN} via some Tannakian formalism, but here we will construct it, following Sakellaridis-Venkatesh, in a combinatorial way by dualizing some based root datum attached to $X$. The existence of this naturally occuring root datum was observed by Brion \cite{Bri90} and later reproved by Knop \cite{KnopAB}. Moreover, as shown by Knop-Schalke \cite{Knopdualgp}, this dual group comes with a distinguished morphism $\widehat{G}_X\times \SL(2)\to \widehat{G}$ to the dual group of $G$. This morphism is at the heart of all the general conjectures stated in the later sections. The last Subsection \ref{sect boundary degenerations} concerns the construction of so-called {\em boundary degenerations}. These are auxilliary $G$-spherical varieties, that roughly play the role of parabolic subgroups in the relative setting and that are crucial ingradients in the local theory of smooth and $L^2$-asymptotics developed by Sakellaridis-Venkatesh. We introduce these boundary degenerations via the {\em affine degeneration} of $X$ (assuming that it is quasi-affine). Starting from the affine degeneration we also recall the theory of toroidal compactifications of $X$, that is due to Luna and Vust \cite{KnopLV}.
	
	The second section focus on the local aspects of the RLP. More precisely, after a general discussion of local distinction and its relation to harmonic analysis on $X$, I state the main local conjecture, due to Sakellaridis-Venkatesh, on the Plancherel decomposition of $L^2(X)$ (Subsection \ref{sect SV local conjecture}). This is followed by a general discussion of two important special properties of homogeneous varieties, temperedness and strong temperedness, giving a priori restrictions on the support of the Plancherel decomposition. After that, I review the local Gross-Prasad conjecture for orthogonal groups and revisit a conjecture of Prasad on Galois symmetric varieties (Subsections \ref{Sect GGP} and \ref{Sect Prasad conj}). These are two special instances where the local conjecture of Sakellaridis-Venkatesh can be made much more precise. Subsections \ref{Section center} to \ref{Sect Bernstein maps} are then devoted to present the local theory of asymptotics, following Sakellaridis-Venkatesh, over a local non-Archimedean field. This starts with a general discussion on the {\em center} of a spherical variety $X$ and its action on various space of functions on $X$. Indeed, as emphasized there the understanding of this action, and its finiteness properties (e.\ g.\ over the Bernstein center), is crucial for the full development of a theory of asymptotics. Our treatment of this question, leads in particular to a natural conjecture on the (non-)distinction of supercuspidal representations (Conjecture \ref{conj supercuspidal}) which to my knowledge has not been formulated before. Then Subsections \ref{Sect smooth asym} and \ref{Sect Bernstein maps} contain a review of the theory of smooth and $L^2$-asymptotics (also called Bernstein maps) respectively. In \cite{SV} these have been developed under the technical condition that $X$ is {\em wavefront}, see \S \ref{S cone Ax-} for a definition, and I explain here how this assumption can be eliminated provided a certain finiteness condition on the action of the center is satisfied (in particular this would follow from Conjecture \ref{conj supercuspidal}). Subsection \ref{Sect most cont spectrum} contains a discussion of `the most continuous part of the spectrum' and, in particular, of the part of the Plancherel decomposition that should correspond, under the local Sakellaridis-Venkatesh conjecture, to unramified parameters $W_k/I_k\to \widehat{G}_X$. The final Subsection \ref{Sect local trace formula} is devoted to a particularly powerful tool to attack the local conjecture (as well as its variants and refinements), which is the local trace formula. It is illustrated in two particular examples: the local trace formula of Arthur and a local trace formula that was develop by Waldspurger in his work on the local Gross-Prasad conjecture.
	
	The last section deals with the global aspects and aim to formulate, as precisely as possible, the general conjecture of Sakellaridis-Venkatesh on factorization of global periods. This is motivated by first reviewing the global Gross-Prasad conjecture and its refinement by Ichino-Ikeda (Subsection \ref{S global GGP}). Then, in Subsection \ref{S normalized rc}, I explain how to (conjecturally) normalize the local relative characters, which are the main ingredient entering into the local Plancherel formulas, and how in the unramified case they should relate (under the assumption that $X$ is affine) to certain ratio of local $L$-values. Once this is in place, the global conjecture can be formulated in Subsection \ref{Sect global conjecture}. 
	
	We emphasize that both the local and global conjectures stated in these notes (both being due to Sakellaridis-Venkatesh \cite{SV}) are not as precise as one would hope for, e.\ g.\ in the global conjecture this appears through the presence of certain unspecified rational constants. We encourage the reader to look at the recent work of Ben Zvi-Sakellaridis-Venkatesh \cite{BZSV} for a new, and particularly exciting, perspective on these matters.
\end{paragr}

\begin{paragr}[Acknowledgements.]
	I thank warmly the referee for going over a (far too) preliminary version of this text and for his many comments and suggestions that allowed to greatly improve its readability.
\end{paragr}

\begin{paragr}[Notation.]
	Here are some of the most common notation used in the paper (an index of notations is also included at the end):
	\begin{itemize}
		\item $k$ is a field of characteristic zero. In Section \ref{sect structure} (except for the last two Subsections \ref{S L-groups} and \ref{S Galois pairs Lgroup} discussing Galois actions on the dual groups), we assume that $k$ is algebraically closed. In Part \ref{Part local aspects}, $k$ will be a local field and we will sometimes even assume that it is non-Archimedean. Finally, in Part \ref{Part global}, $F$ will be a number field. Whenever convenient, we fix an algebraic closure $\overline{k}$ and we write $\Gamma=\Gal(\overline{k}/k)$\index{$\Gamma$} for the corresponding absolute Galois group. Unless specified otherwise, all varieties and linear algebraic groups are implicitely supposed to be defined over $k$.

		\item If $L$ is a linear algebraic group over $k$, we denote by $X^*(L)=\Hom_{k}(L,\mathbb{G}_m)$\index{$X^*(L)$} its group of algebraic characters {\em defined over $k$}. The group of algebraic characters over the algebraic closure will be denoted by $X^*(L)_{\overline{k}}=\Hom_{\overline{k}}(L_{\overline{k}},\mathbb{G}_m)$\index{$X^*(L)_{\overline{k}}$}. Similarly, for a torus $S$ defined over $k$, we denote by $X_*(S)=\Hom_k(\mathbb{G}_m,S)$\index{$X_*(S)$} and $X_*(S)_{overline{k}}=\Hom_{\overline{k}}(\mathbb{G}_m,S)$\index{$X_*(S)_{overline{k}}$} its lattices of cocharacters over $k$ and $\overline{k}$ respectively.
		
		\item Most of the time $G$ will be a connected reductive group over $k$ (or over $F$). We will always denote by $B$ a Borel subgroup (possibly only defined over $\overline{k}$) and $N$ the unipotent radical of $B$. We will also write $A$ for the {\it canonical Cartan} of $G$, that is the quotient $B/N$. When convenient, we will also identify $A$ with a maximal torus in $B$. In Subsections \ref{sect structure}, \ref{Sect smooth asym} and \ref{Sect Bernstein maps} we will assume that $G$ is split. Although the results contained in these sections should extend to nonsplit groups this would require working with appropriate notion of {\em boundary degenerations} over the field $k$.
		
		\item We set $\cA=X_*(A)\otimes \bQ$\index{$\cA$}, $\cA^*=X^*(A)_{\overline{k}}\otimes \bQ$\index{$\cA^*$}. Similar notations will be applied to other (split) tori (e.g. the canonical Cartan of a spherical variety defined in \S \ref{S canonical Cartan}).
		
		\item We write $\Delta\subset X^*(A)_{\overline{k}}$\index{$\Delta$} for the set of simple roots of $A$ (with respect to $B$), $\Delta^\vee\subset X_*(A)_{\overline{k}}$\index{$\Delta^\vee$} for the set of simple coroots and by $\alpha\mapsto \alpha^\vee$ the canonical bijection $\Delta\simeq \Delta^\vee$. We will also denote by $X^*(A)_{\overline{k}}^+\subset X^*(A)_{\overline{k}}$\index{$X^*(A)_{\overline{k}}^+$} the cone of dominant weights.
		
		\item Lie algebras will be denoted by corresponding gothic letters, such as $\mathfrak{g}$\index{$\mathfrak{g}$} for $G$. We will write $\mathfrak{g}^*$\index{$\mathfrak{g}^*$} for the dual. When working over an Archimedean field, we will denote by $\mathcal{U}(\mathfrak{g})$\index{$\mathcal{U}(\mathfrak{g})$} the enveloping algebra of the complexification of $\mathfrak{g}$.
		
		\item The neutral component of a linear algebraic group $H$ will be denoted by $H^0$\index{$H^0$}.
		
		\item For any subset $S\subset G$, we write $\Cent_G(S)$\index{$\Cent_G(S)$} and $\Norm_G(S)$\index{$\Norm_G(S)$} for the centralizer and the normalizer of $S$ in $G$ respectively.
		
		\item If a group $G$ acts on a set $X$ on the right (resp. on the left), we will denote by $R(g)$\index{$R(g)$} (resp. $L(g)$\index{$L(g)$}), $g\in G$, the operators of right translations (resp. left translations) on functions on $X$. When $G$ is a Lie group and the action $C^\infty$, similar notations will be applied to the differential of the action.
		
		\item If $G$ is a reductive group acting on a quasi-affine variety $X$, we will denote by $X\sslash G=\mathrm{Spec}\; k[X]^G$\index{$X\sslash G$} the GIT (Geometric Invariant Theoretic) quotient. This notation will also be sometimes applied to nonreductive actions but only in the following special situation: if $X=P_x\backslash P$ is a homogeneous variety under the action of a linear algebraic group $P$ and $N\subset L$ is the unipotent radical of $L$, then $X\sslash N:=L_x\backslash L$ where $L=P/N$ is the Levi quotient of $P$ and $L_x$ the image of $P_x$ in $L$.
		
		\item If $L$ is a linear algebraic group, $H\subset L$ a closed subgroup and $X$ is a $H$-variety (with $H$ acting on the right), we denote by $X\times^H L$\index{$X\times^H L$} the {\em contracted product}, that is the quotient of $X\times L$ by the free $H$-action $(x,\ell)\cdot h:=(xh,h^{-1}\ell)$.
		
		\item $X$ will most of the time be a spherical variety, usually assumed to be homogeneous quasi-affine and unimodular, see \S \ref{S def spherical var}.
	\end{itemize}
	
\end{paragr}

\section{Spherical varieties}\label{Part spherical varieties}

\subsection{Definition and examples}\label{S def spherical var}

In this subsection, we review the definition and basic properties of spherical varieties. We also discuss various examples.

\vspace{2mm}

\begin{paragr}\label{S def spherical}
A $G$-variety $X$ over $k$ is said to be {\it spherical} if it is normal and any Borel subgroup $B\subset G_{\overline{k}}$ (a priori only defined over the algebraic closure $\overline{k}$) acts with a (necessarily unique) open orbit $X_B\subset X_{\overline{k}}$ or, equivalently, if there is an open $G$-orbit in $X\times \mathcal{B}$ for the diagonal action, where $\mathcal{B}$ stands for the flag variety of $G$. In particular a spherical $G$-variety $X$ admits an open $G$-orbit $X^{\bullet}\subset X$ and in most parts of this paper we will only consider the homogeneous case $X=X^{\bullet}$\index{$X^{\bullet}$}. One reason is that most of the objects to be considered will only depend on the $G$-birational equivalence class of $X$. This is the case e.g. for all the combinatorial invariants of $X$ introduced in the next subsections (including its dual group) or of the unitary representation $L^2(X(k))$ in the local setting. One notable exception is however the discussion of the global conjecture in Section \ref{Sect global conjecture}.
	
Thus, unless explicitly specified otherwise, we will always consider a homogeneous spherical $G$-variety $X=H\backslash G$. For the sake of simplicity, we will also always assume that it satisfies the following condtion:
	\begin{itemize}
		\item $X$ is quasi-affine and unimodular (i.e.\ it admits a nonzero $G$-invariant volume form).
	\end{itemize}
	This is actually not a very restrictive hypothesis because of the following: every homogeneous $G$-variety admits an equivariant $\mathbb{G}_m$-bundle $\widetilde{X}\to X$ that is unimodular\footnote{Indeed, it suffices to take $\widetilde{X}=L^\times$ where $L\to X$ is the canonical line bundle on $X$, the one corresponding to volume forms, and $L^\times$ denotes the complement of the zero section.} and moreover $X$ is $G$-spherical if and only if $\widetilde{X}$ is $\mathbb{G}_m\times G$-spherical. Similarly, by a theorem of Borel \cite[Theorem 5.1]{BorelLAG}, every homogeneous $G$-variety admits an equivariant $\mathbb{G}_m$-cover that is quasi-affine and is spherical (resp. unimodular), for the $\mathbb{G}_m\times G$-action, if the original variety $X$ is. Thus, up to replacing $X$ by a suitable $\mathbb{G}_m^2$-cover (and enlarging $G$ to include this $\mathbb{G}_m^2$-action), we can always meet the above criterion. Actually, it suffices to take one $\mathbb{G}_m$-cover since a unimodular spherical variety is automatically quasi-affine, see Lemma \ref{lem1 appendix} below.
	
	Furthermore, our assumption that $X$ is unimodular is however mainly for convenience because, when say $k$ is local, it gives the existence of a $G(k)$-invariant measure on $X(k)$ and therefore, upon fixing such a measure, allows to identify (although not canonically) measures and half-densities with functions.
\end{paragr}

\begin{paragr}\label{S decomp algebra of regular fns}
	It turns out that spherical $G$-varieties have favorable finiteness properties that make them good candidates for the relative Langlands program. More precisely, if $X$ is a normal quasi-affine $G$-variety then $X$ is spherical if and only if the locally algebraic representation of $G$ on the ring of regular functions $k[X]$\index{$k[X]$} is multiplicity free i.e.\ we have a $G$-equivariant decomposition
	$$\displaystyle \overline{k}[X]=\bigoplus_{\lambda\in X^*(A)_{\overline{k}}^+} \overline{k}[X]_\lambda$$
	where each $\overline{k}[X]_\lambda$\index{$\overline{k}[X]_\lambda$} is either $0$ or irreducible with highest weight $\lambda$.
	
	In a similar vein, when $X$ is a spherical $G$-variety and the field $k$ is local we expect (and this is known in a lot of cases see e.g. \cite[Theorem 5.1.5]{SV}, or Corollary \ref{cor multiplicities} below, for the case where $k$ is $p$-adic and \cite{KobOsh} in the Archimedean case) that for every smooth irreducible representation $\pi$ of $G(k)$ the space $\Hom_G(\pi,C^\infty(X(k)))$ of $G(k)$-equivariant embeddings $\pi\to C^\infty(X(k))$ is finite dimensional. 
\end{paragr}


\begin{paragr}[Examples.]
	Here are some examples of homogeneous spherical varieties:
	\begin{itemize}
		\item {\it Symmetric varieties}, i.e.\ quotients of the form $X=H\backslash G$ where $H$ is of finite index in the subgroup $G^\iota$ of fixed points for some involution $\iota$ of $G$, are spherical. This is already a source of many examples such as $\mathrm{O}_n\backslash \GL_n$, $\Sp_{2n}\backslash \GL_{2n}$ or $\GL_n\times \GL_n\backslash \GL_{2n}$.
		
		\item For $H$ a connected reductive group over $k$, we can take $X=H$ equipped with the natural action of $G=H\times H$ on it by left and right translations. It is a symmetric variety, the underlying involution being given by $\iota(h_1,h_2)=(h_2,h_1)$, $(h_1,h_2)\in G$. We shall also refer to this example as the {\it group case} or to $X$ as the {\it group variety}. One rough general principle is that when specializing conjectures from the relative Langlands program to the group case we should essentially recover the original conjectures of the Langlands program.
		
		\item Another interesting class of symmetric varieties are the so-called {\it Galois symmetric varieties}. More precisely, those are associated to a connected reductive group $H$ over $k$ and a (separable) quadratic extension $\ell/k$. The group $G$ is then the Weil restriction of scalar $G=\Res_{\ell/k} H_\ell$ of the base-change of $H$ to $\ell$ and the embedding $H\subset G$ the obvious one. This corresponds to the symmetric space associated to the natural Galois involution on $G$. Furthermore, this example is a form of the group variety, since over an algebraic closure $\overline{k}$ there exists an isomorphism $X_{\overline{k}}\simeq H_{\overline{k}}$ of $G_{\overline{k}}\simeq H_{\overline{k}}\times H_{\overline{k}}$-varieties. 
		
		\item The {\it Rankin-Selberg variety} $\GL(n)^{\diag}\backslash \GL(n)\times \GL(n+1)$ and the Gan-Gross-Prasad varieties $\SO(n)^{\diag}\backslash \SO(n)\times \SO(n+1)$, $\mathrm{U}(n)^{\diag}\backslash \mathrm{U}(n)\times \mathrm{U}(n+1)$, where the superscript ``$\diag$'' indicates that we take the diagonal embedding, are examples of spherical varieties that are not symmetric.
		
		
		\item {\it Horospherical varieties} are quotients $X=S\backslash G$ where $S$ is a subgroup containing the unipotent radical $N$ of a Borel subgroup (perhaps only defined over the algebraic closure $\overline{k}$).
		
		\item More exotic examples of spherical varieties include $\mathrm{G}_2\backslash \SO(7)$ or $\mathrm{Spin}_9\backslash \mathrm{F}_4$.
		
		\item {\it Parabolic induction}: this is a general procedure to produce new spherical varieties from old ones. More precisely, let $P=LU\subset G$ be a parabolic subgroup and $X_L$ be an $L$-spherical variety then the contracted product $X=X_L\times^P G$\index{$X_L\times^P G$} (that is the quotient of $X_L\times G$ by the free $P$-action $\ell u\cdot (y,g)=(y\ell^{-1},\ell ug)$) is again spherical and is called the parabolic induction of $X_L$ from $P$ to $G$.
	\end{itemize}
\end{paragr}

\begin{paragr}[A non-example: Whittaker models.]\label{S Whittaker induction}
	Assume that $k$ is a global or local field and that $G$ is quasi-split. Let $B=TN\subset G$ be a Borel subgroup. If $k$ is global, we fix a nondegenerate character\footnote{Recall that a character, of $N(\bA_k)$ or $N(k)$, is said to be {\it nondegenerate} if it is non-trivial on each of the root subgroup $N_\alpha\subset N$ associated to simple roots $\alpha\in \Delta$.} $\psi_N: N(\bA_k)\to \bC^\times$ trivial on $N(k)$ whereas if $k$ is local, we fix a nondegenerate unitary character $\psi_N: N(k)\to \bC^\times$. In this situation, we would like to consider the ``equivariant quotient''
	$$\displaystyle X=(N,\psi_N)\backslash G$$
	as part of our general setting.\index{$(N,\psi_N)\backslash G$}  More precisely, the underlying variety is just the horopherical variety $N\backslash G$ but we should understand $X$ as a symbol whose meaning is that spaces of functions on $X(k)$ (for $k$ local) or $X(\mathbb{A}_k)$ (for $k$ global) are actually sections of the natural (complex) line bundle $\mathcal{L}_\psi$ on $(N\backslash G)(k)$ or $(N\backslash G)(\mathbb{A})$ associated to $\psi$ or, more concretely, functions $f:G(k) \mbox{ or } G(\bA_k)\to \bC$ such that $f(ug)=\psi_N(u)f(g)$ for every $u\in N(k)$ or $N(\bA_k)$. We call $X$ the {\it Whittaker variety}. In general, every statement or conjecture that we will state, only involving spaces of functions on spherical varieties, should remain true in the Whittaker case as well once appropriately reinterpreted.
	
	This example can actually be extended to the notion of {\it Whittaker induction}. More precisely, let us fix for convenience a non trivial additive character $\psi: \bA_k/k\to \bC^\times$ or $\psi: k\to \bC^\times$. Then, if $P=LU$ is a parabolic subgroup, $X_L=H_L\backslash L$ a homogeneous spherical $L$-variety and $\Psi: X_L\to \Hom_{alg}(U,\mathbb{G}_a)$ is an $L$-equivariant regular map, we shall denote by the formal symbol
	$$\displaystyle X=X_L\times^{P,\Psi} G$$
	the `variety' such that functions on $X(\bA_k)$ or $X(k)$ are functions on (the $\bA_k$- or $k$-points of) $X^M\times G$ satisfying $f(x(ul)^{-1},ulg)=\psi_{U,x}(u)f(x,g)$ for every $ul\in P(\bA_k)$ or $P(k)$. Here, for every $x\in X_L(\bA_k)$ or $X_L(k)$ we have denoted by $\psi_{U,x}$ the composition $\psi\circ \Psi(x)$. We call $X$ the {\it Whittaker induction} of $X_L$ (or more properly of the pair $(X_L,\Psi)$). Note that it can also formally be written as
	$$\displaystyle X=(H_L\ltimes U,\psi_U)\backslash G$$
	where $\psi_U=\psi_{U,1}$. We refer the reader to \cite[Section 2.6]{SV} for a more conceptual and detailed discussion of this notion as well as explanations on how to extend the definition of most of the combinatorial invariants of the next subsections to this setting.
\end{paragr}

\subsection{Structure}\label{sect structure}

The aim of this section is to review some important invariants associated to a spherical variety $X$. In particular, from the work of Brion and Knop, we can associate to $X$ a based root system $(\Phi_X,\Delta_X)$ to which we can in turn associate a dual complex group $\widehat{G}_X$ in a way that generalizes Langlands construction of the dual of a connected reductive group. A first construction of the dual group of a spherical variety was proposed is in the work of Gaitsgory and Nadler \cite{GN} through Tannakian formalism, whereas the definition given here is combinatorial in nature. There is however a slight difference between the two constructions: the dual group $\widehat{G}_X^{\mathrm{GN}}$ attached to $X$ by Gaitsgory and Nadler comes with an embedding $\widehat{G}_X^{\mathrm{GN}}\subset \widehat{G}$ in the dual group of $G$ whereas the dual group we consider here, following Sakellaridis and Venkatesh, admits an isogeny onto $\widehat{G}_X^{\mathrm{GN}}$ with finite (in general nontrivial) kernel. The drawback is that the Sakellaridis-Venkatesh dual group $\widehat{G}_X$ is not always well-defined: it will only exist under a technical condition that ``$X$ has no roots of type $N$'' (see \S \ref{S spherical roots}). However, $\widehat{G}_X$ seems to relate in better ways to local and global distinction phenomena. Moreover, the map $\widehat{G}_X\to \widehat{G}_X^{\mathrm{GN}}$ was defined in \cite{SV} under a series of assumptions on the Gaitsgory-Nadler dual group. An inconditional construction of the morphism $\iota_X: \widehat{G}_X\to \widehat{G}$ was established in general by Knop and Schalke \cite{Knopdualgp}. 
	
Everything that we will discuss in this section depends on $X$ only up to $G$-birational equivalence. Therefore, we will assume that $X$ is a spherical $G$-homogeneous variety. Moreover, except for the last two paragraphs discussing the Galois action on $\widehat{G}_X$, all the constructions happens over an algebraic closure so that \textbf{except in \S \ref{S L-groups} and \S \ref{S Galois pairs Lgroup}, we will assume that $k=\overline{k}$ is algebraically closed.}
	
For convenience, we will fix a Borel subgroup $B\subset G$\index{$B$} with unipotent radical $N$ and denote by $X_B\subset X$\index{$X_B$} the open $B$-orbit although all constructions will be independent on such choice (up to canonical isomorphism) as can readily be checked. For example the canonical Cartan of $G$, defined as the quotient $A=B/N$\index{$A$}, is independent on choices.
	
We will also denote by $W=\Norm_G(A)/A$\index{$W$} the Weyl group of $G$ and by $\Phi\supset \Phi^+\supset \Delta$ the set of all\index{$\Phi$}, positive\index{$\Phi^+$} and simple roots\index{$\Delta$} of $A$ with respect to $B$ respectively.

\vspace{2mm}

\begin{paragr}[The center.]
	The center of $X$ is its group of $G$-equivariant automorphisms
	$$\displaystyle Z(X)=\Aut_G(X).$$
	Writing $X=H\backslash G$, we have $Z(X)=\mathrm{N}_G(H)/H$\index{$Z(X)$}. It is known that $Z(X)$ is a diagonalizable group \cite[corollaire 5.2]{BrionPauer}. In particular, its neutral component $Z(X)^0$ is always a torus.
\end{paragr}

\begin{paragr}[The canonical Cartan.]\label{S canonical Cartan}
	The universal Cartan $A_X$\index{$A_X$} of $X$ is the quotient through which $A$ acts on $X_B\sslash N$. Thus, we have a canonical quotient map $A\twoheadrightarrow A_X$ and $X_B\sslash N$ is a $A_X$-torsor. The weight lattice of $X$ is simply the character group $\Lambda_X=X^*(A_X)$\index{$\Lambda_X$}. It fits in the following exact sequence:
	\begin{equation}\label{fund exact seq}
		\displaystyle 1\to k^\times \to k(X)^{(B)}\to \Lambda_X \to 1
	\end{equation}
	where $k(X)^{(B)}$\index{$k(X)^{(B)}$} stands for the group of $B$-eigenfunctions in the field of rational functions $k(X)$\index{$k(X)$}, i.e.\ functions $f\in k(X)^\times$ for which there exists an algebraic character $\chi_f\in X^*(A)=X^*(B)$ with $b\cdot f=\chi(b)f$ for every $b\in B$, and the last morphism of the short exact sequence is just $f\mapsto \chi_f$.
\end{paragr}

\begin{paragr}
	\begin{lem}\label{lem1 appendix}
		Let $X$ be a $G$-spherical variety (not necessarily quasi-affine or unimodular). Then:
		\begin{enumerate}[(i)]
			\item The $k$-algebra $k[X]$ of regular functions on $X$ is finitely generated;
			
			\item If $X=H\backslash G$ is homogeneous, spherical and unimodular. Then, $X$ is quasi-affine. 
		\end{enumerate}
	\end{lem}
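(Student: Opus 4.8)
The plan is to deduce both statements from the multiplicity‑free structure of $k[X]$ together with elementary facts about the affine closure of $X$.

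\emph{For (i).} First I would record that, $X$ being spherical and normal, the $G$‑module $k[X]$ is multiplicity free: one has a $G$‑equivariant decomposition $k[X]=\bigoplus_\lambda k[X]_\lambda$ indexed by a sub\emph{monoid} $\Lambda^+\subseteq X^*(A_X)$, each $k[X]_\lambda$ being irreducible of highest weight $\lambda$ — the monoid law coming from the fact that a product of two $B$‑eigenfunctions is a $B$‑eigenfunction whose weight is the sum and which is nonzero because $X$, being homogeneous, is irreducible. The key point is then that $\Lambda^+$ is a finitely generated monoid: the highest weight vectors in $k[X]$ are exactly the \emph{regular} $B$‑eigenfunctions, the divisor of such a function is $B$‑stable hence supported on the finitely many colors of $X$ (sphericity forces finitely many $B$‑orbits), and regularity amounts to effectivity of that divisor, so $\Lambda^+$ is the set of lattice points of a rational polyhedral cone in $X^*(A_X)\otimes\bQ$ and Gordan's lemma applies. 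Choosing highest weight vectors $f_1,\dots,f_r$ over monoid generators $\lambda_1,\dots,\lambda_r$ of $\Lambda^+$, the highest weight vector of $k[X]_{\sum n_i\lambda_i}$ is, up to a scalar, $\prod_i f_i^{n_i}$; hence every $k[X]_\lambda$ lies in the $G$‑stable subalgebra generated by $f_1,\dots,f_r$, and $k[X]$ is finitely generated. (Alternatively, this is due to Brion and Knop and one could simply cite it.)

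\emph{For (ii).} Using (i), set $\overline X=\mathrm{Spec}\,k[X]$; evaluation of regular functions gives a $G$‑morphism $\phi\colon X\to\overline X$ which is dominant (as $\phi^*=\mathrm{id}_{k[X]}$). Since $X$ is homogeneous its image is a single $G$‑orbit $O$, which is therefore the dense — hence open — orbit of $\overline X$, so $O$ is quasi‑affine. Writing $O\cong\widehat H\backslash G$ with $H\subseteq\widehat H$, the map $\phi$ is the natural projection $H\backslash G\to\widehat H\backslash G$, and it suffices to prove $\mathfrak{h}=\widehat{\mathfrak h}$: then the fibers of $\phi$ are finite and (characteristic zero) reduced, so $\phi$ is finite, and $X$ — being finite over the quasi‑affine $O$ — is quasi‑affine by Zariski's main theorem. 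Equivalently, one must show that the fiber $F=H\backslash\widehat H$ of $\phi$ is $0$‑dimensional. Here unimodularity enters: from the exact sequence of $H$‑modules $0\to\mathfrak h\to\mathfrak g\to\mathfrak g/\mathfrak h\to 0$, together with the $G$‑triviality of $\bigwedge^{\mathrm{top}}\mathfrak g$ (since $G$ is reductive) and the $H$‑triviality of $\bigwedge^{\mathrm{top}}(\mathfrak g/\mathfrak h)$ (unimodularity of $X$), one gets that $\bigwedge^{\mathrm{top}}\mathfrak h$ is $H$‑trivial, i.e.\ $H$ is itself a unimodular group; combined with the combinatorial structure of the spherical homogeneous space this should force $\widehat H=H$.

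\emph{Main obstacle.} The genuinely substantive point is the last one: ruling out a positive‑dimensional fiber $F=H\backslash\widehat H$. A priori $F$ could be a positive‑dimensional unimodular variety such as $\bA^1$, $\mathbb{G}_m$, or a torus, and excluding this requires real input about how colors and $B$‑semi‑invariants behave under the passage to the affine closure. Concretely I would try to show that any strict enlargement $H\subsetneq\widehat H$ produces a $B$‑eigenfunction in $k(X)$ whose divisor is not supported on the colors of $X$ — contradicting the description of $k(X)^{(B)}$ used in (i) — or, more safely, invoke Grosshans' criterion that $X$ is quasi‑affine if and only if $\mathrm{Frac}(k[X])=k(X)$ and verify this directly for unimodular spherical $X$ via the multiplicity‑free decomposition, the point becoming that the subfield generated by the $B$‑eigenfunctions $f_\lambda/f_\mu$ ($\lambda,\mu\in\Lambda^+$) is already all of $k(X)$. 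Alternatively one simply cites the known fact (Bialynicki‑Birula–Hochschild–Mostow, Sukhanov) that a unimodular spherical subgroup is observable.
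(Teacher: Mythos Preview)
Your argument for (i) is essentially the paper's: both reduce to showing that the weight monoid of regular $B$-eigenfunctions is finitely generated via Gordan's lemma, using that the complement of the open $B$-orbit is a finite union of prime divisors (you phrase this in terms of colors; the paper simply lists the irreducible components of $X\setminus X_B$ and the associated valuations), and then conclude by taking $G$-translates of finitely many highest-weight generators.

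For (ii) your route is genuinely different, and the gap you flag is real and not easily patched. You correctly extract from unimodularity of $X$ that $H$ is a unimodular \emph{group}, but you have no mechanism to pass from this to $\dim\widehat H/H=0$. Your three proposed completions are each incomplete: the Grosshans-criterion route (verify $\mathrm{Frac}(k[X])=k(X)$) is essentially a restatement of the problem; the ``problematic $B$-eigenfunction'' idea is a hope, not an argument; and ``unimodular spherical subgroup $\Rightarrow$ observable'' is not a standard citable statement --- Sukhanov-type criteria for observability concern the unipotent radical of $H$ relative to a parabolic, not unimodularity.

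The paper sidesteps the observable-hull analysis entirely by constructing an explicit $f\in k[X]$ whose zero set is exactly $X\setminus X_B$. Concretely, one first shows (by a filtration argument for $B$-representations) that there is a subspace $W\subset\mathfrak b$ with $\Ad(b)W\oplus(\mathfrak h\cap\mathfrak b)=\mathfrak b$ for every $b\in B$; in particular $\mathfrak g=\mathfrak h\oplus W$, and one sets $f(g)=\det\bigl(p_W\circ\Ad(g)\big|_W\bigr)$ where $p_W:\mathfrak g\to W$ is the projection along $\mathfrak h$. Unimodularity enters precisely to check $f(hg)=\det(\Ad(h)|_{\mathfrak g/\mathfrak h})f(g)=f(g)$, so $f$ descends to $X$; and $f(g)\neq 0$ iff $\mathfrak h+\Ad(g)W=\mathfrak g$, which holds on $HB$ (by the choice of $W$) and fails off $X_B$ (since there already $\mathfrak h+\Ad(g)\mathfrak b\neq\mathfrak g$). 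Once such an $f$ exists, $\phi:X\to\mathrm{Spec}\,k[X]$ restricts to an isomorphism $X_B\simeq D(f)$, and homogeneity propagates this to an open embedding of $X$. The contrast is that the paper uses unimodularity \emph{locally} (to make one determinant $H$-invariant), whereas your approach tries to use it \emph{globally} (to constrain $\widehat H$), which is where the difficulty lies.
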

	
	\begin{proof}
		We may assume that $k=\overline{k}$ is algebraically closed.
		\begin{enumerate}[(i)]
			\item Let $B\subset G$ be a Borel subgroup and $X_B\subset X$ be the open Borel orbit. By \cite[Exercise 14.2.8]{SpringerLAG}, $X_B$ is affine. Hence, since $X$ is normal, the complement $X\setminus X_B$ is the union of finitely many irreducible divisors $D_1,\ldots, D_\ell$. Let, for each $1\leq i\leq \ell$, $v_i: k(X)^{\times}\to \mathbb{Z}$ be the valuation along the divisor $D_i$. Then, the restriction of $v_i$ to $k(X)^{(B)}$ descends to a linear map $X^*(A_X)\to \mathbb{Z}$. Let $k[X]^{(B)}$ be the multiplicative monoid of nonzero $B$-eigenfunctions in $k[X]$. Then, $k[X]^{(B)}$ is the submonoid of those $f\in k(X)^{(B)}$ such that $v_i(f)\geq 0$ for every $1\leq i\leq \ell$. By \cite[Proposition 1, p.12]{Fulton}, this implies that the image of $k[X]^{(B)}\to X^*(A_X)$, $f\mapsto \chi_f$, is a finitely generated monoid. Let $f_1,\ldots,f_k\in k[X]^{(B)}$ such that $\chi_{f_1},\ldots,\chi_{f_k}$ generate this monoid. Then, by the theory of highest weights, the $G$-translates of $f_1,\ldots,f_k$ (which span a finite dimensional $k$-vector space) generate $k[X]$ as a $k$-algebra.
			
			\item Let $\varphi: X\to X^{\mathrm{aff}}:=\mathrm{Spec}(k[X])$ be the canonical morphism. By $(i)$, we need to show that $\varphi$ is an open embedding. Assume one moment that there exists a regular function $f\in k[X]$ whose zero locus $\{f=0 \}$ is set-theoretically equal to the complement $X\setminus X_B$. This would imply that the restriction of $\varphi$ to the principal open subset $D(f)\subset X^{\mathrm{aff}}$ is an isomorphism $X_B\simeq D(f)$. However, since $X$ is homogeneous it can be covered by finitely many translates of $X_B$, say $X=X_Bg_1\cup\ldots\cup X_B g_n$. But, for each $1\leq i\leq n$, the restriction of $\varphi$ to $D(f^{g_i})$ (where $f^{g_i}=$ the right $g^{-1}_i$-translate of $f$) is also an isomorphism $X_B g_i\simeq D(f^{g_i})$ which implies that $\varphi$ is indeed an open embedding. Thus, it suffices to show the existence of a regular function $f\in k[X]$ such that $\{f=0 \}=X\setminus X_B$ set theoretically. 
			
			Let $\mathfrak{g}$, $\mathfrak{h}$ and $\mathfrak{b}$ be the Lie algebras of $G$, $H$ and $B$ respectively. Without loss in generality, we may assume that the canonical basepoint $x_0=H1$ belongs to $X_B$ i.e.\ $\mathfrak{g}=\mathfrak{h}+\mathfrak{b}$. We claim that there exists a subspace $W$ of $\mathfrak{b}$ such that we have the decomposition $\Ad(b)W\oplus \mathfrak{h}\cap \mathfrak{b}=\mathfrak{b}$ for every $b\in B$. Indeed, this follows from the following general fact applied to the adjoint representation of $B$:
			\begin{num}
				\item Let $V$ be an algebraic $B$-representation and $X\subset V$ a subspace. Then, there exists a subspace $Y\subset V$ such that $V=bY\oplus X$ for every $b\in B$.
			\end{num}
			This can be proved by induction on the length $n$ of a $B$-stable filtration
			$$\displaystyle V=V_n\supset V_{n-1}\supset \ldots \supset V_0=0$$
			such that $B$ acts on every subquotient $V_i/V_{i-1}$ by a character. Indeed, when $n=0$ (i.e.\ $V=0$), the result is obvious whereas if $Y_{n-1}\subset V_{n-1}$ is a subspace such that $V_{n-1}=bY_{n-1}\oplus (X\cap V_{n-1})$ for every $b\in B$, it suffices to take for $Y$ any subspace satisfying the two following properties: $Y\cap V_{n-1}=Y_{n-1}$ and $Y/Y_{n-1}$ is a supplementary subspace to $X/(X\cap V_{n-1})$ in $V_n/V_{n-1}$.
			
			Fix a subspace $W\subset \mathfrak{b}$ as above. Then, we have (applying the condition to $b=1$) a decomposition $\mathfrak{g}=\mathfrak{h}\oplus W$ and denoting by $p_W:\mathfrak{g}\to W$ the corresponding projection, we claim that the regular function on $G$
			$$\displaystyle f: g\in G\mapsto \det(p_W\circ \Ad(g)\mid_W)$$
			descends to a regular function on $X$ satisfying the desired property. Indeed, since $X$ is unimodular, we have
			$$\displaystyle f(hg)=\det(\Ad(h)\mid_{\mathfrak{g}/\mathfrak{h}})f(g)=f(g)$$
			for every $(h,g)\in H\times G$ so that $f$ indeed descends to a function on $X=H\backslash G$. Moreover, by construction $f$ does not vanish on $B$, hence on $X_B=H\backslash HB$, whereas if $Hg\in X\setminus X_B$, we have $\mathfrak{h}+ \Ad(g)\mathfrak{b}\neq \mathfrak{g}$, a fortiori $\mathfrak{h}+ \Ad(g)W\neq \mathfrak{g}$ and therefore $f(g)=0$.
		\end{enumerate}
		
	\end{proof}
\end{paragr}

\begin{paragr}[The parabolic type]\label{S parabolic type}
	We will also denote by $P_X$\index{$P_X$} the stabilizer of the open $B$-orbit:
	$$\displaystyle P_X=\{g\in G\mid X_Bg=X_B \}.$$
	It is a parabolic subgroup of $G$ whose (conjugacy) class does not depend on $B$. As we will see, this very coarse invariant is related to a certain {\it Arthur $\SL_2$} associated to $X$ that controls in some approximative sense the size of the irreducible representations in the $X$-spectrum. If $X$ is quasi-affine, it is also the maximal parabolic subgroup containing $B$ of which $A_X$ is still a quotient \cite[Lemma 3.1]{KnopAB}.
	
	The unipotent radical $N_X$\index{$N_X$} of $P_X$ acts freely on $X_B$ and we have $X_B/N_X=X_B\sslash N$. Actually, the $N_X$-bundle $X_B\to X_B/N_X$ can be split (although non-canonically) as follows. Pick a regular function $f\in k[X]$ that vanishes, set-theoretically, on the complement $X\setminus X_B$ (this can be done thanks to our running assumption that $X$ is quasi-affine). Let $x_0\in X_B(k)$. Then, the differential $d_{x_0}f$ of $f$ at $x_0$ can be seen as an element in the dual Lie algebra $\mathfrak{g}^*$ through the natural embedding (given by differentiating the action) $T^*_{x_0} X\hookrightarrow \mathfrak{g}^*$. Then, the stabilizer $L_X$\index{$L_X$} of $d_{x_0}f$ for the coadjoint action is a Levi factor of $P_X$, the $L_X$-orbit $x_0L_X$ is isomorphic to the quotient $L_X\twoheadrightarrow A_X$ \cite[th\'eor\`eme 3.4]{BLV} thus identifying the torus $A_X$ with a subvariety of $X$ and the action map yields an isomorphism
	\begin{equation}\label{eq LST X}
		\displaystyle X_B\simeq A_X\times N_X.
	\end{equation}
	We note that the above choice of embedding $A_X\hookrightarrow X$ corresponds, in the terminology of \cite{KnopAB}, to the choice of a {\it generic flat} together with a base-point. Thus, since $X$ is homogeneous, by \cite[Corollary 7.8]{KnopAB} this embedding has a {\it closed} image. Let us also remark that the isomorphism \eqref{eq LST X} allows to define a (non-canonical) left $A_X$-action on $X_B$ commuting with the (right) $P_X$-action.
\end{paragr}

\begin{paragr}[The Weyl group]
	One crucial invariant of $X$, and probably the subtlest one, is its {\it Weyl group} $W_X$\index{$W_X$}. It is at first approximation a group of automorphisms of the canonical Cartan $A_X$ and a posteriori the Weyl group of the root system of $X$. There are multiple ways to define it but here is one that is due to Friedrich Knop \cite{KnopAB} and that is pleasantly aligned with the Hamiltonian duality picture of \cite{BZSV}. In a nutshell, Knop's approach relates $W_X$ to the cotangent bundle $T^*X$\index{$T^*X$} describing more precisely the GIT quotient $T^*X\sslash G$\footnote{Note that since $X$ is assumed quasi-affine, so is $T^*X$ and the GIT quotient $T^*X\sslash G$ is well-defined.} as $\mathfrak{a}_X^*\sslash W_X$. This identification is moreover compatible with the moment map $\Phi:T^*X\to \mathfrak{g}^*$\index{$\Phi$} (dual to the map $\mathfrak{g}\to \Gamma(X,TX)$ obtained by differentiating the action) in the sense that we have a commutative diagram \cite[Korollar 7.2]{KnopWM}:
	$$\displaystyle \xymatrix{ T^*X \ar[d] \ar[r] & T^*X\sslash G=\mathfrak{a}_X^*\sslash W_X \ar[d] \\ \mathfrak{g}^* \ar[r] & \mathfrak{g}^*\sslash G=\mathfrak{a}^*\sslash W}$$
	where $\mathfrak{a}^*$\index{$\mathfrak{a}^*$} (resp. $\mathfrak{a}_X^*$)\index{$\mathfrak{a}_X^*$} stands for the dual of the Lie algebra of $A$ (resp. of $A_X$), the identification $\mathfrak{g}^*\sslash G=\mathfrak{a}^*\sslash W$ follows from Chevalley's theorem and the right vertical arrow is induced from the natural inclusion $\mathfrak{a}_X^*\subset \mathfrak{a}^*$ (dual to the projection $A\twoheadrightarrow A_X$).
	In order to pinpoint $W_X$ more precisely (for example the previous property does not distinguish $W_X$ from its conjugates by $\Norm_W(\mathfrak{a}_X^*)$), we can pass (following Knop) to the following $W$-cover
	$$\displaystyle \widehat{T^* X}^{\bullet}:=T^*X\times_{\mathfrak{a}^*\sslash W} \mathfrak{a}^*.$$
	There is a natural map 
	$$\displaystyle X_B\times \mathfrak{a}_X^*\to \widehat{T^*X}^{\bullet}$$
	sending $(x,\chi)\in X_B\times X^*(A_X)$ to $(x,\frac{df_\chi(x)}{f_\chi(x)}, d\chi)$, where $f_\chi\in k(X)^{(B)}$ denotes any $B$-eigenfunction with eigencharacter $\chi$, and extended to all of $X_B\times \mathfrak{a}_X^*$ by linearity in the second factor. Then, this map is $P_X$-equivariant (for the trivial $P_X$-action on $\mathfrak{a}_X^*$) and there exists a non-empty open subset $\mathfrak{a}_X^{*,r}\subset \mathfrak{a}_X^*$\footnote{We can take for $\mathfrak{a}_X^{*,r}$ the open subset defined by the condition $\alpha^\vee \neq 0$ for every $\alpha\in \Phi\setminus \Phi^{L_X}$ where $\Phi^{L_X}$ stands for the root system of the Levi factor $L_X$ of $P_X$.} such that the resulting map
	$$\displaystyle X_B\times^{P_X} G\times \mathfrak{a}_X^{*,r}\to \widehat{T^*X}^{\bullet}$$
	is an open embedding with image in a particular irreducible component $\widehat{T^*X}$ of maximal dimension of $\widehat{T^*X}^{\bullet}$ whose image by the natural map $\widehat{T^*X}^{\bullet}\to \mathfrak{a}^*$ is $\mathfrak{a}_X^*$. Following Knop \cite{KnopAB}, we call $\widehat{T^*X}$ the {\it polarized cotangent bundle} of $X$. 
	
	Then, $W_X$ is the quotient of the stabilizer $W_{(X)}$\index{$W_{(X)}$} in $W$ of the irreducible component $\widehat{T^*X}$ by its pointwise stabilizer that can be shown to be equal to the Weyl group $W^{L_X}$ of the Levi quotient of $P_X$. Moreover, the identification $T^*X\sslash G=\mathfrak{a}_X^*\sslash W_X$ descends from the map $\widehat{T^*X}\to \mathfrak{a}_X^*$.
\end{paragr}

\begin{paragr}\label{embedding WX}
	This gives a description of $W_X$ as a subquotient of $W$. However, there is the following canonical way to lift it as a subgroup. Any $w\in W_X$ lifts to an unique element of $W_{(X)}$ preserving the set of positive roots $\Delta^{L_X}$ of $L_X$ (or equivalently the Borel subgroup $B\cap L_X$) and this yields a section $W_X\to W_{(X)}$ so that $W_{(X)}=W^{L_X}\rtimes W_X$.
\end{paragr}

\begin{paragr}[The cone $\mathcal{A}_X^-$.]\label{S cone Ax-}
	Let $\mathcal{V}$\index{$\mathcal{V}$} be the set of $G$-invariant valuations $v:k(X)\to \bQ\cup \{\infty \}$ that are trivial on $k^\times$. By restriction to $k(X)^{(B)}$, and the exact sequence \eqref{fund exact seq}, we have a natural map $\mathcal{V}\to \mathcal{A}_X:=X_*(A_X)\otimes_{\bZ} \bQ$ which is always injective \cite[Corollary 1.8]{KnopLV}. We will write $\mathcal{A}_X^-\subset \mathcal{A}_X$\index{$\mathcal{A}_X^-$} for the image of $\mathcal{V}$ by this map. It is known that $\mathcal{A}_X^-$ is a finitely generated cone \cite[Corollary 5.3]{KnopLV} that always contains the image of the negative Weyl chamber $\mathcal{A}^-\subset \mathcal{A}=X_*(A)\otimes \mathbb{Q}$\footnote{More precisely, $\mathcal{A}^-=\{H\in \mathcal{A}\mid \langle \alpha,H\rangle \leqslant 0 \; \forall \alpha\in \Delta \}$ where $\Delta$ stands for the set of simple roots of $A$ in $B$.} \index{$\mathcal{A}^-$} by the natural projection $\mathcal{A}\twoheadrightarrow \mathcal{A}_X$. 
	
	In the case where $X$ is quasi-affine, the valuation cone $\mathcal{A}_X^-$ admits the following alternative description. Let $X^*(A_X)^+=X^*(A_X)\cap X^*(A)^+$\index{$X^*(A_X)^+$} be the subset of dominant characters in $X^*(A_X)$. Then, the ring of regular function of $X$ decomposes according to the $G$-action as
	$$\displaystyle k[X]=\bigoplus_{\lambda\in X^*(A_X)^+} k[X]_\lambda$$
	where $k[X]_\lambda$ denotes the isotypic component for the irreducible representation with highest weight $\lambda$ (under the sphericity assumption, $k[X]_\lambda$ is either zero or irreducible itself). This decomposition is however not a grading of the ring $k[X]$ and the valuation cone is actually (negative) dual to the cone measuring the defect of this decomposition to be a grading. Namely, letting ${}^+ \mathcal{A}_X^*$\index{${}^+ \mathcal{A}_X^*$} be the cone in $\mathcal{A}_X^*=X^*(A_X)\otimes \mathbb{Q}$ generated by the set of $\nu \in X^*(A_X)$ such that
	$$\displaystyle k[X]_\lambda k[X]_\mu\cap k[X]_{\lambda+\mu-\nu}\neq 0$$
	for some $\lambda,\mu\in X^*(A_X)^+$, we have (see \cite[Lemma 5.1]{KnopLV})
	$$\displaystyle \mathcal{A}_X^-=\{ H\in \mathcal{A}_X\mid \langle \lambda,H\rangle\leq 0 \; \forall \lambda\in {}^+\mathcal{A}_X^*\}.$$
	Note that there is a natural $G$-stable (multi)filtration on the algebra $k[X]$ indexed by $X^*(A_X)$ and the partial order $\preceq$ associated to the cone ${}^+ \mathcal{A}_X^*$ (i.e.\ $\mu\preceq \lambda$ iff $\lambda-\mu\in {}^+ \mathcal{A}_X^*$) given by
	$$\displaystyle k[X]_{\preceq \lambda}=\bigoplus_{\mu\preceq \lambda} k[X]_\mu.$$
	
	Following Sakellaridis and Venkatesh \cite{SV}, we say that $X$ is {\it wavefront} if $\mathcal{A}_X^-$ is precisely the image of $\mathcal{A}^-$. This is a technical but important condition allowing to obtain a priori finiteness properties for the (points of the) spherical variety $X$ over $p$-adic fields (as well as for all its boundary degenerations see Section \ref{sect boundary degenerations}) that are crucial inputs for the development of harmonic analysis on the variety (in particular for the Plancherel decomposition of \cite[Part 3]{SV}).
\end{paragr}

\begin{paragr}[Spherical roots.]\label{S spherical roots}
	Back to the general case, it is known \cite{KnopAB} that the cone $\mathcal{A}_X^-$ is a fundamental domain for the action of $W_X$ on $\mathcal{A}_X$. This entails that $W_X$ is actually the Weyl group of a finite root system with a system of simple roots $\Delta_X\subset a_X^*$\index{$\Delta_X$} that generate the extremal rays of the dual cone
	$$\displaystyle {}^+\mathcal{A}_X^*=\{\chi\in \mathcal{A}_X^*\mid \langle \chi,\lambda\rangle \leqslant 0 \; \forall \lambda\in \mathcal{A}_X^-  \}.$$
	There are actually various ways to normalize the roots in $\Delta_X$. In the literature on spherical varieties it is customary to take the generators of the intersections of the extremal rays of ${}^+\cA_X^*$ with the weight lattice $X^*(A_X)$, although other normalizations have also been considered see e.g. \cite{KnopAutom}. However, Sakellaridis and Venkatesh have pointed out that this is not the best normalization from a representation theoretic point of view and that we should instead take $\Delta_X$ to be the set of generators of intersections of extremal rays with the root lattice $\bZ \Delta$ of $G$. See \cite[Proposition 2.2.1]{SV} for the proof that it indeed produces the simple roots of a root system with Weyl group $W_X$. In these notes, we will follow Sakellaridis-Venkatesh normalization and simply refer to elements of $\Delta_X$ as the {\it spherical roots}. We will also write $\Delta_X^\vee\subset \cA_X$ for the corresponding set of spherical coroots.
	
	We have
	\begin{equation*}
		\displaystyle Z(X)^0=\bigcap_{\alpha\in \Delta_X} \Ker(\alpha)^0.
	\end{equation*}
	Moreover, by \cite[Corollary 3.1.2]{SV}, for each spherical root $\alpha\in \Delta_X$, exactly one of the following holds:
	\begin{enumerate}[(a)]
		\item $\alpha$ is a positive root for $G$ i.e.\ $\alpha\in \Phi^+$;
		
		\item there exist two positive roots $\gamma_1,\gamma_2\in \Phi^+$ that are strongly orthogonal in the sense that $(\bQ\gamma_1+\bQ \gamma_2)\cap \Phi=\{\pm \gamma_1, \pm \gamma_2 \}$ and satisfying $\alpha=\gamma_1+\gamma_2$.
	\end{enumerate}
	Case (a) can also be divided in two subcases according to whether $\alpha\in X^*(A_X)$ or $\alpha\notin X^*(A_X)$. In the former case we say that $\alpha$ is a root of type $T$ (a typical example being the unique spherical root of $T\backslash \PGL_2$, $T\subset \PGL_2$ being a maximal torus) and in the latter case we say that $\alpha$ is a root of type $N$ (in which case $2\alpha\in X^*(A_X)$ see the discussion in \cite[pp.56-57]{SV}; a typical example is the unique spherical root of $N(T)\backslash \PGL_2$, where $N(T)$ denotes the normalizer of a maximal torus). When $\alpha$ falls in case (b) on the other hand, we shall say that $\alpha$ is a root of type $G$ (this is for example the case of all spherical roots for the group variety $X=H$). When $\alpha$ is of type $G$, it automatically belongs to the weight lattice $X^*(A_X)$ but a decomposition $\alpha=\gamma_1+\gamma_2$ as in (b) above is in general far from unique. It becomes unique however if we ask it to satisfy one of the following equivalent additional conditions (\cite[Corollary 3.1.4]{SV}, \cite[Lemma 6.1]{Knopdualgp}):
	\begin{itemize}
		\item There exist simple roots $\delta_1,\delta_2\in \Delta$ such that $\gamma_1^\vee-\gamma_2^\vee=\delta_1^\vee-\delta_2^\vee$;
		
		\item Denoting by $w_\alpha\in W_X$ the simple reflection corresponding to $\alpha$ and using the embedding $W_X\subset W$ described in \S \ref{embedding WX} to let $W_X$ act on $\mathcal{A}^*$, $\gamma_1$ and $\gamma_2$ are the only positive roots in the $-1$-eigenspace of $w_\alpha$.
	\end{itemize}
	When $\gamma_1$, $\gamma_2$ are chosen in such a way, we call them the {\it associated roots} of $\alpha$.
	
	Let $\alpha\in \Delta_X$ and $\alpha^\vee$ be the corresponding coroot for the root system of $X$. When $\alpha$ is of type $T$ or $N$ (i.e.\ $\alpha\in \Phi^+$), $\alpha^\vee$ is the image of the corresponding coroot of $G$ by the projection $\mathcal{A}\twoheadrightarrow \mathcal{A}_X$. When $\alpha$ is of type $G$ on the other hand, the projections to $\mathcal{A}_X$ of the coroots $\gamma_1^\vee$, $\gamma_2^\vee$ corresponding to the two associated roots of $\alpha$ coincide and are equal to $\alpha^\vee$.
	
	This has the following important consequence. Let $\widehat{A}$\index{$\widehat{A}$}, $\widehat{A}_X$\index{$\widehat{A}_X$} be the complex dual tori of $A$ and $A_X$ respectively. The surjection $A\to A_X$ dualizes to a morphism $\widehat{A}_X\to \widehat{A}$ and the above property implies that:
	\begin{equation}\label{eq containment coroots}
		\displaystyle \Delta_X^\vee\subset \Phi^\vee\mid_{\widehat{A}_X}.
	\end{equation}
	Since $\Phi^\vee$ is the set of roots of $\widehat{A}$ in the dual group $\widehat{G}$ the above containment will have an obvious interpretation when we define the dual group of $X$ in the next section. We emphasize that the Sakellaridis-Venkatesh normalization of spherical roots is the only one for which \eqref{eq containment coroots} holds.
\end{paragr}

\begin{paragr}[The dual group.]\label{S dual group}
	Given the existence of a root system associated to $X$ it is tempting to just mimick Langlands' definition of the dual group and to define a dual group for $X$ by just exchanging its roots and coroots. However, as discussed in the previous paragraph, the quadruple
	$$\displaystyle \mathcal{R}_X:=(X^*(A_X),\Delta_X,X_*(A_X),\Delta_X^\vee)$$\index{$\mathcal{R}_X$}
	is a based root datum only if $X$ has no spherical root of type $N$ (because otherwise $\Delta_X\not\subset X^*(A_X)$).
	
	Thus, only under the assumption that $X$ has no root of type N can we really defined a dual group. More precisely, under this assumption we let $\widehat{G}_X$\index{$\widehat{G}_X$} be a complex connected reductive group equipped with a Borel pair $(\widehat{A}_X, \widehat{B}_X)$\index{$(\widehat{A}_X, \widehat{B}_X)$} as well as an isomorphism
	$$\displaystyle \mathcal{R}_{\widehat{G}_X}\simeq \mathcal{R}_X^\vee$$
	between its based root datum $\mathcal{R}_{\widehat{G}_X}$ and the dual based root datum
	$$\displaystyle \mathcal{R}_X^\vee=(X_*(A_X),\Delta_X^\vee,X^*(A_X),\Delta_X).$$\index{$\mathcal{R}^\vee_X$}
	
	Here, we recall that a {\em Borel pair} of a complex connected reductive group $\mathcal{G}$ is a pair $(\mathcal{T},\mathcal{B})$ consisting of a Borel subgroup $\mathcal{B}\subset \mathcal{G}$ and a maximal torus $\mathcal{T}\subset \mathcal{B}$. On the other hand, a {\em pinning} of $\mathcal{G}$ is a Borel pair $(\mathcal{T},\mathcal{B})$ together with a family of basis elements $\{X_\alpha \}_{\alpha\in S}$ of the root subspaces $\Lie(\mathcal{G})_\alpha$ associated to the corresponding set of simple roots $S\subset X^*(\mathcal{T})$ of $\mathcal{T}$ in $\mathcal{B}$. The automorphisms of $\mathcal{G}$ fixing a given Borel pair $(\mathcal{T},\mathcal{B})$ (resp. fixing a  given pinning) and inducing the identity on its based root datum are the inner automorphisms associated to elements of $\mathcal{T}$ (resp. are trivial). In particular, $\widehat{G}_X$ is uniquely defined up to $\widehat{A}_X$-conjugacy.
	
	We can also associate to $X$ an algebraic morphism
	$$\displaystyle \iota^{\SL_2}_X: \SL_2(\bC)\to \widehat{G}$$\index{$\iota_X^{\SL_2}$}
	depending only on the parabolic type $P_X$ of $X$ and defined as follows. The dual group $\widehat{G}$ comes with a pinning
	$$\displaystyle \Pin_G=(\widehat{B},\widehat{A}, (X_{\alpha^\vee})_{\alpha\in \Delta}).$$\index{$\Pin_G$}
	The parabolic $P_X$ determines a standard Levi subgroup $\widehat{L}_X$ of $\widehat{G}$ equipped with its own pinning, namely $(\widehat{B},\widehat{A}, (X_{\alpha^\vee})_{\alpha\in \Delta^{L_X}})$. This, in turn, gives rise to a so-called {\it principal $\SL_2$} morphism $\SL_2(\bC)\to \widehat{L}_X$ whose differential sends $\begin{pmatrix} 0 & 1 \\ 0 & 0 \end{pmatrix}$ to the sum $\displaystyle \sum_{\alpha\in \Delta^{L_X}} X_{\alpha^\vee}$ and maps $\begin{pmatrix} 1 & 0 \\ 0 & -1 \end{pmatrix}$ into $\Lie(\widehat{A})$. Then, $\iota^{\SL_2}_X$ is the composition of this morphism with the inclusion $\widehat{L}_X\subset \widehat{G}$. 
	
	The surjection $A\to A_X$ naturally induces a morphism between dual tori $\widehat{A}_X\to \widehat{A}$. Note that this morphism has image in the center $Z(\widehat{L}_X)$ of $\widehat{L}_X$ (this is just a reflection of the fact that the quotient $A\to A_X$ extends to $L_X\to A_X$) and in particular commutes with the image of $\iota_X^{\SL_2}$. The resulting morphism $\widehat{A}_X\times \SL_2(\bC)\to \widehat{G}$ can then be further extended to $\widehat{G}_X\times \SL_2(\bC)\to \widehat{G}$ thanks to the following theorem of Knop and Schalke \cite{Knopdualgp} which was suggested by Sakellaridis and Venkatesh \cite[\S 3.2]{SV}.  

	\begin{theo}[Knop-Schalke]\label{thm KS}
		Assume that $X$ has no spherical roots of type $N$. Then, there exists an (algebraic) morphism
		$$\displaystyle \iota_X: \widehat{G}_X\times \SL_2(\bC)\to \widehat{G}$$\index{$\iota_X$}
		satisfying the following conditions:
		\begin{itemize}
			\item the restriction of $\iota_X$ to $\SL_2(\mathbb{C})$ coincides with the morphism $\iota_X^{\SL_2}$ defined above;
			
			\item the restriction of $\iota_X$ to the maximal torus $\widehat{A}_X\subset \widehat{G}_X$ is the morphism $\widehat{A}_X\to \widehat{A}$ dual to the surjection $A\to A_X$;
		\end{itemize}
		as well as the following properties on the image of root subspaces by its differential $d\iota_X$:
		\begin{itemize}
			\item for every $\alpha\in \Delta_X$ of type $T$ (i.e.\ $\alpha\in \Phi$),
			$$\displaystyle d\iota_X(\widehat{\mathfrak{g}}_{X,\alpha^\vee})= \widehat{\mathfrak{g}}_{\alpha^\vee};$$
			
			\item for every $\alpha\in \Delta_X$ of type $G$ (i.e.\ $\alpha\notin \Phi$), 
			$$\displaystyle d\iota_X(\widehat{\mathfrak{g}}_{X,\alpha^\vee})\subset \widehat{\mathfrak{g}}_{\gamma_1^\vee}\oplus \widehat{\mathfrak{g}}_{\gamma_2^\vee}$$
			where $\gamma_1,\gamma_2\in \Phi$ are the associated roots of $\alpha$ (see \S \ref{S spherical roots}).
		\end{itemize}
	\end{theo}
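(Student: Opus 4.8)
The plan is to construct $\iota_X$ on a generating set of $\widehat{G}_X\times\SL_2(\bC)$ and then check the defining relations. Recall that a connected reductive group is generated by a maximal torus together with the root subgroups attached to the simple roots of its based root datum; so $\widehat{G}_X\times\SL_2(\bC)$ is generated by $\widehat{A}_X$, by the root subgroups $U_{\pm\alpha^\vee}$ of $\widehat{G}_X$ for $\alpha\in\Delta_X$ (here $\alpha^\vee$ is a simple root of $\mathcal{R}_X^\vee$, so $\Lie U_{\alpha^\vee}=\widehat{\mathfrak{g}}_{X,\alpha^\vee}$ and the associated coroot of $\widehat{G}_X$ is $\alpha\in X_*(\widehat{A}_X)=X^*(A_X)$), and by $\SL_2(\bC)$. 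Conditions (1) and (2) already pin down $\iota_X$ on $\SL_2(\bC)$ and on $\widehat{A}_X$. Thus it remains (i) to define $\iota_X$ on each $U_{\pm\alpha^\vee}$ compatibly with (3) and (4), and (ii) to verify that the resulting assignment respects the rank-one relations between $U_{\pm\alpha^\vee}$ and $\widehat{A}_X$, the rank-two Steinberg relations between $U_{\pm\alpha^\vee}$ and $U_{\pm\beta^\vee}$ for $\alpha\neq\beta$ in $\Delta_X$, and the commutation of $\iota_X(\widehat{G}_X)$ with $\iota_X^{\SL_2}(\SL_2(\bC))$; once this is done the generators-and-relations description of the source group produces the homomorphism. (If convenient one carries this out first for the simply connected cover of $\widehat{G}_X$ and checks, using (2), that the map descends.)

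To define $\iota_X$ on $U_{\pm\alpha^\vee}$, fix $\alpha\in\Delta_X$. If $\alpha$ is of type $T$ then $\alpha\in\Phi$, hence $\alpha^\vee\in\Phi^\vee$ is a root of $\widehat{G}$, and I send $U_{\pm\alpha^\vee}$ isomorphically onto the root subgroups of $\widehat{G}$ attached to $\alpha^\vee$; then (3) holds and the induced torus relation is consistent with (2) because the coroot $\alpha\in X^*(A_X)$ of $\widehat{G}_X$ is carried by $X^*(A_X)\hookrightarrow X^*(A)=X_*(\widehat{A})$ to $\alpha$, exactly the coroot of $\widehat{G}$ at $\alpha^\vee$. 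If $\alpha$ is of type $G$, let $\gamma_1,\gamma_2\in\Phi$ be its associated roots, which exist and are unique by the combinatorial results recalled in \S\ref{S spherical roots} (\cite[Corollary 3.1.4]{SV}, \cite[Lemma 6.1]{Knopdualgp}): strong orthogonality gives $[\widehat{\mathfrak{g}}_{\gamma_1^\vee},\widehat{\mathfrak{g}}_{\gamma_2^\vee}]=0$, so the subgroup of $\widehat{G}$ generated by the root groups at $\pm\gamma_1^\vee$ and $\pm\gamma_2^\vee$ is a quotient of $\SL_2\times\SL_2$, and I send the rank-one subgroup of $\widehat{G}_X$ at $\alpha^\vee$ onto the image of the diagonally embedded $\SL_2$. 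Then (4) holds, and (2) is respected because the coroot $\alpha=\gamma_1+\gamma_2$ of $\widehat{G}_X$ is carried by $X^*(A_X)\hookrightarrow X_*(\widehat{A})$ to the cocharacter $\gamma_1+\gamma_2$ of $\widehat{A}$ contributed by the diagonal. What makes this internally consistent — that the root $\alpha^\vee$ of $\widehat{G}_X$ really restricts to the root $\alpha^\vee$ of $\widehat{G}$ (resp.\ that $\gamma_1^\vee$ and $\gamma_2^\vee$ have the same restriction to $\widehat{A}_X$) — is precisely the containment \eqref{eq containment coroots}, $\Delta_X^\vee\subset\Phi^\vee\mid_{\widehat{A}_X}$, which, as stressed there, holds only for the Sakellaridis--Venkatesh normalization of $\Delta_X$.

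The serious work is the relation-checking. The rank-one relations are covered by the torus/root-space matching just done. The commutation of $\iota_X(\widehat{A}_X)$ with $\iota_X^{\SL_2}(\SL_2(\bC))$ is immediate: $\iota_X(\widehat{A}_X)\subset Z(\widehat{L}_X)$ centralizes the principal $\SL_2$ of $\widehat{L}_X$. That $\iota_X(U_{\pm\alpha^\vee})$ also centralizes $\iota_X^{\SL_2}(\SL_2(\bC))$ needs the structural fact that the associated roots $\gamma_1,\gamma_2$ (resp.\ $\alpha$ in type $T$) sit relative to $\Phi^{L_X}$ so that the relevant commutators in $\widehat{\mathfrak{g}}$ vanish — a consequence of the combinatorial theory of spherical roots, ultimately of $A_X$ being a quotient of $P_X$ (hence of $L_X$), not merely of $B$. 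Finally, for the rank-two relations one passes, for each pair $\{\alpha,\beta\}\subset\Delta_X$, to the rank-two sub-root-system of $\mathcal{R}_X^\vee$ it spans; using the classification of rank-two spherical systems (Wasserman's list, and Luna's work) one checks case by case that the rank-two subsystem of $\Phi^\vee$ built out of the associated roots is of the type predicted by $\mathcal{R}_X^\vee$, so that the Steinberg relations on both sides agree. Uniqueness up to $\widehat{A}_X$-conjugacy then follows because an automorphism of $\widehat{G}$ fixing its pinning and inducing the identity on the root datum is trivial.

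I expect the main obstacle to be exactly the rank-two compatibility of the last paragraph: it is genuinely combinatorial and does not seem to follow from any soft argument, since a morphism of based root data need not lift to a group homomorphism once the target is not a torus. It is this point that forced \cite[\S 3.2]{SV} to argue conditionally (on properties of the Gaitsgory--Nadler dual group), and that the analysis of Knop and Schalke settles in general.
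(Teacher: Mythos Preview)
The paper does not prove this theorem: it is stated as a result of Knop and Schalke with a citation to \cite{Knopdualgp}, and only the uniqueness assertion (in the paragraph following the statement) is argued in the paper. Your outline is a correct sketch of the Knop--Schalke strategy --- define $\iota_X$ on the generators $\widehat{A}_X$, $\SL_2(\bC)$, and the rank-one subgroups at each $\alpha^\vee$, then verify the relations --- and you have correctly located the genuine difficulty in the rank-two compatibility, which is indeed handled in \cite{Knopdualgp} via a case-by-case analysis based on the classification of rank-two spherical systems.

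One correction on the uniqueness remark you added at the end: the paper's claim is uniqueness up to $Z(\widehat{L}_X)$-conjugacy, not $\widehat{A}_X$-conjugacy, and the argument is different from the one you gave. The paper observes that the spherical roots of type $T$ together with the associated roots $\gamma_1,\gamma_2$ of those of type $G$ form a linearly independent family in $\Phi$; hence the last three conditions of the theorem determine $\iota_X\mid_{\widehat{G}_X}$ up to conjugation by $\widehat{A}$, and since the centralizer of $\iota_X^{\SL_2}$ in $\widehat{A}$ is exactly $Z(\widehat{L}_X)$, one gets uniqueness up to $Z(\widehat{L}_X)$. Uniqueness up to $\widehat{A}_X$-conjugacy is only obtained for the \emph{strong} distinguished morphisms introduced in \S\ref{S L-groups}, where an additional constraint on the lines $L_{\alpha^\vee}\subset \widehat{\mathfrak{g}}_{\gamma_1^\vee}\oplus\widehat{\mathfrak{g}}_{\gamma_2^\vee}$ is imposed for roots of type $D_2$.
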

	
	A morphism $\iota_X$ satisfying the conditions of the theorem is called a {\it distinguished morphism}. Note that distinguished morphisms are unique up to $Z(\widehat{L}_X)$-conjugacy. Indeed, since the union of the set of spherical roots of type $T$ and of the set of associated roots to spherical roots of type $G$ form a linearly independent set of roots (of $G$), a morphism $\widehat{G}_X\to \widehat{G}$ satisfying the last three conditions above is unique up to $\widehat{A}$-conjugation and moreover the centralizer of $\iota_X^{\SL_2}$ in $\widehat{A}$ is precisely $Z(\widehat{L}_X)$.
\end{paragr}

\begin{paragr}[The $L$-group.]\label{S L-groups}
	When the spherical variety $X$ is defined over a field $k$ that is not algebraically closed, it is tempting to ask whether we can naturally define an action of the absolute Galois group $\Gamma=\Gal(\overline{k}/k)$\index{$\Gamma$} on the dual group $\widehat{G}_X$ which would lead to some sort of $L$-group ${}^L G_X=\widehat{G}_X\rtimes \Gamma$\index{${}^L G_X$} of $X$. Moreover, we would naturally require the morphism $\iota_X: \widehat{G}_X\times \SL_2(\bC)\to \widehat{G}$ to extend to a morphism ${}^L G_X\times \SL_2(\bC)\to {}^L G$ compatible with the projections to $\Gamma$.
	
	In \cite{KnopFdualgp}, motivated by some natural functorial properties of the assignment $X\to \widehat{G}_X$, Knop has proposed more stringent conditions on the images of the root subspaces $\widehat{\mathfrak{g}}_{X,\alpha^\vee}$ by $d\iota_X$ when $\alpha\in\Delta_X$ is a root of type $G$. More precisely, he has prescribed a line $L_{\alpha^\vee}\subset \widehat{\mathfrak{g}}_{\gamma_1^\vee}\oplus \widehat{\mathfrak{g}}_{\gamma_2^\vee}$, where $\gamma_1$, $\gamma_2$ denote again the associated roots of $\alpha$, and shown that there exists a distinguished morphism $\iota_X: \widehat{G}_X\times \SL_2(\bC)\to \widehat{G}$ as in Theorem \ref{thm KS} with the additional property that $d\iota_X(\widehat{g}_{X,\alpha^\vee})\subset L_{\alpha^\vee}$ for every $\alpha\in \Delta_X\setminus \Phi$. Actually, the condition that $\iota_X\mid_{\widehat{G}_X}$ commutes with $\iota_X^{\SL_2}$ already characterizes the line $L_{\alpha^\vee}$ uniquely except in one case: when $\gamma_1$, $\gamma_2$ are also simple roots for $G$. Such spherical roots are called {\it roots of type $D_2$} by Knop (because this is equivalent to the root subsystem generated by the support of $\alpha$ in $\Delta$ being of type $D_2$). Then, for $\alpha\in \Delta_X$ of type $D_2$, Knop definition is that
	$$\displaystyle L_{\alpha^\vee}=\bC(X_{\gamma_1^\vee}-X_{\gamma_2^\vee})$$
	where $X_{\gamma_1^\vee}$, $X_{\gamma_2^\vee}$ are the basis elements of $\widehat{\mathfrak{g}}_{\gamma_1^\vee}$, $\widehat{\mathfrak{g}}_{\gamma_2^\vee}$ provided by the pinning of $\widehat{G}$.
	
	Let us call a distinguished morphism $\iota_X: \widehat{G}_X\times \SL_2(\bC)\to \widehat{G}$ that satisfies Knop requirements that $d\iota_X(\widehat{\mathfrak{g}}_{X,\alpha^\vee})\subset L_{\alpha^\vee}$ for every $\alpha\in \Delta_X$ of type $D_2$, a {\it strong distinguished morphism}. Then, strong distinguished morphisms are unique up to $\widehat{A}_X$-conjugation. (In particular, the image in $\widehat{G}$ of all strong distinguished morphisms are the same.)
	
	\begin{exe}
		Consider the group case for $\SL(2)$, that is $X=\SL(2)$ as a $\SL(2)\times \SL(2)$-variety. Then, $\widehat{G}_X=\PGL_2(\bC)$, $\widehat{G}=\PGL_2(\bC)\times \PGL_2(\bC)$ with their standard pinnings. A strong distinguished morphism should send $X:= \begin{pmatrix} 0 & 1 \\ 0 & 0 \end{pmatrix}\in \widehat{\mathfrak{g}}_X$ to a multiple of
		$$\displaystyle X_{\gamma_1^\vee}-X_{\gamma_2^\vee}=(X,-X)\in \widehat{\mathfrak{g}}.$$
		In particular, the diagonal embedding $\widehat{G}_X\to \widehat{G}$ is not strongly distinguished whereas the morphism
		$$\displaystyle \widehat{G}_X\to \widehat{G},\;\;\; g\mapsto (g,\begin{pmatrix} -1 & 0 \\ 0 & 1 \end{pmatrix} g \begin{pmatrix} -1 & 0 \\ 0 & 1 \end{pmatrix})$$
		is.
	\end{exe}
	
	This now allows to define an $L$-group ${}^L G_X$ as follows. Namely, assume that the pair $(G,X)$ is defined over a field $k$ that is not algebraically closed and let $\iota_X: \widehat{G}_X\times \SL_2(\bC)\to \widehat{G}$ be a strong distinguished morphism. Because the action of $\Gamma$ on $\widehat{G}$ is pinned, it obviously preserves the image of $\iota_X$\footnote{Essentially because $\Gamma$ acts on $\cA$ and $\cA_X$ compatibly with the map $\cA\to \cA_X$ so that the subspaces $L_{\alpha^\vee}$ for $\alpha\in \Delta_X$ of type $D_2$ are permuted by $\Gamma$ hence the composition of $\iota_X$ with the action of any $\sigma\in \Gamma$ is again a strongly distinguished morphism.} and moreover the restriction of $\iota_X$ to $\widehat{A}_X\times \SL_2(\bC)$ is $\Gamma$-equivariant for the natural action of $\Gamma$ on the latter (which is trivial on the $\SL_2(\bC)$-factor). It follows that the $\Gamma$-action on $\widehat{G}$ lifts (necessarily uniquely since the kernel of $\iota_X$ is finite) to an action on $\widehat{G}_X$ such that $\iota_X$ is $\Gamma$-equivariant. The $L$-group of $X$ is then defined to be the semi-direct product ${}^L G_X=\widehat{G}_X\rtimes \Gamma$ with respect to this action. Note that $\iota_X$ extends uniquely to a morphism
	$$\displaystyle {}^L \iota_X: {}^L G_X\times \SL_2(\bC)\to {}^L G$$\index{${}^L \iota_X$}
	that is the identity on $\Gamma$.
	
	This action of $\Gamma$ on $\widehat{G}_X$ depends a priori on the choice of the strong distinguished morphism $\iota_X$ and changing $\iota_X$ conjugates the action by some element $a\in \widehat{A}_X$. Moreover, $\Gamma$ automatically preserves the Borel pair $(\widehat{A}_X,\widehat{B}_X)$ of $\widehat{G}_X$. This implies that if the $\Gamma$-action fixes any pinning of $\widehat{G}_X$ at all then up to conjugating it by some $a\in \widehat{A}_X$ it will fix the canonical pinning $\Pin_X$. However, this is not always possible i.e.\ the action of $\Gamma$ on $\widehat{G}_X$ cannot always be chosen to fix the canonical pinning.
	
	Whether this is the good definition of the $L$-group of a spherical variety remains to be seen in general but there is at least one case where this seems to fit well with known results and conjectures which is that of {\it Galois symmetric varieties}.
\end{paragr}

\begin{paragr}[Example: Galois symmetric varieties.]\label{S Galois pairs Lgroup}
	
	Let us illustrate the construction of the dual group and the morphism $\iota_X$ in the case of Galois symmetric varieties i.e.\ let us take
	$$\displaystyle X=H\backslash G,\;\; G=\Res_{\ell/k} H_\ell,$$
	where $H$ is a connected reductive group over $k$ and $\ell/k$ is a (separable) quadratic extension.
	
	As a preparation for this, let us now describe Knop's strong distinguished morphism in the group case $X=H\curvearrowleft G=H\times H$. Then, we have 
	$$\displaystyle \widehat{G}=\widehat{H}\times \widehat{H},\;\; \widehat{G}_X=\widehat{H}$$
	and the spherical root are all of type $D_2$. A particular strong distinguished morphism
	$$\displaystyle \iota_X=\iota_H: \widehat{H}\to \widehat{G}$$
	is then given by $h\mapsto (h,h^\vee)$ where
	\begin{equation}\label{Chevalley inv}
		\displaystyle \widehat{H}\to \widehat{H},\;\; h\mapsto h^\vee
	\end{equation}
	is the unique automorphism of $\widehat{H}$ sending the canonical pinning $\Pin_H=(\widehat{B}_H,\widehat{T}_H, (X_{\alpha^\vee})_{\alpha\in \Delta_H})$ of $\widehat{H}$ to its {\it opposite} 
	$$\displaystyle \Pin_H^{\mathrm{opp}}:=(\widehat{B}_H,\widehat{T}_H, (-X_{\alpha^\vee})_{\alpha\in \Delta_H})$$\index{$\Pin_H^{\mathrm{opp}}$}
	and acting by $t\mapsto w_\ell t^{-1}w_\ell$ on $\widehat{T}_H$ where $w_\ell$ stands for the longest element of the Weyl group\footnote{e.g. for $\GL_n$ with its standard pinning, $h\mapsto h^\vee$ is the automorphism $g\mapsto w{}^t g^{-1}w$ where $w=\begin{pmatrix} & & 1 \\ & \ddots & \\ 1 & & \end{pmatrix}$.}. We will call $h\mapsto h^\vee$\index{$h\mapsto h^\vee$} the {\it duality involution} of $\widehat{H}$. It is closely related to what is usually called the Chevalley involution. More precisely, the latter corresponds to the unique inner translate of $h\mapsto h^\vee$ preserving the pinning $\Pin_H$. 
	
	As the Galois action on $\widehat{H}$ preserves the pinning $\Pin_H$ (and therefore also the opposite pinning), it obviously commutes with the duality involution. This allows to extend the latter to an automorphism ${}^L H\to {}^L H$, $h\mapsto h^\vee$ that is the identity on $\Gamma$.
	
	Let now $X=\Res_{\ell/k} H_{\ell}/H$ be a Galois symmetric variety as above. Choosing an embedding $\ell\hookrightarrow \overline{k}$ yields an isomorphism $(X_{\overline{k}},G_{\overline{k}})\simeq (H_{\overline{k}},H_{\overline{k}}\times H_{\overline{k}})$ with the group variety and therefore identifications
	$$\displaystyle \widehat{G}_X=\widehat{H},\;\; \widehat{G}=\widehat{H}\times \widehat{H},\;\; \iota_X=\iota_H.$$
	However, these identifications are not Galois equivariant. Indeed, denoting by $\sigma\in \Gamma\mapsto \sigma_H\in \Aut(\widehat{H})$ the usual Galois action on $\widehat{H}$ (i.e.\ that preserving the canonical pinning $\mathcal{P}$ and giving rise to the $L$-group ${}^L H$), the Galois action on $\widehat{G}$ is given by
	\begin{equation*}
		\displaystyle \sigma_G(h_1,h_2)=\left\{ \begin{array}{ll}
			(\sigma_H(h_1),\sigma_H(h_2)) & \mbox{ if } \sigma\in \Gamma_\ell, \\
			(\sigma_H(h_2),\sigma_H(h_1)) & \mbox{ if } \sigma\in \Gamma_k\setminus \Gamma_\ell,
		\end{array}\right.\;\;\; \mbox{ for } \sigma\in \Gamma \mbox{ and } (h_1,h_2)\in \widehat{G}.
	\end{equation*}
	We readily check that this action preserves the image of $\iota_X$ and induces the following Galois action on $\widehat{G}_X$:
	\begin{equation}\label{Galois action GSspaces}
		\displaystyle \sigma_{G_X}(h)=\left\{ \begin{array}{ll}
			\sigma_H(h) & \mbox{ if } \sigma\in \Gamma_\ell, \\
			\sigma_H(h)^\vee & \mbox{ if } \sigma\in \Gamma_k\setminus \Gamma_\ell,
		\end{array}\right. \;\;\; \mbox{ for } \sigma\in \Gamma \mbox{ and } h\in \widehat{G}_X.
	\end{equation}
	We note that this action does not preserve the pinning $\Pin_H$, since the duality involution doesn't. More generally, the pinning given by a family $\{t_\alpha X_\alpha \}_{\alpha\in \Delta_H}$, where $t_\alpha\in \bC^\times$, is invariant by the Galois action $\sigma\mapsto \sigma_X$ if and only if $t_{\sigma_H(\alpha)}=t_\alpha$, for $(\sigma,\alpha)\in \Gamma_{\ell} \times \Delta_H$, and $t_{-w_\ell \sigma_H(\alpha)}=-t_\alpha$, for $(\sigma,\alpha)\in (\Gamma_k\setminus \Gamma_{\ell}) \times \Delta_H$. Thus, we can find a pinning fixed by the Galois action on $\widehat{G}_X$ unless there exists $(\sigma,\alpha)\in (\Gamma_k\setminus \Gamma_{\ell}) \times \Delta_H$ such that $w_\ell \sigma_H(\alpha)=-\alpha$. Thus, in the case where $H$ is split such a pinning exists if and only if there is no simple root $\alpha\in \Delta_H$ with $w_\ell \alpha=-\alpha$. For example, this last condition is satisfied for $H=\GL(n)$ exactly for $n$ odd.
	
	By definition, $\iota_X$ extends to a morphism
	$$\displaystyle {}^L\iota_X: {}^L G_X=\widehat{G}_X\rtimes \Gamma\to {}^L G=\widehat{G}\rtimes \Gamma.$$
	
	This morphism is related to quadratic base-change (with respect to $\ell/k$) in the following way. Let $H^{\operatorname{op}}$\index{$H^{\operatorname{op}}$} be the quasi-split outer form of $H$ over $k$ whose dual group is $\widehat{H}$ with the Galois action given by the pinned version of $\sigma\mapsto \sigma_X$ (that is the action given by the formula \eqref{Galois action GSspaces} where we replace the duality involution by the, pinning preserving, Chevalley involution). The base-change of $H^{\operatorname{op}}$ to $\ell$ is then the quasi-split inner form of $H_\ell$ and so ${}^L G={}^L (R_{\ell/k} H^{\operatorname{op}}_\ell)$. In the case where the Galois action on $\widehat{G}_X$ fixes a pinning $\Pin_X$, the natural inner class of isomorphisms $\widehat{G}_X\simeq \widehat{H}^{\operatorname{op}}$ contains a unique one sending $\Pin_X$ to $\Pin_{H^{\operatorname{op}}}$ which then extends to an isomorphism of $L$-groups ${}^L G_X\simeq {}^L H^{\operatorname{op}}$ that is the identity on $\Gamma$. Via this isomorphism, the morphism ${}^L \iota_X$ then corresponds to the usual base-change morphism, up to $\widehat{G}$-conjugacy. 
	
	On the other hand, when there is no pinning of $\widehat{G}_X$ fixed by the Galois action, ${}^L \iota_X$ is related to what is sometimes called an {\em unstable base-change} morphism as illustrated by the following example. Let $H=\GL_n$ with $n$ even and assume that $k$ is a local field. Then, the outer form $H^{\operatorname{op}}=\mathrm{U}_n$ is a quasi-split unitary group in $n$ variables. The Galois action on $\widehat{G}_X=\GL_n(\bC)$ factors through the finite Galois group $\Gal(\ell/k)$ and the nontrivial element there $c\in \Gal(\ell/k)$ acts by
	$$\displaystyle h\in \GL_n(\bC)\mapsto h^\vee=w{}^t h^{-1}w,$$
	where $w=\begin{pmatrix} & & 1 \\ & \ddots & \\ 1 & & \end{pmatrix}$. Since $n$ is even, there is a simple root $\alpha$ such that $X_\alpha^\vee=-X_\alpha$ so that this action doesn't preserve any pinning. On the other hand, the $L$-group of $\mathrm{U}_n$ is given by the $\Gal(\ell/k)$-action on $\GL_n(\bC)$ with $c$ acting by
	$$\displaystyle h\in \GL_n(\bC)\mapsto h^\vee=J{}^t h^{-1}J^{-1},$$
	where $J=\begin{pmatrix} & & 1 \\ & \ddots & \\ (-1)^{n-1} & & \end{pmatrix}$, which does preserve the standard pinning. However the Weil form of the $L$-groups $\widehat{G}_X\rtimes W_k$ and $\widehat{H}^{\operatorname{op}}\rtimes W_k$ {\em are} isomorphic. Indeed, let $\mu:\ell^\times\to \bC^\times$ be a continuous character whose restriction to $k^\times$ is the quadratic character $\eta_{\ell/k}$ associated to the extension $\ell/k$ by local class field theory. We may identify $\mu$ with a character of the Weil group $W_\ell$ of $\ell$ by the Artin reciprocity map $W_\ell^{ab}\simeq \ell^\times$. Then, an explicit isomorphism is given by the morphism
	$$\displaystyle \widehat{G}_X\rtimes W_k\to \widehat{H}^{\operatorname{op}}\rtimes W_k$$
	that is the identity on the neutral component and sends $\sigma\in W_k$ to $(J,\sigma)$ if $\sigma\in W_k\setminus W_\ell$, $(\mu(\sigma),\sigma)$ if $\sigma\in W_\ell$. However, through this isomorphism, ${}^L \iota_X$ does not correspond to the usual base-change morphism but rather to its twists by the character $\mu$ (seen as a $1$-cocyle $W_k\to Z(\widehat{G})$). The functorial lift associated to this twisted morphism is what is usually called unstable base change; it is given by the usual base-change followed by a twist by $\mu\circ \det$.
\end{paragr}

\subsection{Boundary degenerations}\label{sect boundary degenerations}

The {\it boundary degenerations} are a families of spherical varieties $(X_{\Theta})_{\Theta\subset \Delta_X}$ attached to $X$ that are important tools for the development of harmonic analysis on $X(k)$ when $k$ is a local field. In some very rough approximation they model the geometry of $X$ at infinity but they are simpler in the sense that they have more symmetries (i.e.\ their centers are bigger). Recall that $X$ is assumed quasi-affine throughout. We also assume, until the end of this section, that the group $G$ is \textbf{split}. Indeed, this allows to get rid of all subtleties related to descents over the base field $k$ (that we don't assume to be algebraically closed anymore). The theory of boundary degenerations admits an extension to nonplit $G$, see \cite{KKRGA}, but this requires introducing even more notation (starting with the introduction of a suitable set of {\em relative} spherical roots).

\vspace{2mm}

\begin{paragr}[Asymptotic cones.]\label{S Asymptotic cones}
	
	Recall the isotypic decomposition of the ring of regular functions on $X$ from \S \ref{S decomp algebra of regular fns} (which descents to $k[X]$ thanks to the assumption that $G$ is split)
	$$\displaystyle k[X]=\bigoplus_{\lambda\in X^*(A_X)^+} k[X]_\lambda.$$\index{$k[X]_\lambda$}
	As already mentioned, this does not induce a grading but rather a filtration of the algebra $k[X]$ for the order associated to the root cone ${}^+\mathcal{A}_X^*$ i.e.\ the order to which $\lambda\preceq \mu$ if and only if $\mu-\lambda\in {}^+\mathcal{A}_X^*$. The associated graded $\mathrm{gr}(k[X])$ is the ring of regular functions on some (affine) $G$-variety $X_\emptyset^{\aff}$\index{$X_\emptyset^{\aff}$} which is readily seen to be again spherical (e.g. because its affine ring is multiplicity free). Its unique open $G$-orbit $X_\emptyset\subset X_\emptyset^{\aff}$\index{$X_\emptyset$} is a horospherical variety called the {\it asymptotic cone} of $X$. 
	
	\begin{exe}\label{exe1}
		\begin{itemize}
			\item In the case of the hyperboloid
			$$\displaystyle X=SO(2)\backslash SO(2,1)=\{(x,y,z)\in \mathbb{A}^3\mid x^2+y^2-z^2=-1 \},$$
			$X_{\emptyset}^{\aff}$ can be identified with the isotropic cone
			$$\displaystyle X_{\emptyset}^{\aff}=\{ (x,y,z)\in \mathbb{A}^3\mid x^2+y^2-z^2=0 \}$$
			and the asymptotic cone $X_{\emptyset}$ is simply $X_{\emptyset}^{\aff}$ minus the origin. This example should intuitively illustrate the fact that $X_{\emptyset}$ is supposed to model $X$ close to infinity. Note that in this case $X_\emptyset$ is equipped with an extra $\mathbb{G}_m$-symmetry given by scalar multiplication.
			
			\item In the group case $X=\PGL_2$ (with $G=\PGL_2\times \PGL_2$), the asymptotic cone is $X_\emptyset= (N\times A^{diag}\times \overline{N})\backslash \PGL_2\times \PGL_2$ and can also be identified with the variety of rank one $2$ by $2$ matrices up to the scalar action of $\mu_2$.
		\end{itemize}
		
	\end{exe}
\end{paragr}

\begin{paragr}[Canonical affine degeneration and boundary degenerations.]\label{S canonical affine degeneration}
	More generally, we can let the filtration on $k[X]$ degenerate in various directions, corresponding to the faces of ${}^+\mathcal{A}^*_X$ or equivalently of its dual cone $\mathcal{A}_X^+=-\cA_X^-$ (that is, the positive Weyl chamber). Note that these faces are themselves naturally parametrized by subsets $\Theta\subset \Delta_X$ and the corresponding degeneration will yield the boundary degeneration $X_\Theta$.\index{$X_\Theta$}
	
	Formally, they can be defined via the following {\it affine degeneration} (again using the terminology introduced by Sakellaridis-Venkatesh). Consider the subalgebra of $k[A_X\times X]=k[A_X]\otimes_k k[X]$ given by the direct sum
	$$\displaystyle \bigoplus_{\lambda\in X^*(A_X)^+} t^\lambda k[X]_{\preceq \lambda}=\bigoplus_{\lambda\in X^*(A_X)^+} k[A_X]_{\succeq \lambda}\otimes k[X]_\lambda$$
	where $t^\lambda$ is a symbol for the character $\lambda$ seen as an element of $k[A_X]$, $k[X]_{\preceq \lambda}=\bigoplus_{\mu\preceq \lambda} k[X]_{\mu}$\index{$k[X]_{\preceq \lambda}$} are the pieces of the aforementioned filtration on $k[X]$ and similarly  $k[A_X]_{\succeq \lambda}=\bigoplus_{\mu\succeq \lambda} kt^\mu$\index{$k[A_X]_{\succeq \lambda}$}. The spectrum of that subalgebra is an affine variety $\mathfrak{X}^{\aff}$\index{$\mathfrak{X}^{\aff}$} equipped with an action of $A_X\times G$ (by convention, we let $G$ act on the right and $A_X$ on the left of $\mathfrak{X}^{\aff}$). Moreover, there is a natural $G$-invariant and $A_X$-equivariant morphism 
	\begin{equation}\label{eq2}
		\displaystyle \mathfrak{X}^{\aff}\to \overline{A}_{X,ad}
	\end{equation}
	where $\overline{A}_{X,ad}=Spec(k[A_X]_{\succeq 0})$\index{$\overline{A}_{X,ad}$} is the affine toric embedding of $A_{X,ad}:=A_X/Z(X)^0$\index{$A_{X,ad}$} corresponding to the cone $\mathcal{A}_X^+$. In other words, it is the unique normal equivariant embedding of $A_{X,ad}$ such that for $\lambda\in X_*(A_X)$, $x_\lambda:=\lim\limits_{t\to 0} \lambda(t)$ exists in $\overline{A}_{X,ad}$ if and only if $\lambda\in \mathcal{A}_X^+$. 
	
	For every $t\in \overline{A}_{X,ad}$, the fiber $\mathfrak{X}^{\aff}_t$ is a spherical $G$-variety and in particular contains an open $G$-orbit. Let $\mathfrak{X}\subset \mathfrak{X}^{\aff}$\index{$\mathfrak{X}$} be the union of the open $G$-orbits in the fibers of the map \eqref{eq2}. Then, $\mathfrak{X}$ is an open subset on which $A_X$ is acting freely. This can be seen as follows. Fixing a Borel $B\subset G$, the fiberwise union $\mathfrak{X}_B$\index{$\mathfrak{X}_B$} of the open $B$-orbits is the affine open subset cut out by the relations $t^\mu f_\mu\neq 0$ where $f_\mu$ runs over a set of generators of the monoid $k[X]^{(B)}$ of $B$-eigenfunctions on $X$ and $\mu\in X^*(A_X)^+$ denotes the corresponding character. As $\mathfrak{X}$ is the union of the $G$-translates of $\mathfrak{X}_B$ it is also open. Note also that $k[X_B]$ inherits a filtration $(k[X_B]_{\preceq \lambda})_{\lambda\in X^*(A_X)}$ from that on $k[X]$ by setting
	$$\displaystyle k[X_B]_{\preceq \lambda}=\bigcup_{f_\mu\in k[X]^{(B)}} \frac{k[X]_{\preceq \lambda+\mu}}{f_\mu}$$\index{$k[X_B]_{\preceq \lambda}$}
	and that $\mathfrak{X}_B\to \overline{A}_{X,ad}$ is obtained in the same way as $\mathfrak{X}^{\aff}$ by letting this filtration degenerates in a multidimensional way. However, fixing a presentation of $X_B$ as in \eqref{eq LST X} the resulting action of $A_X$ on $X_B$ gives a graduation on $k[X_B]$ that is readily seen to split the above filtration. Therefore, the affine degeneration $\mathfrak{X}_B$ splits i.e.\ we have a (non-canonical) isomorphism
	\begin{equation}\label{eq3}
		\displaystyle \mathfrak{X}_B\simeq \overline{A}_{X,ad}\times X_B\simeq \overline{A}_{X,ad}\times A_X\times N_X
	\end{equation}
	through which the map $\mathfrak{X}_B\to \overline{A}_{X,ad}$ is the second projection and the $A_X$-action is the diagonal one. In particular, this shows that $A_X$ acts freely on $\mathfrak{X}_B$ and therefore also on $\mathfrak{X}$ (since, again, the $G$-translates of $\mathfrak{X}_B$ cover $\mathfrak{X}$). It also follows from the isomorphism \eqref{eq3} that the map $\mathfrak{X}\to \overline{A}_{X,ad}$ is smooth.
	
	We denote by
	$$\displaystyle \pi:\mathfrak{X}\to \overline{A}_{X,ad}$$
	the restriction of the map \eqref{eq2} to $\mathfrak{X}$. Then, the fiber of $\pi$ at $x_0=1$ is $X$ and, setting $\mathfrak{X}^{\bullet}=\pi^{-1}(A_{X,ad})$\index{$\mathfrak{X}^{\bullet}$} (that is the open $A_X\times G$-orbit in $\mathfrak{X}$), the $A_X$-action induces an isomorphism
	\begin{equation}\label{eq6}
		\displaystyle A_X\times^{Z(X)^0} X\simeq \mathfrak{X}^{\bullet}
	\end{equation}
	compatible with the natural projections to $A_{X,ad}$.
	
	For $\lambda\in X_*(A_X)\cap \mathcal{A}_X^+$, the limit point $x_\lambda\in \overline{A}_{X,ad}$ only depends on the subset $\Theta\subset \Delta_X$ of simple roots orthogonal to $\lambda$ (that is on the face of the cone $\cA_X^+$ containing $\lambda$ in its relative interior) and can thus be denoted by $x_\Theta$\index{$x_\Theta$}. Then, for each subset $\Theta\subset \Delta_X$, we define the {\em boundary degeneration} $X_\Theta$ to be the fiber of $\pi$ above $x_\Theta$. Note that for $\Theta=\Delta_X$ we get $X_{\Delta_X}=X$. Also, $X_\Theta$ comes naturally with a (left) action of the stabilizer $A_{X,\Theta}=\bigcap_{\alpha\in \Theta} \Ker(\alpha)^0$\index{$A_{X,\Theta}$} of $x_\Theta$ that commutes with that of $G$.
	
	The $A_X$-orbits in $\overline{A}_{X,ad}$ are naturally parametrized by subsets $\Theta\subset \Delta_X$, where the orbit corresponding to $\Theta$ is that of the point $x_\Theta$ and is isomorphic to $A_X^\Theta:=A_X/A_{X,\Theta}$\index{$A_X^\Theta$}. Denote by $\mathfrak{X}_\Theta$\index{$\mathfrak{X}_\Theta$} the preimage by $\pi$ of the closure $\overline{A_X^\Theta}$. It contains a unique open $A_X\times G$-orbit $\mathfrak{X}^{\bullet}_\Theta$\index{$\mathfrak{X}^{\bullet}_\Theta$} and, generalizing the above discussion for $\Theta=\Delta_X$, the $A_X$-action induces an isomorphism
	\begin{equation}\label{eq5}
		\displaystyle A_X\times^{A_{X,\Theta}} X_\Theta\simeq \mathfrak{X}^{\bullet}_\Theta
	\end{equation}
	compatible with the projections to $A_X^\Theta$.
\end{paragr}

\begin{paragr}[Invariants of $X_\Theta$.]\label{S invts Xtheta}
	From the identification \eqref{eq3} of the open $B$-orbits in the fiber of the affine degeneration, we see that $A_{X_\Theta}=A_X$ and therefore $P_{X_\Theta}=P_X$ (since $A_{X_\Theta}$ determines $P_{X_\Theta}$, see \S \ref{S parabolic type}). Furthermore, from the description of the negative Weyl cone $\cA_{X_\Theta}^-$ via the natural $G$-stable filtration on $k[X_\Theta]$, we readily get $\Delta_{X_\Theta}=\Theta$ from which it follows that $Z(X_\Theta)^0=A_{X,\Theta}$. In particular, we have $A_{X_\Theta,ad}=A_X^\Theta$ and the previously defined family $\mathfrak{X}_\Theta\to \overline{A}_X^{\Theta}$ identifies with the canonical affine degeneration of $X_\Theta$. This, in particular, implies that boundary degenerations have the following inductive property: for every $\Omega\subset \Theta$ the corresponding boundary degeneration of $X_\Theta$ is $(X_\Theta)_\Omega=X_\Omega$. Finally, when there is no root of type $N$, the dual group $\widehat{G}_{X_\Theta}$ is the standard Levi subgroup $\widehat{L}_\Theta$\index{$\widehat{L}_\Theta$} of $\widehat{G}_X$ associated to the subset $\Theta$ and the distinguished morphism $\iota_{X_\Theta}:\widehat{G}_{X_\Theta}\times \SL_2(\mathbb{C})\to \widehat{G}$ is the restriction of $\iota_X$.
\end{paragr}

\begin{paragr}[Wonderful compactification.]\label{S wonderful compactification}
	Assume for the moment that $Z(X)^0=\{ 1\}$. Since in this case $A_{X,ad}=A_X$ we will write $\overline{A}_X$\index{$\overline{A}_X$} instead of $\overline{A}_{X,ad}$. Then, the quotient
	$$\displaystyle \overline{X}:=A_X\backslash \mathfrak{X}$$\index{$\overline{X}$}
	is a $G$-variety naturally containing $X$ that is proper by \cite[Theorem 4.2]{KnopLV}\footnote{More precisely, with the notation and terminology of {\em op.\ cit.}, the embedding $X\hookrightarrow \overline{X}$ is associated to the colored fan $(\mathcal{V},\emptyset)$.}. Associating to $\Theta$ the $G$-orbit
	$$\displaystyle Y_\Theta:=A_X\backslash \mathfrak{X}_\Theta=A_{X,\Theta}\backslash X_\Theta,$$\index{$Y_\Theta$}
	we can again parametrize $G$-orbits in $\overline{X}$ by subsets $\Theta\subset \Delta_X$. We let $Z_\Theta$\index{$Z_\Theta$} be the closure of $Y_\Theta$ in $\overline{X}$. 
	
	Note that the compactification $X\hookrightarrow \overline{X}$ is completely canonical. Since $\mathfrak{X}\to \overline{A}_X$ is smooth and surjective, it is smooth if and only if the toric variety $\overline{A}_X$ i.e.\ iff the monoid $\cA_X^+\cap X_*(A_X)$ is free. 
	
	Assume now that the $G$-variety $\overline{X}$ is smooth. Then, it is sometimes called {\it the wonderful compactification} in the literature on symmetric varieties. The structure of the boundary $\overline{X}\setminus X$ is the same as that of $\overline{A}_{X}\setminus A_X$ and is easy to describe: for every $\alpha\in \Delta_X$, $D_\alpha=Z_{\alpha^c}$\index{$D_\alpha$} (where $\alpha^c\subset \Delta_X$ denotes the complement of $\{\alpha \}$) is a smooth divisor, these divisors meets transversally and for every $\Theta\subset \Delta_X$ we have $Z_\Theta=\bigcap_{\alpha\in \Theta^c} D_\alpha$. In this case, we simply set $\infty_\Theta:=Z_\Theta$\index{$\infty_\Theta$} and refer to this (smooth) subvariety as {\it $\Theta$-infinity}.

	\begin{exe}\label{exe2}
		\begin{itemize}
			\item For $X$ the same hyperboloid as in Example \ref{exe1}, the affine degeneration is
			$$\displaystyle \mathfrak{X}^{\aff}=\{(v,t)\in \mathbb{A}^3\times \mathbb{A}^1\mid q(v)=-t^2 \}$$
			where $q$ denotes the quadratic form $q(x,y,z)=x^2+y^2-z^2$, the $A_X=\mathbb{G}_m$-action on $\mathfrak{X}^{\aff}$ is given by scalar multiplication of $v$ and $t$ and the map $\pi: \mathfrak{X}^{\aff}\to \overline{A}_X=\mathbb{A}^1$ is $(v,t)\mapsto t$. The open subset $\mathfrak{X}\subset \mathfrak{X}^{\aff}$ is the complement of $(0,0)$ and the compactification $\overline{X}=\mathfrak{X}/\mathbb{G}_m$ is the projective variety defined by the same equation $q(v)=-t^2$.
			
			\item In the group case $X=\PGL_2$, we have
			$$\displaystyle \mathfrak{X}^{\aff}=\Mat_2\sslash \mu_2$$
			where $\mu_2$ acts on the space $\Mat_2$ of $2\times 2$ matrices by scalar multiplication, $A_X=\mathbb{G}_m/\mu_2\simeq \mathbb{G}_m$ acts on $\mathfrak{X}^{\aff}$ by multiplication and $\pi: \mathfrak{X}^{\aff}\mapsto \overline{A}_X=\mathbb{A}^1$ is given by the determinant. We get $\mathfrak{X}=\Mat_2^*/\mu_2$ (where $\Mat_2^*:=\Mat_2\setminus 0$) and $\overline{X}=\mathfrak{X}/\mathbb{G}_m=\mathbb{P}(\Mat_2)$.
		\end{itemize}
		
	\end{exe}
\end{paragr}

\begin{paragr}[Normal bundles and boundary degenerations: the wonderful case.]\label{S normal bundles and wonderful compactification}
	We continue to assume that $Z(X)^0=1$ and that the canonical compactification $\overline{X}$ is smooth. Then, there is the following alternative description of the boundary degenerations.
	\begin{num}
		\item\label{Normal bundle wonderful case} For every $\Theta\subset \Delta_X$, the normal bundle $N_{\infty_\Theta}(\overline{X})$\index{$N_{\infty_\Theta}(\overline{X})$} to $\infty_\Theta$ in $\overline{X}$ contains an open $G$-orbit that can be canonically identified with $X_\Theta$.
	\end{num}
	Indeed, since $\mathfrak{X}\to \overline{A}_X$ is smooth, $N_{\infty_\Theta}(\overline{X})$ can be identified with the $A_X$-quotient (for the diagonal action) of the pullback of the normal bundle $N_{\overline{A}_X^\Theta}(\overline{A}_X)$ to $\mathfrak{X}_\Theta$. However, it follows from the theory of toric embeddings that we have a canonical identification $N_{\overline{A}_X^\Theta}(\overline{A}_X)=\overline{A}_X$\footnote{More precisely, if $\lambda_1,\ldots,\lambda_r$ are the primitive generators of the extremal rays of $\cA_{X,\Theta}^-=\cA_X^-\cap \Ker(\Theta)$ and we set $\lambda=\lambda_1+\ldots+\lambda_r$, then the isomorphism $\overline{A}_X\simeq N_{\overline{A}_X^\Theta}(\overline{A}_X)$ is given by $a\mapsto (\lim\limits_{t\to 0}\lambda(t)a, \frac{d}{dt}(\lambda(t)a)\mid_{t=0}$).} so that, in particular, this normal bundle contains an open copy of $A_X$. We can now identify $X_\Theta$ with the open subset
	$$\displaystyle A_X\backslash(A_X\times_{A_X^\Theta} \mathfrak{X}_\Theta^\bullet)\simeq A_X\backslash (A_X\times X_\Theta)$$
	of $N_{\infty_\Theta}(\overline{X})$ where the isomorphism comes from \eqref{eq5}.
\end{paragr}

\begin{paragr}[Non-wonderful case: Toroidal embeddings.]\label{S toroidal embeddings}
	When $Z(X)^0\neq 1$, the quotient $A_X\backslash \mathfrak{X}$ is still complete but is only a compactification of the variety $X_{ad}:=X/Z(X)^0$ and when it is smooth the normal bundles to the $G$-orbits only allows to recover the quotients $X_\Theta/Z(X)^0$ of the boundary degenerations by the connected center of $X$. Therefore, to have a picture similar to the wonderful case in general, we would like to replace $A_X\backslash \mathfrak{X}$ by some smooth $G$-equivariant completion of $X$. One way to do this is to pullback the canonical affine degeneration $\mathfrak{X}\to \overline{A}_{X,ad}$ along a smooth resolution of $\overline{A}_{X,ad}$ on which $A_X$ is acting faithfully. The theory of toric embeddings \cite{Fulton} allows to construct such resolutions in a very combinatorial way and lead to the so-called {\it smooth toroidal compactification} of $X$. The advantage of smooth toroidal embeddings are that they always exist and have a geometry at infinity very similar to wonderful compactifications, however there are not unique which makes certain definitions less transparently canonical.
	
	More precisely, to every {\it fan} $\mathcal{F}$\index{$\mathcal{F}$} of strongly convex finitely generated cones\footnote{Recall that a cone $C$ is {\it strongly convex} if it doesn't contain any line and that a {\it fan} $\cF$ is a family finitely generated cones, stable by taking faces, and such that for every $C,D\in \cF$, $C\cap D$ is a common face of $C$ and $D$.} in $\cA_X$, we can associate a toric embedding $A_X\hookrightarrow\overline{A}_X^{\cF}$\index{$\overline{A}_X^{\cF}$} as in \cite{Fulton}. Furthermore, if the cones in $\cF$ are all included in the positive Weyl chamber $\cA_X^+$ then the quotient map $A_X\to A_{X,ad}$ extends to a morphism $\overline{A}_X^{\cF}\to \overline{A}_{X,ad}$ and we define
	$$\displaystyle \mathfrak{X}^{\cF}:=\overline{A}_X^{\cF}\times_{\overline{A}_{X,ad}} \mathfrak{X},\;\; \overline{X}^{\cF}:=A_X\backslash \mathfrak{X}^{\cF}$$\index{$\mathfrak{X}^{\cF}$}\index{$\overline{X}^{\cF}$}
	where the quotient is taken with respect to the {\it diagonal} action of $A_X$ on $\mathfrak{X}^{\cF}$. (It is again a free action.) Note that $\mathfrak{X}^{\cF}$ contains as an open subset
	\begin{equation*}
		\displaystyle A_X\times_{A_{X,ad}} \mathfrak{X}^{\bullet}\simeq A_X\times X
	\end{equation*}
	(where the isomorphism comes from \eqref{eq6})from which we get a natural $G$-equivariant open embedding $X\subset \overline{X}^{\cF}$.
	
	The morphism $\mathfrak{X}\to \overline{A}_{X,ad}$ being smooth, we see that the variety $\overline{X}^{\cF}$ is smooth if and only if the toric embedding $\overline{A}_X^{\cF}$ is so i.e.\ if and only if for every $C\in \cF$, the intersection $C\cap X_*(A_X)$ is a free monoid. Also, since the quotient $A_X\backslash \mathfrak{X}$ is complete and the two morphisms
	$$\displaystyle A_X\backslash \mathfrak{X}\leftarrow \mathfrak{X}\to \overline{A}_{X,ad}$$
	smooth, $\overline{X}^{\cF}$ is complete if and only if the morphism $\overline{A}^{\cF}_X\to \overline{A}_{X,ad}$ is proper and by the theory of toric embedding this happens if and only if the fan $\cF$ covers all of $\cA_X^+$.
	
	Embeddings of the form $X\hookrightarrow \overline{X}^{\cF}$ are named {\it toroidal} in the literature on spherical varieties (see e.g. \cite{KnopLV}) and can be alternatively characterized as the normal $G$-varieties containing $X$ as an open subset and such that none of the $B$-divisors in $X$ contains a $G$-orbit in its closure. By a {\it smooth toroidal compactification}, we mean a toroidal embedding that is both smooth and complete. By the above characterization in terms of the fan $\cF$, smooth toroidal compactifications are readily seen to exist (see \cite[Sect. 2.6]{Fulton}) but are however far from unique.
\end{paragr}

\begin{paragr}[The local structure theorem.]\label{S LST}
	The following ``local structure theorem'' of Brion-Luna-Vust \cite[th\'eor\`eme 3.5]{BLV} is a basic tool in the study of toroidal embeddings.
	
	\begin{theo}[Brion, Luna, Vust]\label{thm LST}
		Let $\overline{X}=\overline{X}^{\cF}$ be the toroidal embedding of $X$ associated to the fan $\cF$ as above. Then, the union $\overline{X}_B$\index{$\overline{X}_B$} of the open $B$-orbits in each $G$-orbit of $\overline{X}$ is an open $P_X$-stable subset on which $N_X$ acts freely. Up to choosing a base-point, the quotient $A_{\overline{X}}:=\overline{X}_B/N_X$\index{$A_{\overline{X}}$} is isomorphic to the opposite of the toric embedding $\overline{A}^{\cF}_X$ of $A_X$ i.e.\ the toric variety $\overline{A}^{\cF}_X$ with $A_X$ acting via composition with the inversion $a\mapsto a^{-1}$ or equivalently the toric embedding associated to the opposite fan $-\cF$. Moreover, the (non-canonical) identification \eqref{eq LST X} extends to a $P_X$-equivariant isomorphism
		$$\displaystyle \overline{X}_B\simeq A_{\overline{X}}\times N_X.$$
	\end{theo}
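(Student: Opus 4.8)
The plan is to reduce everything to the canonical affine degeneration $\pi\colon\mathfrak{X}\to\overline{A}_{X,ad}$, whose open $B$-locus was already completely described in \S\ref{S canonical affine degeneration}: by the (non-canonical) splitting \eqref{eq3} one has $\mathfrak{X}_B\simeq\overline{A}_{X,ad}\times A_X\times N_X$, with $\pi$ the projection to the first factor, the $A_X$-action diagonal on the first two factors, $N_X$ acting by right translation on the third, and $L_X$ acting through $A_X$ on the middle factor. Since $\overline{X}^{\cF}=A_X\backslash\mathfrak{X}^{\cF}$ with $\mathfrak{X}^{\cF}=\overline{A}_X^{\cF}\times_{\overline{A}_{X,ad}}\mathfrak{X}$ and a free $A_X$-action, the first step is to set $\mathfrak{X}_B^{\cF}:=\overline{A}_X^{\cF}\times_{\overline{A}_{X,ad}}\mathfrak{X}_B$ and to verify that it is open in $\mathfrak{X}^{\cF}$, $P_X$-stable, with $G$-translates covering $\mathfrak{X}^{\cF}$ — all of which is inherited from $\mathfrak{X}_B$, which is the locus $\{t^\mu f_\mu\neq 0\}$ and hence visibly a $G$-saturated union of principal opens, and stable under $P_X$ (the stabiliser of the open $B$-orbit in each fibre of $\pi$). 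Dividing by $A_X$ transports openness and $P_X$-stability to $\overline{X}_B:=A_X\backslash\mathfrak{X}_B^{\cF}$; and since $A_X$ is connected and acts along the fibres, while the fibres of $\mathfrak{X}^{\cF}\to\overline{A}_X^{\cF}$ are $G$-homogeneous spherical varieties (translates of boundary degenerations $X_\Theta$), one checks in the usual Luna--Vust way that $\overline{X}_B$ is precisely the union of the open $B$-orbits in the $G$-orbits of $\overline{X}^{\cF}$.

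Next I would compute the quotient. Pulling back \eqref{eq3} along $\overline{A}_X^{\cF}\to\overline{A}_{X,ad}$ gives a $P_X$-equivariant isomorphism
\[
\mathfrak{X}_B^{\cF}\;\simeq\;\overline{A}_X^{\cF}\times A_X\times N_X,
\]
in which the map to $\overline{A}_X^{\cF}$ is the first projection, $A_X$ acts diagonally on the first two factors (by the toric action, resp.\ by translation), $N_X$ acts by right translation on the last factor, and $L_X$ acts through $A_X$ on the middle factor with the usual semidirect twist against $N_X$. Taking the $A_X$-quotient via the invariant map $(z,a,n)\mapsto(za^{-1},n)$ then yields
\[
\overline{X}_B\;\simeq\;\overline{A}_X^{\cF}\times N_X.
\]
From this the freeness of the $N_X$-action is immediate (it is right translation on the $N_X$-factor), as is the product decomposition $\overline{X}_B\simeq A_{\overline{X}}\times N_X$ with $A_{\overline{X}}=\overline{X}_B/N_X\simeq\overline{A}_X^{\cF}$ as varieties; the $P_X$-equivariance of this decomposition follows by tracking the $L_X$- and $N_X$-actions through the identification, exactly as for \eqref{eq LST X} (indeed the case $\cF=\{0\}$ of the statement is \eqref{eq LST X}/\cite[th\'eor\`eme 3.4]{BLV} already used in \S\ref{S parabolic type}, and the toroidal case only adds the toric factor).

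It remains to pin down the $A_X$-structure on $A_{\overline{X}}$, i.e.\ the residual right $A$-action induced from $A\subset P_X$ via $A\to A_X$. On $\mathfrak{X}_B^{\cF}\simeq\overline{A}_X^{\cF}\times A_X\times N_X$ this action is right translation on the middle $A_X$-factor, and pushing it through $(z,a,n)\mapsto(za^{-1},n)$ turns it into translation by the \emph{inverse} on the $\overline{A}_X^{\cF}$-factor; hence $A_{\overline{X}}$ is $\overline{A}_X^{\cF}$ with $A_X$ acting by $a\mapsto a^{-1}$, equivalently — via the inversion isomorphism extending $a\mapsto a^{-1}$ — the toric embedding attached to the opposite fan $-\cF$. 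This sign bookkeeping, together with the verification that the $A_X$-quotient really does commute with ``passing to the open $B$-orbit in each $G$-orbit'' (which is where one leans on the $G$-homogeneity of the $X_\Theta$ and on $A_X$ being connected), is the only genuinely delicate point; everything else is the formal manipulation of the toric data assembled in \S\ref{S canonical affine degeneration}--\S\ref{S toroidal embeddings}.
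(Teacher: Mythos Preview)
Your argument is correct and follows exactly the route the paper takes: define $\mathfrak{X}_B^{\cF}=\overline{A}_X^{\cF}\times_{\overline{A}_{X,ad}}\mathfrak{X}_B$, pull back the splitting \eqref{eq3} to obtain $\mathfrak{X}_B^{\cF}\simeq\overline{A}_X^{\cF}\times A_X\times N_X$, and then divide by the diagonal $A_X$-action. You supply more detail than the paper does (notably the sign bookkeeping for the residual $A_X$-action and the verification that $A_X\backslash\mathfrak{X}_B^{\cF}$ really is the union of open $B$-orbits in the $G$-orbits of $\overline{X}^{\cF}$), but the strategy is identical.
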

	
	Actually, from the perspective of affine degeneration this result can be explained as follows. Set
	$$\displaystyle \mathfrak{X}_B^{\cF}:=\overline{A}_X^{\cF}\times_{\overline{A}_{X,ad}} \mathfrak{X}_B.$$\index{$\mathfrak{X}_B^{\cF}$}
	Then, $\mathfrak{X}_B^{\cF}$ is the fiberwise union of open $B$-orbits for $\mathfrak{X}^{\cF}\to \overline{A}_X^{\cF}$ and we have $\overline{X}_B=A_X\backslash \mathfrak{X}_B^{\cF}$. Moreover, by \eqref{eq3} we have a (non-canonical) isomorphism
	\begin{equation}\label{eq4}
		\displaystyle \mathfrak{X}_B^{\mathcal{F}}\simeq \overline{A}^{\cF}_{X}\times A_X\times N_X
	\end{equation}
	which after taking a quotient by $A_X$ gives the isomorphism of the local structure theorem.
\end{paragr}

\begin{paragr}[Orbits and $\Theta$-infinity: toroidal case.]\label{S theta infinity}
	
	Let $\overline{X}=\overline{X}^{\cF}$ be a smooth toroidal compactification of $X$ as above. Then, the map $\mathfrak{X}^{\cF}\to \overline{A}_X^{\cF}$ induces a bijection between $G$-orbits in $\overline{X}$ and $A_X$-orbits in $\overline{A}_X^{\cF}$. By the theory of toric embeddings, the latter are naturally parametrized by the cones in $\mathcal{F}$ and, for $C\in \mathcal{F}$, the $A_X$-orbit associated to $C$ is canonically isomorphic to $A_X^C:=A_X/A_{X,C}$\index{$A_X^C$} where $A_{X,C}:=\bigcap_{\chi\in X^*(A_X)\cap C^\vee} \Ker(\chi)^0$\index{$A_{X,C}$}. The $G$-orbit $Y_C$\index{$Y_C$} and its closure $Z_C$\index{$Z_C$} corresponding to $C$ in $\overline{X}$ can then identified with
	$$\displaystyle Y_C=A_X\backslash \mathfrak{X}_C^{\bullet} \; \mbox{ and } \; Z_C:=A_X\backslash \mathfrak{X}_C$$
	where
	$$\displaystyle \mathfrak{X}^{\bullet}_C:=A_X^C\times_{\overline{A}_X^{\cF}} \mathfrak{X}^{\cF}\subset \mathfrak{X}_C:=\overline{A}_X^C\times_{\overline{A}_X^{\cF}} \mathfrak{X}^{\cF}$$\index{$\mathfrak{X}^{\bullet}_C$}\index{$\mathfrak{X}_C$}
	and $\overline{A}_X^C$\index{$\overline{A}_X^C$} stands for the closure of $A_X^C$ in $\overline{A}_X^{\cF}$.\footnote{We note a slight conflict of notation here: $\overline{A}_X^C$ is not the toric embedding associated to (the fan of faces of) $C$. We will try to always distinguish between cones and fans hoping that this will not create any confusion.}
	
	The combinatorics of the $G$-orbits in $\overline{X}$ is also the same as that of $A_X$-orbits in $\overline{A}_X^{\cF}$ that is: the orbit closure $D_L=Z_L$\index{$D_L$} corresponding to each half-line $L\in \cF$ are smooth divisors meeting transversely and for every $C\in \cF$ we have
	$$\displaystyle Z_C=\bigcap_{L\subset C} D_L$$
	where the intersection runs over extremal rays of $C$.
	
	Let $\Theta\subset \Delta_X$. We say that a cone $C\in \cF$ {\it belongs to the $\Theta$-face} if $\Theta$ is exactly the set of simple roots $\alpha\in \Delta_X$ orthogonal to $C$ (or in other words if the minimal face of $\cA_X^+$ containing $C$ is the one corresponding to $\Theta$). This is equivalent to saying that the image of $A_X^C$ by the map $\overline{A}_X^{\cF}\to \overline{A}_{X,ad}$ is $A_X^\Theta$ and therefore from \eqref{eq5} we get isomorphisms
	\begin{equation}\label{eq8}
		\displaystyle  \mathfrak{X}^{\bullet}_C \simeq A_X\times^{A_{X,C}} X_\Theta \subset \mathfrak{X}_C\simeq \overline{A}_X^C\times_{\overline{A}_X^\Theta} \mathfrak{X}_\Theta
	\end{equation}
	from which it follows that $Y_C\simeq A_{X,C}\backslash X_\Theta$.
	
	Let $\infty_\Theta^\circ$\index{$\infty_\Theta^\circ$} be the union of the $G$-orbits $Y\subset G$ whose corresponding cone belongs to the $\Theta$-face and by $\infty_\Theta$ the union of $\infty_\Omega^\circ$ over all subsets $\Omega\subset \Theta$. Note that $\infty_\Theta$ is closed but is not necessarily the closure of $\infty_\Theta^\circ$.
\end{paragr}

\begin{paragr}[Identifications of normal bundles.]\label{S ident normal bundles}
	To $\overline{X}$ we associate the smooth toroidal embeddings $\overline{X}_\Theta:=\overline{X}_{\Theta}^{\cF}$\index{$\overline{X}_\Theta$} of the boundary degenerations $X_\Theta$ corresponding to the same fan $\cF$. Since (see \S \ref{S invts Xtheta}) $\cA_X^+\subset \cA_{X_\Theta}^+$ we see that $\cF$ is indeed a fan in $\cA_{X_\Theta}^+$ so that $\overline{X}_\Theta$ is well-defined. Moreover, it can be obtained as before using the canonical affine degeneration $\mathfrak{X}_\Theta\to \overline{A}_X^\Theta$:
	\begin{equation*}
		\displaystyle \overline{X}_\Theta=A_X\backslash \mathfrak{X}_\Theta^{\cF} \;\;\mbox{ where }\;\; \mathfrak{X}_\Theta^{\cF}=\overline{A}_X^{\cF}\times_{\overline{A}_X^\Theta} \mathfrak{X}_\Theta.
	\end{equation*}
	As for $\overline{X}$, every cone $C\in \cF$ corresponds to the closure of a $G$-orbit $Z_{\Theta,C}\subset \overline{X}_\Theta$\index{$Z_{\Theta,C}$}. When $C$ belongs to the $\Omega$-face for some subset $\Omega\subset \Theta$, the isomorphism \eqref{eq8} (and its analog for $X_\Theta$) gives an identification $Z_C\simeq Z_{\Theta,C}$. In other words, we can naturally identify $\infty_\Theta$ with a subvariety of $\overline{X}_\Theta$.
	
	As a direct generalization of \eqref{Normal bundle wonderful case}, if $Y\subset \infty_\Theta^\circ$ is a $G$-orbit, the normal cone $N_{Y}(\overline{X})$ contains an open $G$-orbit which can be canonically identified with the boundary degeneration $X_\Theta$. Actually, by the same kind of argument we have the following more general statement: for every $G$-orbit closure $Z\subset \infty_\Theta$ there is a natural identification
	\begin{equation}\label{can iden normal bundles}
		\displaystyle N_Z(\overline{X})\simeq N_Z(\overline{X}_\Theta).
	\end{equation}
\end{paragr}

\begin{paragr}[Toral actions on $X_\Theta$.]\label{S toral actions}
	Let $C\in \cF$ and $D_1,\ldots, D_r$ be the $G$-divisors in $\overline{X}$ corresponding to the extremal rays $L_1,\ldots,L_r$ of $C$ so that $\bigcap_{i=1}^r D_i=Z_C$. For each $1\leq i\leq r$, $L_i$ has a unique generator $\lambda_i$ that is primitive in the lattice of cocharacters $X_*(A_X)$. Then, as $C\cap X_*(A_X)$ is freely generated as a monoid by $\lambda_1,\ldots,\lambda_r$, the tuple $(\lambda_1,\ldots,\lambda_r)$ induces an isomorphism
	\begin{equation}\label{eq7}
		\displaystyle \mathbb{G}_m^r\simeq A_{X,C}.
	\end{equation}
	Let $\Theta\subset \Delta_X$ be such that the cone $C$ belongs to the $\Theta$-face. Then, $A_{X,C}\subset A_{X,\Theta}$ and the resulting action on $X_\Theta\subset N_{Z_C}(\overline{X})$ can be described as follows: through the isomorphism \eqref{eq7}, the action of the $i$-th $\mathbb{G}_m$-factor on $N_{Z_C}(\overline{X})$ is induced by the inverse of the natural scaling action on $N_{D_i}(\overline{X})$ via the natural identification $N_{Z_C}(\overline{X})\simeq N_{Z_C}(N_{D_i}(\overline{X}))$.
\end{paragr}

\begin{paragr}[The wavefront case.]\label{S wavefront case}
	By \cite[Proposition 2.7.2]{SV}, if $X$ is wavefront then for every $\Theta\subset \Delta_X$ (including $\Theta=\Delta_X$!), there exists a parabolic subgroup $P_\Theta^-=L_\Theta U_\Theta^-$\index{$P_\Theta^-$}\index{$L_\Theta$}\index{$U_\Theta^-$} such that $X_\Theta$ can be obtained by parabolic induction from an $L_\Theta$-variety $X^L_\Theta$\index{$X^L_\Theta$} i.e.\
	\begin{equation}\label{eq boundary deg parabind}
		\displaystyle X_\Theta=X^L_\Theta\times^{P_\Theta^-} G
	\end{equation}
	with the natural morphism between (connected) centers $Z(L_\Theta)^0\to Z(X_\Theta)^0$ surjective. Actually, if $Z(G)^0\to Z(X)^0$ is already surjective, this last property characterizes wavefront varieties. (And in general it characterizes spherical varieties that are parabolically induced from wavefront.)
	
	More precisely, we can take for $P_\Theta^-$ the parabolic opposite to the standard parabolic subgroup $P_\Theta=L_\Theta U_\Theta\supset P_X$\index{$P_\Theta$}\index{$U_\Theta$} corresponding to the set of simple roots $\Supp(\Theta)\cup \Delta(X)$\index{$\Supp(\Theta)$}, where $\Supp(\Theta)\subset \Delta$ is the minimal subset of simple roots containing $\Theta$ in their span and $\Delta(X)\subset \Delta$\index{$\Delta(X)$} is the subset corresponding to the parabolic $P_X$. Moreover, by \cite[Lemma 2.8.1]{SV}, the Levi variety can be described as the quotient $X_{P_\Theta}/U_\Theta$ of the open $P_\Theta$-orbit $X_{P_\Theta}=X_B P_\Theta$\index{$X_{P_\Theta}$} by its unipotent radical.
	
	\begin{exe}\label{exe3}
		When the variety $X$ is symmetric, corresponding to an involution $\iota$, the parabolic subgroups $P_\Theta$ are exactly the (conjugacy class of) {\it $\iota$-split} parabolics\footnote{Recall that a parabolic $P\subset G$ is said to be $\iota$-split if $\iota(P)$ is opposite to $P$} and, choosing the Levi factor $L_\Theta=P_\Theta\cap \iota(P_\Theta)$, the Levi variety $X^L_\Theta$ is the symmetric variety $L_\Theta^\iota\backslash L_\Theta$.
		
		For example, in the group case, $X=H$, $G=H\times H$, $\Delta$ is the disjoint union of two copies of the set $\Delta_H$ of simple roots for $H$, and we have $\Delta_X=\{\alpha_1+(-w_\ell \alpha)_2\mid \alpha\in \Delta_H \}$, where $w_\ell$ denotes the longest element in the Weyl group of $H$ and the subscripts $1,2$ are here to distinguih the two copies of $\Delta_H$. In particular, we can identify (by the first projection), $\Delta_X$ with $\Delta_H$ and the boundary degeneration associated to $\Theta\subset \Delta_H$ is
		$$\displaystyle X_\Theta=M_\Theta\times^{P_\Theta^-} G=(N^-_\Theta\times M_\Theta^{diag}\times N_\Theta)\backslash H\times H$$
		where $Q_\Theta=M_\Theta N_\Theta$ is (the class of) the parabolic subgroup of $H$ associated to $\Theta$, $Q^-_\Theta=M_\Theta N^-_\Theta$ the opposite parabolic and $P_\Theta^-=Q_\Theta^-\times Q_\Theta$.
	\end{exe}
\end{paragr}

\section{Local aspects}\label{Part local aspects}

\begin{paragr}[General notation.]\label{S local aspects notations}
	In this chapter, $k$ is a local field of characteristic zero and $X=H\backslash G$ is a homogeneous quasi-affine $G$-spherical variety that is unimodular (see \S \ref{S def spherical}). In particular, $X(k)$ admits $G(k)$-invariant measures and we will fix one for convenience, so that we can identify smooth measures/half-densities with functions all along.
	
	We will denote by $\mathcal{L}_k$\index{$\mathcal{L}_k$} the {\it Langlands group} of $k$ that is
	$$\displaystyle \mathcal{L}_k=\left\{\begin{array}{ll}
		W_k & \mbox{ if } k \mbox{ is Archimedean,} \\
		W_k \times \SL_2(\bC) & \mbox{ if } k \mbox{ is non-Archimedean,}
	\end{array} \right.$$
	where $W_k$\index{$W_k$} stands for the {\it Weil group} of $k$. In the non-Archimedean case, we will also denote by $I_k\subset W_k$\index{$I_k$} the inertia subgroup.
\end{paragr}

\begin{paragr}[$L$-parameters.]\label{S Lparameters}
	At some points, it will be more convient to work with the Weil form of $L$-groups. Thus, provided $X$ has no root of type $N$, we redefine $L$-groups by
	$$\displaystyle {}^L G=\widehat{G}\rtimes W_k,\;\;\; {}^L G_X=\widehat{G}_X\rtimes W_k$$
	where in the latter case the Weil action comes from the Galois action on $\widehat{G}_X$ discussed in \S \ref{S L-groups}.
	
	If $\mathcal{G}$ is an extension of $W_k$ by a connected complex reductive group $\mathcal{G}^0$ (typically, ${}^L G$ or ${}^L G_X$ as above), by a {\it $L$-parameter} in $\mathcal{G}$ we mean as usual a $\mathcal{G}^0$-conjugacy class of continuous semisimple morphisms $\mathcal{L}_k\to \mathcal{G}$ that are algebraic on $\SL_2(\bC)$ (when $k$ is non-Archimedean) and commute with the two natural projections $\mathcal{L}_k\to W_k$, $\mathcal{G}\to W_k$.
	
	We will denote by $\Phi(G)$\index{$\Phi(G)$} and $\Phi(X)$\index{$\Phi(X)$} the set of $L$-parameters in ${}^L G$ and ${}^L G_X$ up to $\widehat{G}$- and $\widehat{G}_X$-conjugacy respectively. The subsets of {\it tempered $L$-parameters}, that is parameters $\phi$ such that the projection of $\phi(W_k)$ to $\widehat{G}$ or $\widehat{G}_X$ is bounded, will be denoted by $\Phi_{\temp}(G)$\index{$\Phi_{\temp}(G)$} and $\Phi_{\temp}(X)$\index{$\Phi_{\temp}(X)$} respectively.
\end{paragr}

\begin{paragr}[Representations.]\label{S representations}
	All the representations $(\pi,V_\pi)$ of the group $G(k)$ to be considered in this section have complex coefficients and, unless explicitly said, are {\it smooth}. When $k$ is non-Archimedean this means, as usual, that every vector $v\in V_\pi$ has an open stabilizer. When $k$ is Archimedean on the other hand, we will use the term `smooth' as a shorthand for {\it admissible smooth Fr\'echet and of moderate growth} in the sense of Casselman and Wallach. By the main result of \cite{CassExt} \cite[Chap. 11]{WRRG2}, the category of such topological representations is equivalent to that of Harish-Chandra modules but with the advantage that it consists of genuine representations of the group $G(k)$ (and not just of its Lie algebra). Furthermore, it turns out that for the questions we will discuss the Fr\'echet topology on $V_\pi$ will really matter. In particular, when the field $k$ is Archimedean all intertwining operators as well as linear/bilinear/sesquilinear forms on representations will implicitely be assumed to be continuous.
	
	We also briefly need to consider unitary representations in Section \ref{Sect distinction and Plancherel}. When $(\Pi,\cH)$ is a unitary representation of $G(k)$ (with $\cH$ a Hilbert space), the subspace of smooth vectors (or $C^\infty$ vectors in the Archimedean case) $\cH^\infty$\index{$\cH^\infty$} is always a smooth representation in the above sense and will be denoted by $\Pi^\infty$\index{$\Pi^\infty$}. More generally, we will often confuse a representation with the space on which it is realized.
	
	We denote by $\Irr(G)$\index{$\Irr(G)$} the set of isomorphism classes of smooth irreducible representations of $G(k)$\footnote{Of course, irreducible is to be taken in the topological sense (i.e.\ no closed nontrivial invariant subspace) in the Archimedean case and the admissibility condition is superfluous in the non-Archimedean case.}. Every $\pi\in \Irr(G)$ has a smooth contragredient $\pi^\vee\in \Irr(G)$\index{$\pi^\vee$}. In the non-Archimedean case, it is just the space of smooth linear functionals with the obvious $G(k)$-action whereas in the Archimedean case this can either be described using Casselman-Wallach equivalence with Harish-Chandra modules or directly as the space of linear forms on $V_\pi$ that are continuous with respect to {\it every} $G(k)$-continuous norm\footnote{Here we say that a continuous norm $q$ on $V_\pi$ is {\it $G(k)$-continuous} is the action map $G(k)\times V_\pi\to V_\pi$ is continuous when we equip $V_\pi$ with the topology induced by $q$.}.

	We let $\Temp(G)\subset \Irr(G)$\index{$\Temp(G)$} the subset of {\it tempered} representations. There are actually various possible characterizations of the tempered representations: either by a growth condition on matrix coefficients (see \eqref{ineq MC tempered} below) or using the support of the Plancherel measure (see \S \ref{S remark Plancherel}). 
	
	We will also denote by $\Pi_2(G)\subset \Temp(G)$\index{$\Pi_2(G)$} the subset of square-integrable representations (aka discrete series) and by $\Pi_{2,\operatorname{ess}}(G)\subset \Irr(G)$\index{$\Pi_{2,\operatorname{ess}}(G)$} the subset of essentially square-integrable representations. We remind the reader that an irreducible representation $\pi\in \Irr(G)$ is said to be square-integrable if it has a unitary central character and the square of the norm of any of its (smooth) matrix coefficients is integrable on $G(k)/Z(G)(k)$. An irreducible representation $\pi\in \Irr(G)$ is essentially square-integrable if it admits an unramified twist that is square-integrable.
	
	We write $\Unit(G)$\index{$\Unit(G)$} for the {\it unitary dual} of $G(k)$ i.e.\ the set of isomorphism classes of irreducible unitary representations of $G(k)$. Recall that the assignment $\Pi\mapsto \Pi^\infty$ induces a natural embedding $\Unit(G)\hookrightarrow \Irr(G)$ whose image consists of {\it unitarizable} smooth irreducible representations. This image contains in particular $\Temp(G)$ and we will sometimes implicitely identify $\Unit(G)$ with the corresponding subset of $\Irr(G)$.
\end{paragr}

\begin{paragr}[The local Langlands correspondence.]\label{S LLC}
	We will need at time to assume a suitable form of the local Langlands correspondence (LLC) for $G$ (and some closely related groups such as Levi subgroups or (pure) inner forms). For a modern treatment of what this correspondence should look like in general we refer to Kaletha and Ta\"ibi's articles in these proceedings. Let us nevertheless recall some of the simplest features of this correspondence, partly to set up notation. The basic requirement is that there should be a partition of the set of irreducible representations into finite sets $\Pi^G(\phi)$\index{$\Pi^G(\phi)$} called {\it $L$-packets}, that is:
	$$\displaystyle \Irr(G)=\bigsqcup_{\phi\in \Phi(G)} \Pi^G(\phi).$$
	Moreover, this correspondence should preserve temperedness i.e.\ it should restrict to a partition $\Temp(G)=\bigsqcup_{\phi\in \Phi_{\temp}(G)} \Pi^G(\phi)$ of the tempered dual in terms of tempered $L$-parameters.
	
	Most statement in the LLC become more natural if we take into account {\it pure inner forms}. Pure inner forms of $G$ are in natural bijection with the isomorphism classes of $G$-torsors over $k$, itself classified by the Galois cohomology set
	$$\displaystyle H^1(k,G):=H^1(\Gamma,G(\overline{k})).$$\index{$H^1(k,G)$}
	More precisely, for $\alpha\in H^1(k,G)$ and denoting by $T_\alpha$\index{$T_\alpha$} the corresponding $G$-torsor, where we let $G$ act on the left of $T_\alpha$, the corresponding pure inner form is $G_\alpha:=\Aut_G(T_\alpha)$\index{$G_\alpha$} of $G$-automorphisms of $T_\alpha$. This is a group that is well-defined up to inner automorphisms. Moreover, it has the same $L$-group as $G$ so that, by the LLC, every $L$-parameter $\phi\in \Phi(G)$ should give rise to an $L$-packet $\Pi^{G_\alpha}(\phi)\subset \Irr(G_\alpha)$ for $G_\alpha$. Then, for $\phi\in \Phi(G)$, we denote by
	$$\displaystyle \Pi(\phi)=\bigsqcup_{\alpha\in H^1(k,G)} \Pi^{G_\alpha}(\phi)$$\index{$\Pi(\phi)$}
	the corresponding $L$-packet extended over all pure inner forms.
	
	One advantage of considering pure inner forms is to obtain a nice parametrization of elements in an $L$-packet. Namely, assuming that $G$ is quasi-split and fixing a{\em Whittaker datum} $\mathfrak{w}=(N,\psi_N)$\index{$\mathfrak{w}$} for $G$ (that is a pair consisting of the unipotent radical $N$ of a Borel subgroup and a generic character $\psi_N: N(k)\to \bC^\times$), there should exist for every $\phi\in \Phi(G)$ a canonical bijection
	\begin{equation}\label{param Lpackets}
		\displaystyle \Irr(S_\phi)\simeq \Pi(\phi),
	\end{equation}\index{$\Irr(S_\phi)$}
	$$\displaystyle \rho\mapsto \pi_{\mathfrak{w}}(\phi,\rho),$$\index{$\pi_{\mathfrak{w}}(\phi,\rho)$}
	between the extended $L$-packet of $\phi$ and the set of (isomorphism classes of) irreducible representation of the finite component group $S_\phi=\pi_0(\Cent_{\widehat{G}}(\phi))$\index{$S_\phi$} of the centralizer $\Cent_{\widehat{G}}(\phi)$ of $\phi$ in $\widehat{G}$. Moreover, this bijection should satisfy many properties (in particular related to the theory of endoscopy), but at least when $\phi$ is tempered, it is expected to send the trivial representation $\mathbf{1}\in \Irr(S_\phi)$, to the unique representation in $\Pi^G(\phi)$ admitting a Whittaker model of type $\mathfrak{w}$.
	
	For general $X$, we will also need to consider {\it Arthur packets}. Recall that these are supposed to be associated to {\it $A$-parameters} that is morphisms
	$$\displaystyle \psi: \mathcal{L}_k\times \SL_2(\bC)\to {}^LG$$
	such that the restriction of $\psi$ to $\mathcal{L}_k$ is a tempered $L$-parameter and the restriction to $\SL_2(\bC)$ is algebraic. Then, to such an $A$-parameter, Arthur conjecture that we can associate a packet $\Pi^{G_\alpha}(\psi)$\index{$\Pi^{G_\alpha}(\psi)$} of unitary representations of the various pure inner forms $G_\alpha$ of $G$. These packets are of course expected to satisfy a number of properties, the most important one being in relation with decompositions of global spaces of automorphic forms, cf. again Kaletha and Ta\"ibi's article for more details on this.
\end{paragr}

\begin{paragr}[Function spaces on $X$.]\label{S function spaces}
	For convenience, we fix a $G(k)$-invariant measure on $X(k)$. Here are the various function spaces on $X(k)$ that will play a role in this chapter:
	\begin{itemize}
		\item $L^2(X)$\index{$L^2(X)$} is the space of $L^2$ half-densities on $X(k)$. Since we have fixed an invariant measure on $X(k)$, this can be identified with the space of square-integrable functions (with respect to the given measure). It is naturally a unitary representation of $G(k)$ (for the action by right translation). We will denote the $L^2$-pairing on $L^2(X)$ by $\langle .,.\rangle_X$\index{$\langle .,.\rangle_X$}:
		$$\displaystyle \langle \varphi_1,\varphi_2\rangle_X:=\int_{X(k)} \varphi_1(x)\overline{\varphi_2(x)} dx.$$
		
		\item The {\it Schwartz space} $\cS(X)$\index{$\cS(X)$}. In the non-Archimedean case, it is just the space $C_c^\infty(X)$\index{$C_c^\infty(X)$} of compactly supported smooth (i.e.\ locally constant) complex-valued functions on $X(k)$ (or better measures but the two can be identified again thanks to the invariant measure fixed at the beginning). In the Archimedean case, it is the space of functions that are of rapid decay with all their derivatives that is: $C^\infty$ functions $\varphi:X(k)\to \bC$ such that for every polynomial differential operator $D$ on $X$ we have
		$$\displaystyle \lVert \varphi\rVert_{\infty,D}:=\sup_{x\in X(k)} \lvert (D\varphi)(x)\rvert<\infty.$$
		It comes equipped with the topology associated to the seminorms $(\lVert .\rVert_{\infty,D})_D$ and it is a Fr\'echet space. For a more conceptual definition of the Schwartz space (in the general setting of Nash manifolds) see \cite{AGSchwartz}.

		\item The space of smooth functions $C^\infty(X)$\index{$C^\infty(X)$}. When $k$ is non-Archimedean, it consists of the functions $X(k)\to \bC$ that are right invariant by some compact-open subgroup $J\subset G(k)$. In this case, and using the $L^2$-pairing $\langle .,.\rangle_X$, $C^\infty(X)$ can be identified with the smooth contragredient of the Schwartz space $\cS(X)$. In the Archimedean case, although it is not at all the smooth dual of $\cS(X)$ in any reasonable sense, we will simply take $C^\infty(X)$ to be the space of all $C^\infty$ functions $X(k)\to \bC$.

		\item The {\it Harish-Chandra-Schwartz space} $\cC(X)$\index{$\cC(X)$} is a very convient intermediate space between $\cS(X)$ and $L^2(X)$. It was defined quite generally by Bernstein \cite[\S 0.4]{BerPlanch} for homogeneous $G(k)$-spaces of {\it polynomial growth}. More precisely, let $\mathbb{B}\subset G(k)$ be a {\it ball} (that is a compact neighborhood of $1\in G(k)$) that moreover generates $G(k)$ as a group. A $G(k)$-homogeneous space $Y$ is then said to be of {\it polynomial growth} if, choosing a base point $y_0\in Y$, we can find a polynomial $P$ such that for every $n\geq 1$ the subset $y_0\mathbb{B}^n$ can be covered by at most $P(n)$ subsets of the form $y\mathbb{B}$, $y\in Y$. As can readily be checked, this definition doesn't depend on the choice of $y_0$ and $\mathbb{B}$. When $Y$ is a polynomial growth, we define a ``radial function'' $\sigma_Y: Y\to \mathbb{R}_{\geq 1}$ by
		$$\displaystyle \sigma_Y(y)=\inf\{n\geq 1\mid y\in y_0\mathbb{B}^n\}.$$
		Changing the choice of $(y_0,\mathbb{B})$ would yield a radial function $\sigma_Y'$ that is {\it coarsely equivalent} in the sense that there exists a constant $C>0$ such that
		$$\displaystyle C^{-1} \sigma_Y(y)\leq \sigma_Y'(y)\leq C\sigma_Y(y), \mbox{ for every } y\in Y.$$
		When $X$ is a homogeneous spherical $G$-variety, each of the $G(k)$-orbits in $X(k)$ are of polynomial growth (e.g. this can be seen using the weak Cartan decomposition of \S \ref{S wavefront and weak Cartan}) and we will denote by $\sigma_X$\index{$\sigma_X$} any function $X(k)\to \bR_{\geq 1}$ that is coarsely equivalent to a radial function on each of these $G(k)$-orbits in the above sense. We note that, in this case, there is yet another possible definition of $\sigma_X$ (up to coarse equivalence) which is that of a {\it log-norm} in the sense of Kottwitz (see \cite[Sect. 18]{KottClay} and \cite[\S 1.2]{BP3} for the definitions)\footnote{In the affine case, this can be obtained as follows: fixing a closed embedding $\iota: X\hookrightarrow \bA^n$ in some affine space we can take $\sigma_X(x)=\max(1,\log \; \lvert \iota(x)_i\rvert_v\mid 1\leq i\leq n)$.}. The Harish-Chandra Schwartz space can then be defined as the projective limit (aka decreasing intersection)
		\begin{equation*}
			\displaystyle \cC(X)=\varprojlim_{d} L^2(X,\sigma_X(x)^d dx)^\infty
		\end{equation*}\index{$\cC(X)$}
		of the spaces of smooth vectors in the weighted $L^2$-spaces $L^2(X,\sigma_X(x)^d dx)$. By \cite[Sect. 3.4, Key Lemma]{BerPlanch}, a form of Sobolev inequality, an equivalent description is as the space of functions $\varphi\in C^\infty(X)$ such that for every $d>0$ (and every $u\in \mathcal{U}(\mathfrak{g})$ in the Archimedean case) we have
		\begin{equation}\label{ineq def HCS space}
			\displaystyle \lvert\varphi(x)\rvert\ll_d \Xi^X(x) \sigma_X(x)^{-d}\;\;\; (\mbox{resp. } \lvert(R(u)\varphi)(x)\rvert\ll_{d,u} \Xi^X(x) \sigma_X(x)^{-d} )
		\end{equation}
		where $\Xi^X:X(k)\to \mathbb{R}_{>0}$\index{$\Xi^X$} is the normalizing factor (because we are considering functions instead of half-densities) given by
		$$\displaystyle \Xi^X(x)=\vol(x\mathbb{B})^{-1/2},\;\; x\in X(k),$$
		for some fixed {\it ball} $\mathbb{B}\subset G(k)$ as above. (Once again, another choice of ball would lead to a (coarsely) equivalent function). The property that $X$ is of polynomial growth then translates to:
		\begin{num}
			\item\label{conv polgrowth} There exists $d>0$ such that the integral $\displaystyle \int_{X(k)} \Xi^X(x)^2\sigma_X(x)^{-d}dx$ is convergent.
		\end{num}
	\end{itemize}
\end{paragr}

\begin{paragr}[Function spaces on $G$.]\label{S function spaces for G}
	We will also work with functions on $G(k)$ mainly to construct (smooth) operators on representations. As for $X(k)$, we fix a Haar measure on $G(k)$ in order to identify functions with measures, half-densities... The function spaces we will consider are the Schwartz space $\cS(G)$ and the Harish-Chandra-Schwartz space $\cC(G)$ and these can be obtained by specializing the previous constructions to the group case (that is $X=G$ with its natural $G\times G$-action). Actually, in the second definition of the Harish-Chandra Schwartz space, given by \eqref{ineq def HCS space}, we can replace the function $\Xi^G$\index{$\Xi^G$} by Harish-Chandra basic spherical function i.e.\ by the normalized spherical matrix coefficient, with respect to the choice of some maximal compact subgroup $K\subset G(k)$, of the (normalized) parabolic induction $I_{P_0}^G(1)$ of the trivial representation from a minimal parabolic subgroup $P_0$. (Indeed by \cite[lemme II.1.1]{WaldPlanch}, the two functions are equivalent up to a finite power of $\sigma_G$.) This has the technical advantage that the function $\Xi^G$ then satisfies the {\it doubling inequality} \cite[lemme II.1.3]{WaldPlanch}
	\begin{equation}\label{doublind ineq}
		\displaystyle \int_K \Xi^G(gkg')dk\leq \Xi^G(g)\Xi^G(g'),\;\; g,g'\in G(k).
	\end{equation}
	
	For every smooth representation $(\pi,V)$ of $G(k)$ and $f\in \cS(G)$ we can define a (continuous) operator $\pi(f)$\index{$\pi(f)$} on $V$ characterized by
	$$\displaystyle \ell(\pi(f)v)=\int_{G(k)} f(g) \ell(\pi(g)v)dg$$
	for every $v\in V$ and $\ell\in V^*$ (the continuous dual in the Archimedean case, note that the `moderate growth' condition on $\pi$ then implies that the integral is convergent).

	By \cite{CoHaHo} an irreducible representation $\pi\in \Irr(G)$ is tempered if and only if for every $(v,v^\vee)\in \pi\times \pi^\vee$ we have an inequality
	\begin{equation}\label{ineq MC tempered}
		\displaystyle \lvert \langle \pi(g)v,v^\vee\rangle\rvert\ll \Xi^G(g).
	\end{equation}
	The convergence \eqref{conv polgrowth} allows to extend, for $\pi\in \Temp(G)$, the map $f\mapsto \pi(f)$ to $\cC(G)$ by the requirement
	$$\displaystyle \langle \pi(f)v,v^\vee\rangle=\int_{G(k)} f(g) \langle \pi(g)v,v^\vee\rangle dg,\;\; \mbox{ for every } (v,v^\vee)\in \pi\times \pi^\vee.$$
	Here is yet another characterization of tempered representations that we shall use: $\pi\in \Irr(G)$ if and only is there exists a continuous $G$-equivariant surjection $\cC(G)\twoheadrightarrow \pi$.
\end{paragr}

\begin{paragr}[Space of $\pi$-coinvariants.]
	Let $\pi$ be a (smooth) $G(k)$-representation of finite length. The space of {\em $\pi$-coinvariants} $\cS(X)_\pi$\index{$\cS(X)_\pi$} of $\cS(X)$ is by definition the quotient of $\cS(X)$ by the intersection of the kernels of all the $G(k)$-equivariant continuous maps $\cS(X)\to \pi$ (where the continuity condition has to be dropped in the non-Archimedean case). When the space $\Hom_G(\cS(X),\pi)$ is finite dimensional, we have an isomorphism
	$$\displaystyle \cS(X)_\pi\simeq \Hom_G(\cS(X),\pi)\otimes \pi$$
	and $\cS(X)_\pi$ is the largest (separated in the Archimedean case) quotient of $\cS(X)$ that is isomorphic to a direct sum of copies of $\pi$.
	
	Similarly, we define the space of {\em $\pi$-coinvariants} $\cC(X)_\pi$\index{$\cC(X)_\pi$} of $\cC(X)$ of the Harish-Chandra Schwartz space the quotient of $\cS(X)$ by the intersection of the kernels of all the $G(k)$-equivariant continuous maps $\cC(X)\to \pi$; where in the non-Archimedean case, the continuity condition has to be understood in the following sense: for every compact-open subgroup $J\subset G(k)$, $\cC(X)^J\to \pi^J$ is continuous. (Recall that $\pi^J$ is then finite dimensional.) As above, if the space $\Hom_G(\cC(X),\pi)$ of continuous intertwining operators $\cC(X)\to \pi$ is finite dimensional, we have an isomorphism
	$$\displaystyle \cC(X)_\pi\simeq \Hom_G(\cC(X),\pi)\otimes \pi$$
	and $\cC(X)_\pi$ is the largest separated quotient of $\cC(X)$ that is isomorphic to a direct sum of copies of $\pi$.
\end{paragr}

\begin{paragr}[Outline.]\label{S outline local aspects}
	In Section \ref{sect SV local conjecture}, we will review some conjectures of Sakellaridis-Venkatesh describing the Plancherel decomposition of $L^2(X(k))$ in terms of Arthur packets. In the rest of the chapter, we gather some evidence towards these very general expectations.
	
	More specifically, in Sections \ref{Sect GGP} and \ref{Sect Prasad conj} we discuss at length two particular but seminal examples that are the Gan-Gross-Prasad conjectures and some other, more recent, conjectures of Prasad for Galois symmetric varieties. These predictions are actually more precise than Sakellaridis-Venkatesh' general conjecture in two ways: first, by providing explicit recipes for the contributions of each $L$-packet to the ($L^2$-)spectrum of $X(k)$ and secondly by saying something on the more general smooth spectrum in the cases at hand.
	
	In Sections \ref{Sect smooth asym} and \ref{Sect Bernstein maps} we review foundational structural results of Sakellaridis and Sakellaridis-Venkatesh on the relation between harmonic analysis on $X(k)$ and its boundary degenerations $X_\Theta(k)$ (see Section \ref{sect boundary degenerations} for the definition). The main new tools for doing so are the so-called {\it asymptotic maps} relating function spaces on $X(k)$ and $X_\Theta(k)$. There are actually two versions of such maps: the {\it smooth asymptotic maps} relating smooth function spaces and the {\it unitary asymptotic maps} (also called {\it Bernstein maps}) for $L^2$-spaces roughly corresponding to the difference between integrating Eisenstein series far from the unitary axis and on the unitary axis in a global setting. The theory of Bernstein maps eventually leads to a spectral decomposition of $L^2(X(k))$ that is parallel to the natural partition of Langlands parameters for $X$ into sets of discrete parameters for the Levi subgroups of $\widehat{G}_X$. We warn the reader that in Sections \ref{Sect smooth asym} and \ref{Sect Bernstein maps}, the field $k$ is supposed to be non-Archimedean and the group $G$ is assumed to be split over $k$. Since the appearance of Sakellaridis-Venkatesh work, the $L^2$-theory and the aforementioned Plancherel decomposition have been generalized to most spherical varieties over $\mathbb{R}$ cf. \cite{DKKS}. However, to date there is no known analogs of the smooth asymptotics maps in the Archimedean case and actually the most naive formulation leads to false statements (e.g. already in the group case Bernstein's second adjunction fails for real groups). This is clearly an interesting and fundamental problem to reflect on and we refer the reader to the note \cite{WangSL2R} for some clues in the case of $\SL_2(\mathbb{R})$. The split assumption for its part is imposed throughout the work of Sakellaridis-Venkatesh which stems from the fact that most of the literature on spherical varieties, including their compactification theory which is crucial for the methods used, is developed over an algebraically closed field (of characteristic zero). When the group is split most results of interest can be painlessly descended to an arbitrary field of definition but not in general. See however \cite{KKRGA} for the development of a general compactification theory for $G$-varieties along the lines of what is needed for the development of local harmonic analysis. This can probably be applied to extend the Sakellaridis-Venkatesh' theory to arbitrary groups but such extension hasn't been carried out so far. Obtaining similar results over local fields of positive characteristics on the other hand seems much more challenging as some of the foundational results in the geometry of spherical varieties just break down.
	
	In Section \ref{Sect local trace formula}, we briefly review local trace formulas that are important tools used to attack the above conjectures. These are actually far from being currently developed in the most desired generality but we will try to convey, in particular through a summary of (part of) the seminal works of Arthur \cite{ArtLTF} (in the group case) and Waldspurger \cite{WaldGP1} (in the setting of the orthogonal Gan-Gross-Prasad conjectures), how these can be effectively applied to the study of spectra of spherical varieties.
	
\end{paragr}

\subsection{Distinction and abstract Plancherel decompositions}\label{Sect distinction and Plancherel}

In this subsection, we recall the notion of distinction for irreducible representations with respect to a $G$-variety $X$ as well as its unitary variant; the Plancherel decomposition.

\vspace{2mm}

\begin{paragr}\label{S distinction}
	Let $\pi\in \Irr(G)$ be a smooth irreducible representation of $G(k)$. We say that $\pi$ is {\it $X$-distinguished} if the space of (continuous in the Archimedean case) intertwining homomorphisms $\Hom_{G}(\cS(X),\pi)$ is nonzero. Note that by duality and Frobenius reciprocity we have natural isomorphisms
	\begin{equation*}
		\displaystyle \Hom_{G}(\cS(X),\pi)\simeq \Hom_{G}(\pi^\vee, C^\infty(X))\simeq \bigoplus_{i\in I} \Hom_{H_i}(\pi^\vee,\bC)
	\end{equation*}
	where the subgroups $(H_i)_{i\in I}$ in the last direct sum are (representatives of) the stabilizers of the $G(k)$-orbits in $X(k)$ i.e.\ $X(k)=\bigsqcup_{i\in I} H_i(k)\backslash G(k)$. Thus, when $X(k)=H(k)\backslash G(k)$ is itself homogeneous, $\pi$ is $X$-distinguished if and only if $\pi^\vee$ is {\it $H$-distinguished} in the sense that it supports a nonzero $H(k)$-invariant linear form.
	
	If $k$ is Archimedean, it is a theorem of Kobayashi-Oshima \cite{KobOsh} that the multiplicity
	$$\displaystyle m_X(\pi)=\dim \Hom_{G(k)}(\cS(X(k)),\pi)$$\index{$m_X(\pi)$}
	is uniformly bounded as $\pi$ varies in $\Irr(G)$. The corresponding result for $k$ non-Archimedean case is certainly expected but not known in general. See however \S \ref{S smooth asymptotics cons} below for a discussion of partial results in that direction showing in particular that $m_X(\pi)$ is always {\it finite} under suitable assumptions on $X$ (including the case where $X$ is wavefront).
\end{paragr}

\begin{paragr}[Plancherel decompositions (after Bernstein).]\label{sect Plancherel}
	
	As explained in the introduction, the Plancherel decomposition of $L^2(X)$ is one convenient way to make sense of the spectrum of $X$. Let us explain what is exactly meant by that. More precisely, we will need to write direct integral decompositions of the form:
	\begin{equation}\label{eq AbPlanch}
		\displaystyle L^2(X)\simeq \int^{\oplus}_{\Phi} \Pi_\phi d\mu(\phi)
	\end{equation}
	where typically $\Phi$ will stand for some space of Langlands parameters and $\Pi_\phi$ will be some direct sum of unitary irreducible representations of $G(k)$ is some Arthur packet associated to $\phi$. We can actually make sense of decompositions like \eqref{eq AbPlanch} in a very broad context from the abstract theory of $C^*$-algebras \cite{DixC*}. However, in our context, it turns out that there is a better suited and more explicit description that is essentially due to Bernstein \cite{BerPlanch}. More precisely, in this perspective a direct integral decomposition as \eqref{eq AbPlanch} amounts to the following set of data:
	\begin{itemize}
		\item a measure space $(\Phi,d\mu)$;
		
		\item a family $(J^X_\phi)_{\phi\in \Phi}$\index{$J^X_\phi$} of $G(k)$-invariant semipositive\footnote{semipositive here means $J_\phi^X(f,f)\geq 0$ for every $f\in \cS(X)$.} Hermitian forms on $\mathcal{S}(X)$ which is measurable and square-integrable in the sense that for every $f\in \cS(X)$ the function $\phi\mapsto J^X_\phi(f,f)$ belongs to $L^2(\Phi,d\mu)$.

		
	\end{itemize}
	satisfying the following conditions:
	\begin{itemize}
		\item $\{ J^X_\phi\mid \phi\in \Phi\}$ forms a linearly independent family;
		
		\item for every $f_1,f_2\in \mathcal{S}(X)$ we have
		$$\displaystyle \langle f_1,f_2\rangle_{X}=\int_{\Phi} J^X_\phi(f_1,f_2) d\mu(\phi);$$
		
		\item for a.a. $\phi\in \Phi$, the completion of $\mathcal{S}(X)$ with respect to $J^X_\phi$ is isomorphic to the unitary representation $\Pi_\phi$.
	\end{itemize}
\end{paragr}

\begin{paragr}\label{S remark Plancherel}
	
	We can make the following remarks on this definition:
	\begin{itemize}
		\item We can always replace the triple
		$$\displaystyle (\Phi, d\mu, \{ J^X_\phi\}_{\phi\in \Phi})\; \mbox{ by } (\Phi, \kappa d\mu, \{ \kappa(\phi)^{-1}J^X_\phi\}_{\phi\in \Phi})$$
		for any measurable function $\kappa: \Phi\to \bR_{>0}$. In particular, what is canonical in the decomposition \eqref{eq AbPlanch} is really the {\it class} of the measure $d\mu$ or, even better, the product $J^X_\phi d\mu(\phi)$.
		
		\item Up to this ambiguity, the triple is unique if we require that $\Phi$ is a subset of the unitary dual $\Unit(G)$, with its usual Borel structure (coming from the Fell topology), and that, for every $\phi\in \Phi$, $\Pi_\phi$ is a direct sum of copies of $\phi$. In this case \eqref{eq AbPlanch} is called {\it the} Plancherel decomposition of $L^2(X)$ and the (class of the) the measure $\mu$ {\it the} Plancherel measure for $X$. In particular, its support
		$$\displaystyle \Supp(\mu)\subset \Unit(G)$$
		is the {\it $L^2$-spectrum of $X$}.
		
		\item The Hermitan forms $J^X_\phi$ are called {\it relative characters}. More generally, a relative character is a bilinear (rather than sesquilinear) form $J: \cS(X)\times \cS(X)\to \bC$ such that for some finite length (typically irreducible) representation $\pi$ it factors through a $G\times G$-equivariant projection $\cS(X)\otimes \cS(X)\to \pi\otimes \pi^\vee$ followed by the canonical pairing $\langle .,.\rangle: \pi\otimes \pi^\vee\to \bC$, in other words we have a factorization of the form
		$$\displaystyle J: \cS(X)\otimes \cS(X)\twoheadrightarrow \pi\otimes \pi^\vee \xrightarrow{\langle .,.\rangle} \bC.$$
		We then say that the relative character $J$ is {\it supported} on $\pi$.
		
		\item Let $x\in X(k)$ with stabilizer $H=G_x$. When $\pi$ is irreducible, a relative character $J_\pi$\index{$J_\pi$} supported on $\pi$ is related to $H$-invariant linear forms as follows. Let $\cS(X)_x$\index{$\cS(X)_x$} be the subspace of functions $f\in \cS(X)$ supported in the orbit $xG(k)=H(k)\backslash G(k)$. Then, there exist $H$-invariant linear forms $\ell_H: \pi\to \bC$ and $\ell_H^\vee: \pi^\vee\to \bC$, making the following diagram commute
		$$\displaystyle \xymatrix{ \mathcal{S}(G)\otimes \mathcal{S}(G) \ar[r]^{\int_{H\times H}} \ar[d]^{\star} & \mathcal{S}(X)_x\otimes \mathcal{S}(X)_x \ar[rr]^{\;\;\;\;\;\;\;\;\;\;J_\pi} & & \bC \\ \mathcal{S}(G) \ar[r]^{f\mapsto \pi(f)\;\;\;\;\;\;\;\;\;\;\;\;} & \pi^\vee\otimes \pi\simeq\End^\infty(\pi) \ar[urr]_{\ell_H^\vee\otimes \ell_H}}$$
		where the left vertical arrow is given by the convolution product $(f_1,f_2)\mapsto f_1\star f_2^\vee$, with $f_2^\vee(g):=f_2(g^{-1})$, the first top arrow is given by the twofold tensor product of the surjection $\cS(G)\to \cS(X)_x$, $f\mapsto f\mapsto \tilde{f}(g):=\int_{H(k)} f(hg)dh$, the bottom horizontal arrow maps $f$ to the operator $\pi(f)$ identified with an element of $\pi^\vee\otimes \pi$ via the natural identification of the latter with the space $\End^\infty(\pi)$ of $G\times G$-smooth endomorphisms of $\pi$. More precisely, the $H\times H$-invariant form $\ell_H^\vee\otimes \ell_H$ is the composition of the embedding $\pi^\vee\otimes \pi\hookrightarrow C^\infty(X)\times C^\infty(X)$, dual to the surjection $\mathcal{S}(X)\otimes \cS(X)\twoheadrightarrow \pi\otimes \pi^\vee$ with evaluation at $x\in X(k)$. Concretely, this says that
		\begin{equation*}
			\displaystyle J_\pi(\tilde{f}_1,\tilde{f}_2)=\sum_{v\in \pi} \ell_H(\pi(f_1)v)\ell_H^\vee(\pi^\vee(f_2)v^\vee)
		\end{equation*}
		where the sum runs over a suitable basis of $\pi$ and $v^\vee$ runs over the dual basis;
		
		\item An isomorphism like \eqref{eq AbPlanch} can also be interpreted as a decomposition into eigenfunctions. More precisely, the relative characters $J^X_\phi$ induce surjective intertwinings $\displaystyle \cS(X)\twoheadrightarrow \Pi_\phi^\infty$ to the subspace of smooth vectors in $\Pi_\phi$ which then dualize to embeddings $\Pi_\phi^\infty\to C^\infty(X)$. Denoting by
		$$\displaystyle \cS(X)\ni f\mapsto f^\phi\in C^\infty(X)$$
		the composition $\cS(X)\to \Pi_\phi^\infty\to C^\infty(X)$, the decomposition \eqref{eq AbPlanch} also reads
		\begin{equation}\label{decomp eigenfunctions}
			\displaystyle f=\int_{\Phi} f^\phi d\mu(\phi),\;\; \mbox{ for every } f\in \cS(X),
		\end{equation}
		where the integral can be interpreted in a naive sense, e.g. we have $f(x)=\int_{\Phi} f^\phi(x) d\mu(\phi)$ for every $x\in X(k)$, and the $f^\phi$\index{$f^\phi$} are {\it generalized eigen-functions} i.e.\ they generate finite length subrepresentations of $C^\infty(X)$.
		
		\item By the main result of \cite{BerPlanch}, for $\mu$ almost all $\phi$ the relative character $J_\phi^X$ extends by continuity to the Harish-Chandra Schwartz space $\cC(X)$. Equivalently, the equivariant map $\cS(X)\twoheadrightarrow \Pi_\phi$ extend to $\cC(X)\twoheadrightarrow \Pi_\phi$. We say that a smooth irreducible representation $\pi\in \Irr(G)$ is {\it $X$-tempered} if there exists a nonzero continuous intertwining $\cC(X)\to \pi$. Therefore, this is saying that the Plancherel measure for $X$ is supported on $X$-tempered representations which is an analytic condition usually stronger than $X$-distinction e.g. in the group case this immediately gives that the Plancherel measure for $G(k)$ is supported on tempered representations (in the usual sense). Yet another formulation of the same property says that the eigenfunctions in the decomposition \eqref{decomp eigenfunctions} are always {\it tempered} i.e.\ define, by integration, continuous functionals on $\cC(X)$\footnote{As for the Harish-Chandra Schwartz space, tempered functions $f$ can be characterized, perhaps more concretely, by a growth condition: namely there exists $d>0$ such that $\lvert f(x)\rvert\ll \Xi^X(x)\sigma_X(x)^d$, $x\in X(k)$ (resp. $\lvert (R(u)f)(x)\rvert\ll \Xi^X(x)\sigma_X(x)^{d}$ for all $u\in \mathcal{U}(\mathfrak{g})$ in the Archimedean case.)}

		\item In the group case (i.e.\ when $X=H\curvearrowleft G=H\times H$), $\Pi_\phi$ is a direct sum of unitary $G(k)$-representations of the form $\pi\widehat{\otimes}\pi^\ast\simeq \operatorname{HS}(\pi)$\index{$\operatorname{HS}(\pi)$}, $\pi\in \Unit(H)$, where $\pi^*$ is the unitary dual of $\pi$, $\widehat{\otimes}$ the completed Hilbert tensor product and we have made the natural identification with the space $\operatorname{HS}(\pi)$ of Hilbert-Schmidt operators on (the space of) $\pi$, and the relative characters $J_\phi$ is a linear combination (with positive coefficients) of the Hilbert-Schmidt inner products
		$$\displaystyle (f_1,f_2)\mapsto \Tr \pi(f_1)\pi(f_2)^*=\langle \pi(f_1),\pi(f_2)\rangle_{\operatorname{HS}}.$$\index{$\langle .,.\rangle_{\operatorname{HS}}$}

		\item Since in the group case we have canonical way to normalize relative characters (given by the trace/Hilbert-Schmidt inner product of the operators $\pi(f)$), we also have a canonical measure $d\mu_H$ on the unitary dual $\Unit(H)$. The support of this measure is known to be the tempered dual $\Temp(H)\subseteq \Unit(H)$. More precisely, for $f_1,f_2\in \mathcal{S}(H)$ we have
		$$\displaystyle (f_1,f_2)_{L^2}=\int_{\Temp(H)} \langle \pi(f_1),\pi(f_2)\rangle_{\operatorname{HS}} d\mu_H(\pi)$$
		where $d\mu_H$\index{$d\mu_H$} is a canonical measure on $\Temp(H)$ called {\it the Plancherel measure} of $H$ and that has been described very precisely by Harish-Chandra. Another equivalent version of the above Plancherel formula (obtained by formally plugging $f_2=\delta_1$) is the identity
		$$\displaystyle f(1)=\int_{\Temp(H)} \Tr (\pi(f)) d\mu_H(\pi)$$
		for every $f\in \mathcal{S}(H)$ and actually even for every $f\in \mathcal{C}(H)$.
	\end{itemize}
\end{paragr}

\subsection{The Sakellaridis-Venkatesh local conjecture}\label{sect SV local conjecture}

We now state the most general local conjecture, due to Sakellaridis and Venkatesh \cite{SV}, pertaining to the spectral decomposition of spherical varieties.

\vspace{2mm}

\begin{paragr}[Local conjecture: weak form.]\label{S local conjecture weak form}
	Assume that $X$ has no type $N$ roots so that by the general construction of \S \ref{S L-groups} it admits an $L$-group ${}^L G_X$. Recall that $\Phi_{\temp}(X)$ denote the set of (equivalence classes of) tempered $L$-parameters $\phi: \mathcal{L}_k\to {}^L G_X$ (see \S \ref{S Lparameters}). This set comes naturally equipped with a topology and a natural class of Radon measures as follows. Namely, for every $\phi: \mathcal{L}_k\to {}^L G_X\in \Phi_{\temp}(X)$, picking a minimal Levi subgroup ${}^L M\subset {}^L G_X$ through which $\phi$ factors, there is a natural action of the group of unramified parameters
	$$\displaystyle \mathfrak{X}({}^L M):=\Hom_{\operatorname{cont}}(\mathcal{L}_k/\mathcal{L}_k^1, Z({}^L M)_c^0),$$\index{$\mathfrak{X}({}^L M)$}\index{$\mathcal{L}_k^1$}\index{$Z({}^L M)_c^0$}
	where $\mathcal{L}_k^1$ denotes the kernel of the absolute value map $\lvert .\rvert:\mathcal{L}_k\to \bR_{>0}$ (e.\ g.\ if $k$ is non-Archimedean, $\mathcal{L}_k^1=I_k\times \SL_2(\bC)$ where $I_k\subset W_k$ is the inertia subgroup) and $Z({}^L M)_c^0$ is the maximal compact connected subgroup in the Galois fixed part $Z(\widehat{M})^\Gamma$ of the center of $\widehat{M}$, on $\phi$ by multiplication:
	$$\displaystyle \chi\in \mathfrak{X}({}^L M)\mapsto \phi\chi\in \Phi_{\temp}(X).$$
	Then, the topology and the measure-class on $\Phi_{\temp}(X)$ are uniquely characterized by requiring that for every $\phi$ the above map is atopological  quotient map in a neighborhood of $1\in \mathfrak{X}({}^L M)$ and the push-forward of any Haar measure on $\mathfrak{X}({}^L M)$ (which is either a compact real torus $(\mathbb{S}^1)^N$, in the non-Archimedean case, or a real vector space, in the Archimedean case) to a neighborhood of $\phi\in \Phi_{\temp}(X)$ belongs to the (restriction of the) given measure-class.

	\begin{conj}[Sakellaridis-Venkatesh]\label{conj1 SV}
		There exists a ``natural'' direct integral decomposition
		$$\displaystyle L^2(X)\simeq \int_{\Phi_{\temp}(X)}^\oplus \Pi_\phi^G d\mu(\phi)$$
		where
		\begin{itemize}
			\item $d\mu$ is in the natural class of measures on $\Phi_{\temp}(X)$ as described above.
			
			\item $\Pi_\phi^G$ is a multiplicity free direct sum of representations in the Arthur packet $\Pi^G(\psi)$ where $\psi$ is the $A$-parameter
			$$\displaystyle \mathcal{L}_k\times \SL_2(\bC)\xrightarrow{\phi\times Id} {}^LG_X\times \SL_2(\bC)\xrightarrow{{}^L \iota_X} {}^L G.$$
		\end{itemize}
	\end{conj}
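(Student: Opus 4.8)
Since Conjecture~\ref{conj1 SV} presupposes both a local Langlands correspondence and Arthur's conjectures for $G$, its Levi subgroups and its pure inner forms, it is at present a research program rather than a theorem; what follows is a proposal for how one would attack it --- essentially the strategy of Sakellaridis and Venkatesh --- in the most favourable case, namely $X$ wavefront over a non-Archimedean $k$, and conditionally on those two external inputs. The plan is to produce concrete Plancherel data $(\Phi_{\temp}(X),d\mu,\{J_\phi^X\})$ as in \S\ref{sect Plancherel} realizing $\langle\cdot,\cdot\rangle_X$ as a direct integral, and then to match the completions $\Pi_\phi^G$ with the Arthur packets $\Pi^G(\psi)$, $\psi={}^L\iota_X\circ(\phi\times\Id)$. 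The backbone is an induction on the rank $\dim\mathcal{A}_X$.

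The inductive step would run through the boundary degenerations of \S\ref{sect boundary degenerations} and the Bernstein (unitary asymptotic) maps of \S\ref{Sect Bernstein maps}. For each $\Theta\subsetneq\Delta_X$ these maps embed $L^2(X_\Theta)$, integrated over a family of unramified central characters, into $L^2(X)$, and one shows that $L^2(X)$ splits as a discrete-mod-center piece $L^2_\disc(X)$ together with pieces induced from the $L^2(X_\Theta)$ with $\Theta\neq\Delta_X$. Since $X$ is wavefront, each such $X_\Theta$ is parabolically induced from an $L_\Theta$-spherical variety of strictly smaller rank (\S\ref{S wavefront case}), so the inductive hypothesis, combined with a calculus of parabolic induction for Plancherel formulas (an analogue for $X$ of Harish-Chandra's Eisenstein integrals), yields the Plancherel decomposition of each $L^2(X_\Theta)$ over $\Phi_{\temp}(X_\Theta)$; by \S\ref{S invts Xtheta} the latter is exactly the set of tempered parameters into $\widehat{G}_{X_\Theta}=\widehat{L}_\Theta$. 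Gluing these along the combinatorics of $\Delta_X$ assembles the base space $\Phi_{\temp}(X)$, and the explicit scattering operators relating the various $L^2(X_\Theta)$ --- a relative Gindikin--Karpelevich computation indexed by the Weyl group $W_X$ of $X$ --- are what force $d\mu$ into the natural measure class of \S\ref{S local conjecture weak form} and confine the purely continuous spectrum to parameters factoring through proper Levi subgroups of ${}^L G_X$. The ``most continuous'' piece, induced from the minimal horospherical degeneration $X_\emptyset$, would be identified with the unramified part of the spectrum --- parameters $W_k/I_k\to\widehat{G}_X$ --- through an explicit Casselman--Shalika/Gindikin--Karpelevich formula on $X$ (\S\ref{Sect most cont spectrum}), which is precisely where $\widehat{A}_X\subset\widehat{G}_X$ and $\iota_X|_{\widehat{A}_X}$ enter concretely.

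What then remains --- the genuinely hard and still open core --- is to analyze the residual discrete spectrum $L^2_\disc(X_\Theta)$ for every $\Theta$ (in particular $\Theta=\Delta_X$) and to prove that each constituent of the corresponding $\Pi_\phi^G$ lies in the Arthur packet $\Pi^G(\psi)$. Here the morphism $\iota_X^{\SL_2}$, which depends only on the parabolic type $P_X$, supplies the ``Arthur $\SL_2$'' governing the size of these representations, and the distinguished morphism $\iota_X$ of Theorem~\ref{thm KS} performs the translation from the $X$-spectrum to Arthur parameters of $G$. The natural tool is a local relative trace formula for $X$ (\S\ref{Sect local trace formula}): a regularization of a distribution $f_1\otimes f_2\mapsto J(f_1,f_2)$ admitting both a geometric expansion in terms of relative orbital integrals and a spectral expansion in terms of relative characters, the two sides being matched so as to isolate the discrete relative characters of $X$. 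In the examples where this has been carried through --- Arthur's local trace formula in the group case $X=H\curvearrowleft H\times H$, where $\widehat{G}_X=\widehat{H}$, the Arthur $\SL_2$ is trivial, and one recovers Harish-Chandra's Plancherel theorem, and Waldspurger's local trace formula in the orthogonal Gan--Gross--Prasad setting --- it does exactly this job.

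The main obstacle is twofold. Analytically, identifying $L^2_\disc(X)$ with the ``right'' representations, with the correct multiplicities (which the statement does not even pin down), and showing they lie in the predicted Arthur packets via $\iota_X$, is the hard arithmetic content; moreover the local relative trace formula is currently available only in scattered cases, and to argue outside the wavefront case --- or over Archimedean fields, where no analogue of the smooth asymptotics is known --- one would additionally need finiteness of the action of the Bernstein center on $\cC(X)$, compare Conjecture~\ref{conj supercuspidal}. Conceptually, the whole scheme rests on an unconditional local Langlands correspondence and Arthur's conjectures for $G$ and its associated groups. For a refined perspective on these matters we refer again to \cite{BZSV}.
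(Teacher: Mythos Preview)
The statement is a \emph{conjecture}, and the paper does not prove it; it is presented precisely as the central open local conjecture of Sakellaridis--Venkatesh, with the surrounding sections (\S\ref{Sect smooth asym}--\S\ref{Sect local trace formula}) supplying only partial structural evidence. You recognise this correctly and do not claim a proof; what you give is an accurate pr\'ecis of the very strategy the paper itself lays out as the route of attack --- induction via boundary degenerations and Bernstein maps (Theorems~\ref{theo Bernstein maps} and~\ref{theo scattering}), identification of the most continuous spectrum with unramified parameters (\S\ref{Sect most cont spectrum}), and the local relative trace formula (\S\ref{Sect local trace formula}) as the tool for the residual discrete spectrum. Your description of the open core (the discrete spectrum of each $X_\Theta$ and its matching with Arthur packets via $\iota_X$) and of the analytic/conceptual obstructions (non-wavefront cases, Archimedean fields, dependence on LLC and Arthur's conjectures) is also in line with the paper's own caveats. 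So there is nothing to correct: your proposal and the paper's treatment coincide, both in identifying the statement as conjectural and in the outline of how one hopes eventually to establish it.
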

	
	\begin{rem}
		\begin{itemize}
			\item The conjecture is not saying anything about the exact constituents of $\Pi_\phi^G$. It is not even asserting that $\Pi_\phi^G\neq 0$, for that we have to take into account pure inner forms, see below. In particular, it is only giving an upper bound on the Plancherel support of $L^2(X)$.
			
			\item The conjecture stated in \cite{SV} only concerns split groups (in which case the Galois action on $\widehat{G}_X$ is trivial) but here we have stated its obvious extension in general by using Knop's definition of the $L$-group ${}^L G_X$ (see \S \ref{S L-groups}).
		\end{itemize}
		
	\end{rem}
\end{paragr}

\begin{paragr}[Local conjecture: pure inner forms.]\label{S PIF X}
	We assume now that $G$ is quasi-split. A {\it pure inner form} of $X$ is by definition a spherical variety that can be obtained by the following procedure. Take $\alpha\in H^1(k,G)$. Then, $\alpha$ parametrizes a $G$-torsor $T_\alpha$ (with a left $G$-action) and the contracted product
	$$\displaystyle X_\alpha:= X\times^G T_\alpha$$\index{$X_\alpha$}
	is naturally a $G_\alpha$-variety (it is even homogeneous if $X$ is) where $G_\alpha=\Aut_G(T_\alpha)$ is the pure inner form of $G$ associated to $\alpha$. Also, we will say of the pure inner form $X_\alpha$ that it is {\it relevant} if $X_\alpha(k)\neq \emptyset$.
	
	The direct sum
	$$\displaystyle \bigoplus_{\alpha\in H^1(k,G)} L^2(X_\alpha)$$
	is now a unitary representation of $\prod_\alpha G_\alpha(k)$ (where only one factor acts nontrivially on each summand). Note that the sum can obviously be restricted to relevant pure inner forms.
	
	\begin{rem}
		Simple considerations in Galois cohomology show that
		$$\displaystyle \bigoplus_{\alpha\in H^1(k,G)} L^2(X_\alpha)=\bigoplus_{\beta\in H^1(k,H)} L^2(H_\beta(k)\backslash G_\beta(k)).$$
		Indeed, the pure inner form $X_\alpha$ corresponding to $\alpha\in H^1(k,G)$ is relevant if and only if $\alpha$ belongs to the image of the natural map $i:H^1(k,H)\to H^1(k,G)$ in which case the fiber $i^{-1}(\alpha)$ naturally parametrizes the $G_\alpha(k)$-orbits in $X_\alpha(k)$ i.e.\ we have a decomposition $X_\alpha(k)=\sqcup_{\beta\in i^{-1}(\alpha)} H_\beta(k)\backslash G_\alpha(k)$.
	\end{rem}
	
	\begin{conj}[Sakellaridis-Venkatesh]\label{Conj local strong SV}
		Assume that $G$ acts faithfully on $X$. Then, there should exist a natural decomposition
		$$\displaystyle \bigoplus_{\alpha\in H^1(k,G)} L^2(X_\alpha)\simeq \int_{\Phi_{\temp}(X)}^\oplus \Pi_\phi d\mu(\phi)$$
		where this time $\Pi_\phi$ is a {\bf non-zero} multiplicity free direct sum of representations in the (extended) Arthur packet $\Pi(\psi)=\bigsqcup_{\alpha} \Pi^{G_\alpha}(\psi)$ with $\psi$ defined as in Conjecture \ref{conj1 SV}.
	\end{conj}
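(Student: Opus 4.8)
\medskip\noindent\textbf{Step 1 (scattering theory via Bernstein maps).}
The plan is to follow Sakellaridis--Venkatesh \cite{SV}, inducting on $\dim X$ and reducing the conjecture to an understanding of the \emph{relative discrete series} of $X$ and of all of its pure inner forms. Applying the theory of $L^2$-asymptotics (the Bernstein maps of Subsection~\ref{Sect Bernstein maps}) to $X$ and, simultaneously, to each $X_\alpha$, one would obtain a $G_\alpha(k)$-equivariant decomposition
\[
 L^2(X_\alpha)\;=\;L^2(X_\alpha)_{\mathrm{disc}}\;\oplus\;\bigoplus_{\Theta\subsetneq\Delta_X}\bigl(\text{the summand of }L^2(X_\alpha)\text{ parabolically induced from }L^2(X_{\alpha,\Theta})\bigr),
\]
where the $X_{\alpha,\Theta}$ are the boundary degenerations. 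Since $\widehat G_{X_\Theta}=\widehat L_\Theta$ is a proper Levi of $\widehat G_X$ and $\iota_{X_\Theta}=\iota_X|_{\widehat L_\Theta\times\SL_2(\bC)}$ by \S\ref{S invts Xtheta}, the induction hypothesis applied to the proper $X_\Theta$ accounts for exactly those $\phi\in\Phi_{\temp}(X)$ that factor through a proper ${}^L L_\Theta$. This reduces the conjecture to showing that $\bigoplus_\alpha L^2(X_\alpha)_{\mathrm{disc}}$ decomposes over the remaining ``elliptic'' parameters, with each fibre $\Pi_\phi$ a nonzero multiplicity-free sum of members of the packet $\Pi(\psi)$, $\psi={}^L\iota_X\circ(\phi\times\mathrm{id})$.

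\medskip\noindent\textbf{Step 2 (the base case: the most continuous spectrum).}
The base of the induction is the contribution of unramified parameters $W_k/I_k\to\widehat G_X$ (Subsection~\ref{Sect most cont spectrum}), whose Plancherel density I would compute by hand: in the $p$-adic case, by letting the unramified Hecke algebra act on the $K$-invariants $\cS(X)^K$ for a hyperspecial maximal compact $K\subset G(k)$ and using analogues on $X$ of the Gindikin--Karpelevich and Casselman--Shalika formulas. One expects the outcome to match the ratio of local $L$-factors attached to $\widehat\iota_X$ predicted in \S\ref{S normalized rc}; this both verifies the shape of $\Pi_\phi$ on that locus and fixes the normalization of the relative characters used below.

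\medskip\noindent\textbf{Step 3 (the discrete part via a trace-formula comparison).}
To pin down $\bigoplus_\alpha L^2(X_\alpha)_{\mathrm{disc}}$ I would develop a relative local trace formula for the family $(X_\alpha)_\alpha$: for $f_1,f_2\in\cS(X_\alpha)$ one expands a suitably truncated inner product both spectrally, producing the relative characters $J^{X_\alpha}_\phi$, and geometrically, in terms of relative orbital integrals. After stabilizing, and granting the local Langlands correspondence with Arthur packets and the endoscopic character identities (\S\ref{S LLC}), one would transport the relative characters through $\iota_X$ and compare with ordinary harmonic analysis on the group side --- Arthur's local trace formula in the group case, a Jacquet--Rallis-type comparison in the Rankin--Selberg and Gan--Gross--Prasad cases. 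Running this comparison over \emph{all} pure inner forms at once is what completes the geometric side, and hence forces each $\Pi_\phi$ to be nonzero and to exhaust $\Pi(\psi)$ with multiplicity one; Waldspurger's proof of the local orthogonal Gan--Gross--Prasad conjecture (Subsection~\ref{Sect local trace formula}) is the model to emulate.

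\medskip\noindent\textbf{The main obstacle.}
The hard part will be Step~3: a relative local trace formula whose geometric side one can actually evaluate is currently available only in special cases, and constructing it for an arbitrary wavefront spherical variety with no roots of type $N$ is the central open problem. It will moreover rest on an unproven local Langlands correspondence and theory of Arthur packets for all the pure inner forms $G_\alpha$ that occur, and on the a priori finiteness and temperedness bounds for $X$ and its $X_\Theta$ that Step~1 needs (known under the wavefront hypothesis, and otherwise tied to Conjecture~\ref{conj supercuspidal}). A further genuine difficulty is that over Archimedean $k$ there is as yet no theory of smooth asymptotics, so the inductive mechanism of Step~1 would have to be replaced there by the purely $L^2$-theoretic arguments of \cite{DKKS}.
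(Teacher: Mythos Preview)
This statement is a \emph{conjecture} in the paper, not a theorem: the paper states it as Conjecture~\ref{Conj local strong SV} (due to Sakellaridis--Venkatesh) and provides no proof. There is therefore no ``paper's own proof'' to compare your proposal against.

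Your outline is a reasonable roadmap, and indeed it tracks the architecture the paper itself develops as \emph{partial evidence} toward the conjecture: the Bernstein maps and scattering theory of Subsections~\ref{Sect smooth asym}--\ref{Sect Bernstein maps} (your Step~1), the explicit unramified Plancherel computation of Subsection~\ref{Sect most cont spectrum} (your Step~2), and the local trace formula machinery of Subsection~\ref{Sect local trace formula} (your Step~3). You have also correctly identified the genuine obstruction: Step~3 is wide open in general, Step~1 currently requires the wavefront hypothesis (or the unproven Conjecture~\ref{conj supercuspidal}), and the whole edifice presupposes the local Langlands correspondence with Arthur packets for all pure inner forms. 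So what you have written is not a proof but a plausible strategy for attacking an open conjecture --- which is exactly how the paper itself treats this material.
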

\end{paragr}

\subsection{Tempered and strongly tempered varieties}

The aim of this subsection is to discuss two particular classes of (spherical) $G$-varieties $X$ roughly characterized by certain relations between their Plancherel measures $\mu_X$ and the Plancherel measure $\mu_G$ of the group $G$ itself. More specifically, we will first consider {\em tempered varieties}, that is those $G$-varieties whose Plancherel measure $\mu_X$ is supported on the set of tempered representations ($=$ the support of $\mu_G$). Following Cowling, Haagerup and Howe \cite{CoHaHo}, tempered varieties can be characterized by a growth condition on the matrix coefficients associated to functions $\varphi_1,\varphi_2\in C_c^\infty(X)$. Following Sakellaridis-Venkatesh, we will also discuss {\em strongly tempered varieties}, characterized in a similar way by a growth condition on matrix coefficients. The Plancherel measure of a strongly tempered variety $X$ is automatically absolutely continuous with respect to $\mu_G$ which, when the group $G$ is split and $X$ is spherical without type $N$ roots, is to be expected from the Sakellaridis-Venkatesh local conjecture since in this case they share the same $L$-group: ${}^L G_X={}^L G$ (at least under the assumption of Conjecture \ref{Conj local strong SV}; in general the map ${}^L G_X\to {}^L G$ is rather a finite isogeny). We however start this subsection by discussing weak Cartan decompositions for wavefront spherical varieties; a technical tool that allows to relate in a certain ad hoc way eigenfunctions on $X$ to irreducible smooth matrix coefficients on $G$.

\vspace{2mm}

\begin{paragr}[Weak Cartan decompositions and wavefront varieties.]\label{S wavefront and weak Cartan}
For simplicity, in this paragraph we assume that $G$ is split. We identify $A_X$ with a subvariety of $X$ as in \S \ref{S parabolic type}, by fixing a flat with a base-point. Then, for any large enough compact subset $\mathcal{K}\subset G(k)$ satisfies the {\it weak Cartan decomposition}
\begin{equation}\label{weak Cartan}
\displaystyle X(k)=A_X^-\mathcal{K}
\end{equation}
where
$$\displaystyle A_X^-:=\{a\in A_X(k)\mid \lvert \alpha(a)\rvert\geq 1,\; \forall \alpha\in \Delta_X \}$$\index{$A_X^-$}
is the negative Weyl chamber in $A_X(k)$. The validity of \eqref{weak Cartan} for $\mathcal{K}$ large enough can be readily deduced from the local structure theorem (Theorem \ref{thm LST}) applied to some toroidal compactification of $X$, see \cite[Lemma 5.3.1]{SV}.
	
	Recall that a spherical $G$-variety $X$ is said to be wavefront if the negative Weyl chamber $\cA_X^-$ of $X$ is the image of the negative Weyl chamber $\cA^-$ of $G$. If this is the case, the above weak Cartan decomposition allows to reduce many statements for $X$ to similar statements for the group e.g. it implies that arbitrary embeddings $\pi\hookrightarrow C^\infty(X)$ can be written in a simple way in terms of smooth matrix coefficients of $\pi$. This is mostly the content of the {\it wavefront lemma} of \cite[\S 5.3]{SV} that for convenience we reproduce here.
	
	\begin{lem}[Wavefront lemma \cite{SV}.]\label{wavefront lemma}
		Assume that $X=H\backslash G$ is a wavefront spherical variety and that $k$ is non-Archimedean. Let $x_0=H1\in X$ be the canonical base-point. Then, there exists a subset $G^+\subset G(k)$ satisfying the following conditions:
		\begin{enumerate}[(i)]
			\item $G(k)=H(k)G^+$.
			
			\item For every compact-open sugroup $J\subset G(k)$, we can find a compact-open subgroup $J'\subset G(k)$ such that
			$$\displaystyle x_0J'gJ\subset x_0gJ, \mbox{ for every } g\in G^+.$$
			In particular for every smooth $G(k)$-representation, every $G(k)$-equivariant map $L:\pi\to C^\infty(X)$ and every vector $v\in \pi^J$, we can find $v^\vee\in (\pi^\vee)^{J'}$ such that $L(v)(x_0g)=\langle \pi(g)v,v^\vee\rangle$ for every $g\in G^+$.
			
			\item $\Xi^X(x_0g)\sim \Xi^G(g)$ and $\sigma_X(x_0g)\sim \sigma_G(g)$ for $g\in G^+$.
		\end{enumerate}
	\end{lem}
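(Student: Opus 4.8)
The plan is to manufacture $G^+$ out of the weak Cartan decomposition \eqref{weak Cartan} by lifting the negative chamber $A_X^-$ to the negative chamber of $G$ itself, which is exactly what the wavefront hypothesis permits. Fix a maximal torus $A\subset B$ and the identification of $A_X$ with a subvariety of $X$ through $x_0$ as in \S\ref{S parabolic type}, so that $A\twoheadrightarrow A_X$ is induced by $B\twoheadrightarrow A\twoheadrightarrow A_X$. By \eqref{weak Cartan} choose a compact-open $\mathcal{K}\subset G(k)$ with $X(k)=A_X^-\mathcal{K}$, where $A_X^-$ is regarded inside $X(k)$ via $A_X\hookrightarrow X$. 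Since $X$ is wavefront, $\mathcal{A}_X^-$ is the image of $\mathcal{A}^-$ under $\mathcal{A}\twoheadrightarrow\mathcal{A}_X$ (\S\ref{S cone Ax-}), and on $k$-points this forces the image of $A^-(k):=\{a\in A(k):|\alpha(a)|\le 1,\ \forall\alpha\in\Delta\}$ in $A_X(k)$ to differ from $A_X^-$ only by a fixed compact set (and finitely many translates). Absorbing this correction, together with the compact $N_X$-discrepancy coming from the splitting $X_B\simeq A_X\times N_X$ of \eqref{eq LST X}, into an enlargement $\mathcal{K}'$ of $\mathcal{K}$, one checks $x_0\,A^-(k)\,\mathcal{K}'=X(k)$. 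I would then set $G^+:=A^-(k)\,\mathcal{K}'$; since the orbit map $g\mapsto x_0g$ identifies $H(k)\backslash G(k)$ with $X(k)$, property (i) is immediate.

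The technical heart — and what I expect to be the main obstacle — is (ii), a contraction estimate uniform over all of $G^+$. Writing $g=a\kappa$ with $a\in A^-(k)$, $\kappa\in\mathcal{K}'$, and using that $\kappa$ ranges over a fixed compact-open set, it suffices to produce, for each compact-open $J$, a compact-open $J'$ with $x_0J'\,a\subset x_0\,aJ$ for every $a\in A^-(k)$, equivalently $J'\subset H(k)\cdot aJa^{-1}$. I would prove this by applying the local structure theorem (Theorem \ref{thm LST}) to a smooth toroidal compactification of $X$: near the closed boundary stratum towards which $A_X^-$ escapes, the $P_X=L_XN_X$-equivariant chart $\overline{X}_B\simeq A_{\overline{X}}\times N_X$ models $X$ on a product in which conjugation by $a$ deep in the negative chamber contracts exactly the unipotent directions along which $|\alpha(a)|\le 1$, while the $A_X$-direction only translates. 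The wavefront property is precisely what makes a single $J'$ work simultaneously for all $a$: it pins the escape direction of $X$ at infinity to the one chamber $A^-$ of $G$ rather than to a strictly larger cone. The consequence for matrix coefficients then follows by averaging: for $L\colon\pi\to C^\infty(X)$ equivariant and $v\in\pi^J$, the functional $\ell(w):=L(w)(x_0)$ is $H(k)$-invariant and satisfies $\ell(\pi(g)v)=L(v)(x_0g)$; then (ii) together with the right-$J$-invariance of $L(v)$ gives $\ell(\pi(j')\pi(g)v)=\ell(\pi(g)v)$ for all $j'\in J'$ and $g\in G^+$, so the averaged functional $v^\vee:=e_{J'}\cdot\ell$ (with $e_{J'}$ the normalized characteristic function of $J'$) lies in $(\pi^\vee)^{J'}$ — a $J'$-fixed functional being automatically smooth since $\pi$ is admissible — and satisfies $\langle\pi(g)v,v^\vee\rangle=L(v)(x_0g)$ for all $g\in G^+$.

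For (iii) I would compare sizes directly along the chamber. The bound $\sigma_X(x_0g)\ll\sigma_G(g)+1$ holds on all of $G(k)$ because the orbit map is a morphism of affine varieties; for the reverse inequality on $G^+$ it is enough to note that $\sigma_G$ restricted to $A^-(k)$ is coarsely $\sigma_A$, that $\sigma_X$ restricted to $A_X^-$ is coarsely $\sigma_{A_X}$, and that wavefrontness identifies these under $A^-(k)\to A_X(k)$ up to a bounded amount. For $\Xi^X(x_0g)\sim\Xi^G(g)$ I would again use \eqref{weak Cartan} and Theorem \ref{thm LST} to write the invariant measure of $X$ near $A_X^-$ in Iwasawa-type coordinates: for $a\in A^-(k)$ the volume $\mathrm{vol}_X(x_0a\mathbb{B})$ is governed, up to constants, by the same modulus character of $A^-$ as $\mathrm{vol}_G(a\mathbb{B})$ — precisely because, by wavefrontness, the toroidal degeneration towards which $A_X^-$ escapes is parabolically induced from a Levi, so that its ``distance to infinity'' is read off from $\delta_B$. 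Taking square roots and absorbing the resulting bounded factors gives the equivalence on $G^+$; combined with (i) and (ii), this completes the argument.
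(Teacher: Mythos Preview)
Your construction of $G^+$ and the verification of (i), as well as the derivation of the matrix-coefficient consequence from (ii), match the paper's argument essentially verbatim. The paper makes a minor refinement --- it takes $G^+=s(A_X^-)\mathcal{K}$ for a specific section $s:A_X\to Z(L_X)^0$ rather than all of $A^-(k)\mathcal{K}'$ --- but this is cosmetic.

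For (ii) your appeal to the local structure theorem on a toroidal compactification is considerable overkill, and your sketch only treats $a$ ``deep in the negative chamber'' when the containment must hold for every $a\in A^-(k)$. The paper's argument is a two-liner: for $g=a\kappa\in A^-\mathcal{K}$, the conjugate $gJg^{-1}$ contains a \emph{fixed} compact-open $J_B\subset B(k)$ (because conjugation by $a\in A^-$ expands every positive root subgroup, uniformly in $a$), and since $H(k)B(k)$ is open in $G(k)$ the product $H(k)J_B$ contains some compact-open $J'$. That immediately gives $J'\subset H(k)\cdot gJg^{-1}$, i.e.\ $x_0J'gJ\subset x_0gJ$. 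No compactification or boundary analysis is needed.

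For (iii) there is a genuine gap in the $\Xi$ comparison. Your claim that $\vol_X(x_0a\mathbb{B})$ and $\vol_G(a\mathbb{B})$ are ``governed by the same modulus character'' is the right heuristic, but the justification via parabolic induction at the boundary does not supply the hard inequality $\vol_G(J'gJ)\ll\vol_X(x_0gJ)$. The paper obtains this by a direct fiber estimate: for $x\in J'gJ$ one has $\int_{H(k)}\mathbf{1}_{J'gJ}(hx)\,dh=\vol_H(H(k)\cap J'gJx^{-1})$, and this is bounded \emph{uniformly} in $g\in G^+$ because (a) the construction of $J'$ in (ii) already gives $J'\subset J_H\cdot gJg^{-1}$ for some fixed compact-open $J_H\subset H(k)$, and (b) --- the key new point --- $H(k)\cap gJg^{-1}$ stays inside a fixed compact $\mathcal{K}_H\subset H(k)$ as $g$ ranges over $G^+$. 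The latter uses that $H\cap N_X=1$ together with closedness of the orbit $x_0N_X$ in $X$, which forces $H(k)\to (G/N_X)(k)$ to be a closed embedding; since the image of $gJg^{-1}$ in $(G/N_X)(k)$ is bounded (the $N_X$-direction is the only one that blows up under conjugation by $A^-$), its intersection with $H(k)$ is bounded. Your modulus-character reasoning does not reach this, and without it the equivalence $\Xi^X(x_0g)\sim\Xi^G(g)$ is unproved. The $\sigma$ part is fine; the paper simply cites closedness of $A_X\hookrightarrow X$.
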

	
	\begin{proof}
		The wavefront assumption entails that we can find a $\bQ$-linear section $\cA_X\to \cA_{L(X)}$ sending $\cA_X^-$ into $\cA_{L(X)}^-$. Up to scaling this by an integer, we find a morphism $s: A_X\to Z(L(X))^0$ such that, denoting by $p:Z(L(X))^0\to A_X$ the natural projection, $ps: A_X\to A_X$ is a finite isogeny and $s(A_X^-)\subset A^-$. We then take for $G^+$ a subset of the form $s(A_X^-)\mathcal{K}$ where $\mathcal{K}\subset G(k)$ is a compact. The first point then follows from the weak Cartan decomposition, provided $\mathcal{K}$ is chosen large enough.
		
		Let $J\subset G(k)$ be a compact-open subgroup. Since $G^+\subset A^-\mathcal{K}$, as $g$ runs over $G^+$, $gJg^{-1}$ contains a fixed compact-open subgroup $J_B$ of $B(k)$. Moreover, as $H(k)B(k)$ is open in $G(k)$, the product $H(k)J_B$ contains some small compact-open subgroup $J'$. This shows the first part of the second point. For the last part, let $L:\pi\to C^\infty(X)$ be $G(k)$-equivariant and $v\in \pi^J$. Then, for every $g\in G^+$ and $k'\in J'$ we have $L(v)(x_0g)=L(v)(x_0k'g)$. Letting $\ell=L^*(\mathrm{ev}_{x_0})$ be the pullback of the evaluation at $x_0$ by $L$ (a, not necessarily smooth, linear form on $\pi$), this then gives $L(v)(x_0g)=\langle \pi(g)v,v^\vee\rangle$ where $v^\vee:=\pi^*(e_{J'})\ell\in (\pi^\vee)^{J'}$.
		
		Let us fix two compact-open subgroups $J$, $J'$ as in (ii). Then, by definition of $\Xi^X$ and $\Xi^G$, the first estimate in the third point is equivalent to
		\begin{equation}\label{eq1 prop wavefront}
			\displaystyle \vol_X(x_0gJ)\sim \vol_G(J'gJ),\mbox{ for } g\in G^+.
		\end{equation}
		The inequality $\vol_X(x_0gJ)=\vol_X(x_0J'gJ)\ll\vol_G(J'gJ)$ is clear: writing $J'gJ$ as a finite disjoint union of $k$ cosets of the form $J'\gamma$ we have $\vol_X(x_0J'gJ)\leq  \vol_X(x_0J')k \ll \vol_G(J'gJ)$. For the reverse inequality, we remark that:
		\begin{itemize}
			\item By the construction above, up to shrinking $J'$ (which replaces the right hand side of \eqref{eq1 prop wavefront} by an equivalent function), we may assume the existence of a compact-open subgroup $J_H\subset H(k)$ we have $J'\subset J_H gJg^{-1}$ for every $g\in G^+$;
			
			\item Similarly, shrinking $J$ if necessary, we may assume that for each $k\in \mathcal{K}$ the compact open-subgroup $kJk^{-1}$ is included in $J_{P_X}^-J_{N_X}$ for some compact open subgroups $J_{P_X}^-\subset P_X^-(k)$ and $J_{N_X}\subset N_X(k)$. This then implies that the image of $gJg^{-1}$ in $(G/N_X)(k)$ is included in a compact subset independent of $g\in G^+$. Since $H\cap N_X=1$ and the $N_X$-orbit $x_0N_X\subset X$ is closed \cite[Exercise 8, p.115]{Humphreys}, the natural map $H(k)\to (G/N_X)(k)$ is a closed embedding and it follows that $gJg^{-1}\cap H(k)$ is included in a fixed compact subset of $H(k)$, say $\mathcal{K}_H$, as $g$ runs over $G^+$.
		\end{itemize}
		Then, for every $x\in J'gJ$ we have
		$$\displaystyle \int_{H(k)} \mathbf{1}_{J'gJ}(hx)dh=\vol_H(H(k)\cap J'gJx^{-1})\leq \vol_H(H(k)\cap J'gJg^{-1}J')\leq \vol_H(J_H\mathcal{K}_H J_H),$$
		where we use $\vol_H$ to indicate that the volume is taken with respect to the Haar measure on $H(k)$. It then follows that
		$$\displaystyle \vol(J'gJ)=\int_{H(k)\backslash G(k)}\int_{H(k)} \mathbf{1}_{J'gJ}(hx)dhdx\ll \int_{X(k)} \mathbf{1}_{x_0J'gJ}(x)dx=\vol_X(x_0J'gJ)=\vol_X(x_0gJ).$$ 
		
		Finally, the second equivalence of (iii) is just a consequence of $A_X$ being closed in $X$ (see \S \ref{S parabolic type}).
	\end{proof}
\end{paragr}

\begin{paragr}[Tempered varieties.]\label{S tempered varieties}
	We say that the variety $X$ is {\em tempered} if the support of the Plancherel measure for $L^2(X)$ is included in the tempered dual $\Temp(G)$. For $\varphi_1,\varphi_2\in L^2(X)$, we denote by 
	$$\displaystyle M_{\varphi_1,\varphi_2}: G(k)\to \bC,\; g\mapsto \langle R(g)\varphi_1,\varphi_2\rangle_X,$$\index{$M_{\varphi_1,\varphi_2}$}
	the corresponding matrix coefficient. A slight reformulation of a theorem of Cowling-Haagerup-Howe \cite{CoHaHo}\footnote{More precisely, the main result of {\em op.\ cit.}\ only concerns semisimple groups but the extension to general reductive groups is easy since a unitary representation of $G(k)$ is tempered if and only if its restriction to the derived subgroup $G_{\der}(k)$ is tempered.} provides the following characterization of tempered varieties.
	
	\begin{theo}[Cowling-Haagerup-Howe]\label{theo CHH}
		Let $X$ be a unimodular $G$-variety. The following are equivalent:
		\begin{enumerate}
			\item The variety $X$ is tempered;
			
			\item For every $\varphi_1,\varphi_2\in L^2(X)^\infty$, we can find constants $C=C_{\varphi_1,\varphi_2}>0$ and $d=d_{\varphi_1,\varphi_2}\geq 0$ such that we have the estimate
			\begin{equation}\label{ineq tempered variety}
				\displaystyle \lvert M_{\varphi_1,\varphi_2}(g)\rvert\leqslant C \Xi^G(g)\sigma_G(g)^d,\;\;\; g\in G(k).
			\end{equation}
			
			\item For every $\varphi_1,\varphi_2\in C_c^\infty(X)$, we can find constants $C=C_{\varphi_1,\varphi_2}>0$ and $d=d_{\varphi_1,\varphi_2}\geq 0$ such that the matrix coefficient $M_{\varphi_1,\varphi_2}$ satisfies the estimate \eqref{ineq tempered variety}.
		\end{enumerate}
		Furthermore, if these conditions are satisfied then we can always take $d=0$ in the estimates \eqref{ineq tempered variety}. Also, when $k$ is non-Archimedean, for every compact-open subgroup $J\subset G(k)$ there exists $C_J>0$ such that for $\varphi_1,\varphi_2\in L^2(X)^J$ the constant $C$ in \eqref{ineq tempered variety} may be taken to be $C=C_J\lVert \varphi_1\rVert_{L^2} \lVert \varphi_2\rVert_{L^2}$ (and $d=0$).
	\end{theo}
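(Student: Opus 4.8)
The plan is to deduce the whole statement from the theorem of Cowling--Haagerup--Howe \cite{CoHaHo} on tempered unitary representations of semisimple groups, combined with soft facts about direct integral decompositions. As a preliminary reduction, I would pass to the derived group: a unitary representation of $G(k)$ is tempered if and only if its restriction to $G_{\der}(k)$ is, and $\Xi^G$ (resp.\ $\sigma_G$) restricted to $G_{\der}(k)$ is comparable, up to a finite power, to $\Xi^{G_{\der}}$ (resp.\ $\sigma_{G_{\der}}$), so both the hypotheses and the conclusions of the theorem for $(G,X)$ are equivalent to the corresponding ones for $(G_{\der},X)$, and $G_{\der}(k)$ is semisimple. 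The first key point is then that condition (1) is equivalent to the assertion that the unitary $G(k)$-representation $L^2(X)$ is weakly contained in the regular representation $L^2(G(k))$: writing the (central) Plancherel decomposition of $L^2(X)$ as a direct integral of multiples of irreducible unitary representations, its Plancherel support lies in $\Temp(G)$ exactly when every irreducible occurring is weakly contained in $L^2(G(k))$; since $\Temp(G)$ is closed in $\Unit(G)$ and a direct integral of representations weakly contained in a fixed one is again weakly contained in it, this is the same as $L^2(X)$ being weakly tempered. All of this is formal from Dixmier's theory \cite{DixC*}.

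Now \cite{CoHaHo} does the rest. If $L^2(X)$ is weakly tempered, the matrix coefficients of $K$-finite (resp., in the non-Archimedean case, $J$-fixed) vectors are dominated by $\Xi^G$ up to a constant, and a routine smoothing argument---expressing an arbitrary smooth vector as $R(\chi)$ applied to a vector and invoking the doubling inequality \eqref{doublind ineq}---upgrades this to the estimate \eqref{ineq tempered variety} for all $\varphi_1,\varphi_2\in L^2(X)^\infty$, possibly at the cost of a polynomial factor $\sigma_G^d$; this gives $(1)\Rightarrow(2)$. The implication $(2)\Rightarrow(3)$ is trivial because $C_c^\infty(X)\subset L^2(X)^\infty$. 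For $(3)\Rightarrow(1)$: Harish-Chandra's estimates together with the Weyl integration formula give $\Xi^G\in L^{2+\varepsilon}(G(k))$ for every $\varepsilon>0$, so if the matrix coefficients of the dense subspace $C_c^\infty(X)$ obey \eqref{ineq tempered variety} they lie in $L^{2+\varepsilon}(G(k))$ for all $\varepsilon>0$, and the Cowling--Haagerup--Howe criterion then forces $L^2(X)$ to be weakly tempered. This closes the cycle of equivalences.

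For the two refinements, assume now that $X$ is tempered. In the non-Archimedean case $\varphi_i\in L^2(X)^\infty$ is fixed by a compact-open subgroup $J$, and one writes $M_{\varphi_1,\varphi_2}(g)=\langle \Pi(e_J\ast\delta_g\ast e_J)\varphi_1,\varphi_2\rangle_X$ with $\Pi$ the representation $L^2(X)$ and $e_J=\vol(J)^{-1}\mathbf 1_J$; weak containment bounds the $L^2(X)$-operator norm of this normalized Hecke element by its $L^2(G(k))$-operator norm, which in turn is bounded by $C_J\,\Xi^G(g)$ by the uniform weak inequality for tempered representations of $G(k)$, the constant depending only on $J$ (see \cite{WaldPlanch}). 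Cauchy--Schwarz then yields $\lvert M_{\varphi_1,\varphi_2}(g)\rvert\le C_J\lVert\varphi_1\rVert_{L^2}\lVert\varphi_2\rVert_{L^2}\,\Xi^G(g)$, giving both $d=0$ and the claimed $J$-uniformity. In the Archimedean case one instead decomposes $\varphi_i$ into its $K$-isotypic components, applies the polynomial-free bound $\lvert\langle\Pi(g)v,w\rangle\rvert\le \Xi^G(g)\,p_{\tau_1}(v)\,p_{\tau_2}(w)$ valid for $K$-finite vectors of types $\tau_1,\tau_2$ in the tempered representation $L^2(X)$, with the seminorms $p_\tau$ growing at most polynomially in $\tau$, and sums the resulting series using the rapid decay of the isotypic components of a smooth vector.

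I expect the genuine obstacle to be exactly these refinements: namely the sharp uniform comparison, independent of the representation, between the operator norm of a bi-$J$-invariant Hecke element (or the size of a $K$-finite matrix coefficient) and $\Xi^G$, which is what lets one take $d=0$ and obtain the $J$-uniform constant. Everything else is an assembly of \cite{CoHaHo} with standard functional analysis.
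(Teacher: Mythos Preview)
Your proposal is correct and aligns with the paper's approach, which is simply to cite \cite{CoHaHo} without giving a proof, noting only (in a footnote) that the main result of {\em op.\ cit.}\ is stated for semisimple groups and that the extension to reductive groups follows from the fact that a unitary representation of $G(k)$ is tempered if and only if its restriction to $G_{\der}(k)$ is. Your reduction to the derived group and subsequent appeal to the weak-containment criterion of Cowling--Haagerup--Howe is exactly this, fleshed out; the additional arguments you give for the refinements ($d=0$ and the $J$-uniform constant) are the standard ones and the paper does not spell them out either.
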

	
	This further implies the following characterization that turns out to be very easy to check in practice.
	
	\begin{lem}\label{lem tempered}
		Let $X$ be a unimodular homogeneous $G$-variety. Then, $X$ is tempered if and only if for every $x\in X(k)$ we can find $d>0$ such that the integral
		\begin{equation*}
			\displaystyle \int_{G_x(k)} \Xi^G(h) \sigma_G(h)^{-d}dh
		\end{equation*}
		converges.
	\end{lem}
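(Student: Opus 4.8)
The plan is to verify criterion~(3) of Theorem~\ref{theo CHH}, i.e.\ to show that $X$ is tempered if and only if for all $\varphi_1,\varphi_2\in C_c^\infty(X)$ there are constants $C,d$ with $\lvert M_{\varphi_1,\varphi_2}(g)\rvert\leq C\,\Xi^G(g)\sigma_G(g)^d$. First I would reduce to a single orbit: since $X(k)=\bigsqcup_{i\in I}H_i(k)\backslash G(k)$ is a disjoint union of ($G(k)$-stable) orbits (see \S\ref{S distinction}), the matrix coefficient $M_{\varphi_1,\varphi_2}$ vanishes identically whenever $\varphi_1,\varphi_2$ are supported on distinct orbits, so it suffices to argue orbit by orbit; thus fix $x\in X(k)$, put $H=G_x$, and identify the orbit through $x$ with $H(k)\backslash G(k)$. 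The computations will use three standard facts implicit in \S\ref{S function spaces for G}: (a) $\Xi^G$ and the log-norm $\sigma_G$ are coarsely invariant under left and right translation by a fixed compact set, i.e.\ $\Xi^G(\omega_1 g\omega_2)\ll_\Omega\Xi^G(g)\ll_\Omega\Xi^G(\omega_1 g\omega_2)$ and likewise for $\sigma_G$, for $\omega_1,\omega_2$ in a compact $\Omega\subset G(k)$; (b) $L^2(G(k))$ is tempered as a $G(k)\times G(k)$-module, with $\Xi^{G\times G}=\Xi^G\boxtimes\Xi^G$, so that by Theorem~\ref{theo CHH} the matrix coefficients $\langle L(h)R(g)f_1,f_2\rangle_{L^2(G)}$ of $f_1,f_2\in C_c^\infty(G)$ are $\ll_{f_1,f_2}\Xi^G(h)\Xi^G(g)$; (c) $G(k)$ has polynomial growth, i.e.\ $\int_{G(k)}\Xi^G(g)^2\sigma_G(g)^{-D}dg<\infty$ for $D$ large (\eqref{conv polgrowth} applied to $X=G$). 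Finally I would record the two-fold unfolding of the surjection $f\mapsto\widetilde f$, $\widetilde f(Hg)=\int_{H(k)}f(hg)dh$, of \S\ref{S remark Plancherel}: for $f_1,f_2\in C_c^\infty(G)$,
$$\displaystyle M_{\widetilde f_1,\widetilde f_2}(g)=\int_{H(k)}F(h,g)\,dh,\qquad F(h,g)=\int_{G(k)}f_1(h^{-1}zg)\overline{f_2(z)}\,dz=\langle L(h)R(g)f_1,f_2\rangle_{L^2(G)}.$$

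For the implication ``$X$ tempered $\Rightarrow$ integrability'', I would pick $f\in C_c^\infty(G)$ with $f\geq 0$, $f\neq 0$, and set $\varphi=\widetilde f$, so that $M_{\varphi,\varphi}(g)=\int_{H(k)}\int_{G(k)}f(y)f(h^{-1}yg)\,dy\,dh\geq 0$, and by Theorem~\ref{theo CHH} there are $C,d$ with $M_{\varphi,\varphi}(g)\leq C\,\Xi^G(g)\sigma_G(g)^d$. Choosing $D$ so that $\int_{G(k)}\Xi^G(g)^2\sigma_G(g)^{d-D}dg<\infty$, I would integrate $M_{\varphi,\varphi}$ against $w(g):=\Xi^G(g)\sigma_G(g)^{-D}$: the upper bound gives $\int_{G(k)}M_{\varphi,\varphi}(g)w(g)\,dg<\infty$, while Tonelli (all integrands are $\geq 0$) and the substitution $g\mapsto y^{-1}hu$ rewrite this integral as $\int_{H(k)}\int_{G(k)}\int_{G(k)}f(y)f(u)\,w(y^{-1}hu)\,dy\,du\,dh$, which by (a) is $\gg_\Omega\bigl(\int_{G(k)}f\bigr)^2\int_{H(k)}\Xi^G(h)\sigma_G(h)^{-D}dh$ with $\Omega=\mathrm{supp}(f)\cup\mathrm{supp}(f)^{-1}$. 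Hence $\int_{H(k)}\Xi^G(h)\sigma_G(h)^{-D}dh<\infty$, which is the assertion with $d=D$; for a different base point in the same orbit one replaces $H$ by a conjugate and invokes (a) again.

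For the converse, assume $\int_{H(k)}\Xi^G(h)\sigma_G(h)^{-d_0}dh<\infty$. Given $\varphi_1,\varphi_2\in C_c^\infty(H(k)\backslash G(k))$, write $\varphi_i=\widetilde f_i$; the two key observations are that $F(h,g)=0$ unless $h\in\Omega g\Omega$ for a fixed compact $\Omega$ containing $\mathrm{supp}(f_i)^{\pm 1}$, and that $\lvert F(h,g)\rvert\ll_{f_1,f_2}\Xi^G(h)\Xi^G(g)$ by (b). On the set $h\in\Omega g\Omega$ one has, by (a), $\Xi^G(h)\ll_\Omega\Xi^G(h)\sigma_G(h)^{-d_0}\sigma_G(g)^{d_0}$, whence
$$\displaystyle \lvert M_{\varphi_1,\varphi_2}(g)\rvert\leq\int_{H(k)\cap\Omega g\Omega}\lvert F(h,g)\rvert\,dh\ \ll\ \Xi^G(g)\sigma_G(g)^{d_0}\int_{H(k)}\Xi^G(h)\sigma_G(h)^{-d_0}dh\ \ll\ \Xi^G(g)\sigma_G(g)^{d_0},$$
which is criterion~(3) of Theorem~\ref{theo CHH}. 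Since the supports of general $\varphi_1,\varphi_2\in C_c^\infty(X)$ meet only finitely many orbits and cross terms vanish, this proves $X$ tempered.

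The main obstacle is the first implication: the crude bound $\lvert F(h,g)\rvert\leq\mathrm{vol}(\Omega)$ throws away all the decay of $F$ in $h$ and leads nowhere, so one must instead trade the matrix-coefficient decay of $L^2(G)$ against the polynomial growth of $G$ by integrating in the $g$-variable --- the change of variables $g\mapsto y^{-1}hu$ being precisely what converts a statement about $L^2(G(k))$ into one about the integral over $H(k)$. By comparison, the two unfoldings and the uses of Tonelli (licensed by the positivity gained from taking $f\geq 0$), together with the coarse translation estimates (a) for $\Xi^G$ and $\sigma_G$, are routine.
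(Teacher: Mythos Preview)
Your proposal is correct. For the ``only if'' direction (tempered $\Rightarrow$ integrability), your argument is essentially identical to the paper's: both take a nonnegative test function, integrate the resulting matrix coefficient against the weight $\Xi^G(g)\sigma_G(g)^{-D}$ over $G(k)$, and unfold via a change of variables to extract the integral of $\Xi^G\sigma_G^{-D}$ over $H(k)$ (the paper phrases this as $\int_{G_x(k)}(f_2\star\Xi^G\sigma_G^{-d}\star f_1^\vee)(h)\,dh$, which is your $\int_{H(k)}\int\int f(y)f(u)w(y^{-1}hu)\,dy\,du\,dh$).

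For the ``if'' direction, the paper gives no argument at all and simply cites \cite[Proposition~2.7.1]{BeuGal}. Your self-contained proof via the temperedness of the biregular representation on $L^2(G)$, combined with the support constraint $h\in\Omega g\Omega$ to convert $\sigma_G(h)^{-d_0}$ into $\sigma_G(g)^{d_0}$, is a genuinely different (and more transparent) route: it stays entirely within the framework already set up in the paper and avoids the external reference.
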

	
	\begin{proof}
		The if part of the lemma is a consequence of \cite[Proposition 2.7.1]{BeuGal}. Conversely, assume that $X$ is tempered. By the previous theorem and \eqref{conv polgrowth}, we can find $d\geq 0$ such that for every $\varphi_1,\varphi_2\in \cS(X)$,
		\begin{equation}\label{conv int1}
			\displaystyle \int_{G(k)} \langle R(g)\varphi_1,\varphi_2\rangle_X \Xi^G(g)\sigma_G(g)^{-d}dg <\infty.
		\end{equation}
		Let $x\in X(k)$ and let $\delta_x$ be the Dirac at $x$, seen as a compactly supported distribution on $X(k)$, that is element of the continuous dual $C^\infty(X)'$ given by $\varphi\mapsto \varphi(x)$. Using the invariant measure on $X(k)$, we can identify for every $f\in C_c^\infty(G)$ the convolution $R(f)\delta_x$ with a smooth compactly supported function on $X(k)$. Applying \eqref{conv int1} to $\varphi_1=R(f_1)\delta_x$ and $\varphi_2=R(f_2)\delta_x$ with $f_1$, $f_2$ positive, we get
		\[\begin{aligned}
			\displaystyle \int_{G(k)} \langle R(g)\varphi_1,\varphi_2\rangle_X \Xi^G(g)\sigma_G(g)^{-d}dg=\int_{G_x(k)} (f_2\star \Xi^G \sigma_G^{-d}\star f_1^\vee)(h)dh<\infty.
		\end{aligned}\]
		However, for every compact subset $\mathcal{K}\subset G(k)$, we have $\Xi^G(k_1gk_2)\sigma_G(k_1gk_2)^{-d}\sim \Xi^G(g)\sigma_G(g)^{-d}$ (in the sense that $C^{-1}\Xi^G(g)\sigma_G(g)^{-d}\leq \Xi^G(k_1gk_2)\sigma_G(k_1gk_2)^{-d}\leq C \Xi^G(g)\sigma_G(g)^{-d}$ for some $C>0$) for $g\in G(k)$, $k_1,k_2\in \mathcal{K}$. This implies that $f_2\star \Xi^G \sigma_G^{-d}\star f_1^\vee\sim \Xi^G \sigma_G^{-d}$ and so the convergence of the above integral is equivalent to that in the lemma.

	\end{proof}
	
	Let $A_0$ be a maximal split torus in $G$ defined over $k$ and set
	$$\displaystyle A_0^-:=\{a\in A_0(k)\mid \lvert \alpha(a)\rvert\geq 1,\; \forall \alpha\in \Delta_0 \}$$
	where $\Delta_0$ is the set of simple roots associated to the choice of a minimal parabolic subgroup $P_0\supset A_0$ (also defined over $k$). Then, by \cite[lemme II.1.1]{WaldPlanch}, we can find $d_0>0$ such that
	$$\displaystyle \delta_{P_0}(a)^{-1/2}\ll \Xi^G(a)\ll \delta_{P_0}(a)^{-1/2}\sigma_G(a)^{d_0}$$
	for $a\in A_0^-$. 
	
	Whenever $H\subset G$ is a reductive subgroup, combining these estimates with a Cartan decomposition for $H(k)$ and the above lemma, this leads to a very workable combinatorial criterion to check whether the variety $X=H\backslash G$ is tempered or not (see e.g. \cite{BenKob} or \cite{GurOff} where this criterion is explicitly spelled out). Examples of tempered spherical varieties are Galois symmetric varieties $H\backslash \Res_{\ell/k} H_\ell$ ($[\ell:k]=2$) or the non-wavefront varieties $\GL_n\backslash \SO_{2n+1}$. An example of a spherical variety that is not tempered is the symmetric variety $\Sp_{2n}\backslash \GL_{2n}$.
	
	On the other hand, Conjecture \ref{conj1 SV} suggests the following alternative characterization of tempered spherical varieties.
	
	\begin{conj}\label{conj tempered varieties}
		Assume that $G$ is quasi-split and $X$ is a spherical $G$-variety. Then, $X$ is tempered if and only if $P_X=B$.
	\end{conj}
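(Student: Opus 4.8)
The plan is to reduce both implications to a purely combinatorial statement about the Arthur $\SL_2$-morphism $\iota_X^{\SL_2}$ attached to $P_X$ in \S\ref{S dual group}, and then feed this into the Sakellaridis--Venkatesh Plancherel conjectures. The combinatorial input is: \emph{for $G$ quasi-split, $P_X = B$ if and only if $\iota_X^{\SL_2}$ is trivial}. Indeed, $B$ is a minimal parabolic, so $P_X = B$ is equivalent to $L_X$ being a minimal Levi, hence a maximal torus (quasi-splitness); dualizing, $\widehat{L}_X = \widehat{A}$, and since $\iota_X^{\SL_2}$ is by construction the principal $\SL_2$ of $\widehat{L}_X$ composed with $\widehat{L}_X \hookrightarrow \widehat{G}$, and the principal $\SL_2$ of a connected reductive group is trivial exactly for tori, the claim follows. (For quasi-split but non-split $G$ one first base-changes to $\overline{k}$, using that $P_X$ is Galois-stable, so that $\Delta(X)$ is defined over $k$ and $P_X = B$ over $k$ iff $P_X = B$ over $\overline{k}$.)

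\textbf{The implication $P_X = B \Rightarrow X$ tempered.} Grant Conjecture \ref{conj1 SV}. If $\iota_X^{\SL_2}$ is trivial then for every $\phi \in \Phi_{\temp}(X)$ the $A$-parameter $\psi = {}^L\iota_X \circ (\phi \times \mathrm{id})$ kills the Arthur $\SL_2$ and has bounded restriction to $W_k$, being the image of a bounded set under the algebraic morphism $\iota_X|_{\widehat{G}_X}$; hence $\psi$ is a tempered $L$-parameter and $\Pi^G(\psi)$ is the tempered $L$-packet of $\psi$, consisting of tempered representations. By Conjecture \ref{conj1 SV} the Plancherel measure of $L^2(X)$ is then supported on $\Temp(G)$, i.e. $X$ is tempered. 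One can hope to remove the dependence on the conjecture, at least for wavefront $X$, via the decomposition $L^2(X) \simeq \bigoplus_{\Theta \subset \Delta_X} L^2(X)_\Theta$ produced by the Bernstein maps of Section \ref{Sect Bernstein maps}: when $P_X = B$ each Levi variety $X^L_\Theta$ of \eqref{eq boundary deg parabind} satisfies $P_{X^L_\Theta} = B \cap L_\Theta$, so by induction on $\dim G$ its discrete $L^2$-spectrum is $L_\Theta(k)$-tempered, and $L^2(X)_\Theta$ is built from it by unitary parabolic induction, which preserves temperedness; the subtle point is the non-wavefront case, where \eqref{eq boundary deg parabind} may fail and one must argue directly with the boundary degenerations $X_\Theta$.

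\textbf{The implication $X$ tempered $\Rightarrow P_X = B$.} By contraposition, suppose $P_X \ne B$, so $\iota_X^{\SL_2}$ is nontrivial and every $\psi$ as above has nontrivial Arthur $\SL_2$; thus $\psi$ is a non-tempered $A$-parameter and $\Pi^G(\psi)$ consists of non-tempered representations. Granting the refined Conjecture \ref{Conj local strong SV}, $\Pi_\phi \ne 0$ for almost every $\phi$ after summing over the relevant pure inner forms $X_\alpha$, so some $L^2(X_\alpha)$ has non-tempered Plancherel support; since $\iota_X^{\SL_2}$ — and with it the non-temperedness of the packets — is the same for all $X_\alpha$, a separate argument (using that $G$ is quasi-split) is still needed to deduce that the non-tempered contribution is visible already on $X$ itself. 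An unconditional route would instead go through Lemma \ref{lem tempered}: it suffices to exhibit $x \in X(k)$, with stabilizer $G_x$ conjugate to $H$, for which $\int_{G_x(k)} \Xi^G(h)\, \sigma_G(h)^{-d}\, dh$ diverges for every $d > 0$; translating this via the weak Cartan decomposition \eqref{weak Cartan}, the bounds $\delta_{P_0}^{-1/2} \ll \Xi^G \ll \delta_{P_0}^{-1/2}\sigma_G^{d_0}$ on $A_0^-$, and a Cartan decomposition of $H(k)$, one wants to show that convergence forces $\Delta(X) = \emptyset$.

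\textbf{Main obstacle.} The hard half is $X$ tempered $\Rightarrow P_X = B$: it is essentially the ``lower bound'' (non-vanishing) part of the Sakellaridis--Venkatesh conjecture, which is open in general, and even granting it one must descend the non-temperedness from the family of pure inner forms to $X$. The unconditional approach via Lemma \ref{lem tempered} shifts the difficulty to a concrete but delicate harmonic-analytic estimate: controlling $\Xi^G$ along the typically non-reductive stabilizers $G_x$ and matching the resulting region of convergence with the combinatorics of the cone $\mathcal{A}_X^-$ and of $P_X$. The forward implication, by contrast, is morally routine once one has either Conjecture \ref{conj1 SV} or a working theory of Bernstein maps, the only genuine point being the non-wavefront case.
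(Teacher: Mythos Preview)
Your assessment of which direction is hard is reversed relative to the paper. Immediately after stating the conjecture, the paper remarks that the ``only if'' part, i.e.\ $X$ tempered $\Rightarrow P_X = B$, is \emph{easy} and unconditional, via the description of the most continuous part of the spectrum in Section~\ref{Sect most cont spectrum}. The point is that $L^2(X)_\emptyset = \iota_\emptyset L^2(X_\emptyset)$ is always nonzero (the Bernstein map $\iota_\emptyset$, or more concretely the Radon transform machinery, produces it explicitly), and its Plancherel decomposition is over representations of the form $I_{P_X^-}^G(\chi)$ with $\chi$ a unitary character of $A_X$ inflated to $L_X$. When $P_X \ne B$, the Levi $L_X$ is not a torus, such a character of $L_X(k)$ is not a tempered representation of $L_X(k)$, and hence these degenerate principal series are not tempered. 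Thus the Plancherel support of $L^2(X)$ already meets the non-tempered locus. No appeal to Conjecture~\ref{Conj local strong SV} or to the divergence criterion of Lemma~\ref{lem tempered} is needed, and there is no issue of descending from pure inner forms back to $X$.

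Conversely, the implication $P_X = B \Rightarrow X$ tempered, which you treat as ``morally routine'', is the genuinely open half. Your argument via Conjecture~\ref{conj1 SV} is fine as a heuristic, but the paper explicitly flags this direction as unresolved in general: for $k = \mathbb{R}$ and $H$ reductive it follows from Benoist--Kobayashi \cite{BenKobIII}, but only through a case-by-case analysis with no known uniform $p$-adic analogue. Your inductive sketch via Bernstein maps and Levi varieties is reasonable in spirit but runs into exactly the difficulty you note (non-wavefront $X$), and even in the wavefront case one needs to know that the \emph{relative discrete series} of each $X^L_\Theta$ consist of $L_\Theta(k)$-tempered representations, which is itself a form of the conjecture for $X^L_\Theta$ and not obviously amenable to induction on $\dim G$ alone.
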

	
	Actually, the only if part is easy to show e.g. via the description of the most continuous part of the spectrum of $L^2(X)$ (see Section \ref{Sect most cont spectrum}). When $k=\bR$ and the subgroup $H$ is reductive, the above conjecture can actually be deduced from the, more general, main result of \cite{BenKobIII}, although the proof of {\it op.\ cit.}\ is mainly through a case-by-case analysis that seems difficult to adapt in the $p$-adic case.
	
\end{paragr}

\begin{paragr}
	\begin{prop}\label{prop tempered}
		Assume that $k$ is non-Archimedean and let $X$ be a homogeneous spherical tempered $G$-variety. Pick $x\in X(k)$ and let $H=G_x$, $\cS(X)_x$\index{$\cS(X)_x$} (resp. $\cC(X)_x$\index{$\cC(X)_x$}) denote the subspace of functions $f\in \cS(X)$ (resp. $f\in \cC(X)$) supported in the $G(k)$-orbit $xG(k)\simeq H(k)\backslash G(k)$.
		\begin{enumerate}[(i)]
			\item The morphism
			\begin{equation*}
				\displaystyle P_x: \cS(G)\to \cS(X)_x,\;\; P_xf: xg\mapsto \int_{H(k)} f(hg) dh,
			\end{equation*}
			extends (necessarily uniquely) to a continuous map $P_x: \cC(G)\to \cC(X)_x$. In particular, if $\pi\in \Irr(G)$ is $X$-tempered then $\pi\in \Temp(G)$ (i.e.\ it is tempered as a representation of the group $G$).
			
			\item If furthermore $X$ is wavefront,
			\begin{itemize}
				\item $P_x: \cC(G)\to \cC(X)_x$ is surjective, more precisely for every compact-open subgroup $J\subset G(k)$, we can find another compact open subgroup $J'$ such that $P_x(\cC(G)^{J'\times J})=\cC(X)^J$;
				
				\item $\cC(X)_\pi=\cS(X)_\pi$ for every $\pi\in \Temp(G)$. In particular, if an irreducible tempered (for the group $G$) representation $\pi$ is $X$-distinguished, it is automatically $X$-tempered.
			\end{itemize} 
		\end{enumerate}
	\end{prop}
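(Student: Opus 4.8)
The plan is to handle the three assertions in sequence; the only genuinely analytic input is one estimate in (i), and (ii) is then largely a matter of feeding the wavefront lemma (Lemma \ref{wavefront lemma}) into the formalism of Harish-Chandra--Schwartz spaces.

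For (i), I would start from the growth bound $|f(u)|\ll_N\Xi^G(u)\sigma_G(u)^{-N}$ (valid for every $N$) characterizing $f\in\cC(G)$, so that pointwise on the orbit $xG(k)$,
\[
|P_xf(xg)|\le\int_{H(k)}|f(hg)|\,dh\ll_N\int_{H(k)}\Xi^G(hg)\sigma_G(hg)^{-N}\,dh.
\]
Convergence of the right-hand side for fixed $g$ is easy: for $g$ in a fixed compact set one has $\Xi^G(hg)\sigma_G(hg)^{-N}\sim\Xi^G(h)\sigma_G(h)^{-N}$ (this is used already in the proof of Lemma \ref{lem tempered}), so the integral is dominated by $\int_{H(k)}\Xi^G(h)\sigma_G(h)^{-d_0}dh$, finite by Lemma \ref{lem tempered} since $X$ is tempered. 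The crux is the \emph{uniform} estimate: there is a fixed $d_1\ge 0$ with
\[
\int_{H(k)}\Xi^G(hg)\sigma_G(hg)^{-N}\,dh\ll_N\Xi^X(xg)\,\sigma_X(xg)^{-(N-d_1)}\qquad(g\in G(k),\ N\ \text{large}),
\]
which I would derive by combining the integrability above with the doubling inequality \eqref{doublind ineq} and the comparison, along each $H(k)$-orbit, of $\Xi^X$ with the $H(k)$-average of $\Xi^G$ — the circle of ideas in \cite[Proposition 2.7.1]{BeuGal} underlying Lemma \ref{lem tempered}. Granting this and the (evident) local constancy of $P_xf$, one gets $P_xf\in\cC(X)_x$ with constants uniform at each compact-open level, hence continuity; agreement with the original $P_x$ on $\cS(G)$ and density of $\cS(G)$ in $\cC(G)$ identify it with the unique continuous extension. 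For the consequence, $X(k)$ is a finite disjoint union of its $G(k)$-orbits, which are clopen since $k$ has characteristic zero, so $\cC(X)=\bigoplus_i\cC(X)_{x_i}$; a nonzero continuous $\cC(X)\to\pi$ is then nonzero on some $\cC(X)_{x_i}$, and since $P_{x_i}(\cS(G))=\cS(X)_{x_i}$ is dense in $\cC(X)_{x_i}$, the composite $\cC(G)\xrightarrow{P_{x_i}}\cC(X)_{x_i}\to\pi$ is nonzero; as $\pi$ is irreducible it is a surjection, so $\pi\in\Temp(G)$ by the characterization recalled in \S\ref{S function spaces for G}.

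For the surjectivity in (ii), fix a compact-open $J$ and construct an explicit continuous section. Using the subset $G^+\subset G(k)$ of the wavefront lemma together with the boundedness of $gJg^{-1}\cap H(k)$ for $g\in G^+$ (established in its proof), I would build a non-negative locally constant ``gauge'' function $\psi$ on $G(k)$ with $\int_{H(k)}\psi(hg)\,dh=1$ for all $g$ and meeting each coset $Hg$ ($g\in G^+$) in a subset contained in a fixed compact subset of $H(k)$. For $\varphi\in\cC(X)_x$ right-$J$-invariant, set $f(g):=\varphi(xg)\,\psi(g)$; then $P_xf=\varphi$, $f$ is locally constant, and $f\in\cC(G)$ because $|\varphi(xg)|\ll_N\Xi^X(xg)\sigma_X(xg)^{-N}$, while on the support of $\psi$ the ``$H(k)$-part'' of the argument is bounded, so the wavefront comparisons $\Xi^X(x_0g)\sim\Xi^G(g)$, $\sigma_X(x_0g)\sim\sigma_G(g)$ on $G^+$ (Lemma \ref{wavefront lemma}(iii)) and the doubling inequality turn this into $|f(g)|\ll_N\Xi^G(g)\sigma_G(g)^{-N}$. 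Tracking invariance levels yields the desired $J'$ with $P_x(\cC(G)^{J'\times J})=\cC(X)^J$. (Equivalently one can partition $\varphi$ along the weak Cartan decomposition $X(k)=A_X^-\mathcal{K}$, lift each compactly supported piece through the elementary surjectivity of $P_x\colon\cS(G)\to\cS(X)_x$ with support controlled by the wavefront lemma, and sum, convergence in $\cC(G)$ being guaranteed by the same comparisons.)

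For the coinvariant statement, I would reduce to a single orbit and show that the $\pi$-coinvariants of $\cC(X)_x$ and of $\cS(X)_x$ coincide for $\pi\in\Temp(G)$. Since $m_X(\pi)<\infty$ in the wavefront case (see \S\ref{S smooth asymptotics cons}), both are finite sums of copies of $\pi$, and $\cS(X)\hookrightarrow\cC(X)$ induces an injection $\Hom_G(\cC(X),\pi)\hookrightarrow\Hom_G(\cS(X),\pi)$ whose surjectivity is the only point: every $G$-embedding $L\colon\pi^\vee\hookrightarrow C^\infty(X)$ must land in the space of \emph{tempered} functions. Here wavefrontness is used decisively: for $v^\vee\in(\pi^\vee)^J$, Lemma \ref{wavefront lemma}(ii) gives $v\in\pi^{J'}$ with $L(v^\vee)(x_0g)=\langle\pi(g)v,v^\vee\rangle$ for $g\in G^+$; temperedness of $\pi$ bounds this by $\Xi^G(g)$, hence by $\Xi^X(x_0g)$ on $G^+$ (Lemma \ref{wavefront lemma}(iii)), and since $x_0g$ depends only on the class of $g$ modulo $H(k)$ while $G(k)=H(k)G^+$, we get $|L(v^\vee)(y)|\ll\Xi^X(y)$ on the whole orbit; by \eqref{conv polgrowth} this makes $L(v^\vee)$ a continuous functional on $\cC(X)$, so $L$ factors through $\cC(X)$ and dualizing extends $\cS(X)\to\pi$ to $\cC(X)\to\pi$. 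Thus $\Hom_G(\cC(X),\pi)=\Hom_G(\cS(X),\pi)$, the coinvariant spaces agree, and the last assertion follows at once: if $\pi\in\Temp(G)$ is $X$-distinguished then $0\ne\Hom_G(\cS(X),\pi)=\Hom_G(\cC(X),\pi)$, i.e. $\pi$ is $X$-tempered. The main obstacle is the uniform estimate in (i): it is where temperedness is actually exploited, and it genuinely requires — rather than follows from — the doubling inequality; once it is in place, the remainder is careful but essentially formal bookkeeping with the wavefront lemma.
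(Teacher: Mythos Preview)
Your argument for (ii) is sound and, interestingly, takes a different route from the paper. For surjectivity the paper builds the section more bluntly: it fixes representatives $S\subset G^+$ for $H(k)\backslash G(k)/J$ and sends $\sum_{g\in S}a_g\mathbf{1}_{xgJ}\mapsto\sum_{g\in S}a_g\mathbf{1}_{J'gJ}$, checking this lies in $\cC(G)$ via Lemma \ref{wavefront lemma}(iii). Your gauge-function version is equivalent in spirit. For the coinvariant statement, the paper does \emph{not} argue as you do (showing directly that any $\pi^\vee\hookrightarrow C^\infty(X)$ lands in tempered functions); instead it uses the section $S_x^J$ just constructed: given $T\colon\cS(X)_x\to\pi$, it first extends $T\circ P_x\colon\cS(G)\to\pi$ to $\widetilde{T}\colon\cC(G)\to\pi$ (this is where $\pi\in\Temp(G)$ enters), then sets the desired extension to be $\widetilde{T}\circ S_x^J$. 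Your dual argument is arguably cleaner and more conceptual; the paper's has the advantage of reusing the section and avoids the need to check that the bound $|L(v^\vee)|\ll\Xi^X$ really yields continuity of the dual map on $\cC(X)^J$ (which, as you implicitly note, follows from \eqref{conv polgrowth}).

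There is, however, a genuine gap in (i). You correctly identify the uniform estimate
\[
\int_{H(k)}\Xi^G(hg)\,\sigma_G(hg)^{-N}\,dh\ \ll_N\ \Xi^X(xg)\,\sigma_X(xg)^{-(N-d_1)}
\]
as the crux, but you do not prove it, and the tools you invoke do not obviously give it: the doubling inequality \eqref{doublind ineq} concerns integration over the maximal compact $K$, not over $H(k)$, and \cite[Proposition 2.7.1]{BeuGal} goes in the direction ``integrability of $\Xi^G\sigma_G^{-d}$ over $H$ $\Rightarrow$ $X$ tempered'', which is the converse of what you need here. The paper avoids this difficulty entirely by a different device: it rewrites
\[
P_x(f)(y)=\int_{G(k)}f(g)\,\langle R(g)\varphi_x,\varphi_y\rangle_X\,dg,\qquad \varphi_z:=\vol(zJ)^{-1}\mathbf{1}_{zJ},
\]
so that the integrand is $f(g)$ times a \emph{matrix coefficient of $L^2(X)$}. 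Temperedness of $X$ then enters through Theorem \ref{theo CHH} (Cowling--Haagerup--Howe), which bounds that matrix coefficient by $C_J\,\Xi^G(g)\,\|\varphi_x\|\,\|\varphi_y\|$; after a weight-shifting trick (replacing $\varphi_y$ by $\sigma_X^{-d}\varphi_y$) and the elementary computation $\|\sigma_X^{-d}\varphi_y\|_{L^2}\sim\Xi^X(y)\sigma_X(y)^{-d}$, one reads off $|P_xf(y)|\ll\Xi^X(y)\sigma_X(y)^{-d}$ times a continuous seminorm of $f$ in $\cC(G)$. The point is that the CHH bound packages exactly the uniform control you are after, without ever needing to compare $\Xi^X$ with an $H$-average of $\Xi^G$.
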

	
	\begin{proof}
		Let $J\subset G(k)$ be a compact-open subgroup.
		\begin{enumerate}[(i)]
			\item We need only show that $P_x$ extends to a continuous map $\cC(G)^{J\times J}\to \cC(X)_x^J$. Set $\varphi_x=P_x(e_J)=\vol(xJ)^{-1} \mathbf{1}_{xJ}\in \cS(X)$ and more generally $\varphi_y=\vol(yJ)^{-1} \mathbf{1}_{yJ}$ for every $y\in X(k)$. Then, we have
			$$\displaystyle P_x(f)(y)=(R(f)\varphi_x)(y)=\int_{G(k)} f(g) \langle R(g) \varphi_x,\varphi_y\rangle_X dg$$
			for all $f\in \cS(G)^{J\times J}$ and $y\in X(k)$. Let $d>0$. Using the inequality $\sigma_X(z)\ll \sigma_X(zg)\sigma_G(g)$, we see that
			$$\displaystyle \langle R(g) \varphi_x,\varphi_y\rangle_X=\int_{X(k)} \sigma_X(z)^d \varphi_x(zg) \sigma_X(z)^{-d}\varphi_y(z)dz\ll \sigma_G(g)^d\langle R(g) \varphi_{x,d},\varphi_{y,-d}\rangle_X$$
			for $y\in X(k)$ and $g\in G(k)$, where we have set $\varphi_{x,d}=\sigma_X^d \varphi_x$, $\varphi_{y,-d}=\sigma_X^{-d} \varphi_y$. Therefore, up to shrinking $J$ such that it leaves $\sigma_X$ invariant and letting $C_J>0$ be a constant as in Theorem \ref{theo CHH}, we obtain
			\[\begin{aligned}
				\displaystyle \lvert P_x(f)(y)\rvert\ll C_J \int_{G(k)} \lvert f(g)\rvert \Xi^G(g)\sigma_G(g)^{d} dg \lVert \varphi_{x,d}\rVert_{L^2} \lVert \varphi_{y,-d}\rVert_{L^2}
			\end{aligned}\]
			for $f\in \cS(G)^{J\times J}$ and $y\in X(k)$. However, it is straightforward to see that
			$$\displaystyle \lVert \varphi_{y,-d}\rVert_{L^2}\sim \vol_X(yJ)^{-1/2}\sigma_X(y)^{-d}\sim \Xi^X(y)\sigma_X(y)^{-d}, \mbox{ for } y\in X(k),$$
			so that we get
			$$\displaystyle \lvert P_x(f)(y)\rvert\ll \int_{G(k)} \lvert f(g)\rvert \Xi^G(g)\sigma_G(g)^{d} dg\times \Xi^X(y)\sigma_X(y)^{-d}$$
			as before for $f\in \cS(G)^{J\times J}$ and $y\in X(k)$. By definition of the topology on $\cC(G)^{J\times J}$ and \eqref{conv polgrowth}, the right integral is a continuous semi-norm on the latter and therefore, by definition of $\cC(X)^J$ and its topology, the claim follows.

			We now explain how to deduce the implication $\pi$ is $X$-tempered $\Rightarrow$ $\pi$ is $G$-tempered. Indeed, if $\pi\in \Irr(G)$ is $X$-tempered we have a nonzero $G$-equivariant continuous morphism $\cC(X)\twoheadrightarrow \pi$ and even, since $\cC(X)$ splits as a direct sum of $\cC(X)_x$ for $x$ running over representatives of the $G(k)$-orbits, $\cC(X)_x\twoheadrightarrow \pi$ for some $x\in X(k)$. Precomposing by $P_x$ this yields a continuous $G$-morphism $\cC(G)\to \pi$ which is also nonzero as $P_x(\cS(G))=\cS(X)_x$ is dense in $\cC(X)_x$. It follows that $\pi$ is tempered as a $G$-representation.
			
			\item For the first part, it suffices to show that there exist a compact-open subgroup $J'\subset G(k)$ as well as a continuous section $S_x^J: \cC(X)_x^J\to \cC(G)^{J'\times J}$ of $P_x$ i.e.\ such that $P_xS_x^J(\varphi)=\varphi$ for every $\varphi\in \cC(X)_x^J$. We note that this section will {\it not} be $G$-equivariant. (However see the proof of Theorem \ref{theo nondeg canonical form SV} for the construction of an equivariant section under a stronger assumption.) For this we apply the Wavefront Lemma \ref{wavefront lemma}. Indeed, let $G^+$ be a subset of $G(k)$ and $J'$ be as in that lemma. Take for $S$ a set of representatives in $G^+$ for the cosets $H(k)\backslash G(k)/J$. Then, any $J$-invariant function on $xG(k)$ can be written uniquely as a series $\sum_{g\in S} a_s \mathbf{1}_{xgJ}$ and the map
			$$\displaystyle \sum_{g\in S} a_s \mathbf{1}_{xgJ}\mapsto \sum_{g\in S} a_s \mathbf{1}_{J'gJ}$$
			yields the desired section $S_x^J: \cC(X)_x^J\to \cC(G)^{J'\times J}$.
			
			Let $\pi\in \Temp(G)$. To show that $\cS(X)_\pi=\cC(X)_\pi$, we may restrict to one $G(k)$-orbit, i.e.\ it suffices to prove $\cS(X)_{x,\pi}=\cC(X)_{x,\pi}$. Let $T: \cS(X)_x\to \pi$ be a $G$-morphism. We need to show that it extends to $\cC(X)_x$. Of course, it suffices to do it at the level of $J$-invariants. Since $\pi$ is $G$-tempered, the composition $\cS(G)^{J'\times J}\to \pi^J$ of $T$ with $P_x: \cS(G)^{J'\times J}\to \cS(X)^J$ extends to a continuous morphism $\tilde{T}: \cC(G)^{J'\times J}\to\pi^J$. Now, note that the section $S_x^J: \cC(X)_x^J\to \cC(G)^{J'\times J}$ constructed above sends $\cS(X)^J$ into $\cS(G)^{J'\times J}$ and it entails that $\tilde{T}S_x^J(\varphi)=T(\varphi)$ for every $\varphi\in \cS(X)^J$. Therefore, the composition $\tilde{T}S_x^J: \cC(X)^J\to \pi^J$ provides the desired continuous extension and we are done.
		\end{enumerate}
	\end{proof}
\end{paragr}

\begin{paragr}[Strongly tempered varieties]\label{S strongly tempered}
	The variety $X$ is called {\it strongly tempered} if for every $\varphi_1,\varphi_2\in \mathcal{S}(X)$ the matrix coefficient
	$$\displaystyle M_{\varphi_1,\varphi_2}: g\in G(k)\mapsto \langle R(g)\varphi_1,\varphi_2\rangle_{X}$$
	belongs to the Harish-Chandra Schwartz space $\mathcal{C}(G)$ i.e.\ if for every $d>0$ we have an estimate
	$$\displaystyle \lvert M_{f_1,f_2}(g)\rvert\ll \Xi^G(g) \sigma_G(g)^{-d},\;\; g\in G(k).$$
	By an argument similar to Lemma \ref{lem tempered}, we can show that this definition is actually equivalent to the one given in \cite[\S 6.2]{SV}, namely that for every $x\in X(k)$ the integral
	$$\displaystyle \int_{H_x(k)} \Xi^G(h)dh$$
	converges. We think however that the first definition is better in that it allows to include the Whittaker case in a uniform way as shown by the following lemma.
	
	\begin{lem}
		Assume that $G$ is quasi-split, let $B=TN\subset G$ be a Borel subgroup and $\psi: N(k)\to \bC^\times$ be a nondegenerate unitary character. Then, the Whittaker variety $(N,\psi)\backslash G$ is strongly tempered in the following sense: for every $W_1,W_2\in \cS((N,\psi)\backslash G)$, the matrix coefficient
		$$\displaystyle g\in G(k)\mapsto \langle R(g)W_1,W_2\rangle_{N\backslash G}:=\int_{N(k)\backslash G(k)} W_1(x)\overline{W_2(x)} dx$$
		belongs to the Harish-Chandra Schwartz space $\mathcal{C}(G)$.
	\end{lem}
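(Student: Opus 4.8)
The plan is to reduce the assertion to a pointwise bound on the matrix coefficient $M_{W_1,W_2}(g)=\langle R(g)W_1,W_2\rangle_{N\backslash G}$, and then to prove that bound by unfolding along the Iwasawa decomposition and invoking the classical rapid decay of Whittaker functions (Jacquet--Shalika); the nondegeneracy of $\psi$ and the Schwartz hypothesis on the $W_i$ get consumed exactly at that last point. Note that, in contrast with a genuine homogeneous spherical variety, the equivalent description of strong temperedness through the convergence of $\int_{H_x(k)}\Xi^G$ is not directly available here because of the $\psi$-twist along $N$; this is precisely why one must argue with matrix coefficients.

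First, by the description \eqref{ineq def HCS space} of $\mathcal{C}(G)$ and the stability of $\mathcal{S}((N,\psi)\backslash G)$ under $R(\mathcal{U}(\mathfrak{g}))$ --- together with $R(u)M_{W_1,W_2}=M_{R(u)W_1,W_2}$ --- it suffices to show that for every $d>0$ one has $|M_{W_1,W_2}(g)|\ll_d\Xi^G(g)\sigma_G(g)^{-d}$, uniformly as $W_1,W_2$ range over a bounded subset of $\mathcal{S}((N,\psi)\backslash G)$. Fix a minimal parabolic $P_0=A_0N$ of $G$ with $A_0$ a maximal split torus. Using the Cartan decomposition $G(k)=\mathcal{K}A_0^+\mathcal{K}$, the bi-$\mathcal{K}$-invariance (up to the usual equivalence) of $\Xi^G$ and $\sigma_G$, and the fact that $M_{W_1,W_2}(k_1ak_2)=M_{R(k_1)W_1,R(k_2^{-1})W_2}(a)$ with the translated functions staying in a bounded set, one reduces to the case $g=a\in A_0^+$. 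Since $\Xi^G$ is inversion-invariant, the estimate recalled before the lemma gives $\delta_{P_0}(a)^{1/2}\ll\Xi^G(a)$ for $a\in A_0^+$, so it is enough to prove $|M_{W_1,W_2}(a)|\ll_d\delta_{P_0}(a)^{1/2}\sigma_G(a)^{-d}$ for $a\in A_0^+$.

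Now unfold using $G(k)=N(k)A_0(k)\mathcal{K}$: writing $x=tk$ and $dx=\delta_{P_0}(t)^{-1}dt\,dk$,
$$M_{W_1,W_2}(a)=\int_{\mathcal{K}}\int_{A_0(k)}W_1(tka)\,\overline{W_2(tk)}\,\delta_{P_0}(t)^{-1}\,dt\,dk.$$
The key input is the gauge estimate for Whittaker functions: there is a nonnegative function $\xi$ on $A_0(k)$, rapidly decreasing as $t$ goes to infinity towards the dominant walls (this uses the nondegeneracy of $\psi$) and --- because $W_i\in\mathcal{S}((N,\psi)\backslash G)$ --- compactly supported in the non-Archimedean case, resp.\ rapidly decreasing in the Archimedean case, towards the complementary directions, such that $|W_i(tk)|\le\delta_{P_0}(t)^{1/2}\xi(t)$ for all $t\in A_0(k)$, $k\in\mathcal{K}$, $i=1,2$, uniformly over bounded families. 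Writing the Iwasawa decomposition $ka=n_{k,a}t_{k,a}\kappa_{k,a}$, the left $N$-equivariance gives $W_1(tka)=\psi(tn_{k,a}t^{-1})\,W_1\bigl((t\,t_{k,a})\kappa_{k,a}\bigr)$, hence $|W_1(tka)|\le\delta_{P_0}(t)^{1/2}\delta_{P_0}(t_{k,a})^{1/2}\xi(t\,t_{k,a})$; moreover, by the convexity property of the Iwasawa projection one has $t_{k,a}\preceq a$, so in particular $\delta_{P_0}(t_{k,a})^{1/2}\le\delta_{P_0}(a)^{1/2}$.

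Plugging in, the three powers of $\delta_{P_0}$ attached to $W_1$, $W_2$ and $dx$ collapse to $\delta_{P_0}(t_{k,a})^{1/2}\le\delta_{P_0}(a)^{1/2}$, leaving
$$|M_{W_1,W_2}(a)|\ll\delta_{P_0}(a)^{1/2}\int_{\mathcal{K}}\int_{A_0(k)}\xi(t\,t_{k,a})\,\xi(t)\,dt\,dk,$$
and it remains to bound the double integral by $\sigma_G(a)^{-d}$ for every $d$: the factor $\xi(t)$ confines $t$ to a fixed gauge region, the factor $\xi(t\,t_{k,a})$ then forces $t_{k,a}$ into a fixed compact set of $A_0(k)$, and for $a$ large this can only happen for $k$ in a subset of $\mathcal{K}$ of measure decaying faster than any power of $\sigma_G(a)$ (a standard consequence of the asymptotics of the Iwasawa projection, or equivalently of spherical-function estimates), while on that set the inner $t$-integral is bounded. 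I expect the main obstacle to be a clean formulation and proof of the gauge estimate for Whittaker functions over an arbitrary quasi-split $G$ --- in both the $p$-adic and the real case --- and, in the Archimedean case, control of the interplay between that gauge and the Iwasawa shift $t\mapsto t\,t_{k,a}$; every other step is formal bookkeeping, and in the $p$-adic case the gauge simply degenerates to the statement that $W_i$ is compactly supported modulo $N(k)$.
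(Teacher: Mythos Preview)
Your approach via Iwasawa unfolding and gauge bounds is quite different from the paper's, but it contains a genuine error and, more seriously, a structural gap.

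The error is a chamber mismatch. Your inequality $\delta_{P_0}(a)^{1/2}\ll\Xi^G(a)$ (obtained from the paper's estimate by inversion) forces $A_0^+$ to be the \emph{anti-dominant} chamber, where $\lvert\alpha(a)\rvert\le 1$. But Kostant convexity gives $\delta_{P_0}(t_{k,a})\le\delta_{P_0}(a)$ only for $a$ \emph{dominant} (the linear functional $2\rho$ attains its maximum on $\mathrm{conv}(W\cdot\log a)$ at the dominant vertex). On the anti-dominant chamber the inequality reverses, so the step $\delta_{P_0}(t_{k,a})^{1/2}\le\delta_{P_0}(a)^{1/2}$ is false exactly where you need it.

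More fundamentally, even after discarding that step and bounding $\delta_{P_0}(t_{k,a})^{1/2}$ by a constant on the set $\{t_{k,a}\in C\}$, the remaining volume estimate only gives $\lvert M(a)\rvert\ll \mathrm{vol}\{k:t_{k,a}\in C\}$. In $\SL_2(\mathbb{R})$ with $a=a_t$ one computes this volume to be $\asymp e^{-|t|/2}$, while $\Xi^G(a)\sigma_G(a)^{-d}\asymp |t|^{1-d}e^{-|t|/2}$; so you obtain $\lvert M(a)\rvert\ll\Xi^G(a)\sigma_G(a)^{-1}$ but not the required decay for every $d$. The missing $\sigma_G^{-d}$ cannot be recovered from absolute-value gauges on $W_1,W_2$: it comes from oscillation, as witnessed by Lapid--Mao's result (cited in the paper's remark) that in the $p$-adic case $M$ is actually compactly supported, something no gauge bound can see.

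The paper bypasses all of this. It writes $W_i=R(f_i)\delta_1$ with $f_i\in C_c^\infty(G)$ (Dixmier--Malliavin in the Archimedean case), unfolds to $\int_{G\times N} f_1(uxg)\overline{f_2(x)}\psi(u)\,du\,dx$, and then averages the oscillatory factor $\psi(u)$ over a small compact torus $J_T\subset T(k)$ fixing $f_1,f_2$ on the left; this replaces $\psi(u)$ by a function compactly supported modulo the derived subgroup $N_{\mathrm{der}}$. An elementary $\Xi^G$-estimate (via the doubling inequality) then reduces everything to the convergence of $\int_{N_{\mathrm{der}}(k)}\Xi^G(v)\sigma_G(v)^d\,dv$, which is a known fact. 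The torus-averaging trick is precisely what captures the cancellation that your gauge argument misses.
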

	
	\begin{proof}
		We give the proof in the non-Archimedean case. The Archimedean case can be dealt similarly by using the Dixmier-Malliavin's theorem in place of small compact-open subgroups. As in the proof of Lemma \ref{lem tempered}, we may assume that $W_i=R(f_i)\delta_1$, that is $W_i(x)=\int_{N(k)} f_i(ux)\psi(u)^{-1}du$, for $i=1,2$ and for some functions $f_1,f_2\in C_c^\infty(G)$. Then, we have
		\begin{equation*}
			\displaystyle \langle R(g)W_1,W_2\rangle_{N\backslash G}=\int_{G(k)\times N(k)} f_1(uxg)\overline{f_2(x)} \psi(u)dudx.
		\end{equation*}
		Let $J_T\subset T(k)$ ba a compact-open subgroup leaving $f_1$ and $f_2$ invariant on the left. Then, the above integral doesn't change if we replace $\psi(u)$ by $\psi(tut^{-1})$ for $t\in J_T$. Thus,
		\begin{equation}\label{eq1 coef Whitt}
			\displaystyle \langle R(g)W_1,W_2\rangle_{N\backslash G}=\int_{G(k)\times N(k)} f_1(uxg)\overline{f_2(x)} \widehat{\mathbf{1}}_{J_T}(u)dudx
		\end{equation}
		where we have set $\widehat{\mathbf{1}}_{J_T}(u):=\int_{J_T} \psi(tut^{-1})dt$. As readily follows from the theory of the Fourier transform, $\widehat{\mathbf{1}}_{J_T}$ is a function of compact support on the quotient $N(k)/N_{\der}(k)$ where $N_{\der}\subset N$ is the derived subgroup. Furthermore, for every $d\geq 0$ we have an estimate
		\begin{equation}\label{eq2 coef Whitt}
			\displaystyle \left\lvert \int_{G(k)} f_1(g_1xg_2)\overline{f_2(x)} dx\right\rvert\ll_d \Xi^G(g_1)\Xi^G(g_2)\sigma_G(g_1)^d \sigma_G(g_2)^{-d},\; \mbox{ for } g_1,g_2\in G(k).
		\end{equation}
		Indeed, since $f_1$ is compactly supported we certainly have
		$$\displaystyle \lvert f_1(g_1xg_2)\rvert\ll_d \Xi^G(g_1xg_2) \sigma_G(g_1xg_2)^{-d}\ll \Xi^G(g_1xg_2) \sigma_G(g_1)^d \sigma_G(x)^d\sigma_G(g_2)^{-d},$$
		and by \eqref{doublind ineq} we also have
		$$\displaystyle \int_{G(k)} \Xi^G(g_1xg_2) \sigma_G(x)^d\lvert f_2(x)\rvert dx\ll_d \Xi^G(g_1)\Xi^G(g_2).$$
		From \eqref{eq1 coef Whitt} and \eqref{eq2 coef Whitt}, to conclude it suffices to know the convergence of the integral
		$$\displaystyle \int_{N(k)} \Xi^G(u) \sigma_G(u)^d \widehat{\mathbf{1}}_{J_T}(u)du$$
		for every $d\geq 0$. But this follows from the convergence of $\int_{N_{\der}(k)} \Xi^G(v) \sigma_G(v)^d dv$ that was shown in \cite[Proposition 6.3.1]{SV} (see \cite[Appendix B.3]{BP3} for an argument in the Archimedean case).
		
	\end{proof}
	
	\begin{rem}
		When $k$ is non-Archimedean, there is a stronger result of Lapid and Mao \cite{LaMao} that says that for every $W_1,W_2\in \cS(N(k),\psi\backslash G(k))$ the matrix coefficient $g\in G(k)\mapsto \langle R(g)W_1,W_2\rangle_{N\backslash G}$ is actually compactly supported. It would be interesting to establish the analog result for $k$ Archimedean, that is $g\in G(k)\mapsto \langle R(g)W_1,W_2\rangle_{N\backslash G}$ belongs to the Schwartz space $\cS(G)$.
	\end{rem}
	
	When $X$ is strongly tempered, we can write an explicit Plancherel decomposition for $L^2(X)$ by applying the Plancherel formula for the group $G(k)$ to the matrix coefficient $M_{f_1,f_2}\in \cC(G)$. More precisely, this yields
	$$\displaystyle \langle f_1,f_2\rangle_{X}=M_{f_1,f_2}(1)=\int_{\Temp(G)} J_\pi^X(f_1,f_2) d\mu_G(\pi)$$
	where $d\mu_G(\pi)$ is the Plancherel measure for $G(k)$ and the relative character $J_\pi^X$ is given by
	\begin{equation}\label{eq Jxpi strongly tempered}
		\displaystyle J_\pi^X(f_1,f_2)=\Tr \pi(M_{f_1,f_2}).
	\end{equation}
	Note that this defines a relative character $J_\pi^X$ for {\it every} $\pi\in \Temp(G)$ (and not just $\mu_G$-almost every $\pi$.) Moreover, the above decomposition automatically entails that $J_\pi^X$ is positive semi-definite \cite[Proposition 6.1.1]{SV} (i.e.\ $J_\pi^X(f,f)\geq 0$) so that it is really a Plancherel formula in the sense of Section \ref{Sect distinction and Plancherel}.
	
	For $x\in X(k)$, setting $H=G_x$, the restriction of the relative character $J_\pi^X$ to $\cS(X)_x=\cS(H(k)\backslash G(k))$ is dual to the $H(k)$-invariant Hermitian form
	\begin{equation}\label{eq BHpi strongly tempered}
		\displaystyle \mathcal{B}^H_\pi: \pi\otimes \overline{\pi}\to \bC
	\end{equation}\index{$\mathcal{B}^H_\pi$}
	$$\displaystyle v_1\otimes v_2\mapsto \int_{H(k)} (\pi(h)v_1,v_2) dh.$$
	We refer to \cite[Theorem 6.2.1]{SV} for details. In particular, the form $\mathcal{B}^H_\pi$ is always semi-positive (i.e.\ $\mathcal{B}^H_\pi(v,v)\geq 0$) (cf. {\it op.\ cit.}).
\end{paragr}

\begin{paragr}\label{S Ichino-Ikeda local conjecture}
	The following was conjectured by Ichino-Ikeda in the setting of the Gan-Gross-Prasad conjectures (see below) and established by Sakellaridis-Venkatesh \cite[Theorem 6.4.1]{SV} in the above general setting. We also include here a small improvement that has applications for the development of certain local trace formulas (see Section \ref{Sect local trace formula}).

	\begin{theo}\label{theo nondeg canonical form SV}
		Assume that $k$ is non-Archimedean and let $X$ be a homogeneous spherical variety that is strongly tempered and wavefront. Then, for every $\pi\in \Temp(G)$, $J_\pi^X$ induces a nondegenerate hermitian form on $\cS(X)_\pi=\cC(X)_\pi$. Moreover, for $L$ a Levi subgroup and $\sigma$ a discrete series of $L(k)$, denoting by $\mathfrak{X}^{\unit}(L)$ the torus of unitary unramified characters of $L(k)$, the $X$-multiplicity $m_X(I_L^G(\sigma\otimes \lambda))$ is finite and independent of $\lambda\in \mathfrak{X}^{\unit}(L)$.
	\end{theo}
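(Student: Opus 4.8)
The plan is to treat the two assertions in turn, the nondegeneracy of $J_\pi^X$ being the substantial one. Since $X$ is wavefront, Proposition \ref{prop tempered}(ii) gives $\cS(X)_\pi=\cC(X)_\pi$ for every $\pi\in\Temp(G)$, and by \eqref{eq Jxpi strongly tempered} together with the semipositivity recalled in \S\ref{S strongly tempered} (\cite[Proposition 6.1.1]{SV}) the relative character $J_\pi^X$ is a positive semi-definite Hermitian form on this common space. Decomposing $\cC(X)=\bigoplus_x\cC(X)_x$ over representatives $x$ of the $G(k)$-orbits in $X(k)$, the form $J_\pi^X$ is block diagonal, so it is enough to prove that it is nondegenerate on each $\cC(X)_{x,\pi}$. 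Fixing $x$ and setting $H=G_x$, the description in \S\ref{S strongly tempered} identifies $J_\pi^X$ restricted to $\cC(X)_{x,\pi}$ with the form induced by $\mathcal{B}^H_\pi$ (see \eqref{eq BHpi strongly tempered}) on the space of $H(k)$-coinvariants of $\pi$, a space of dimension $m_X(\pi)$, which is finite by the finiteness of multiplicities for wavefront spherical varieties (\S\ref{S smooth asymptotics cons}). As $\mathcal{B}^H_\pi$ is already semipositive, nondegeneracy here is the same thing as positive-definiteness.

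The key step is to reduce this to the group case $X=G$, where $J_\pi^G(\varphi_1,\varphi_2)=\langle\pi(\varphi_1),\pi(\varphi_2)\rangle_{\operatorname{HS}}$ is nondegenerate on $\cC(G)_\pi\simeq\operatorname{HS}(\pi)$ for the obvious reason that $\varphi\mapsto\pi(\varphi)$ already exhausts the Hilbert--Schmidt operators. To carry this out I would, following the remark after the proof of Proposition \ref{prop tempered}(ii), promote the surjection $P_x\colon\cC(G)\twoheadrightarrow\cC(X)_x$ to a $G(k)$-equivariant continuous section $s_x\colon\cC(X)_x\to\cC(G)$ of $P_x$. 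This is precisely the point where strong temperedness --- the finiteness of $\int_{H(k)}\Xi^G(h)\,dh$, which is strictly stronger than what is needed merely to define $P_x$ --- is used: one checks that no section obtained by multiplying the pull-back to $G(k)$ of a function on $H(k)\backslash G(k)$ by a fixed cut-off can be $G(k)$-equivariant, so $s_x$ has to be, up to normalisation, a fixed right convolution applied to that pull-back, and it is strong temperedness that makes the defining integral converge into $\cC(G)$ and be a genuine section. With $s_x$ in hand, $\cC(X)_x$ becomes a $G(k)$-equivariant topological direct summand of $\cC(G)$; unwinding the matrix coefficient $M_{\varphi_1,\varphi_2}$ in terms of $s_x\varphi_1$ and $s_x\varphi_2$ exhibits $J_\pi^X$ on $\cC(X)_x$ as the compression of $J_\pi^G$ to this summand, and the nondegeneracy of $J_\pi^G$ together with the semipositivity of $J_\pi^X$ then force $J_\pi^X$ to be nondegenerate on $\cC(X)_{x,\pi}$.

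For the second assertion, finiteness of $m_X(I_L^G(\sigma\otimes\lambda))$ is once more the finiteness of multiplicities for wavefront $X$. For the independence of $\lambda$ I would argue by semicontinuity on the connected compact torus $\mathfrak{X}^{\unit}(L)$. On one hand, $\lambda\mapsto m_X(I_L^G(\sigma\otimes\lambda))=\dim\Hom_G(\cC(X),I_L^G(\sigma\otimes\lambda))$ is upper semicontinuous, by the usual behaviour of $\Hom$-dimensions in a parabolic family. On the other hand, fixing a compact-open subgroup $J\subset G(k)$, the formula \eqref{eq Jxpi strongly tempered} and Harish-Chandra's Plancherel theory for $G(k)$ show that the finite-rank Hermitian forms obtained by restricting $J^X_{I_L^G(\sigma\otimes\lambda)}$ to $\cC(X)^J$ vary continuously in $\lambda$, and by the first part of the theorem --- applied to the irreducible summands of $I_L^G(\sigma\otimes\lambda)$, which are mutually inequivalent by the theory of $R$-groups --- their rank equals $\dim\cC(X)_{I_L^G(\sigma\otimes\lambda)}^J$; for $\lambda$ at which $I_L^G(\sigma\otimes\lambda)$ is irreducible this rank is $m_X(I_L^G(\sigma\otimes\lambda))\cdot\dim I_L^G(\sigma)^J$, the dimension $\dim I_L^G(\sigma\otimes\lambda)^J$ being independent of $\lambda$. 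Since the rank of a continuous family of Hermitian forms is lower semicontinuous, $\lambda\mapsto m_X(I_L^G(\sigma\otimes\lambda))$ is lower semicontinuous as well; being upper and lower semicontinuous and integer-valued, it is locally constant, hence constant on the connected torus $\mathfrak{X}^{\unit}(L)$.

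The hard part will be the construction of the equivariant section $s_x$ and the verification of its three properties --- continuity, being a genuine section of $P_x$, and compatibility with the relative characters. Once $s_x$ is available the nondegeneracy collapses to the transparent group case, and the multiplicity statement then follows from the soft semicontinuity argument above.
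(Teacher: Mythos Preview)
Your overall architecture is right: the nondegeneracy does come down to exhibiting $\cC(X)_x$ (at the level of $J$-invariants) as a topological $\cH_J(G)$-direct summand of $\cC(G)$, after which the group Plancherel formula does the work. The gap is precisely in your construction of the $G$-equivariant section $s_x$. You assert that it must be ``a fixed right convolution applied to the pull-back'' and that strong temperedness makes the defining integral converge; but no such kernel is specified, and in fact none can be produced by a one-step analytic estimate. In the paper the only candidate is a left convolution by a partial inverse of the function $f_{J'}:=P_{J'}^*P_{J'}(e_{J'})\in\cC(G)$, i.e.\ a $g$ with $f_{J'}\star g\star f_{J'}=f_{J'}$; and the existence of such a $g$ is exactly what is at stake. (Note also that the remark you invoke after Proposition~\ref{prop tempered}(ii) points forward to the very proof you are writing, so it cannot be used as input.)

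The paper's route to the direct-summand property is indirect and, crucially, reverses your logical order. Strong temperedness is used only to know that $f_{J'}\in\cC(G)$, so that $\pi(f_{J'})$ is defined for all tempered $\pi$ and $\mathcal{B}^H_\pi(v_1,v_2)=(\pi(f_{J'})v_1,v_2)$ on $\pi^{J'}$. The wavefront hypothesis then enters through the \emph{non}-equivariant section of Lemma~\ref{wavefront lemma}, which yields the pointwise inequality $\lVert\varphi\rVert_X^2\le C\lVert P_{J'}^*\varphi\rVert_G^2$. A spectral-separation argument converts this into $(\pi(f_{J'})v,v)\le C\lVert\pi(f_{J'})v\rVert^2$ for every tempered $\pi$, i.e.\ the nonzero eigenvalues of the positive operator $\pi(f_{J'})$ are bounded below by $C^{-1}$ uniformly in $\pi$. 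Since $\lambda\mapsto\pi_\lambda(f_{J'})$ is continuous on $\mathfrak{X}^{\unit}(L)$, this spectral gap forces its rank to be constant---this is where the second assertion is proved, \emph{before} nondegeneracy and as a tool for it. Only then does the matrix Paley--Wiener theorem for $\cC(G)$ allow one to split $\cC(J'\backslash G/J)=\mathrm{Im}\,L(f_{J'})\oplus\ker L(f_{J'})$ and identify $P_{J'}^*\cC(X)^J$ with the first summand; injectivity of $P_{J'}^*$ on coinvariants then gives the nondegeneracy. So your semicontinuity argument for the multiplicity is not needed (and its upper-semicontinuity half would itself require justification), and the section you posit is the \emph{output} of the whole argument rather than its starting point.
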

	
	\begin{rem}
		When the variety $X$ is wavefront, it is known that the multiplicity $m_X(\pi)$ is finite for every irreducible representation $\pi$ of $G(k)$, see \cite[Theorem 5.1.5]{SV} or \S \ref{S smooth asymptotics cons} below.
		
		Let $x\in X(k)$ and set $H=G_x$. The theorem admits the following, equivalent but closer to the original formulation by Ichino-Ikeda, dual formulation: for every $\pi\in \Temp(G)$, the form $\mathcal{B}^H_\pi$ descends to a nondegenerate pairing on the $H(k)$-coinvariant spaces $\pi_H\times \overline{\pi}_H$ and, for $\sigma$ a discrete series of a Levi $L$, the coinvariant space $I_L^G(\sigma\otimes \lambda)_H$ is of finite dimension independent of $\lambda\in \mathfrak{X}^{\unit}(L)$.
		
		In \cite[Theorem 6.4.1]{SV}, it is only proven that the form $\mathcal{B}^H_\pi$ is nonzero when $\pi_H\neq 0$ and that if $I_L^G(\sigma\otimes \lambda_0)_H\neq 0$ for some $\lambda_0\in \mathfrak{X}^{\unit}(L)$ then $I_L^G(\sigma\otimes \lambda)_H\neq 0$ for all $\lambda\in \mathfrak{X}^{\unit}(L)$. This is obviously sufficient to imply the claim when we are in a multiplicity one setting, i.e.\ when $\dim(\pi_H)\leq 1$ for every $\pi\in \Irr(G)$, such as for the Gan-Gross-Prasad conjectures (see Section \ref{Sect GGP}), but not in general. Actually, the argument in \cite[Theorem 6.4.1]{SV} already shows that the pairing $\mathcal{B}^H_\pi$ is nondegenerate for discrete series (of $G$) but for other tempered representations the proof needs to be pushed a bit further using a refinement of the Plancherel decomposition for the group $G(k)$ in the form of a complete matrix Paley-Wiener theorem for the Harish-Chandra Schwartz space $\cC(G)$ (which is due to Harish-Chandra see \cite{WaldPlanch}). Let us review briefly the Sakellaridis-Venkatesh argument in order to explain the refinement.
	\end{rem}
	
	\begin{proof}
		For notational simplicity we will write the proof in the case where $X(k)=H(k)\backslash G(k)$ (i.e.\ there is only one $G(k)$-orbit).
		
		Let $J\subset G(k)$ be a compact-open subgroup. Recall from Proposition \ref{prop tempered} that the $G$-morphism $P_H: \cS(G)\to \cS(X)$ extends to a continuous surjection $\cC(G)\to \cC(X)$. We shall denote by $P_J$ the restriction of $P_H$ to the subspace $\cS(J\backslash G)$ of left $J$-invariant functions. The morphism
		$$\displaystyle P^*_J: C^\infty(X)\to C^\infty(J\backslash G),\;\; \varphi\mapsto e_J\star \varphi,$$
		where we identified every function on $X(k)$ with a $H(k)$-invariant function on $G(k)$, is the adjoint of $P_J$ i.e.\ $\langle P_Jf,\varphi\rangle_X=\langle f,P_J^* \varphi\rangle_G$ for every $(f,\varphi)\in \cS(J\backslash G)\times C^\infty(X)$.
		
		First, we establish some basic properties of the maps $P_J$ and $P_J^*$.
		
		\begin{num}
			\item\label{eq0 theo nondeg canonical form SV} There exists $f_J\in \cC(J\backslash G/J)$ such that $P_J^* P_J(f)=f_J\star f$ for every $f\in \cC(J\backslash G)$ and moreover we have $P^*_J \cC(X)\subseteq \cC(G)$.
		\end{num}
		
		We start by remarking that $P_J^* \cS(X)\subset \cC(G)$. Indeed, this follows directly from the fact that $X$ is strongly tempered by noting that
		$$\displaystyle (P_J^*\varphi)(g)=(e_{J}\star \varphi)(g)=\langle R(g)\varphi, P_H(e_{J})\rangle_X,\;\; g\in G(k).$$
		Set $f_J=P_J^* P_J(e_J)$. Then, by equivariance, for every $f\in \cS(J\backslash G)$ we have
		$$\displaystyle P_J^* P_J(f)=P_J^* P_J(e_J)\star f=f_J\star f.$$
		Since the linear form $f\in \cC(J\backslash G)\mapsto P_J^* P_J(f)(g)$, for $g\in G(k)$, is continuous and $\cS(J\backslash G)$ is dense in $\cC(J\backslash G)$, it follows that the formula $P_J^* P_J(f)=f_J\star f$ holds true for every $f\in \cC(J\backslash G)$. 
		Finally, to show the containment $P^*_J \cC(X)\subseteq \cC(G)$ we use that $P_H: \cC(G)\to \cC(X)$ is surjective. Indeed, this implies that for every $\varphi\in \cC(X)$ we can find a compact-open subgroup $J'\subset J$ as well as $f\in \cC(J'\backslash G)$ such that $\varphi=P_{J'}f$. Hence, by the first part applied to $J'$ instead of $J$, $P_{J'}^* \varphi=P_{J'}^* P_{J'} f= f_{J'}\star f \in \cC(G)$ since the Harish-Chandra Schwartz space is stable by convolution. The result follows by noting that $P_J^* \varphi=e_J\star P_{J'}^* \varphi$.
		
		\begin{num}
			\item\label{eq1 theo nondeg canonical form SV} For every $\pi\in \Temp(G)$, $f_1,f_2\in \cC(J\backslash G)$ and $v_1,v_2\in \pi^J$, we have
			$$\displaystyle J_\pi^X(P_Jf_1,P_Jf_2)=J_\pi^G(f_1,f_J\star f_2)$$
			and
			$$\displaystyle \mathcal{B}_\pi^H(v_1,v_2)=(\pi(f_J)v_1,v_2)$$
			where $J_\pi^G$ denotes the relative character for the group (that is $J_\pi^G(f_1,f_2)=\Tr(\pi(f_1)\pi(f_2)^\ast)$).
		\end{num}
		
		The first equality is just a restatement of formula \eqref{eq Jxpi strongly tempered}. It can also be deduced directly from the chain of equalities
		\begin{equation}\label{eq2 theo nondeg canonical form SV}
			\displaystyle \langle P_J f_1,P_J f_2\rangle_X=\langle f_1,P_J^* P_J f_2\rangle_G=\langle f_1,f_J\star f_2\rangle_G\int_{\Temp(G)} J_\pi^G(f_1,f_J\star f_2) d\mu_G(\pi)
		\end{equation}
		which follows from adjointness and the Plancherel formula for the group $G$. The second equality is similarly just a restatement of the formula \eqref{eq BHpi strongly tempered} or can be deduced from the first one by dualizing.
		
		\begin{num}
			\item\label{eq1bis theo nondeg canonical form SV} For every $\pi\in \Temp(G)$, the operator $\pi(f)$ is a nonnegative Hermitian operator.
		\end{num}
		This follows directly from the second identity in \eqref{eq1 theo nondeg canonical form SV} since $\mathcal{B}_\pi^H$ is a positive semi-definite Hermitian form on $\pi$.
		
		Next, according to the Wavefront Lemma \ref{wavefront lemma}, we can find a subset $G^+\subset G(k)$, another compact-open subgroup $J'$ and a constant $C>0$ such that $G(k)=H(k)G^+$, $H(k)gJ\supset J'gJ$ and $\vol(H(k)\backslash H(k)gJ)\leq C \vol(J'gJ)$ for every $g\in G^+$.  Up to shrinking $J'$, we will also assume that (Proposition \ref{prop tempered}(ii))
		$$\displaystyle P_{J'}\cC(J'\backslash G/J)=\cC(X)^J.$$
		The crux of the proof is the following inequality which already appears in the proof of \cite[Theorem 6.4.1]{SV}: 
		\begin{equation}\label{eq3 theo nondeg canonical form SV}
			\displaystyle \langle \varphi,\varphi\rangle_X\leq C \langle P_{J'}^* \varphi,P_{J'}^* \varphi\rangle \mbox{ for every } \varphi\in \cC(X)^J.
		\end{equation}
		Indeed, without loss in generality, we may assume that $G^+$ is left $J'$- and right $J$-invariant. It then follows that
		\[\begin{aligned}
			\displaystyle \langle \varphi,\varphi\rangle_X=\int_{X(k)} \lvert \varphi(x)\rvert^2 dx & \leq \sum_{g\in J'\backslash G^+/J} \vol(H(k)gJ) \lvert \varphi(g)\rvert^2 \\
			& \leq C \int_{G^+} \lvert (e_{J'}\star \varphi)(g)\rvert^2 dg\leq C \langle P_{J'}^* \varphi,P_{J'}^* \varphi\rangle
		\end{aligned} \]
		for every $\varphi\in \cC(X)^J$.
		
		Combining \eqref{eq2 theo nondeg canonical form SV} with \eqref{eq3 theo nondeg canonical form SV} and a new application of the Plancherel formula for the group, we arrive at the inequality
		\[\begin{aligned}
			\displaystyle & \int_{\Temp(G)} J_\pi^G(f,f_{J'}\star f) d\mu_G(\pi)=\langle P_{J'}f, P_{J'}f\rangle_X \\
			& \leq C \langle P_{J'}^\vee P_{J'}f,P_{J'}^\vee P_{J'}f\rangle_G=C \int_{\Temp(G)} J_\pi^G(f_{J'}\star f,f_{J'}\star f) d\mu_G(\pi)
		\end{aligned}\]
		for every $f\in \cS(J'\backslash G/J)$. By a standard spectral separation argument (see e.g. \cite[Proposition 6.1.1]{SV}) this then yields
		\begin{equation}\label{eq4 theo nondeg canonical form SV}
			\displaystyle J_\pi^G(f,f_{J'}\star f)\leq C J_\pi^G(f_{J'}\star f,f_{J'}\star f), \; f\in \cS(J'\backslash G/J),
		\end{equation}
		for $\mu_G$-almost every $\pi\in \Temp(G)$.
		
		Let $\pi\in \Temp(G)$ such that the inequality \eqref{eq4 theo nondeg canonical form SV} holds. By definition of the relative character $J_\pi^G$, it can be rewritten as an inequality of trace:
		$$\displaystyle \Tr(\pi(f_{J'})\pi(f)\pi(f)^*)\leq C \Tr(\pi(f_{J'})^*\pi(f_{J'})\pi(f)\pi(f)^*),\; \mbox{ for } f\in \cS(J'\backslash G/J).$$
		Assume moreover that $\pi^J\neq 0$ (otherwise $\pi(f)=0$ for every $f\in \cS(J'\backslash G/J)$ and the inequality is trivial). Then, since $\pi$ is irreducible, $\pi(f)$ can be any linear map $\pi^J\to \pi^{J'}$. In particular, letting $\pi(f)$ run over all such rank one operators, we obtain
		\begin{equation}\label{eq5 theo nondeg canonical form SV}
			\displaystyle (\pi(f_{J'})v,v)\leq C (\pi(f_{J'})v,\pi(f_{J'})v), \mbox{ for every } v\in \pi^{J'},
		\end{equation}
		where $(.,.)$ is any invariant inner product on $\pi$.
		
		Let now $L\subset G$ be a Levi subgroup and $\sigma\in \Pi_2(L)$ be a discrete series. For each $\lambda\in \mathfrak{X}^{unit}(L)$ we set $\pi_\lambda=I_L^G(\sigma\otimes \lambda)$. Assume that $\pi_\lambda^J\neq 0$ (if it holds for one $\lambda$ then it automatically holds for all $\lambda$). Then, for almost all $\lambda\in \mathfrak{X}^{unit}(L)$, the representation $\pi_\lambda$ is irreducible and the restriction of the Plancherel measure $d\mu_G$ to this family is in the same class as (the pushforward of) any Haar measure on $\mathfrak{X}^{unit}(L)$. Therefore, the inequality \eqref{eq4 theo nondeg canonical form SV} is verified with $\pi=\pi_\lambda$ for almost all $\lambda$. However, all the unitary representations $\pi_\lambda$ can be realized on a common pre-Hilbert space $I_{P\cap K}^K(\sigma)$, where $P$ is a parabolic subgroup with Levi factor $L$ and $K\subset G(k)$ a suitable maximal compact subgroup satisfying the Iwasawa decomposition $G(k)=P(k)K$, and both sides of the inequality (for a fixed vector $v\in I_{P\cap K}^K(\sigma)^{J'}$) are readily seen to be $C^\infty$ functions in $\lambda$. Thus, the inequality holds for every $\pi_\lambda$ and we have
		\begin{equation}
			\displaystyle (\pi_\lambda(f_{J'})v,v)\leq C (\pi_\lambda(f_{J'})v,\pi_\lambda(f_{J'})v), \mbox{ for every } \lambda\in \mathfrak{X}^{\unit}(L), v\in \pi_\lambda^{J'}.
		\end{equation}
		Since $\pi_\lambda(f_{J'})$ is a nonnegative Hermitian operator (\eqref{eq1bis theo nondeg canonical form SV}), the above inequality just says that all its nonzero eigenvalues are bounded from below by $C^{-1}$. Because the map $\lambda\mapsto \pi_\lambda(f_{J'})\in \End(I_{P\cap K}^K(\sigma)^J)$ is continuous (even smooth), this implies:
		\begin{num}
			\item\label{eq6 theo nondeg canonical form SV} The rank of $\pi_\lambda(f_{J'})$ is independent of $\lambda\in \mathfrak{X}^{\unit}(L)$.
		\end{num}
		Note that by the relation \eqref{eq1 theo nondeg canonical form SV} between $\pi(f_{J'})$ and $\mathcal{B}_\pi^H$, this shows that the rank of the Hermitian form $\mathcal{B}_{\pi_\lambda}^H$ is independent of $\lambda\in \mathfrak{X}^{\unit}(L)$. Thus, the second part of the theorem, namely that the multiplicity $m_X(\pi_\lambda)$ is independent of $\lambda$, is now a consequence of the first part (according to which the rank of $\mathcal{B}_{\pi_\lambda}^H$is $m_X(\pi_\lambda)$).
		
		As a consequence of the above, we can now show:
		\begin{num}
			\item\label{eq7 theo nondeg canonical form SV} $P_{J'}^* \cC(X)^J$ is a closed direct summand of $\cC(J'\backslash G/J)$ as a topological $\cH(G,J)$-module. More precisely, we have the decomposition
			\begin{equation*}
				\displaystyle \cC(J'\backslash G/J)=\Ima(P_{J'}^*\mid_{\cC(X)^J})\oplus \Ker(P_{J'}\mid_{\cC(J'\backslash G/J)}).
			\end{equation*}
		\end{num}
		Note that, by adjunction, the two subspaces $\Ima(P_{J'}^*\mid_{\cC(X)^J})$ and $\Ker(P_{J'}\mid_{\cC(J'\backslash G/J)})$ are orthogonal with respect to the $L^2$-scalar product $\langle .,.\rangle_G$ and
		$$\displaystyle \Ker(P_{J'}\mid_{\cC(J'\backslash G/J)})=\Ker(P_{J'}^*P_{J'}\mid_{\cC(J'\backslash G/J)})=\{f\in \cC(J'\backslash G/J)\mid f_{J'}\star f=0\}=\Ker(L(f_{J'})\mid_{\cC(J'\backslash G/J)})$$
		where $L(f_{J'})$ stands for the operator of left convolution by $f_{J'}$. Moreover, by our choice of $J'$ (i.e.\ such that $P_{J'}\cC(J'\backslash G/J)=\cC(X)^J$) and \eqref{eq0 theo nondeg canonical form SV}, we have
		$$\displaystyle \Ima(P_{J'}^*\mid_{\cC(X)^J})=f_{J'}\star \cC(J'\backslash G/J)=\Ima(L(f_{J'})\mid_{\cC(J'\backslash G/J)}).$$
		Therefore, it suffices to show that the subspace $\Ima(L(f_{J'})\mid_{\cC(J'\backslash G/J)})$ is closed and
		$$\displaystyle \cC(J'\backslash G/J)=\Ima(L(f_{J'})\mid_{\cC(J'\backslash G/J)})\oplus \Ker(L(f_{J'})\mid_{\cC(J'\backslash G/J)}).$$
		For this, we shall use the spectral description of $\cC(G)$ given in \cite{WaldPlanch} that we now recall. More precisely, this theorem characterizes the image of the Fourier transform $f\in \cC(G)\mapsto (\pi\in \Temp(G)\mapsto \pi(f))$, which is injective, as follows. An assignment $\pi\in \Temp(G)\mapsto T_\pi\in \End(\pi)$\footnote{Note that $\End(\pi)$ depends on the isomorphism class of $\pi$ up to a {\it unique} isomorphism, an easy consequence of Schur's lemma.} is of the form $\pi\mapsto \pi(f)$ for some $f\in \cC(G)$ if and only if the following conditions are met:
		\begin{itemize}
			\item There exists a compact-open subgroup $J_0$ such that $T_\pi$ is bi-$J_0$-invariant for all $\pi$;
			
			\item For every parabolic subgroup $P=LU\subset G$ and $\sigma\in \Pi_2(L)$, identifying the spaces of all the induced representations $I_P^G(\sigma\otimes \lambda)$ for $\lambda\in \mathfrak{X}^{\unit}(L)$ as above (by choosing a suitable maximal compact subgroup), the map $\lambda\in \mathfrak{X}^{\unit}(L)\mapsto T_{I_P^G(\sigma\otimes\lambda)}\in I_P^G(\sigma\otimes\lambda)\otimes I_P^G(\sigma\otimes\lambda)^\vee$\footnote{Here, if $I_P^G(\sigma\otimes\lambda)$ is reducible, $T_{I_P^G(\sigma\otimes\lambda)}$ implicitely stands for the sum of the $T_\pi$ for its various irreducible summands $\pi$. (By unitarity, $I_P^G(\sigma\otimes\lambda)$ is automatically semisimple.)} is $C^\infty$. (Note that by the first condition it automatically lands in a finite dimensional subspace.)
		\end{itemize}
		Let us define $\cC(J'\backslash G/J)_X$ and $\cC(J'\backslash G/J)^X$ as the subspaces of those $f\in \cC(J'\backslash G/J)$ such that for every $\pi\in \Temp(G)$, $\Ima(\pi(f))\subset \Ima(\pi(f_{J'}))$ or $\Ima(\pi(f))\subset \Ima(\pi(f_{J'}))^\perp=\Ker(\pi(f_{J'}))$ respectively. Since the Fourier transform $f\mapsto(\pi\mapsto \pi(f))$ is injective and transforms the convolution product into pointwise composition we have
		$$\displaystyle \Ima(L(f_{J'})\mid_{\cC(J'\backslash G/J)})\subset \cC(J'\backslash G/J)_X \mbox{ and } \Ker(L(f_{J'})\mid_{\cC(J'\backslash G/J)})=\cC(J'\backslash G/J)^X.$$
		Moreover, for every $f\in \cC(J'\backslash G/J)$ and $\pi\in \Temp(G)$, the operator $\pi(f)$ can be uniquely decomposed as
		$$\displaystyle \pi(f)=\pi(f)_X+\pi(f)^X$$
		where $\Ima(\pi(f)_X)\subset \Ima(\pi(f_{J'}))$ and $\Ima(\pi(f)^X)\subset \Ima(\pi(f_{J'}))^\perp$.  Thanks to \eqref{eq6 theo nondeg canonical form SV}\footnote{More precisely, we are using the combination of \eqref{eq6 theo nondeg canonical form SV} and the following elementary fact: if $M$ is a $C^\infty$ manifold, $V$ a finite dimensional Hilbert space and $\lambda\in M\mapsto T_\lambda\in \End(V)$ a $C^\infty$ map with $T_\lambda$ of constant rank then the orthogonal projection $p_\lambda$ onto $\Ima(T_\lambda)$ also varies smoothly with $\lambda$.}, the assignments $\pi\in \Temp(G)\mapsto \pi(f)_X$ and $\pi\mapsto \pi(f)^X$ still satisfy the two conditions above and therefore are given as the Fourier transforms of two functions $f_X\in \cC(J'\backslash G/J)_X$ and $f^X\in \cC(J'\backslash G/J)^X$. This shows that
		$$\displaystyle \cC(J'\backslash G/J)=\cC(J'\backslash G/J)_X\oplus \cC(J'\backslash G/J)^X.$$
		Furthermore, for every $f\in \cC(J'\backslash G/J)_X$ and $\pi\in \Temp(G)$, $\pi(f)$ can be written uniquely as a product $\pi(f_{J'})T_\pi$ where $\Ima(T_\pi)\subset \Ker \pi(f_{J'})^\perp$ and, once again thanks to \eqref{eq6 theo nondeg canonical form SV}, the assignment $\pi\mapsto T_\pi$ also satisfies the two condition above and is therefore the Fourier transform of a unique function $g\in \cC(J'\backslash G/J)$. This implies $f=f_{J'}\star g$ and therefore
		$$\displaystyle \cC(J'\backslash G/J)_X=f_{J'}\star \cC(J'\backslash G/J).$$
		
		We can now finish the proof of the theorem. Namely, let $\pi\in \Temp(G)$. Without loss in generality we may assume that $\pi^J\neq 0$. It then suffices to prove that $J_\pi^X$ is nondegenerate on $\cC(X)_\pi^J$. By adjointness, the surjectivity $\cC(X)^J=P_{J'}\cC(J'\backslash G/J)$ implies that the restriction of $P^*_{J'}$ to $\cC(X)^J$ is injective and therefore, by \eqref{eq7 theo nondeg canonical form SV}, it identifies $\cC(X)^J$ with a (topological) direct summand of $\cC(J'\backslash G/J)$. This entails in particular that the morphism $P_{J'}^*$ descends to a map between coinvariants $P_{J',\pi}^\vee: \cC(X)_\pi\to \cC(G)_\pi$ that is injective on $\cC(X)_\pi^J$. By the relation \eqref{eq1 theo nondeg canonical form SV}, and since $J_\pi^G$ is nondegenerate on $\cC(G)_\pi$, this implies the desired result.
	\end{proof}

	It follows from the above theorem that when $X$ is strongly tempered and wavefront, the Plancherel decomposition reads
	$$\displaystyle L^2(X)\simeq \int_{\Temp(G)}^\oplus \pi^{\oplus m_X(\pi)} d\mu_G(\pi)$$
	where $m_X(\pi)=dim \Hom_G(\cS(X),\pi)$ is the (smooth) $X$-multiplicity of $\pi$. Therefore, in this case describing the $L^2$-spectrum of $X$ amounts to computing the multiplicities $m_X(\pi)$ for $\pi$ tempered.
\end{paragr}

\subsection{The Gan-Gross-Prasad local conjectures}\label{Sect GGP}

In this subsection, we discuss a particularly important instance of strongly tempered spherical varieties for which conjectures of Gan-Gross-Prasad (now established in all cases) give an important refinement of the local Conjecture \ref{Conj local strong SV}; describing the distinguished representations in each (extended) tempered $L$-packets.

\vspace{2mm}

\begin{paragr}[The orthogonal Gross-Prasad varieties.]
	One seminal example of a situation where we have precise description of the multiplicity $m_X(\pi)$ is provided by the {\em Gan-Gross-Prasad conjectures} \cite{GGP} that we now describe in a particular case for orthogonal groups. Let $(V,q)$ be quadratic space over $k$, $W\subset V$ be a nondegenerate hyperplane and $L=W^\perp$ be its orthogonal complement (a line). Set
	$$\displaystyle H=\SO(W)\hookrightarrow G=\SO(W)\times \SO(V),$$
	where the embedding of $H$ in $G$ is the product of the identity with the natural inclusion $\SO(W)\subset \SO(V)$, which identifies $\SO(W)$ with the subgroup of those $g\in \SO(V)$ that act trivially on $L$. Then, the original conjecture of Gross-Prasad \cite{GP} concerns the multiplicity function $\pi\in \Irr(G)\mapsto m(\pi)=m_X(\pi)$ associated to the spherical variety $X=H\backslash G$. A theorem of Aizenbud, Gourevitch, Rallis and Schiffmann \cite{AGRS} says that we always have $m(\pi)\leq 1$ so that the only remaining question is to determine when this multiplicity is nonzero.
\end{paragr}

\begin{paragr}[$L$-group.]
	The spherical variety $X=H\backslash G$ is strongly tempered, wavefront and its $L$-group is the same as that of $G$:
	$$\displaystyle {}^L G_X={}^L G={}^L \SO(W)\times_{W_k} {}^L \SO(V).$$
	Here, we recall that the $L$-group of $\SO(W)$ is given by
	$$\displaystyle {}^L \SO(W)=\left\{\begin{array}{ll}
		\Sp_{2m}(\bC)\times W_k & \mbox{ if } \dim(W)=2m+1 \mbox{ is odd,} \\
		\SO_{2m}(\bC)\rtimes W_k & \mbox{ if } \dim(W)=2m \mbox{ is even.}
	\end{array} \right.$$
	Moreover, in the case where $\dim(W)=2m$ is even, the Galois action on $\SO_{2m}(\bC)$ factors through the Galois group of a separable extension $\ell/k$ that is at most quadratic and if $\ell\neq k$ the nontrivial element of $\Gal(\ell/k)$ acts as the conjugation of an element in the nontrivial identity component $\mathrm{O}_{2m}(\bC)\setminus \SO_{2m}(\bC)$ (up to translation by the center $Z(\SO_{2m}(\bC))=\{ \pm 1\}$ there is a unique such element that fixes the pinning on the dual group $\SO_{2m}(\bC)$). In particular, there is a natural morphism ${}^L \SO(W)\to \mathrm{O}_{2m}(\bC)$ when $\dim(W)=2m$ and ${}^L \SO(W)\to \Sp_{2m}(\bC)$ when $\dim(W)=2m+1$. In other words, ${}^L \SO(W)$ admits a natural representation on a $\bC$-vector space $N$ that is symplectic when $\dim(W)$ is odd and orthogonal when $\dim(W)$ is even. Similarly, ${}^L \SO(V)$ admits a natural representation on a $\bC$-vector space $M$ that is symplectic when $\dim(V)$ is odd and orthogonal when $\dim(V)$ is even. The tensor product of $N$ and $M$ yields a representation of the $L$-group of $G$,
	$$\displaystyle r:{}^L G\to \GL(N\otimes M).$$
	Note that, since $\dim(W)=\dim(V)-1$ and $\dim(V)$ are of different parities, this representation of ${}^L G$ is always of symplectic type.
\end{paragr}

\begin{paragr}[Pure inner forms.]
	The pure inner forms (PIF) of the variety $X$ are of a similar form. More precisely, $H^1(k,\SO(V))$ (resp. $H^1(k,\SO(W))$) naturally parametrizes isomorphism classes of quadratic spaces of the same dimension and discriminant as $V$ (resp. as $W$) and if $\alpha\in H^1(k,\SO(V))$ (resp. $\beta\in H^1(k,\SO(W))$) corresponds to the quadratic space $V_\alpha$ (resp. $W_\beta$) then the corresponding pure inner form is its special orthogonal group $\SO(V_\alpha)$ (resp. $\SO(W_\beta)$). In particular, the {\it relevant} pure inner forms\footnote{i.e.\ the set of PIF with nonempty set of $k$-points.} of $X$ are the spherical varieties $X_\beta=\SO(W_\beta)\backslash \SO(V_\beta)$ for $\beta\in H^1(k,\SO(W))$, where $V_\beta=W_\beta\oplus^\perp L$, a quadratic space that is isomorphic to $V_\alpha$ for $\alpha\in H^1(k,\SO(V))$ the image of $\beta$ by the map $H^1(k,\SO(W))\to H^1(k,\SO(V))$, and we recall that $L$ denotes the orthogonal complement of $W$ in $V$.
\end{paragr}

\begin{paragr}[Local Langlands correspondence for orthogonal groups.]
	Let $\phi_V: \mathcal{L}_k\to {}^L \SO(V)$, $\phi_W:\mathcal{L}_k\to {}^L \SO(W)$ be $L$-parameters that we will consider as symplectic or orthogonal representations on $M$ and $N$ respectively. According to Vogan's version of the local Langlands correspondence, these $L$-parameters should give rise to $L$-packets
	$$\displaystyle \Pi^{\SO(V_\alpha)}(\phi_V)\subseteq \Irr(\SO(V_\alpha)),\;\; \Pi^{\SO(W_\beta)}(\phi_W)\subseteq \Irr(\SO(W_\beta)),$$
	for all $(\alpha,\beta)\in H^1(k,\SO(V))\times H^1(k,\SO(W))$ (that is for all the pure inner forms of $\SO(V)$ and $\SO(W)$) and there should exist bijections:
	\begin{equation}\label{LLC SOV}
		\displaystyle \bigsqcup_{\alpha\in H^1(k,\SO(V))} \Pi^{\SO(V_\alpha)}(\phi_V)\simeq \Irr(S_{\phi_V}),\;\; \sigma \mapsto \langle \sigma,.\rangle,
	\end{equation}
	
	\begin{equation}\label{LLC SOW}
		\displaystyle \bigsqcup_{\beta\in H^1(k,\SO(W))} \Pi^{\SO(W_\beta)}(\phi_W)\simeq \Irr(S_{\phi_W}),\;\; \sigma' \mapsto \langle \sigma',.\rangle
	\end{equation}
	where $S_{\phi_V}=\pi_0(Z_{\widehat{\SO(V)}}(\phi_V))$, $S_{\phi_W}=\pi_0(Z_{\widehat{\SO(W)}}(\phi_W))$ denote the (finite) component groups of the centralizers of $\phi_V$, $\phi_W$ in the respective dual groups and $\Irr(S_{\phi_V})$, $\Irr(S_{\phi_W})$ their sets of irreducible characters. (Actually, in the case considered here, that of special orthogonal groups, the component groups $S_{\phi_V}$, $S_{\phi_W}$ are always $2$-abelian groups and therefore $\Irr(S_{\phi_V})$, $\Irr(S_{\phi_W})$ are just their character groups). The bijections \eqref{LLC SOV} and \eqref{LLC SOW} are not entirely canonical and should depend on the choice of suitable Whittaker data, more precisely on the choices of quasi-split pure inner forms of $\SO(V)$, $\SO(W)$ as well as Whittaker data on those. Following Gan, Gross and Prasad \cite[\S 17]{GGP} these can be chosen in terms of the pair $(V,W)$ alone. Namely, we first observe that there exists a unique $\beta\in H^1(k,\SO(W))$ such that the relevant pure inner form $\SO(W_\beta)\times \SO(V_\beta)$ is quasi-split (where we recall that $V_\beta=W_\beta\oplus^\perp L$). Thus, it suffices to choose Whittaker data on the special orthogonal groups $\SO(U)$ for $U=W_\beta$ and $V_\beta$. One of these special orthogonal group is odd, hence adjoint, and therefore admits a unique Whittaker datum (up to conjugacy). On the other hand, the (conjugacy classes of) Whittaker data on the even special orthogonal group $\SO(U)$ can be naturally set in bijection with isomorphism classes of non-isotropic lines $L'\subset U$ with the property that the special orthogonal group of its orthogonal complement $(L')^\perp$ is split, see \cite[Proposition 12.1]{GGP}. Then, we choose the Whittaker datum corresponding to a non-isotropic line $L'\subset U$ of the same discriminant as the odd orthogonal space.
	
	We emphasize that the existence of the Local Langlands correspondence for special orthogonal groups, with its required compatibility with the theory of (twisted) endoscopy, was established by Arthur \cite{artbook} (in the quasi-split case) and M\oe{}glin-Renard \cite{MoeReLLC} (in general) with one caveat: in the even orthogonal case $\SO(V)$, $\dim(V)=2m$, this correspondence does not distinguish between $L$-parameters $\mathcal{L}_k\to {}^L \SO(V)$ that are conjugate under the full orthogonal group $\mathrm{O}_{2m}(\bC)$ (whereas the usual equivalence relation on parameters is by conjugation by the dual group $\widehat{\SO(V)}=\SO_{2m}(\bC)$). In particular, this means that the $L$-packets so constructed are all invariant by the outer automorphism given by conjugation by any element in $\mathrm{O}(V)\setminus \SO(V)$. An alternative construction of the local Langlands correspondence for special orthogonal groups, based on the local theta correspondence, is given in \cite{ChenZouLLC}, although this approach doesn't give the endoscopic relations.
\end{paragr}

\begin{paragr}[The Gross-Prasad character.]
	In \cite[\S 6]{GGP}, Gan, Gross and Prasad define a character
	$$\displaystyle \chi_{\phi_V,\phi_W}: S_{\phi_V}\times S_{\phi_W}\to \{\pm 1 \},$$
	by the following formulas
	\begin{equation}\label{GPchar1}
		\displaystyle \chi_{\phi_V,\phi_W}(s,1)=\epsilon(M^{-\widetilde{s}}\otimes N) \det(M^{-\widetilde{s}})(-1)^{\dim(N)/2} \det(N)(-1)^{\dim(M^{-\widetilde{s}})/2},
	\end{equation}
	\begin{equation}\label{GPchar2}
		\displaystyle \chi_{\phi_V,\phi_W}(1,s')=\epsilon(M\otimes N^{-\widetilde{s}'}) \det(M)(-1)^{\dim(N^{-\widetilde{s}'})/2} \det(N^{-\widetilde{s}'})(-1)^{\dim(M)/2},
	\end{equation}
	for $(s,s')\in S_{\phi_V}\times S_{\phi_W}$, where:
	\begin{itemize}
		\item $\tilde{s}\in Z_{\widehat{\SO(V)}}(\phi_V)$, $\widetilde{s}'\in Z_{\widehat{\SO(W)}}(\phi_W)$ are chosen lifts of $s$, $s'$ respectively;
		
		\item $M^{-\widetilde{s}}$ denotes the $-1$-eigenspace of $\widetilde{s}$ in $M$, considered as a representation of the local Langlands group $\mathcal{L}_k$ by composition with the $L$-parameter $\phi_V$, and $N^{-\widetilde{s}'}$ is defined similarly;
		
		\item $\det(M^{-\widetilde{s}})$, $\det(M)$, $\det(N^{-\widetilde{s}'})$ and $\det(N)$ are the determinant characters of the corresponding representations of $\mathcal{L}_k$, seen as characters of $k^\times$ via local class field theory; in particular in the above formulas they are evaluated at $-1\in k^\times$. Moreover, all the vector spaces $M^{-\widetilde{s}}$, $M$, $N^{-\widetilde{s}'}$ and $N$ are of even dimension so that all the exponents are integers.

		\item $\epsilon(M^{-\widetilde{s}}\otimes N)$, $\epsilon(M\otimes N^{-\widetilde{s}'})$ are the local root numbers of the corresponding representations of $\mathcal{L}_k$; since the two representations $M^{-\widetilde{s}}\otimes N$ and $M\otimes N^{-\widetilde{s}'}$ of $\mathcal{L}_k$ are symplectic these local root numbers do not depend on the auxilliary choice of an additive character of $k$.
	\end{itemize}
	
	Finally, it is shown in {\em op.\ cit.}\ that the above definition does not depend on the chosen lifts $\widetilde{s}$, $\widetilde{s}'$ and that this indeed yields a character of $S_{\phi_V}\times S_{\phi_W}$.
\end{paragr}

\begin{paragr}[Generic $L$-parameters.]
	The $L$-parameter $\phi_V$ is said to be {\em generic} if the corresponding Vogan $L$-packet
	$$\displaystyle \bigsqcup_{\alpha\in H^1(k,\SO(V))} \Pi^{\SO(V_\alpha)}(\phi_V),$$
	contains at least a representation that is generic with respect to some Whittaker datum. It was conjectured by Gross and Prasad \cite[Conjectures 2.5, 2.6]{GP}, and proved by Gan and Ichino \cite[Appendix B]{GIGGP} for all classical groups, that if an $L$-parameter $\phi_V$ is generic if and only if its associated local $L$-factor $L(s,\phi_V,\Ad)$ (where $\Ad$ denotes the adjoint representation of ${}^L \SO(V)$) does not have a pole at $s=1$. In particular, tempered $L$-parameter are always generic (their local $L$-factors always have poles of nonpositive real part). Moreover, if $\phi_V$ is generuc then, for each Whittaker datum $\varpi$ on one of the pure inner form $\SO(V_\alpha)$, its Vogan $L$-packet contains exactly one representation that is generic with respect to $\varpi$. Of course, all of this applies equally well to the $L$-parameter $\phi_W$.
\end{paragr}

\begin{paragr}[The Gross-Prasad conjecture.]
	The theorem below is the original local Gross-Prasad conjecture \cite{GP} in the non-Archimedean case which was proven by Waldspurger and M\oe{}glin-Waldspurger \cite{WalGP2}, \cite{MWGP}. For simplicity, we will ignore in its formulation the aforementioned ambiguity built in the currently available version of the local Langlands correspondence for even orthogonal groups and we refer the reader to {\em op.\ cit.}\ for precise statement taking this subtlety into account.
	
	\begin{theo}[Waldspurger, M\oe{}glin-Waldspurger (formerly Gross-Prasad conjecture)]
		Assume that the parameters $\phi_V$, $\phi_W$ are generic and that $k$ is non-Archimedean of characteristic zero. Then, there exists a unique representation
		$$\displaystyle \pi=\sigma'\boxtimes \sigma\in \bigsqcup_{\beta\in H^1(k,SO(W))} \Pi^{SO(W_\beta)}(\phi_W)\boxtimes \Pi^{SO(V_\beta)}(\phi_V)$$
		with $m(\pi)\neq 0$ (hence $m(\pi)=1$ by \cite{AGRS}). Moreover, the character $\langle \sigma,.\rangle\boxtimes \langle \sigma',.\rangle$ of the product of component groups $S_{\phi_V}\times S_{\phi_W}$ is exactly the character $\chi_{\phi_V,\phi_W}$ defgined by the formulas \eqref{GPchar1} and \eqref{GPchar2} above.
	\end{theo}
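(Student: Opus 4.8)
The plan is to follow the local (relative) trace formula strategy of Waldspurger, reducing the computation of the multiplicity $m(\pi)$ to an explicit \emph{geometric} expression which is then matched against the Gross--Prasad character by means of the endoscopic classification of representations of orthogonal groups.

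First I would establish a \emph{multiplicity formula} $m(\pi)=m_{\mathrm{geom}}(\pi)$, where
\[
m_{\mathrm{geom}}(\pi)=\sum_{T}\lvert W(H,T)\rvert^{-1}\int_{T(k)} D^H(t)\,D^G(t)\,\Theta_\pi(t)\,\nu(t)\,dt,
\]
the sum running over representatives of the maximal tori $T$ of $H=\SO(W)$, with $\Theta_\pi$ the Harish-Chandra character of $\pi$, $D^H, D^G$ Weyl discriminants, and $\nu$ an explicit normalizing factor. The proof is a local trace formula for the pair $(G,H)$: one forms a suitably truncated integral over $H(k)\times H(k)$ of a test function in $\cC(G)$, expands it spectrally — producing, via the $\SO(V)\times\SO(W)$-Plancherel formula, a sum of squared multiplicities $m(\pi)\in\{0,1\}$ weighted by relative characters — and geometrically, by localizing near semisimple classes and applying a Weyl integration formula on elliptic tori. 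Strong temperedness and the wavefront property of $X=H\backslash G$ (recorded in the setup, and underlying Theorem \ref{theo nondeg canonical form SV}) are exactly what guarantee convergence of all the integrals, while the multiplicity-one theorem of Aizenbud--Gourevitch--Rallis--Schiffmann collapses the spectral side to scalars; a standard spectral separation argument then isolates $m(\pi)=m_{\mathrm{geom}}(\pi)$.

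Next I would sum the geometric multiplicity over an extended (Vogan) $L$-packet against the characters of the component groups: for $x\in S_{\phi_V}\times S_{\phi_W}$ compute $\sum_{\pi}\bigl(\langle\sigma',\cdot\rangle\boxtimes\langle\sigma,\cdot\rangle\bigr)(x)\,m_{\mathrm{geom}}(\pi)$. The point is that $\sum_{\sigma\in\Pi(\phi_V)}\langle\sigma,x\rangle\,\Theta_\sigma$ is the endoscopic transfer of a stable character attached to $(\phi_V,x)$; substituting its explicit values on elliptic regular tori into $m_{\mathrm{geom}}$ and using the character identities built into the local Langlands correspondence for special orthogonal groups (Arthur, M\oe{}glin--Renard), the integral telescopes into a product of local root numbers and determinant characters of $\mathcal{L}_k$-representations — precisely the right-hand sides of \eqref{GPchar1} and \eqref{GPchar2}. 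This ``integral of characters'' is Waldspurger's central technical input. Finally, since $S_{\phi_V}\times S_{\phi_W}$ is a finite abelian $2$-group, Fourier inversion on it applied to the summed identity isolates each $m(\pi)$, yielding $m(\pi)=1$ exactly when $\langle\sigma',\cdot\rangle\boxtimes\langle\sigma,\cdot\rangle=\chi_{\phi_V,\phi_W}$ and $0$ otherwise — hence simultaneously existence, uniqueness, and the identification of the distinguished member. The passage from tempered to general generic $\phi_V,\phi_W$ then proceeds as in M\oe{}glin--Waldspurger: realize the relevant packet members as Langlands quotients of standard modules induced from tempered data on smaller orthogonal and general linear factors, and propagate the multiplicity through Jacquet modules, genericity ensuring the standard modules are irreducible where needed.

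I expect the main obstacle to be the character-integral computation of the second step — equivalently, pinning down the match with $\epsilon$-factors \emph{on the nose}, not merely up to an unspecified twist. It requires the full endoscopic classification as input, a hands-on evaluation of stable tempered characters along elliptic maximal tori, and careful tracking of transfer factors and Whittaker normalizations (the choices of quasi-split pure inner form and Whittaker datum extracted from the pair $(V,W)$ alone). Setting up the local trace formula of the first step, with its spectral/geometric matching, is also technically heavy, though it is precisely the kind of harmonic analysis that the strong temperedness of the Gross--Prasad varieties is designed to support.
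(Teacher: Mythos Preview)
Your overall strategy is correct and matches Waldspurger's approach as the paper describes it in Section~\ref{Sect local trace formula}: a local trace formula yielding a multiplicity formula, then an endoscopic computation of its value on $L$-packets, then Fourier inversion, then the M\oe{}glin--Waldspurger reduction from generic to tempered. But your multiplicity formula is wrong in a way that would stop the argument.

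The integral you write down,
\[
m_{\mathrm{geom}}(\pi)=\sum_{T}\lvert W(H,T)\rvert^{-1}\int_{T(k)} D^H(t)\,D^G(t)\,\Theta_\pi(t)\,\nu(t)\,dt,
\]
is only the ``regular'' part of what is actually needed. Waldspurger's formula (Theorem~\ref{theo multiplicity Waldspurger}) reads
\[
m_X(\pi)=\int_{\Gamma_{X\text{-}\elli}(G)} c_{\pi,X}(\gamma)\, d\gamma,
\]
where $\Gamma_{X\text{-}\elli}(G)$ is a set of \emph{semisimple} (not regular semisimple) conjugacy classes in $G(k)$, and the integrand $c_{\pi,X}(\gamma)$ is not the character value $\Theta_\pi(\gamma)$ but a coefficient extracted from the Harish-Chandra local expansion of $\Theta_\pi$ near $\gamma$ in terms of Fourier transforms of nilpotent orbital integrals on $\mathfrak{g}_\gamma^*$ --- specifically the coefficient attached to a particular regular nilpotent coadjoint orbit $\mathcal{O}_{\gamma,X}$. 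As the paper emphasizes (see the discussion around Theorem~\ref{theo geom side Waldspurger}), the tori parametrizing $\Gamma_{X\text{-}\elli}(G)$ lie mostly in the singular locus of $G$, and these singular contributions are \emph{not} negligible: when $G$ is quasi-split the trivial class is an atom of mass one. Your formula, which only sees $\Theta_\pi$ on regular elements, misses these terms entirely; in particular it misses the contribution at $\gamma=1$, which is the one tied to genericity (the regular nilpotent coefficient in the germ expansion) and is what ultimately produces the correct count of $1$ distinguished member per packet. Without these singular terms the geometric side simply does not equal $m(\pi)$, and the endoscopic computation in your second step --- which you correctly identify as the hardest part --- would have nothing correct to match against.
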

\end{paragr}

\begin{paragr}
	\begin{rem}
		\begin{itemize}
			\item Gan, Gross and Prasad \cite{GGP} have proposed a more general conjecture pertaining to certain pairs $W\subset V$ of quadratic spaces with $\dim(V)-\dim(W)$ odd. In this generality, $H$ is a so-called ``Bessel subgroup'' that is the semi-direct product of $\SO(W)$ (which is no longer a spherical subgroup of $G$ when $\dim(V)-\dim(W)>1$) by a certain unipotent subgroup which also comes equipped with a character invariant by $\SO(W)$-conjugation. The proof of Waldspurger and M\oe{}glin-Waldspurger actually also applies to this more general setting.
			
			\item Similar conjectures have also been formulated for products of unitary and symplectic/metaplectic groups by Gan-Gross-Prasad \cite{GGP}. This is now known in most cases, see \cite{BP3}, \cite{GIGGP}, \cite{AtobGGP}, \cite{HeGGP}, \cite{XueGGP}.
			
			\item More recently, Gan, Gross and Prasad \cite{GGPnontempered} have extended their conjectures to certain non-generic $L$-packets; more precisely those associated to Arthur parameters. This new prediction also features the same character of component groups as above but also introduce a new combinatorial condition on the Arthur parameters for the corresponding $L$-packets to contain a distinguished representation. This conjecture has been established for general linear groups \cite{KYChan} but remains open for all other classical groups.
		\end{itemize}
	\end{rem}
\end{paragr}

\subsection{Prasad's conjecture on Galois pairs}\label{Sect Prasad conj}

Continuing the discussion initiated in \S \ref{S Galois pairs Lgroup}, we will describe in this subsection general predictions made by D. Prasad \cite{PraGal} on distinction for Galois symmetric varieties. It seems that the conjectures in {\em op.\ cit.}\ didn't achieve yet their final form, in particular with respect to some of the finest aspects related to ramification points of a certain base-change map between spaces of local Langlands parameters, and we will thus restrict ourself to the case of discrete series (for which the conjectures are in a much more definitive form).

\vspace{2mm}

\begin{paragr}[Notation.]
Let $\ell/k$ be a separable quadratic extension and $c$ be the nontrivial element of $\Gal(\ell/k)$. We fix a quasi-split connected reductive group $H$ over $k$ and we set
$$\displaystyle X=H\backslash G,\;\;\; G=\Res_{\ell/k} H_{\ell},$$
for the corresponding Galois symmetric variety. Recall that in \S \ref{S Galois pairs Lgroup} we have described the $L$-group ${}^L G_X$ of $X$ as well as the strong distinguished morphism
$$\displaystyle {}^L \iota_X: {}^L G_X\to {}^L G.$$
In this section, we will adopt the notation introduced at this occasion and, in particular, we denote by $h\mapsto h^\vee$ the duality involution of ${}^L H$ (as defined in \S \ref{S Galois pairs Lgroup} i.e.\ the inner translate of the Chevalley involution that sends the canonical pinning to its opposite).
	
Using that the restriction of the duality involution to $Z(\widehat{H})$ is given by $z\mapsto z^{-1}$, we readily check that the norm map
$$\displaystyle N: Z(\widehat{G})=Z(\widehat{H})\times Z(\widehat{H})\to Z(\widehat{H}),\;\; (z_1,z_2)\mapsto z_1z_2$$
identifies the quotient $Z(\widehat{G})/Z(\widehat{G}_X)$, $\Gamma$-equivariantly, with $Z(\widehat{H})$ i.e.\ we have a short exact sequence of $\Gamma$-modules
\begin{equation}\label{ses Z}
\displaystyle 1\to Z(\widehat{G}_X)\to Z(\widehat{G}) \xrightarrow{N} Z(\widehat{H})\to 1.
\end{equation}
	
Some statements will be more simply formulated using the homogeneous space
\begin{equation*}
\displaystyle \widehat{X}={}^ L G/{}^L G_X=\widehat{G}/\widehat{G}_X.
\end{equation*}\index{$\widehat{X}$}
By \eqref{ses Z}, there is a natural right action of $Z(\widehat{H})$ on $\widehat{X}$. This action commutes with left translations by $\widehat{G}$ but not with the Weil action. More precisely, denoting by
	$$\sigma\in W_k\mapsto \sigma_X\in \Aut(\widehat{X})$$\index{$\sigma_X$}
	the natural action of $W_k$ on $\widehat{X}$, we have
	$$\displaystyle \sigma_X(xz)=\sigma_X(x)\sigma_H(z)$$
	for every $x\in \widehat{X}$, $z\in Z(\widehat{H})$ and $\sigma\in W_k$.
	
	Recall that $\Phi(G)$ and $\Phi(X)$ denote the sets of equivalence classes of $L$-parameters $\phi: \mathcal{L}_k\to {}^L G$ and $\psi: \mathcal{L}_k\to {}^L G_X$ respectively. Then, we have a pushforward map
	$$\displaystyle \iota_{X,*}: \Phi(X)\to \Phi(G),\;\; \iota_{X,*}(\psi)={}^L\iota_X\circ \psi.$$\index{$\iota_{X,*}$}
	
	For every $L$-parameter $\phi\in \Phi(G)$ (resp. $\psi\in \Phi(X)$), we denote by $Z_\phi=\Cent_{\widehat{G}}(\phi)$\index{$Z_\phi$} (resp. $Z_\psi=\Cent_{\widehat{G}_X}(\psi)$\index{$Z_\psi$}) the centralizer of its image in $\widehat{G}$ (resp. in $\widehat{G}_X$). These groups are well-defined up to inner automorphisms and following \S \ref{S LLC} we denote by $S_\phi=\pi_0(Z_\phi)$, $S_\psi=\pi_0(Z_\psi)$ their component groups. 
	
	For every $\psi\in \Phi(X)$, setting $\phi=\iota_{X,*}(\psi)$, we have a natural embedding $Z_\psi\to Z_\phi$. More precisely, identifying $\widehat{G}_X$ with its image in $\widehat{G}$ we have $Z_\psi=Z_{\phi}\cap \widehat{G}_X$. This induces a morphism $S_\psi\to S_\phi$ that is however not always an embedding.
	
	An $L$-parameter $\phi: \mathcal{L}_k \to {}^L G$ defines, by composition, a left action of $\mathcal{L}_k$ on $\widehat{X}$. The subset of fixed points for this action can then be naturally decomposed as
	\begin{equation}\label{eq fixed points}
		\displaystyle \widehat{X}^\phi\simeq \bigsqcup_{\psi\in \iota_{X,*}^{-1}(\phi)} Z_\phi/Z_\psi.
	\end{equation}\index{$\widehat{X}^\phi$}
	
	In \cite{PraGal}, D. Prasad has made a precise conjecture on the distinction of irreducible representations $\pi\in \Irr(G)$ with respect to $X$ and even about the individual multiplicities $m_X(\pi)=\dim \Hom_G(\cS(X),\pi)$ in terms of the morphism ${}^L \iota_X$ and the induced map between spaces of Langlands parameters $\iota_{X,*}$, at least when the parameters are {\it generic} in a suitable sense. It seems however that the finer aspects of this conjecture as stated in {\it op.\ cit.}\ are still provisional as of now, due to some subtleties at the points of ramification of the map $\iota_{X,*}$, so that we will here only state Prasad's prediction in the restricted case of discrete series, where the previous issues disappear. In particular, we will from now on assume that $k$ is non-Archimedean (as for $k=\bR$, $G(k)=H(\bC)$ has no discrete series).
\end{paragr}

\begin{paragr}
	Recall that an $L$-parameter $\phi: \mathcal{L}_k\to {}^L G$ is said to be {\em discrete} if its centralizer $Z_\phi$ in $\widehat{G}$ is finite modulo the subgroup $Z(\widehat{G})^\Gamma$ of Galois fixed elements in the center of $\widehat{G}$. We define similarly the notion of discrete $L$-parameter $\phi': \mathcal{L}_k\to {}^L G_X$. One basic requirement of the local Langlands correspondence is that $L$-packets corresponding to discrete $L$-parameters should consist entirely of representations that are essentially square-integrable and, conversely, if an $L$-packet contains an essentially square-integrable representation then it is associated to a discrete $L$-parameter. We shall denote by $\Phi_{\disc}(G)$\index{$\Phi_{\disc}(G)$} and $\Phi_{\disc}(X)$\index{$\Phi_{\disc}(X)$} the subsets of discrete $L$-parameters in $\Phi(G)$ and $\Phi(X)$ respectively. Note that for $\psi\in \Phi(X)$, $\phi\in \Phi(G)$ with $\phi=\iota_X(\psi)$, $\phi$ discrete implies that $\psi$ is discrete as well. The converse, however, is false in general.
	
	In what follows, we will assume the existence of a local Langlands correspondence as formulated in \S \ref{S LLC} and we fix the Whittaker datum $\mathfrak{w}=(N,\psi_N)$, on which the bijection \eqref{param Lpackets} depends, such that its image $\mathfrak{w}^c$ by the automorphism of $G$ induced by the nontrivial element of $\Gal(\ell/k)$ is conjugate to the {\it opposite} Whittaker datum $(N,\psi_N^{-1})$. More concretely, fixing a Borel subgroup $B=T_HN_H$ of $H$, such a Whittaker datum is given by any nondegenerate character $\psi_N: N_H(\ell)\to \bC^\times$ that is trivial on $N_H(k)$ (so that $\psi_N^c=\psi_N^{-1}$).
	
	If $\pi$ is an irreducible representation of some pure inner form $G_\alpha(k)$, $\alpha\in H^1(k,G)$, we denote by
	$$\displaystyle m_X(\pi)=\dim \Hom_{G_\alpha}(\cS(X_\alpha), \pi)$$
	the multiplicity of $\pi$ with respect to the associated pure inner form $X_\alpha$ of $X$, see \S \ref{S PIF X}.
	
	\begin{conj}[Prasad's conjecture, first version]\label{conj Prasad 1}
		Let $\phi\in \Phi_{\disc}(G)$ be a discrete $L$-parameter and $\rho\in \Irr(\pi_0(Z_\phi))$. We have
		\begin{equation*}
			\displaystyle m_X(\pi_{\mathfrak{w}}(\phi,\rho))=\sum_{\psi\in \iota_{X,*}^{-1}(\phi)} \langle \rho, \mathbf{1}\rangle_{S_\psi}
		\end{equation*}
		where $\langle .,.\rangle_{S_\psi}$ denotes the canonical pairing on the Grothendieck group of $S_\psi$ (given, for two representations $\tau_1$, $\tau_2$ of $S_\psi$, by $\langle \tau_1,\tau_2\rangle_{S_\psi}=\dim \Hom_{S_\psi}(\tau_1,\tau_2)$).\index{$\langle .,.\rangle_{S_\psi}$}
	\end{conj}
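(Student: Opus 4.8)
The plan is to derive the multiplicity formula by comparing two local trace formulas: the \emph{relative} (Jacquet-type) local trace formula attached to the symmetric variety $X=H\backslash G$, and the $\sigma$-\emph{twisted} local trace formula for $G=\Res_{\ell/k}H_{\ell}$, where $\sigma$ is the Galois involution defining $H$. The point is that the $\sigma$-twisted trace formula computes twisted characters, hence detects exactly the representations of $G(k)=H(\ell)$ that are base changes in the sense of the morphism $\iota_X$ of \S\ref{S Galois pairs Lgroup}, while the spectral side of the relative trace formula for $X$ is built out of the relative characters $J^X_\pi$, whose rank on the $\pi$-coinvariants is the multiplicity $m_X(\pi)$. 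Matching the two expansions parameter by parameter should yield the identity; the decomposition \eqref{eq fixed points} of $\widehat X^{\phi}$ is the shadow of this matching on the dual side.

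First I would carry out the reductions. As $X$ is a homogeneous, unimodular, wavefront spherical variety that is moreover tempered (Galois symmetric varieties being among the basic tempered examples), the Plancherel measure of $L^2(X)$ is supported on $\Temp(G)$ and each multiplicity $m_X(\pi)$ is finite; in particular an $X$-distinguished discrete series of $H(\ell)$ is automatically $X$-tempered. Using the Whittaker datum $\mathfrak w$ chosen so that $\mathfrak w^{c}$ is conjugate to the opposite datum, passing to contragredients carries $\pi_{\mathfrak w}(\phi,\rho)$ to $\pi_{\mathfrak w}(\phi,\rho^{\vee})$ up to the Galois twist; combined with $m_X(\pi)=\dim\Hom_{H(k)}(\pi^{\vee},\bC)$ this means it suffices to control the $\pi$-contributions to the relative spectral side. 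Since discrete $L$-parameters contribute isolated pieces there, one may then fix $\phi\in\Phi_{\disc}(G)$ and project onto its part.

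The analytic heart is the geometric comparison. One needs a matching of test functions on $X(k)$ with $\sigma$-twisted test functions on $G(k)$ so that the semisimple $H\times H$-orbital integrals on $G$ are carried to the stable $\sigma$-twisted orbital integrals relevant to base change --- a \emph{fundamental lemma for the symmetric space} $H\backslash\Res_{\ell/k}H_{\ell}$ together with the associated smooth transfer, in the tradition of Jacquet's work and its later refinements. Granting this, the geometric identity propagates to the spectral sides, so that $I^X_{\operatorname{spec}}$ equals a stable form of the $\sigma$-twisted spectral expansion. Stabilizing the latter --- i.e.\ invoking the twisted endoscopic character identities, which are precisely the base-change relations packaged by $\iota_X$, in the ``unstable'' shape of \S\ref{S Galois pairs Lgroup} when $\widehat G_X$ carries no Galois-stable pinning --- rewrites the $\phi$-part as a sum over the fibre $\iota_{X,*}^{-1}(\phi)$: for each $\psi$ in the fibre the associated stable distribution pairs with $\pi_{\mathfrak w}(\phi,\rho)$ through the restriction of $\rho$ along the natural map $S_\psi\to S_\phi$, picking out --- and here the choice of $\mathfrak w$ is essential, to rule out a character twist --- the \emph{trivial} character of $S_\psi$. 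Comparing with the relative side, where $\pi$ occurs with multiplicity $m_X(\pi)$, then gives
\[
m_X(\pi_{\mathfrak w}(\phi,\rho))=\sum_{\psi\in\iota_{X,*}^{-1}(\phi)}\dim\Hom_{S_\psi}(\bC,\rho)=\sum_{\psi\in\iota_{X,*}^{-1}(\phi)}\langle\rho,\mathbf 1\rangle_{S_\psi}.
\]

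The hardest part will be the geometric comparison in this generality: the fundamental lemma and smooth transfer for $H\backslash\Res_{\ell/k}H_{\ell}$ against twisted base-change orbital integrals are known only for special $H$ --- $\GL_n$ after Jacquet, and a number of classical groups --- and a complete proof also presupposes a local Langlands correspondence with twisted endoscopy for $H(\ell)$ of sufficient precision. A secondary but genuine difficulty is keeping track of normalizations when $\iota_X$ is an unstable base change (the twist by the character $\mu$ of \S\ref{S Galois pairs Lgroup}): landing exactly on the trivial character of $S_\psi$, rather than on a twist of it, is where most of the combinatorial bookkeeping lives. Finally, in the basic case $H=\GL_n$ the whole argument collapses to known facts --- the groups $S_\psi$ are connected and $\iota_{X,*}^{-1}(\phi)$ has at most one element, so the formula reduces to the statement that a discrete series of $\GL_n(\ell)$ is $\GL_n(k)$-distinguished, with multiplicity one, exactly when it is a base change, recovering the theorem of Asai and Flicker as a sanity check.
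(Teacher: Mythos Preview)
The statement you are addressing is a \emph{Conjecture} in the paper, not a theorem: there is no proof of it in the text, and none is claimed. What the paper does with Conjecture~\ref{conj Prasad 1} is (i) show that it is implied by the refined Conjecture~\ref{conj Prasad 2} via the identification \eqref{eq fixed points} of $\widehat{X}^{\phi}$, (ii) record the known cases $H=\GL_n$ (Kable, Matringe) and $H=U_n$ (Jacquet, Feigon--Lapid--Offen, \cite{BeuGLnU}), and (iii) note that certain \emph{consequences} --- distinction of the Steinberg representation, independence of the stable multiplicity on the inner form --- have been established via a local trace formula for $X$ in \cite{BeuGal}. So your write-up is not to be compared against a proof but against the status of an open problem.

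That said, your outline is a coherent program and is in fact close in spirit to how partial results have been obtained: \cite{BeuGal} does use a simple local trace formula for the Galois symmetric variety to produce an integral formula for $m_{H_\beta,\chi}(\pi)$, and comparison with twisted endoscopy is exactly how the $\GL_n$ and $U_n$ cases are understood. You yourself correctly flag the genuine gap: the geometric matching you need --- a fundamental lemma and smooth transfer between the relative trace formula for $H\backslash \Res_{\ell/k}H_\ell$ and the $\sigma$-twisted trace formula on $G$ --- is not available for general $H$, and neither is a local Langlands correspondence with full twisted endoscopic character identities. Until those ingredients exist, what you have written is a strategy, not a proof; the paper treats the statement accordingly.

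One small correction to your sanity check: in the case $H=\GL_n$, the paper attributes the distinction criterion for discrete series (and more generally generic representations) to Kable and Matringe, not to Asai--Flicker; the latter names are associated with the global period and the Asai $L$-function rather than the local multiplicity statement.
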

	
	\begin{rem}
		\begin{itemize}
			\item Using the description \eqref{eq fixed points}, the conjecture can be reformulated as saying that
			\begin{equation}\label{eq conj Prasad 1}
				\displaystyle m_X(\pi_{\mathfrak{w}}(\phi,\rho))=\dim \Hom_{S_\phi}(\rho,H^0(\widehat{X}^\phi,\bC))
			\end{equation} 
			where $H^0(\widehat{X}^\phi,\bC)=\bC^{\pi_0(\widehat{X}^\phi)}$ denotes the zeroth cohomology group (with complex coefficients) with its natural action of $S_\phi$.
			
			\item One basic requirement on the local Langlands correspondence is that the representation
			$$\displaystyle \sum_{\rho\in \Irr(S_\phi)} \dim(\rho)\pi_{\mathfrak{w}}(\phi,\rho)$$
			is stable, in the sense of its character being a stable distribution. We then define the {\it stable multiplicity} of $\phi$ (with respect to $X$) as the number
			\begin{equation*}
				\displaystyle m_X(\phi):=\sum_{\rho\in \Irr(\pi_0(Z_\phi))} \dim(\rho)m_X(\pi_{\mathfrak{w}}(\phi,\rho)).
			\end{equation*}
			Summing \eqref{eq conj Prasad 1} over $\rho$, we arrive at the following prediction for the stable multiplicity:
			\begin{equation}\label{eq Prasad stable 1}
				\displaystyle m_X(\phi)=\dim H^0(\widehat{X}^\phi,\bC)=\left\lvert \pi_0(\widehat{X}^\phi) \right\rvert=\sum_{\psi\in \iota_{X,*}^{-1}(\phi)} \lvert \pi_0(Z_\phi/Z_\psi) \rvert.
			\end{equation}
		\end{itemize}
	\end{rem}
	
\end{paragr}

\begin{paragr}
	For $\alpha\in H^1(k,G)$, the set of $k$-points $X_\alpha(k)$ naturally decomposes into $G_\alpha(k)$-orbits parametrized by the fiber above $\alpha$ of the map $H^1(k,H)\to H^1(k,G)$ i.e.\
	$$\displaystyle X_\alpha(k)=\bigsqcup_{H^1(k,H)\ni \beta\mapsto \alpha\in H^1(k,G)} H_\beta(k)\backslash G_\alpha(k).$$
	Thus, the multiplicity $m_X(\pi)$ can also be written as a finite sum
	$$\displaystyle m_X(\pi)=\sum_{H^1(k,H)\ni \beta\mapsto \alpha\in H^1(k,G)} m_{H_\beta}(\pi),$$
	where we have set
	$$\displaystyle m_{H_\beta}(\pi):=\dim \Hom_{G_\alpha}(\cS(H_\beta(k)\backslash G_\alpha(k)),\pi)=\dim \Hom_{H_\beta}(\pi^\vee, \bC).$$
	Prasad's original conjecture is about the refined multiplicities $m_{H_\beta}(\pi)$ and we will formulate it for discrete series below. In order to make the statement more pleasant, if $\beta\in H^1(k,H)$ does not map to $\alpha\in H^1(k,G)$, we will set $m_{H_\beta}(\pi)$ to be zero.
	
	Let us introduce the following `twisting' group
	\begin{equation*}
		\displaystyle D:=\Ker(H^1(\Gamma, Z(\widehat{G}_X))\to H^1(\Gamma, Z(\widehat{G}))).
	\end{equation*}\index{$D$}
	Then, $D$ acts naturally on $\Phi(X)$ (by multiplication of cocycles) and the map $\iota_{X,*}: \Phi(X)\to \Phi(G)$ is $D$-invariant. For $\phi\in \Phi(G)$, we denote by $\iota_{X,*}^{-1}(\phi)/D$ the set of $D$-orbits on the fiber $\iota_{X,*}^{-1}(\phi)$.

	Recall the Kottwitz isomorphism \cite{KottSTFell} $H^1(k,H)\simeq \pi_0(Z(\widehat{H})^\Gamma)^D$, where the superscript $D$ stands for the Pontryagin dual $\Hom(.,\bC^\times)$. For each $\beta\in H^1(k,H)$, we shall denote by $\chi_\beta: Z(\widehat{H})^\Gamma\to \bC^\times$ the corresponding character.
	
	Let $\psi\in \Phi(X)$ and set $\phi=\iota_{X,*}(\psi)\in \Phi(G)$. We define
	$$\displaystyle Z_{\psi}^\heartsuit=Z_{Z(\widehat{G})\widehat{G}_X}(\phi),$$\index{$Z_{\psi}^\heartsuit$}
	as the centralizer of $\phi$ in the subgroup $Z(\widehat{G})\widehat{G}_X\subset \widehat{G}$, and
	$$\displaystyle S_\psi^\heartsuit=\pi_0(Z_{\psi}^\heartsuit).$$\index{$S_\psi^\heartsuit$}
	We have $Z_{\psi}^\heartsuit\subset Z_{\phi}$ and, using that $Z(\widehat{G})\cap \widehat{G}_X=Z(\widehat{G}_X)$, there is a natural projection $Z_{\psi}^\heartsuit\to Z(\widehat{G})/Z(\widehat{G}_X)\simeq Z(\widehat{H})$ (where the latter identification comes from the norm, see the short exact sequence \eqref{ses Z}) that induces an exact sequence
	\begin{equation}\label{ses Z 2}
		\displaystyle 1\to Z_\psi\to Z_\psi^\heartsuit\xrightarrow{\theta_\psi} Z(\widehat{H})^\Gamma.
	\end{equation}
	Then, for each $\beta\in H^1(k,H)$, we denote by $\chi_\beta^\heartsuit$ the pullback of $\chi_\beta$ to $Z_{\psi}^\heartsuit$ (or $S_{\psi}^\heartsuit$) by the last morphism in the above sequence.

	\begin{conj}[Prasad's conjecture, refined version]\label{conj Prasad 2}
		Let $\phi\in \Phi_{\disc}(G)$ be a discrete $L$-parameter and $\rho\in \Irr(S_\phi)$. Then, for every $\beta\in H^1(k,H)$, we have
		\begin{equation}\label{eq conj Prasad 2}
			\displaystyle m_{H_\beta}(\pi_{\mathfrak{w}}(\phi,\rho))=\sum_{\psi\in \iota_{X,*}^{-1}(\phi)/D} \langle \rho, \chi_\beta^\heartsuit\rangle_{S_\psi^\heartsuit}.
		\end{equation}
	\end{conj}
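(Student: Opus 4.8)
The plan is to deduce Conjecture \ref{conj Prasad 2} from a comparison of a local relative trace formula for the Galois pair $(G,H)=(\Res_{\ell/k}H_\ell,H)$ with the twisted (base-change) local trace formula for $H$, fed into the expected endoscopic character identities of the local Langlands correspondence. First I would reduce the refined multiplicities to a statement about Galois periods. A Galois symmetric variety is tempered and wavefront, so for a discrete series $\pi$ of a pure inner form $G_\alpha(k)$ and $\beta\in H^1(k,H)$ mapping to $\alpha$, Proposition \ref{prop tempered} gives that the smooth and Harish-Chandra--Schwartz $H_\beta(k)$-coinvariants of $\pi$ coincide, whence $m_{H_\beta}(\pi)$ equals the rank of the $H_\beta(k)$-invariant Hermitian form $\mathcal{B}^{H_\beta}_\pi(v_1,v_2)=\int_{H_\beta(k)}(\pi(h)v_1,v_2)\,dh$ (convergent because $\pi$ is a discrete series, so $\pi$ occurs discretely in $L^2(X_\alpha)$). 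Thus it suffices to compute $\sum_\beta\operatorname{rank}\mathcal{B}^{H_\beta}_\pi$ with its decomposition over $\beta$, and the problem becomes one about the Galois-weighted discrete spectrum of $G(k)=H(\ell)$.

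Second, I would write the local relative trace formula whose geometric side is a sum of (regularized) orbital integrals over the $k$-points of $H\backslash G/H$ and whose spectral side, restricted to the discrete spectrum, is $\sum_{\pi}\sum_{\beta}\mathcal{B}^{H_\beta}_\pi$. The basic input, going back to Flicker for $H=\GL_n$, is that a suitable transfer of test functions matches this geometric side with the geometric side of the twisted trace formula for $H$ attached to the base-change automorphism $x\mapsto{}^cx$, whose spectral content is the twisted characters of base-change lifts; on that side a representation contributes exactly when its parameter is fixed by the lift, i.e.\ comes from $H^{\operatorname{op}}$ or one of its pure inner forms. Stabilizing both identities and using the expected matching of stable distributions then expresses the Galois-weighted discrete spectrum of $G$ through stable characters on the quasi-split outer form $H^{\operatorname{op}}$ and its inner forms, the transfer factors relating the pure inner forms $X_\alpha$ to classes $\beta\in H^1(k,H)$. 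Here the $L$-group identification ${}^LG_X\simeq{}^LH^{\operatorname{op}}$ of \S\ref{S Galois pairs Lgroup}, together with its Weil-form refinement in the non-pinned case (producing an unstable base-change twist), is precisely what makes the parameters $\psi\in\iota_{X,*}^{-1}(\phi)$ the base-change sources of $\phi$, and it forces the group $D=\Ker(H^1(\Gamma,Z(\widehat G_X))\to H^1(\Gamma,Z(\widehat G)))$ to appear: $D$-twisted parameters have equal push-forward and equal contribution, whence the quotient $\iota_{X,*}^{-1}(\phi)/D$.

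Third, I would extract formula \eqref{eq conj Prasad 2} by bookkeeping in Galois cohomology. The endoscopic character identities attach to each $\psi\in\iota_{X,*}^{-1}(\phi)/D$ a contribution to $m_{H_\beta}(\pi_{\mathfrak{w}}(\phi,\rho))$ equal to $\langle\rho,\chi_\beta^\heartsuit\rangle_{S_\psi^\heartsuit}$, the enlargement $S_\psi^\heartsuit=\pi_0(Z_{Z(\widehat G)\widehat G_X}(\phi))$ of $S_\psi$ being exactly the component group that simultaneously sees the $D$-twist and the pure inner form $\beta$: the latter enters through the character $\chi_\beta$ of $Z(\widehat H)^\Gamma$ of the Kottwitz isomorphism $H^1(k,H)\simeq\pi_0(Z(\widehat H)^\Gamma)^D$, pulled back along the map $\theta_\psi$ of the exact sequence \eqref{ses Z 2}. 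The Whittaker normalization, with $\mathfrak{w}$ chosen so that $\mathfrak{w}^c$ is the opposite datum, fixes the signs so that the trivial character of $S_\psi^\heartsuit$ (equivalently, after summing over $\beta$, of $S_\psi$) is the distinguished one, recovering Conjecture \ref{conj Prasad 1} through the fixed-point description \eqref{eq fixed points}; a consistency check against the stable multiplicity \eqref{eq Prasad stable 1} and against the known $\GL_n$ answer (distinction $\Leftrightarrow$ base change from a unitary group, with the corresponding multiplicities) pins down the remaining constants.

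\textbf{The main obstacle} is that the required analytic ingredients are not available in this generality: the local relative trace formula for an arbitrary Galois pair, its stabilization, and the matching with the twisted base-change trace formula are established essentially only for $H=\GL_n$ (Flicker, with the refined multiplicities later worked out by Anandavardhanan, Matringe and others via the Jacquet--Rallis and doubling methods), and even granting the comparison one still leans on the full local Langlands correspondence with its endoscopic character relations, conjectural for general quasi-split $H$. A second delicate point, flagged in the discussion preceding the statement, is the behaviour of $\iota_{X,*}$ at its ramification locus; restricting to discrete $\phi$ removes the worst of it (all $\psi\in\iota_{X,*}^{-1}(\phi)$ are then automatically discrete), but one must still control the non-injectivity of $S_\psi\to S_\phi$ and $S_\psi^\heartsuit\to S_\phi$, which is what forces $S_\psi^\heartsuit$ and $\iota_{X,*}^{-1}(\phi)/D$ in place of the naive fibre and the naive component group. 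In favourable cases — $H$ a product of general linear groups, or $H$ a classical group — one can hope to bypass the stabilization by arguing through the local theta correspondence, as Prasad did for $\GL_2$, at the price of explicit computations of theta lifts and their distinction properties.
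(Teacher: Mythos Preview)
The statement you are attempting to prove is labelled in the paper as a \emph{Conjecture}, not a theorem, and the paper does not supply a proof of it. What the paper does provide are: (a) a reformulation in terms of the fixed-point set $\widehat{X}^\phi$ (equation \eqref{eq conj Prasad 2ter}); (b) a verification that it implies the coarser Conjecture \ref{conj Prasad 1}; (c) a reduction of the twisted-character variant to the untwisted one via $z$-extensions; and (d) a discussion of the two cases $H=\GL_n$ and $H=U_n$ where the conjecture (or its natural extension to generic parameters) is known by work of Kable, Matringe, Jacquet, Feigon--Lapid--Offen and the author. There is therefore no paper proof to compare against.

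Your proposal is not a proof but a programme, and you yourself identify the obstruction correctly: the comparison of a local relative trace formula for the Galois pair with the twisted base-change trace formula, together with its stabilisation and the full set of endoscopic character identities, is not available for general quasi-split $H$. This is exactly why the statement remains a conjecture. A few smaller points: the variety $X$ being strongly tempered (as opposed to merely tempered) is what makes the integral defining $\mathcal{B}^{H_\beta}_\pi$ converge for \emph{all} tempered $\pi$, not the discreteness of $\pi$; and the argument that the rank of $\mathcal{B}^{H_\beta}_\pi$ equals $m_{H_\beta}(\pi)$ requires the full strength of Theorem \ref{theo nondeg canonical form SV}, whose hypotheses (strongly tempered and wavefront) are indeed satisfied for Galois symmetric varieties. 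But even granting these, the spectral comparison you need is open in general, so the gap is genuine and acknowledged.
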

	
	\begin{rem}
		\begin{itemize}
			\item Let $\alpha\in H^1(k,G)$ be such that $\pi_{\mathfrak{w}}(\phi,\rho)\in \Pi^{G_\alpha}(\phi)$. Then, compatibility of the local Langlands correspondence with the Kottwitz isomorphism requires that the central character of $\rho$ on $Z(\widehat{G})^\Gamma$ should be given by $\chi_\alpha$ whereas the restriction of $\chi_\beta^\heartsuit$ to $Z(\widehat{G})^\Gamma$ is the composition of $\chi_\beta$ with the norm map $N:Z(\widehat{G})^\Gamma\to Z(\widehat{H})^\Gamma$. Thus, the right hand side of \eqref{eq conj Prasad 2} is zero unless $\chi_\alpha=\chi_\beta\circ N$. As the dual of the norm morphism corresponds, under Kottwitz isomorphisms, to the natural map $H^1(k,H)\to H^1(k,G)$, we see that this last condition is equivalent to $\alpha$ being the image of $\beta$ by the latter, also a necessary condition for the left hand side of \eqref{eq conj Prasad 2} to be nonzero (by definition).
			
			\item Applying the derived functor of $\Gamma$-invariants to the short exact sequence \eqref{ses Z}, we obtain an isomorphism $D\simeq Z(\widehat{H})^\Gamma/N(Z(\widehat{G})^\Gamma)$. Moreover, for every $\psi\in \Phi(X)$, $Z_\psi^\heartsuit$ contains $Z(\widehat{G})^\Gamma$ and composing the last arrow of \eqref{ses Z 2} with the projection $Z(\widehat{H})^\Gamma\to D$ we obtain a short exact sequence
			\begin{equation}\label{ses Z 3}
				\displaystyle 1\to Z(\widehat{G})^\Gamma Z_\psi\to Z_\psi^\heartsuit\to D_\psi\to 1
			\end{equation}
			for some subgroup $D_\psi\subset D$. It is not hard to see that $D_\psi$ coincides with the stabilizer of $\psi$ in $D$ (for the twisting action). In particular, the identity \eqref{eq conj Prasad 2} can be restated as
			\begin{equation}\label{eq conj Prasad 2bis}
				\displaystyle m_{H_\beta}(\pi_{\mathfrak{w}}(\phi,\rho))=\sum_{\psi\in \iota_{X,*}^{-1}(\phi)} \langle \rho, \chi_\beta^\heartsuit\rangle_{S_\psi^\heartsuit} \frac{\lvert D_\psi\rvert}{\lvert D\rvert}.
			\end{equation}
			
			\item Similarly to \eqref{eq conj Prasad 1}, we can also reformulate the conjecture in terms of the subset of fixed points $\widehat{X}^\phi$. Indeed, the latter is not only naturally acted upon by $Z_\phi$ (by left multiplication) but also by $Z(\widehat{H})^\Gamma$ (through right multiplication by $Z(\widehat{G})/Z(\widehat{G}_X)\simeq Z(\widehat{H})$). Then, \eqref{eq conj Prasad 2bis} is equivalent to the formula
			\begin{equation}\label{eq conj Prasad 2ter}
				\displaystyle m_{H_\beta}(\pi_{\mathfrak{w}}(\phi,\rho))=\dim \Hom_{S_\phi\times Z(\widehat{H})^\Gamma}(\rho\otimes \chi_\beta, H^0(\widehat{X}^\phi,\bC)).
			\end{equation}
			
			\item Conjecture \ref{conj Prasad 1} is a consequence of the above refined version. Indeed, summing \eqref{eq conj Prasad 2ter} over $\beta\in H^1(k,H)$, we obtain
			\[\begin{aligned}
				\displaystyle m_X(\pi_{\mathfrak{w}}(\phi,\rho)) & =\dim \Hom_{S_\phi}(\rho, H^0(\widehat{X}^\phi,\bC))=\sum_{\psi\in \iota_{X,*}^{-1}(\phi)} \langle \rho,\mathbf{1}\rangle_{S_\psi}.
			\end{aligned}\]
			
			\item Similarly, summing \eqref{eq conj Prasad 2ter} over $\rho$, we obtain the following formula
			\begin{equation*}
				\displaystyle m_{H_\beta}(\phi)=\left\lvert \pi_0\left(\widehat{X}^\phi/Z(\widehat{H})^\Gamma \right)\right\rvert
			\end{equation*}
			for the stable multiplicity
			$$\displaystyle m_{H_\beta}(\phi):=\sum_{\rho\in \Irr(S_\phi)} \dim(\rho) m_{H_\beta}(\pi_{\mathfrak{w}}(\phi,\rho)).$$
		\end{itemize}
	\end{rem}
\end{paragr}

\begin{paragr}[The group case.]
	The discussion of the previous paragraphs actually can be applied verbatim to the group case $X=H$ by formally replacing $\ell$ by the split extension $k\times k$. This leads to the following prediction on the compatibility between the local Langlands correspondence and the operation of passing to the contragredient \cite{Prasdual}.
	
	\begin{conj}[Prasad conjecture for contragredients.]
		Let $H$ be a quasi-split connected reductive group over $k$ and $\mathfrak{w}=(N,\psi)$ be a Whittaker datum for $H(k)$. Then, for every $L$-parameter $\phi\in \Phi(H)$ and $\rho\in \Irr(\pi_0(Z_\phi))$ we have:
		\begin{equation}
			\displaystyle \pi_{\mathfrak{w}}(\phi,\rho)^\vee\simeq \pi_{\mathfrak{w}^\vee}(\phi^\vee, \rho^\vee)
		\end{equation}
		where $\phi^\vee$ denotes the composite of $\phi$ with the duality involution $h\mapsto h^\vee$ of ${}^L H$, $\mathfrak{w}^\vee=(N,\psi^{-1})$\index{$\mathfrak{w}^\vee$} is the opposite Whittaker datum and $\pi_{\mathfrak{w}}(\phi,\rho)^\vee$, $\rho^\vee$ denote the contragredients of the representations $\pi_{\mathfrak{w}}(\phi,\rho)$ and $\rho$ respectively. (And we have used the natural isomorphism $S_\phi\simeq S_{\phi^\vee}$ to identify $\rho^\vee$ with an irreducible representation of $S_{\phi^\vee}$.)
	\end{conj}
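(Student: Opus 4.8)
The plan is to deduce the statement from two independent inputs: a purely group-theoretic fact about the Chevalley involution, and a dual-side bookkeeping that identifies the effect of that involution on the data $(\phi,\rho,\mathfrak{w})$ parametrizing a representation. Fix a pinning of $H$ with Borel subgroup $B\supset N$ and maximal torus $T$, and let $C$ be the associated Chevalley involution of $H$, i.e.\ the automorphism preserving this pinning, inducing the opposition permutation on simple roots and acting by $t\mapsto w_\ell t^{-1}w_\ell$ on $T$; it is characterized by the property that $C(g)$ is conjugate to $g^{-1}$ for all $g$. The first input is the \emph{Chevalley-involution description of the contragredient}: for every $\pi\in\Irr(H)$ one has $\pi\circ C\simeq\pi^\vee$. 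This is classical for $H=\GL_n$ (where $C$ is transpose-inverse up to conjugacy), is a theorem of Adams and Vogan when $k=\bR$, and for (products of) classical groups can be read off the explicit construction of the correspondence; in general it is expected.

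\textbf{Dual-side bookkeeping.} The second input is that, under the local Langlands correspondence, precomposition with $C$ matches the duality involution on the dual side. Concretely: $(i)$ the automorphism $C$ of $H$ is dual to the Chevalley involution of $\widehat H$, which is an inner translate of the duality involution $h\mapsto h^\vee$ of \S\ref{S Galois pairs Lgroup}; since $L$-packets depend only on $\widehat H$-conjugacy classes of parameters, $\pi\circ C$ lies in the packet of $\phi^\vee$ whenever $\pi\in\Pi^{H}(\phi)$. $(ii)$ Since $C(N)=N$, precomposition with $C$ sends $\mathfrak{w}=(N,\psi)$ to $(N,\psi\circ C)$, and because $C$ acts on each simple root subgroup through the opposition map and carries the sign implicit in $C(g)\sim g^{-1}$, the datum $(N,\psi\circ C)$ is equivalent to the opposite datum $\mathfrak{w}^\vee=(N,\psi^{-1})$. $(iii)$ The duality involution restricts to an isomorphism $Z_\phi\xrightarrow{\sim}Z_{\phi^\vee}$, hence $S_\phi\simeq S_{\phi^\vee}$, and under the resulting bijection $\Irr(S_\phi)\simeq\Irr(S_{\phi^\vee})$ the character attached to $\pi\circ C$ is $\rho^\vee$ (this is automatic when the component group is $2$-torsion, as for special orthogonal groups). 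To establish $(i)$--$(iii)$ rigorously one invokes the expected characterizing properties of the correspondence---compatibility with twists by characters, with parabolic induction, with central and infinitesimal characters, with the Whittaker normalization, and with endoscopic character identities---and checks that $\phi\mapsto\{\pi\circ C:\pi\in\Pi^{H}(\phi)\}$, equipped with the transported Whittaker normalization, satisfies exactly the axioms that pin down the packet of $\phi^\vee$. Combining the two inputs then gives $\pi_{\mathfrak{w}}(\phi,\rho)^\vee\simeq\pi_{\mathfrak{w}}(\phi,\rho)\circ C\simeq\pi_{\mathfrak{w}^\vee}(\phi^\vee,\rho^\vee)$.

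\textbf{A second route.} Alternatively, one can pass through the group case of Prasad's distinction conjecture (Conjecture~\ref{conj Prasad 1}) by formally taking $\ell=k\times k$: here $X=H$ viewed as an $H\times H$-variety, distinction of $\pi_1\boxtimes\pi_2$ along the diagonal is equivalent to $\pi_2\simeq\pi_1^\vee$ by the standard ``group $=$ Gelfand pair'' argument, and the multiplicity-one statement forces the combinatorial recipe of \cite{PraGal}---phrased in terms of the duality involution $h\mapsto h^\vee$---to single out precisely $\pi_{\mathfrak{w}^\vee}(\phi^\vee,\rho^\vee)$ as the contragredient of $\pi_{\mathfrak{w}}(\phi,\rho)$. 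This is essentially a reorganization of the same ingredients, with the Whittaker-datum bookkeeping now entering through the normalization of the distinguishing functionals.

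\textbf{Main obstacle.} The principal difficulty is that both routes ultimately rest on a construction of the local Langlands correspondence for $H$ satisfying all the expected internal properties, which at present is available only for classical groups (and their products and pure inner forms); for those the argument can be made unconditional by unwinding Arthur's, Mok's, and Kaletha--Minguez--Shin--White's constructions and computing the effect of $C$ directly. The remaining technical points---proving $\pi\circ C\simeq\pi^\vee$ uniformly beyond $\GL_n$ and the real case, checking that $C$ transports $\mathfrak{w}$ to $\mathfrak{w}^\vee$ with the correct sign, and tracking $\rho\mapsto\rho^\vee$ through $S_\phi\simeq S_{\phi^\vee}$---are delicate chiefly in their sign and normalization bookkeeping; one must also keep in mind that for even special orthogonal groups the statement is only meaningful up to the outer-automorphism ambiguity inherent in the currently available correspondence.
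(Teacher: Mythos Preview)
The statement you are attempting to prove is labelled a \emph{Conjecture} in the paper, and the paper does not offer a proof of it. Rather, the paper explains that it arises by specializing Prasad's general distinction conjecture (Conjecture~\ref{conj Prasad 1}) to the group case $X=H$, $G=H\times H$, by formally taking the split quadratic algebra $\ell=k\times k$; it then cites the known cases---Adams--Vogan for $k=\bR$ at the level of parameters, and Kaletha's proof for quasi-split real groups and for quasi-split symplectic and special orthogonal $p$-adic groups via compatibility with endoscopy. There is therefore no ``paper's own proof'' to compare against.

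That said, your first route is precisely the strategy underlying the known cases: one transports $\pi$ by the Chevalley involution, identifies the effect on the dual side as the duality involution $h\mapsto h^\vee$, and checks the bookkeeping on Whittaker data and component groups using the characterizing properties of the correspondence (endoscopic character identities, etc.). You correctly isolate the genuine obstruction: outside the cases treated by Arthur and its extensions, one simply does not have an LLC with enough internal structure to run the argument. Your ``second route'' is, however, not an independent argument but a restatement of how the paper arrives at the conjecture in the first place---specializing Prasad's distinction conjecture to the diagonal in $H\times H$---so it cannot be used to \emph{prove} the statement without already assuming that conjecture. In short: your proposal accurately describes the expected mechanism and matches the approach behind the cited partial results, but it is a proof \emph{strategy} contingent on inputs not yet available in general, which is exactly why the paper records the statement as a conjecture.
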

	
	A similar conjecture was formulated by Adams-Vogan \cite{AVdual} at the level of $L$-parameters and established in {\it op.\ cit.}\ when $k=\bR$. Furthermore, in \cite{Kaldual}, Kaletha has studied the compatibility of the above conjecture with the theory of endoscopy leading to a proof of it in the case of quasi-split real reductive groups or quasi-split symplectic and special orthogonal $p$-adic groups (based on the endoscopic characterization of the local Langlands correspondence in the latter two cases given in \cite{artbook}).

\end{paragr}

\begin{paragr}[Twist by a character.]
	We can also state analogs of Conjectures \ref{conj Prasad 1} and \ref{conj Prasad 2} with a character twist. More precisely, Langlands has constructed a natural morphism
	$$\displaystyle H^1(W_k, Z(\widehat{H}))\to \Hom_{\operatorname{cont}}(H(k),\bC^\times)$$
	which, under the running assumption that $H$ is quasi-split, is bijective (see e.g. the appendix \cite[Appendix A]{LaMao} by J.-P.Labesse and E. Lapid). Actually, the Langlands morphism exists for general reductive groups so that an element $\chi\in H^1(W_k, Z(\widehat{H}))$ yields characters of all the pure inner forms $H_\beta$ of $H$. We shall call such an element a {\it Galoisian character} of $H$.

	Let $\chi\in H^1(W_k, Z(\widehat{H}))$ be a Galoisian character. Then, for every $\beta\in H^1(k,H)$ with image $\alpha\in H^1(k,G)$ and $\pi\in \Irr(G_\alpha)$, we define a multiplicity
	\begin{equation*}
		\displaystyle m_{H_\beta,\chi}(\pi):=\dim\Hom_{G_\alpha}(\cS(H_\beta(k)\backslash G_\alpha(k),\chi),\pi)=\dim\Hom_{H_\beta}(\pi^\vee\otimes \chi,\bC)
	\end{equation*}\index{$m_{H_\beta,\chi}(\pi)$}
	where $\cS(H_\beta(k)\backslash G_\alpha(k),\chi)$ denotes the space of smooth functions $f: G_\alpha(k)\to \bC$ such that $f(hg)=\chi(h)f(g)$ for every $h\in H_\beta(k)$ and whose support has compact image in $\cS(H_\beta(k)\backslash G_\alpha(k),\chi)$. We can also define the multiplicity
	\begin{equation*}
		\displaystyle m_{X_\chi}(\pi):=\dim \Hom_{G_\alpha}(\cS(X_{\chi,\alpha}),\pi)=\sum_{H^1(k,H)\ni \beta\mapsto \alpha\in H^1(k,G)} m_{X_\chi,\beta}(\pi)
	\end{equation*}\index{$m_{X_\chi}(\pi)$}
	where
	$$\displaystyle \cS(X_{\chi,\alpha}):=\bigoplus_{H^1(k,H)\ni \beta\mapsto \alpha\in H^1(k,G)} \cS(H_\beta(k)\backslash G_\alpha(k),\chi).$$\index{$\cS(X_{\chi,\alpha})$}
	
	The $L$-group ${}^L G_X$ (in its Weil form) with the morphism $\iota_{X}: {}^L G_X\to {}^L G$ also admits versions with $\chi$-twists that can be defined as follows. Let $c_\chi: W_F\to Z(\widehat{H})$ be a cocycle representing $\chi$. Then, we introduce the subgroup 
	\begin{equation*}
		\displaystyle \mathcal{Z}_\chi:=\left\{(z,\sigma)\in Z(\widehat{G})\rtimes W_F\mid c_\chi(\sigma)=N(z)  \right\}
	\end{equation*}\index{$\mathcal{Z}_\chi$}
	of $Z(\widehat{G})\rtimes W_F$. This defines an extension of $W_F$ by $Z(\widehat{G}_X)$ i.e.\ we have a short exact sequence:
	$$\displaystyle 1\to Z(\widehat{G}_X)\to \mathcal{Z}_\chi\to W_F\to 1.$$
	Then, we let ${}^L G_{X,\chi}$ be the pushout of this extension by the inclusion $Z(\widehat{G}_X)\subset \widehat{G}_X$ that is:
	\begin{equation}
		\displaystyle {}^L G_{X,\chi}:= (\widehat{G}_X\rtimes \mathcal{Z}_\chi)/Z(\widehat{G}_X)
	\end{equation}\index{${}^L G_{X,\chi}$}
	where the action of $\mathcal{Z}_\chi$ on $\widehat{G}_X$ is given by the projection $\mathcal{Z}_\chi\to W_F$ and we quotient by the antidiagonal copy of $Z(\widehat{G}_X)$ i.e.\ the image of $Z(\widehat{G}_X)\to \widehat{G}_X\rtimes \mathcal{Z}_\chi$, $z\mapsto (z,z^{-1})$. The product induces a natural injective morphism
	\begin{equation*}
		\displaystyle \iota_{X_\chi}: {}^L G_{X,\chi}\to {}^L G.
	\end{equation*}
	As can be readily checked, changing the choice of the cocycle $c_\chi$ conjugates the subgroup ${}^L G_{X,\chi}\subset {}^L G$ by an element of $Z(\widehat{G})$.
	
	Actually, it is easier to define the twisted space $\widehat{X}_\chi={}^L G/{}^L G_{X,\chi}$\index{$\widehat{X}_\chi$}. Indeed, the latter is simply obtained from $\widehat{X}$ by twisting the action of $W_k$ by any cocycle $c_\chi: W_k\to Z(\widehat{H})$ representing $\chi$. (Recall that there is an action of $Z(\widehat{H})\rtimes W_k$ on $\widehat{X}$).
	
	Replacing ${}^L G_X$ by ${}^L G_{X,\chi}$, or $\widehat{X}$ by $\widehat{X}_\chi$, we can directly formulate analogs of Conjectures \ref{conj Prasad 1} and \ref{conj Prasad 2} for the multiplicities $m_{X_\chi}(\pi)$ and $m_{H_\beta,\chi}(\pi)$ when $\pi$ is a discrete series. We leave the details to the reader. 
	
	We emphasize that the original conjecture in \cite{PraGal} is formulated for a particular, in general nontrivial, Galoisian character $\chi=\omega_{\ell/k}$. More precisely, there is a canonical central element $\epsilon\in Z(\widehat{H})$ of order at most $2$ that can be described as the image of $-I_2$ by any principal $\SL_2$-morphism $\SL_2(\bC)\to \widehat{H}$. The Galoisian character $\omega_{\ell/k}$ is then the one associated to the cocycle $W_k\twoheadrightarrow \Gal(\ell/k)\to Z(\widehat{H})$ mapping $c$ to $\epsilon$. It turns out that for $\chi=\omega_{\ell/k}$, we can identify ${}^L G_{X,\chi}$ with the $L$-group ${}^L H^{op}$ of a certain quasi-split $\ell/k$-form $H^{op}$ of $H$\footnote{More precisely, $H^{op}$ is the form obtained by twisting the Galois action of $c\in \Gal(\ell/k)$ on $H_{\ell}$ by the Chevalley involution, see \S \ref{S Galois pairs Lgroup}.}, called the {\it opposite group}, so that the morphism $\iota_{X,\chi}:{}^L G_{X,\chi}\to {}^L G$ becomes the base-change morphism.
\end{paragr}

\begin{paragr}[Reduction to the untwisted case.]
	Actually, the twisted versions of the conjecture mentioned in the previous paragraph are not more general but consequences of the original (untwisted) conjectures. Let us briefly sketch the mechanism for this reduction. The key is to use the theory of $z$-extensions. Namely, we can find a central extension
	\begin{equation}\label{extension}
		\displaystyle 1\to T\to H'\xrightarrow{p} H\to 1
	\end{equation}
	such that:
	\begin{itemize}
		\item $T$ is an induced torus (i.e.\ it can be written as a finite product $T\simeq \prod_{i\in I} \Res_{k_i/k}\mathbb{G}_m$ for some finite extensions $k_i/k$);
		
		\item the derived subgroup of $H'$ is simply connected (equivalently: $Z(\widehat{H}')$ is connected);
		
		\item the natural map $H^1(k,H')\to H^1(k,H)$ is a bijection (equivalently $\pi_0(Z(\widehat{H})^\Gamma)\to \pi_0(Z(\widehat{H'})^\Gamma)$ is bijective).
	\end{itemize}
	Indeed, a $z$-extension is precisely an extension satisfying the two first conditions above and it always exists by \cite[Lemma 1.1]{KottratCC}. Moreover, fixing such a $z$-extension we can force it to satisfy the last condition by pushing it out along the inclusion $T\hookrightarrow \Res_{k'/k} T_{k'}$ (whose target is again an induced torus) for any finite extension $k'/k$ or degree divisible by the order of $H^1(k,H)$.
	
	Let us now pick an extension \eqref{extension} satisfying the three conditions above and set $S=\Res_{\ell/k} T_\ell$, $G'=\Res_{\ell/k} G_\ell$, $X'=H'\backslash G'$. The Galoisian character $\chi$ of $H$ also induces a Galoisian character $\chi\circ p$ of $H'$, that for simplicity we will denote also by $\chi$. Since $T$ is connected (i.e.\ is a torus), we have a short exact sequence
	$$\displaystyle 1\to {}^L G\xrightarrow{i} {}^L G'\to {}^L S\to 1$$
	of $L$-groups. This leads to a natural ${}^L G'$-equivariant projection $\widehat{X}'\to \widehat{T}$ with fiber above $1$ isomorphic to $\widehat{X}$ (${}^L G$-equivariantly). We shall summarize this situation with the following notation:
	$$\displaystyle \widehat{X}\hookrightarrow \widehat{X}'\twoheadrightarrow \widehat{T}.$$
	Then, twisting everything by $\chi$ leads to a similar sequence:
	\begin{equation}\label{sequence}
		\displaystyle \widehat{X}_\chi\hookrightarrow \widehat{X}'_\chi\twoheadrightarrow \widehat{T}.
	\end{equation}
	
	The general principle of the reduction is then to show two implications of the form
	\begin{equation}\label{reduction Prasad conjecture}
		\displaystyle \mbox{ Prasad's conjecture for } (X,\chi)\Leftarrow \mbox{ Prasad's conjecture for } (X',\chi) \Leftarrow \mbox{ Prasad's conjecture for }  X'.
	\end{equation}
	
	Let $\pi\in \Irr(G)$ and $\pi'\in \Irr(G')$ be its pullback along $p$. Since $T$ is induced, the maps $G'(k)\to G(k)$ and $H'(k)\to H(k)$ are surjective and therefore 
	\begin{equation}\label{eq mult isogeny}
		\displaystyle m_{H,\chi}(\pi)=m_{H',\chi}(\pi').
	\end{equation}
	Moreover, if $\phi: \mathcal{L}_k\to {}^L G$ be an $L$-parameter and $\phi': \mathcal{L}_k\to {}^L G'$ its composite with $i$ (so that if $\pi\in \Pi(\phi)$ then we should have $\pi'\in \Pi(\phi')$), taking invariants with respect to the corresponding actions in \eqref{sequence} we obtain a similar sequence
	$$\displaystyle \widehat{X}^\phi_\chi\hookrightarrow (\widehat{X}'_\chi)^{\phi'}\twoheadrightarrow \widehat{T}^\Gamma.$$
	On the other hand, again because $T$ is induced, $\widehat{T}^\Gamma$ is connected and combining this with the third condition on the extension this gives an exact sequence of connected groups
	$$\displaystyle 1\to Z(\widehat{H})^{\Gamma,0}\to Z(\widehat{H}')^{\Gamma,0}\to \widehat{T}^\Gamma\to 1.$$
	This shows that the previous map $\widehat{X}^\phi_\chi\to (\widehat{X}'_\chi)^{\phi'}$ induces a bijection on $\pi_0$'s:
	\begin{equation}\label{map pi0}
		\displaystyle \pi_0(\widehat{X}^\phi_\chi)\simeq \pi_0((\widehat{X}'_\chi)^{\phi'})
	\end{equation}
	which is moreover equivariant under the action of $\pi_0(Z_\phi)\times \pi_0(Z(\widehat{H})^\Gamma)=\pi_0(Z_{\phi'})\times \pi_0(Z(\widehat{H}')^\Gamma)$. The first implication in \eqref{reduction Prasad conjecture} can then be readily deduced from the equality \eqref{eq mult isogeny} and the isomorphism \eqref{map pi0}.
	
	As for the second implication, we note that the second condition on the extension implies that the natural map 
	$$\displaystyle H^1(W_k,Z(\widehat{G}'))\to H^1(W_k,Z(\widehat{H}'))$$
	is surjective. Therefore, we may choose a Galoisian character $\mu$ of $G'$ whose restriction to $H'$ is $\chi$. This implies
	\begin{equation*}
		\displaystyle m_{H',\chi}(\pi')=m_{H'}(\pi'\otimes \mu),\;\; \mbox{ for every } \pi'\in \Irr(G').
	\end{equation*} 
	On the other hand, twist by a cocycle representing $\mu\in H^1(W_k,Z(\widehat{G}'))$ should send the $L$-parameter $\phi'$ of $\pi'$ to the $L$-parameter $\phi'_{\mu}$ of $\pi'\otimes \mu$ whereas it turns the $W_k$-action on $X'$ to that on $X'_\chi$. This shows that the $\mathcal{L}_k$-spaces $(\widehat{X}',\phi')$ and $(\widehat{X}',\phi'_{\mu})$ are isomorphic from which we get a $Z_{\phi'}\times Z(\widehat{H}')^\Gamma=Z_{\phi'_\mu}\times Z(\widehat{H}')^\Gamma$-equivariant bijection
	$$\displaystyle \pi_0((\widehat{X}')^{\phi'})\simeq \pi_0((\widehat{X}'_\chi)^{\phi'_\mu}).$$
	The second implication of \eqref{reduction Prasad conjecture} immediately follows.
\end{paragr}

\begin{paragr}
	Here are two consequences of Conjecture \ref{conj Prasad 2} (suitably extended to include twists by Galoisian characters as in the previous paragraph):
	\begin{itemize}
		\item The Steinberg representation $\St$ of $G(k)$ is $(H,\chi)$-distinguished (i.e.\ $m_{H,\chi}(\St)\neq 0$) if and only if $\chi=\omega_{\ell/k}$ is the quadratic character defined by Prasad. Moreover, in this case we have $m_{H,\chi}(\St)=1$;
		
		\item For every discrete $L$-parameter $\phi\in \Phi_{\disc}(G)$ the stable multiplicity
		$$\displaystyle m_{H_\beta,\chi}(\phi):=\sum_{\rho\in \Irr(S_\phi)} \dim(\rho) m_{H_\beta,\chi}(\pi_{\mathfrak{w}}(\phi,\rho))$$
		does not depend on $\beta\in H^1(k,H)$.
	\end{itemize}
	Actually, both of these consequences have been deduced in \cite{BeuGal} (assuming that the character of the representation $\sum_{\rho\in \Irr(S_\phi)} \dim(\rho) \pi_{\mathfrak{w}}(\phi,\rho)$ is stable in a suitable sense for the second property) from a certain integral formula for the multiplicities $m_{H_\beta,\chi}(\pi)$. This formula is in turn a byproduct of some local trace formula for the varieties $X_\chi$ that we shall discuss in more details in Section \ref{Sect local trace formula}.
\end{paragr}

\begin{paragr}[The case $H=\GL_n$.]
	Let $H=\GL_n$. Then, $H^{op}=U_n$ is a quasi-split unitary group of rank $n$ (with respect to $\ell/k$) and there exists an isomorphism ${}^L G_X\simeq {}^L U_n$ through which ${}^L\iota_X$ is the base-change morphism when $n$ is odd and an unstable base-change morphism when $n$ is even. (We recall that an unstable base-change morphism is obtained by twisting the base-change morphism ${}^L U_n\to {}^L G$ by a cocycle $W_k\to Z(\widehat{G})$ that corresponds to a (Galoisian) character $\chi: G(k)=\GL_n(\ell)\to \bC^\times$ whose restriction to $\GL_n(k)$ is $\eta_{\ell/k}\circ \det$; $\eta_{\ell/k}$\index{$\eta_{\ell/k}$} being the quadratic character of $k^\times$ associated to $\ell/k$. It should be noted that, although such a character $\chi$ is not unique, the image of the unstable base-change morphism ${}^L U_n\to {}^L G$ does not depend on it and, similarly, neither does the image of the corresponding functorial lift.)
	
	By the Local Langlands Correspondence for $\GL_n$ \cite{HTLLC} \cite{HennLLC} any $\pi\in \Irr(G)$ determines an $L$-parameter $\phi_\pi\in \Phi(G)$. Then, Conjecture \ref{conj Prasad 2} in this case is a consequence of the following more general result of Kable \cite{Kabl} and Matringe \cite{Matr}.
	
	\begin{theo}[Kable, Matringe]
		Assume that $H=\GL_n$. Then, for every generic irreducible representation $\pi\in \Irr(G)$, we have
		\begin{equation*}
			\displaystyle m_X(\pi)=m_H(\pi)=\left\{\begin{array}{ll}
				1 & \mbox{ if } \phi_\pi\in \iota_{X,*}(\Phi(X)), \\
				0 & \mbox{ otherwise.}
			\end{array}\right.
		\end{equation*}
	\end{theo}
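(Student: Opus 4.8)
The plan is to reduce to the structure theory of generic representations of $\GL_n$ over $\ell$, analyze $\GL_n(k)$-invariant functionals by a Mackey/geometric-lemma argument, treat the discrete-series building blocks via Kable's and Flicker's results on Asai $L$-functions, and finally translate the resulting combinatorial criterion to the dual side using the local Langlands correspondence. First one observes that the two multiplicities coincide: since $G=\Res_{\ell/k}\GL_{n,\ell}$ one has $H^1(k,H)=H^1(k,\GL_n)=1$ and $H^1(k,G)=H^1(\ell,\GL_n)=1$ by Hilbert 90, so $X$ has a single relevant pure inner form and $X(k)=H(k)\backslash G(k)=\GL_n(k)\backslash\GL_n(\ell)$ is one $G(k)$-orbit; hence $m_X(\pi)=m_H(\pi)=\dim\Hom_{\GL_n(k)}(\pi^\vee,\bC)$. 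That this dimension is at most $1$ for all of $\Irr(G)$ is Flicker's theorem that $(\GL_n(\ell),\GL_n(k))$ is a Gelfand pair, so the whole content is to decide exactly when it is nonzero and to match that with $\phi_\pi\in\iota_{X,*}(\Phi(X))$.

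Next I would run the reduction to discrete series. A generic $\pi\in\Irr(G)$ is an irreducible parabolically induced representation $\pi_1\times\cdots\times\pi_r$ with each $\pi_i$ essentially square-integrable on some $\GL_{n_i}(\ell)$. Decomposing $\Hom_{\GL_n(k)}(\pi_1\times\cdots\times\pi_r,\bC)$ along the finitely many orbits of $\GL_n(k)$ on $P(\ell)\backslash\GL_n(\ell)$ and controlling the contribution of each orbit through its stabilizer and the relevant twisted Jacquet modules (derivatives) of the $\pi_i$, one arrives at Matringe's criterion: $\pi$ is $\GL_n(k)$-distinguished if and only if, after reindexing, the $\pi_i$ split into singletons $\{\pi_i\}$ with $\pi_i$ itself $\GL_{n_i}(k)$-distinguished and pairs $\{\pi_i,\pi_j\}$ with $\pi_j\simeq(\pi_i^\vee)^c$, the pairs contributing automatically. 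The delicate points --- the existence of the open-orbit functional, the vanishing on the nonopen orbits, and the linear independence of the surviving contributions --- are exactly where the derivative combinatorics of discrete series of $\GL_m$ enters, and I would follow \cite{Matr} for this.

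It then remains to settle the base case: a discrete series $\delta$ of $\GL_m(\ell)$ is $\GL_m(k)$-distinguished if and only if its Langlands parameter is conjugate-self-dual of the sign prescribed, via \S\ref{S Galois pairs Lgroup}, by $m$ and the unstable twist --- equivalently $\phi_\delta$ lies in the image of $\iota_{X,*}$ for the rank-$m$ problem. For this I would invoke Kable's theorem \cite{Kabl} that distinction of $\delta$ is equivalent to a pole of the Asai $L$-function $L(s,\delta,\As)$ at $s=0$, the Anandavardhanan--Kable--Tandon dichotomy that for conjugate-self-dual $\delta$ precisely one of $L(s,\delta,\As)$ and $L(s,\delta\otimes(\eta_{\ell/k}\circ\det),\As)$ has such a pole, and the reduction --- via the M\oe{}glin--Tadi\'c/Zelevinsky description of $\delta$ as attached to a segment over a supercuspidal --- to the supercuspidal case. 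For supercuspidal $\rho$, the ``only if'' direction (distinction forces $\rho^c\simeq\rho^\vee$, plus a central-character constraint) is a short argument with the contragredient and the Gelfand--Kazhdan involution; the ``if'' direction is Flicker's theorem, proved globally by embedding $\rho$ as a local component of a cuspidal automorphic representation of $\GL_m$ over a number field with $\ell/k$ occurring at a place, then reading off from the analytic behaviour of the global Asai $L$-function (Flicker, Flicker--Zinoviev, Kable) that the representation is a stable or unstable base change from the quasi-split unitary group $\U_m$, and descending by local--global compatibility of the LLC.

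Finally I would match the two characterizations. Under the LLC for $\GL_n$ \cite{HTLLC}\cite{HennLLC}, $\phi_\pi$ is an $n$-dimensional representation of $\mathcal{L}_\ell$, and $\phi_\pi\in\iota_{X,*}(\Phi(X))$ means that it extends through ${}^LG_X\simeq{}^L\U_n$ and the (unstable, for $n$ even) base-change embedding of \S\ref{S Galois pairs Lgroup}; since $L$-parameters add under parabolic induction this ``global'' conjugate-self-duality-of-given-sign condition on $\phi_\pi$ unwinds into exactly Matringe's partition of the $\pi_i$, once the base case above identifies distinction of a discrete series with membership of its parameter in $\iota_{X,*}$. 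The main obstacle is precisely this base case: both the global input needed for the ``if'' direction for supercuspidals and, more subtly, the exact determination of the sign of the Asai local factor in terms of the parameter, which is what makes the bookkeeping between the untwisted problem and the stable versus unstable base-change embedding come out right.
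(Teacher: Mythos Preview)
The paper does not give its own proof of this theorem: it is stated with attribution to Kable \cite{Kabl} and Matringe \cite{Matr} and no argument is supplied. Your outline is a faithful summary of the Kable--Matringe approach --- Flicker's multiplicity-one for the pair $(\GL_n(\ell),\GL_n(k))$, Matringe's Mackey/geometric-lemma reduction of distinction of a generic $\pi=\pi_1\times\cdots\times\pi_r$ to the combinatorial criterion on the discrete-series blocks, Kable's Asai $L$-function treatment of the discrete-series base case (with the Anandavardhanan--Kable--Tandon dichotomy and the global input of Flicker for supercuspidals), and the translation to the dual side via the LLC and the (unstable for $n$ even) base-change description of $\iota_X$ from \S\ref{S Galois pairs Lgroup} --- so there is nothing to compare against and your proposal is correct as a sketch of the cited proofs.
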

\end{paragr}

\begin{paragr}[The case $H=U_n$.]
	Let us now consider the case where $H$ is a unitary group of rank $n$ with respect to the quadratic extension $\ell/k$. Then, $G\simeq \Res_{\ell/k} \GL_n$ (so that $X=H\backslash G$ identifies with the variety of nondegenerate Hermitian forms on $\ell^n$) whereas $H^{op}\simeq \GL_n$. Moreover, we have ${}^L X={}^L H^{op}=\GL_n(\bC)\times W_k$ and $\iota_X: {}^L X\to {}^L G$ is the base-change morphism.
	
	Distinction of representations in this case has been much studied by Jacquet \cite{Jacfactorization} as a byproduct of his development of a relative trace formula approach to the, related, problem of factorization of global unitary periods. This work of Jacquet has then been greatly extended and completed by a paper of Feigon, Lapid and Offen \cite{FLO} leading to an almost complete description of the multiplicities $m_H(\pi)$, $\pi\in \Irr(G)$, when $\pi$ is generic. For some particular representations $\pi$ however, those corresponding to ramification points of the base-change map as described below, {\it op.\ cit.}\ only provided a lower bound for $m_H(\pi)$. That these lower bounds yield exactly the multiplicity has then been established in my paper \cite{BeuGLnU}.
	
	Let us now describe more precisely the resulting formula for $m_H(\pi)$. This will include Prasad's conjecture for this group as a particular case (when $\pi$ is a discrete series) but also features some extra phenomenon in general concerning the ramification of the map $\iota_{X,*}: \Phi(X)\to \Phi(G)$. To be more specific, we need to equip the set of $L$-parameters $\Phi(G)$ and $\Phi(X)$ with structures of algebraic varieties over $\bC$; we do it by choosing the most naive structure thereof. 
	
	
	Namely, let $Z^1(\mathcal{L}_k, \widehat{G})$\index{$Z^1(\mathcal{L}_k, \widehat{G})$} be the set of all $1$-cocycles $\mathcal{L}_k=W_k\times \SL_2(\bC)\to \widehat{G}$ that are continuous on $W_k$, algebraic on $\SL_2(\bC)$ but not necessarily Frobenius semisimple. We define similarly $Z^1(I_k\times \SL_2(\bC),\widehat{G})$ where $I_k$ stands for the inertia subgroup of $W_k$. Then, letting $\widehat{G}$ act in the usual way on $1$-cocycles, we equip each $\widehat{G}$-orbit in $Z^1(I_k\times \SL_2(\bC),\widehat{G})$ with its natural structure of algebraic variety making it an homogeneous variety under $\widehat{G}$; thus for $\psi\in Z^1(I_k\times \SL_2(\bC),\widehat{G})$ we have an isomorphism of varieties $\widehat{G}\cdot \psi\simeq \widehat{G}/\widehat{G}_\psi$ where $\widehat{G}_\psi\subset \widehat{G}$ denotes the (Zariski closed) stabilizer of $\psi$. We then think of $Z^1(I_k\times \SL_2(\bC),\widehat{G})$ as the disjoint union of these algebraic varieties. There is a natural restriction map $Z^1(\mathcal{L}_k, \widehat{G})\to Z^1(I_k\times \SL_2(\bC),\widehat{G})$ and, choosing a lift $\Frob\in W_k$ of the geometric Frobenius, we also have an evaluation map $\phi\in Z^1(\mathcal{L}_k, \widehat{G})\mapsto \phi(\Frob)\in \widehat{G}$. The product of those two maps gives an injection
	$$\displaystyle Z^1(\mathcal{L}_k, \widehat{G})\hookrightarrow Z^1(I_k\times \SL_2(\bC),\widehat{G})\times \widehat{G},$$
	whose image is readily seen to be a closed subvariety of the product, and we equip $Z^1(\mathcal{L}_k, \widehat{G})$ with the resulting structure of algebraic variety (which obviously does not depend on the choice of $\Frob$). Projection to $\widehat{G}$ then identifies the set of $L$-parameters $\Phi(G)$ with the set of closed $\widehat{G}$-orbits in $Z^1(\mathcal{L}_k, \widehat{G})$ for the obvious conjugation action.\footnote{More precisely, this is saying that the $\widehat{G}$-conjugacy class of $\phi\in Z^1(\mathcal{L}_k, \widehat{G})$ is closed if and only if $\phi(\Frob)$ is semisimple.}  Therefore, $\Phi(G)$ is in natural bijection with the points of the GIT quotient $Z^1(\mathcal{L}_k, \widehat{G})\sslash \widehat{G}$ which is how we equip the former with a structure of algebraic variety. The definition of the structure of algebraic variety on $\Phi(X)$ is completely similar.
	
	The pushforward map
	$$\displaystyle \iota_{X,*}: \Phi(X)\to \Phi(G),\;\;\; \psi\mapsto \phi=\iota_X\circ \psi,$$
	is then a finite morphism. In the case at hand ($H=U_n$), $\Phi(X)$ and $\Phi(G)$ are the varieties of $L$-parameters for $\GL_{n,k}$ and $\GL_{n,\ell}$ respectively and $\iota_{X,*}$ associates to every $L$-parameter $\phi: \mathcal{L}_k\to \GL_n(\bC)$ its restriction to $\mathcal{L}_{\ell}$. Since $L$-packets in this case are singletons, we can identify $\iota_{X,*}$ with the {\it base-change} map $\BC: \Irr(\GL_{n,k})\to \Irr(\GL_{n,\ell})$ of Arthur-Clozel \cite{ACBC}. For $\sigma\in \Irr(\GL_{n,k})$, we let $\deg \BC(\sigma)$ be the degree of this (finite regular) map at $\sigma$. More precisely, $\deg \BC(\sigma)$ is the cardinality of a generic nonempty fiber in a neighborhood of $\sigma$ or equivalently, and more formally, the degree of the extension $\operatorname{Frac}(\mathcal{O}_\sigma)/\operatorname{Frac}(\mathcal{O}_\pi/I)$ where $\mathcal{O}_\sigma$, $\mathcal{O}_\pi$ denote the local rings at $\sigma$ and $\pi$ respectively, $I$ is the kernel of $\BC^*:\mathcal{O}_\pi\to \mathcal{O}_\sigma$ and $\operatorname{Frac}(.)$ stands for the associated field of fractions. For $\pi\in \Irr(\GL_{n,\ell})$, we then let
	$$\displaystyle \deg \BC(\pi):=\sum_{\sigma\in \BC^{-1}(\pi)} \deg \BC(\sigma).$$\index{$\deg \BC(\pi)$}
	We are now in position to state the aforementioned formula for $m_H(\pi)$.
	
	\begin{theo}[\cite{Jacfactorization}, \cite{FLO}, \cite{BeuGLnU}]
		For every generic representation $\pi\in \Irr(G)=\Irr(\GL_{n,\ell})$, we have
		\begin{equation*}
			\displaystyle m_H(\pi)=\left\{\begin{array}{ll}
				\ceil{\frac{\deg \BC(\pi)}{2}} & \mbox{ if } H \mbox{ is quasi-split}, \\
				\\
				\floor{\frac{\deg \BC(\pi)}{2}} & \mbox{ otherwise,}
			\end{array} \right.
		\end{equation*}
		where the brackets indicate the ceiling function (less integer greater than or equal to $\frac{\deg \BC(\pi)}{2}$) on the first line and the floor function (greatest integer less than or equal to $\frac{\deg \BC(\pi)}{2}$) on the second line.
	\end{theo}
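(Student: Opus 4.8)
The plan is to reduce the computation of $m_H(\pi)=\dim\Hom_{H(k)}(\pi^\vee,\bC)$ to two inputs of a rather different nature: a formula for the \emph{total} multiplicity $m_{H^{\mathrm{qs}}}(\pi)+m_{H^{\mathrm{nqs}}}(\pi)$ summed over the (at most two) pure inner forms of $U_n$, which I expect to equal $\deg\BC(\pi)$, together with the distribution of this total between the quasi-split and the non-quasi-split form. Since any generic $\pi\in\Irr(\GL_{n,\ell})$ is an isobaric sum $\pi_1\boxplus\cdots\boxplus\pi_r$ of pairwise inequivalent discrete series, and $(G,H)$-distinction behaves hereditarily under parabolic induction, I would first establish the formula for discrete series and then propagate it to the general case, controlling on one side the $(P\cap H)$-orbit geometry on partial flag varieties together with Bernstein--Zelevinsky filtrations of $\cS(X)$, and on the other side the multiplicativity of $\deg\BC$ under isobaric sums, following the method of Feigon--Lapid--Offen.

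For the total multiplicity I would invoke Jacquet's relative trace formula comparing the $U_n(k)$-period on $\GL_n(\ell)$ with the Kuznetsov formula on $\GL_{n,k}$. Spectrally this comparison matches the $U_n$-distinguished part of the spectrum of $\GL_n(\ell)$ with the image of (possibly unstable) base change, and on the level of individual representations it should identify $m_{H^{\mathrm{qs}}}(\pi)+m_{H^{\mathrm{nqs}}}(\pi)$ with the number of points of the fiber $\BC^{-1}(\pi)$ weighted by the local degrees $\deg\BC(\sigma)$, i.e.\ with $\deg\BC(\pi)$. In particular this recovers the qualitative statement that $\pi$ is distinguished by some form of $U_n$ exactly when $\pi\in\iota_{X,*}(\Phi(X))$, i.e.\ when $\pi$ is conjugate-selfdual of the parity imposed by $\iota_X$, and it shows that for $\pi$ a discrete series the fiber has at most one point, so the total multiplicity is at most $1$. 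The ceiling and floor on the right-hand side then simply record the two symmetric ways of splitting $\deg\BC(\pi)$ across the two forms.

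The hard part is to decide which form receives the extra unit when $\deg\BC(\pi)$ is odd, and more generally to pin down each $m_{H_\beta}(\pi)$ individually rather than only up to the ambiguity at the ramification points of $\BC$. Here the plan is to use a local trace formula for the symmetric variety $X$ and its Galoisian twists $X_\chi$, modelled on Waldspurger's local trace formula in the Gan--Gross--Prasad setting, producing an integral formula for $m_{H_\beta,\chi}(\pi)$ in terms of the (stable) germ expansion of the Harish-Chandra character of $\pi$ along the relative elliptic locus; the elliptic term should contribute an explicit local factor, ultimately a root number of the Asai representation of $\pi$, which breaks the symmetry in favour of the quasi-split form and thereby pins the ceiling to $H^{\mathrm{qs}}$ and the floor to $H^{\mathrm{nqs}}$. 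The main obstacle, as the text flags in connection with Prasad's conjecture, is precisely the ramification locus of $\iota_{X,*}$: there the multiplicity genuinely grows, the relative trace formula of Feigon--Lapid--Offen yields only a lower bound of the shape $m_{H_\beta}(\pi)\geq\floor{\deg\BC(\pi)/2}$ (resp.\ $\ceil{\cdot}$), and closing the gap — as carried out in \cite{BeuGLnU} — requires either a local trace formula with a completely explicit geometric side (no unspecified constants) or a deformation argument tracking how $H_\beta(k)$-invariant functionals specialize as $\pi$ degenerates within a unitary family $I_L^G(\sigma\otimes\lambda)$, relying on the finiteness of the multiplicity in such families.
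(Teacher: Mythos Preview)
The paper does not give its own proof of this theorem; it is stated as a result of the cited references, with only a narrative summary of the method preceding it. That summary is: Jacquet and then Feigon--Lapid--Offen use a \emph{global} relative trace formula comparison (unitary periods on $\GL_n$ versus Kuznetsov on $\GL_{n,k}$) to factorize global unitary periods into local invariant functionals, and these local functionals produce elements of $\Hom_{H_\beta}(\pi^\vee,\bC)$ whose span has dimension $\ceil{\deg\BC(\pi)/2}$ or $\floor{\deg\BC(\pi)/2}$ according to the form. This is the lower bound. The upper bound, established in \cite{BeuGLnU}, is obtained not by a Waldspurger-style local trace formula with germ expansions of Harish-Chandra characters, nor by a deformation argument in unitary families, but by showing that the FLO local functionals already furnish a full Plancherel decomposition of $L^2(X_v)$ indexed by $\Temp(\GL_n(F_v))$; this is achieved via a comparison of local relative trace formulas (the spectral side of the $L^2$-inner product on $X_v$ versus that on $\GL_n(F_v)$).

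Your outline has the right overall architecture---lower bound from FLO, the ramification locus of $\iota_{X,*}$ as the obstruction, a separate local argument to close the gap---but mislocates two of the ingredients. First, Jacquet's relative trace formula is a global tool: it does not directly compute the local multiplicity $m_{H^{\mathrm{qs}}}(\pi)+m_{H^{\mathrm{nqs}}}(\pi)$; rather, the global comparison plus factorization produces local functionals, and counting independent ones gives the lower bound. Second, the upper bound in \cite{BeuGLnU} does not go through germ expansions of characters along a relative elliptic locus (that is the method of \cite{BeuGal} for Galois pairs, applied to a different problem); it is a Plancherel-type argument showing that the FLO functionals already exhaust the $L^2$-spectrum of $X_v$. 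Your Asai root-number heuristic for breaking the symmetry is not the mechanism used either: the distribution between the two forms is read off directly from the explicit FLO functionals, which are attached from the outset to specific Hermitian forms.
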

	
	\begin{rem}
		\begin{itemize}
			\item Denoting by $\pi^c$ the composition of $\pi$ with the standard action of $c$ on $\GL_n(\ell)$ (the one fixing $\GL_n(k)$ but not $H(k)$!), it is known that $\pi$ is in the image of base-change from $\GL_n(k)$ if and only if $\pi\simeq \pi^c$. In particular, when $H$ is quasi-split the above theorem implies a conjecture of Jacquet \cite{Jacfactorization} that $\pi$ is $H$-distinguished if and only if $\pi\simeq \pi^c$.
			
			\item There are only two pure inner forms of unitary groups of rank $n$, let us denote them $H$ and $H'$. The theorem implies
			\begin{equation}\label{eq mult unitary period}
				\displaystyle m_X(\pi)=m_H(\pi)+m_{H'}(\pi)=\deg \BC(\pi)
			\end{equation}
			for every generic $\pi\in \Irr(G)$.
			
			\item Any generic $\pi\in \Irr(\GL_{n,\ell})$ is parabolically induced from essentially square-integrable representations i.e.\, denoting by $\times :\Irr(\GL_a)\times \Irr(\GL_b)\to \Irr(\GL_{a+b})$ the parabolic induction functor, there exist essentially square-integrable representations $\delta_i\in \Pi_{2,\mathrm{ess}}(\GL_{n_i,\ell})$ ($1\leq i\leq t$) such that
			$$\displaystyle \pi=\delta_1\times\ldots\times \delta_t.$$
			Using this presentation, if $\pi\simeq \pi^c$ we have
			$$\displaystyle \deg \BC(\pi)=2^{\lvert \{i\mid \delta_i\simeq \delta_i^c \}\rvert}.$$
			On the other hand, the fiber $\BC^{-1}(\pi)$ has cardinality
			$$\displaystyle \lvert \BC^{-1}(\pi)\rvert=\prod_{\delta\simeq \delta^c} (1+\lvert \{i\mid \delta_i\simeq \delta \}\rvert)$$
			where the product runs over all essentially square-integrable representations of some $\GL_m(\ell)$ (but almost all the terms are equal to $1$). In particular, if the multiset $\{\delta_i\mid \delta_i\simeq \delta_i^c \}$ has multiplicities then $\lvert \BC^{-1}(\pi)\rvert<\deg \BC(\pi)$.
			
			\item Passing back to $L$-parameters, in the case at hand the centralizers $Z_\phi$, $Z_\psi$ are connected for every $\phi\in \Phi(G)$, $\psi\in \Phi(X)$. It follows that \eqref{eq mult unitary period} can be reformulated similarly to \eqref{eq Prasad stable 1}, for parameters $\phi\in \Phi(G)$ that are generic rather than discrete, provided we count connected components of $\widehat{X}^\phi$ with multiplicities corresponding to the degree of the map $\iota_{X,*}$, that is in the case at hand we have
			\begin{equation}\label{reform Prasad GLn}
				\displaystyle m_X(\phi)=\sum_{\psi\in \iota_{X,*}^{-1}(\phi)}  \deg \iota_{X,*}(\psi) \lvert \pi_0(Z_\phi/Z_\psi)\rvert.
			\end{equation}
			(where $\lvert \pi_0(Z_\phi/Z_\psi)\rvert=1$.) Similarly, the formula of Conjecture \ref{conj Prasad 2} holds for generic parameters provided it is modified by weighting the sum by the function $\psi\mapsto \deg \iota_{X,*}(\psi)$.
			
			\item Although this suggests extending Conjecture \ref{conj Prasad 2} by taking into account the degree of the map $\iota_{X,*}$ as above, formula \eqref{reform Prasad GLn} is wrong in general. Indeed, consider the case $H=\SL_2$. Let $\chi: \ell^\times\to \{\pm 1\}$ be a quadratic character whose restriction to $k^\times$ is $\eta_{\ell/k}$\footnote{Note that this implies that $\chi$ can be factored by the norm map $\operatorname{Nm}: \ell^\times\to k^\times$, i.\ e.\ that is is trivial on $\Ker(\operatorname{Nm})$. Indeed, the fact that $\chi\mid_{k^\times}=\eta_{\ell/k}$ entails that $\chi^c=\chi^{-1}$ (where $c\in \Gal(\ell/k)$ is the nontrivial element), hence, since $\chi$ is quadratic, $\chi^c=\chi$ but that last equality is equivalent to $\chi$ being trivial on $\Ker(\operatorname{Nm})$.}. Let $\phi$ the $L$-parameter of $G=\Res_{\ell/k} \SL_{2,\ell}$ corresponding to the principal series induced from $\chi$. Then, the fiber $\iota_{X,*}^{-1}(\phi)=\{\psi \}$ is a singleton\footnote{Here the dual group of $X$ is ${}^L G_X=\PGL_2(\bC)\times W_k$ and $\psi$ is given by any character $\chi_0: W_k^{ab}=k^{\times}\to \bC^\times\subset \PGL_2(\bC)$ whose composition with the norm $\ell^\times\to k^\times$ is equal to $\chi$. There are two such characters, namely $\chi_0$ and $\chi_0\eta_{\ell/k}$, but $\chi_0\eta_{\ell/k}=\chi_0^{-1}$ and they lead to conjugate parameters in $\PGL_2(\bC)$.} and $Z_\phi/Z_{\psi}=\bZ/2\bZ$, $\deg \iota_{X,*}(\psi)=2$\footnote{In a neighborhood of $\psi$ the base-change map $\iota_{X,*}$ looks like the quotient map $\bC^\times\to\bC^\times/(z\sim z^{-1})$ near $z=1$.} so that the right-hand side of \eqref{reform Prasad GLn} is equal to $4$. On the other hand, the $L$-packet $\Pi(\phi)$ contains two representations $\pi^+$, $\pi^-$ (the irreducible constituents of $I_{B(\ell)}^{\SL_2(\ell)}(\chi)$) with $m_X(\pi^+)=m_X(\pi^-)=1$.
		\end{itemize}
	\end{rem}
\end{paragr}

\subsection{The action of the center}\label{Section center}

In this subsection, we assume that the field $k$ is non-Archimedean.

\vspace{2mm}

\begin{paragr}\label{S normalization action center}
Recall that the center $Z(X)=\Aut_G(X)$ of the variety $X$ is a diagonalizable group. We let $Z(X)$ act on the left of $X$ (to emphasize that it commutes with the right $G$-action) and we will denote by $(a,f)\mapsto (a\cdot f)(x)=f(a^{-1}x)$ the corresponding action of $Z(X)(k)$ on functions on $X(k)$. However, when dealing with the $L^2$-theory it will be convienent to introduce another action, related to the previous one by a simple twist. Indeed, although we have assumed that $X(k)$ carries a $G(k)$-invariant measure, this measure is not necessarily invariant by $Z(X)(k)$ (as the example of a parabolically induced variety shows). This measure is however necessarily semi-invariant i.e.\ translation by $Z(X)(k)$ multiplies it according to a character $\eta: Z(X)(k)\to \bR_{>0}$\index{$\eta$}. The normalized action of $Z(X)(k)$ on $L^2(X)$ is then defined by
$$\displaystyle (a,f)\in Z(X)(k)\times L^2(X)\mapsto  (\mathcal{L}_a f)(x)=\eta(a)^{-1/2}f(a^{-1}x).$$
Note that for every $a\in Z(X)(k)$, the operator $\mathcal{L}_a$ is then unitary.

	It turns out that parts of the general theory developed by Sakellaridis and Venkatesh in \cite{SV} require certain basic inputs related to the action of $Z(X)(k)$ on various spaces of functions on $X(k)$ both for $X$ and its boundary degenerations. We will review below these two conditions, which concern more precisely:
	\begin{itemize}
		\item A certain finiteness property for the action of $Z(X)(k)$ on the space of smooth functions named $\mathbf{(Z-fin)}$ below. This is the basic input needed for the theory of smooth asymptotics reviewed in Section \ref{Sect smooth asym}. When $X$ is parabolically induced from a wavefront variety, it is easy to check that all of its boundary degenerations satisfy this condition (see Lemma \ref{lem Z-fin}). Strictly speaking, this condition didn't appear in \cite{SV} but there was instead a restriction to wavefront spherical varieties there.
		
		\item An assumption on how the relative discrete series of $X$ vary with their central character. This corresponds to the {\em Discrete Series Conjecture} of \cite[\S 9.4.6]{SV} and is an important input for the development of the theory of Bernstein maps (aka $L^2$-asymptotic maps see Section \ref{Sect Bernstein maps}) through its implication to the existence of certain spectral gaps \cite[Proposition 9.4.8]{SV}. This issue of variation of the central character naturally leads to distinguish certain spherical varieties named {\it factorizable} \cite[\S 9.4.1]{SV}. Parabolic induction of factorizable varieties are readily seen to satisfy the Discrete Series Conjecture and $X$ is said to be {\em strongly factorizable} if each of its boundary degenerations is of this form (that is: parabolically induced from a factorizable variety). Symmetric varieties are strongly factorizable \cite[Proposition 9.4.2]{SV} whereas strongly factorizable varieties are automatically parabolically induced from a wavefront variety. A complete characterization of strongly factorizable varieties (including some non-symmetric examples) can be found in \cite[Appendix A]{DHS}.
	\end{itemize}
	
	We still lack a proof that these two properties hold true for general spherical varieties. However, the technique of {\it unfolding} \cite[\S 9.5]{SV}, also introduced by Sakellaridis-Venkatesh, allows in practice to check both properties in any given particular case. The emphasis of {\it op.\ cit.}\ was on the Discrete Series Conjecture. However, the same method can be applied to check condition $\mathbf{(Z-fin)}$ and we will illustrate this in \S \ref{S unfolding} for the boundary degenerations of $X=\GL_2\backslash \SO_5$ (this is the first example of a spherical variety that is not induced from a wavefront one).

\end{paragr}

\begin{paragr}[Finiteness of the center over the Hecke algebra.]\label{S condition Zfin}
	
	In the following, we shall denote by $\mathcal{Z}(G)$\index{$\mathcal{Z}(G)$} the Bernstein center of $G(k)$ \cite{Bercenter} and by $\mathcal{B}(G)=\Specmax(\cZ(G))$\index{$\mathcal{B}(G)$} the {\it Bernstein variety} (i.e.\ the complex variety of supercuspidal supports). A {\em Bernstein block} is a connected component $\mathfrak{s}\subset \mathcal{B}(G)$\index{$\mathfrak{s}$} (i.e.\ a supercuspidal support up to inertial equivalence) corresponding to a direct factor $\cZ_{\mathfrak{s}}(G)\subset \cZ(G)$\index{$\cZ_{\mathfrak{s}}(G)$} (that is the algebra of regular functions on $\mathfrak{s}$) as well as an indecomposable block of the category of smooth representations. Similarly, if $\mathfrak{s}\subset \mathcal{B}(G)$ is a finite union of Bernstein blocks $\mathfrak{s}=\cup_{i=1}^n \mathfrak{s}_i$, we denote by
	$$\displaystyle \cZ_{\mathfrak{s}}(G)=\prod_{i=1}^n \cZ_{\mathfrak{s}_i}(G)$$
	the corresponding direct factor of the Bernstein center. We also denote by $\cS_{\mathfrak{s}}(X)$\index{$\cS_{\mathfrak{s}}(X)$} the sum of the projections of $\cS(X)$ to the blocks $\mathfrak{s}_i$ and by $\cH_{\mathfrak{s}}(Z(X)^0)$\index{$\cH_{\mathfrak{s}}(Z(X)^0)$} the image of the Hecke algebra $\cH(Z(X)^0)$ of $Z(X)^0(k)$ by the morphism (given by left convolution) $\cH(Z(X)^0)\to \End_{\bC}(\cS_{\mathfrak{s}}(X))$.
	
	Similarly, for $J\subseteq G(k)$ a compact-open subgroup, we write $\cZ_J(G)$\index{$\cZ_J(G)$} for the image of $\cZ(G)$ by the natural projection $\cZ(G)\to \cH_J(G)$ and $\cH_J(Z(X)^0)$ for the image of $\cH(Z(X)^0)$ by the map $\cH(Z(X)^0)\to \End_{\bC}(\cS(X)^J)$.
	
	\begin{lem}\label{lem ZX}
		The following are equivalent:
		\begin{enumerate}[(i)]
			\item For every compact-open subgroup $J\subset G(k)$, $\cH_J(Z(X)^0)$ is finite over $\cH_J(G)$ i.e.\ the $\cH_J(G)$-subalgebra of $\End_{\bC}(\cS(X)^J)$ generated by $\cH_J(Z(X)^0)$ is finitely generated as a $\cH_J(G)$-module.
			
			\item For every compact-open subgroup $J\subset G(k)$, $\cH_J(Z(X)^0)$ is finite over $\cZ_J(G)$.
			
			\item For every Bernstein block $\mathfrak{s}\subset \mathcal{B}(G)$, $\cH_{\mathfrak{s}}(Z(X)^0)$ is finite over $\cH_{\mathfrak{s}}(G)$.
			
			\item For every Bernstein block $\mathfrak{s}\subset \mathcal{B}(G)$, $\cH_{\mathfrak{s}}(Z(X)^0)$ is finite over $\cZ_{\mathfrak{s}}(G)$.
		\end{enumerate}
	\end{lem}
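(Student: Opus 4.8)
The plan is to deduce all four equivalences from Bernstein's structure theory of $\Rep(G)$, and in particular from the two facts that \textbf{(B1)} for every compact-open subgroup $J\subset G(k)$ the algebra $\cH_J(G)=e_J\cH(G)e_J$ is a finitely generated module over the subalgebra $\cZ_J(G)$, which is itself a finitely generated (hence Noetherian) commutative $\bC$-algebra lying in the centre of $\cH_J(G)$, and \textbf{(B2)} each $J$ meets only finitely many Bernstein blocks while each block is met by some $J$ (the first half of (B2) follows from (B1): $\cZ_J(G)=e_J\cZ(G)$ is a finitely generated $\bC$-algebra, hence has only finitely many orthogonal idempotents coming from the block decomposition $\cZ(G)=\prod_{\mathfrak{t}}\mathcal{O}(\mathfrak{t})$). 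Throughout I write $C$ for the commutative algebra $\cH_J(Z(X)^0)$ (resp. $\cH_{\mathfrak{s}}(Z(X)^0)$) inside $\End_\bC(\cS(X)^J)$ (resp. $\End_\bC(\cS_{\mathfrak{s}}(X))$); since $Z(X)^0$ acts by $G$-automorphisms of $X$, $C$ commutes with the image of $\cH(G)$, commutes with the Bernstein centre, and preserves the decomposition of $\cS(X)$ into Bernstein components. This is what makes the whole statement ``block-diagonal''.

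First I would prove $(i)\Leftrightarrow(ii)$. Fix $J$ and let $\bar A_1$, $\bar A_2$ be the subalgebras of $\End_\bC(\cS(X)^J)$ generated over the image of $\cH_J(G)$, resp. over the image of $\cZ_J(G)$, by $C$; since $\cZ_J(G)$ is central in $\cH_J(G)$ and the actions commute, $\bar A_1=\bar A_2\cdot(\text{image of }\cH_J(G))$. If $\bar A_2$ is finite over $\cZ_J(G)$, its finitely many module generators also generate $\bar A_1$ over $\cH_J(G)$, which gives $(ii)\Rightarrow(i)$. Conversely, if $\bar A_1$ is finite over $\cH_J(G)$, then by \textbf{(B1)} and transitivity of finiteness it is finite over $\cZ_J(G)$; as $\cZ_J(G)$ is Noetherian and $\bar A_2$ is a $\cZ_J(G)$-submodule of $\bar A_1$, $\bar A_2$ is finite over $\cZ_J(G)$, giving $(i)\Rightarrow(ii)$. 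The implication $(iii)\Leftrightarrow(iv)$ is formally identical, once one notes that $\cZ_{\mathfrak{s}}(G)=\mathcal{O}(\mathfrak{s})$ is a Noetherian $\bC$-algebra and interprets ``finite module'' over the non-unital idempotented algebra $\cH_{\mathfrak{s}}(G)$ in the idempotented sense; since every element of $\bar A_1,\bar A_2$ is fixed by $e_J$ for $J$ small, one can equally reduce block by block to the fixed-$J$ situation and invoke the previous paragraph.

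It then remains to bridge the ``for every $J$'' conditions with the ``for every $\mathfrak{s}$'' conditions, i.e. to prove $(ii)\Leftrightarrow(iv)$. For $(iv)\Rightarrow(ii)$: given $J$, let $\mathfrak{s}_1,\dots,\mathfrak{s}_m$ be the finitely many blocks meeting $J$ (by \textbf{(B2)}); then $\cS(X)^J=\bigoplus_i\cS_{\mathfrak{s}_i}(X)^J$, $\cZ_J(G)=\bigoplus_i\cZ_{\mathfrak{s}_i,J}(G)$ (the image of $\cZ_{\mathfrak{s}_i}(G)$ at level $J$), and the generated algebra decomposes compatibly, so finiteness over each $\cZ_{\mathfrak{s}_i}(G)$ descends, upon applying $e_J$ (a quotient operation on modules), to finiteness over $\cZ_J(G)$. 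The remaining implication $(ii)\Rightarrow(iv)$ is the crux and the main obstacle: from ``finite over $\cZ_J(G)$ for every $J$'' one must produce a single finite set of module generators for the whole block algebra $\bar A_{\mathfrak{s}}$ over $\cZ_{\mathfrak{s}}(G)$, whereas a priori the number of level-$J$ generators could grow with $J$.

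To resolve this I would exploit that $Z(X)^0$ is a torus, so $Z(X)^0(k)$ is an extension of a finitely generated free abelian group by its maximal compact subgroup $Z(X)^0(k)^{\circ}$. Consequently the algebra $\bar A_{\mathfrak{s}}$ is generated over $\cZ_{\mathfrak{s}}(G)$ by the images of finitely many elements $a_1^{\pm1},\dots,a_r^{\pm1}\in Z(X)^0(k)$ together with the image of $\cH(Z(X)^0(k)^{\circ})$; one checks, using condition $(ii)$ at a single sufficiently small level $J$, that this last image is already finite over $\cZ_{\mathfrak{s}}(G)$ (the compact part acts semisimply, and $(ii)$ bounds, modulo $\cZ_{\mathfrak{s}}(G)$, the isotypic components that occur), and that each $a_i$ is integral over $\cZ_{\mathfrak{s}}(G)$ with a monic relation whose coefficients can be chosen independently of the level because $\cZ_{\mathfrak{s}}(G)\to\cZ_{\mathfrak{s},J}(G)$ is surjective and $\cZ_{\mathfrak{s}}(G)$ is Noetherian. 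Then ``finitely generated as a $\cZ_{\mathfrak{s}}(G)$-algebra and integral over $\cZ_{\mathfrak{s}}(G)$'' upgrades to ``finite as a $\cZ_{\mathfrak{s}}(G)$-module'', giving $(iv)$. Assembling $(i)\Leftrightarrow(ii)$, $(iii)\Leftrightarrow(iv)$ and $(ii)\Leftrightarrow(iv)$ closes the circle of equivalences. The only genuinely delicate point, and where I expect to spend real effort, is the uniformity-in-$J$ argument of this last paragraph; everything else is bookkeeping around Bernstein's theorems.
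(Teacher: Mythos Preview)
Your argument for $(i)\Leftrightarrow(ii)$ is correct and matches the paper's (which dispatches it in half a sentence via \cite[corollaire 3.4]{Bercenter}). Your $(iv)\Rightarrow(ii)$ is fine. But you are working much too hard on $(ii)\Rightarrow(iv)$, and the hard part you flag --- the uniformity-in-$J$ of the integrality relations for the $a_i$ --- is not actually established by what you wrote: lifting coefficients along the surjection $\cZ_{\mathfrak{s}}(G)\to\cZ_{\mathfrak{s},J}(G)$ gives you, for each $J$, a monic polynomial killing the action on $\cS(X)^J$, but neither Noetherianity of $\cZ_{\mathfrak{s}}(G)$ nor anything else you invoke bounds the degree as $J$ shrinks, so you do not get a single relation valid on all of $\cS_{\mathfrak{s}}(X)$.

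The paper bypasses this entirely. By \cite[corollaire 3.9]{Bercenter} there is a cofinal decreasing sequence $(J_n)$ of compact-open subgroups and an exhausting increasing sequence $(\mathfrak{s}_n)$ of finite unions of Bernstein blocks such that $\cH_{J_n}(G)=\cH_{\mathfrak{s}_n}(G)$ \emph{literally} (the functor $V\mapsto V^{J_n}$ is an equivalence on those blocks). Hence also $\cZ_{J_n}(G)=\cZ_{\mathfrak{s}_n}(G)$ and $\cH_{J_n}(Z(X)^0)=\cH_{\mathfrak{s}_n}(Z(X)^0)$, so conditions $(i)$ and $(iii)$ (resp.\ $(ii)$ and $(iv)$) are the \emph{same} statement evaluated along these cofinal/exhausting sequences. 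No torus structure, no integrality, no uniformity argument is needed. Your instinct that ``each block is met by some $J$'' was pointing the right way; the sharper fact is that each finite union of blocks is \emph{exactly} cut out by some good $J$.
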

	\begin{proof}
		By \cite[corollaire 3.9]{Bercenter}, there exist a decreasing and separating\footnote{i.e.\ $\bigcap_n J_n=\{ 1\}$.} family $(J_n)_n$ of compact-open subgroups as well as an increasing and exhausting\footnote{i.e.\ $\bigcup_n \mathfrak{s}_n=\mathcal{B}(G)$.} family $(\mathfrak{s}_n)_n$ of finite unions of Bernstein blocks such that $\cH_{J_n}(G)=\cH_{\mathfrak{s}_n}(G)$, therefore also $\cZ_{J_n}(G)=\cZ_{\mathfrak{s}_n}(G)$ and $\cH_{J_n}(Z(X)^0)=\cH_{\mathfrak{s}_n}(Z(X)^0)$, for every $n$. This shows that (i) is equivalent to (iii) and (ii) to (iv). On the other hand, (i) is equivalent to (ii) since $\cH_J(G)$ is finite over $\cZ_J(G)$ \cite[corollaire 3.4]{Bercenter}.
	\end{proof}
	
	\begin{rem}
		Because of the inclusions $\cS(X)\subset L^2(X)^\infty\subset C^\infty(X)$ (where $L^2(X)^\infty$ denotes the subspace of smooth vectors in $L^2(X)$) and since $C^\infty(X)$ can be identified with the smooth contragredient of $\cS(X)$ (through the invariant measure on $X(k)$), we see that replacing $\cS(X)$ by either $L^2(X)^\infty$ or $C^\infty(X)$ in the above lemma, we obtain statements that are equivalent to the original ones. Thus, for the condition $\mathbf{(Z-fin)}$ introduced below, we can as well freely use one of these two spaces instead of $\cS(X)$.
	\end{rem}
	
	The following condition on the variety $X$ plays a crucial role in the construction of smooth asymptotic maps (see Section \ref{Sect smooth asym}):
	
	\begin{equation*}
		\displaystyle \mbox{The equivalent conditions of Lemma \ref{lem ZX} are satisfied.} \leqno \mathbf{(Z-fin)_X}
	\end{equation*}
	
	When obvious, we will drop the index $X$ in the condition $\mathbf{(Z-fin)_X}$. As we will see in the next section, assuming the above condition holds for $X$ and its boundary degeneration, we can construct certain {\it smooth asymptotic maps} whose first main consequence is the much stronger finiteness property that for every compact-open subgroup $J\subset G(k)$, $\cS(X)^J$ is a finitely generated $\cH_J(G)$-module (Theorem \ref{theo f.g. Schwartz space}).
	
	An analog of the property $\mathbf{(Z-fin)_X}$ in the Archimedean case can be stated as follows: the enveloping algebra $\mathcal{U}(\mathfrak{z}(X))$ of the Lie algebra $\mathfrak{z}(X)$ of $Z(X)$ is finite over the center $\mathcal{Z}(\mathfrak{g})$\index{$\mathcal{Z}(\mathfrak{g})$} of $\mathcal{U}(\mathfrak{g})$ (both being mapped into $\End(\cS(X))$ by differentiating the actions of $Z(X)(k)$ and $G(k)$ respectively). This finiteness property can actually be deduced from the Harish-Chandra isomorphism of Knop \cite{KnopHC} for the algebra $\mathbb{D}(X)$ of invariant differential operators on $X$. Indeed, the images of $\mathcal{U}(\mathfrak{z}(X))$ and $\mathcal{Z}(\mathfrak{g})$ both land in $\mathbb{D}(X)$ and the aforementioned result of Knop immediately implies that $\mathbb{D}(X)$ is finite over $\mathcal{Z}(\mathfrak{g})$. We also note that this finiteness property was used in \cite{DKSBP} to construct asymptotic maps between spaces of admissible tempered functions for general spherical varieties over $\mathbb{R}$.
	
	
	Let us say that a spherical variety $X$ is {\em central} if the cokernel of the natural morphism $Z(G)^0\to Z(X)^0$ is anisotropic (or equivalently if the image of this morphism contains the maximal split subtorus of $Z(X)^0$).
	The following lemma gives an elementary criterion to check that a spherical variety satisfies property $\mathbf{(Z-fin)}$. In particular, it holds for boundary degenerations of wavefront varieties (see point (iii) below).
	
	\begin{lem}\label{lem Z-fin}
		Let $X$ be a spherical variety.
		\begin{enumerate}[(i)]
			\item Let $J\subset G(k)$ be a compact-open subgroup. Then, there exists a compact-open subgroup $J_Z\subset Z(X)^0(k)$ such that the quotient map $\cH(Z(X)^0)\to \cH_J(Z(X)^0)$ factors through $\cH_{J_Z}(Z(X)^0)$.
			
			\item Assume that $X$ is central. Then, the condition $\mathbf{(Z-fin)_X}$ is satisfied.
			
			\item More generally, if $X$ is parabolically induced from a central spherical variety, i.e.\ if there exists a parabolic subgroup $P^-=LU^-$ and a central spherical $L$-variety $X^L$ such that $X\simeq X^L\times^{P^-} G$, then $X$ satisfies condition $\mathbf{(Z-fin)_X}$.
		\end{enumerate} 
	\end{lem}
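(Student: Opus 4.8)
The plan is to prove (i) by a direct contraction argument, then obtain (ii) formally from (i) together with structural facts about $Z(X)^0$ and the Bernstein centre, and finally deduce (iii) from (ii) by parabolic descent.

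For (i): since $Z(X)^0(k)$ is abelian, the averaging map $\cH(Z(X)^0)\to e_{J_Z}\ast\cH(Z(X)^0)=\cH_{J_Z}(Z(X)^0)$ is a quotient of unital algebras, so the asserted factorisation amounts to saying that $J_Z$ acts trivially on $\cS(X)^J$ (equivalently on the discrete set $X(k)/J$); concretely one needs a single compact-open $J_Z$ contained in every stabiliser $\operatorname{Stab}_{Z(X)^0(k)}(xJ)$, $x\in X(k)$. Writing $x=Hg$ one checks that this stabiliser is the image in $Z(X)^0(k)$ of $\Norm_G(H_x)(k)\cap gJg^{-1}$. I would then apply the weak Cartan decomposition \eqref{weak Cartan} (valid for any spherical $X$) to write $g=g_ak$ with $a\in A_X^-$ and $k$ in a fixed compact set $\mathcal{K}$; as the conjugates $kJk^{-1}$, $k\in\mathcal{K}$, all contain a fixed compact-open subgroup $J_\mathcal{K}$, the stabiliser contains the image of $\Norm_G(H_x)(k)\cap g_aJ_\mathcal{K}g_a^{-1}$. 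The crucial point is that $Z(X)^0\subseteq A_X$ commutes with the $G$-action on $X$, so its lifts are centralised by the $A_X$-part of the $g_a$; hence conjugation by the antidominant elements $g_a$ does not push a suitable compact-open subgroup of $Z(X)^0(k)$ outside $J_\mathcal{K}$, and this subgroup serves as $J_Z$.

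For (ii): fix $J$ and assume $X$ central. By (i) the action of $\cH(Z(X)^0)$ on $\cS(X)^J$ factors through $\bC[Z(X)^0(k)/J_Z]$, a finitely generated $\bC$-algebra since $Z(X)^0(k)/J_Z$ is finitely generated abelian. Let $I\subseteq Z(X)^0(k)$ be the image of $Z(G)^0(k)$; because $Z(G)^0$ is central in $G$, right translation by $z\in Z(G)^0(k)$ on $X(k)$ equals left translation by the image of $z$ in $Z(X)^0(k)$, so the image of $\bC[I/(I\cap J_Z)]$ in $\End(\cS(X)^J)$ lies in $\cZ_J(G)$. The centrality hypothesis says $\Coker(Z(G)^0\to Z(X)^0)$ is anisotropic; together with finiteness of $H^1(k,-)$ for diagonalisable groups this forces $I$ to be cocompact in $Z(X)^0(k)$, hence of finite index modulo $J_Z$, so $\bC[Z(X)^0(k)/J_Z]$ is a finite module over $\bC[I/(I\cap J_Z)]$. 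Thus $\cH_J(Z(X)^0)$ is finite over a subalgebra of $\cZ_J(G)$, a fortiori over $\cZ_J(G)$, which is condition (ii) of Lemma \ref{lem ZX}.

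For (iii): write $X=X^L\times^{P^-}G$ with $X^L$ central. Since parabolic induction alters neither the canonical Cartan nor the spherical roots, $Z(X)^0=\bigcap_{\alpha\in\Delta_X}\Ker(\alpha)^0=Z(X^L)^0$ and $\cS(X)\simeq\Ind_{P^-}^G\cS(X^L)$ compatibly with the fibrewise $Z(X^L)^0(k)$-action. Fixing $J$, completeness of $P^-\backslash G$ makes $P^-(k)\backslash G(k)/J$ finite, and the Mackey description of $J$-invariants of an induced representation gives a $Z(X^L)^0(k)$-equivariant isomorphism $\cS(X)^J\simeq\bigoplus_i\cS(X^L)^{J_i}$ for finitely many compact-open $J_i\subseteq L(k)$; hence $\cH_J(Z(X)^0)\subseteq\prod_i\cH_{J_i}(Z(X^L)^0)$, which by (ii) is finite over $\prod_i\cZ_{J_i}(L)$. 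Finally the map $\mathcal{B}(L)\to\mathcal{B}(G)$, sending a cuspidal support to the one it induces, is finite, so $\cZ(L)$ is a finite $\cZ(G)$-module, and since the Bernstein centre of $G$ acts on $\Ind_{P^-}^G(-)$ through the corresponding homomorphism $\cZ(G)\to\cZ(L)$, the subalgebra $\prod_i\cZ_{J_i}(L)$ of $\End(\cS(X)^J)$ is finite over $\cZ_J(G)$. As $\cZ_J(G)$ is Noetherian, $\cH_J(Z(X)^0)$ is finite over it, giving $\mathbf{(Z-fin)_X}$. The main obstacle is part (i): handling the disconnectedness and Galois-cohomology discrepancies between $Z(X)^0(k)$ and $\Norm_G(H)(k)/H(k)$ while keeping the contraction uniform; once (i) is available, (ii) and (iii) are essentially bookkeeping with standard structural results.
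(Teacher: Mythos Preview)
Your arguments for (ii) and (iii) are correct and match the paper's approach closely. For (ii) you target $\cZ_J(G)$ directly while the paper targets $\cH_J(G)$, but these are equivalent by Lemma~\ref{lem ZX}. For (iii) your Mackey decomposition makes explicit what the paper compresses into one line (``follows from (ii) and the fact that $\cZ(G)\to\cZ(L)$ is finite''); both rest on the same finiteness of the Harish--Chandra morphism between Bernstein centres. One small point: your justification $Z(X)^0=Z(X^L)^0$ via ``same canonical Cartan and spherical roots'' is correct, but it is also immediate from the definition $Z(X)=\Aut_G(X)$, since a $G$-automorphism of $X^L\times^{P^-}G$ is determined by, and determines, an $L$-automorphism of the fibre $X^L$.

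Part (i), however, has a genuine gap. You compute the stabiliser of $xJ$ in $Z(X)^0(k)$ as the image of $\Norm_G(H)(k)\cap gJg^{-1}$ and then try to control the conjugation $g_a(\cdot)g_a^{-1}$ via the assertion that ``lifts [of $Z(X)^0$] are centralised by the $A_X$-part of the $g_a$''. This conflates two different objects: $Z(X)^0$ sits inside $A_X$ only as an abstract quotient torus, while your $g_a$ is an element of $G(k)$ chosen so that $Hg_a=a$. There is no reason for $g_a$ to centralise any particular compact-open piece of $\Norm_G(H)(k)$, and in fact no canonical way to lift $A_X^-$ to $G(k)$ at all (the section $s:A_X\to Z(L_X)^0$ used elsewhere in the paper requires the wavefront hypothesis, which is not assumed here).

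The paper's ``reduction to the torus case'' bypasses lifting to $G$ entirely. The point is that $Z(X)^0$ acts on $X$ directly, and on the open piece $X_B\simeq A_X\times N_X$ it acts purely by translation on the $A_X$-factor (this follows from the local structure theorem together with $Z(X)^0$ commuting with the $P_X$-action). Thus for $a\in A_X^-\subset X_B$ and $z\in Z(X)^0(k)$ one has $z\cdot a=za\in A_X$, and $za\in a\cdot(kJk^{-1})$ is implied by $z$ lying in the image of $kJk^{-1}\cap L_X(k)$ under $L_X\twoheadrightarrow A_X$. As $k$ ranges over the compact $\mathcal K$, these images are bounded below by a fixed compact-open subgroup of $A_X(k)$, whose intersection with $Z(X)^0(k)$ gives $J_Z$. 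In short: work with the $Z(X)^0$-action on $X$, not with conjugation inside $G$.
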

	
	\begin{proof}
		(i) Thanks to the weak Cartan decomposition \eqref{weak Cartan}, we are immediately reduced to the case where $G=A$ is a torus and $X=A_X$ a quotient torus. In this case however, we have $Z(X)=A$ and we can just take $J_Z=J$.
		
		(ii) Let $J\subset G(k)$ be a compact-open subgroup, $J_Z$ be as in (i) and $J'_Z\subset Z(G)^0(k)$ be a compact-open subgroup mapping into $J_Z$. Then, the condition that $X$ is central implies that the Hecke algebra $\cH_{J_Z}(Z(X)^0)$ is finite over $\cH_{J'_Z}(Z(G)^0)$. As the latter is contained in $\cH_J(G)$, this immediately implies condition $\mathbf{(Z-fin)_X}$.
		
		(iii) is easier to see using the formulation of $\mathbf{(Z-fin)_X}$ in terms of the Berstein center and indeed follows from (ii) and the fact that the natural Harish-Chandra morphism $\cZ(G)\to \cZ(L)$ between Bernstein centers is finite. The last part of the lemma is then a consequence of the description of boundary degenerations of wavefront spherical varieties as parabolic inductions given in \S \ref{S wavefront case}.
	\end{proof}
\end{paragr}

\begin{paragr}[Finiteness of the center over the Hecke algebra: reduction to the supercuspidal blocks.]\label{S supercuspidal reduction}
	
	Let $P\subset G$ be a parabolic subgroup with unipotent radical $U_P$ and Levi quotient $L_P$. It is known \cite{Brionsph} that $X$ has finitely many $P$-orbits. Let $Y\subset X$ be one of them. Then, $Y$ is stable under the action of $Z(X)^0$, the quotient $Y_P=Y\sslash U_P$ is $L_P$-spherical\footnote{Indeed, if $B\subset P$ is a Borel subgroup, the finiteness of $B$-orbits in $X$ implies the finiteness of $B_L$-orbits in $Y_P$ where $B_L$ denotes the image of $B$ in $L_P$, a Borel subgroup of the latter.} and the resulting action of $Z(X)^0$ on $Y_P$ factors through a morphism $Z(X)^0\to Z(Y_P)^0$.
	
	A block $\mathfrak{s}\subset \mathcal{B}(G)$ is said supercuspidal if it represents the inertial equivalence class of a cuspidal support of the form $(G,\sigma)$ (with $\sigma$ a supercuspidal representation of $G(k)$).
	
	\begin{prop}
		Assume that for every parabolic subgroup $P\subset G$, every $P$-orbit $Y\subset X$ and every supercuspidal block $\mathfrak{s}_L\in \cB(L_P)$, $\cH_{\mathfrak{s}_L}(Z(Y_P))$ is finite over $\cZ_{\mathfrak{s}_L}(L_P)$. Then, $X$ satisfies condition $\mathbf{(Z-fin)}$.
	\end{prop}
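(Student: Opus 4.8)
The plan is to verify, via Lemma \ref{lem ZX}(iv), the finiteness of $\cH_{\mathfrak{s}}(Z(X)^0)$ over $\cZ_{\mathfrak{s}}(G)$ for each Bernstein block $\mathfrak{s}\subset\cB(G)$ separately, and to do so by pushing everything down to a supercuspidal block of a Levi via a Jacquet functor. So fix $\mathfrak{s}$, let $(L_P,\sigma)$ be its cuspidal datum and let $P=L_PU_P$ be a parabolic with Levi $L_P$. I would use the (normalized) Jacquet functor $r_P=(-)_{U_P}$, which has the following properties: it is exact; it is faithful on the block subcategory $\Rep_{\mathfrak{s}}(G)$ (every irreducible object there has nonzero $r_P$-image, its cuspidal support being carried by $L_P$, so $r_P$ kills no nonzero object of the block and hence no nonzero morphism between such objects); it intertwines the $\cZ(G)$-action with the $\cZ(L_P)$-action through the canonical ring map $\cZ_{\mathfrak{s}}(G)\to\cZ_{\mathfrak{s}_L}(L_P)$, where $\mathfrak{s}_L\subset\cB(L_P)$ is the finite set of supercuspidal blocks lying over $\mathfrak{s}$ and this map is a finite extension by Bernstein's theory \cite{Bercenter}; and it commutes with the $Z(X)^0(k)$-action, since $Z(X)^0$ centralises $U_P$. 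By faithfulness of $r_P$, the sought finiteness over $\cZ_{\mathfrak{s}}(G)$ follows once we know that the algebra acting on the $L_P(k)\times Z(X)^0(k)$-module $r_P\cS_{\mathfrak{s}}(X)$ generated by $\cZ_{\mathfrak{s}_L}(L_P)$ and the image of $\cH(Z(X)^0)$ is finite over $\cZ_{\mathfrak{s}_L}(L_P)$.

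Next I would stratify. As $X$ is spherical it has finitely many $P$-orbits \cite{Brionsph}; ordering them $Y_1,\dots,Y_N$ so that $Y_1\cup\dots\cup Y_i$ is closed for each $i$ gives a finite filtration of $\cS(X)$ by $P$-stable subspaces with graded pieces $\cS(Y_i)$, and this filtration is also $Z(X)^0(k)$-stable because the connected group $Z(X)^0$, commuting with $P$, fixes each of the finitely many $P$-orbits. Applying the exact functor $r_P$ gives a finite $L_P(k)\times Z(X)^0(k)$-stable filtration of $r_P\cS(X)$ with graded pieces $r_P\cS(Y_i)$. For $Y=Y_i=P_y\backslash P$ the map $Y\to Y_P=L_y\backslash L_P$ is a $U_P$-equivariant fibration whose fibres are single $U_P$-orbits, so the standard computation of Jacquet modules of compactly induced representations yields an $L_P(k)$-isomorphism $r_P\cS(Y)\simeq\cS(Y_P)\otimes\delta_Y$ for a character $\delta_Y$ of $L_P(k)$, compatibly with the $Z(X)^0(k)$-action through the morphism $Z(X)^0\to Z(Y_P)^0$ recalled in the statement. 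Projecting to a block $\mathfrak{s}_L$ of $L_P$, each graded piece becomes $\cS_{\mathfrak{s}'}(Y_P)\otimes\delta_Y$ with $\mathfrak{s}'=\mathfrak{s}_L\otimes\delta_Y^{-1}$ still supercuspidal; the hypothesis of the Proposition then gives that the algebra generated over $\cZ_{\mathfrak{s}'}(L_P)$ by the image of $\cH(Z(X)^0)$ — which factors through $\cH(Z(Y_P)^0)$ and so lands in $\cH_{\mathfrak{s}'}(Z(Y_P))$ — is finite over $\cZ_{\mathfrak{s}'}(L_P)$, hence over $\cZ_{\mathfrak{s}}(G)$.

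Finally I would assemble the graded statements into a statement about $r_P\cS_{\mathfrak{s}}(X)$ itself. All the operators at issue (those from $\cZ_{\mathfrak{s}}(G)$ and the image of $\cH(Z(X)^0)$) commute with one another and preserve the finite filtration; given, for such an operator $a$, a monic polynomial over the relevant $\cZ$ killing $a$ on each graded piece, one lifts the coefficients to $\cZ_{\mathfrak{s}}(G)$ and multiplies the resulting monic polynomials over the pieces to obtain a single monic polynomial over $\cZ_{\mathfrak{s}}(G)$ that kills $a$ on the whole filtered module. Thus every element of the commutative algebra $\cZ_{\mathfrak{s}}(G)\bigl[\text{image of }\cH(Z(X)^0)\bigr]$ acting on $r_P\cS_{\mathfrak{s}}(X)$ is integral over $\cZ_{\mathfrak{s}}(G)$; since $Z(X)^0$ is a torus and its action factors through a finitely generated abelian quotient (Lemma \ref{lem Z-fin}(i), applied at the compact-open level generating the block $\mathfrak{s}$), this algebra is generated by finitely many integral elements over the Noetherian finitely generated $\bC$-algebra $\cZ_{\mathfrak{s}}(G)$, hence is a finite $\cZ_{\mathfrak{s}}(G)$-module. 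Transporting back along the faithful functor $r_P$ yields that $\cH_{\mathfrak{s}}(Z(X)^0)$ is finite over $\cZ_{\mathfrak{s}}(G)$, which is condition Lemma \ref{lem ZX}(iv) and hence $\mathbf{(Z-fin)_X}$.

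I expect the main obstacle to be the geometric–analytic step identifying $r_P\cS(Y)$ with $\cS(Y_P)$ up to a character twist for a general, possibly non-generic, $P$-orbit $Y$: one has to confirm that the fibres of $Y\to Y_P$ are genuinely single $U_P$-orbits (so that no further filtration steps appear in the Jacquet module) and to keep careful track of both the twisting character $\delta_Y$ and of which supercuspidal block of $L_P$ each graded piece lands in. The devissage from graded pieces to the filtered module, and the three compatibility properties of $r_P$ (faithfulness on the block, $\cZ(G)$-linearity, $Z(X)^0(k)$-linearity), are expected to be routine, but the Jacquet-module bookkeeping is where the real care is needed.
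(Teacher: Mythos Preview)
Your proof is correct and follows essentially the paper's approach: reduce to one Bernstein block, pass to a Jacquet module of $\cS(X)$, filter by the finitely many $P$-orbits, identify each graded piece with $\cS(Y_P)\otimes\chi_Y$, and invoke the hypothesis. The paper packages the reduction slightly differently, using a progenerator $\Pi_{\mathfrak{s}}=I_{P^-}^G(\sigma_B)$ of the block (with $B=\bC[L(k)/L(k)^1]$) together with Bernstein's second adjunction $\Hom_G(\Pi_{\mathfrak{s}},\cS(X))\simeq\Hom_L(\sigma_B,J_P\cS(X))$, and then checks finiteness of the $Z(X)^0(k)$-action over $B$; your faithful-functor-plus-integrality d\'evissage is an equivalent way to assemble the graded information, and in fact spells out the step the paper abbreviates as ``the proposition follows''.
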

	
	\begin{proof}
		Let $\mathfrak{s}\subset \cB(G)$ be a Bernstein block corresponding to the inertial equivalence class of a pair $(L,\sigma)$ where $L\subset G$ is a Levi subgroup and $\sigma$ a supercuspidal representation of $L(k)$. Set $B=\bC[L(k)/L(k)^1]$, where $L(k)^1\subset L(k)$ is the subgroup of $\ell\in L(k)$ such that $\lvert \chi(\ell)\rvert_k=1$ for every algebraic character $\chi\in X^*(L)$, and $\sigma_B=\sigma\otimes B$ equipped with the smooth $L(k)$-action given by $\ell\cdot (v\otimes f)=\sigma(\ell)v\otimes \ell f$. Let $P=LU_P$ be a parabolic subgroup with Levi factor $L$ and $P^-=LU_P^-$ be the opposite parabolic. Then, the parabolic induction $\Pi_{\mathfrak{s}}=I_{P^-}^G(\sigma_B)$ is a progenerator of the block of smooth representations corresponding to $\mathfrak{s}$ \cite{Bercoursepadic}. In particular, $\cZ_{\mathfrak{s}}(G)$ can be identified with the center of the algebra $A_{\mathfrak{s}}=\End_G(\Pi_{\mathfrak{s}})$.  We have a natural inclusion $\cZ_{\mathfrak{s}}(G)\subset B\subset A_{\mathfrak{s}}$ and $A_{\mathfrak{s}}$ is finite over $\cZ_{\mathfrak{s}}(G)$.
		
		Thus, it suffices to check that the action of $Z(X)(k)$ on the intertwining space $\Hom_G(\Pi_{\mathfrak{s}},\cS(X))$ is finite over that of $B$. By Bernstein's second adjunction theorem,
		$$\displaystyle \Hom_G(\Pi_{\mathfrak{s}},\cS(X))\simeq \Hom_L(\sigma_B,J_P\cS(X))$$
		where $J_P$ denotes the normalized Jacquet functor. Moreover, $J_P\cS(X)$ admits a filtration whose associated graded pieces are isomorphic to $J_P\cS(Y)$, for $Y\subset X$ a $P$-orbit, and the latter is itself isomorphic (as an $L(k)$-representation) to $\cS(Y_P)\otimes \chi_Y$ for some unramified character $\chi_Y: L(k)\to \mathbb{R}_{>0}$. Furthermore, this filtration is readily seen to be $Z(X)^0(k)$-stable with $Z(X)^0(k)$ acting on $\cS(Y_P)\otimes \chi_Y$ through the morphism $Z(X)^0\to Z(Y_P)^0$. Finally, the assumption entails that the $Z(Y_P)^0(k)$-action on $\Hom_L(\sigma_B,\cS(Y_P)\otimes \chi_Y)$ is finite over $B$ and the proposition follows.
	\end{proof}
	
	Thus, by the above proposition, showing that condition $\mathbf{(Z-fin)}$ holds for every spherical variety amounts to proving that for every connected reductive group $G$, every $G$-spherical variety $X$ and every supercuspidal block $\mathfrak{s}\subset \cB(G)$, $\cH_{\mathfrak{s}}(Z(X)^0)$ is finite over $\cZ_{\mathfrak{s}}(G)$. This would actually follow at once from the following stronger conjecture which is in turn a consequence of Conjecture \ref{conj1 SV}\footnote{More precisely, this follows from Conjecture \ref{conj1 SV} and the following two facts:
		-if $X$ is not central, then the morphism ${}^L \iota_X$ factors through a Levi subgroup of ${}^L G$, so that all the $X$-distinguished Arthur packets should be parabolically induced,
		-if a supercuspidal representation is $X$-distinguished it necessarily embeds in $L^2(X)$.}
	
	\begin{conj}\label{conj supercuspidal}
		If $X$ is not central then there is no $X$-distinguished supercuspidal representation of $G(k)$ i.e.\ for every supercuspidal irreducible representation $\pi$ of $G(k)$, we have
		$$\displaystyle \Hom_{G(k)}(\cS(X),\pi)=0.$$
	\end{conj}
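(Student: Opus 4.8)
The plan is to isolate the two mechanisms that force $\Hom_{G(k)}(\cS(X),\pi)=0$ for a supercuspidal $\pi$: on the geometric side, parabolic induction from a proper parabolic, and on the dual side, factorization of the relevant Arthur parameters through a proper Levi of $\widehat{G}$. The first mechanism already settles the statement unconditionally whenever $X$ is parabolically induced from a proper parabolic --- in particular whenever $X$ is wavefront and not central --- so that the genuine content of the conjecture lies in the remaining (necessarily non-wavefront) cases, where one is thrown back on Conjecture \ref{conj1 SV} or, in any given example, on the unfolding technique of \S \ref{S unfolding}.

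\textbf{The parabolically induced case.}
Suppose $X\simeq X^L\times^{P^-}G$ for a \emph{proper} parabolic $P=LU\subsetneq G$ and an $L$-spherical variety $X^L$. Since $P^-\backslash G$ is projective, $\cS(X)$ is $G$-equivariantly a (full, equivalently compactly supported) parabolic induction $I_{P^-}^G(\cS(X^L)\otimes\delta)$ for the appropriate twist $\delta$, and Frobenius reciprocity gives $\Hom_G(\cS(X),\pi)\simeq\Hom_L(\cS(X^L)\otimes\delta,\ J_{P^-}\pi)$; for $\pi$ supercuspidal and $P^-$ proper, $J_{P^-}\pi=0$, so the space vanishes. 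Now if $X$ is wavefront and not central, the description of boundary degenerations in \S \ref{S wavefront case} applied with $\Theta=\Delta_X$ exhibits $X$ itself as $X^L_{\Delta_X}\times^{P_{\Delta_X}^-}G$, with $P_{\Delta_X}$ the standard parabolic attached to $\Supp(\Delta_X)\cup\Delta(X)$ and $Z(L_{\Delta_X})^0\to Z(X)^0$ surjective; were $P_{\Delta_X}$ equal to $G$ this surjectivity would read $Z(G)^0\to Z(X)^0$, i.e.\ $X$ central, a contradiction. Hence $P_{\Delta_X}$ is proper and the preceding paragraph applies. The same argument covers every strongly factorizable $X$ and, more generally, every $X$ parabolically induced from a proper parabolic.

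\textbf{The general case, conditional on Conjecture \ref{conj1 SV}.}
First one shows that an $X$-distinguished supercuspidal $\pi$ must occur \emph{discretely} in $L^2$ of the relevant pure inner form of $X$. Pick $x\in X(k)$ with stabilizer $H$ contributing to the distinction; Frobenius reciprocity produces a nonzero $G$-map $T\colon\pi^\vee\to C^\infty(X)$ with image on the orbit $xG(k)\simeq H(k)\backslash G(k)$. For $\xi$ a $J$-fixed vector, $\phi=T(\xi)$, and after smoothing the (a priori non-smooth) functional $\mathrm{ev}_x\circ T$ by $e_J$, the function $g\mapsto\phi(xg)$ is a genuine matrix coefficient of $\pi^\vee$, hence compactly supported modulo $Z(G)(k)$ by Harish-Chandra; as $X(k)$ has finitely many $G(k)$-orbits, $\phi$ is compactly supported modulo $Z(G)(k)$ on all of $X(k)$, which (after the routine normalization of the central character, using that distinction forces $\omega_\pi$ to be trivial on $Z(G)(k)\cap H(k)$) places $\phi$ in $L^2(X)$; thus $\pi^\vee$, equivalently $\pi$, embeds discretely in $L^2(X)$. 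Next, when $X$ is not central the morphism $\iota_X\colon\widehat{G}_X\times\SL_2(\bC)\to\widehat{G}$ of Theorem \ref{thm KS} factors through a proper Levi $\widehat{L}\subsetneq\widehat{G}$: this is read off the construction of \S \ref{S dual group}, where $\iota_X(\widehat{A}_X)\subseteq Z(\widehat{L}_X)$ and the root subspaces land in the span of the edges $\gamma_i^\vee$, so that the failure of $Z(G)^0\to Z(X)^0$ to be rationally surjective produces a non-central subtorus of $\widehat{A}$ centralizing $\mathrm{im}(\iota_X)$. Finally, Conjecture \ref{conj1 SV} (its discrete part suffices) forces $\pi$ into an Arthur packet $\Pi^G(\psi)$ with $\psi={}^L\iota_X\circ(\phi\times\mathrm{Id})$ factoring through ${}^LL$; such a packet consists of subquotients of proper parabolic inductions from $L(k)$, hence contains no supercuspidal --- a contradiction.

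\textbf{Main obstacle.}
The decisive point is that Conjecture \ref{conj1 SV} is itself open, so the argument above is conditional, and an unconditional proof in the remaining non-wavefront cases seems to demand running the unfolding method of \S \ref{S unfolding} example by example, with no uniform input in sight; this is exactly the circle of finiteness-of-the-center questions of \S \ref{S condition Zfin}, and is where I expect the real work to lie. The secondary technical points --- the central-character normalization in the discreteness step, the precise identification of $\Cent_{\widehat{G}}(\mathrm{im}\,\iota_X)^0$ with the dual of $Z(X)^0/Z(G)^0$, and the compatibility of Arthur packets with parabolic induction --- are routine but must be checked with care.
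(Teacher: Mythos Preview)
The statement is a \emph{conjecture} in the paper, not a theorem, so there is no proof to compare against. What the paper does offer is two reductions: a footnote deriving the conjecture from Conjecture~\ref{conj1 SV} (via the same two facts you use --- that non-central $X$ forces ${}^L\iota_X$ to factor through a proper Levi of ${}^LG$, and that an $X$-distinguished supercuspidal embeds in $L^2(X)$), and then a separate, unconditional argument immediately following the conjecture showing that the statement is already implied by the seemingly much weaker hypothesis that $m_X(\pi)<\infty$ for supercuspidal $\pi$.

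Your conditional argument via Conjecture~\ref{conj1 SV} matches the paper's footnote essentially verbatim, and your unconditional treatment of the parabolically induced (hence wavefront non-central) case is correct and is a useful observation not spelled out in the paper. What you are missing is the paper's second reduction, which is both slicker and of independent interest: writing $\Hom_G(\cS(X),\pi)=\Hom_G(\cS(X,\omega),\pi)$ for $\omega$ the central character, injectivity of $\pi$ in the category of smooth representations with central character $\omega$ turns a nonzero element into a nonzero map $\pi\to\cS(X,\omega)$; but $\Hom_G(\pi,\cS(X,\omega))$ carries a natural $Z(X)^0(k)$-action with no nonzero finite-dimensional subrepresentation when $Z(X)^0(k)/Z(G)^0(k)$ is noncompact, so finiteness of $m_X(\pi^\vee)$ already forces vanishing. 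This bypasses both the Plancherel conjecture and the dual-group analysis entirely, and is worth knowing.
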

	
	Let us also remark that, under the assumption that $X$ is not central, this conjecture is also a consequence of the seemingly weaker property that for every irreducible supercuspidal representation $\pi$ of $G(k)$, the $X$-multiplicity
	$$\displaystyle m_X(\pi)=\dim \Hom_G(\cS(X),\pi)$$
	is finite. Indeed, denoting by $\omega: Z(G)(k)\to \bC^\times$ the central character of $\pi$, we have the equality
	$$\displaystyle \Hom_G(\cS(X),\pi)=\Hom_G(\cS(X,\omega),\pi)$$
	where $\cS(X,\omega)$ stands for the space of smooth functions $f: X(k)\to \bC$ satisfying $f(zx)=\omega(z)f(x)$ for every $z\in Z(G)^0(k)$ and whose support is compact modulo $Z(G)^0(k)$. By the injectivity of $\pi$ in the category of smooth representations with central character $\omega$, if this Hom space is nonzero then so is $\Hom_G(\pi,\cS(X,\omega))$. However, the latter space embeds in $\Hom_G(\pi,C^\infty(X,\omega))=\Hom_G(\cS(X),\pi^\vee)$. Therefore, if $m_X(\pi^\vee)$ is finite dimensional then so is $\Hom_G(\pi,\cS(X,\omega))$ but the latter admits a natural action of $Z(X)^0(k)$ (coming from the translation action on $\cS(X,\omega)$) and there is no non-trivial $Z(X)^0(k)$-stable finite dimensional subspace of $\cS(X,\omega)$ (since by assumption the quotient $Z(X)^0(k)/Z(G)^0(k)$ is non compact) which forces $\Hom_G(\pi,\cS(X,\omega))=0$ and therefore $\Hom_G(\cS(X),\pi)=0$ too.
\end{paragr}

\begin{paragr}[The discrete series conjecture.]\label{S DSC}
	For every unitary character $\chi: Z(X)(k)\to \bC^\times$, we let $L^2(X,\chi)$\index{$L^2(X,\chi)$} be the Hilbert space of functions $f: X(k)\to \bC$ satisfying $\mathcal{L}_a f=\chi(a) f$ for every $a\in Z(X)(k)$ and such that $\lvert f\rvert^2$ is integrable on $Z(X)(k)\backslash X(k)$. By the elementary theory of Mellin transform, we have a $Z(X)(k)\times G(k)$-equivariant direct integral decomposition (in the sense of \S \ref{sect Plancherel})
	$$\displaystyle L^2(X)=\int_{\Unit(Z(X))}^\oplus L^2(X,\chi) d\chi.$$
	Let $L^2_{\disc}(X,\chi)$\index{$L^2_{\disc}(X,\chi)$} be the discrete part of $L^2(X,\chi)$ i.e.\ the (completed) direct sum of all its irreducible subrepresentations. As explained in \cite[Sect. 9.3]{SV}, it is a priori not clear how the discrete subspaces $L^2_{\disc}(X,\chi)$ vary with $\chi$. Actually, even the property that these subspaces vary ``measurably'' with $\chi$  is not clear and this basic fact was actually proved in \cite[Proposition 9.3.3]{SV} using the theory of smooth asymptotic maps which we will review in the next section. (And whose development eventually depends on the property $\mathbf{(Z-fin)}$ for all the boundary degenerations of $X$ as we will explain there.) For the sake of this paragraph, we will assume that this basic fact holds which allows to introduce the relatively discrete subspace
	$$\displaystyle L^2_{\disc}(X)=\int_{\Unit(Z(X))}^\oplus L^2_{\disc}(X,\chi) d\chi$$\index{$L^2_{\disc}(X)$}
	of $L^2(X)$.
	
	The situation should be compared to the group case where such issue does not appear as we may always twist irreducible representations by the torus of unramified characters of {\it the group} and the latter maps surjectively to the torus of unramified characters of {\it the center}. Building on this example, it is natural to introduce the notion of a {\it factorizable} spherical variety $X$ by asking that the kernel of the action of $Z(X)^0$ on the ``abelianization'' $X_{\mathrm{ab}}:=X\sslash G_{\der}$\index{$X_{ab}$} (where $G_{\der}$ denotes the derived subgroup of $G$) be anisotropic\footnote{Note that this in particular entails that the cokernel of $Z(G)^0\to Z(X)^0$ is anisotropic.}. Indeed, if $X$ is factorizable, fixing a base-point in $X_{ab}$ so that it can be identified with a torus quotient of $Z(X)$, multiplication by the unramified characters of $X_{\mathrm{ab}}(k)$ allows to vary central characters over full connected components of $\Unit(Z(X))$ (since the torus of unramified characters of $X_{\mathrm{ab}}(k)$ surjects onto that of $Z(X)(k)$). All symmetric varieties are factorizable but there are many examples of spherical varieties that are not e.g. $\GL_1\backslash \GL_2$. The Discrete Series Conjecture of \cite[\S 9.4.6]{SV} states roughly that the discrete subspaces $L^2_{\disc}(X,\chi)$ vary in the same way as for varieties parabolically induced from a factorizable one. More precisely, here is a slightly modified form of the conjecture:
	
	\begin{itemize}
		\item $\mathbf{(DSC')_X}$ There exist a parabolic subgroup $P=LU$ of $G$, a subgroup $P(k)'\subset P(k)$ containing $P(k)^1$ (in particular containing the unipotent radical $U(k)$)\footnote{Recall that $P(k)^1$ denotes the intersection of the kernels of all unramified characters $P(k)\to \mathbb{C}^\times$.}, a unitary representation $\Sigma$ of $L(k)$ that is discrete (i.e.\ a direct sum of countably many irreducible representations), a surjective morphism $Z(L)^0\to Z(X)^0$ and an isomorphism of unitary representations
		$$\displaystyle I_{P(k)'}^{G(k)}(\Sigma)\simeq L^2_{\disc}(X)$$
		which is $Z(L)^0$-equivariant.
	\end{itemize}
	
	As explained in \cite[Proposition 9.4.8]{SV}, this property of a spherical variety $X$ has a direct consequence to the set of real exponents of the set of (relative) discrete series which in turn implies the existence of certain spectral gaps that are crucial for the construction of Bernstein maps (Section \ref{Sect Bernstein maps}). To be more specific, let us denote by $\cA^*=X^*(A)_{\overline{k}}\otimes \bR$ be the set of real exponents, where we recall that $A$ stands for the universal Cartan of $G$ (taken over an algebraic closure if $G$ is not split for simplicity). Then, to every infinitesimal character $\chi: \cZ(G)\to \bC$ corresponding to a point $z_\chi\in \cB(G)$, we can associate a $W_{\overline{k}}$-orbit\index{$W_{\overline{k}}$} $\Re(\chi)\in \cA^*/W_{\overline{k}}$\index{$\Re(\chi)$} (where $W_{\overline{k}}$ denotes the absolute Weyl group of $G$) as follows. Take a cuspidal pair $(L,\sigma)$ representing $z_\chi$. Then, the absolute value of the central character of $\sigma$, defines an unramified character $\lvert \omega_\sigma\rvert: Z(L)^0(k)\to \bR_{>0}$ which can then be lifted, uniquely, to a character $L(k)\to \bR_{>0}$. The restriction of this lifting to any maximal torus inside $L$ then defines a point of $\cA^*$ up to the action of $W_{\overline{k}}$, it is $\Re(\chi)$. Let $\Pi_{\disc}(X)$ be the set of all isomorphism classes of irreducible representations of $G(k)$ appearing in $L^2_{\disc}(X,\chi)$ for some $\chi\in \Unit(Z(X))$ and, for $J\subset G(k)$ a compact-open subgroup, let $\Pi_{\disc}(X)^J$ be the subset of $\pi\in \Pi_{\disc}(X)$ such that $\pi^J\neq 0$. Every $\pi\in \Irr(G)$ defines an infinitesimal character $\chi_\pi: \cZ(G)\to \bC$ and with these definitions, we can now state the consequence of $\mathbf{(DSC')}$ to the space of real exponents of $\Pi_{\disc}(X)$ on which the construction of Bernstein maps eventually rests as follows:
	
	\begin{itemize}
		\item $\mathbf{(SG)_X}$ For every compact-open sugroup $J\subseteq G(k)$, the image of the map
		$$\displaystyle \pi\in \Pi_{\disc}(X)^J\mapsto \Re(\chi_\pi)\in \cA^*/W_{\overline{k}}$$
		is finite.
	\end{itemize}
	
	As a final remark, let us note that condition $\mathbf{(SG)_X}$ is automatically satisfied when $X$ is tempered (because the set $\Temp(G)^J$ has finitely many real exponents).
	
\end{paragr}

\begin{paragr}[Unfolding.]\label{S unfolding}
	
	The {\it unfolding} technique of Sakellaridis and Venkatesh \cite[\S 9.5]{SV} is a useful tool to check that a given spherical variety $X$ satisfies condition $\mathbf{(Z-fin)}$ or the discrete series conjecture $\mathbf{(DSC')}$ whenever it does not obviously do so (e.g. if $X$ is not parabolically induced from a central or factorizable variety). We will not give a systematic presentation of this technique here, referring the reader to {\em op.\ cit.}\ for a thorough theoretical description of this process, a local counterpart to the well-known unfolding method of global period integrals, and in particular how it can help to verify the Discrete Series Conjecture. (Indeed, this was the emphasis of {\em op.\ cit.}) In a nutshell, unfolding is a way to produce isomorphisms of unitary representations
	$$\displaystyle L^2(X)\simeq L^2(Y)$$
	where $X$ is the spherical variety of interest whereas, in general, $Y$ stands for the Whittaker induction (in the sense of \S \ref{S Whittaker induction}) of some auxilliary spherical variety. These isomorphisms are obtained by a series of partial Fourier transforms and are typically $Z(X)^0$-equivariant via an isomorphism of centers $Z(X)^0\simeq Z(Y)^0$ (where the latter has to be suitably interpreted by the normalizer of a subgroup with a character). The main point is then to produce such an unfolding where the action of $Z(Y)^0$ on $L^2(Y)$ is easier to analyze e.g. where $Y$ is parabolically induced from a central or factorizable variety (or Whittaker induced of such). There is no known example of a spherical variety that does not unfold to the parabolic induction of a factorizable one and it would be very desirable to show that it is always the case.

	Let us illutrate the unfolding method on the first example of a non-wavefront spherical variety (and not parabolically induced from such) that is $X=\GL_2\backslash \SO_5$ (where each group represents its split form). In this case, writing $\Delta=\{\alpha,\beta \}$ with $\alpha$ (resp. $\beta$) the short (resp. long) root, we have $\Delta_X=\{\alpha,\alpha+\beta \}$ and there is exactly one boundary degeneration that is not parabolically induced from a factorizable variety: it is $X_\alpha$. More precisely, let $P\subset \SO_5$ be a parabolic subgroup stabilizing an isotropic plane, $U$ be its unipotent radical and $L$ a Levi factor that we identify with $\GL_2$ (via the choice of a basis of the isotropic plane). Then, the boundary degeneration $X_\alpha$ is isomorphic to $\SL_2\ltimes Z(U)\backslash \SO(5)$ where $Z(U)$ stands for the center of $U$ whereas $Z(X_\alpha)=\GL_2/\SL_2\simeq \mathbb{G}_m$. Note that $U$ is a Heisenberg group with $Z(U)$ one-dimensional and that the quotient $U/Z(U)$ is isomorphic to the standard representation of $L=\GL(2)$. Fixing such isomorphism $U(k)/Z(U)(k)\simeq k^2$ with the standard representation, the unfolding technique can be applied in this situation to yield an isomorphism of unitary representations
	$$\displaystyle \mathcal{U}: L^2(X_\alpha)\simeq L^2(V(k)\ltimes U(k)\backslash \SO_5(k),\psi_U)$$
	where $\psi_U$ is the character $u=\begin{pmatrix} x \\ y \end{pmatrix}\in U(k)/Z(U)(k)\mapsto \psi(y)$, for some non-trivial character $\psi: F\to \bC^\times$, and $V=\begin{pmatrix} 1 & \star \\ 0 & 1 \end{pmatrix}$ is the standard unipotent subgroup of $L=\GL(2)$ (which is precisely the stabilizer of $\psi_N$ in $\SL_2$). On the dense subspace $\cS(X_\alpha)\subset L^2(X_\alpha)$ the isomorphism is given by
	$$\displaystyle f\in \cS(X_\alpha)\mapsto (\mathcal{U}f)(g)=\int_{U(k)/ Z(U)(k)} f(ug) \psi_U(u)^{-1}du.$$
	Moreover, the right-hand side of the above isomorphism is readily seen to be induced from a parabolic $Q$ with Levi factor $M=\GL_1\times \SO_3$. More precisely:
	$$\displaystyle L^2(V(k)\ltimes U(k)\backslash \SO_5(k),\psi_N)\simeq I_Q^{\SO_5}L^2(U_M(k)\backslash M(k),\psi_{U_M})$$
	where $U_M=U\cap M$ is a maximal unipotent subgroup of $M$, $\psi_{U_M}$ a nondegenerate character on it (i.e.\ the inducing representation is the $L^2$ Whittaker model of $M$) and the induction functor is implicitely taken to be unitary induction. Moreover, as is readily checked, by the above chain of isomorphisms, the action of $Z(X_\alpha)$ is transported to the natural action of $Z(M)$ on the last induced representation by the natural identification $Z(X_\alpha)=Z(M)$. Then, by the very same argument as for Lemma \ref{lem Z-fin} (iii), for every compact-open subgroup $J\subset G(k)$ the action of $Z(M)(k)$ on $I_Q^{\SO_5}(L^2(U_M(k)\backslash M(k),\psi_{U_M}))^J$ is finite over the Bernstein center $\cZ(G)$, and therefore so does the action of $Z(X)(k)$ on $\cS(X_\alpha)^J\subset L^2(X_\alpha)^J$ i.e.\ we have $\mathbf{(Z-fin)}_{X_\alpha}$. Similarly, the ``Whittaker variety'' $Y=(U_M\backslash M,\psi_{U_M})$ is factorizable in a suitable sense from which we can readily deduce that the Discrete Series Conjecture $\mathbf{(DSC')}$ holds for $X_\alpha$ as well.
	
	The above discussion implies that all the boundary degenerations of $X=\GL_2\backslash \SO_5$ satisfy both $\mathbf{(Z-fin)}$ and $\mathbf{(DSC')}$. Therefore, as will be explained in the next sections, the theory of smooth asymptotics and Bernstein maps can be applied to $X$.
\end{paragr}

\subsection{Smooth asymptotics}\label{Sect smooth asym}

In this subsection and the next we review (part of) the theory developed by Sakellaridis-Venkatesh in \cite{SV} broadly relating harmonic analysis on $X$ to that on its boundary degenerations. It can roughly be divided into two parts, one concerning the smooth theory and one on the $L^2$-theory. In both cases, they are best stated in terms of certain {\it asymptotic maps} between function spaces. In the smooth setting these are simply refered to as {\it smooth asymptotics} whereas in the unitary or $L^2$ setting these were named {\it Bernstein maps} by Sakellaridis and Venkatesh.
	
All of these results have been obtained under the assumptions that $k$ is non-Archimedean (still of characteristic zero) and that the group $G$ is split and these will be running assumptions until Section \ref{Sect most cont spectrum}. As of now, we also need to impose some restrictions on the spherical variety $X$ (the most stringent one being that it is wavefront) that will be described below in more details. However, we certainly expect that a similar story (suitably interpreted) can be told without those restrictions and since the appearance of the book \cite{SV} the theory of asymptotic maps has been extended to $p$-adic symmetric varieties (by the work of Delorme \cite{Delpadicsymm}) whereas Berstein maps have been constructed for all real spherical varieties (see \cite{DKKS}). For arbitrary $p$-adic spherical varieties, the main missing ingredient for the extension of Sakellaridis-Venkatesh theory was a lack of a suitable theory of compactifications over the base field (when the group $G$ is nonsplit) but this has recently been developed by Knop and Kr\"otz \cite{KKRGA}. On the other hand, finding the correct analog of smooth asymptotics for Archimedean fields, even in the group case, looks like an outstanding question (see however \cite{WangSL2R} for some hints in that direction).
	
For every compact-open subgroup $J\subseteq G(k)$, we will write $\cH_J(G)$\index{$\cH_J(G)$} for the Hecke algebra of compactly supported bi-$J$-invariant measures on $G(k)$ whose multiplication is given by convolution $\star$. 

\vspace{2mm}

\begin{paragr}[Neighborhoods of $\infty_\Theta$.]\label{S neighbds inftytheta}
	Recall that to every subset $\Theta\subset \Delta_X$, we have associated in Section \ref{sect boundary degenerations} a boundary degeneration $X_\Theta$ which is supposed to model the geometry at infinity of $X$ in certain directions.
	
	Let us fix from now on a smooth complete toroidal embedding $X\hookrightarrow \overline{X}$ as in \S \ref{S toroidal embeddings}. Recall that we have associated to every $\Theta\subset \Delta_X$ a closed $G$-stable subvariety $\infty_\Theta\subset \overline{X}$. In the following, we will implicitly identify $\infty_\Theta$ with its set of $k$-points.
	
	By definition, a {\it neighborhood of $\infty_\Theta$} in $X(k)$ is the intersection of the latter with a neighborhood of $\infty_\Theta$ in $\overline{X}(k)$. It can be shown that this notion does not depend on the chosen toroidal embedding. (Essentially because any morphism $\overline{X}'\to \overline{X}$ of toroidal embeddings sends $\Theta$-infinity $\infty_\Theta'\subset \overline{X}'$ for $\overline{X}'$ onto $\infty_\Theta\subset \overline{X}$ and any pair of toroidal embeddings can be covered by a third one.)
	
	Following \S \ref{S ident normal bundles}, for each subset $\Theta\subseteq \Delta_X$, $\overline{X}$ gives rise to smooth (but not complete in general) toroidal embedding $X_\Theta\hookrightarrow \overline{X}_\Theta$ with $\overline{X}_\Theta$ containing a natural copy of $\infty_\Theta$. We define similarly a {\it neighborhood of $\infty_\Theta$ in $X_\Theta(k)$} to be the intersection of a (genuine) neighborhood of $\infty_\Theta$ in $\overline{X}_\Theta(k)$ with $X_\Theta(k)$. For any subset $\Omega\subset \Theta$, we have $\infty_\Omega\subset \infty_\Theta$ and we can define more generally the notion of {\it neighborhood of $\infty_\Omega$ in $X_\Theta(k)$} in a similar way. We should however warn the reader that this definition is not equivalent to that of neighborhoods of $\infty_\Omega$ in $X(k)$ as defined above by simply replacing $X$ by $X_\Theta$. (We recall from \S \ref{S invts Xtheta} that we have $\Delta_{X_\Theta}=\Theta$.) Indeed, we can always find a complete toroidal embedding $\overline{X}_\Theta\hookrightarrow \overline{\overline{X}}_\Theta$ and the corresponding closed subvariety $\infty^{X_\Theta}_\Omega\subset \overline{\overline{X}}_\Theta$ of ``$\Omega$-infinity'' is usually strictly larger than $\infty_\Omega$. When we think that these two notions may enter in conflict (e.g. when arguing by induction assuming that some result holds for all the proper boundary degenerations) we will distinguish the two notions of $\Omega$-infinity with superscripts as follows: $\infty_\Omega^X$ and $\infty_{\Omega}^{X_\Theta}$.
	
	Recall that there is a left action of $A_{X,\Theta}$ on $X_\Theta$ coming from the identification $Z(X_\Theta)^0=A_{X,\Theta}$. Set
	$$\displaystyle A_{X,\Theta}^-=\{a\in A_{X,\Theta}(k)\mid \lvert \alpha(a)\rvert\geq 1, \; \forall \alpha\in \Delta_X\setminus \Theta \}.$$\index{$A_{X,\Theta}^-$}
	Then, we have the following important properties relating neighborhoods of infinity in $X_\Theta(k)$ and the action of the monoid $A_{X,\Theta}^-$:
	\begin{itemize}
		\item For every neighborhood $N_\Theta\subseteq X_\Theta(k)$ of $\infty_\Theta$ and $x\in X_\Theta(k)$, we can find $a\in A_{X,\Theta}(k)$ such that $A_{X,\Theta}^- ax\subseteq N_\Theta$;
		
		\item Let $N_\Omega\subseteq X_\Theta(k)$ be neighborhoods of $\infty_\Omega$ for every proper subset $\Omega\subsetneq \Theta$. Then, for every sufficiently small neighborhood $N_\Theta\subseteq X_\Theta(k)$ of $\infty_\Theta$, the difference
		$$\displaystyle N_\Theta\setminus \bigcup_{\Omega\subsetneq \Theta} N_\Omega$$
		is relatively compact modulo $A_{X,\Theta}^-$ (i.e.\ there exists a compact subset $C\subseteq X_\Theta(k)$ such that the difference is included in $A_{X,\Theta}^-C$).
	\end{itemize}
\end{paragr}

\begin{paragr}[Description using a weak Cartan decomposition.]\label{S weak Cartan}
	There is another alternative, perhaps more concrete, description of neighborhoods of $\infty_\Theta$ which rests on the choice of a weak Cartan decomposition. Indeed, fixing a compact subset $\mathcal{K}\subset G(k)$ such that \eqref{weak Cartan} holds, we can describe neighborhoods of $\infty_\Theta$ as the subsets of $X(k)$ containing $A_X^-(\Theta,\geq C)\mathcal{K}$ for some $C\geq 1$, where we have set 
	$$\displaystyle A_X^-(\Theta,\geq C):=\{a\in A_X^-\mid \lvert \alpha(a)\rvert\geq C,\; \forall \alpha\in \Delta_X\setminus \Theta \}.$$
	
	For $\Omega\subseteq \Theta$, we can similarly describe neighborhoods of $\infty_\Omega$ in $X_\Theta(k)$ as follows. First, we pick a weak Cartan decomposition for $X_\Theta(k)$, it takes the form $X_\Theta(k)=A_X^{\Theta,-}\mathcal{K}$ where $\mathcal{K}\subset G(k)$ is a suitable compact subset, $A_X^{\Theta,-}=\{a\in A_X(k)\mid \lvert \alpha(a)\rvert\geq 1, \; \forall \alpha\in \Theta \}$ and we have chosen an embedding $A_X\subset X_\Theta$ as in \S \ref{S parabolic type}. Then, neighborhoods of $\infty_\Omega$ in $X_\Theta(k)$ are the subsets containing $A_X^-(\Omega,\geq C)\mathcal{K}$ for some $C\geq 1$.
\end{paragr}

\begin{paragr}[Exponential maps.]\label{S exponential maps}
	The fundamental device to relate harmonic analysis on $X(k)$ to its boundary degenerations are {\it exponential maps}. More precisely, for any $\Theta\subseteq \Delta_X$, a {\em $(X,\Theta)$-exponential map}, or simply a {\em $\Theta$-exponential map} if the variety $X$ is clear from the context, in the sense of Sakellaridis-Venkatesh \cite[\S 4.3]{SV} is a bijection
	$$\displaystyle \exp_\Theta: N_\Theta\simeq N'_{\Theta}$$\index{$\exp_\Theta$}
	where $N_\Theta\subset \overline{X}(k)$ and $N'_{\Theta}\subset \overline{X}_\Theta(k)$ are open neighborhoods of $\infty_\Theta$ that extends (necessarily uniquely) to a $F$-analytic isomorphism 
	$$\displaystyle \overline{\exp}_\Theta: \overline{N}_\Theta\simeq \overline{N}'_\Theta$$\index{$\overline{\exp}_\Theta$}
	between the closures of $N_\Theta$, $N'_\Theta$ in $\overline{X}(k)$ and $\overline{X}_{\Theta}(k)$ respectively, satisfying the following conditions:
	\begin{itemize}
		\item With the identification of \S \ref{S ident normal bundles}, the restriction of $\overline{\exp}_\Theta$ to $\infty_\Theta$ is the identity and for every $G$-orbit closure $Z\subset \infty_\Theta$, the normal derivatives of $\overline{\exp}_\Theta$ induce the canonical identification
		$$\displaystyle N_{Z(k)}(\overline{X}(k))\simeq N_{Z(k)}(\overline{X}_\Theta(k))$$
		of \eqref{can iden normal bundles}.
		
		\item The map $\overline{\exp}_\Theta$ preserves the natural correspondence between orbits that is to say: if $\cF$ denotes the fan giving rise to the toroidal embedding $\overline{X}$, for every cone $C\in \cF$, $\exp_\Theta$ sends $Z_C(k)\cap N_\Theta$ onto $Z'_C(k)\cap N'_\Theta$ where we have denoted by $Z_C\subset \overline{X}$, $Z'_C\subset \overline{X}_\Theta$ the corresponding closures of $G$-orbits (see \S \ref{S ident normal bundles}).
	\end{itemize}
	
	The basic properties on existence and canonicity of exponential maps is summarized in the following statement (see \cite[Proposition 4.3.1, Proposition 4.3.3 and \S 4.3.4]{SV}).
	
	\begin{prop}\label{prop exponential maps}
		Let $\Theta\subset \Delta_X$ and $J\subseteq G(k)$ be a compact-open subgroup. Then,
		\begin{enumerate}[(i)]
			\item $\Theta$-exponential maps exist, more precisely for any small enough neighborhood $N_\Theta\subset X(k)$ of $\infty_\Theta$, there exists a $\Theta$-exponential map $\exp_\Theta: N_\Theta\simeq N'_{\Theta}$;
			
			\item Let $\exp_\Theta: N_\Theta\simeq N'_\Theta$ be a $\Theta$-exponential map. Then, we may find $J$-stable neighborhoods $U_\Theta\subset N_\Theta$ and $U'_\Theta\subset N'_\Theta$ of $\infty_\Theta$ such that the restriction of $\exp_\Theta$ descends to a bijection
			\begin{equation}\label{J exp map}
				\displaystyle \exp_{\Theta,J}: U_\Theta/J\simeq U'_\Theta/J.
			\end{equation}
			We shall call such a bijection a \textbf{$(X,\Theta,J)$-exponential map}, or simply a \textbf{$(\Theta,J)$-exponential map} if $X$ is clear from the context.
			
			\item The germ of $(\Theta,J)$-exponential maps at $\infty_\Theta$ is unique. More precisely, if $\exp_{\Theta,J}: N_\Theta/J \simeq N'_\Theta/J$ and $\exp'_{\Theta,J}: U_\Theta/J\simeq U'_\Theta/J$ are two $(\Theta,J)$-exponential maps then we can find further $J$-stable neighborhoods $V_\Theta\subset N_\Theta\cap U_\Theta$, $V'_\Theta\subset N'_\Theta\cap U'_\Theta$ of $\infty_\Theta$ such that $\exp_{\Theta,J}$ and $\exp'_{\Theta,J}$ restrict to a common bijection
			\begin{equation*}
				\displaystyle \exp_{\Theta,J}\mid_{V_\Theta}=\exp'_{\Theta,J}\mid_{V_\Theta}: V_\Theta/J\simeq V'_\Theta/J.
			\end{equation*}
			
			\item (Eventual equivariance) Let $\exp_{\Theta,J}: N_\Theta/J\simeq N'_\Theta/J$ be a $(\Theta,J)$-exponential map and $h\in \mathcal{H}_J(G)$. Then, we can find $J$-invariant neighborhoods $U_\Theta\subset N_\Theta$ and $U'_\Theta\subset N'_\Theta$ of $\infty_\Theta$ such that:
			\begin{itemize}
				\item $\exp_{\Theta,J}$ restricts to a bijection $U_\Theta/J\simeq U'_\Theta/J$;
				
				\item we have $U_\Theta \Supp(h)\subset N_\Theta$ and $U'_\Theta \Supp(h)\subset N'_\Theta$;
				
				\item for every function $f\in C^\infty(U'_\Theta)^J$ we have $\exp^*_{\Theta}(f\star h)=\exp^*_{\Theta}(f)\star h$ where $\exp_\Theta^*(f)$ denotes the pullback function $f\circ\exp_\Theta\in C^\infty(U_\Theta)$ (and similarly for $\exp^*_{\Theta}(f\star h)$).
			\end{itemize}
			
			\item (Transitivity of exponential maps) Let $\Omega\subseteq \Theta$, and let $\exp_{\Theta,J}: N_\Theta/J\simeq N'_\Theta/J$, $\exp_{\Omega,J}^\Theta: N_\Omega^\Theta/J\simeq N_\Omega^{\Theta,'}/J$ be respectively $(X,\Theta,J)$- and $(X_\Theta,\Omega,J)$-exponential maps. (Thus, here $N_\Omega^\Theta$ and $N_\Omega^{\Theta,'}$ are $J$-invariant neighborhoods of $\infty_\Omega^{X_\Theta}$.) Then, we can find a $J$-invariant neighborhood $U_\Theta\subset N_\Theta$ of $\infty_\Theta$ such that $\exp_{\Theta,J}(U_\Theta/J)\subset N_\Omega^\Theta/J$ and the composition $\exp_{\Omega,J}^\Theta\circ \exp_{\Theta,J}\mid_{U_\Theta/J}$ is a $(X,\Omega,J)$-exponential map.
		\end{enumerate}
	\end{prop}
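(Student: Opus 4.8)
The plan is to observe that all five assertions concern only the germ at $\infty_\Theta$ of the maps involved, and that they all rest on a single geometric input: in a suitable analytic neighbourhood of $\infty_\Theta$, the chosen toroidal embedding $\overline{X}$ and the toroidal embedding $\overline{X}_\Theta$ of the boundary degeneration are isomorphic in a way compatible with the canonical identification \eqref{can iden normal bundles} of normal bundles, with the stratification by the orbit closures $Z_C$ ($C\in\cF$), and --- asymptotically --- with the $G$-action. One proves this input first and then deduces (i)--(v) in order, running an outer induction on the cardinality of $\Delta_X\setminus\Theta$ so that all statements are available for the proper boundary degenerations $X_\Theta$ (this is what the transitivity statement (v) and the distinction $\infty_\Omega^X$ versus $\infty_\Omega^{X_\Theta}$ require).

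For (i): apply the local structure theorem (Theorem \ref{thm LST}) to both $\overline{X}$ and $\overline{X}_\Theta$. It reduces the open $P_X$-stable piece $\overline{X}_B$ near $\infty_\Theta$ to a product $A_{\overline{X}}\times N_X$, and the analogous piece of $\overline{X}_\Theta$ to $A_{\overline{X}_\Theta}\times N_X$, where $A_{\overline{X}}$ and $A_{\overline{X}_\Theta}$ are the toric embeddings of $A_X$ attached to the same fan $\cF$, viewed inside $\cA_X^+$ respectively $\cA_{X_\Theta}^+$. Since these two fans differ only in directions transverse to the $\Theta$-stratum, the two toric varieties agree on a neighbourhood of the toric orbit corresponding to $\infty_\Theta$; this produces a $P_X$-equivariant analytic isomorphism of neighbourhoods of $\infty_\Theta$ on the open $B$-orbit side, which one upgrades from the formal to the $p$-adic analytic category (the formal isomorphism converges on a small polydisc). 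Finally $\overline{X}$ is covered by finitely many $G$-translates of $\overline{X}_B$, and these local maps glue because on overlaps any two maps satisfying the defining conditions share a germ at $\infty_\Theta$ --- which is exactly (iii). I would prove (iii) independently by rigidity: the ``quotient'' of two such maps is an analytic automorphism of a neighbourhood of $\infty_\Theta$ in $\overline{X}_\Theta$ that fixes $\infty_\Theta$ and induces the identity on the normal cone, hence is the identity near $\infty_\Theta$ once one conjugates it by the contracting action of $A_{X,\Theta}^-$ and uses that $A_{X,\Theta}^-$-orbits are drawn into any neighbourhood of $\infty_\Theta$ (\S\ref{S neighbds inftytheta}).

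For (ii): since $\infty_\Theta$ is $G(k)$-stable it admits a basis of $J$-stable neighbourhoods --- concretely one may take $A_X^-(\Theta,\geq C)\mathcal{K}$ with $\mathcal{K}$ large, cf. \S\ref{S weak Cartan}. The point is that $\exp_\Theta$ maps $J$-orbits to $J$-orbits near $\infty_\Theta$, which is the special case for the compact group $J$ of the asymptotic $G$-equivariance: because \eqref{can iden normal bundles} is $G$-equivariant, $\exp_\Theta$ conjugates the $G$-action on $\overline{X}$ to an action agreeing with the $G$-action on $\overline{X}_\Theta$ to first order along $\infty_\Theta$, and conjugating by $A_{X,\Theta}^-$ pushes the discrepancy to arbitrarily high order; on a fixed $J$-level, where functions are locally constant along $J$-orbits, this forces the two $J$-actions to coincide literally on a small enough $J$-stable neighbourhood, giving the descent \eqref{J exp map}. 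Statement (iv) is the same mechanism applied to a \emph{compactly supported} $h\in\mathcal{H}_J(G)$: only finitely many translates are involved, so after shrinking $U_\Theta,U'_\Theta$ to keep $U_\Theta\Supp(h)$ and $U'_\Theta\Supp(h)$ in the domains, the identity $\exp_\Theta^*(f\star h)=\exp_\Theta^*(f)\star h$ holds verbatim; equivalently one applies (iii) to $\exp_\Theta$ and to the twisted maps $x\mapsto \exp_\Theta(xg)g^{-1}$, $g\in\Supp(h)$, which share the same germ.

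For (v): the composite $\exp_{\Omega,J}^\Theta\circ\exp_{\Theta,J}$, restricted to a small enough neighbourhood of $\infty_\Omega$ in $\overline{X}$, is an analytic bijection onto a neighbourhood of $\infty_\Omega$ in the relevant completion of $\overline{X}_\Omega$; one checks it induces the canonical normal-cone identification for $Z\subset\infty_\Omega$ by transitivity of \eqref{can iden normal bundles} along $Z\subset\infty_\Omega\subset\infty_\Theta$, and that it respects the correspondence of orbit closures by chasing the cones $C\in\cF$ through \S\ref{S ident normal bundles}; hence it satisfies the defining conditions of an $(X,\Omega,J)$-exponential map, and (iii) identifies it with any prescribed such map near $\infty_\Omega$. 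The main obstacle throughout is the passage from ``asymptotic'' to ``eventual'' equivariance used in (ii) and (iv): turning a discrepancy that merely tends to $0$ along the $A_{X,\Theta}^-$-flow into one that literally vanishes on a small $J$-stable set requires combining the precise contracting geometry of that flow with local constancy along $J$-orbits, and it is here that the non-Archimedean hypothesis is genuinely used.
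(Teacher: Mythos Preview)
The paper does not supply its own proof of this proposition: immediately before the statement it simply refers the reader to \cite[Proposition 4.3.1, Proposition 4.3.3 and \S 4.3.4]{SV}. So there is no in-paper argument to compare against; your sketch is, in outline, the Sakellaridis--Venkatesh approach that the paper is citing --- construct the map locally via the local structure theorem (reducing to the toric picture), prove a rigidity/uniqueness statement for germs, and deduce descent to $J$-orbits and eventual equivariance from the contracting $A_{X,\Theta}^-$-action together with local constancy at level $J$.

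Two small points where your write-up could be tightened. First, in (i) you say the toric embeddings $A_{\overline{X}}$ and $A_{\overline{X}_\Theta}$ ``agree on a neighbourhood of the toric orbit corresponding to $\infty_\Theta$''; in fact, by \S\ref{S ident normal bundles} both are the toric variety attached to the \emph{same} fan $\cF$, so they are globally isomorphic --- what is noncanonical is the splitting $\overline{X}_B\simeq A_{\overline{X}}\times N_X$, and it is this ambiguity (not a discrepancy of fans) that the normal-bundle condition pins down. Second, your proof of (iii) is phrased for analytic automorphisms of a neighbourhood of $\infty_\Theta$, but the statement concerns maps on $J$-orbits coming from possibly \emph{different} underlying analytic $\Theta$-exponential maps; the rigidity argument still works, but one should say explicitly that the ``quotient'' is a bijection of $J$-orbit spaces (a discrete set) which the contracting action and the first-order agreement force to be the identity near $\infty_\Theta$, rather than invoking analyticity directly. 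With these adjustments your sketch matches the cited source.
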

\end{paragr}

\begin{paragr}[Smooth asymptotics: statement.]
	
	The following theorem contains the basic existence result on smooth asymptotic maps. 	
	
	\begin{theo}\label{theo smooth asymptotic}
		Let $\Theta\subset \Delta_X$ and assume that the property $\mathbf{(Z-fin)_{X_\Omega}}$ holds for every subset $\Omega\subseteq \Theta$. Then, there exists a $G(k)$-equivariant morphism
		\begin{equation}
			\displaystyle e_\Theta: \cS(X_\Theta)\to \cS(X)
		\end{equation}\index{$e_\Theta$}
		that is uniquely characterized by the following property: for every compact-open subgroup $J\subset G(k)$, we can find an exponential map $\exp_\Theta: N_\Theta\to N'_\Theta$, that descends to a bijection between $J$-orbits $N_\Theta/J\simeq N'_\Theta/J$, such that for every $f\in \cS(X_\Theta)^J$ we have $e_\Theta(f)=\exp_\Theta^*(f)$.
	\end{theo}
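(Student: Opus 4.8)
The plan is to construct $e_\Theta$ at the level of $J$-fixed vectors for each compact-open subgroup $J$, using a fixed choice of $(\Theta,J)$-exponential map, and then to check that these constructions are compatible as $J$ shrinks so that they glue to a $G(k)$-equivariant map on all of $\cS(X_\Theta)$. The starting point is Proposition \ref{prop exponential maps}(i)-(ii): for any small enough neighborhood $N_\Theta\subset X(k)$ of $\infty_\Theta$ we have a $\Theta$-exponential map $\exp_\Theta: N_\Theta\simeq N'_\Theta$ descending to a bijection $\exp_{\Theta,J}: N_\Theta/J\simeq N'_\Theta/J$. Given $f\in \cS(X_\Theta)^J$, the pullback $\exp_\Theta^*(f)$ is a smooth $J$-invariant function defined on the neighborhood $N_\Theta$ of $\infty_\Theta$ in $X(k)$; the problem is that it is not compactly supported (it is supported on a neighborhood of infinity) and it is only defined near $\infty_\Theta$, not on all of $X(k)$. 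So $e_\Theta(f)$ cannot literally be $\exp_\Theta^*(f)$; rather the theorem asks for an element of $\cS(X)$ that \emph{agrees} with $\exp_\Theta^*(f)$ in some (unspecified, $J$- and $f$-dependent) neighborhood of $\infty_\Theta$. The real content is therefore an \emph{extension/cutoff} result: one must produce a genuine Schwartz function on $X(k)$ whose germ at $\infty_\Theta$ is prescribed by $f$ via $\exp_\Theta$.

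**Key steps.** First I would fix $J$ and reduce to the wavefront description of neighborhoods of $\infty_\Theta$ via the weak Cartan decomposition of \S\ref{S weak Cartan}: a neighborhood of $\infty_\Theta$ is (up to the compact $\mathcal{K}$) the translate by $A_X^-(\Theta,\geq C)$ of a fixed compact piece. Second, and this is the heart of the matter, I would invoke the hypothesis $\mathbf{(Z-fin)_{X_\Omega}}$ for all $\Omega\subseteq\Theta$: the point of this finiteness is that the $A_{X,\Theta}(k)$-action (through $Z(X_\Theta)^0=A_{X,\Theta}$) on $\cS(X_\Theta)^J$ is finite over the Hecke algebra $\cH_J(G)$, so the function $f$, translated deep into the chamber $A_{X,\Theta}^-$, can be controlled by finitely many generators over $\cH_J(G)$; combined with the eventual-equivariance of exponential maps (Proposition \ref{prop exponential maps}(iv)) this lets one transport the $\cH_J(G)$-module structure across $\exp_\Theta$ and show that $\exp_\Theta^*(f)$, cut off away from $\infty_\Theta$ by a $G(k)$-finite smooth cutoff, differs from a true Schwartz function by something supported near the lower strata $\infty_\Omega$, $\Omega\subsetneq\Theta$. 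An induction on $|\Theta|$ (the inductive step using the transitivity of exponential maps, Proposition \ref{prop exponential maps}(v), to identify the germ near $\infty_\Omega$ inside $X_\Theta$ with a germ handled by $e_\Omega^{X_\Theta}$, and then $e_\Omega=e_\Omega^{X_\Theta}$ suitably interpreted) then disposes of the error terms and produces $e_\Theta(f)\in\cS(X)^J$. Third, uniqueness: any two choices agree near $\infty_\Theta$, and the difference is a Schwartz function on $X(k)$ supported, for every $J$, away from a neighborhood of $\infty_\Theta$ — by the second bullet of \S\ref{S neighbds inftytheta} such a difference is relatively compact modulo $A_{X,\Theta}^-$ on each lower stratum and, pushed to infinity, must vanish; more cleanly, one uses that $\cS(X)\to\cS(X_\Theta)$ has no kernel on the relevant germ, so the characterizing property pins $e_\Theta(f)$ uniquely, hence the maps for different $J$ are compatible and assemble to a single linear map $e_\Theta:\cS(X_\Theta)\to\cS(X)$. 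Finally, $G(k)$-equivariance follows from the eventual equivariance in Proposition \ref{prop exponential maps}(iv): for $h\in\cH_J(G)$ one has $e_\Theta(f)\star h = e_\Theta(f\star h)$ near $\infty_\Theta$, and by uniqueness equality holds everywhere; letting $J$ vary gives full equivariance.

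**Main obstacle.** The delicate point is the cutoff/extension step: showing that $\exp_\Theta^*(f)$, which is a smooth function only defined and only ``Schwartz'' near $\infty_\Theta$, can be corrected to an actual element of $\cS(X)$ without changing its germ at $\infty_\Theta$. This is exactly where $\mathbf{(Z-fin)}$ for $X$ and all of its boundary degenerations $X_\Omega$, $\Omega\subseteq\Theta$, is indispensable — it is what guarantees that the ``tail'' of $\exp_\Theta^*(f)$ along the $A_{X,\Theta}^-$-directions is $\cH_J(G)$-finitely-generated, hence of the controlled asymptotic shape that a genuine Schwartz function on $X(k)$ must have, and it is what makes the induction over the strata $\infty_\Omega$ close up. Without this hypothesis one cannot rule out ``wild'' behavior of $f$ under deep central translation that would obstruct the extension; this is precisely the reason $\mathbf{(Z-fin)_{X_\Omega}}$ for all $\Omega\subseteq\Theta$ appears in the hypotheses (and why, in the wavefront case, Lemma \ref{lem Z-fin} makes it automatic). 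Everything else — existence and canonicity of exponential maps, the weak Cartan description of neighborhoods, eventual equivariance — is supplied by Proposition \ref{prop exponential maps} and \S\ref{S neighbds inftytheta}, so the proof is really an organized bootstrap from those ingredients plus the finiteness input.
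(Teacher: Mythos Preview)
Your outline has the right scaffolding (exponential maps, eventual equivariance, induction over strata) but there is a genuine gap at the step you yourself flag as the main obstacle. You propose to produce $e_\Theta(f)\in\cS(X)$ by a ``cutoff/extension'' of $\exp_\Theta^*(f)$, controlled by $\mathbf{(Z\text{-}fin)}$. Two problems. First, in the non-Archimedean setting there is no $G(k)$-finite smooth cutoff function to invoke; the phrase ``cut off away from $\infty_\Theta$ by a $G(k)$-finite smooth cutoff'' does not name an actual operation. Second, and more seriously, $\mathbf{(Z\text{-}fin)}_{X_\Theta}$ only says that the image of $\cH(A_{X,\Theta})$ in $\End(\cS(X_\Theta)^J)$ is finite over $\cH_J(G)$; it does \emph{not} say that $\cS(X_\Theta)^J$ itself is a finitely generated $\cH_J(G)$-module, which is what you would need to turn ``controlled by finitely many generators'' into an honest construction. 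Your uniqueness argument has the same defect: two $\cH_J(G)$-linear maps that agree on $\cS(N'_\Theta)^J$ need not agree on all of $\cS(X_\Theta)^J$ unless you know the latter is generated over $\cH_J(G)$ by the former (and $A_{X,\Theta}$ is not a subgroup of $G$, so equivariance alone will not push supports into $N'_\Theta$).

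The paper closes this gap by proving Theorem~\ref{theo smooth asymptotic} \emph{simultaneously} with Theorem~\ref{theo f.g. Schwartz space} (finite generation of $\cS(X)^J$ over $\cH_J(G)$) by induction on $|\Delta_X|$. The inductive hypothesis supplies finite generation of $\cS(X_\Theta)^J$ for $\Theta\subsetneq\Delta_X$, and this is fed into a multidimensional Bezrukavnikov--Kazhdan lemma (Lemma~\ref{lemma BK}): given a module $M$ finite over a Noetherian ring $A$, a $\Lambda$-action on $M$, and a partially defined $A$-linear map $e_0$ that is eventually compatible with the $A$-action along $\Lambda^-$, there is a unique $A$-linear extension $e:M\to N$. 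Applied with $A=\cH_J(G)$, $M=\cS(X_\Theta)^J$, $\Lambda=X_*(A_{X,\Theta})$, and $e_0=\exp_{\Theta,J}^*$ on $\cS(N'_\Theta)^J$, this yields $e_{\Theta,J}$ directly --- no cutoff needed. The condition $\mathbf{(Z\text{-}fin)}$ is used only afterwards, in the inductive step for Theorem~\ref{theo f.g. Schwartz space}: once the $e_\Omega$ exist, the quotient $\cS(X)^J/\sum_\Omega e_\Omega\cS(X_\Omega)^J$ is generated by functions supported on a set compact modulo $Z(X)(k)$, and $\mathbf{(Z\text{-}fin)}_X$ then gives finite generation. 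So the logical role of $\mathbf{(Z\text{-}fin)}$ is one level removed from where you placed it.
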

	
	Note that from Lemma \ref{lem Z-fin}, the assumptions of the above theorem are always satisfied when $X$ is (parabolically induced from) a wavefront spherical variety. This is actually the original assumption made by Sakellaridis and Venkatesh in \cite{SV}. However, we prefer the above formulation as, despite the fact that we don't yet know a systematic way to check the assumption outside the wavefront setting, we feel that it makes clearer what is currently missing to deal with the general case (that is a better understanding of the center for arbitrary spherical varieties). The proof we will sketch in the subsequent paragraphs will actually follow a strategy introduced by Bezrukavnikov and Kazhdan in the group case \cite{BK} (where the existence of smooth asymptotic maps is closely related to Bernstein second adjunction theorem) which takes as input some finite generation property of the Schwartz space $\cS(X_\Theta)$. Thus, we need to establish at the same time this finiteness property for the boundary degenerations and this will be achieved by induction using the smooth asymptotic maps $\cS(X_\Omega)\to \cS(X_\Theta)$ for $\Omega\subset \Theta\subsetneq \Delta_X$. More formally, we will apply the following theorem that will be established simultaneously to Theorem \ref{theo smooth asymptotic}.
	
	\begin{theo}\label{theo f.g. Schwartz space}
		Assume that the property $\mathbf{(Z-fin)_{X_\Theta}}$ holds for every subset $\Theta\subseteq \Delta_X$. Then, for every compact-open subgroup $J\subset G(k)$, the $\cH_J(G)$-module $\cS(X)^J$ is finitely generated.
	\end{theo}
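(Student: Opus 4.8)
## Proof Strategy for Theorem (finite generation of $\cS(X)^J$)

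The plan is to run an induction on the number of simple spherical roots, proving Theorem \ref{theo smooth asymptotic} and Theorem \ref{theo f.g. Schwartz space} simultaneously, and to use the smooth asymptotic maps from the \emph{proper} boundary degenerations as the source of the needed compactness. The base case is $\Delta_X=\emptyset$: then $X=X_\emptyset$ is horospherical, in fact (by \S\ref{S wavefront case}, since the property $\mathbf{(Z\text{-}fin)}$ forces us into the parabolically-induced-from-central situation, or directly) $X$ is induced from a torus quotient, where finite generation of $\cS(X)^J$ over $\cH_J(G)$ is elementary from the action of the center together with Lemma \ref{lem Z-fin}(i). For the inductive step, assume both theorems hold for every boundary degeneration $X_\Theta$ with $\Theta\subsetneq\Delta_X$; in particular $\cS(X_\Theta)^J$ is a finitely generated $\cH_J(G)$-module and (by Theorem \ref{theo smooth asymptotic} applied to the nested degenerations) the smooth asymptotic maps $e_\Theta:\cS(X_\Theta)\to\cS(X)$ exist for all $\Theta\subsetneq\Delta_X$.

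The core of the argument is a decomposition of $\cS(X)^J$ into a ``compactly supported near the open orbit'' piece and a ``supported near infinity'' piece, following the Bezrukavnikov--Kazhdan mechanism cited in the excerpt. Concretely, fix a weak Cartan decomposition $X(k)=A_X^-\mathcal{K}$ as in \eqref{weak Cartan} and, for each $\Theta\subsetneq\Delta_X$, choose a $(\Theta,J)$-exponential map $\exp_{\Theta,J}:N_\Theta/J\simeq N'_\Theta/J$ as in Proposition \ref{prop exponential maps}. Using the second bullet of \S\ref{S neighbds inftytheta}, one can choose the neighborhoods so that the complement $X(k)\setminus\bigcup_{\Theta\subsetneq\Delta_X}N_\Theta$ is relatively compact modulo the monoid $A_X^-$ acting in the ``most degenerate'' direction. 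I would then argue:
\begin{enumerate}[(a)]
\item Every $f\in\cS(X)^J$ can be written $f=f_0+\sum_{\Theta\subsetneq\Delta_X}e_\Theta(f_\Theta)+r$, where $f_0\in\cS(X)^J$ is supported in a fixed compact set, each $f_\Theta\in\cS(X_\Theta)^J$, and $r$ lies in the image of $\cH_J(G)$ acting on a \emph{finitely generated} submodule --- this last point being exactly where condition $\mathbf{(Z\text{-}fin)_X}$ enters, controlling the ``radial'' direction via the action of $\cH_J(Z(X)^0)$ being finite over $\cH_J(G)$.
\item The subspace of $f_0\in\cS(X)^J$ supported in a fixed compact $C\subset X(k)$ is finite-dimensional, hence trivially a finitely generated $\cH_J(G)$-module.
\item By the inductive hypothesis each $\cS(X_\Theta)^J$ is finitely generated over $\cH_J(G)$, and $e_\Theta$ is $G(k)$-equivariant, so $e_\Theta(\cS(X_\Theta)^J)$ is a finitely generated $\cH_J(G)$-module.
\end{enumerate}
Assembling (a)--(c) and using that $\cH_J(G)$ is Noetherian as a module over its center $\cZ_J(G)$ (which is a finitely generated $\bC$-algebra, by \cite{Bercenter}), one concludes that $\cS(X)^J$ is finitely generated.

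The delicate point --- and the step I expect to be the main obstacle --- is establishing the decomposition in (a) with genuine control: namely, showing that \emph{after} peeling off the exponential-map contributions $e_\Theta(f_\Theta)$ near each $\infty_\Theta$, what remains is supported in a region that is compact modulo the central torus $Z(X)^0(k)$ acting through $A_X^-$, so that $\mathbf{(Z\text{-}fin)_X}$ can be invoked to finish. This requires (i) matching the pullback $\exp_{\Theta,J}^*f_\Theta$ to the actual function $f$ on an honest neighborhood of $\infty_\Theta$ --- which is exactly the defining property of $e_\Theta$, but one must check the glueing of the finitely many $\Theta$'s is consistent, using the transitivity and eventual-equivariance statements of Proposition \ref{prop exponential maps}(iv)--(v); and (ii) a careful bookkeeping of the weak Cartan decomposition of \S\ref{S weak Cartan} to see that the ``leftover'' region $A_X^-(\emptyset,\le C)\mathcal{K}$ is compact modulo $Z(X)^0(k)$. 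Once this geometric reduction is in place, the algebra is formal: finite generation propagates through equivariant maps and Noetherian base rings. I would remark finally that, as noted in the excerpt, this finiteness is the first serious consequence of the smooth-asymptotics package, and conversely it is the engine that makes the inductive construction of the $e_\Theta$ themselves work, so the two theorems must indeed be proved in tandem rather than sequentially.
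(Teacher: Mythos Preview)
Your approach is essentially the same as the paper's: simultaneous induction with Theorem \ref{theo smooth asymptotic}, using the images $e_\Theta(\cS(X_\Theta)^J)$ for $\Theta\subsetneq\Delta_X$ to cover neighborhoods of $\infty_\Theta$, and then invoking $\mathbf{(Z\text{-}fin)_X}$ on the leftover piece, which is compact modulo $Z(X)(k)$. The paper's presentation is slightly cleaner in two respects: it works directly with the quotient $\cS(X)^J/\sum_{\Theta\subsetneq\Delta_X} e_\Theta\cS(X_\Theta)^J$ (rather than an explicit three-term decomposition of each $f$) and deduces compactness of the complement $N^c$ modulo $Z(X)(k)$ from properness of the toroidal compactification $\overline{X}$ rather than the weak Cartan decomposition; also, Noetherianity of $\cH_J(G)$ is not needed for this step --- only that finite sums and quotients of finitely generated modules remain finitely generated.
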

\end{paragr}

\begin{paragr}[Application to finiteness of multiplicities.]\label{S smooth asymptotics cons}
	The smooth asymptotic maps have as immediate consequence the finiteness of multiplicities for the spherical variety $X$ (see \cite[Theorem 5.1.5]{SV}):
	
	\begin{cor}\label{cor multiplicities}
		Assume that the property $\mathbf{(Z-fin)_{X_\Theta}}$ holds for every subset $\Theta\subseteq \Delta_X$. Then, for every irreducible representation $\pi\in \Irr(G)$ the multiplicity
		$$\displaystyle m_X(\pi):=\dim_{\bC} \Hom_G(\cS(X),\pi)$$
		is finite. More generally, all the higher extension groups $\Ext^i_G(\cS(X),\pi)$, for $i\geq 0$ and $\pi\in \Irr(G)$, are of finite dimension.
	\end{cor}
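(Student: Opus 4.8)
The plan is to reduce the statement to module theory over a Noetherian ring, the genuine analytic input being entirely contained in Theorem \ref{theo f.g. Schwartz space}. Fix $\pi\in\Irr(G)$ and let $\mathfrak{s}\subset\mathcal{B}(G)$ be the Bernstein block containing $\pi$. Since $\mathfrak{s}$ is a direct factor of the category of smooth $G(k)$-representations, cut out by an idempotent of the Bernstein center $\cZ(G)$, one has $\Ext^i_G(\cS(X),\pi)=\Ext^i_G(\cS(X)_{\mathfrak{s}},\pi)$ where $\cS(X)_{\mathfrak{s}}$ denotes the projection of $\cS(X)$ to $\mathfrak{s}$, and these groups may be computed inside $\mathfrak{s}$. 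By Bernstein's theory (cf.\ \S\ref{S supercuspidal reduction}) there is a progenerator $\Pi_{\mathfrak{s}}$ of $\mathfrak{s}$ such that $V\mapsto\Hom_G(\Pi_{\mathfrak{s}},V)$ is an equivalence of $\mathfrak{s}$ with the category of modules over $A_{\mathfrak{s}}=\End_G(\Pi_{\mathfrak{s}})$, and $A_{\mathfrak{s}}$ is finite over its center $\cZ_{\mathfrak{s}}(G)$, which is a finitely generated $\bC$-algebra; in particular $A_{\mathfrak{s}}$ is Noetherian. Under this equivalence, $\Ext^i_G(\cS(X),\pi)\simeq\Ext^i_{A_{\mathfrak{s}}}(M_X,M_\pi)$, with $M_X=\Hom_G(\Pi_{\mathfrak{s}},\cS(X)_{\mathfrak{s}})$ and $M_\pi=\Hom_G(\Pi_{\mathfrak{s}},\pi)$.

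The key point is that $M_X$ is a finitely generated $A_{\mathfrak{s}}$-module, equivalently that $\cS(X)_{\mathfrak{s}}$ is a finitely generated smooth $G(k)$-representation. To see this I would choose a compact-open subgroup $J\subset G(k)$ that is adapted to $\mathfrak{s}$ in the sense of \cite[corollaire 3.9]{Bercenter}, so that every object of $\mathfrak{s}$ is generated by its $J$-fixed vectors. Then $\cS(X)_{\mathfrak{s}}$ is generated, as a $G(k)$-representation, by $\cS(X)_{\mathfrak{s}}^J$; but $\cS(X)_{\mathfrak{s}}^J$ is the image of $\cS(X)^J$ under the idempotent $e_{\mathfrak{s}}\in\cZ(G)$, hence a $\cH_J(G)$-submodule of $\cS(X)^J$, which is finitely generated over $\cH_J(G)$ by Theorem \ref{theo f.g. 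Schwartz space}; since $\cH_J(G)$ is Noetherian (it is finite over the finitely generated $\bC$-algebra $\cZ_J(G)$, by \cite[corollaire 3.4]{Bercenter}), the submodule $\cS(X)_{\mathfrak{s}}^J$ is finitely generated as well, and finitely many of its elements generate $\cS(X)_{\mathfrak{s}}$ over $G(k)$. On the other hand $M_\pi$ is finite-dimensional over $\bC$: the simple object $\pi$ corresponds under the equivalence to a simple $A_{\mathfrak{s}}$-module, and simple $A_{\mathfrak{s}}$-modules are finite-dimensional (Schur's lemma over the uncountable field $\bC$ forces a central character, and $A_{\mathfrak{s}}$ is finite over its center), which is of course consistent with the admissibility of $\pi$.

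It then remains to run the standard homological argument: since $A_{\mathfrak{s}}$ is Noetherian and $M_X$ finitely generated, $M_X$ admits a resolution $\cdots\to P_1\to P_0\to M_X\to 0$ by finitely generated projective (indeed free of finite rank, up to summands) $A_{\mathfrak{s}}$-modules, and applying $\Hom_{A_{\mathfrak{s}}}(-,M_\pi)$ yields a complex each of whose terms $\Hom_{A_{\mathfrak{s}}}(P_i,M_\pi)$ is finite-dimensional over $\bC$ because $M_\pi$ is; hence every cohomology group $\Ext^i_{A_{\mathfrak{s}}}(M_X,M_\pi)$ is finite-dimensional. Taking $i=0$ gives $m_X(\pi)=\dim_{\bC}\Hom_G(\cS(X),\pi)<\infty$, and the case of general $i$ gives the statement about higher extension groups. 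The only delicacies in this argument are bookkeeping ones — the choice of $J$ adapted to $\mathfrak{s}$ and the transfer of finite generation from $\cS(X)^J$ over $\cH_J(G)$ to $\cS(X)_{\mathfrak{s}}$ over $G(k)$ — so the substantial obstacle is really Theorem \ref{theo f.g. Schwartz space} itself (and hence, through its proof, the condition $\mathbf{(Z\text{-}fin)_{X_\Theta}}$ for all $\Theta$), which we are entitled to assume here.
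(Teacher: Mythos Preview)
Your proposal is correct and follows essentially the same route as the paper: both reduce everything to the finite generation of Theorem~\ref{theo f.g. Schwartz space} together with Noetherianity of the relevant Hecke-type algebra. The paper is slightly more direct --- it chooses $J$ as in \cite[corollaire 3.9]{Bercenter} so that $V\mapsto V^J$ is already an equivalence onto $\cH_J(G)$-modules and concludes $\Ext^i_G(\cS(X),\pi)=\Ext^i_{\cH_J(G)}(\cS(X)^J,\pi^J)$ in one line --- whereas you pass through the progenerator $\Pi_{\mathfrak{s}}$ and $A_{\mathfrak{s}}$-modules; but this is only a cosmetic difference, and your version has the mild expository benefit of making explicit why the target module $M_\pi$ is finite-dimensional.
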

	
	\begin{proof}
		Indeed, by \cite[corollaire 3.9]{Bercenter}, we can find a compact-open subgroup $J\subset G(k)$ such that the subgategory of smooth representations $V$ of $G(k)$ generated by their $J$-invariant subspace $V^J$ is a direct summand which is equivalent to the category of $\cH_J(G)$-modules by the functor $V\mapsto V^J$ and $\pi^J\neq 0$. Therefore, $\Ext^i_G(\cS(X),\pi)=\Ext^i_{\cH_J(G)}(\cS(X)^J,\pi^J)$ for all $i\geq 0$ and the result is now a direct consequence of Theorem \ref{theo f.g. Schwartz space} (and the fact that $\cH_J(G)$ is Noetherian).
	\end{proof}
\end{paragr}

\begin{paragr}[Smooth asymptotics: proof.]
	We sketch a proof of Theorem \ref{theo smooth asymptotic} that is different from the one given in \cite{SV} (which is based on Bernstein's stabilization theorem) and basically follows the strategy used by Bezrukavnikov and Kazhdan in the group case \cite{BK}. The key ingredient is the following multidimensional version of \cite[Lemma 4.4]{BK}. For convenience, we reproduce the proof of {\it op.\ cit.}\ below.
	
	\begin{lem}\label{lemma BK}
		Let $A$ be a Noetherian $\bC$-algebra, $\Lambda\simeq \bZ^r$ be a free $\bZ$-module of finite rank and $\Lambda^-\subset \Lambda$ be a saturated finitely generated submonoid that generates $\Lambda$ as a $\bZ$-module\footnote{Concretely, this means that we can find a subset $S\subset \Hom(\Lambda,\bZ)$ generating a strictly convex subcone such that $\Lambda^-=\{\lambda\in \Lambda\mid \langle \alpha,\lambda\rangle \leq 0 \; \forall \alpha\in S \}$.}. We denote by $A[\Lambda]$ the algebra of Laurent polynomials $\sum_{\lambda\in \Lambda} a_\lambda T^\lambda$, where $(a_\lambda)_{\lambda\in \Lambda}\in A^\Lambda$ is a family with finite support, and $A[\Lambda^-]$ the subalgebra corresponding to families supported in $\Lambda^-$.
		Assume given:
		\begin{itemize}
			\item An $A[\Lambda]$-module $M$ that is of finite type as an $A$-module and $N$ an (arbitrary) $A$-module,
			
			\item $M_0\subset M$ a $\bC$-vector subspace and $e_0: M_0\to N$ a $\bC$-linear map,
		\end{itemize}
		satisfying the following condition:
		\begin{itemize}
			\item For every $m\in M$ and $h\in A$, there exists $\lambda_0=\lambda_0(m,h)\in \Lambda$ such that for every $\lambda\in \lambda_0+\Lambda^-$ we have $T^\lambda m\in M_0$, $T^\lambda hm\in M_0$ and $e_0(T^\lambda hm)=he_0(T^\lambda m)$.
		\end{itemize}
		Then, there exists a unique $A$-linear map $e:M\to N$ satisfying: for every $m\in M$, there exists $\lambda_0=\lambda_0(m)\in \Lambda$ such that for every $\lambda\in \lambda_0+\Lambda^-$ we have $T^\lambda m\in M_0$ and $e(T^\lambda m)=e_0(T^\lambda m)$.
	\end{lem}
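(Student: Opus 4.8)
The plan is to adapt the argument of \cite[Lemma 4.4]{BK} to the multidimensional setting, where the one-parameter monoid $\mathbb{Z}_{\leq 0}$ is replaced by the saturated finitely generated monoid $\Lambda^-$. First I would settle uniqueness: if $e, e'$ both satisfy the stated property then for any $m\in M$ we may choose a common $\lambda_0$ (valid for both), so that $e(T^\lambda m) = e_0(T^\lambda m) = e'(T^\lambda m)$ for all $\lambda\in\lambda_0+\Lambda^-$; since $T^\lambda$ acts invertibly on $M$ (because $\Lambda$ is a group and $M$ is an $A[\Lambda]$-module) and $e, e'$ are $A$-linear — hence $A[\Lambda^-]$-semilinear in the appropriate sense — one deduces $e(m)=e'(m)$ after multiplying back by $T^{-\lambda}$ inside $N$; here one uses that $e(T^\lambda m) = T^\lambda$ does not literally act on $N$, so the precise bookkeeping is that $e$ is determined on the $A$-submodule generated by $\{T^\lambda m : \lambda \in \lambda_0 + \Lambda^-\}$, which contains $m$.

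For existence I would proceed by the following steps. Since $M$ is finite over the Noetherian algebra $A$, pick generators $m_1,\dots,m_s$ of $M$ as an $A$-module. For each pair $(i,h)$ with $h$ ranging over a finite generating set of $A$ (as an algebra, or just over all of $A$ handled uniformly) the hypothesis gives a threshold $\lambda_0(m_i,h)$; I would like to produce a single $\mu_0\in\Lambda$ such that for all $\lambda\in\mu_0+\Lambda^-$, simultaneously $T^\lambda m_i\in M_0$ and the compatibility $e_0(T^\lambda h m_i)=h\, e_0(T^\lambda m_i)$ holds for all $i$ and all relevant $h$. The key finiteness input making this possible is that $M$ is finite over $A$, so that the ``module of relations'' and the action of $A[\Lambda]$ are controlled by finitely much data: concretely, $T^\lambda$ for $\lambda\in\Lambda^-$ acts on the finite $A$-module $M$, and one shows that the set of $\lambda$ for which a given finite list of equalities holds is itself a translate of $\Lambda^-$ (or at least contains one), using that $\Lambda^-$ is saturated and finitely generated and that a finite intersection of translates of $\Lambda^-$ contains a translate of $\Lambda^-$. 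Then define $e(m)$ for $m=\sum a_i m_i$ by choosing any $\lambda\in\mu_0+\Lambda^-$, setting $e(m):= $ the unique element of $N$ with $e_0(T^\lambda m) = (\text{action of } T^\lambda) \cdot e(m)$ — more precisely, since $T^\lambda$ acts invertibly on $M$, write $m = T^{-\lambda}(T^\lambda m)$ with $T^\lambda m\in M_0$, and one checks $e_0(T^\lambda m)$ is well-defined up to the expected compatibility; then argue that $e_0(T^{\lambda+\nu} m) $ for $\nu\in\Lambda^-$ determines a consistent value, using the compatibility $e_0(T^\nu \cdot (T^\lambda m)) = T^\nu e_0(T^\lambda m)$ built into the hypothesis (taking $h = T^\nu$, which lies in $A[\Lambda^-]\subset$ the relevant algebra — note here one must be slightly careful that $h$ in the hypothesis ranges over $A$, not $A[\Lambda^-]$, so instead one uses $h\in A$ together with the $A[\Lambda]$-module structure to get $A[\Lambda^-]$-compatibility formally).

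The main obstacle I anticipate is the ``simultaneous threshold'' step: in the one-dimensional case of \cite{BK} one takes a maximum of finitely many integers, but for a general saturated monoid $\Lambda^-$ one needs that finitely many translates $\lambda_1+\Lambda^-,\dots,\lambda_k+\Lambda^-$ have intersection containing a translate $\mu+\Lambda^-$. This is true — it follows from the saturated finite generation of $\Lambda^-$ (equivalently $\Lambda^-$ is the set of lattice points in a rational polyhedral cone $\sigma^\vee$, and a finite intersection of translates of $\sigma^\vee$ by lattice vectors, when nonempty, contains a lattice translate of $\sigma^\vee$) — but it must be invoked carefully, and one also needs the parallel statement that ``the set of $\lambda\in\Lambda^-$ for which $e_0$ is compatible at $T^\lambda m_i$'' is of this form, which is where the finite generation of $M$ over $A$ and the hypothesis are combined. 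Once these thresholds are in hand, $A$-linearity of the resulting $e$ is a formal check: for $h\in A$ one chooses $\lambda$ large enough (in $\mu_0(m)+\mu_0(hm)+\Lambda^-$) that all the defining identities for $m$ and $hm$ hold simultaneously, and then $e(hm) = $ value from $e_0(T^\lambda hm) = h\, e_0(T^\lambda m) = h\, e(m)$ directly by the hypothesis. Additivity is similar. This completes the construction; the only genuinely new ingredient beyond \cite{BK} is the elementary cone-combinatorics lemma about intersections of translates of $\Lambda^-$.
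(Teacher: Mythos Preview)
Your overall strategy matches the paper's, and your uniqueness argument (after the self-correction) is essentially the paper's: once you know $e$ agrees with $e_0$ on $\{T^\lambda m_i\}_i$ for a single deep enough $\lambda$, this family still generates $M$ over $A$, so $e$ is determined. Your identification of the cone-combinatorics lemma (a finite intersection of translates $\lambda_j+\Lambda^-$ contains a translate of $\Lambda^-$) is also exactly what is used, implicitly, throughout.

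There is, however, a genuine gap in your existence argument: you never actually \emph{define} $e(m)$. You write that $e(m)$ should be ``the unique element of $N$ with $e_0(T^\lambda m) = (\text{action of }T^\lambda)\cdot e(m)$'', immediately note that $T^\lambda$ does not act on $N$, and then leave the definition hanging. The paper resolves this cleanly as follows. Fix generators $m_1,\ldots,m_s$ of $M$ over $A$ and let $\Pi:\tilde{M}\to M$ be the corresponding free presentation. For each sufficiently deep $\lambda$, define an $A$-linear map $\tilde{e}_\lambda:\tilde{M}\to N$ by $\tilde{e}_\lambda(\tilde{m}_i)=e_0(T^\lambda m_i)$. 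Using the hypothesis and the finite generation of $\Ker\Pi$ (Noetherianity), one shows $\tilde{e}_\lambda$ kills $\Ker\Pi$ for all $\lambda\ll_{\Lambda^-}0$, hence descends to $e_\lambda\in\Hom_A(M,N)$. Then one proves the coherence $e_\lambda\circ T^\mu=e_{\lambda+\mu}$ for $\mu\in\Lambda^-$ and $\lambda\ll 0$, and sets $e:=e_\lambda\circ T^{-\lambda}$ (which now makes perfect sense, since $T^{-\lambda}$ acts on $M$). The point is that the ``division by $T^\lambda$'' happens in $M$, not in $N$.

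Your second worry --- that the hypothesis only gives compatibility for $h\in A$, not for $T^\nu\in A[\Lambda^-]$ --- is real, and the paper's fix is precisely the one you gesture at: write $T^\mu m_i=\sum_j h_{ij}m_j$ with $h_{ij}\in A$ and apply the hypothesis with each $h_{ij}$. This is what proves $e_\lambda\circ T^\mu=e_{\lambda+\mu}$. So the missing ingredient in your sketch is not new mathematics but the intermediate object $e_\lambda:M\to N$, without which the construction does not get off the ground.
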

	
	\begin{proof}
		Pick a finite set $\{m_1,\ldots,m_s \}$ generating $M$ as a $A$-module.
		The unicity follows readily from noting that if $e$ satisfies the condition of the lemma, we can find $\lambda\in \Lambda$ such that $e(T^\lambda m_i)=e_0(T^\lambda m_i)$ for every $1\leq i\leq s$ while the family $\{T^\lambda m_1,\ldots,T^\lambda m_s \}$ still generates $M$ as a $A$-module. 
		
		For the existence, let $\tilde{M}$ be a free $A$-module of rank $s$ together with a basis denoted by $\tilde{m}_1,\ldots,\tilde{m}_s$ and let $\Pi: \tilde{M}\to M$ be the $A$-linear surjection sending $\tilde{m}_i$ to $m_i$ for every $i$. We will also use the convenient, although slightly imprecise, notation $\lambda \ll_{\Lambda^-} 0$ to mean that there exists $\lambda_0\in \Lambda$ such that a certain property holds for every $\lambda\in \lambda_0+\Lambda^-$.

		By assumption, for $\lambda\ll_{\Lambda^-} 0$ we have $T^\lambda m_i\in M_0$ for each $1\leq i\leq s$. For such a $\lambda$, we define $\tilde{e}_\lambda\in \Hom_A(\tilde{M},N)$ by $\tilde{e}_\lambda(\tilde{m}_i)=e_0(T^\lambda m_i)$, $1\leq i\leq s$.
		
		Let $x=\sum_{i=1}^s h_i \tilde{m}_i\in \Ker(\Pi)$. Then, still by the assumption of the lemma, for $\lambda \ll_{\Lambda^-} 0$ we have
		$$\displaystyle \tilde{e}_\lambda(x)=\sum_i h_i e_0(T^\lambda m_i)=e_0(T^\lambda \Pi(x))=0.$$
		Since $\tilde{M}$ is of finite type and $A$ is Noetherian, the $A$-module $\Ker(\Pi)$ is finitely generated and it therefore follows that for $\lambda\ll_{\Lambda^-} 0$, $\tilde{e}_\lambda$ vanishes on $\Ker(\Pi)$ and thus descends to an homomorphism $e_\lambda\in \Hom_A(M,N)$.
		
		Let $\mu\in X^+$ and choose decompositions
		$$\displaystyle T^\mu m_i=\sum_{j=1}^s h_{ij} m_j,\;\; h_{ij}\in A,$$
		for every $1\leq i\leq s$. Then, for $\lambda\ll_{\Lambda^-} 0$ we have
		$$\displaystyle e_{\lambda}(T^\mu m_i)=\sum_j h_{ij}e_0(T^\lambda m_j)=e_0(T^{\lambda+\mu}m_i)=e_{\lambda+\mu}(m_i)$$
		for every $i$. Since $\Lambda^-$ is finitely generated, this shows that for $\lambda\ll_{\Lambda^-}0$ we have $e_\lambda\circ T^\mu=e_{\lambda+\mu}$ for every $\mu\in \Lambda^-$ and then it readily follows that $e:=e_\lambda\circ T^{-\lambda}$ satisfies the desired property for $\lambda\ll_{\Lambda^-}0$.
	\end{proof}
	
	We now explain the proof of Theorems \ref{theo smooth asymptotic} and \ref{theo f.g. Schwartz space}. We prove both by induction on $\lvert \Delta_X\rvert$. Actually, we will first prove Theorem \ref{theo smooth asymptotic} and then explain how to deduce Theorem \ref{theo f.g. Schwartz space} from it.
	
	For the proof of Theorem \ref{theo smooth asymptotic}, obviously, it suffices to show that for every compact-open subgroup $J\subset G(k)$ there exists a unique  $\mathcal{H}_J(G)$-equivariant map $e_{\Theta,J}: \cS(X_\Theta)^J\to \cS(X)^J$ with the required asymptotic property. (Indeed, unicity will entail that for $J'\subset J$ the restriction of $e_{\Theta,J'}$ to $\cS(X_\Theta)^J$ coincides with $e_{\Theta,J}$ and therefore that the family $(e_{\Theta,J})_J$, with $J$ running over all compact-open subgroups, comes from a $G(k)$-equivariant morphism $e_\Theta: \cS(X_\Theta)\to \cS(X)$.) 
	Let us now fix a compact-open subgroup $J\subset G(k)$. We may suppose that $\Theta\neq \Delta_X$ (otherwise the statement is trivial with $e_{\Delta_X}=Id$). Then, by the induction hypothesis on Theorem \ref{theo f.g. Schwartz space}, we know that $\cS(X_\Theta)^J$ is a finitely generated $\cH_J(G)$-module.
	
	Let $\exp_{\Theta,J}: N_\Theta/J\simeq N'_\Theta/J$ be a $(\Theta,J)$-exponential map as in Proposition \ref{prop exponential maps}. We are going to apply Lemma \ref{lemma BK} to the following objects:
	$$\displaystyle A=\cH_J(G),\;\; M=\cS(X_\Theta)^J,\;\; \Lambda=\Lambda_\Theta=X_*(A_{X,\Theta}),\;\; \Lambda^-=\Lambda_\Theta^-=\{\lambda\in \Lambda\mid \langle \alpha,\lambda\rangle\leq 0 \; \forall \alpha\in \Delta_X\setminus \Theta \},$$
	$$\displaystyle N=\cS(X)^J,\;\; M_0=\cS(N'_\Theta)^J,\;\; e_0=\exp_{\Theta,J}^*:M_0\to N.$$
	Here, implicitely, we have fixed an uniformizer $\varpi\in k$ and the action of $\Lambda$ on $M=\cS(X_\Theta)^J$ is deduced from the (left) action of $A_{X,\Theta}(k)$ on $X_\Theta(k)/J$ via the embedding $\Lambda_\Theta\subset A_{X,\Theta}(k)$, $\lambda\mapsto \lambda(\varpi)$. We will denote this action by $(\lambda,f)\in \Lambda\times \cS(X_\Theta)\mapsto L_\lambda f:=L(\lambda(\varpi))f$. Note that for every function $f\in \cS(X_\Theta)$ and every neighborhood $N''_\Theta\subset X_\Theta(k)$ of $\infty_\Theta$ we can find $\lambda_0\in \Lambda_\Theta$ such that $\Supp(L_\lambda f)=\lambda(\varpi)\Supp(f)\subset N''_\Theta$ for all $\lambda\in \lambda_0+\Lambda_\Theta^-$ (see \S \ref{S neighbds inftytheta}). Thus, the assumption of Lemma \ref{lemma BK} is satisfied by the eventual equivariance property of exponential maps (Proposition \ref{prop exponential maps}(iv)) and we may conclude that there exists a unique $\cH_J(G)$-equivariant morphism $e_{\Theta,J}:\cS(X_\Theta)^J\to \cS(X)^J$ satisfying the following property:
	\begin{num}
		\item\label{eq first charact etheta} for every $f\in \cS(X_\Theta)^J$, we have $e_{\Theta,J}(L_\lambda f)=\exp_{\Theta,J}^*(L_\lambda f)$ for $\lambda\ll_{\Lambda_\Theta^-} 0$ (the implicit constant depending on $f$).
	\end{num}
	It now remains to prove that the morphism $e_{\Theta,J}$ so obtained coincides with $\exp^*_{\Theta,J}$ on some $J$-invariant neighborhood $N''_\Theta\subset N'_\Theta$ of $\infty_\Theta$ since the above characterizing condition is in general weaker than the desired one.
	
	For this, it is convenient to use the asymptotic maps $e_\Omega: \cS(X_\Omega)\to \cS(X)$ and $e_\Omega^{\Theta}: \cS(X_\Omega)\to \cS(X_\Theta)$ for all the proper subsets $\Omega\subsetneq \Theta$. Indeed, by a suitable induction hypothesis (on $\lvert \Theta\rvert$ this time), we may assume that the latter maps are already known to exist. The crux is then to check that
	\begin{equation}\label{eq composition asymp maps}
		\displaystyle e_\Omega=e_\Theta\circ e_\Omega^\Theta \mbox{ for every } \Omega\subsetneq \Theta.
	\end{equation}
	For this, we first we note that, as follows readily from unicity, $e_\Omega^\Theta$ is $A_{X,\Theta}(k)$-equivariant (for the left action; note that $A_{X,\Theta}\subset A_{X,\Omega}$) and thus in particular, with our current notation, $e_\Omega^\Theta\circ L_\lambda=L_\lambda\circ e_\Omega^\Theta$ for every  $\lambda\in \Lambda_\Theta$. Let $f_1,\ldots,f_s$ be a generating family of $\cS(X_\Omega)^J$ as a $\cH_J(G)$-module. Then, by the characterizing properties of $e_\Omega$ and $e_\Omega^{\Theta}$ we can find $\lambda_0\in \Lambda_\Omega$ such that for every $\lambda\in \lambda_0+\Lambda_\Omega^-$ and $1\leq i\leq s$ we have
	$$\displaystyle e_\Omega(L_\lambda f_i)=\exp_{\Omega,J}^*(L_\lambda f_i),\;\; e_\Omega^{\Theta}(L_\lambda f_i)=\exp_{\Omega,J}^{\Theta,*}(L_\lambda f_i)$$
	where we have fixed $(\Omega,J)$-exponential maps $\exp_{\Omega,J}: X(k)/J\supset N_\Omega/J\simeq N'_\Omega/J\subset X_\Omega(k)/J$ and $\exp_{\Omega,J}^\Theta: X_\Theta(k)/J\supset N^\Theta_\Omega/J\simeq N^{\Theta,'}_\Omega/J$. Also, fixing such a $\lambda_0$, for $\mu\ll_{\Lambda_\Theta^-} 0$ (where the bounds depends a priori on $\lambda_0$) we have
	$$\displaystyle e_\Theta\circ e_\Omega^\Theta(L_{\mu+\lambda_0}f_i)=e_\Theta(L_\mu e_\Omega^\Theta(L_{\lambda_0}f_i))=\exp_{\Theta,J}^*(\exp_{\Omega,J}^{\Theta,*}L_{\mu+\lambda_0}f_i)$$
	for each $1\leq i \leq s$. By the transitivity of exponential maps (Proposition \ref{prop exponential maps} (iv)), up to shrinking the domains of the various exponential maps, we may further assume that $\exp_{\Theta,J}^*\circ \exp_{\Omega,J}^{\Theta,*}=\exp_{\Omega,J}^*$\footnote{This entails in particular that $N'_\Theta\subset N_\Omega^\Theta$.}. Then, choosing $\mu$ in $\Lambda_\Theta^-\subset \Lambda_\Omega^-$, as we may, we have $\lambda:=\lambda_0+\mu\in \lambda_0+\Lambda_\Omega^-$ and it follows that
	$$\displaystyle e_\Omega(L_{\lambda} f_i)=\exp_{\Omega,J}^*(L_\lambda f_i)=e_\Theta\circ e_\Omega^\Theta(L_{\lambda}f_i)$$
	for each $1\leq i \leq s$, i.e.\ $e_\Omega$ and $e_\Theta\circ e_\Omega^\Theta$ coincides on the family $\{ L_\lambda f_1,\ldots, L_\lambda f_s\}$. As the latter also generates $\cS(X_\Omega)^J$ as $\cH_J(G)$-module the claim \eqref{eq composition asymp maps} follows.
	
	We can now deduce the desired property for $e_\Theta$. Indeed, from \eqref{eq composition asymp maps}, and the fact that $e_{\Omega,J}$, $e_{\Omega,J}^\Theta$ coincide on suitable neighborhoods of $\infty_\Omega$ with pushforwards along exponential maps,  we deduce that, for every $\Omega\subsetneq \Theta$, $e_{\Theta,J}$ coincides with $\exp_{\Theta,J}^*$ on some $J$-invariant and $A_{X,\Theta}^-$-invariant neighborhood $N'_\Omega\subset X_\Theta(k)$ of $\infty_\Omega$. Moreover, as explained in \S \ref{S neighbds inftytheta}, up to shrinking $N_\Theta'$ we may assume that the difference $N'_\Theta\setminus \bigsqcup_{\Omega\subsetneq \Theta} N'_\Omega$ is relatively compact modulo $A_{X,\Theta}^-$ i.e.\ that it is included in the union of a finite number of orbits $\Lambda_\Theta^- x_0J\cup \ldots\cup \Lambda_\Theta^- x_rJ$. Then, applying the property \eqref{eq first charact etheta}  to the characteristic functions $f_i=\mathbf{1}_{x_iJ}$, we see that $e_\Theta$ coincides with $\exp_{\Theta,J}^*$ on some translate $aN'_\Theta$ where $a\in A_{X,\Theta}^-$ but the latter is again a neighborhood of $\infty_\Theta$ and this ends the proof of Theorem \ref{theo smooth asymptotic}.
	
	Given Theorem \ref{theo smooth asymptotic} for $X$ and its boundary degenerations, the proof of Theorem \ref{theo f.g. Schwartz space} (which is needed to complete the induction) is basically the same as that of \cite[Theorem 5.1.5]{SV}. For convenience, we repeat the argument here. By the assumption of this theorem and Theorem \ref{theo smooth asymptotic} we have at our disposal the asymptotic maps $e_\Theta: \cS(X_\Theta)\to \cS(X)$ for every $\Theta\subset \Delta_X$. Moreover, by the induction hypothesis, the $\cH_J(G)$-modules $\cS(X_\Theta)^J$, for every proper subset $\Theta\subsetneq \Delta_X$, are all finitely generated. Thus, it suffices to check that the quotient module
	\begin{equation}\label{eq1 proof theo f.g. Schwartz space}
		\displaystyle \cS(X)^J/\sum_{\Theta\subsetneq \Delta_X} e_\Theta \cS(X_\Theta)^J
	\end{equation}
	is also finitely generated. Let us fix $J$-exponential maps $\exp_{\Theta,J}: N_\Theta/J\simeq N'_\Theta/J$, for every $\Theta\subsetneq \Delta_X$, such that $e_{\Theta,J}$ coincides with $\exp_{\Theta,J}^*$ on $\cS(N'_\Theta)^J$. In particular, the submodule $\sum_{\Theta\subsetneq \Delta_X} e_\Theta \cS(X_\Theta)^J$ contains all functions $f\in \cS(X)^J$ that are supported in $\bigcup_{\Theta\neq \Delta_X} N_\Theta$ and it follows that the above quotient is also a quotient of the $\cH_J(G)$-submodule generated by $\cS(N^c)^J$ where $N^c$ denotes the complement of $\bigcup_{\Theta\neq \Delta_X} N_\Theta$ in $X(k)$. By the properness of the toroidal compactification $\overline{X}$, $N^c$ is compact modulo $Z(X)(k)$. Therefore, up to enlarging $N^c$ to make it $Z(X)(k)$-invariant, we see that $\cS(N^c)^J$ is finitely generated as a $Z(X)(k)$-representation. But this, together with the assumption $\mathbf{(Z-fin)_X}$, implies that the $\cH_J(G)$-module spanned by $\cS(N^c)^J$ is finitely generated. Thus, the quotient \eqref{eq1 proof theo f.g. Schwartz space}, and therefore also $\cS(X)^J$, are finitely generated over $\cH_J(G)$ too.
	
\end{paragr}

\subsection{Bernstein maps and scattering operators}\label{Sect Bernstein maps}

In this subsection, we continue to assume that $k$ is a non-Archimedean local field (of characteristic zero) and the group $G$ is split.

\vspace{2mm}

\begin{paragr}[Bernstein maps.]
The Bernstein maps are $L^2$ analogs of the smooth asymptotic maps and roughly correspond, in a global setting, to the integration of wave-packets of Eisenstein series on the unitary axis (as opposed to integration far from the unitary axis, leading to the so-called pseudo-Eisenstein series of \cite[\S II.1.10]{MWlivre} and which are the analogs of the smooth asymptotic maps). They have been constructed quite generally by Sakellaridis and Venkatesh in \cite{SV} however not as integrals of local analogs of Eisenstein series (which in the literature on symmetric varieties come under the name of normalized Eisenstein integrals) but rather as abstract maps between $L^2$ spaces characterized by their asymptotic properties. We summarize the main result of {\em op.\ cit.}\ in the following statement.
	
	\begin{theo}[Sakellaridis-Venkatesh]\label{theo Bernstein maps}
		Assume that all the boundary degenerations $X_\Theta$, $\Theta\subset \Delta_X$, of $X$ satisfy the condition $\mathbf{(Z-fin)}$ of \S \ref{S condition Zfin} as well as the Discrete Series Conjecture $\mathbf{(DSC')}$ of \S \ref{S DSC}. Then, for every $\Theta\subset \Delta_X$, there exists a unique equivariant continuous map
		$$\displaystyle \iota_\Theta: L^2(X_\Theta)\to L^2(X),$$\index{$\iota_\Theta$}
		called the \textbf{Bernstein map}, satisfying the following property: for any $\Theta$-exponential map $\exp_\Theta: N_\Theta\simeq N'_\Theta$, $f\in L^2(X_\Theta)$ and $a\in A_{X,\Theta}^{--}$ we have
		\begin{equation*}
			\displaystyle \lim\limits_{n\to \infty} \lVert \iota_\Theta(\mathcal{L}_{a^n}f)-\exp_{\Theta}^*(\mathcal{L}_{a^n}f)\rVert=0.
		\end{equation*}
		Moreover, these maps satisfy the following properties:
		\begin{enumerate}[(i)]
			\item For every subsets $\Omega\subset \Theta\subset \Delta_X$, denoting by $\iota_\Omega^{\Theta}: L^2(X_\Omega)\to L^2(X_\Theta)$ the Bernstein map with respect to $X_\Theta$, we have $\iota_\Theta\circ \iota_\Omega^\Theta=\iota_\Omega$;
			
			\item $$\displaystyle L^2(X)=\sum_{\Theta\subset \Delta_X} \iota_\Theta L^2_{\disc}(X_\Theta).$$
		\end{enumerate}
	\end{theo}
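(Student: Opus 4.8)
The plan is to construct the Bernstein maps $\iota_\Theta$ by descending induction on $|\Delta_X \setminus \Theta|$, following the same inductive skeleton as the construction of the smooth asymptotic maps in Theorem \ref{theo smooth asymptotic}, but now working in the $L^2$ category. For $\Theta = \Delta_X$ we take $\iota_{\Delta_X} = \mathrm{Id}$. The key analytic input is that, unlike for smooth functions, one cannot simply pull back along an exponential map globally; instead, for $f \in L^2(X_\Theta)$ and $a \in A_{X,\Theta}^{--}$ deep in the negative chamber, the translates $\mathcal{L}_{a^n} f$ concentrate near $\infty_\Theta$, where the exponential identification $\exp_\Theta$ is valid, so $\exp_\Theta^*(\mathcal{L}_{a^n} f)$ makes sense as an element of $L^2(X)$. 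The heart of the construction is to show that the sequence $\exp_\Theta^*(\mathcal{L}_{a^n}f)$, suitably ``un-translated'' by $\mathcal{L}_{a^{-n}}$ on the $X$-side (using the residual $A_{X,\Theta}$-action that survives because $A_{X,\Theta} = Z(X_\Theta)^0$ acts on $X_\Theta$ and, via the geometry at infinity, on a neighborhood of $\infty_\Theta$ in $X$), converges in $L^2(X)$ and defines an isometry onto its image. Concretely, one first builds $\iota_\Theta$ on the dense subspace spanned by $\mathcal{L}_a$-translates of Schwartz functions, checks it is bounded, and extends by continuity; uniqueness is forced by the stated asymptotic characterization together with density.

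The main obstacle — and the reason the hypotheses $\mathbf{(Z\text{-}fin)}$ and $\mathbf{(DSC')}$ enter — is proving that this limit exists and, more precisely, that the map is bounded (indeed, after normalization, isometric up to a constant) rather than merely defined on a dense subspace. This requires controlling the ``scattering'': two different exponential maps, or the same map composed with different amounts of $A_{X,\Theta}^{--}$-translation, differ by operators that must be shown to stabilize. The condition $\mathbf{(SG)_X}$ (a spectral gap for the real exponents of $\Pi_{\disc}(X_\Omega)$ for all $\Omega$), which by \S \ref{S DSC} follows from $\mathbf{(DSC')}$, is exactly what guarantees that the discrete spectra of the boundary degenerations do not accumulate, so that the ``leading term'' extracted by the asymptotic procedure is well separated and the limit converges; $\mathbf{(Z\text{-}fin)}$ feeds in through its consequence (Theorem \ref{theo f.g. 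Schwartz space}) that the Schwartz spaces $\cS(X_\Theta)^J$ are finitely generated over $\cH_J(G)$, which reduces convergence statements to finitely many generators and lets one invoke the smooth asymptotic maps $e_\Omega^\Theta$ as a bridge. I would prove boundedness by a ``doubling'' argument: pair $\iota_\Theta(\mathcal{L}_{a^n}f)$ with itself in $L^2(X)$, expand using the exponential identification, and show the cross terms with the complementary region (compact modulo $A_{X,\Theta}^-$ by \S \ref{S neighbds inftytheta}) decay, so that $\|\iota_\Theta(\mathcal{L}_{a^n}f)\|^2 \to \|f\|^2$ (up to the Jacobian/modular normalization built into $\mathcal{L}$).

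Once each $\iota_\Theta$ is constructed, property (i), the transitivity $\iota_\Theta \circ \iota_\Omega^\Theta = \iota_\Omega$ for $\Omega \subset \Theta$, follows by exactly the argument used for smooth asymptotics in the proof of Theorem \ref{theo smooth asymptotic}: both sides satisfy the asymptotic characterization defining $\iota_\Omega$ — one uses the transitivity of exponential maps (Proposition \ref{prop exponential maps}(v)) to see that the composite of the two exponential identifications is an $\Omega$-exponential map, and the $A_{X,\Theta}$-equivariance of $\iota_\Omega^\Theta$ (forced by uniqueness) to push the translation $\mathcal{L}_{a^n}$ through — and then uniqueness gives equality. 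Finally, for property (ii), the decomposition $L^2(X) = \sum_{\Theta \subset \Delta_X} \iota_\Theta L^2_{\disc}(X_\Theta)$: one argues by induction on $|\Delta_X|$, showing that the orthogonal complement of $\iota_{\Delta_X} L^2_{\disc}(X) = L^2_{\disc}(X)$ inside $L^2(X)$ — i.e. the ``continuous part'' — is the closure of the span of the images $\iota_\Theta L^2(X_\Theta)$ over proper $\Theta \subsetneq \Delta_X$; this is where one needs that a function orthogonal to all relative discrete series of $X$ and to all the $\iota_\Theta$-images must vanish, which again relies on the asymptotic behavior near each $\infty_\Theta$ (a tempered $L^2$ function with trivial asymptotics in every direction is zero), combined with the inductive hypothesis applied to each $X_\Theta$ to replace $L^2(X_\Theta)$ by $\sum_{\Omega \subset \Theta} \iota_\Omega^\Theta L^2_{\disc}(X_\Omega)$ and then using (i) to collapse the double sum. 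The genuinely hard part throughout is the first one: establishing convergence and boundedness of the defining limit, which is the technical core of \cite[Part 3]{SV}.
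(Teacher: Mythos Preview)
The paper does not actually prove this theorem: it is stated as a result of Sakellaridis--Venkatesh \cite{SV}, and the only commentary the paper offers is the remark immediately following, which records that the proof in \cite{SV} requires as input (a) the existence of the smooth asymptotic maps $e_\Theta$ (from Theorem \ref{theo smooth asymptotic}), (b) the spectral gap property $\mathbf{(SG)}$, and (c) a uniform bound on the number of unitary $A_{X,\Theta}$-exponents of relative discrete series of $X_\Theta$. So there is no ``paper's proof'' to match; your outline should be compared against the actual construction in \cite[Part 3]{SV} and against the ingredients the paper flags.

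Your proposal has a genuine gap in the construction step. You want to define $\iota_\Theta(f)$ as a limit of $\exp_\Theta^*(\mathcal{L}_{a^n}f)$ ``un-translated by $\mathcal{L}_{a^{-n}}$ on the $X$-side''. But there is no operator $\mathcal{L}_{a^{-n}}$ on $L^2(X)$: the torus $A_{X,\Theta}=Z(X_\Theta)^0$ acts on $X_\Theta$, not on $X$ (except when $\Theta=\Delta_X$), and your suggestion that it acts ``via the geometry at infinity, on a neighborhood of $\infty_\Theta$ in $X$'' does not give a unitary operator on $L^2(X)$ --- at best it gives a partially defined map near the boundary, and un-translating by $a^{-n}$ with $a\in A_{X,\Theta}^{--}$ would push mass \emph{away} from $\infty_\Theta$, precisely where the identification breaks down. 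The characterizing asymptotic property is a \emph{uniqueness} criterion, not a constructive formula, and it cannot be inverted in the way you describe.

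The actual construction in \cite{SV} proceeds differently: one works on fixed $J$-invariants and uses the smooth asymptotic map $e_\Theta$ (equivalently its adjoint $e_\Theta^*: C^\infty(X)\to C^\infty(X_\Theta)$) together with the Mellin/spectral decomposition in the $A_{X,\Theta}$-variable. The spectral gap $\mathbf{(SG)}$ and the uniform exponent bound (your outline omits the latter, but it is listed in the paper's remark as essential) are used to show that the resulting ``Eisenstein-type'' integrals extend to bounded operators on $L^2$, from which $\iota_\Theta$ (or rather $\iota_\Theta^*$, then its adjoint) is obtained. Your treatment of (i) via uniqueness and transitivity of exponential maps, and of (ii) via induction through the boundary degenerations, is in the right spirit; it is the existence and boundedness step where your mechanism does not work as written.
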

	
	\begin{rem}
		\begin{itemize}
			\item The Bernstein maps are not isometric in general and their failure to be so is in some sense accounted for by the scattering operators that we shall introduce in the next paragraph. Similarly, the decomposition in (ii) is not orthogonal.
			
			\item A direct consequence of (ii) is that the Plancherel measure for $L^2(X)$ is in the same class as the sum of the Plancherel measures of $L^2_{\disc}(X_\Theta)$ for $\Theta\subset \Delta_X$. In particular, the spectrum of $L^2(X)$ (i.e.\ the support of its Plancherel measure) is the union of the discrete spectra of the boundary degenerations $X_\Theta$. This is pleasantly aligned with Conjecture \ref{conj1 SV}, and in particular the natural partition of $\Phi_{\temp}(X)$ into sets of discrete $L$-parameters valued in (conjugacy classes of) standard Levi subgroups of ${}^L G_X$ (which are exactly the $L$-groups ${}^L G_{X_\Theta}$ of the boundary degenerations). This parallel also suggests that the subspaces $\iota_\Theta L^2_{\disc}(X_\Theta)$ and $\iota_\Omega L^2_{\disc}(X_\Omega)$ should be the same when the subsets $\Theta,\Omega\subset \Delta_X$ are conjugate under the Weyl group $W_X$. Under additional assumptions, this is also a consequence of the theory of scattering operators.
			
			\item In \cite{SV}, for the above theorem, Sakellaridis and Venkatesh actually assume that $X$ is wavefront (which automatically entails that its boundary degenerations satisfy $\mathbf{(Z-fin)}$ cf. \S \ref{S condition Zfin}). However, a closer look at the proof shows that all that is used are the following properties:
			\begin{itemize}
				\item The existence of smooth asymptotic maps $e_\Theta: \cS(X_\Theta)\to \cS(X)$ (which is guaranteed by Theorem \ref{theo smooth asymptotic});
				
				\item The spectral gap property $\mathbf{(SG)}$ of \S \ref{S DSC}, which as explained there is a consequence of the Discrete Series Conjecture;
				
				\item For every $\Theta\subset \Delta_X$, a uniform bound on the number of (unitary) $A_{X,\Theta}$-exponents for relative discrete series of $X_\Theta$ i.e.\ the existence of a constant $C>0$ such that for every $\pi\in \Irr(G)$, the number of $\chi\in \Unit(A_{X,\Theta})$ such that $\pi\hookrightarrow L^2(X_\Theta,\chi)$ is bounded by $C$. But this is also readily seen to be a consequence of the Discrete Series Conjecture.
			\end{itemize}
		\end{itemize}
	\end{rem}
\end{paragr}

\begin{paragr}[Scattering operators]
	
	Let, for $\Theta\subset \Delta_X$, $\iota_{\Theta,\disc}$\index{$\iota_{\Theta,\disc}$} denote the restriction of the Bernstein map $\iota_\Theta$ to the discrete subspace $L^2_{\disc}(X_\Theta)$ of $L^2(X_\Theta)$. By Theorem \ref{theo Bernstein maps} all that is missing to arrive at a Plancherel decomposition of $L^2(X)$ in terms of the discrete spectra of the boundary degenerations is a description of the kernel of the map $\sum_{\Theta} \iota_{\Theta,\disc}$. Such a description is provided by the theory of {\it scattering operators} which is summarized in the next theorem. This however requires so-far much more stringent assumptions on $X$ than before. Namely, not only do we have to asuume that $X$ is wavefront but also that it satisfies the condition of
	\begin{equation*}
		\displaystyle \mbox{generic injectivity on faces of } \cA_X^*/W_X\to \cA^*/W.
	\end{equation*} 
	This means that after restricting the natural map $\cA_X^*/W_X\to \cA^*/W$ to any facet of a Weyl chamber in $\cA_X^*$ it is injective on a subset with negligible complement (see \cite[Sect. 14.2]{SV} for a more detailed discussion of this condition). Although satisfied for symmetric varieties, it should be noted that it is a very strong assumption that fails for most non-wavefront spherical varieties (e.g. for $\GL_2\backslash \SO_5$).
	
	Before stating the theorem we need to introduce some notation. Namely, for every subsets $\Theta,\Omega\subset \Delta_X$ we let
	$$\displaystyle W_X(\Theta,\Omega)=\{w\in W_X\mid w\Theta=\Omega \}$$\index{$ W_X(\Theta,\Omega)$}
	be the set of elements in the Weyl group conjugating $\Theta$ to $\Omega$ (or equivalently, on the dual side, conjugating the standard Levi $\widehat{G}_{X_\Theta}$ of $\widehat{G}_X$ to $\widehat{G}_{X_\Omega}$). For $\Theta\subset \Delta_X$, we also set
	$$\displaystyle W_X(\Theta):=\bigcup_{\Omega\subset \Delta_X} W_X(\Theta,\Omega)$$\index{$W_X(\Theta)$}
	which is also the representatives of minimal length of the cosets $W_X/W_{X_\Theta}$. 
	
	Finally, a technical annoyance requires to introduce the images $A_{X,\Theta}'$\index{$A_{X,\Theta}'$} of the $k$-points of the center $Z(L_\Theta)^0$ by the natural morphism $Z(L_\Theta)^0\to A_{X,\Theta}=Z(X_\Theta)$. (Recall that the Levi subgroup was introduced in \S \ref{S wavefront case} for wavefront varieties and that $X_\Theta$ is parabolically induced from an $L_\Theta$-variety, hence the morphism $Z(L_\Theta)^0\to Z(X_\Theta)$.) Note that every $w\in W_X(\Theta,\Omega)$, seen for example as an element of the Weyl group $W$ of $G$ via the embedding of \S \ref{embedding WX}, conjugates $L_\Theta$ to $L_\Omega$ and therefore sends the subgroup $A'_{X,\Theta}\subset A_{X,\Theta}(k)$ onto $A'_{X,\Omega}$.
	
	\begin{theo}[Sakellaridis-Venkatesh]\label{theo scattering}
		Assume that $X$ is wavefront and satisfies the spectral gap property $\mathbf{(SG)}$ as well as the ``generic injectivity on faces of $\cA_X^*/W_X\to \cA^*/W$'' stated above. Then, there exist unitary $G$-equivariant isomorphisms
		$$\displaystyle S_w: L^2(X_\Theta)\to L^2(X_\Omega),\;\; \Theta,\Omega\subseteq \Delta_X, w\in W_X(\Theta,\Omega),$$\index{$S_w$}
		called \textbf{scattering operators} that are characterized by the following properties:
		\begin{enumerate}[(i)]
			\item For every $\Theta,\Omega\subseteq \Delta_X$ and $w\in W_X(\Theta,\Omega)$, $S_w$ is $A'_{X,\Theta}$-equivariant with respect to the isomorphism $w: A'_{X,\Theta}\simeq A'_{X,\Omega}$;
			
			\item For every $\Theta,\Omega, Z\subseteq \Delta_X$, $w\in W_X(\Theta,\Omega)$ and $w'\in W_X(\Omega,Z)$, we have
			$$\displaystyle S_{w'}S_w=S_{w'w}.$$

			\item The discrete Berstein maps $(\iota_{\Theta,\disc})_\Theta$ induce a $G$-equivariant isometric isomorphism
			$$\displaystyle \left(\bigoplus_{\Theta\subset \Delta_X} L^2_{\disc}(X_\Theta)\right)^{inv}\simeq L^2(X), (f_\Theta)_{\Theta}\mapsto \sum_{\Theta} \iota_\Theta(f_\Theta)$$
			where $\left(\bigoplus_{\Theta\subset \Delta_X} L^2_{\disc}(X_\Theta)\right)^{inv}$ stands for the subspace of tuples $\mathbf{f}=(f_\Theta)_{\Theta}\in \bigoplus_{\Theta\subset \Delta_X} L^2_{\disc}(X_\Theta)$ satisfying the symmetry relations
			$$\displaystyle S_wf_\Theta=f_\Omega, \mbox{ for every } \Theta,\Omega\subseteq \Delta_X, w\in W_X(\Theta,\Omega),$$
			and it is equipped with the Hilbert norm
			$$\displaystyle \lVert \mathbf{f}\rVert^2_{inv}:=\sum_{\Theta}\lvert W_X(\Theta)\rvert \cdot \lVert f_\Theta\rVert^2.$$
			
			\item For every $w\in W_X(\Theta,\Omega)$, we have $\iota_\Omega S_w=\iota_\Theta$. More generally, if $\Omega\subseteq \Omega'\subset \Delta_X$, then
			$$\displaystyle \iota_\Omega^{\Omega'}S_w=S_{w'}\iota_{\Theta}^{\Theta'}$$
			where $\Theta'\supset \Theta$ and $w'\in W_X(\Theta',\Omega')$ are characterized by $w\in W_{X_{\Omega'}}w'$.
		\end{enumerate}
		Moreover, the scattering operators have the following additional property for every subsets $\Theta,\Omega\subset \Delta_X$:
		\begin{enumerate}[(i)]
			\setcounter{enumi}{4}
			
			\item $$\displaystyle \iota_{\Omega,\disc}^*\iota_{\Theta,\disc}=\sum_{w\in W_X(\Theta,\Omega)} S_w\mid_{L^2_{\disc}(X_\Theta)}.$$
		\end{enumerate}
	\end{theo}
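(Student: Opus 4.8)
The plan is to construct the scattering operators $S_w$ directly from the Bernstein maps and their adjoints, and then verify the listed properties by reducing everything to rank-one computations along one-parameter degenerations. The starting point is Theorem \ref{theo Bernstein maps}: the discrete Bernstein maps $\iota_{\Theta,\disc}: L^2_{\disc}(X_\Theta)\to L^2(X)$ are continuous and $G$-equivariant, and by part (ii) of that theorem their images jointly span $L^2(X)$. The natural candidate for $S_w$ with $w\in W_X(\Theta,\Omega)$ is (a suitable normalization of) the composite $\iota_{\Omega,\disc}^*\iota_{\Theta,\disc}$, which by construction maps $L^2_{\disc}(X_\Theta)$ to $L^2_{\disc}(X_\Omega)$ and is $G$-equivariant; property (v) is then essentially a \emph{definition} once one shows that this composite decomposes as a sum over $W_X(\Theta,\Omega)$ of the individual $S_w$'s. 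So the first genuine step is to establish that $\iota_{\Omega,\disc}^*\iota_{\Theta,\disc}$ is a finite sum of unitary $A'_{X,\Theta}$-equivariant pieces indexed by $W_X(\Theta,\Omega)$, and that each piece is independent of the auxiliary choices (exponential maps, toroidal embedding).

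The mechanism for this is the asymptotic characterization of the Bernstein maps: $\iota_\Theta(\mathcal{L}_{a^n}f)$ is, up to an $L^2$-error tending to zero, the pushforward $\exp_\Theta^*(\mathcal{L}_{a^n}f)$ for $a\in A_{X,\Theta}^{--}$ deep in the chamber. To compute $\iota_{\Omega,\disc}^*\iota_{\Theta,\disc}$, I would apply both Bernstein maps to vectors pushed far toward $\infty_\Theta$ and $\infty_\Omega$ respectively and compare supports inside a fixed toroidal compactification $\overline{X}$. The $A_X$-orbit combinatorics on $\overline{A}_X$ (from \S\ref{S theta infinity}) show that the neighborhoods of $\infty_\Theta$ and $\infty_\Omega$ overlap precisely along the strata corresponding to cones lying in faces conjugate to both $\Theta$ and $\Omega$, and the set of such conjugacies is exactly $W_X(\Theta,\Omega)$ since $\cA_X^-$ is a fundamental domain for $W_X$ acting on $\cA_X$ (\S\ref{S spherical roots}). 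This is where the ``generic injectivity on faces of $\cA_X^*/W_X\to \cA^*/W$'' hypothesis enters crucially: it guarantees that the different $W_X$-translates of a face contribute \emph{linearly independent} spectral data, so that the various $S_w$ can be isolated from their sum and each one is forced to be unitary (it matches a pushforward, which is a local isometry on half-densities, on a spectrally dense set). The spectral gap property $\mathbf{(SG)}$ is what makes the relevant exponent sets finite, so the sums are finite and the limits exist.

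Given the existence and unitarity of the $S_w$, the remaining properties are formal consequences of the corresponding properties of the $\iota_\Theta$'s. Property (iv), $\iota_\Omega S_w=\iota_\Theta$, follows by feeding the defining limit relation for $S_w$ into the defining limit relation for $\iota_\Omega$ and using transitivity of exponential maps (Proposition \ref{prop exponential maps}(v)); the more general version with $\Omega\subseteq\Omega'$ follows by also invoking property (i) of Theorem \ref{theo Bernstein maps}, $\iota_{\Theta'}\circ\iota_\Theta^{\Theta'}=\iota_\Theta$, and tracking cosets $W_X/W_{X_\Theta}$. The cocycle relation (ii), $S_{w'}S_w=S_{w'w}$, and the $A'$-equivariance (i) are then forced: (i) because pushforward along exponential maps intertwines the $A_{X,\Theta}$- and $A_{X,\Omega}$-actions via $w$ up to the unipotent-induced twist (which is why one uses $A'_{X,\Theta}$ rather than all of $A_{X,\Theta}(k)$), and (ii) by composing two asymptotic identities and again applying Proposition \ref{prop exponential maps}(v). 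Finally, the isometric decomposition (iii) is the combination of surjectivity from Theorem \ref{theo Bernstein maps}(ii), the symmetry relations $S_w f_\Theta=f_\Omega$ which cut out exactly the kernel of $\sum_\Theta\iota_{\Theta,\disc}$ by (iv) and (v), and a bookkeeping of multiplicities: each discrete summand of $L^2(X)$ is hit $|W_X(\Theta)|$ times, explaining the weight in $\lVert\mathbf{f}\rVert^2_{inv}$; one then checks the polarization identity using (v).

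The main obstacle, I expect, will be the isolation of individual scattering operators from the composite $\iota_{\Omega,\disc}^*\iota_{\Theta,\disc}$ and the proof that each is \emph{unitary} (not merely a partial isometry or a bounded operator). This is precisely the point where the generic-injectivity hypothesis does the heavy lifting: without it, distinct $w\in W_X(\Theta,\Omega)$ could contribute to overlapping parts of the spectrum and one could not disentangle them, nor conclude that the pushforward comparison (which is an honest isometry locally) propagates to a global isometry of $L^2_{\disc}$. Making this rigorous requires a careful spectral-separation argument — comparing, for a dense family of discrete data, the $L^2$-norms of $\iota_\Theta f$ computed via the asymptotic expansion along two different chambers — and controlling the error terms uniformly, which is the technically delicate heart of \cite[Sect. 14]{SV}.
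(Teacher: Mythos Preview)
The paper does not prove this theorem: it is quoted from \cite{SV}, and the only argument supplied is the \emph{unicity} part (the paragraph beginning ``Let us explain the unicity part of the above theorem''). That argument runs in the opposite logical direction from yours. The paper takes properties (i)--(iv) as given and shows they determine the $S_w$ uniquely: from (iv) the image of $(\iota_{\Theta,\disc}^*)_\Theta$ lands in the invariant subspace, which combined with (iii) forces (v); then (v) together with the $A'_{X,\Theta}$-equivariance (i) --- which separates the summands of (v) spectrally --- pins down each $S_w\mid_{L^2_{\disc}(X_\Theta)}$; finally the extension to all of $L^2(X_\Theta)$ is forced by the relation $S_w\iota_Z^\Theta = \iota_{wZ}^\Omega S_w$ (itself a consequence of (iii)) together with the spanning property of Theorem~\ref{theo Bernstein maps}(ii).

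Your proposal, by contrast, is a sketch of the \emph{existence} argument, which the paper does not attempt at all. Your outline --- decompose $\iota_{\Omega,\disc}^*\iota_{\Theta,\disc}$ along $A'_{X,\Theta}$-characters, use generic injectivity on faces to separate the $W_X(\Theta,\Omega)$-contributions, and invoke $\mathbf{(SG)}$ for finiteness of exponents --- is broadly in the spirit of \cite[Sect.~14]{SV}, and you correctly flag the isolation-plus-unitarity step as the delicate heart of the matter. One caution: you treat (v) as essentially definitional and derive (iii) from it, whereas in the paper's unicity logic (v) is a \emph{consequence} of (iii) and (iv). This is not an error, but if you want your construction to line up with the paper's characterization, you would need to verify (iii) and (iv) directly from the asymptotics rather than via (v); otherwise you have not shown that your operators satisfy the properties that the paper claims characterize them.
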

	
	\begin{rem}
		To make precise the unitary part of the statement, we need to fix in a coherent way the invariant measures on the various boundary degenerations. This can be done as in \cite[\S 4.2]{SV}, more precisely starting with an invariant measure on $X(k)$ we can deduce invariant measures on all the boundary degenerations $X_\Theta(k)$, $\Theta\subset \Delta_X$, characterized in particular by the fact that the exponential maps are eventually measure-preserving.
	\end{rem}
	
	Let us explain the unicity part of the above theorem. Namely, property (iv) entails that the image of the map
	$$\displaystyle (\iota_{\Theta,\disc}^*)_{\Theta\subset \Delta_X}: L^2(X)\to \bigoplus L^2_{\disc}(X_\Theta)$$
	lands in $(\bigoplus L^2_{\disc}(X_\Theta))^{inv}$. Combining this with (iii), this readily implies property (v) which in turn, together with property (i), characterizes the restriction $S_w\mid_{L^2_{\disc}(X_\Theta)}$ for $w\in W_X(\Theta,\Omega)$. Property (iii) also entails that $S_w \iota_Z^\Theta=\iota_{wZ}^\Omega S_w$ for every $Z\subset \Theta$, $w\in W_X(\Theta,\Omega)$. Since (by Theorem \ref{theo Bernstein maps}), $L^2(X_\Theta)=\sum_{Z\subset \Theta} \iota_Z^\Theta L^2_{\disc}(X_Z)$, this shows the desired unicity.
	
	It turns out that the scattering operators are related to standard intertwining operators. More precisely, let $\Theta,\Omega\subset \Delta_X$. The Plancherel decompositions of $L^2_{\disc}(X_\Theta)$ and $L^2_{\disc}(X_\Omega)$ for the actions respectively of $A_{X,\Theta}'$ and $A_{X,\Omega}'$ read
	$$\displaystyle L^2_{\disc}(X_\Theta)=\int^\oplus_{\Unit(A_{X,\Theta}')} L^2_{\disc}(X_\Theta,\chi) d\chi \mbox{ and } L^2_{\disc}(X_\Omega)=\int^\oplus_{\Unit(A_{X,\Omega}')} L^2_{\disc}(X_\Omega,\chi) d\chi$$
	Then, for each $w\in W_X(\Theta,\Omega)$, since the scattering operator $S_w$ is equivariant with respect to the isomorphism $w: A_{X,\Theta}'\simeq A_{X,\Omega}'$, it desintegrates into unitary equivariant maps
	$$\displaystyle S_{w,\chi}: L^2_{\disc}(X_\Theta,\chi)\to L^2_{\disc}(X_\Omega,w\chi)$$\index{$S_{w,\chi}$}
	well-defined for almost all $\chi\in \Unit(A_{X,\Theta}')$. Furthermore, for all $\chi$ inducing the isotypic decompositions of $L^2_{\disc}(X^L_\Theta,\chi)$ and $L^2_{\disc}(X^L_\Omega,w\chi)$ yields direct sums decompositions
	\begin{equation}\label{decomp L2disc}
		\displaystyle L^2_{\disc}(X_\Theta,\chi)\simeq \bigoplus_{\sigma} M_\Theta(\sigma)\otimes I_{P_\Theta^-}^G(\sigma) \mbox{ and } L^2_{\disc}(X_\Omega,w\chi)\simeq \bigoplus_{\tau} M_\Omega(\tau)\otimes I_{P_\Omega^-}^G(\tau)
	\end{equation}
	where $\sigma$ and $\tau$ runs over countable sets of irreducible representations of $L_\Theta(k)$ and $L_\Omega(k)$ with central characters $\chi$ and $w\chi$ respectively and $M_\Theta(\sigma)$, $M_\Omega(\tau)$ are some multiplicity spaces. Assume moreover that the Levi varieties $X^L_\Theta$, $X^L_\Omega$  satisfy the Discrete Series Conjecture $\mathbf{(DSC')}$ (see \S \ref{S DSC}), so that the $\sigma$ and $\tau$ appearing above belong to a countable number of orbits in $\Irr(L_\Theta)$, $\Irr(L_\Omega)$ for the actions by twisting of unramified characters. It can then be shown\footnote{see the proof of \cite[Corollary 15.3.5]{SV}}, that for almost all $\chi$ all induced representations appearing in the decompositions \eqref{decomp L2disc} are irreducible and that there is a nonzero intertwining map $I_{P_{\Theta}^-}^G(\sigma)\to I_{P_{\Omega}^-}^G(\tau)$ only if $\tau\simeq w\sigma$. Therefore, for almost all $\chi$, the desintegration of the scattering operator decomposes further into a sum of intertwinings
	$$\displaystyle S_{w,\sigma}: M_\Theta(\sigma)\otimes I_{P_\Theta^-}^G(\sigma)\to M_\Omega(w\sigma)\otimes I_{P_\Omega^-}^G(w\sigma).$$\index{$S_{w,\sigma}$}
	Comparing it with the standard intertwining operator (whenever it is regular) $M(\sigma,w): I_{P_\Theta^-}^G(\sigma)\to I_{P_\Omega^-}^G(w\sigma)$\index{$M(\sigma,w)$} we further obtain a factorization
	$$\displaystyle S_{w,\sigma}=C(w,\sigma)\otimes M(\sigma,w)$$
	with $C(w,\sigma)\in \Hom(M_\Theta(\sigma), M_\Omega(w\sigma))$. When we are moreover in a multiplicity one setting and we can find coherent trivializations of the line $M_\Theta(\sigma), M_\Omega(w\sigma)$, this yields a function $\sigma\mapsto C(w,\sigma)\in \bC$ which, experience shows, is often given by some local $\gamma$-factors. E.g. in the case of the Whittaker model (to which most of the previous discussion, including Theorem \ref{theo scattering} applies modulo slight modifications), we recover the so-called local constants of the Langlands-Shahidi method \cite{ShaFE}. Other examples in rank one have been computed in \cite[Section 3]{SaktransOpI}. For symmetric spaces, the coefficients $C(w,\sigma)$ are closely related to functional equations of so-called local intertwining periods; see \cite[Theorem 12.4(4)]{FLO} and \cite{Matringegammafact} for a computation of those for some Galois symmetric varieties.
\end{paragr}

\begin{paragr}[The group case.]
	Finally, let us explain the relations of Theorems \ref{theo Bernstein maps} and \ref{theo scattering} to Harish-Chandra Plancherel formula in the group case $X=H$, $G=H\times H$ \cite{WaldPlanch}. For a similar discussion we also refer the reader to \cite[\S 15.7]{SV}. Recall that in this case we have $\Delta_X=\Delta_H$, the parabolic $P_\Theta$ is a product $P_\Theta=Q_\Theta\times Q_\Theta^-$ where $Q_\Theta\subset H$ is the standard parabolic subgroup associated to $\Theta$, $Q_\Theta^-$ the opposite antistandard parabolic, and we have an identification
	$$\displaystyle X_\Theta=M_\Theta\times^{P_\Theta^-} G=(N_\Theta^-\times M_\Theta^{diag}\times N_\Theta)\backslash H\times H$$
	where $M_\Theta=Q_\Theta\cap Q_\Theta^-$, $Q_\Theta=M_\Theta N_\Theta$, $Q_\Theta^-=M_\Theta N_\Theta^-$.
	
	Let us fix Haar measures on $N_\Theta(k)$, $M_\Theta(k)$, $N_\Theta^-(k)$, $G(k)$ such that the product map $N_\Theta(k)\times M_\Theta(k)\times N_\Theta^-(k)\to G(k)$ is measure preserving.
	
	Fixing furthermore a Haar measure on the Pontryagin dual $\Unit(A_{X,\Theta})$, the Plancherel decomposition for $L^2_{\disc}(M_\Theta)$ reads
	\begin{equation}\label{Planch Mtheta}
		\displaystyle L^2_{\disc}(M_\Theta)\simeq \int^{\oplus}_{\Pi_2(M_\Theta)} \sigma\widehat{\otimes} \sigma^\vee d\sigma
	\end{equation}
	where $d\sigma$ is the unique measure such that the central character map $\Pi_2(M_\Theta)\to \Unit(A_{X,\Theta})$, $\sigma\mapsto \omega_\sigma$, is locally measure preserving,  the backwards isomorphism is given by the formation of matrix coefficients:
	$$\displaystyle \int_{\Pi_2(M_\Theta)} f_\sigma\otimes f_\sigma^\vee d\sigma\mapsto \left( m\mapsto \int_{\Pi_2(M_\Theta)} \langle \sigma(m)f_\sigma,f_\sigma^\vee\rangle d\sigma\right).$$
	and $\sigma\otimes \sigma^\vee\simeq HS(\sigma)$ (the space of Hilbert Schmidt operators on $\sigma$) is equipped with the unitary structure given by the Hilbert-Schmidt norm $\lVert .\rVert^2_{\mathrm{HS}}$ times the formal degree $\deg(\sigma)\in \bR_{>0}$. (This normalization is necessary for the isomorphism \eqref{Planch Mtheta} to be unitary.)
	
	Inducting from $P_\Theta^-$ to $G$, we get an isomorphism
	\begin{equation}
		\displaystyle L^2(X_\Theta)\simeq \int_{\Pi_2(M_\Theta)} I_{Q^-_\Theta}(\sigma)\widehat{\otimes} I_{Q_\Theta}(\sigma^\vee) d\sigma.
	\end{equation}
	Then, on the dense subspace $\cS(H)\subset L^2(X)$, the adjoint of the discrete Bernstein map $\iota_{\Theta,\disc}^*$ is given by
	$$\displaystyle \iota_{\Theta,\disc}^*f=\int_{\Pi_2(M_\Theta)} (M_{Q_\Theta\mid Q_\Theta^-}(\sigma)^{-1}\otimes I)I_{Q_\Theta}(\sigma,f) d\sigma$$
	where $I_{Q_\Theta}(\sigma,f)$ is the operator associated to $f$ on the induced representation $I_{Q_\Theta}(\sigma)$, a finite rank operator that we identify with an element of $I_{Q_\Theta}(\sigma)\otimes I_{Q_\Theta}(\sigma^\vee)$ via an isomorphism $I_{Q_\Theta}(\sigma^\vee)\simeq I_{Q_\Theta}(\sigma)^\vee$ (given by the choice of Haar measure on $Q_\Theta(k)$), and
	$$\displaystyle M_{Q_\Theta\mid Q_\Theta^-}(\sigma): I_{Q^-_\Theta}(\sigma)\to I_{Q_\Theta}(\sigma)$$\index{$M_{Q_\Theta\mid Q_\Theta^-}(\sigma)$}
	stands for the standard intertwining operator given by (the meromorphic continuation of) $e\mapsto \int_{N_\Theta(k)} e(ug)du$.
	
	Note that the norm of $(M_{Q_\Theta\mid Q_\Theta^-}^{-1}\otimes I)I_{Q_\Theta}(\sigma,f)$ for the unitary structure on the induced representation $I_{Q^-_\Theta}(\sigma)\otimes I_{Q_\Theta}(\sigma^\vee)$ is given by
	$$\displaystyle \lVert (M_{Q_\Theta\mid Q_\Theta^-}(\sigma)^{-1}\otimes I)I_{Q_\Theta}(\sigma,f)\rVert^2=\mu_H(\sigma) \lVert I_{Q_\Theta}(\sigma,f)\rVert_{H-S}^2$$
	where $\lVert . \rVert_{H-S}$ denotes the Hilbert-Schmidt norm and
	$$\displaystyle \mu_H(\sigma):=(M_{Q^-_\Theta\mid Q_\Theta}(\sigma)M_{Q_\Theta\mid Q_\Theta^-}(\sigma))^{-1} \deg(\sigma)$$
	is the so-called Harish-Chandra Plancherel density. Here, the composition $M_{Q^-_\Theta\mid Q_\Theta}(\sigma)M_{Q_\Theta\mid Q_\Theta^-}(\sigma)$ appears by the adjunction property ${}^tM_{Q_\Theta\mid Q_\Theta^-}(\sigma)=M_{Q^-_\Theta\mid Q_\Theta}(\sigma^\vee)$ and is identified with a number as it is generically (at the holomorphy points of the intertwining operators) a scalar operator, whereas $\deg(\sigma)>0$ stands, as above, for the formal degree of $\sigma$.
	
	The Plancherel formula of Theorem \ref{theo scattering}(ii) then reads
	$$\displaystyle \lVert f\rVert_{L^2}^2=\sum_{\Theta} \frac{1}{\lvert W_H(\Theta)\rvert}\int_{\Pi_2(M_\Theta)} \lVert I_{Q_\Theta}(\sigma,f)\rVert_{H-S}^2 \mu_H(\sigma) d\sigma,\mbox{ for } f\in \cS(H),$$
	which is nothing but the Plancherel formula of Harish-Chandra \cite{WaldPlanch}.
\end{paragr}

\subsection{The most continuous part of the spectrum}\label{Sect most cont spectrum}

In this section, we continue to assume that $k$ is non-Archimedean and $G$ is split.

\vspace{2mm}

\begin{paragr}[A relative Satake isomorphism.]
Let $B=AN$ be a Borel subgroup and set $K=G(\mathfrak{o})$. We are interested in the $K$-unramified spectrum of $X$ i.e.\ in $\cS(X)^K$ as a module over the unramified Hecke algebra $\cH(G,K)$. Let us recall the classical Satake isomorphism \cite{CartierCorvallis}
	\begin{equation*}
		\displaystyle \cH(G,K)\simeq \bC[\widehat{A}]^W
	\end{equation*}
	where we recall that $\widehat{A}$ is the dual complex torus to $A$ and $\bC[\widehat{A}]$\index{$\bC[\widehat{A}]$} is the ring of regular functions on the latter. In order to have a nicer formulation of the main results, we further assume that
	\begin{equation*}
		\mbox{ The } k\mbox{-points } X_B(k) \mbox{ of the open } B\mbox{-orbit contains a unique } B(k)\mbox{-orbit.}
	\end{equation*}
	Indeed, this condition is equivalent to the injectivity of the dual morphism $\widehat{A}_X\to \widehat{A}$ (or equivalently of $\iota_X: \widehat{G}_X\to \widehat{G}$) so that lifts of unramified $L$-parameters $\phi: W_k/I_k\to \widehat{G}$ to $\widehat{G}_X$ are automatically also unramified. Therefore, according to the spirit of Conjecture \ref{conj1 SV}, only under this assumption is $\cS(X)^K$ really capturing ``the unramified spectrum of $X$''. 
	
	As a side remark, we note that the above condition implies that $X$ has no type $N$-roots (see \S \ref{S spherical roots}). Indeed, it implies that the cokernel of the morphism between character lattice $X^*(A_X)\to X^*(A)$ doesn't have torsion, hence $\Delta_X\subset X^*(A_X)$ (as in any event we have $\Delta_X\subset X^*(A_X)\otimes \bQ\cap X^*(A)$).
	
	Let $\delta^{1/2}_{(X)}$\index{$\delta^{1/2}_{(X)}$} be the square root of the modular character of the parabolic $P_X$. It is an unramified character of $A(k)$ and as such can be seen as an element of $\widehat{A}$. Let $\cH_X$\index{$\cH_X$} be the image of the unramified Hecke algebra by the Satake isomorphism followed by the restriction map
	$$\displaystyle \bC[\widehat{A}]^W\to \bC[\delta^{1/2}_{(X)}\widehat{A}_X]$$
	and $\mathcal{K}_X$\index{$\mathcal{K}_X$} be its fraction field. Actually, under the assumption that $X$ is unimodular we have $\delta^{1/2}_{(X)}\in \widehat{A}_X$ so that $\delta^{1/2}_{(X)}\widehat{A}_X=\widehat{A}_X$. However, the theorem below doesn't require this assumption. Note that the Weyl group $W_X$ acts on $\delta^{1/2}_{(X)}\widehat{A}_X$ and we have the containment $\cH_X\subset \bC[\delta^{1/2}_{(X)}\widehat{A}_X]^{W_X}$.

	\begin{theo}[Sakellaridis, \cite{Sakunr} \cite{SakSph}]\label{theo relative Satake}
		\begin{enumerate}[(i)]
			\item The $\cH(G,K)$-module $\cS(X)^K$ is finitely generated, torsion free and supported on $\cH_X$ (i.e.\ the action factors through $\cH(G,K)\to \cH_X$).  Moreover, there exists an isomorphism
			\begin{equation}\label{iso Satake}
				\displaystyle \cS(X)^K\otimes_{\cH_X} \mathcal{K}_X\simeq \bC(\delta^{1/2}_{(X)}\widehat{A}_X)^{W_X}
			\end{equation}
			that is canonical up to multiplication by an element of $\bC(\delta^{1/2}_{(X)}\widehat{A}_X)^{W_X}$.
			
			\item Assume furthermore that the $G$-scheme $X$ is defined over $\mathfrak{o}$, is affine, satisfies the axioms stated in \cite[Section 2]{SakSph} \footnote{If $(G,X)$ is defined over a number field, these axioms are always satisfied at all except finitely many places.} as well as the two technical conditions of \cite[Theorem 7.2.1]{SakSph}\footnote{These conditions concern the combinatorics of so-called colors of the spherical variety $X$ and are conjectured to be satisfied under the affineness condition.}. Then, the isomorphism in (i) can be chosen to restrict to an isomorphism
			\begin{equation*}
				\displaystyle \cS(X)^K\simeq \bC[\delta^{1/2}_{(X)}\widehat{A}_X]^{W_X}
			\end{equation*}
			sending $\mathbf{1}_{X(\mathfrak{o})}\in \cS(X)^K$ to $1$. (It then becomes canonical.)
		\end{enumerate}
		
	\end{theo}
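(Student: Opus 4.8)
The plan is to prove the relative Satake isomorphism by reducing the study of $\cS(X)^K$ to harmonic analysis on the open Borel orbit together with a geometric analysis of the boundary, exactly as in Sakellaridis' work \cite{Sakunr}, \cite{SakSph}. First I would set up the \emph{Mellin/asymptotics dictionary}: using the weak Cartan decomposition $X(k)=A_X^-\mathcal{K}$ (\S\ref{S wavefront and weak Cartan}) and the presentation $X_B\simeq A_X\times N_X$ of \eqref{eq LST X}, one identifies $\cS(X)^K$ with a space of functions on $A_X^-$, controlled near each face of the cone $\cA_X^+$ by the corresponding boundary degeneration $X_\Theta$ (via the exponential/smooth asymptotic maps of \S\ref{Sect smooth asym}). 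This is the mechanism that produces the support statement: a generator of $\cS(X)^K$ supported away from all proper $\infty_\Theta$ is compactly supported modulo $Z(X)(k)$, and its Hecke-module structure is governed by $\cH(Z(X)^0)$, while the asymptotics near $\infty_\emptyset$ — the asymptotic cone, a horospherical variety — are governed by the ``most continuous'' piece, whose unramified spectrum is literally $\bC[\delta_{(X)}^{1/2}\widehat A_X]$. The key structural input is that the horospherical case is essentially a torus computation: for $S\backslash G$ with $S\supset N$, the unramified Hecke action on $C_c^\infty(S\backslash G)^K$ is computed by the Casselman–Shalika–type formula, giving $\bC[\delta_{(X)}^{1/2}\widehat A_X]$ with its $W_X$-action appearing through the normalizer $Z(X)$.

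Next I would prove finite generation and torsion-freeness. Finite generation over $\cH(G,K)$ is the unramified specialization of Theorem \ref{theo f.g. Schwartz space} (valid here since $X$, being wavefront/affine in the relevant cases, has all boundary degenerations satisfying $\mathbf{(Z-fin)}$); alternatively it follows directly from the weak Cartan decomposition plus the finite generation of the relevant monoid $\cA_X^-\cap X_*(A_X)$ and $\mathbf{(Z-fin)}$. Torsion-freeness I would deduce from the fact that $\cS(X)^K$ embeds into $C^\infty(X)^K$, on which the center of the Hecke algebra acts without nilpotents because $C^\infty(X)$ separates the (continuous family of) unramified eigenfunctions; concretely, a torsion element would be an eigenvector killed by some nonzero $h\in\cH_X$, contradicting its nonvanishing on a Zariski-dense set of unramified parameters lifting to $\widehat G_X$, which is where the injectivity hypothesis on $\widehat A_X\to\widehat A$ enters. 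Given finite generation, torsion-freeness and the identification of the generic rank with $\dim_{\mathcal K_X}\bC(\delta_{(X)}^{1/2}\widehat A_X)^{W_X}=|W/W_X|\cdot(\text{something})$ — more precisely, rank one over $\bC(\delta_{(X)}^{1/2}\widehat A_X)^{W_X}$ — one gets the isomorphism \eqref{iso Satake} after inverting $\cH_X$, canonical up to a scalar in $\bC(\delta_{(X)}^{1/2}\widehat A_X)^{W_X}$. Here the computation of the generic rank is the place where one invokes the rank-one reduction of Sakellaridis–Venkatesh together with Knop's theory of the little Weyl group $W_X$ acting as a reflection group on $\cA_X$ (\S\ref{S spherical roots}).

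For part (ii), the strategy is to pin down the normalizing scalar by an integrality argument at the identity. Under the affineness and the axioms of \cite[Section 2]{SakSph}, the function $\mathbf 1_{X(\fo)}$ lies in $\cS(X)^K$ and is $K$-fixed; one shows its image under the (suitably normalized) isomorphism is $1\in\bC[\delta_{(X)}^{1/2}\widehat A_X]^{W_X}$ by evaluating the Hecke operators on $\mathbf 1_{X(\fo)}$ via point-counting on the $\fo$-points, i.e. a Casselman–Shalika recursion on the spherical variety. The two technical conditions of \cite[Theorem 7.2.1]{SakSph} (about colors) guarantee that this recursion closes up and that the resulting map lands in the \emph{polynomial} ring $\bC[\delta_{(X)}^{1/2}\widehat A_X]^{W_X}$ rather than merely its fraction field, with no denominators appearing; this is the analogue of the fact that, for the group case, the Satake transform of $\mathbf 1_K$ is $1$. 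I expect the \textbf{main obstacle} to be precisely this last integrality/normalization step: controlling the denominators in the Casselman–Shalika expansion on a general (affine) spherical variety requires the combinatorics of colors and the precise behavior of the $B$-orbit structure on the $\fo$-points, which is exactly what the technical hypotheses in \cite{SakSph} are designed to handle; everything upstream (finite generation, torsion-freeness, the generic isomorphism) is comparatively soft once the asymptotics machinery of Sections \ref{Sect smooth asym}–\ref{Sect Bernstein maps} and the horospherical base case are in hand.
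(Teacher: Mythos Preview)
The paper does not contain a proof of this theorem: it is stated as a result of Sakellaridis and attributed entirely to \cite{Sakunr} and \cite{SakSph}, with only the subsequent remark noting that the construction of \eqref{iso Satake} in \cite[Section 6.2]{Sakunr} depends on a choice of nonzero $f_0\in C_c^\infty(X)^K$, fixable in case (ii) as $\mathbf{1}_{X(\mathfrak{o})}$. So there is nothing in the paper itself to compare your proposal against.

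That said, your sketch diverges in spirit from the route actually taken in \cite{Sakunr}. There, the argument does not go through the boundary-degeneration/smooth-asymptotics machinery of Sections~\ref{Sect smooth asym}--\ref{Sect Bernstein maps}; rather, it proceeds via Mackey theory on the stratification of $X$ by $B$-orbits, computing $\Hom_G(\cS(X), I_B^G(\chi))$ directly and showing that the unramified eigenfunctions are parametrized (generically, with multiplicity one) by $\delta_{(X)}^{1/2}\widehat A_X/W_X$. The support statement and the generic rank-one property fall out of this orbit analysis, not from asymptotics near $\infty_\Theta$. Your invocation of Theorem~\ref{theo f.g. Schwartz space} for finite generation is also problematic as stated: that theorem is proved under the hypothesis that every boundary degeneration satisfies $\mathbf{(Z\text{-}fin)}$, which is not assumed in part (i) here (affineness is only assumed in (ii), and neither affineness nor the standing quasi-affine hypothesis implies wavefront). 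In the unramified setting, finite generation of $\cS(X)^K$ over $\cH(G,K)$ is more elementary---it follows from the weak Cartan decomposition and finite generation of the monoid $\cA_X^+\cap X_*(A_X)$, without needing the full $\mathbf{(Z\text{-}fin)}$ apparatus. Your account of part (ii) is essentially correct in outline: the integrality step (no denominators) is indeed the heart of \cite{SakSph}, and the color hypotheses are there precisely to make the Casselman--Shalika-type recursion close.
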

	
	\begin{rem}
		More precisely, the construction of the isomorphism \eqref{iso Satake} given in \cite[Section 6.2]{Sakunr} depends on the choice of a nonzero function $f_0\in C_c^\infty(X)^K$. In the situation of point (ii) of the theorem this can be fixed by choosing $f_0=\mathbf{1}_{X(\mathfrak{o})}$.
	\end{rem}
\end{paragr}

\begin{paragr}[The unramified Plancherel formula.]\label{S unramified Plancherel}
	In this paragraph, we explain how to describe explicitly the smooth asymptotic map as well as the Bernstein map for the most continuous part of the spectrum (corresponding to $\Theta=\emptyset$) following \cite[Section 15]{SV}. As an application, we get an explicit Plancherel decomposition for the unramified subspace $L^2(X)^K$ whose understanding, as we will see in Section \ref{Sect global conjecture} is crucially related to global conjectures in the relative Langlands program.
	
	Let $X^h$\index{$X^h$} be the space of {\em generic horocycles} on $X$ \cite[\S 2.8]{SV}. More precisely, it is the variety classifying pairs $(Q,\mathcal{O})$ (or just $\mathcal{O}$ for short) where $Q$ is a parabolic subgroup in the same class as $P_X$ and $\mathcal{O}$ is an orbit under the unipotent radical $N_Q$ of $Q$ in the unique open $Q$-orbit $X_{Q}\subset X$. All of these orbits are $G$-conjugates so that fixing one $N_X$-orbit $\mathcal{O}_0\subset X_{P_X}=X_B$ we get an identification
	\begin{equation}\label{iso horocycles}
		\displaystyle X^h\approx A_X\times^{P_X} G
	\end{equation}
	where $P_X$ acts transitively on $A_X$ via the quotient $P_X\twoheadrightarrow L_X\twoheadrightarrow A_X$. We equip $X^h(k)$ with the $G(k)$-equivariant $\bC$-local system of rank one $\mathcal{L}$ whose fiber $\mathcal{L}_{\mathcal{O}}$ over an horocycle $\mathcal{O}$ is the space of invariant complex measures on $\mathcal{O}$ (or equivalently the space of complex Haar measures on $N_Q(k)$ where $Q$ is the underlying parabolic). Integration along horocycles yields a $G$-morphism
	$$\displaystyle \mathcal{R}: \cS(X)\to C^\infty(X^h,\mathcal{L}^\vee),$$\index{$\mathcal{R}$}
	called the {\it Radon transform}, whose target is the space of smooth (with respect to the $G(k)$-action) sections of the dual local system $\mathcal{L}^\vee$. More explicitly, the image of $\varphi\in \cS(X)$ by the Radon transform is the section sending $\mathcal{O}$ to the linear form
	$$\displaystyle \mu\in \mathcal{L}_{\mathcal{O}}\mapsto \int_{\mathcal{O}} \varphi \mu.$$
	(Note that, since $X$ is quasi-affine, horocycles are automatically closed in it, by \cite[Exercise 8, p.115]{Humphreys}, so that the above integral makes sense.)
	Actually, under our assumption that the variety $X$ is unimodular, the $G(k)$-equivariant local system $\mathcal{L}$ can be trivialized i.e.\ we can choose invariant measures on all horocycles in a coherent way, namely by fixing a measure $\mu$ on one horocycle $\mathcal{O}\subset X$ and equipping any other horocycle $\mathcal{O}'$ with the translated measure by any $g\in G$ such that $\mathcal{O}'=\mathcal{O}g$ (since $X$ is unimodular the resulting measure on $\mathcal{O}'$ does not depend on the choice of $g$). In what follows, for the sake of notational simplicity, we will fix such a trivialization and suppress the local system $\mathcal{L}$.
	
	We can make a similar construction for the asymptotic cone $X_\emptyset$ whose space of horocycles will be denoted by $X_\emptyset^h$\index{$X_\emptyset^h$}. The isomorphism \eqref{eq3} then shows that there is a canonical isomorphism
	$$\displaystyle X^h\simeq X_\emptyset^h.$$
	More concretely, this isomorphism sends an horocycle $\mathcal{O}\subset X$ to the intersection of the closure of $\mathcal{O}\times A_X$ in the affine degeneration $\mathfrak{X}^{\aff}$ with $X_\emptyset$. Moreover, our trivialization of the bundle $\mathcal{L}$ automatically fixes invariant measures on all the horocycles in $X_\emptyset$. (Indeed, as can readily be seen, two horocycles corresponding by the above isomorphism are orbits under the unipotent radical $N_Q$ of the same parabolic subgroup $Q$ and fixing a measure on any of them amounts to choose a Haar measure on $N_Q(k)$.)
	
	Introducing the Radon transform $\mathcal{R}_\emptyset$\index{$\mathcal{R}_\emptyset$} as above we thus have a diagram
	$$\displaystyle \xymatrix{ \cS(X_\emptyset) \ar[rd]^{\mathcal{R_\emptyset}} & & \cS(X) \ar[ld]^{\mathcal{R}} \\ & C^\infty(X_\emptyset^h)\simeq C^\infty(X^h) & }.$$
	The adjoint smooth asymptotic map $e_\emptyset^*$ and $\iota_\emptyset^*$ are then essentially obtained by inverting the left arrow of the above diagram. More specifically, this inversion can be done spectrally almost everywhere as follows. Note that there is a natural action of $A_X$ on $X^h\simeq X_\emptyset^h$ since translation by $L_Q$ of any $Q$-horocycle factors through the quotient $L_Q\twoheadrightarrow A_X$. Since it also commutes with the $G$-action, we will make this $A_X$-action a left action. The Radon transform $\mathcal{R}_\emptyset$ is then $A_X(k)$-equivariant if we normalize the $A_X(k)$-action on $C^\infty(X^h)$ by
	$$\displaystyle (\mathcal{L}_a\varphi)(.)=\delta_{(X)}(a)^{1/2}\varphi(a^{-1}.),\;\; \varphi\in C^\infty(X^h), a\in A_X(k)$$
	where $\delta_{(X)}$ stands as before for the modular character of $P_X(k)$. Thus, this normalization is the inverse of the normalization of the $A_X(k)$-action on $C^\infty(X_\emptyset)$ defined in \S \ref{S normalization action center} (for $X_\emptyset$ it is easy to see that $\eta=\delta_{(X)}$). This stems from the fact that the (left) $A_X(k)$-action on horocycles of $X_\emptyset$ doesn't respect our choice of measures and multiplies them by $\delta_{(X)}(a)$.
	
	For every character $\chi: A_X(k)\to \bC^\times$, we define the $\chi$-Radon transform for $X_\emptyset$ by
	$$\displaystyle (\mathcal{R}_{\emptyset,\chi}\varphi)(y):=\int_{A_X(k)} \delta_{(X)}(a)^{1/2}(\mathcal{R}_\emptyset\varphi)(a^{-1}y) \chi(a) da,\;\; \varphi\in \cS(X_\emptyset), y\in X^h(k).$$\index{$\mathcal{R}_{\emptyset,\chi}$}
	The above integral can be shown to converge absolutely whenever $\Re(\chi)$ is in some translate of the negative Weyl chamber $(\cA_X^*)^-$ and to admit a rational extension to all $\chi$. Actually, this boils down to well-known results on standard intertwining operators. Indeed, the map $\mathcal{R}_{\emptyset,\chi}$ descends to an intertwining map
	$$\displaystyle \mathcal{R}_{\emptyset,\chi}: \cS(X_\emptyset, \chi)\to \cS(X^h,\chi)$$
	where $\cS(X_\emptyset, \chi)$ and $\cS(X^h,\chi)$ denote the spaces of smooth functions $\varphi: X_\emptyset(k)\to \bC$ (respectively $\varphi: X^h(k)\to \bC$) satisfying $\varphi(ax)=\delta_{(X)}(a)^{-1/2}\chi(a)\varphi(x)$ (resp. $\varphi(ax)=\delta_{(X)}(a)^{1/2}\chi(a)\varphi(x)$) for all $a\in A_X(k)$. Using the isomorphisms \eqref{iso horocycles} and $X_\emptyset\simeq A_X\times^{P_X^-} G$ (which is a particular case of \eqref{eq boundary deg parabind}), $\cS(X_\emptyset, \chi)$ and $\cS(X^h,\chi)$ can be (non canonically) identified with the normalized parabolic inductions $I_{P_X^-}^{G}(\chi)$ and $I_{P_X}^{G}(\chi)$ respectively through which $\mathcal{R}_{\emptyset,\chi}$ is simply a nonzero scalar multiple of the standard intertwining operator $M_{P_X^-\mid P_X}: I_{P_X^-}^{G}(\chi)\to I_{P_X}^{G}(\chi)$. In particular, $\mathcal{R}_{\emptyset,\chi}$ yields an isomorphism $\cS(X_\emptyset, \chi)\simeq \cS(X^h,\chi)$ for almost all $\chi$.
	
	Similarly, the $\chi$-Radon transform $\mathcal{R}_\chi$ for $X$ is defined by\index{$\mathcal{R}_{\chi}$}
	$$\displaystyle (\mathcal{R}_{\chi}\varphi)(y):=\int_{A_X(k)} \delta_{(X)}(a)^{1/2}(\mathcal{R}\varphi)(a^{-1}y) \chi(a) da,\;\; \varphi\in \cS(X), y\in X^h(k).$$
	It converges for $\Re(\chi)$ in a translate of $(\cA_X^*)^-$, extends rationally to all $\chi$ (see \cite[Proposition 4.5.1]{Sakunr}) and, whenever it is regular, defines a $G$-equivariant morphism
	$$\displaystyle \mathcal{R}_\chi: \cS(X)\to \cS(X^h,\chi).$$
	
	For all $\chi$ for which it makes sense we then define the intertwining map\index{$e^*_{\emptyset,\chi}$}
	$$\displaystyle e^*_{\emptyset,\chi}:= \mathcal{R}_{\emptyset,\chi}^{-1}\circ \mathcal{R}_\chi:\cS(X)\to \cS(X_\emptyset,\chi).$$
	This furnishes explicit decompositions of $e_\emptyset$ and $\iota_\emptyset$ by the following theorem \cite[Theorem 15.4.2, Theorem 15.6.1]{SV}.
	
	\begin{theo}[Sakellaridis-Venkatesh]
		Under additional assumptions on $X$, that are for example satisfied if $X$ is a symmetric variety, we have the following disintegration of the adjoint asymptotic map $e_{\emptyset}^*$ and Bernstein map $\iota_\emptyset^*$: for every $f\in \cS(X)$ and $x\in X_\emptyset(k)$,
		\begin{equation*}
			\displaystyle (e_\emptyset^*f)(x)=\int_{\Re(\chi)=\lambda_0} (e^*_{\emptyset,\chi}f)(x) d\chi
		\end{equation*}
		and
		\begin{equation*}
			\displaystyle (\iota_\emptyset^*f)(x)=\int_{\Unit(A_X)} (e^*_{\emptyset,\chi}f)(x) d\chi
		\end{equation*}
		where $\lambda_0$ is a point of $\cA_X^*$ that is sufficiently far in the negative Weyl chamber $(\cA_X^*)^-$, the subscript ``$\Re(\chi)=\lambda_0$'' indicates that we integrate over the (torsor under $\Unit(A_X)$) subset of characters $\chi: A_X(k)\to \bC^\times$ whose real part is given by $\chi$ and the measures are (translated from) the measure on $\Unit(A_X)$ dual to the measure we have fixed on $A_X(k)$.
	\end{theo}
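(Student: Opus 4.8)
The plan is to reduce everything to the Radon transforms $\mathcal{R}\colon\cS(X)\to C^\infty(X^h)$ and $\mathcal{R}_\emptyset\colon\cS(X_\emptyset)\to C^\infty(X_\emptyset^h)$ together with the canonical isomorphism $X^h\simeq X_\emptyset^h$ coming from \eqref{eq3}. Recall that $e_\emptyset^*$ and $\iota_\emptyset^*$ are the transposes, for the invariant measures fixed on $X(k)$ and $X_\emptyset(k)$, of the smooth asymptotic map $e_\emptyset$ of Theorem \ref{theo smooth asymptotic} and of the Bernstein map $\iota_\emptyset$ of Theorem \ref{theo Bernstein maps}, and that the right-hand sides of the two displayed formulas are, by the very definition $e^*_{\emptyset,\chi}=\mathcal{R}_{\emptyset,\chi}^{-1}\circ\mathcal{R}_\chi$, built from these Radon transforms. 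So the content of the theorem splits into (i) a geometric compatibility of $e_\emptyset$ with the Radon transforms and (ii) a Mellin inversion along the $A_X(k)$-action.

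First I would prove the compatibility
\[
\mathcal{R}_\emptyset\circ e_\emptyset^*=\mathcal{R}\qquad\text{as maps }\cS(X)\longrightarrow C^\infty(X_\emptyset^h)\simeq C^\infty(X^h),
\]
equivalently, dually, that $e_\emptyset$ carries wave packets of horocycle sections on $X_\emptyset$ to wave packets on $X$. This rests on the characterizing property of $e_\emptyset$ from Theorem \ref{theo smooth asymptotic}: at a fixed level $J$, $e_\emptyset$ agrees on a $J$-invariant neighbourhood of $\infty_\emptyset$ with pullback $\exp_\emptyset^*$ along an exponential map. By the defining properties of exponential maps (Proposition \ref{prop exponential maps}), such a map identifies horocycles of $X$ near $\infty_\emptyset$ with horocycles of $X_\emptyset$ compatibly with the isomorphism $X^h\simeq X_\emptyset^h$ and, eventually, with our coherent choices of invariant measures on horocycles, so the two Radon transforms agree on any horocycle lying deep enough at infinity. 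For an arbitrary horocycle one then pushes it into such a neighbourhood using the genuine left $A_X=Z(X_\emptyset)^0$-action on $X_\emptyset$ and the eventual local $A_X$-action near $\infty_\emptyset$ furnished by \eqref{eq LST X}; since both $\mathcal{R}$ and $\mathcal{R}_\emptyset$ are $A_X(k)$-equivariant for the normalized action carrying the modular twist $\delta_{(X)}^{1/2}$, the general case reduces to the deep case.

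Next I would put in the spectral decomposition. For $f\in\cS(X)$, the section $\mathcal{R}f\in C^\infty(X^h)$ has, in each $K$-isotypic component, controlled growth along the $A_X(k)$-direction, so Mellin inversion gives $\mathcal{R}f=\int_{\Re(\chi)=\lambda_0}\mathcal{R}_\chi f\,d\chi$ for $\lambda_0$ sufficiently far in $(\cA_X^*)^-$, the contour lying in the cone where the defining integral of $\mathcal{R}_\chi$ converges absolutely (cf. \cite[Proposition 4.5.1]{Sakunr}). Feeding this into the compatibility above and taking $\chi$-Mellin components of the equality $\mathcal{R}_\emptyset(e_\emptyset^* f)=\mathcal{R}f$ — using that $\mathcal{R}_{\emptyset,\chi}$, being a nonzero scalar multiple of the standard intertwining operator $M_{P_X^-\mid P_X}$, is the $\chi$-component of $\mathcal{R}_\emptyset$ and is invertible for almost all $\chi$ — yields $(e_\emptyset^* f)_\chi=\mathcal{R}_{\emptyset,\chi}^{-1}\mathcal{R}_\chi f=e^*_{\emptyset,\chi}f$, which is the first formula. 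For $\iota_\emptyset^*$ the same Mellin mechanism applies, but the inversion must now be carried out on the \emph{unitary} contour $\Unit(A_X)$ dictated by the $L^2$-nature and the asymptotic characterization of the Bernstein map (integration of wave packets on the unitary axis); one shows that $\chi\mapsto e^*_{\emptyset,\chi}f$ is holomorphic on a neighbourhood of $\Unit(A_X)$, that $\int_{\Unit(A_X)}e^*_{\emptyset,\chi}f\,d\chi$ converges in $L^2(X_\emptyset)$, and that the resulting element satisfies the asymptotic relation characterizing $\iota_\emptyset^* f$ in Theorem \ref{theo Bernstein maps}, whence equality by uniqueness.

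The hard part is this last step for $\iota_\emptyset^*$: passing from the chamber contour $\Re(\chi)=\lambda_0$, where everything converges by soft arguments, to the unitary contour. This requires a careful analysis of the rational family $\chi\mapsto\mathcal{R}_\chi f$ and of the meromorphic family $\chi\mapsto\mathcal{R}_{\emptyset,\chi}^{-1}$ — equivalently, of the poles of the relevant standard intertwining operators and of their inverses — between the two contours, together with a wave-packet (Maass–Selberg type) estimate ensuring $L^2$-convergence on the unitary axis. It is precisely here that the ``additional assumptions on $X$'' enter: wavefrontness (\S \ref{S cone Ax-}), the spectral-gap property $\mathbf{(SG)}$ of \S \ref{S DSC}, and the ``generic injectivity on faces of $\cA_X^*/W_X\to\cA^*/W$'' are what keep the singularities under control and what allow one to invoke the uniqueness statement of Theorem \ref{theo Bernstein maps} (and, behind the scenes, the scattering theory of Theorem \ref{theo scattering}) to identify the unitary wave-packet integral with the genuine $L^2$-adjoint $\iota_\emptyset^*$, and not merely with a map agreeing with it spectrally. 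The detailed estimates would follow \cite[\S 15]{SV}.
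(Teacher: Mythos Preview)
The paper does not actually prove this theorem: it is stated with a direct citation to \cite[Theorem 15.4.2, Theorem 15.6.1]{SV} and no argument is given beyond the setup of the Radon transforms $\mathcal{R}$, $\mathcal{R}_\emptyset$ and the definition $e^*_{\emptyset,\chi}=\mathcal{R}_{\emptyset,\chi}^{-1}\circ\mathcal{R}_\chi$. So there is no ``paper's own proof'' to compare against.

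That said, your outline is a faithful reconstruction of the strategy in \cite[\S 15]{SV}: the key compatibility $\mathcal{R}_\emptyset\circ e_\emptyset^*=\mathcal{R}$ (which in \cite{SV} is proved by reducing to the asymptotic behaviour of $e_\emptyset$ and the identification of horocycles via the affine degeneration), followed by Mellin inversion on a deep chamber contour for $e_\emptyset^*$, and then the contour shift to $\Unit(A_X)$ for $\iota_\emptyset^*$. You correctly locate the genuine analytic content in this last step and correctly identify the role of the extra hypotheses. One small point: the uniqueness you invoke for $\iota_\emptyset^*$ is not quite the uniqueness clause of Theorem \ref{theo Bernstein maps} (which characterizes $\iota_\emptyset$, not its adjoint, via an asymptotic limit); in \cite{SV} the identification proceeds rather by showing that the unitary wave-packet map is an isometry onto the relevant piece of $L^2(X)$ and agrees with $\iota_\emptyset$ on a dense subspace, but this is a minor reorganization of the same ingredients.
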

	
	Let us denote by $L^2(X)_{\emptyset}$ the image in $L^2(X)$ of the Bernstein map $\iota_\emptyset$. Then, by Theorem \ref{theo scattering}, the above theorem can be seen as giving an explicit Plancherel decomposition for $L^2(X)_{\emptyset}$. Namely, the Plancherel formula for $L^2(X)_{\emptyset}$ reads as follows:
	$$\displaystyle \lVert f\rVert^2=\int_{\Unit(A_X)/W_X} J_\chi(f,f) d\chi$$
	where the relative character $J_\chi$ is given by $J_\chi(f,f)=$ the $L^2$-norm of the function $e_{\emptyset,\chi}^* f\in L^2(X_\emptyset,\chi)$.
	
	By \cite[Section 9]{SakSph}, under the same assumptions as in Theorem \ref{theo relative Satake} (ii), we have $L^2(X)^K=L^2(X)_{\emptyset}^K$ and therefore the above specializes to a Plancherel formula for $L^2(X)^K$:
	\begin{equation*}
		\displaystyle \lVert f\rVert^2=\int_{\widehat{A}_X^1/W_X} J_\chi(f,f) d\chi
	\end{equation*}
	where $\widehat{A}_X^1$ is the maximal compact subgroup of the dual torus $\widehat{A}_X$ identified in the usual way with the unitary unramified characters of $A_X(k)$.
\end{paragr}

\subsection{The local trace formula approach}\label{Sect local trace formula}

The local trace formula is an important tool to study the spectrum of local spherical varieties. For simplicity, we shall assume in this section that $X(k)$ has a unique $G(k)$-orbit i.e.\ $X(k)=H(k)\backslash G(k)$ and that $k$ is non-Archimedean (the Archimedean case requires, as often, more care on certain analytic aspects) although these assumptions can be relaxed in most of the statements.

\vspace{2mm}

\begin{paragr}[Kernels.]
The basic idea is to try to compute the character of the unitary representation $L^2(X)$. More precisely, for every $f\in \cS(G)$, we aim to compute the trace of the operator of right convolution $R(f): L^2(X)\to L^2(X)$. This operator is readily seen to be given by a kernel function that is:\index{$K_f$}
$$\displaystyle (R(f)\varphi)(x)=\int_{X(k)} K_f(x,y)\varphi(y) dy,\;\;\; \varphi\in L^2(X),$$
where the ``kernel'' $K_f$ is a function $X(k)\times X(k)\to \bC$. Under our assumption that $X(k)=H(k)\backslash G(k)$, it is given by the explicit formula
	$$\displaystyle K_f(x,y)=\int_{H(k)} f(x^{-1}hy)dh.$$
	
	Thinking formally of $K_f$ as some ``continuous matrix'' representing the operator $R(f)$, the trace should be given by
	$$\displaystyle \Tr(R(f))=\int_{X(k)} K_f(x,x)dx.$$
	However, as soon as $X$ has some continuous spectrum this does not make sense: the operator $R(f)$ has no well-defined trace and the above integral usually diverges.
	
	As a side remark, when $X$ is tempered (see \S \ref{S tempered varieties}), we can define more generally an operator $R(f)$ for $f$ in the Harish-Chandra Schwartz space $\cC(G)$ and the above discussion applies verbatim.
\end{paragr}

\begin{paragr}[Arthur local trace formula.]\label{S Arthur LTF}
	There are at least two ways to circumvent this original difficulty. One of them was succesfully implemented by Arthur \cite{ArtLTF} in the group case $X=H$, $G=H\times H$. Simply put, it consists in truncating the previous divergent integral and studying the behavior of the resulting expression when some truncation parameter (usually called $T$) goes to infinity. Let us us discuss very briefly the mechanism developed by Arthur in this case, assuming for simplicity that the group is split and referring the reader to {\it op.\ cit.}\ for details.
	
	Since $G=H\times H$, by linearity we may assume that $f=f_1\otimes f_2$ where $f_1,f_2\in \cS(H)$. The kernel is then given by
	$$\displaystyle K_f(x,y)=\int_{H(k)} f_1(x^{-1}hy) f_2(h)dh,\;\; x,y\in H(k).$$
	Arthur's truncation starts with the choice of a Cartan decomposition $H(k)=KA^+K$, where $A^+=\{a\in A(k)\mid \lvert \alpha(a)\rvert\geq 1 \forall \alpha\in \Delta_0 \}$ is the positive Weyl chamber in $A(k)$ and $K$ is a suitable maximal compact subgroup\footnote{More precisely, the stabilizer of a special point in the apartment associated to $A$ in the (extended) Bruhat-Tits bulding of $H$.}. For $T\in \cA^+$, let $u_T$\index{$u_T$} be the characteristic function of those elements $h\in H(k)$ that can be written as $h=k_1ak_2$ where $k_1,k_2\in K$ and $a\in A^+$ satisfies the inequalities $\lvert \varpi_\alpha(a)\rvert \leq \langle \varpi_\alpha, T\rangle$ for every $\alpha\in \Delta_0$, where $\{ \varpi_\alpha\mid \alpha\in \Delta_0\}$ denotes the set of fundamental weights. This set is compact modulo the center and the expression\index{$I^{H,T}$}
	$$\displaystyle I^{H,T}(f)=\int_{A_H(k)\backslash H(k)} u_T(h) K_f(h,h)dh$$
	is therefore well-defined.
	
	The main results of \cite{ArtLTF} can then be summarized very roughly as follows. First, for a fixed function $f\in \cS(G)$, the function $T\mapsto I^{H,T}(f)$ is asymptotic to a polynomial as $T\to \infty$ by staying ``sufficiently far from the walls'' of $\cA^+$\footnote{More precisely, this means as $T\to \infty$ while staying in the domain $\{ T\in \cA^+\mid \lVert T\rVert\geq \epsilon \alpha(T)\; \forall \alpha\in \Delta_0\}$ for any given $\epsilon>0$.} whose constant terms we denote by $I^H(f)$\index{$I^H$}. Secondly, and most importantly, Arthur provides two different expansions for the distribution $I^H$. One of them is ``geometric'' in nature and involves generalizations of orbital integrals known as {\it weighted orbital integrals} that are distributions of the form\index{$WO_M(\gamma,f)$}
	\begin{equation}\label{def WOI}
		\displaystyle f\mapsto WO_M(\gamma,f)=\int_{H_\gamma(k)\backslash H(k)} f(g^{-1}\gamma g) v_M(g)dg
	\end{equation}
	where $\gamma\in H(k)$ is a regular semisimple conjugacy class, $H_\gamma$ stands for the centralizer of $\gamma$, $M$ is a Levi subgroup containing $\gamma$ and the ``weight'' $v_M$ is a certain function on $H_\gamma(k)\backslash H(k)$ (that also depends on the choice of the Haar measure on $H_\gamma(k)$ so that the above expression only depends on the chosen measure on $H(k)$). The other expansion is ``spectral'' and features generalizations of characters known as {\it weighted characters} which are distributions of the form\index{$W\Theta_M(\pi,f)$}
	\begin{equation}\label{def WC}
		\displaystyle f\mapsto W\Theta_M(\pi,f)= \Tr(\mathcal{R}_M(\pi)\pi(f))
	\end{equation}
	where $\pi\in \Temp(G)$ is an irreducible tempered representation, $M$ is again a Levi subgroup and the ``weight'' $\mathcal{R}_M(\pi)$ is this time a certain operator on (the space of) $\pi$ that actually depends on the auxilliary choice of a representation $\pi^M\in \Temp(M)$ such that $\pi\simeq I_M^H(\pi^M)$ (in particular the weighted character $W\Theta_M(\pi,.)$ can only be defined for a Levi subgroup $M$ from which it is induced).
	
	Needless to say the discussion so far has been on a very rough and approximative level, in great part due to our very vague description of weighted orbital integrals and characters, and we will not say much more, refering the interested reader to \cite{ArtWOI} and \cite{ArtIntertwining} for precise definitions (which involve, among other things, the theory of $(G,M)$-families as introduced and developed by Arthur). Let us just mention that the distributions \eqref{def WOI} and \eqref{def WC}, although not made transparent in the notation, are actually also dependent on the choice of the maximal compact subgroup $K$ as well as on the choice of a normalization of the standard intertwining operators $M_{P\mid Q}(\pi_M): I_P^H(\pi_M)\to I_Q^H(\pi_M)$ (in the case of the weighted characters).
\end{paragr}

\begin{paragr}[Strongly cuspidal functions.]\label{S strongly cuspidal}
	Another strategy, going back at least to Kazhdan in a global setting, is to restrict oneself to particular test functions $f$ for which the kernel function $K_f$ is integrable over the diagonal. We then hope to expand this integral as in the previous case in two different ways: a ``geometric'' expression featuring distributions of geometric origin (orbital integrals and the like) and a ``spectral'' expansion involving terms closely related to characters of irreducible representations. The resulting identity is then usually called a ``simple'' trace formula. 
	
	Originally, these simple trace formulas have been mostly used in a global setting by imposing on the test function $f$ two different conditions having the effect to drastically simplify both the geometric and spectral expansions (as well as their proofs) respectively; hence the word ``simple''. More precisely, these two conditions restrict the possible support of $f$ both geometrically and spectrally (that is of its Fourier transform $\pi\to \pi(f)$) and are typically incompatible locally (as a form of the Heisenberg uncertainty principle). Thus, only in a global situation, where the test functions are typically products $f=\prod_v f_v$ of local test functions, can we really impose these restrictions in a meaningful way (at two different places $v_0$, $v_1$).
	
	In his work on the local Gross-Prasad conjecture \cite{WaldGP1}, Waldspurger was able to develop such a simple local trace formula, that is however not as simple as in the above sense anymore. The particular test functions he used are named {\it strongly cuspidal functions} and are characterized by the following property: for every proper parabolic subgroup $P=MU$ we have
	$$\displaystyle \int_{U(k)} f(mu)du=0,\;\; m\in M(k).$$
	One reason we could expect a simple trace formula for strongly cuspidal functions is the following result.
	
	\begin{lem}\label{lem wavefront kernel}
		Assume that $G$ is split, $k$ is non-Archimedean and $X$ is wavefront. Then, for every strongly cuspidal $f\in \cS(G)$, the function
		$$\displaystyle x\in X(k)\mapsto K_f(x,x)$$
		is compactly supported modulo $Z(X)^0(k)$.
	\end{lem}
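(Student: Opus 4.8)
The plan is to bound the support of $x\mapsto K_f(x,x)$ by combining the weak Cartan decomposition \eqref{weak Cartan} for the wavefront variety $X$ with the defining vanishing property of strongly cuspidal functions, extracted via an asymptotic analysis of the kernel in the direction of each boundary degeneration. Write $X(k)=A_X^-\mathcal{K}$ as in \S\ref{S wavefront and weak Cartan}; since $\mathcal{K}$ is compact, it suffices to show that $K_f(a\kappa,a\kappa)$ vanishes for $a\in A_X^-$ outside a bounded-modulo-$Z(X)^0(k)$ region and $\kappa\in\mathcal{K}$. The wavefront hypothesis is what makes $A_X^-$ the image of the negative Weyl chamber $\mathcal{A}^-$ of $G$, so moving far in $A_X^-$ is the same as moving far in a Weyl chamber of $G$; this is precisely the regime in which parabolic-descent phenomena govern the behavior of $H$-orbital-type integrals.

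First I would make the link between $K_f$ and constant terms explicit. By the weak Cartan decomposition, any sequence $a_n\in A_X^-$ escaping to infinity modulo $Z(X)^0(k)$ eventually lies in $A_X^-(\Theta,\geq C_n)$ for some fixed proper subset $\Theta\subsetneq\Delta_X$ (after passing to a subsequence), with $C_n\to\infty$; thus $a_n\kappa$ eventually enters any prescribed neighborhood of $\infty_\Theta$ in $X(k)$ in the sense of \S\ref{S neighbds inftytheta}. Using a $(\Theta,J)$-exponential map $\exp_\Theta$ (Proposition \ref{prop exponential maps}) and the eventual equivariance property, one transports the computation of $K_f$ near $\infty_\Theta$ to a computation on the boundary degeneration $X_\Theta$, where by \S\ref{S wavefront case} we have $X_\Theta=X^L_\Theta\times^{P_\Theta^-}G$ with $P_\Theta^-=L_\Theta U_\Theta^-$ a genuine parabolic subgroup. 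The key point is that the kernel of $R(f)$ acting on $L^2(X_\Theta)$, evaluated along the induced structure, involves the integral of $f$ over the unipotent radical $U_\Theta^-$ (this is where the inducing Levi variety sits transversally), i.e. a constant term $f_{P_\Theta^-}(m)=\int_{U_\Theta^-(k)}f(mu)\,du$, twisted by the action of the extra central torus $A_{X,\Theta}$. Because $\Theta\subsetneq\Delta_X$, the parabolic $P_\Theta^-$ is proper, so strong cuspidality of $f$ forces this constant term to vanish identically. Hence $K_f(a_n\kappa,a_n\kappa)\to 0$, and more precisely it is exactly zero once $a_n$ is deep enough in the $\Theta$-direction.

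The main obstacle I expect is making the second step rigorous: one must show that the diagonal kernel $K_f$ near $\infty_\Theta$ genuinely decomposes (or can be estimated) through the constant term $f_{P_\Theta^-}$, rather than through some more complicated expression that does not obviously vanish. This requires unwinding the explicit formula $K_f(x,y)=\int_{H(k)}f(x^{-1}hy)\,dh$ for $x=y=a\kappa$ with $a\in A_X^-(\Theta,\geq C)$, decomposing $H(k)$ according to the Bruhat-type stratification of $X$ relative to $P_\Theta^-$ coming from \eqref{eq boundary deg parabind}, and checking that after conjugating by $a$ the contributions not landing in the $U_\Theta^-$-direction are pushed off to infinity (so the corresponding pieces of $f$, being compactly supported, vanish), while the surviving contribution is a finite sum of translates of $f_{P_\Theta^-}$ evaluated on $L_\Theta(k)$. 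For symmetric varieties this is essentially the computation underlying Waldspurger's descent of the kernel in \cite{WaldGP1}; in the general wavefront case one invokes Proposition \ref{prop exponential maps} together with the description of $A_{X,\Theta}^-$-action on neighborhoods of $\infty_\Theta$ in \S\ref{S neighbds inftytheta} to reduce to exactly that computation on $X^L_\Theta$. A clean way to organize this is induction on $\lvert\Delta_X\setminus\Theta\rvert$: the statement for $X$ reduces, via the exponential maps and the parabolic-induction structure of $X_\Theta$, to the analogous (already-known, e.g. for the group case by Arthur–Waldspurger-type arguments, or inductively) statement that constant terms of strongly cuspidal functions vanish, which is immediate from the definition in \S\ref{S strongly cuspidal}. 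Finally, one assembles: since $\Delta_X$ has finitely many proper subsets $\Theta$ and $A_X^-\setminus(Z(X)^0(k)\cdot\text{compact})$ is covered by the neighborhoods $A_X^-(\Theta,\geq C)\mathcal{K}$ of the various $\infty_\Theta$, the function $x\mapsto K_f(x,x)$ vanishes off a set that is compact modulo $Z(X)^0(k)$, as desired.
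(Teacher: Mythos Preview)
Your proposal is correct and follows essentially the same route as the paper: use exponential maps to identify $K_f(x,x)$ near $\infty_\Theta$ with the diagonal kernel $K_f^\Theta$ on the boundary degeneration $X_\Theta$, then invoke the parabolic-induction description \eqref{eq boundary deg parabind} and strong cuspidality. The one place you overcomplicate is your ``main obstacle'': no Bruhat-type stratification of $H(k)$ or induction on $\lvert \Delta_X\setminus\Theta\rvert$ is needed, because once you are on $X_\Theta$ the stabilizer of \emph{every} point is $G(k)$-conjugate to $H_\Theta^L\ltimes U_\Theta^-$, so $K_f^\Theta(x,x)$ is literally the integral of (a conjugate of) $f$ over such a subgroup and vanishes outright by integrating first over the unipotent radical---this is exactly how the paper dispatches the step in one line.
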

	\begin{proof}
		Let $\Theta\subset \Delta_X$. Since $G$ is split, we can consider the boundary degeneration $X_\Theta$ (see Section \ref{sect boundary degenerations}). Let $\exp_\Theta: N_\Theta\to N'_\Theta$ be a $\Theta$-exponential map (see \S \ref{S exponential maps}) and let $K_f^\Theta$ be the kernel of $f$ acting on $L^2(X_\Theta)$. Then, as a consequence of the eventual equivariance of exponential maps (Proposition \ref{prop exponential maps} (iv)), up to shrinking the neighborhood $N_\Theta$ of $\infty_\Theta$, we have
		$$\displaystyle K_f(x,y)=K_f^\Theta(\exp_\Theta(x),\exp_\Theta(y)),\;\; \mbox{ for every } x,y\in N_\Theta.$$
		Moreover, since $X$ is wavefront and $\Theta\neq \Delta_X$, the variety $X_\Theta$ is induced from a proper parabolic subgroup $P_\Theta^-=L_\Theta U_\Theta$ i.e.\ there exists a subgroup $H^L_\Theta\subset L_\Theta$ such that all the isotropy groups of $X_\Theta$ are conjugated to $H_\Theta=H^L_\Theta\ltimes U_\Theta$ (see \S \ref{S wavefront case}). In particular, as $f$ is strongly cuspidal and since for every $x\in X_\Theta(k)$, $K_f^\Theta(x,x)$ is the integral of $f$ over the stabilizer of $x$, this implies that the kernel $K_f^\Theta$ vanishes on the diagonal. Combining this with the above asymptotic identity, this shows that the function $x\in X(k)\mapsto K_f(x,x)$ vanishes in a neighborhood $N_\Theta$ of $\infty_\Theta$ for every $\Theta\subsetneq \Delta_X$. As $X(k)\setminus \bigcup_{\Theta\subsetneq \Delta_X} N_\Theta$ is compact modulo $Z(X)^0(k)$, this proves the lemma.
	\end{proof}
	
	\begin{rem}
		\begin{itemize}
			\item In the group case ($X=H$, $G=H\times H$), the same statement holds for test functions $f=f_1\otimes f_2$ where only one of $f_1$ and $f_2$ is strongly cuspidal. Indeed, this can be shown by the same argument noting that for $\Theta\neq \Delta_X$, since the boundary degeneration $X_\Theta$ is in this case induced from a parabolic subgroup $P_\Theta^-=Q_\Theta\times Q_\Theta^-$ with both $Q_\Theta$ and $Q^-_\Theta$ proper, this weaker assumption also implies $K_f^\Theta(x,x)=0$ for all $x\in X_\Theta(k)$.
			
			\item Of course, the split assumption in the lemma shouldn't be necessary. However, this would require extending the theory of boundary degenerations and exponential maps to the nonsplit case and this hasn't been done yet. Similarly, the lemma should hold for $k$ Archimedean up to remplacing ``compactly supported'' by ``rapidly decreasing'' (in a suitable sense). In the setting of the unitary Gan-Gross-Prasad conjectures, this was established by a direct argument in \cite[Theorem 8.1.1]{BP3}.
			
			\item One simple example of a strongly cuspidal function is a function supported in the elliptic locus $G(k)_{\mathrm{ell}}$ (where we use the following definition of ellipticity: an element $\gamma\in G(k)$ is elliptic if $G_\gamma(k)/A_G(k)$ is compact or equivalently if it does not belong to any proper parabolic subgroup.)
		\end{itemize}

	\end{rem}
\end{paragr}

\begin{paragr}[Weighted orbital integrals for strongly cuspidal functions.]\label{S geom sides}
	Assume from now on that $X$ is wavefront. Given Lemma \ref{lem wavefront kernel}, we can now define a functional
	$$\displaystyle f\in \cS_{\mathrm{\scusp}}(G)\mapsto I^X(f)=\int_{X(k)} K_f(x,x)dx$$
	on the subspace $\cS_{\scusp}(G)\subset \cS(G)$\index{$\cS_{\scusp}(G)\subset \cS(G)$} of strongly cuspidal functions. By analogy with the local trace formula of Arthur, we could expect that this distribution admits an expansion in terms of weighted orbital integrals. For the Gross-Prasad variety $X=\SO_n\backslash \SO_n\times \SO_{n+1}$ this is indeed true as shown in a striking paper of Waldspurger \cite{WaldGP1}. However, the geometric expansion obtained by Waldspurger in this case is more complicated than in the group case due to the appearance of certain singular contributions which don't show up in the later (even for general test functions).
	
	Before describing Waldspurger's formula, we first give an ad hoc definition of weighted orbital integrals for strongly cuspidal functions. Namely, for $f\in \cS_{\scusp}(G)$ and $\gamma\in G(k)$ regular and semisimple, we can define the weighted orbital of $f$ at $\gamma$ as a limit\index{$WO(\gamma,f)$}
	$$\displaystyle WO(\gamma,f)=\lim\limits_{N\to \infty} \int_{A_G(k)\backslash G(k)} u_N(g) f(g^{-1}\gamma g) dg$$
	where the $(u_N)_N$ are the characteristic function of a sequence $(\mathcal{K}_N)_N$ of compact subsets of $A_G(k)\backslash G(k)$ that is increasing, exhaustive (i.e.\ $\bigcup_N \mathcal{K}_N=G(k)$) and uniformly smooth (in the sense that there exists a compact-open subgroup $J\subset G(k)$ such that $J\mathcal{K}_N=\mathcal{K}_N$ for all $N$). For example, we may take for $u_N$ the truncation function of Arthur $u_{T_N}$ for some parameters $T_N\in \cA^+$ going to infinity as $N\to \infty$. Let us make few remarks on this definition:
	\begin{itemize}
		\item The fact that the limit exists and does not depend on the sequence $(u_N)$ really uses the assumption that $f$ is strongly cuspidal (there is no such description of weighted orbital integrals for other test functions);
		
		\item When $\gamma$ is elliptic (i.e.\ $G_\gamma(k)$ is compact modulo $A_G(k)$), $WO(\gamma, f)$ coincides with the usual orbital integral $O(\gamma,f)=\int_{A_G(k)\backslash G(k)} f(g^{-1}\gamma g) dg$.
		
		\item As briefly discussed in \S \ref{S Arthur LTF}, there are usually more than one weighted orbital integral supported on the orbit of $\gamma$, as they are basically indexed by Levi subgroups containing $\gamma$, and moreover they a priori depend on the auxilliary choice of a special maximal compact subgroup $K$. However, for strongly cuspidal test functions all these weighted orbital integrals vanish identically except possibly for the one indexed by the unique minimal Levi $M$ containing $\gamma$ and it is independent on the choice of $K$ \cite[lemme 5.2]{WaldGP1}. Moreover, the above definition relates to Arthur's one for this particular $M$ up to a sign:
		$$\displaystyle WO(\gamma,f)=(-1)^{a_M-a_G}\Phi_M(\gamma,f)$$
		(where for a reductive group $H$, $a_H$ stands for the split rank of its center.)
		
		\item The distribution $f\mapsto WO(\gamma,f)$ is invariant in the sense that $WO(\gamma,{}^g f)=WO(\gamma,f)$ for every $g\in G(k)$, where we have set ${}^gf(x)=f(g^{-1}xg)$. Indeed, this follows by passing from truncation along the sequence $(\mathcal{K}_N)$ to truncation along the translated sequence $(g\mathcal{K}_N)$.
		
		\item A similar definition can be made in the Archimedean case if instead of characteristic functions we use sequence of functions $u_N\in C_c^\infty(G)$ that converge pointwise to $1$ and are {\it uniformly smooth and bounded} by which we mean that for every $X\in \mathcal{U}(\mathfrak{g})$ (the enveloping algebra of the Lie algebra of $G$), we can find $C_X>0$ such that $L(X)u_N$ is bounded in absolute value by $C_X$ for all $N$.
	\end{itemize}
\end{paragr}

\begin{paragr}[Geometric side of Arthur local trace formula.]
	The definition of weighted orbital integrals for stringly cuspidal functions given in the previous paragraph is actually already sufficient to state the geometric side of Arthur local trace formula when one of the two test functions is strongly cuspidal (in which case it takes a particularly simple form).
	
	\begin{theo}[Arthur]\label{theo Arthur LTF geometric}
		Let $f_1\in \cS_{\scusp}(H)$ be a strongly cuspidal test function. Then, for every $f_2\in \cS(H)$ we have
		$$\displaystyle I^H(f_1\otimes f_2)=\int_{H(k)} WO(\gamma,f_1) f_2(\gamma) d\gamma.$$
	\end{theo}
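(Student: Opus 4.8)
The plan is to compute $I^H(f_1\otimes f_2)$ by unwinding the definition of the kernel on the diagonal, performing an orbit-by-orbit integration, and showing that the weighted orbital integrals arise as the natural residues of the truncation. Recall from \S\ref{S strongly cuspidal} that in the group case $X=H$, $G=H\times H$, the kernel is
$$\displaystyle K_{f_1\otimes f_2}(x,x)=\int_{H(k)} f_1(x^{-1}hx) f_2(h)\, dh,\qquad x\in H(k),$$
and, by the remark following Lemma \ref{lem wavefront kernel}, since $f_1$ is strongly cuspidal this function is compactly supported modulo $A_H(k)$, so that $I^H(f_1\otimes f_2)=\int_{A_H(k)\backslash H(k)} K_{f_1\otimes f_2}(x,x)\,dx$ is absolutely convergent (here I use that the diagonal $H$ in $H\times H$ contains $A_H$ acting trivially, so the integral is naturally over $A_H(k)\backslash H(k)$). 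First I would substitute and switch the order of integration to get
$$\displaystyle I^H(f_1\otimes f_2)=\int_{A_H(k)\backslash H(k)} \int_{H(k)} f_1(x^{-1}hx) f_2(h)\, dh\, dx .$$

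Next I would parametrize $h$ by its $H(k)$-conjugacy class. Using the (suitably normalized) Weyl integration formula, the inner integral $\int_{H(k)} f_1(x^{-1}hx) f_2(h)\,dh$ is rewritten as an integral over regular semisimple classes $\gamma$, with $f_2(\gamma)$ times $\int_{H_\gamma(k)\backslash H(k)} f_1(x^{-1}g^{-1}\gamma g x)\,dg$, plus the Jacobian factor; substituting and using Fubini (justified by the compact support modulo center established above) yields a formal expression
$$\displaystyle I^H(f_1\otimes f_2)=\int_{H(k)\,\mathrm{rs}} f_2(\gamma)\left(\int_{A_H(k)\backslash H(k)}\int_{H_\gamma(k)\backslash H(k)} f_1(x^{-1}g^{-1}\gamma g x)\,dg\,dx\right) d\gamma .$$
The point is then to identify the double integral in parentheses with $WO(\gamma,f_1)$. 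By collapsing $A_H(k)\backslash H(k)$ and $H_\gamma(k)\backslash H(k)$ against each other (writing $A_H\subset H_\gamma$ since $A_H$ is central) one sees that the combined domain is $A_H(k)\backslash H(k)$ integrated against a volume-of-$H_\gamma$-modulo-$A_H$ weight — but since the iterated integral diverges (the volume of $A_H(k)\backslash H_\gamma(k)$ is infinite for non-elliptic $\gamma$), the whole manipulation must be carried out \emph{with} Arthur's truncation function $u_N=u_{T_N}$ inserted. That is, I would run the entire argument with $K^{T}$ in place of $K$ from the outset, pass to the constant term as $T\to\infty$ along a sequence staying away from the walls, and invoke the definition of $WO(\gamma,f_1)$ from \S\ref{S geom sides} as exactly the limit $\lim_N \int_{A_H(k)\backslash H(k)} u_N(g) f_1(g^{-1}\gamma g)\,dg$, together with its invariance under conjugation (also recorded there), which kills the extra $x$-integration after a change of variables.

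The main obstacle — and the genuinely non-trivial analytic input — is justifying the interchange of the limit in $T$ (defining the constant term $I^H$) with the $\gamma$-integration and the inner conjugation integral, i.e.\ showing that the truncated double integral converges to $\int f_2(\gamma) WO(\gamma,f_1)\,d\gamma$ uniformly enough that one may take the constant term inside. This requires the dominated-convergence-type estimates on weighted orbital integrals proved by Arthur (polynomial growth of the weight $v_M$, and the fact that for strongly cuspidal $f_1$ all weighted orbital integrals except the one attached to the minimal Levi containing $\gamma$ vanish identically, so no $(G,M)$-family bookkeeping survives), combined with the compact support modulo $A_H(k)$ of $x\mapsto K_{f_1\otimes f_2}(x,x)$ that bounds the outer integral. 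Granting these standard facts from \cite{ArtLTF} and the structural results of \S\ref{S strongly cuspidal}–\S\ref{S geom sides}, the identity $I^H(f_1\otimes f_2)=\int_{H(k)} WO(\gamma,f_1) f_2(\gamma)\,d\gamma$ follows. I would also remark that the sign $(-1)^{a_M-a_G}$ relating $WO(\gamma,f_1)$ to Arthur's $\Phi_M(\gamma,f_1)$ (noted in \S\ref{S geom sides}) is precisely what makes this clean form agree with Arthur's original geometric expansion in the case where only one factor is strongly cuspidal, so no separate verification is needed.
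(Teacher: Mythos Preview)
The paper does not supply its own proof of this theorem: it is stated with attribution to Arthur and a citation to \cite{ArtLTF}, and the text immediately moves on to reformulating the right-hand side via the Weyl integration formula. So there is no in-paper argument to compare your proposal against; the result is quoted as a black box.

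That said, your sketch contains a genuine error in the Weyl-integration step. You write that the inner integral $\int_{H(k)} f_1(x^{-1}hx)f_2(h)\,dh$ becomes an integral over classes $\gamma$ of ``$f_2(\gamma)$ times $\int_{H_\gamma\backslash H} f_1(x^{-1}g^{-1}\gamma gx)\,dg$''. But $f_2$ is \emph{not} a class function, so after Weyl integration the integrand is $f_1(x^{-1}g^{-1}\gamma gx)\,f_2(g^{-1}\gamma g)$, with $f_2$ still depending on $g$. You cannot pull $f_2(\gamma)$ outside the $g$-integral, and the subsequent ``collapse'' of the double integral over $A_H\backslash H$ and $H_\gamma\backslash H$ does not go through as written.

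The fix is much simpler than your route: skip Weyl integration entirely. For $N$ large the truncation $u_N$ is identically $1$ on the (compact mod center) support of $x\mapsto K_{f_1\otimes f_2}(x,x)$, so
\[
I^H(f_1\otimes f_2)=\int_{A_H(k)\backslash H(k)} u_N(x)\int_{H(k)} f_1(x^{-1}\gamma x)f_2(\gamma)\,d\gamma\,dx
\]
for every such $N$, and Fubini (legitimate since $u_N$ has compact support) gives directly
\[
I^H(f_1\otimes f_2)=\int_{H(k)} f_2(\gamma)\Bigl(\int_{A_H(k)\backslash H(k)} u_N(x)\,f_1(x^{-1}\gamma x)\,dx\Bigr)\,d\gamma.
\]
The left side is independent of $N$; letting $N\to\infty$ inside the $\gamma$-integral produces $WO(\gamma,f_1)$ by the very definition in \S\ref{S geom sides}. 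The only real content, as you correctly identify, is the dominated-convergence argument justifying this interchange, which is where Arthur's estimates on truncated orbital integrals enter.
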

	
	Note that, using Weyl's integration formula, the above expansion can be rewritten as
	$$\displaystyle I^H(f_1\otimes f_2)=\int_{\Gamma_{\mathrm{rs}}(H)} WO(\gamma,f_1) O(\gamma,f_2) d\gamma.$$
	where $\Gamma_{\mathrm{rs}}(H)$\index{$\Gamma_{\mathrm{rs}}(H)$} denotes the set of regular semisimple conjugacy classes in $H(k)$ that we equip with a measure $d\gamma$ of the form
	$$\displaystyle \int_{\Gamma_{\mathrm{rs}}(H)} \varphi(\gamma)d\gamma=\sum_{T\in \mathcal{T}} \lvert W_T\rvert^{-1} \int_{T(k)} D^H(t) \varphi(t)dt$$
	the sum running over a set $\mathcal{T}$ of representatives for the conjugacy classes of maximal tori $T\subset H$, with $W_T=\Norm_{H(k)}(T)/T(k)$ the corresponding Weyl group, $dt$ some fixed Haar measure on $T(k)$ and $D^H(t)=\lvert \det(1-\Ad(t)\mid_{\mathfrak{h}/\mathfrak{t}})\rvert$\index{$D^H$} is the Weyl discriminant whereas $O(\gamma,f_2)=\int_{H_\gamma(k)\backslash H(k)} f(g^{-1}\gamma g)dg$\index{$O(\gamma,.)$} are the usual orbital integrals\footnote{To be specific about measures, $H_\gamma$ is conjugated to a unique $T\in \mathcal{T}$ and we need to choose the Haar measures on these two groups compatibly.}. Specializing further to the case where $f_2$ is also strongly cuspidal (so that the tensor product $f=f_1\otimes f_2$ is a strongly cuspidal function on $G(k)$), we have $O(\gamma,f_2)=0$ unless $\gamma$ is elliptic and therefore
	$$\displaystyle I^H(f_1\otimes f_2)=\int_{\Gamma_{\mathrm{ell}}(H)} O(\gamma,f_1) O(\gamma,f_2) d\gamma$$
	where $\Gamma_{\mathrm{ell}}(H)\subset \Gamma_{\mathrm{rs}}(H)$ denotes the subset of elliptic (and regular semisimple) conjugacy classes.
\end{paragr}

\begin{paragr}[Geometric side in the Gross-Prasad case.]
	Let us now consider a Gross-Prasad variety $X=\SO_n\backslash \SO_n\times \SO_{n+1}$ as in Section \ref{Sect GGP}. Let $\Theta_f$\index{$\Theta_f$} be the function $\gamma\mapsto WO(\gamma,f)$ which is defined on the open dense subset of regular semisimple elements. To state the geometric expansion of Waldspurger in this case, we need first to extend this function to certain singular elements $\gamma$. This can be done using the following result \cite[corollaire 5.9]{WaldGP1}: near every semisimple element $\gamma\in G(k)$, the function $\Theta_f$ admits an expansion\index{$c_{\mathcal{O}}(f)$}\index{$\widehat{j}(\mathcal{O},.)$}
	\begin{equation}\label{expansion qcharacters}
		\displaystyle \Theta_f(\gamma e^X)=\sum_{\mathcal{O}\in \mathrm{Nil}(\mathfrak{g}^*_\gamma)} c_{\mathcal{O}}(f) \widehat{j}(\mathcal{O},X),\;\; X\in \omega_{\mathrm{rs}}
	\end{equation}
	where:
	\begin{itemize}
		\item $\omega$ is a small neighborhood of $0$ in the Lie algebra $\mathfrak{g}_\gamma(k)$ of $G_\gamma(k)$ on which the exponential map $\exp: \omega\to G_\gamma(k)$, $X\mapsto e^X$ is well-defined and $\omega_{\mathrm{rs}}\subset \omega$ is the subset of regular semisimple elements;
		
		\item $\mathrm{Nil}(\mathfrak{g}^*_\gamma)$\index{$\mathrm{Nil}(\mathfrak{g}^*)$} denotes the set of nilpotent coadjoint orbits in $\mathfrak{g}^*_\gamma(k)$ and $\widehat{j}(\mathcal{O},.)$ is the unique smooth function on $\mathfrak{g}_{\gamma,\mathrm{rs}}(k)$ that represents the Fourier transform of the orbital integral over $\mathcal{O}$, that is:
		$$\displaystyle \int_{\mathfrak{g}_\gamma(k)} \varphi(X)\widehat{j}(\mathcal{O},X)dX=\int_{\mathcal{O}} \widehat{\varphi}(Y) dY \mbox{ for every } \varphi\in \cS(\mathfrak{g}_\gamma)$$
		where $\widehat{\varphi}(Y)=\int_{\mathfrak{g}_\gamma(k)} \varphi(X) \psi(\langle Y,X\rangle) dX$ is the Fourier transform of $\varphi$ (with $\psi: k\to \bC^\times$ some fixed non-trivial additive character) and $dY$ denotes the Kirillov-Kostant measure on the coadjoint orbit $\mathcal{O}$\footnote{More precisely, this measure comes from the natural symplectic structure on the coadjoint orbit $\mathcal{O}$ (which yields a canonical volume form) and the Haar measure on $k$ that is self-dual with respect to $\psi$.}. That such a function exists is a deep result of Harish-Chandra \cite{HCadminv}.
		
		\item The coefficients $c_{\mathcal{O}}(f)$ are simply complex numbers.
	\end{itemize}
	This kind of expression is not unfamiliar since it was shown by Harish-Chandra that function-characters of irreducible representations have similar local expansions. In the terminology of \cite{WaldGP1}, we say that the function $\Theta_f$ is a {\it quasi-character}.
	
	Waldspurger also defines a certain set $\Gamma_{X-\elli}(G)$\index{$\Gamma_{X-\elli}(G)$} of semisimple conjugacy classes in $G(k)$ as well as a measure on it. For the detailed definition we refer the reader to {\it op.\ cit.}, let us just say that $\Gamma_{X-\elli}(G)$ is the image of a finite union $\bigsqcup_{T\in \mathcal{T}} T(k)$ of compact tori and that the measure is defined in a way similar to the measure on $\Gamma_{\mathrm{rs}}(H)$, namely it is the unique measure that is locally equal to the normalized Haar measures on this various tori times a Weyl discriminant. However, in this case most of the tori $T\in \mathcal{T}$ are sontained in the singular locus of $G$ and consequently the singular conjugacy classes in $\Gamma_{X-\elli}(G)$ are not negligible e.g., in the case where $G$ is quasi-split, the trivial conjugacy class $1\in \Gamma_{X-\elli}(G)$ is an atom of mass one for this measure. 
	
	In complement to this construction, Waldspurger also associates to every $\gamma\in \Gamma_{X-\elli}(G)$ a particular regular nilpotent coadjoint orbit $\mathcal{O}_{\gamma,X}\in Nil(\mathfrak{g}^*_\gamma)$ (whose true meaning remains to be elucidated). Set\index{$c_{f,X}(\gamma)$}
	\begin{equation}\label{coeff cf}
		\displaystyle c_{f,X}(\gamma)=c_{\mathcal{O}_{\gamma,X}}(f),\mbox{ for every } \gamma\in \Gamma_{X-\elli}(G).
	\end{equation}
	We can now state Waldspurger's geometric expansion.
	
	\begin{theo}[Waldspurger]\label{theo geom side Waldspurger}
		Assume that $X=SO_n\backslash SO_n\times SO_{n+1}$ is a Gross-Prasad variety and that $k$ is non-Archimedean. Then, for every strongly cuspidal function $f\in \cS_{\scusp}(G)$ we have
		$$\displaystyle I^X(f)=\int_{\Gamma_{X-\elli}(G)} c_{f,X}(\gamma) d\gamma.$$
	\end{theo}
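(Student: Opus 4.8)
The plan is to reproduce Waldspurger's argument from \cite{WaldGP1}: unfold the defining integral of $I^X$, regularize the divergent orbital integrals that appear by Arthur's truncation, identify the truncated limits with values of the weighted orbital integral function $\Theta_f$ (using strong cuspidality exactly as in Theorem \ref{theo Arthur LTF geometric}), and finally extract from this intermediate expansion the contribution of the singular, $X$-elliptic classes by means of the local quasi-character expansion \eqref{expansion qcharacters}.

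First I would unfold. Writing $x\in X(k)$ as a coset $H(k)g$ and using the explicit kernel $K_f(x,x)=\int_{H(k)}f(g^{-1}hg)\,dh$, one gets
$$I^X(f)=\int_{H(k)\backslash G(k)}\int_{H(k)} f(g^{-1}hg)\,dh\,dg,$$
an expression which is meaningful precisely because, by Lemma \ref{lem wavefront kernel} (GP varieties being wavefront), the function $x\mapsto K_f(x,x)$ is compactly supported for $f$ strongly cuspidal. I would then apply the Weyl integration formula on $H$ to the inner integral. For generic $t\in H(k)$ regular semisimple in $H$ one checks, using the description of $W\subset V$, that the image of $t$ in $G$ is regular semisimple with centralizer $G_t$ a maximal torus of $G$ containing the maximal torus $H_t\subset H$; hence $H_t\backslash G_t$ is a positive-dimensional torus and the naive orbital integral over $H_t(k)\backslash G(k)$ diverges whenever $H_t\backslash G_t$ is not anisotropic. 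Introducing Arthur's truncation function $u_N$ and letting $N\to\infty$, this produces an expansion of the shape
$$I^X(f)=\sum_{T}|W_T|^{-1}\int_{T(k)}D^H(t)\,\Big(\lim_{N\to\infty}\int_{T(k)\backslash G(k)}u_N(g)f(g^{-1}tg)\,dg\Big)\,dt$$
plus further contributions of the $H$-singular classes, which are themselves to be treated by descent to smaller pairs $(W'\subset V')$.

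The heart of the analytic work is to show that for strongly cuspidal $f$ each truncated limit above exists and is expressed through the weighted orbital integrals of $f$. This is the same mechanism that underlies Arthur's geometric expansion (Theorem \ref{theo Arthur LTF geometric}): expanding the truncated integral over $G_t(k)\backslash G(k)$ separates the ordinary orbital integral from a truncated volume on $H_t\backslash G_t$, and Arthur's $(G,M)$-family combinatorics together with the vanishing of all non-minimal weighted orbital integrals for strongly cuspidal functions identifies the limit with (a sign times explicit volume factors times) $\Theta_f(t)=WO(t,f)$. Reassembling, one reaches an intermediate formula realizing $I^X(f)$ as the integral of the quasi-character $\Theta_f$ over a set of $X$-relevant regular semisimple conjugacy classes, against an explicit combinatorially defined measure.

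The last step is to convert this into the stated sum over $\Gamma_{X-\elli}(G)$. The measure produced above is supported on the images of a finite family of $X$-elliptic tori of $G$, and along each such torus it degenerates as the regular semisimple classes accumulate onto a singular class $\gamma$; near $\gamma$ one inserts the expansion $\Theta_f(\gamma e^Y)=\sum_{\mathcal{O}\in\mathrm{Nil}(\mathfrak{g}^*_\gamma)}c_{\mathcal{O}}(f)\,\widehat{j}(\mathcal{O},Y)$. The decisive point is that integrating $\widehat{j}(\mathcal{O},Y)$ against the germ at $\gamma$ of the intermediate measure vanishes unless $\mathcal{O}$ is the particular regular nilpotent orbit $\mathcal{O}_{\gamma,X}$ determined by the relative position of $W$ inside $V$, in which case it contributes $1$; this is a statement about Fourier transforms of nilpotent orbital integrals, proved by passing to the Lie algebra and inducting on $\dim G$. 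Summing the local contributions collapses the intermediate integral to $\int_{\Gamma_{X-\elli}(G)}c_{\mathcal{O}_{\gamma,X}}(f)\,d\gamma=\int_{\Gamma_{X-\elli}(G)}c_{f,X}(\gamma)\,d\gamma$, the convergence of the right-hand side following from the standard quasi-character estimates. I expect this final step to be the main obstacle: controlling the degeneration of the intermediate measure and establishing the nilpotent-orbit selection property requires the full theory of quasi-characters and a delicate induction, and a secondary difficulty is organizing the $H$-singular contributions in the Weyl integration so that they descend compatibly with the same induction.
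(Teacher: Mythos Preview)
The paper does not give its own proof of this theorem: it is stated as a result of Waldspurger and attributed to \cite{WaldGP1}, with no proof environment following the statement. So there is nothing in the paper to compare your argument against line by line.

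That said, your outline is broadly faithful to Waldspurger's actual strategy in \cite{WaldGP1}, and the paper itself gives one indirect hint about the structure of that proof: in the remarks following Theorem~\ref{theo multiplicity Waldspurger} it is noted that the geometric expansion ``builds on Harish-Chandra's method of semisimple descent.'' In your sketch, descent to smaller Gross--Prasad pairs $(W'\subset V')$ appears only as a side remark for handling the $H$-singular contributions, whereas in Waldspurger's treatment it is the organizing principle of the entire argument: one descends at each semisimple class and reduces to a Lie-algebra analogue on the centralizer, and it is this inductive descent that produces the set $\Gamma_{X-\elli}(G)$, the measure on it, and the selection of the regular nilpotent orbit $\mathcal{O}_{\gamma,X}$ all at once. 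Your ``intermediate expansion over $X$-relevant regular semisimple classes followed by a germ computation'' is the right picture at the endpoint of each descent step, but the global bookkeeping---in particular the simultaneous control of convergence, of the weight functions, and of the nilpotent-orbit selection across all strata---is what semisimple descent is designed to organize. The ``main obstacle'' you correctly anticipate is precisely what descent resolves.
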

	
\end{paragr}

\begin{paragr}[Weighted characters and elliptic tempered representations.]\label{S spectral expansions}
	Given the original motivation that lead to the distribution $f\mapsto I^X(f)$, it is also natural to hope that this can in some way be expressed in terms of the characters of the irreducible representations in the spectrum of $X$ or the weighted characters introduced in \S \ref{S Arthur LTF}. 
	
	As for the geometric side, we will first discuss what happens for Arthur's local trace formula but before we need to say a word on weighted characters for strongly cuspidal functions following \cite[\S 2.2]{WaldGGP2}. The situation is a bit similar to what happens for weighted orbital integrals. Namely, for $f\in \cS_{\scusp}(G)$, $\pi\in \Temp(G)$ and a suitable Levi subgroup $M\subset G$, the weighted character $W\Theta_M(\pi,f)$ vanishes unless $M$ is ``a minimal Levi subgroup from which $\pi$ can be induced''\footnote{Although it is not immediately clear, all such Levi subgroups are conjugate, at least for tempered representations.}. Furthermore, for such a choice of $M$, $W\Theta_M(\pi,f)$ does not depend on the (implicit) choices of $K$, of the representation $\pi_M\in \Temp(M)$ such that $\pi\simeq I_M^G(\pi_M)$ and of the particular normalization of standard intertwining operators that enter into the definition. It doesn't either depend on the choice of $M$ and therefore, it is legitimate to simply set\index{$W\Theta(\pi,f)$}
	$$\displaystyle W\Theta(\pi,f):=W\Theta_M(\pi,f).$$
	Although this shows the existence of a well-defined functional $W\Theta(\pi,.)$ on $\cS_{\scusp}(G)$ associated to every $\pi\in \Temp(G)$, I don't know of any alternative direct definition of those (other than Arthur's original) contrary to what happens for weighted orbital integrals.
	
	In complement to these weighted characters, we also need to introduce a certain set $\mathcal{X}(G)$\index{$\mathcal{X}(G)$} of virtual tempered representations of $G(k)$. It is built on the set of {\it elliptic tempered representations} $\Pi_{\mathrm{ell}}(G)$\index{$\Pi_{\mathrm{ell}}(G)$} introduced by Arthur in \cite{Artelliptic}\footnote{We warn the reader that the set $\Pi_{\mathrm{ell}}(G)$ is actually denoted by $T_{\elli}(G)$ in \cite{Artelliptic}.} and which represents in some sense the spectral counterpart of the elliptic conjugacy classes. More precisely, $\Pi_{\mathrm{ell}}(G)$ is a certain set of {\it virtual} tempered representations containing the discrete series $\Pi_2(G)$ and which forms the basis of a complementary subspace in the Grothendieck group $\mathcal{R}_{\temp}(G)$\index{$\mathcal{R}_{\temp}(G)$} of all virtual tempered representations of finite length of the subspace $\mathcal{R}^{\mathrm{ind}}_{\temp}(G)$\index{$\mathcal{R}^{\mathrm{ind}}_{\temp}(G)$} spanned by (tempered) representations that are properly parabolically induced. The precise definition of $\Pi_{\mathrm{ell}}(G)$ involves again the choice of a normalization of some standard intertwining operators\footnote{More specifically, we need this normalization to construct actions of some (extensions of) $R$-groups on parabolic inductions of the form $I_M^G(\sigma)$, $\sigma\in \Pi_2(M)$.} which makes elements of $\Pi_{\mathrm{ell}}(G)$ canonical only up to a scalar of module $1$. In particular, denoting by $\pi\mapsto \overline{\pi}$ the unique antilinear automorphism of $\mathcal{R}_{\temp}(G)$ extending the contragredient, for every $\pi\in \Pi_{\mathrm{ell}}(G)$ the tensor product $\pi\otimes \overline{\pi}\in \mathcal{R}_{\temp}(G\times G)$ doesn't depend on any choice.
	
	The set $\mathcal{X}(G)$ then consists of all tempered virtual representations (up to multiplication by $\mathbb{S}^1$) of the form $I_M^G(\tau)$ where $M\subset G$ is a Levi subgroup and $\tau\in \Pi_{\mathrm{ell}}(M)$. It forms a basis of $\mathcal{R}_{\temp}(G)$ and is naturally equipped with a measure coming from Haar measures on tori of unitary unramified characters of Levi subgroups that we shall multiply by a certain spectral analog of the Weyl discriminant $\pi\mapsto D(\pi)$\index{$D(\pi)$}\footnote{In the notation of \cite{Artelliptic}, $D(\pi)$ equals the inverse of $d(\pi)$ times the cardinality of a certain finite group.}. 
\end{paragr}		

\begin{paragr}[Spectral side of Arthur local trace formula.]
	We can now state the spectral side of Arthur's local trace formula, still in the case where one of the two test functions is strongly cuspidal.
	
	\begin{theo}[Arthur]\label{theo Arthur LTF spectral}
		Let $f_1\in \cS_{\scusp}(H)$ be a strongly cuspidal test function. Then, for every $f_2\in \cS(H)$ we have
		$$\displaystyle I^H(f_1\otimes f_2)=\int_{\mathcal{X}(H)} W\Theta(\pi,f_1) \Theta(\overline{\pi},f_2) d\pi.$$
	\end{theo}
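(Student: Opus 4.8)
The plan is to obtain the spectral expansion as the other half of Arthur's invariant local trace formula \cite{ArtLTF} for $H$, viewed as the homogeneous space for $H\times H$ exactly as in \S\ref{S Arthur LTF}, so that $I^H$ here is literally the invariant distribution produced there, and then to collapse the general expansion using that $f_1$ is strongly cuspidal. I would carry this out in parallel with the derivation of the geometric side (Theorem~\ref{theo Arthur LTF geometric}), in the three steps below, the third being the hard one.

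First I would record that, by Theorem~\ref{theo Arthur LTF geometric} together with Weyl's integration formula on $H(k)$, for our fixed strongly cuspidal $f_1$ the functional $f_2\mapsto I^H(f_1\otimes f_2)=\int_{\Gamma_{\rs}(H)}WO(\gamma,f_1)\,O(\gamma,f_2)\,d\gamma$ is an invariant tempered distribution in $f_2$: it is the pairing of the quasi-character $\Theta_{f_1}=WO(\cdot,f_1)$ (see \S\ref{S strongly cuspidal}, \S\ref{S geom sides}) against $f_2$ through orbital integrals. Next I would invoke the abstract content of Arthur's local trace formula, namely that the Fourier transform $f\leadsto\bigl(O(\gamma,f)\bigr)_\gamma$ versus $f\leadsto\bigl(\Theta(\pi,f)\bigr)_\pi$ intertwines the two pairings
\[
(\phi_1,\phi_2)\longmapsto\int_{\Gamma_{\rs}(H)}\phi_1(\gamma)\,\overline{\phi_2(\gamma)}\,d\gamma,\qquad (\Psi_1,\Psi_2)\longmapsto\int_{\mathcal{X}(H)}\Psi_1(\pi)\,\overline{\Psi_2(\pi)}\,D(\pi)^{-1}\,d\pi,
\]
on ``geometric data'' and ``spectral data'' respectively, the summation over Levi subgroups $M\subseteq H$ implicit on the geometric side matching the decomposition $\mathcal{X}(H)=\bigsqcup_{M}\Pi_{\mathrm{ell}}(M)/(\cdots)$ and the factors $|W^M_0|\,|W^H_0|^{-1}$, the signs $(-1)^{a_M-a_H}$ and the formal degrees being absorbed into the measures $d\gamma$ and $d\pi$. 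Applied to $\phi_1=WO(\cdot,f_1)$ and $\phi_2=O(\cdot,f_2)$, this produces a unique expansion $I^H(f_1\otimes f_2)=\int_{\mathcal{X}(H)}c_{f_1}(\pi)\,\Theta(\overline{\pi},f_2)\,d\pi$ in which $c_{f_1}$ is the spectral transform of $\gamma\mapsto WO(\gamma,f_1)$.

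It then remains to identify $c_{f_1}(\pi)=W\Theta(\pi,f_1)$. Since $f_1$ is strongly cuspidal, $WO(\cdot,f_1)$ is, up to the sign $(-1)^{a_M-a_H}$, the invariant weighted orbital integral $\Phi_M(\gamma,f_1)$ at the minimal Levi $M$ containing $\gamma$, all other weighted orbital integrals of $f_1$ vanishing identically (\S\ref{S geom sides}); dually, the weighted characters $W\Theta_M(\pi,f_1)$ vanish for every Levi except a minimal one from which $\pi$ is induced, where their common value is $W\Theta(\pi,f_1)$ (\S\ref{S spectral expansions}), and for $\pi\in\mathcal{X}(H)$ that minimal Levi is precisely the $M$ with $\pi\in\Pi_{\mathrm{ell}}(M)$. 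Arthur's theorem on the local behaviour and Fourier transforms of weighted orbital integrals \cite{ArtLTF} asserts exactly that the spectral transform carries the family $\gamma\mapsto\Phi_M(\gamma,f_1)$ onto the family $\pi\mapsto W\Theta_M(\pi,f_1)$ (up to the constants already booked into the measures), which gives $c_{f_1}(\pi)=W\Theta(\pi,f_1)$ and hence the theorem.

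The main obstacle is this last step: the passage from weighted orbital integrals of $f_1$ to its weighted characters --- that Fourier transform sends $\Phi_M(\cdot,f_1)$ to a constant multiple of $W\Theta_M(\cdot,f_1)$ --- is the genuinely difficult analytic core of Arthur's local trace formula, resting on the fine asymptotics of weighted orbital integrals and on the $(G,M)$-family machinery that defines the weights. A secondary but real difficulty is the normalization bookkeeping of the preceding step: one must verify that all the $|W|$-factors, signs, formal degrees and the factor $D(\pi)$ in the measure on $\mathcal{X}(H)$ combine correctly, and that, although the elements of $\Pi_{\mathrm{ell}}(M)$ --- hence the measure $d\pi$ --- are only well defined up to a unit-modulus scalar arising from the chosen normalization of intertwining operators, the product $W\Theta(\pi,f_1)\,\Theta(\overline{\pi},f_2)\,d\pi$ occurring under the integral is independent of that choice, so that both sides of the identity are unambiguous.
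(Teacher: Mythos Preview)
The paper does not prove this theorem: it is stated as a result of Arthur \cite{ArtLTF}, with the surrounding text only indicating that the general spectral expansion simplifies when $f_1$ is strongly cuspidal (\S\ref{S spectral expansions}). So there is no ``paper's own proof'' to compare against.

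That said, your route is more circuitous than needed and has a circularity worry. You first invoke the geometric side, then a Fourier-type duality between the two pairings, and finally identify the coefficients with weighted characters. But the duality you call the ``abstract content of Arthur's local trace formula'' in your second step is not an independent ingredient one can cite separately --- it \emph{is} the local trace formula, obtained precisely by equating the geometric and spectral expansions of $I^H$. Likewise, the fact in your third step that the spectral transform carries $\Phi_M(\cdot,f_1)$ to $W\Theta_M(\cdot,f_1)$ is again a packaging of the same identity. So your argument is closer to a restatement than a derivation.

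The direct line is simpler and avoids this. Arthur obtains the spectral expansion of $I^H(f_1\otimes f_2)$ independently of the geometric side, by spectrally decomposing the truncated kernel; in general it is a sum over Levi subgroups $M$ of integrals of weighted characters $W\Theta_M(\pi,f_1)\,\Theta(\overline{\pi},f_2)$ over $\pi$ induced from $\Pi_{\elli}(M)$. When $f_1$ is strongly cuspidal, all terms with $M$ not minimal for $\pi$ vanish (\S\ref{S spectral expansions}, citing \cite[\S 2.2]{WaldGGP2}), and the surviving term is exactly the integral over $\mathcal{X}(H)$ in the statement. Your final paragraph on normalization bookkeeping is well taken and applies equally to this direct approach.
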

	
	Once again, the formula simplifies even more if $f_2$ is also strongly cuspidal. Indeed, we then have $\Theta(\overline{\pi},f_2)=0$ unless $\pi\in \Pi_{\elli}(H)$, in which case $W\Theta(\pi,f_1)=\Theta(\pi,f_1)$ is the usual character, so that the above identity becomes
	\begin{equation}
		\displaystyle I^H(f_1\otimes f_2)=\int_{\Pi_{\elli}(H)} \Theta(\pi,f_1) \Theta(\overline{\pi},f_2) d\pi.
	\end{equation}
\end{paragr}

\begin{paragr}[Spectral side for strongly tempered varieties.]
	
	\begin{theo}\label{theo spectral expansion strongly tempered}
		Assume that $X$ is strongly tempered and wavefront and that $k$ is non-Archimedean. Then, for every strongly cuspidal test function $f\in \cS_{\scusp}(G)$, we have
		\begin{equation*}
			\displaystyle I^X(f)=\int_{\mathcal{X}(G)} W\Theta(\pi,f) m_X(\overline{\pi}) d\pi
		\end{equation*}
		where we have extended the multiplicity function $\pi\in \Temp(G)\mapsto m_X(\pi)$ to $\mathcal{R}_{\temp}(G)$ by linearity.
	\end{theo}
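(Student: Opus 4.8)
\emph{Proof proposal.} The plan is to realise $I^X(f)$ as a regularised spectral trace of the convolution operator $R(f)$ on $L^2(X)$ and to run the spectral side of Arthur's local trace formula in this relative setting. First, unfolding the $H(k)$-integral in the kernel one writes
$$\displaystyle I^X(f)=\int_{H(k)\backslash G(k)}\int_{H(k)} f(g^{-1}hg)\,dh\,dg,$$
the inner integral being the integral of $f$ over the conjugate subgroup $g^{-1}H(k)g$; absolute convergence for strongly cuspidal $f$ follows from Lemma \ref{lem wavefront kernel} together with the decay of matrix coefficients granted by the strongly tempered hypothesis (cf. \S \ref{S strongly tempered}). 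In particular $R(f)$ has integral kernel $K_f$ and $I^X(f)=\int_{X(k)}K_f(x,x)\,dx$, so that $I^X$ is a continuous $G$-invariant functional on $\cS_{\scusp}(G)$ which we think of as $\Tr_{\mathrm{reg}}\bigl(R(f)\mid_{L^2(X)}\bigr)$.

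The key structural input is that, $X$ being strongly tempered and wavefront, Theorem \ref{theo nondeg canonical form SV} yields the Plancherel decomposition $L^2(X)\simeq \int_{\Temp(G)}^\oplus \pi^{\oplus m_X(\pi)}\,d\mu_G(\pi)$, so that $R(f)$ acts on the $\pi$-isotypic component as $\pi(f)$ with multiplicity $m_X(\pi)$; heuristically $I^X(f)=\int_{\Temp(G)} m_X(\pi)\,\Tr\pi(f)\,d\mu_G(\pi)$, i.e.\ $I^X$ is the pairing of $W\Theta(\cdot,f)$ with the spectral measure $m_X(\pi)\,d\mu_G(\pi)$ of $L^2(X)$. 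This integral diverges and must be regularised, and the way I would do it is to transport Arthur's truncation: using the weak Cartan decomposition $X(k)=A_X^-\mathcal{K}$ of \S \ref{S wavefront and weak Cartan} one truncates $x\mapsto K_f(x,x)$ along the $A_X^-$-direction, indexed by a parameter $T$ in a cone of $\mathcal{A}_X$, obtaining $I^{X,T}(f)$. Because $f$ is strongly cuspidal the geometric truncation is harmless (Lemma \ref{lem wavefront kernel} shows the integrand is already concentrated), so the constant term of $T\mapsto I^{X,T}(f)$ is $I^X(f)$; on the other hand, inserting the Plancherel decomposition above into $K_f$ and analysing the $T$-dependence produces, near each $\infty_\Theta$, a $(G,M)$-family governed by the asymptotics of $K_f$ — whose geometry is that of the boundary degeneration $X_\Theta$, which is why $X$ is required wavefront. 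The strongly tempered hypothesis is exactly what identifies the spectral support of $L^2(X)$ with $\Temp(G)$ and matches the relative Plancherel densities with $\mu_G$, so that this $(G,M)$-family is the same one producing Arthur's weighted characters for $G$. One therefore obtains $I^X(f)=\int_{\mathcal{X}(G)} W\Theta(\pi,f)\,\nu_X(\pi)\,d\pi$ for a spectral density $\nu_X$ assembled from the $m_X(\pi)$.

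It then remains to identify $\nu_X(\pi)=m_X(\overline\pi)$. On the discrete part of $\mathcal{X}(G)$ this is pinned down by testing against strongly cuspidal pseudo-coefficients: if $\sigma\in\Pi_2(G)$ and $f_\sigma$ is such a pseudo-coefficient, then $R(f_\sigma)$ acts on $L^2(X)$ through a finite-rank operator on the $\sigma$-component, whose regularised trace is immediately $m_X(\sigma)$ (up to the formal-degree normalisation), in agreement with $W\Theta(\sigma,f_\sigma)=\Theta(\sigma,f_\sigma)$; the contragredient in $m_X(\overline\pi)$ is the standard flip $\Tr\pi(f)\leftrightarrow\Tr\overline\pi$ coming from the shape $K_f(x,y)=\int_{H(k)}f(x^{-1}hy)\,dh$ of the kernel. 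One propagates the identity from elliptic data of Levi subgroups to all of $\mathcal{X}(G)$ by analytic continuation in unramified twists: along $\lambda\mapsto I_L^G(\sigma\otimes\lambda)$ the density $\nu_X$ is regular while, by the independence statement in Theorem \ref{theo nondeg canonical form SV}, $m_X$ is constant, so agreement on a dense set suffices.

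The main obstacle is the middle step: making rigorous the spectral expansion of the truncated relative kernel $K_f^T$ and the analysis of its behaviour as $T\to\infty$ away from the walls. This is the relative analogue of the hardest part of Arthur's local trace formula — controlling the $(G,M)$-families attached to the asymptotics of $K_f$ near the various $\infty_\Theta$ and recognising them as the ones attached to the group $G$ — and it is precisely here that the strongly tempered and wavefront hypotheses are indispensable (the former to relate the relative Plancherel picture to $\mu_G$, the latter to have the controlled geometry at infinity supplied by the boundary degenerations and the weak Cartan decomposition). Everything else is bookkeeping of constants and of normalisations of intertwining operators.
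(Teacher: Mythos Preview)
Your proposal is a plausible heuristic outline, but it leaves the decisive step undone and takes a route quite different from the paper's. You propose to develop a relative analogue of Arthur's truncation directly on $X$: truncate $K_f(x,x)$ along $A_X^-$, expand spectrally, and extract Arthur's $(G,M)$-families from the asymptotics near each $\infty_\Theta$. As you yourself note, the ``middle step'' --- controlling the truncated relative kernel spectrally and recognising the resulting weights as Arthur's --- is exactly the hard part, and you do not explain how to carry it out. In effect you are proposing to redo Arthur's local trace formula in the relative setting, which is a substantial undertaking and not a proof as it stands.

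The paper avoids this entirely by \emph{reducing to the group case} rather than imitating it. The key device comes from (the proof of) Theorem~\ref{theo nondeg canonical form SV}: for suitable compact-open subgroups $J'\subset J$ one has the surjection $P_{J'}:\cC(J'\backslash G/J)\to\cC(X)^J$, its adjoint $P_{J'}^*$, and a function $f_{J'}\in\cC(J'\backslash G/J')$ with $P_{J'}^*P_{J'}=L(f_{J'})$. The crucial new ingredient is a \emph{partial convolution inverse} $g\in\cC(J'\backslash G)$ of $f_{J'}$, i.e.\ $f_{J'}\star g\star f_{J'}=f_{J'}$ on the relevant block; its existence follows from the direct-sum decomposition established in that proof. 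This yields $P_{J'}L(g)P_{J'}^*=\mathrm{Id}$ on $\cC(X)^J$, hence a kernel identity
\[
K_f^X(x,y)=\int_{H(k)\times H(k)} K^G_{f,g}(h_1x,h_2y)\,dh_1\,dh_2,
\]
where $K^G_{f,g}$ is the kernel of $L(g)R(f)$ on $\cC(G)$. Integrating over the diagonal and unfolding gives
\[
I^X(f)=\int_{H(k)} I^G\!\bigl(f\otimes L(h)g\bigr)\,dh,
\]
so one can invoke Arthur's spectral expansion (Theorem~\ref{theo Arthur LTF spectral}) for the group $G$ as a black box. The integral over $H(k)$ of $\Theta(\overline{\pi},L(h)g)$ is then computed directly: it equals $\Tr(\pi(f_{J'})\pi(g))=\mathrm{rk}\,\pi(f_{J'})=m_X(\pi)$, using again the identities from Theorem~\ref{theo nondeg canonical form SV}.

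In short: your plan would require building a relative truncation theory and proving its spectral expansion from scratch; the paper instead lifts the problem back to $G$ via $P_{J'}$, $P_{J'}^*$ and the partial inverse $g$, so that Arthur's theorem applies verbatim and the only remaining work is the (short) computation of $\int_{H}\Theta(\overline{\pi},L(h)g)\,dh$. The strongly tempered and wavefront hypotheses enter exactly where you say, but through Theorem~\ref{theo nondeg canonical form SV} rather than through any new truncation argument.
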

	
	\begin{proof}
		We only give a sketch assuming for simplicity that $X(k)=H(k)\backslash G(k)$ contains only one $G(k)$-orbit. The idea is to reduce everything to the spectral side of Arthur's local trace formula by means of (the proof of) Theorem \ref{theo nondeg canonical form SV}. Let $J\subset G(k)$ be a compact-open subgroup leaving $f$ both left and right-invariant. We shall use freely the notation introduced in the proof of Theorem \ref{theo nondeg canonical form SV}. In particular, we let $J'$ be another compact-open subgroup such that the map $P_{J'}:\cC(J'\backslash G/J)\to \cC(X)^J$ is surjective and we let $f_{J'}\in \cC(J'\backslash G/J')$ be the function such that $P_{J'}^*P_{J'}f'=f_{J'}\star f'$ for every $f'\in \cC(J'\backslash G)$. Recall from \eqref{eq7 theo nondeg canonical form SV} that we have a decomposition
		$$\displaystyle \cC(J'\backslash G/J)=f_{J'}\star \cC(J'\backslash G/J)\oplus \Ker(L(f_{J'})\mid_{\cC(J'\backslash G/J)})$$
		where $L(f_{J'})$ stands for the operator of left convolution by $f_{J'}$. Then, up to shrinking $J$ we may assume that it is ``good'' in the sense of \cite[\S 3.7]{Bercenter} i.e.\ that the full subcategory of smooth representations of $G(k)$ generated by their $J$-invariants is a product of Bernstein blocks $\mathfrak{s}$. Let $\cC(J'\backslash G)_{\mathfrak{s}}$ be the direct summand of the smooth $G(k)$-representation $\cC(J'\backslash G)$ corresponding that product of blocks. Then, the above decomposition extends to $\cC(J'\backslash G)_{\mathfrak{s}}$ i.e.\
		\begin{equation}\label{eq1 spectral side strongtemp}
			\displaystyle \cC(J'\backslash G)_{\mathfrak{s}}=f_{J'}\star \cC(J'\backslash G)_{\mathfrak{s}}\oplus \Ker(L(f_{J'})\mid_{\cC(J'\backslash G)_{\mathfrak{s}}}).
		\end{equation}
		The crux is to consider a partial inverse $g\in \cC(J'\backslash G)_{\mathfrak{s}}$ of $f_{J'}$ for the convolution product i.e.\ a function such that
		\begin{equation}\label{eq2 spectral side strongtemp}
			\displaystyle f_{J'}\star g\star f_{J'}\star f'=f_{J'}\star f' \mbox{ for every } f'\in \cC(J'\backslash G)_{\mathfrak{s}}.
		\end{equation}
		The existence of such a function can be inferred from the decomposition \eqref{eq1 spectral side strongtemp}. Indeed, this decomposition implies that the endomorphism
		$$\displaystyle f_{J'}\star \cC(J'\backslash G)_{\mathfrak{s}}\to f_{J'}\star \cC(J'\backslash G)_{\mathfrak{s}},\; f'\mapsto f_{J'}\star f'$$
		is bijective. Thus, denoting by $e_{J',\mathfrak{s}}$ the projection of $e_{J'}$ to $\cC(J'\backslash G)_{\mathfrak{s}}$, there exists $g\in \cC(J'\backslash G)_{\mathfrak{s}}$ such that
		$$\displaystyle f_{J'}\star f_{J'}\star g=f_{J'}\star e_{J',\mathfrak{s}}.$$
		Taking the convolution with $f_{J'}\star f'$, where $f'\in \cC(J'\backslash G)_{\mathfrak{s}}$, on the right this gives $f_{J'}\star f_{J'}\star g\star f_{J'}\star f'=f_{J'}\star f_{J'}\star f'$ and therefore, since $f_{J'}\star$ is injective on $f_{J'}\star \cC(J'\backslash G)_{\mathfrak{s}}$, $g$ indeed satisfies \eqref{eq2 spectral side strongtemp}.
		
		Since $L(f_{J'})=P_{J'}^*P_{J'}$ (on $\cC(J'\backslash G/J)$) the equality \eqref{eq2 spectral side strongtemp} implies $P_{J'}^*P_{J'}L(g)P_{J'}^*P_{J'}=P_{J'}^*P_{J'}$ or even, since $P_{J'}$ is surjective and $P_{J'}^*$ injective, that $P_{J'}L(g)P_{J'}^*$ is the identity of $\cC(X)^J$. Therefore, multiplying everything by the operator of right convolution by $f$, we obtain a commutative diagram
		\begin{equation}
			\displaystyle \xymatrix@=2cm{\cC(J'\backslash G/J) \ar[r]^{L(g)\otimes R(f)} & \cC(J'\backslash G/J) \ar[d]^{P_{J'}} \\ \cC(X)^J \ar[u]^{P_{J'}^*} \ar[r]^{R(f)} & \cC(X)^J.}
		\end{equation}
		In terms of kernels, the above diagram reads
		\begin{equation}
			\displaystyle K_f^X(x,y)=\int_{H(k)\times H(k)} K^G_{f,g}(h_1x,h_2y)dh_1 dh_2,\;\; x,y\in X(k)=H(k)\backslash G(k).
		\end{equation}
		
		Therefore, we have
		$$\displaystyle I^X(f)=\int_{H(k)\backslash G(k)}\int_{H(k)\times H(k)} K^G_{f,g}(h_1x,h_2x)dh_1 dh_2 dx.$$
		It turns out that the above triple integral is absolutely convergent (but we will not prove it; in the case of unitary Gan-Gross-Prasad this is proved in (the proof of) \cite[Proposition 9.2.2]{BP3}), so that
		$$\displaystyle I^X(f)=\int_{H(k)}\int_{G(k)} K^G_{f,g}(\gamma,h\gamma) d\gamma dh=\int_{H(k)} I^G(f\otimes L(h)g)dh$$
		where we have identified the inner integral with Arthur's local trace formula for the test function $f\otimes L(h)g$. (Recall that the test function $f$ is strongly cuspidal so that $I^G(f\otimes L(h)g)$ is absolutely convergent.) Therefore, applying Theorem \ref{theo Arthur LTF spectral}, we obtain
		\begin{equation*}
			\displaystyle I^X(f)=\int_{H(k)}\int_{\mathcal{X}(G)} W\Theta(\pi,f) \Theta(\overline{\pi}, L(h)g)d\pi dh.
		\end{equation*}
		But, since $X$ is strongly tempered, this integral is again absolutely convergent, essentially because the functions $\gamma\in G(k)\mapsto \Theta(\overline{\pi}, L(\gamma)g)$ are sums of tempered matrix coefficients and therefore integrable over $H(k)$. Therefore, to conclude it only remains to show that for every $\pi\in \Temp(G)$ we have
		$$\displaystyle \int_{H(k)} \Theta(\pi, L(h)g) dh=m_X(\pi).$$
		By definition of the form $\mathcal{B}_\pi^H$ (see \S \ref{S strongly tempered}), we have
		\[\begin{aligned}
			\displaystyle \int_{H(k)} \Theta(\pi, L(h)g) dh=\sum_{v} \mathcal{B}_\pi^H(\pi(g)v,v)
		\end{aligned}\]
		where the sum runs over an orthonormal basis of $\pi^{J'}$. By \eqref{eq1 theo nondeg canonical form SV}, this can be rewritten as
		$$\displaystyle \int_{H(k)} \Theta(\pi, L(h)g) dh=\sum_{v} (\pi(f_{J'})\pi(g)v,v)= \Tr(\pi(f_{J'})\pi(g)).$$
		However, by \eqref{eq2 spectral side strongtemp} we have $\pi(f_{J'})\pi(g)\pi(f_{J'})=\pi(f_{J'})$ which implies that $\pi(g)$ coincides with the inverse of $\pi(f_{J'})$ on $\Ima(\pi(f_{J'}))=\Ker(\pi(f_{J'}))^{\perp}$. Hence,
		$$\displaystyle \Tr(\pi(f_{J'})\pi(g))=rk(\pi(f_{J'}))=m_X(\pi).$$
	\end{proof}
\end{paragr}

\begin{paragr}[Multiplicity formulas.]
	One important application of the simple local trace formulas we have been discussing is to provide formulas for the multiplicities $m_X(\pi)$ in terms of the Harish-Chandra character $\Theta_\pi$ of the representation $\pi$. This was discovered by Waldspurger in the setting of the Gross-Prasad conjecture but we can also revisit some previous work of Arthur in this light.
	
	Assume that the group $H(k)$ has a compact center. Recall that a function $f\in \cS(H)$ is called {\it cuspidal} if it satisfies the following equivalent conditions:
	\begin{itemize}
		\item $O(\gamma,f)=0$ for every regular semisimple element $\gamma\in H(k)$ that is not elliptic;
		
		\item $\Tr \pi(f)=0$ for every tempered representation $\pi\in \Temp(G)$ that is properly parabolically induced.
	\end{itemize}
	We emphasize that strongly cuspidal functions (as defined in \S \ref{S strongly cuspidal}) are cuspidal but not conversely.
	
	Plugging in Arthur's geometric and spectral expansions (that is Theorems \ref{theo Arthur LTF geometric} and \ref{theo Arthur LTF spectral} respectively) a cuspidal test function $f=f_2\in \cS(H)$ we obtain the identity
	$$\displaystyle \int_{\Gamma_{\mathrm{ell}}(H)} O(\gamma, f_1)O(\gamma,f)d\gamma=\sum_{\pi\in \Pi_{\elli}(H)} D(\pi)\Theta(\pi,f_1) \Theta(\overline{\pi},f_2)$$
	for every function $f_1\in \cS(H)$ that is strongly cuspidal where $D(.)$ is a certain non-vanishing function on $\Pi_{\elli}(H)$ that formally ressembles the (inverse of) Weyl discriminant and which is identically $1$ on the subset of discrete series $\Pi_2(H)$. In particular, taking for $f=f_\pi$ a pseudo-coefficient of some elliptic representation $\pi\in \Pi_{\mathrm{ell}}(H)$ i.e.\ a test function satisfying $\Tr \pi'(f)=\delta_{\pi,\pi'}$ for every $\pi'\in \mathcal{X}(H)$ (the existence of pseudo-coefficients follows from the trace Paley-Wiener theorem of \cite{tracePW}) and for $f_1$ any test function supported in the elliptic locus (recall that those are automatically strongly cuspidal), we get the identity
	\begin{equation}
		\displaystyle O(\gamma,f_\pi)=D(\pi)\overline{\Theta_{\pi}(\gamma)}, \mbox{ for every } \gamma\in \Gamma_{\mathrm{ell}}(H),
	\end{equation}
	where $\Theta_{\pi}$\index{$\Theta_{\pi}$} is the {\it Harish-Chandra character} of $\pi$ (the only locally integrable function representing the character-distribution $f'\mapsto \Tr \pi(f')$). Plugging this back into the defining equality $\Tr \pi'(f_\pi)=\delta_{\pi',\pi}$, we end up with the so-called {\it elliptic orthogonality relations}
	\begin{equation*}
		\displaystyle \int_{\Gamma_{\mathrm{ell}}(H)} \Theta_{\pi'}(\gamma) \overline{\Theta_\pi(\gamma)} d\gamma=\left\{\begin{array}{ll}
			D(\pi)^{-1} & \mbox{ if } \pi'=\pi,\\
			0 & \mbox{ otherwise}
		\end{array}\right.
	\end{equation*}
	for every $\pi,\pi'\in \Pi_{\mathrm{ell}}(H)$. To put this in perspective with Waldspurger's multiplicity formula below, we note that, whenever $\pi,\pi'\in \Pi_2(H)$ are discrete series so that $D(\pi)=1$, the right hand side of the above identity can also be rewritten as $m_X(\pi'\otimes \overline{\pi})$ where $X=H$ is the group variety.

	Let us now consider the case of a Gross-Prasad variety $X=\SO_n\backslash \SO_n\times \SO_{n+1}$. Recall the space of semisimple conjugacy classes $\Gamma_{X-\elli}(G)$ that was introduced in \S \ref{S geom sides}. According to Harish-Chandra, the characters $\Theta_\pi$, $\pi\in \Irr(G)$, admit local expansions of the form \eqref{expansion qcharacters} and therefore the definition of the coefficients $\gamma\in \Gamma_{X-\elli}(G)\mapsto c_{f,X}(\gamma)$ given by \eqref{coeff cf} for $f$ strongly cuspidal, which only depends on those local expansions for $\Theta_f$, can be naturally adapted to the character $\Theta_\pi$ instead yielding functions $\gamma\in \Gamma_{X-\elli}(G)\mapsto c_{\pi,X}(\gamma)$\index{$c_{\pi,X}(\gamma)$} for every $\pi\in \Irr(G)$. Then, by an argument in the same spirit as above, Waldspurger was able to deduce from the geometric and spectral expansions of Theorems \ref{theo geom side Waldspurger} and \ref{theo spectral expansion strongly tempered}, the following striking formula.
	
	\begin{theo}[Waldspurger]\label{theo multiplicity Waldspurger}
		Assume that $X=\SO_n\backslash \SO_n\times \SO_{n+1}$ is a Gross-Prasad orthogonal variety and that $k$ is non-Archimedean. Then, for every tempered representation $\pi\in \Temp(G)$, we have
		$$\displaystyle m_X(\pi)=\int_{\Gamma_{X-\elli}(G)} c_{\pi,X}(\gamma) d\gamma.$$
	\end{theo}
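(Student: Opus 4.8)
The plan is to derive the formula by comparing the geometric and spectral expansions of the distribution $f\mapsto I^X(f)$ on the space $\cS_{\scusp}(G)$ of strongly cuspidal functions, feeding into the comparison Arthur's local trace formula for the group $G$ itself. The first step is to obtain the spectral expansion of the quasi-character $\Theta_f=WO(\cdot,f)$. Applying Arthur's local trace formula (Theorems \ref{theo Arthur LTF geometric} and \ref{theo Arthur LTF spectral}, for the group $G$) to a pair $f_1\otimes f_2$ with $f_1=f$ strongly cuspidal and $f_2\in\cS(G)$ arbitrary, and equating the two expansions of $I^G(f\otimes f_2)$, one gets $\int_{G(k)}WO(\gamma,f)f_2(\gamma)\,d\gamma=\int_{\mathcal{X}(G)}W\Theta(\pi,f)\,\Theta(\overline{\pi},f_2)\,d\pi$; writing $\Theta(\overline\pi,f_2)=\int\Theta_{\overline\pi}f_2$ and exchanging integrals, this is an equality of distributions $WO(\cdot,f)=\int_{\mathcal{X}(G)}W\Theta(\pi,f)\,\Theta_{\overline\pi}\,d\pi$, hence of quasi-characters (both sides being represented by locally integrable functions). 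Since the operation of extracting, in the Harish-Chandra local character expansion \eqref{expansion qcharacters} around a semisimple point, the coefficient $c_{\mathcal{O}}$ attached to a nilpotent orbit $\mathcal{O}$ is linear in the quasi-character, one deduces $c_{\mathcal{O}}(f)=\int_{\mathcal{X}(G)}W\Theta(\pi,f)\,c_{\mathcal{O}}(\Theta_{\overline\pi})\,d\pi$; specializing $\mathcal{O}$ to the orbit $\mathcal{O}_{\gamma,X}$ attached to $\gamma\in\Gamma_{X-\elli}(G)$ gives $c_{f,X}(\gamma)=\int_{\mathcal{X}(G)}W\Theta(\pi,f)\,c_{\overline\pi,X}(\gamma)\,d\pi$, where $c_{\pi,X}(\gamma):=c_{\mathcal{O}_{\gamma,X}}(\Theta_\pi)$ is defined for every $\pi\in\Irr(G)$ by the same recipe used for strongly cuspidal functions.

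Next I would integrate this identity over $\Gamma_{X-\elli}(G)$ and apply Fubini, obtaining $\int_{\Gamma_{X-\elli}(G)}c_{f,X}(\gamma)\,d\gamma=\int_{\mathcal{X}(G)}W\Theta(\pi,f)\big(\int_{\Gamma_{X-\elli}(G)}c_{\overline\pi,X}(\gamma)\,d\gamma\big)\,d\pi$. By Theorem \ref{theo geom side Waldspurger} the left-hand side equals $I^X(f)$, while Theorem \ref{theo spectral expansion strongly tempered} gives $I^X(f)=\int_{\mathcal{X}(G)}W\Theta(\pi,f)\,m_X(\overline\pi)\,d\pi$. Subtracting, we get
\[
\int_{\mathcal{X}(G)}W\Theta(\pi,f)\Big[\,m_X(\overline\pi)-\int_{\Gamma_{X-\elli}(G)}c_{\overline\pi,X}(\gamma)\,d\gamma\,\Big]\,d\pi=0\qquad\text{for all }f\in\cS_{\scusp}(G).
\]
The final step is a spectral separation argument: the family of functionals $f\mapsto W\Theta(\pi,f)$, as $\pi$ ranges over $\mathcal{X}(G)$, is rich enough — by a weighted version of the trace Paley-Wiener theorem, equivalently by producing enough strongly cuspidal pseudo-coefficients of elliptic tempered representations — that the vanishing above forces the bracketed function on $\mathcal{X}(G)$ to vanish identically (first almost everywhere, then everywhere by its continuity along the families in $\mathcal{X}(G)$). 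This yields $m_X(\overline\pi)=\int_{\Gamma_{X-\elli}(G)}c_{\overline\pi,X}(\gamma)\,d\gamma$ for all $\pi\in\mathcal{X}(G)$; since $\mathcal{X}(G)$ is stable under $\pi\mapsto\overline\pi$ and is a basis of $\mathcal{R}_{\temp}(G)$ on which both sides of the identity are linear, the formula extends to every virtual tempered representation, in particular to every $\pi\in\Temp(G)$.

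The main obstacle is not this formal comparison but the analytic bookkeeping underpinning it: one must know that $\Theta_f$ and the $\Theta_\pi$ are genuine quasi-characters whose germ-coefficients in \eqref{expansion qcharacters} depend continuously and uniformly enough on $\pi\in\mathcal{X}(G)$ for the germ-extraction $\Theta\mapsto c_{\mathcal{O}}(\Theta)$ and the integration over $\Gamma_{X-\elli}(G)$ to commute with the $\mathcal{X}(G)$-integral — this is where Waldspurger's detailed theory of quasi-characters, and the regularity properties of the functions $\widehat{j}(\mathcal{O},\cdot)$ from Harish-Chandra's work \cite{HCadminv}, are indispensable. A second, closely related difficulty is the spectral separation step, which ultimately rests on controlling the image of $f\mapsto(\pi\mapsto W\Theta(\pi,f))$ on strongly cuspidal functions, and on the compatibility of all the constructions involved with the auxiliary choices (special maximal compact subgroup, normalized intertwining operators) entering the definitions of weighted characters and of $\Pi_{\elli}$.
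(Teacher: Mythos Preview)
Your proposal is correct and matches the approach the paper sketches: the paper does not give a detailed proof here but only says that ``by an argument in the same spirit as above, Waldspurger was able to deduce from the geometric and spectral expansions of Theorems \ref{theo geom side Waldspurger} and \ref{theo spectral expansion strongly tempered}'' the multiplicity formula, where ``the same spirit'' refers to the preceding derivation of the elliptic orthogonality relations via pseudo-coefficients. Your outline --- expand $\Theta_f$ spectrally via Arthur's local trace formula for the group, extract the germ coefficients $c_{f,X}(\gamma)$ termwise, integrate over $\Gamma_{X-\elli}(G)$, and separate spectrally using (strongly cuspidal) pseudo-coefficients --- is exactly a fleshed-out version of that sketch, and you have correctly flagged the genuine analytic content (uniformity of local character expansions in families, the Fubini step, and the image of $f\mapsto W\Theta(\pi,f)$ on strongly cuspidal functions) as the places where Waldspurger's machinery of quasi-characters does the real work.
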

	
	This result is arguably the main and crucial ingredient in Waldspurger's approach to the local Gross-Prasad conjecture. Combining it with the theory of (twisted) endoscopy \cite{WalGP2}, this leads to a full proof of the conjecture in the tempered case.
	
	\begin{rem}
		\begin{itemize}
			\item Similar formulas have been obtained in the setting of the local unitary Gan-Gross-Prasad conjecture \cite{BPlocalGGPpadic}, the Ginzburg-Rallis model \cite{ChenGR} and Galois symmetric varieties \cite{BeuGal}. There have been also works extending the work of Waldspurger to Archimedean fields \cite{BP3}, \cite{Luo}.
			
			\item According to a conjecture of Prasad, the right-hand side of Waldspurger's formula should represent in general the $X$-Euler Poincar\'e characteristic i.e.\ it is conjectured that\index{$\mathrm{EP}_X(\pi)$}
			$$\displaystyle \mathrm{EP}_X(\pi)=\int_{\Gamma_{X-\elli}(G)} c_{\pi,X}(\gamma) d\gamma \mbox{ for every } \pi\in \Irr(G)$$
			where we have set $\mathrm{EP}_X(\pi):=\sum_{i\geq 0} (-1)^i \dim \Ext^i_G(\cS(X),\pi)$. The analog in the group case is a conjecture of Kazhdan which was proved by Schneider-St\"uhler \cite{SSbuilding} and Bezrukavnikov \cite{Bezthesis}.
			
			\item It is natural to wonder how general Waldspurger's formula can be. As a very first step in that direction, it seems important to understand in a more conceptual way the space of conjugacy classes $\Gamma_{X-\elli}(G)$ as well as the coefficients $c_{\pi,X}(\gamma)$, or equivalently the nilpotent coadjoint orbits $\mathcal{O}_{\gamma,X}\in Nil(\mathfrak{g}^*_\gamma)$, for $\gamma\in \Gamma_{X-\elli}(G)$. Motivated by some specific steps in Waldspurger's proof of the geometric expansion in Theorem \ref{theo geom side Waldspurger} (which, more precisely, builds on Harish-Chandra method of semisimple descent), Chen Wan \cite{Chenmultformula} has proposed a general construction of such a set of conjugacy classes $\Gamma_{X-\elli}(G)$. More precisely, rephrasing slightly Wan's construction, $\Gamma_{X-\elli}(G)$ should consists of the $G(k)$-conjugacy classes of pairs $\gamma=(h,Y)$ where $h\in G_{\mathrm{ss}}(k)$ is a semisimple element and $Y$ is a connected component of the subvariety $X^h$ of $h$-fixed points (it can be shown that $Y$ is then automatically a homogeneous spherical $G_h$-variety) satisfying the three following conditions:
			\begin{itemize}
				\item The connected centralizer $G_h$ is quasi-split;
				
				\item The pair $(G_h,Y)$ is {\it minimal} in the sense of {\em op.\ cit.}\ i.e.\ that $Y/Z(G_h)$ has the same dimension as that of a Borel subgroup of $G_h/Z(G_h)$. (Note that this is the maximal possible dimension for a $G_h/Z(G_h)$-spherical variety.)
				
				\item The pair $(G_h,Y)$ is {\it elliptic} in the following sense: the morphism between centers $Z(G_h)^0\to Z(X)^0$ is surjective and the connected component of its kernel is anisotropic modulo $Z(G)^0$.
			\end{itemize}
			We note that by a result of Knop, the minimality condition is equivalent to: the image of the moment map $T^*(Y/Z(G_h))\to (\mathfrak{g}_h/\mathfrak{z}(G_h))^*$ contains an open dense subset. Specializing to the Gross-Prasad, this recovers the definition of $\Gamma_{X-\elli}(G)$ given by Waldspurger. (In this case, if $X^h$ is nonempty it is connected so that we can really identify $\Gamma_{X-\elli}(G)$ with a set of semisimple conjugacy classes in $G(k)$). Wan has also proposed a generalization of Waldspurger's definition of the nilpotent coadjoint orbits $\mathcal{O}_{\gamma,X}$ as a linear combination (with rational coefficients) of regular nilpotent coadjoint orbits in $\mathfrak{g}^*_\gamma)$ (where if $\gamma=[h,Y]$ we have written $\mathfrak{g}^*_\gamma$ for $\mathfrak{g}^*_h$) and using it he has made very general predictions on multiplicity formulas of the same shape as in Theorem \ref{theo multiplicity Waldspurger}. However, Wan's proposal is not very explicit as it relies on the knowledge of certain Shalika germs that are difficult to compute. It would be nice to see another (even conjectural) more explicit description of these coadjoint orbits.
			
		\end{itemize}
	\end{rem}
\end{paragr}

\section{Global aspects}\label{Part global}

Here are some general notations and conventions that we will adopt in this chapter:
\begin{itemize}
	\item We will use $F$\index{$F$} to denote a global field which, for simplicity of exposition, we assume to be a number field. The main reason is that the geometry of spherical varieties can be more subtle in positive characteristic, but most of the story should adapt verbatim to function fields. We write $V_F$\index{$V_F$} for the set of places of $F$. For $v\in V_F$, we denote by $F_v$\index{$F_v$} the corresponding completion and, if the place in non-Archimedean, by $\mathcal{O}_v\subset F_v$\index{$\mathcal{O}_v$} its ring of integers and $\mathbb{F}_{q_v}$\index{$\mathbb{F}_{q_v}$} its residue field (therefore $q_v$ will be its cardinal!). The absolute Galois group of $F$ (resp. $F_v$) will be denoted by $\Gamma_F$\index{$\Gamma_F$} (resp. $\Gamma_{F_v}$)\index{$\Gamma_{F_v}$} and we will write $\Frob_v\in \Gamma_{F_v}$\index{$\Frob_v$} for any (geometric) Frobenius element.
	
	\item For every algebraic variety $Y$ over $F$ and $v\in V_F$, we will usually write $Y_v$\index{$Y_v$} for its set of $F_v$ points: $Y_v=Y(F_v)$. We will denote by $\mathbb{A}=\prod_v' F_v$\index{$\mathbb{A}$} the adele ring of $F$ and, for any linear algebraic group $G$ over $F$, by $[G]:=G(F)\backslash G(\mathbb{A})$\index{$[G]$} its {\em automorphic quotient};
	
	\item For any linear algebraic reductive group $G$ over $F$, we will equip $G(\bA)$ with its {\em Tamagawa measure}. Let us recall briefly how this canonical measure is constructed. We pick an invariant volume form $\omega$ on $G$, defined over $F$, as well as a nontrivial additive character $\psi: \bA/F\to \bC^\times$\index{$\psi$} and we denote by $\psi_v$\index{$\psi_v$} its restriction to $F_v$ for every place $v\in V_F$. We equip $G_v$ with the {\em local Tamagawa measure}\index{$d_{\Tam}g_v$}
	$$\displaystyle d_{\Tam}g_v=\lvert \omega\rvert_{\psi_v}$$
	that is to say the measure given locally as follows: if in local coordinates
	$$\displaystyle \omega=f(x_1,\ldots,x_d) dx_1\wedge\ldots\wedge dx_d$$
	then
	$$d_{\Tam}g_v=\lvert f(x_1,\ldots,x_d)\rvert_v d_{\psi_v}x_1\ldots d_{\psi_v}x_d$$
	where $d_{\psi_v}x$ stands for the $\psi_v$-selfdual Haar measure on $F_v$. Note that when $\psi_v$ is unramified (which, of course, happens at all but finitely many places), the self-dual measure $d_{\psi_v}x$ is simply the one giving volume $1$ to $\mathcal{O}_v\subset F_v$. Moreover, for almost all places $v$, by results of Weil \cite{Wei} and Steinberg \cite{Steinb} we have
	$$\displaystyle \vol(G(\mathcal{O}_v),d_{\Tam}g_v)=\frac{\lvert G(\mathbb{F}_{q_v})\rvert}{q_v^{d_G}}=L_v(M_G^\vee(1))^{-1}$$
	where $d_G=\dim(G)$\index{$d_G$}, $M_G$\index{$M_G$} is a certain Artin-Tate motive with nonnegative weight naturally associated to $G$ (the so-called {\em motive of $G$} introduced by Gross \cite{Gros}) and $L_v(M_G^\vee(1))$\index{$L_v(M_G^\vee(1))$} is the value at $s=0$ of the local $L$-factor of the Tate-twisted dual motive. We will use the notation $\Delta_{G,v}=L_v(M_G^\vee(1))$\index{$\Delta_{G,v}$} to denote this local $L$-value. We can then form the local normalized measures $dg_v=\Delta_{G,v} d_{\Tam}g_v$, which satisfy $\vol(G(\mathcal{O}_v),dg_v)=1$ for almost all $v$. The Tamagawa measure on $G(\bA)$ is defined as the product\index{$d_{\Tam}g$}
	$$\displaystyle d_{\Tam}g=(\Delta_{G}^*)^{-1}\prod_{v\in V_F}dg_v$$
	where $\Delta_G^*$\index{$\Delta_G^*$} denotes the leading coefficient in the Taylor expansion of the global Artin $L$-function $L(s,M_G^\vee(1))$ at $s=0$ i.e.\
	$$\displaystyle \Delta_{G}^*=\lim\limits_{s\to 0} s^{a_G} L(s,M_G^\vee(1))$$
	where $a_G=\dim(A_G)$\index{$a_G$}, the dimension of the maximal central split torus of $G$. (Note that since $M^\vee(1)$ has weights $\leq -2$, $L(s,M^\vee(1))$ can't have a zero at $s=0$, moreover this pole is of order $a_G$.) We emphasize that the local Tamagawa measure $d_{\Tam}g_v$ depends on the choices of $\omega$ and $\psi_v$ but that, as readily follows from the product formula, the global Tamagawa measure is independent of these choices. 
	Note that for every finite set $S\subset V_F$, we also have the decomposition
	$$\displaystyle d_{\Tam}g=(\Delta_{G}^{S,*})^{-1}\prod_{v\in S} d_{\Tam} g_v \times \prod_{v\notin S}dg_v$$
	where $\Delta_{G}^{S,*}$\index{$\Delta_{G}^{S,*}$} is defined similarly as the leading term in the Taylor expansion of the partial $L$-function $L^S(s,M_G^\vee(1))$ at $s=0$. In particular, taking the limit $S\to \infty$, we formally have a decomposition $d_{\Tam}g=\prod_v d_{\Tam} g_v$.
	
	\item Let $H_G: G(\bA)\to \mathcal{A}_G:=\Hom(X^*(G),\bR)$\index{$H_G$} be the homomorphism defined by $\langle \chi,H_G(g)\rangle=\log \lvert \chi(g)\rvert$, where $X^*(G)$ denotes the lattice of algebraic characters of $G$ defined over $F$, $\lvert .\rvert=\prod_v \lvert .\rvert_v$\index{$\lvert .\rvert$}\index{$\lvert .\rvert_v$} is the product of the local normalized absolute values and $\log$ denotes the logarithm in the natural basis. We set $G(\bA)^1=\Ker(H_G)$\index{$G(\bA)^1$} and we equip $G(\bA)^1$ with the unique Haar measure such that the quotient of the Tamagawa measure $d_{\Tam}g$ of $G(\bA)$ by it induces on $\mathcal{A}_G=H_G(G(\bA))$ the measure for which the lattice $\Hom(X^*(G),\bZ)$ is of covolume $1$. We will refer to this measure as the {\em Tamagawa measure on $G(\bA)^1$} and denote it by $d_{\Tam}g^1$\index{$d_{\Tam}g^1$}. Note that our normalization is such that, for $G=\mathbb{G}_m$, the total volume of $G(F)\backslash G(\bA)^1=F^\times\backslash \mathbb{A}^1$ is $1$.
	
	\item The above definition extends naturally to any unimodular homogeneous space $X=H\backslash G$, thus yielding local Tamagawa measures $d_{\Tam}x_v$\index{$d_{\Tam}x_v$} on $X_v$ (that depend on the choice of an invariant volume form $\omega_X$ on $X$ and of an additive character $\psi$) as well as a global Tamagawa measure $d_{\Tam}x$\index{$d_{\Tam}x$} on $X(\bA)$, that is independent of any choice and that admits a formal decomposition $d_{\Tam}x=\prod_v d_{\Tam} x_v$. More precisely, we have
	$$\displaystyle d_{\Tam}x=(\Delta_X^{*})^{-1}\prod_{v} \Delta_{X,v}d_{\Tam}x_v$$
	where $\Delta_{X,v}:=\Delta_{G,v}\Delta_{H,v}^{-1}$\index{$\Delta_{X,v}$} and $\Delta_X^{*}=\Delta_G^*(\Delta_H^*)^{-1}$\index{$\Delta_X^{*}$} are the ratios between the (regularized) Tamagawa factors for $G$ and $H$.
	
	\item For $G$ a connected reductive group over $F$, we write $\cA(G)$\index{$\cA(G)$} for the space of automorphic forms for $G$: it is a space of functions on $[G]$ satisfying certain conditions of regularity (it is smooth at every place and of {\em moderate growth} in the number field case), finiteness (roughly speaking equivalent to asking that they generate, by right translation, representations of finite length of $G(\bA)$). We refer the reader to \cite{BoJa} for the official definition, but we emphasize that for our purpose it is more natural to weaken a bit the conditions at Archimedean places by not asking the functions to be $K_\infty$-finite (for some choice of maximal compact subgroup $K_\infty\subset G(F_\infty)$); this makes the theory slightly more analytic and in particular requires to work with certain topological representations at Archimedean places rather than the usual $(\mathfrak{g}_\infty,K_\infty)$-modules; see \cite{Grob} or \cite[\S 2.7]{BCZ} for more details.
	\item  The subspace of cuspidal forms, consisting of those $\phi\in \cA(G)$ with vanishing constant terms, will be denoted by $\cA_{\cusp}(G)$\index{$\cA_{\cusp}(G)$}.
	
	\item The {\em cuspidal automorphic representations} of $G(\bA)$ are the irreducible subrepresentations of $\cA_{\cusp}(G)$. Similarly, an {\em automorphic representation} of $G(\bA)$ is an irreducible subquotient of $\cA(G)$.
	
	\item Every unitary cuspidal automorphic representation $\pi$ of $G(\bA)$ is equipped with a canonical invariant inner product namely the {\em Petersson inner product} defined by\index{$\langle .,.\rangle_{\Pet}$}
	$$\displaystyle \langle \phi,\phi'\rangle_{\Pet}=\int_{G(F)\backslash G(\bA)^1} \phi(g^1) \overline{\phi'(g^1)} d_{\Tam}g^1$$
	where $d_{\Tam}g^1$ stands for the quotient of the Tamagawa measure on $G(\bA)^1$ by the counting measure on $G(F)$.
	
	\item Let $r: {}^L G\to \GL(M)$ be an algebraic representation of the $L$-group of $G$. Then, for every automorphic representation $\pi$ of $G(\bA)$, we can define following Langlands a partial $L$-function\index{$L^S(s,\pi,r)$}
	$$\displaystyle L^S(s,\pi,r)=\prod_{v\notin S} L(s,\pi_v,r)$$
	where $S\subset V_F$ is a finite subset containing the Archimedean places and such that for every $v\notin S$, the representation $\pi_v$ is unramified (in the sense that it admits a nonzero vector invariant under a hyperspecial maximal compact subgroup), the local $L$-factors are given by $L(s,\pi_v,r)=\det(1-q_v^{-s} r(\mathcal{S}_v(\pi)))^{-1}$ where $\mathcal{S}_v(\pi)$\index{$\mathcal{S}_v(\pi)$} is the Satake parameter of $\pi_v$ (a $\widehat{G}$-conjugacy class in ${}^L G$) and the product converges for $s\in \bC$ of sufficiently large real part. We refer the reader to \cite{Bolf} for more details. In particular, we emphasize that our $L$-functions are always normalized so that $1/2$ ought to be the center of symmetry of the expected functional equation.
	
	\item It will be convenient to use the {\em global Langlands group} $L_F$\index{$L_F$} of $F$. When $F$ is a function field, it is just the global Weil group $W_F$ of $F$. When $F$ is a number field, $L_F$ ought to be an extension of the Weil group by a compact group but there is no satisfactory definition of it yet. However, it should satisfy the crucial property that the (continuous) $n$-dimensional irreducible representations of $L_F$ are in natural bijection with the set of cuspidal automorphic representations of $\GL_n(\bA)$. The global Langlands correspondence, as refined by Arthur, should then give a decomposition of the space of cuspidal automorphic forms (or more generally, of the space of discrete automorphic forms)
	$$\displaystyle \mathcal{A}_{\cusp}(G)=\bigoplus_{\psi} \mathcal{A}_{\cusp}(G)_\psi$$
	indexed by global $A$-parameters
	$$\displaystyle \psi: L_F\times \SL_2(\bC)\to {}^L G$$
	which are {\em discrete} in the sense that the centralizer $Z_{\widehat{G}}(\psi)$ is finite modulo $Z(\widehat{G})^\Gamma$. We refer the reader to \cite[\S 16.1]{SV} for a nice and concise summary of some basic expectations on this conjectural decomposition. We will also sometimes talk about the global $A$-parameter of a cuspidal representation $\pi$ by which we mean a $A$-parameter $\psi$ such that $\pi$ appears in $\mathcal{A}_{\cusp}(G)_\psi$. Strictly speaking it is a bit abusive as it might happen that $\pi$ appears in different pieces of the above decomposition but in all cases where we will do so, either the choice of $\psi$ will not matter or there will be only one such parameter (e.g. this happens if $\pi$ appears with multiplicity one in $\mathcal{A}_{\cusp}(G)$, which we know is always true for all classical groups except even special orthogonal groups, when the multiplicity can be $2$). We also recall that for classical groups we can, following Arthur \cite{artbook}, define unconditionally certain substitute for $A$-parameters using cuspidal automorphic representations of general linear groups to replace irreducible representations of the hypothetical group $L_F$.
	
	\item As usual, the $A$-parameter $\psi$ of a cuspidal automorphic representation is said to be {\em generic} if it is trivial on $\SL_2(\bC)$. For classical groups, this corresponds to a very explicit condition on Arthur's substitute to the $A$-parameter which can be succinctly described as the requirement that the functorial lift $\pi^{\GL}$ of $\pi$ to the suitable general linear group (corresponding to the standard representation of the $L$-group of $G$) is generic (in the usual, representation-theoretic, sense).
\end{itemize}

\subsection{The global Gan-Gross-Prasad and Ichino-Ikeda conjectures}\label{S global GGP}
As an illustrative introduction to the general conjectures of Sakellaridis-Venkatesh, we first recall the formulation of the global {\em Gan-Gross-Prasad} \cite{GGP} and its refinement due to {\em Ichino-Ikeda} \cite{IIk} in the setting of orthogonal groups. We adopt notation similar to the one in Section \ref{Sect GGP} but over a number field. Thus, $(V,q)$ is a quadratic space over a number field $F$, $W\subset V$ is a nondegenerate subspace of it and we set
$$\displaystyle H=\SO(W) \hookrightarrow G=\SO(W)\times \SO(V).$$

\vspace{2mm}

\begin{paragr}
The {\em Gross-Prasad period} $\mathcal{P}_H$\index{$\mathcal{P}_H$} is the automorphic period associated to that subgroup $H$ of $G$ i.e.\ it is the following linear functional on $\mathcal{A}_{\cusp}(G)$:
$$\displaystyle \mathcal{P}_H(\varphi):=\int_{[H]} \varphi(h) dh,\;\;\; \varphi\in \mathcal{A}_{\cusp}(G).$$
\end{paragr}

\begin{paragr}
	Let $\pi\subset \mathcal{A}_{\cusp}(G)$ be a cuspidal automorphic representation of $G(\mathbb{A})$. The Gan-Gross-Prasad conjecture relates the restriction of the automorphic period $\mathcal{P}_H$ on $\pi$ to the central value of a certain (Rankin-Selberg kind of) $L$-function. More precisely, as recalled in Section \ref{Sect GGP}, each of ${}^L \SO(W)$ and ${}^L \SO(V)$ can be naturally identified with subgroups of either an even complex orthogonal group or a complex symplectic group. More precisely, we have
	$$\displaystyle {}^L \SO(W)\times {}^L \SO(V) \subset \left\{\begin{array}{ll}
		O_{2m}(\mathbb{C})\times \Sp_{2m}(\mathbb{C}) \mbox{ if } \dim(V)=2m+1 \mbox{ is odd,} \\
		\Sp_{2m-2}(\mathbb{C})\times O_{2m}(\mathbb{C}) \mbox{ if } \dim(V)=2m \mbox{ is even.}
	\end{array} \right.$$
	Composing these embeddings with the standard representations of the orthogonal and symplectic groups involved, we obtain complex representations $R_W$ and $R_V$ of ${}^L \SO(W)$ and ${}^L \SO(V)$ respectively. Set $R=R_W\otimes R_V$, a complex symplectic representation of ${}^L G$ and form the (partial) $L$-function
	$$\displaystyle L^S(s,\pi,R)=\prod_{v\notin S} L(s,\pi_v,R),\;\; \Re(s)\gg 0.$$
	Decomposing $\pi=\pi_W\boxtimes \pi_V$ where $\pi_W$, $\pi_V$ are cuspidal representations of $\SO(W)$ and $\SO(V)$ respectively, we will also write $L^S(s,\pi_W\times \pi_V)$ instead of $L^S(s,\pi,R)$.
\end{paragr}

\begin{paragr}
	For the statement of the conjecture we need to assume that this $L$-function admits, as expected, holomorphic continuation. We recall that this follows from the existence of functorial lifts $\pi_W^{\GL}$, $\pi_V^{\GL}$\index{$\pi_W^{\GL}$, $\pi_V^{\GL}$} of $\pi_W$, $\pi_V$ to $\GL_{d_W}$, $\GL_{d_V}$ respectively, where we have denoted by $d_W$, $d_V$ the dimensions of the standard representations $R_W$ and $R_V$ respectively. More precisely, the local components of the representations $\pi_W^{\GL}$ and $\pi_V^{\GL}$ should be related to those of $\pi_W$ and $\pi_V$ at almost all places $v$ by the following requirement on Langlands-Satake parameters: 
	$$\displaystyle \mathcal{S}_v(\pi^{\GL}_{W})=R_W(\mathcal{S}_v(\pi_{W})) \mbox{ and } \mathcal{S}_v(\pi^{\GL}_{V})=R_V(\mathcal{S}_v(\pi_{V,v})).$$
	This makes the automorphic representations $\pi_W^{\GL}$ and $\pi_V^{\GL}$ unique provided we require that they are isobaric sums of cuspidal representations on smaller general linear groups. In the case where $G$ is split, the existence of these lifts was established by Arthur in his seminal work \cite{artbook}.
	
	Assuming the existence of $\pi_W^{\GL}$, $\pi_V^{\GL}$ from now on, $L^S(s,\pi_W\times \pi_V)$ coincides with the (partial) Rankin-Selberg $L$-function of the pair $(\pi^{GL}_W, \pi^{GL}_V)$:
	$$\displaystyle L^S(s,\pi_W\times \pi_V)=L^S(s,\pi^{GL}_W\times \pi^{GL}_V)$$
	and as is well-known \cite{JPSS}, this $L$-function admits a holomorphic continuation to $\mathbb{C}$ and can be completed, by adding suitable local factors at all ramified places (including Archimedean places), to a completed $L$-function $L(s,\pi^{GL}_W\times \pi^{GL}_V)$, that we shall henceforth denote by $L(s,\pi_W\times \pi_V)$\index{$L(s,\pi_W\times \pi_V)$}, satisfying a functional equation of the form $s\leftrightarrow 1-s$.
\end{paragr}

\begin{paragr}
	We can now state the conjecture of Gross-Prasad in the case considered as follows (see \cite{GP}).

	\begin{conj}[Gross-Prasad]\label{conj global GGP}
		Assume that the functorial lift $\pi^{\GL}=\pi_W^{\GL}\boxtimes \pi_V^{\GL}$ is a generic automorphic representation of $\GL_{d_W}(\mathbb{A})\times \GL_{d_V}(\mathbb{A})$. Then, the following assertions are equivalent:
		\begin{enumerate}[(i)]
			\item $L(\frac{1}{2}, \pi_W\times \pi_V)\neq 0$ and for every place $v$ of $F$, $\Hom_{H(F_v)}(\pi_v,\mathbb{C})\neq 0$;
			
			\item there exists $\varphi\in \pi$ such that $\mathcal{P}_H(\varphi)\neq 0$.
		\end{enumerate}
	\end{conj}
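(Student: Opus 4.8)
The plan is to split the equivalence into the two implications $(ii)\Rightarrow (i)$ and $(i)\Rightarrow (ii)$, and to organize both around the conjectural factorization of the square of the Gross--Prasad period into a product of local integrals (the Ichino--Ikeda refinement). First I would dispose of the elementary half of $(ii)\Rightarrow (i)$: if $\mathcal{P}_H$ is nonzero on $\pi=\otimes_v\pi_v$, then since $\mathcal{P}_H$ is manifestly $H(\mathbb{A})$-invariant and, by the local multiplicity one theorem of Aizenbud--Gourevitch--Rallis--Schiffmann \cite{AGRS}, each space $\Hom_{H(F_v)}(\pi_v,\mathbb{C})$ is at most one-dimensional, the period defines a nonzero element of $\bigotimes_v \Hom_{H(F_v)}(\pi_v,\mathbb{C})$; in particular every local Hom space is nonzero. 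The content therefore lies entirely in relating nonvanishing of the period to nonvanishing of $L(\tfrac12,\pi_W\times\pi_V)$ together with the local conditions.

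\textbf{The main mechanism.} I would next invoke (or, where it is not available, prove a qualitative form of) the Ichino--Ikeda identity, which for factorizable $\varphi=\otimes_v\varphi_v$ expresses
$$\frac{|\mathcal{P}_H(\varphi)|^2}{\langle\varphi,\varphi\rangle_{\Pet}} = C\cdot \frac{L(\tfrac12,\pi_W\times\pi_V)}{L(1,\pi,\mathrm{Ad})}\cdot \prod_{v} I_v^{\natural}(\varphi_v),$$
where $C$ is an explicit nonzero constant, $L(1,\pi,\mathrm{Ad})$ is a product of adjoint $L$-values (nonvanishing by Jacquet--Shalika), and $I_v^{\natural}$ is the normalized local functional which equals $1$ at almost all $v$ and whose nonvanishing, by the local Gan--Gross--Prasad theorem of Waldspurger and M\oe{}glin--Waldspurger and its Archimedean analogue, is equivalent to $\Hom_{H(F_v)}(\pi_v,\mathbb{C})\neq 0$. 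Granting this identity, $(i)\Leftrightarrow(ii)$ is immediate: the right-hand side is nonzero for a suitable choice of $\varphi$ exactly when the global $L$-value and all local factors are nonzero. Thus the whole conjecture reduces to the Ichino--Ikeda conjecture plus the (known) local conjecture and the basic analytic properties of the Rankin--Selberg $L$-function recalled above.

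\textbf{Proving the qualitative statement directly.} Since the Ichino--Ikeda conjecture is itself only a theorem in restricted situations, I would alternatively establish the two implications of Conjecture \ref{conj global GGP} either by a relative trace formula comparison in the spirit of Jacquet--Rallis, adapted to the orthogonal pair $(G,H)$, or --- more realistically, given the current state of the orthogonal case --- by the theta-correspondence and see-saw method of Waldspurger and Gan--Ichino. In the theta approach one lifts $\pi_V$ to a representation on a metaplectic or symplectic group, uses a see-saw diagram to convert the Gross--Prasad period of $\pi$ into a Fourier--Jacobi period of lower rank, tracks the behaviour of the relevant $L$-value and of the local Hom spaces under the local theta correspondence (via the local theta dichotomy and epsilon-factor computations), and runs an induction on $\dim V$ whose base case is Waldspurger's toric period result \cite{Waldtoric}. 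In the relative trace formula approach one compares the geometric side of the $(G,H)$-period formula with that of a simpler formula on general linear factors whose spectral side isolates $L(\tfrac12,\pi_W\times\pi_V)$; this requires smooth transfer and the fundamental lemma for the corresponding orbital integrals.

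\textbf{The main obstacle.} The hard direction is $(i)\Rightarrow(ii)$: producing an automorphic form with nonzero period from nonvanishing of the central $L$-value and the local conditions. In the trace-formula route the bottlenecks are the existence of smooth transfer at Archimedean and ramified non-Archimedean places and the orthogonal fundamental lemma, neither available in the generality needed; in the theta route the obstacle is controlling $L$-values and local multiplicities through the full inductive tower of theta lifts, in particular near first occurrence where the correspondence is delicate. I expect that, as in the unitary case, a complete unconditional proof would proceed by establishing the Ichino--Ikeda identity itself --- whose proof again rests on a relative trace formula with all its transfer and fundamental-lemma inputs --- so that the genuine difficulty is concentrated in the harmonic analysis underlying that comparison rather than in any formal manipulation of periods and $L$-functions.
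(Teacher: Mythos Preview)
The statement you are addressing is a \emph{conjecture}, not a theorem, and the paper does not claim to prove it. It is stated as the global Gross--Prasad conjecture and, in the orthogonal case treated here, remains open; the paper offers no proof to compare your proposal against. Your write-up is therefore not a proof but a survey of possible strategies, and should be read as such.

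That said, your outline is broadly consonant with what the paper does say about the status of the conjecture. In Remark~\ref{rem II conj}(7) (following the Ichino--Ikeda conjecture) the paper observes exactly your ``main mechanism'': given the local Gross--Prasad theorem (the equivalence $\mathcal{P}_{H_v}\mid_{\pi_v}\neq 0 \Leftrightarrow \Hom_{H_v}(\pi_v,\bC)\neq 0$ for tempered $\pi_v$), the global conjecture follows from the Ichino--Ikeda identity under the assumption that $\pi$ is everywhere tempered. The paper also notes (Remark~\ref{rem II conj}(6)) that the analogous conjecture for unitary groups is now a theorem via the Jacquet--Rallis relative trace formula comparison, which is your ``relative trace formula'' route; but the orthogonal fundamental lemma and smooth transfer needed to run that argument for $\SO_n\times\SO_{n+1}$ are not available, precisely the obstacle you identify. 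Your theta/see-saw sketch is plausible in low rank but is not a known route to the general orthogonal case.

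One correction to the easy half of $(ii)\Rightarrow(i)$: the nonvanishing of each local Hom space follows immediately from the $H(\bA)$-invariance of $\mathcal{P}_H$ and the factorization $\pi=\bigotimes_v'\pi_v$, without appealing to multiplicity one; the AGRS theorem is used to make the factorization of the period \emph{canonical}, not to establish local distinction.
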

	
	\begin{rem}
		\begin{itemize}
			\item The condition that the lift $\pi^{\GL}$ be generic should be seen as a substitute for the condition that the global A-parameter $\psi: L_F\times \SL_2(\bC)\to {}^LG$ be generic (that is trivial on the $\SL_2(\bC)$ factor). According to the generalized Ramanujuan conjecture, this should also be equivalent to the condition: the local component $\pi_v$ is tempered for every $v\in V_F$.
			
			\item In the above statement, it is important to work with the completed $L$-function $L(s,\pi_W\times \pi_V)$ (as opposed to a partial $L$-function). Indeed, without the Ramanujan conjecture for $\pi$, we can't ensure that the local factors $L(s,\pi_{W,v}\times \pi_{V,v})$ don't have poles at $s=1/2$ (whereas, if $\pi_{W,v}$, $\pi_{V,v}$ are tempered, all the local poles will be in the half-plane $\{ \Re(s)\leq 0\}$) which could mean that $L(\frac{1}{2},\pi_W\times \pi_V)\neq 0$ whereas $L^{\{v\}}(\frac{1}{2},\pi_W\times \pi_V)=0$.
			
			\item Similarly to the local conjecture, we can also give a statement that includes all the {\em pure inner forms} $(G_\alpha, H_\alpha)$ ($\alpha\in H^1(F,H)$) of the pair $(G,H)$. Indeed, we also conjecturally have an equivalence between (see \cite[\S 26]{GGP}):
			\begin{enumerate}
				\item $L(\frac{1}{2}, \pi_W\times \pi_V)\neq 0$;
				
				\item There exists a pure inner form $(G_\alpha, H_\alpha)$ of $(G,H)$, a cuspidal representation $\pi'\subset \cA_{\cusp}(G_\alpha)$ with the same $\GL$-lift $(\pi')^{\GL}=\pi^{\GL}$ as $\pi$ and such that $\mathcal{P}_{H_\alpha}\mid_{\pi'}\neq 0$.
			\end{enumerate}
			Actually, we can show using the local GGP conjecture and Arthur's multiplicity formula (which is again known for $G$ quasi-split case by \cite{artbook}) that this version is equivalent to Conjecture \ref{conj global GGP}. Furthermore, the condition that $(\pi')^{\GL}=\pi^{\GL}$ can be read as saying that $\pi$ and $\pi'$ have the same {\em global $L$-parameter} (modulo the usual ambiguity of conjugacy by the full orthogonal group for $\SO_{2m}$).
			
			\item As for the local conjecture, there is a variant of the conjecture for groups of the form $G=\SO_{2n+1}\times \SO_{2m}$ as well as variants for other classical groups. We refer the reader again to \cite{GGP} for a (very) complete discussion.
		\end{itemize}
	\end{rem}
\end{paragr}

\begin{paragr}[Local periods.]
	We now turn to a refinement of the Gross-Prasad conjecture that has been proposed by Ichino and Ikeda \cite{IIk}. This refinement takes the form of an exact identify relating (the square of) the automorphic period $\mathcal{P}_H \mid_\pi$ to the central value $L(\frac{1}{2},\pi_W\times \pi_V)$.
	
	This requires the introduction of local periods which are most easily defined when the local components $\pi_v$ of $\pi$ are all tempered representations which we assume from now on. (Recall that the Ramanujan conjecture predicts that this happens if and only if the $A$-parameter of $\pi$ is generic, which is needed in the GGP conjecture.)
	
	We fix a factorization $\pi=\bigotimes_v' \pi_v$ (which in particular depends on a choice of normalized spherical vectors $\phi_v^\circ\in \pi_v$\index{$\phi_v^\circ$} for almost all places $v$) as well as an invariant inner product $\langle .,.\rangle_v$\index{$\langle .,.\rangle_v$} on $\pi_v$ for every place $v$ of $F$. As discussed in Section \ref{Sect GGP}, the variety $X=H\backslash G$ is strongly tempered which entails that matrix coefficients of the tempered representation $\pi_v$ are integrable over $H(F_v)$. The local period $\mathcal{P}_{H_v}$ is then defined as the sequilinear form
	$$\displaystyle \mathcal{P}_{H_v}: \pi_v\times \overline{\pi_v}\to \mathbb{C},\;\; \mathcal{P}_{H_v}(\varphi_v,\varphi'_v)=\int_{H(F_v)} \langle \pi_v(h_v) \varphi_v,\varphi'_v \rangle_v dh_v.$$
	
	We assume that the local inner products as well as the Haar measures $dh_v$ have been chosen so that they factorize the Petersson inner product and the Tamagawa measure on $H(\bA)$:
	$$\displaystyle \langle.,.\rangle_{\Pet}=\prod_v \langle .,.\rangle_v,\;\; d_{\Tam}h=\prod_v dh_v$$
	with $\langle \phi^\circ_v,\phi_v^\circ\rangle_v=1$ (where $\phi^\circ_v\in \pi_v$ stands for the normalized spherical vector) and $\vol(H(\mathcal{O}_v),dh_v)=1$ for almost all $v$.
	
	Some (nontrivial) unramified computation done in \cite{IIk}, yields that
	\begin{equation}\label{unr computation}
		\displaystyle \mathcal{P}_{H_v}(\phi^\circ_v,\phi^\circ_v)=\Delta_{v} \frac{L(\frac{1}{2},\pi_{W,v}\times \pi_{V,v})}{L(1,\pi_v,\Ad)}
	\end{equation}
	for almost all places $v$, where $L(s,\pi_v,\Ad)$ is the local adjoint $L$-factor of $\pi_v$ and $\Delta_{v}=\Delta_{\SO(W),v}$ is the Tamagawa local unramified factor of the group $\SO(W)$, the same as the Tamagawa local unramified factor for the homogeneous variety $X=H\backslash G$ (and, in this case, a product of abelian $L$-factors that depend on the discriminant of $W$ see \cite{IIk} for an explicit description). (As a side note, we emphasize that, by the assumption that $\pi_v$ is tempered, the $L$-factor $L(s,\pi_{W,v}\times \pi_{V,v})$ doesn't have a pole at $s=\frac{1}{2}$.)
\end{paragr}

\begin{paragr}
	The Ichino-Ikeda conjecture can now be stated as follows:	
	\begin{conj}[Ichino-Ikeda]\label{global Ichino-Ikeda conj}
		Assume that at every place $v$, $\pi_v$ is tempered and that $\pi$ appears with multiplicity one in $\mathcal{A}_{\cusp}(G)$. Then, for every factorizable vector $\varphi=\bigotimes_v \varphi_v\in \pi=\bigotimes'_v \pi_v$, we have a factorization
		$$\displaystyle \left\lvert \mathcal{P}_H(\varphi)\right\rvert^2=\frac{1}{\lvert S_\pi\rvert}\Delta^S\frac{L^S(\frac{1}{2},\pi_{W}\times \pi_{V})}{L^S(1,\pi,\Ad)}\prod_{v\in S} \mathcal{P}_{H_v}(\varphi_v,\varphi_v)$$
		for any sufficiently large finite set of places $S$, where $\Delta^S=\Delta_{\SO(W)}^S$ denotes the  partial $L$-value appearing in the definition of the Tamagawa measures for $\SO(W)\simeq H\backslash G$ and $S_\pi$ ought to be the centralizer of the hypothetical $L$-parameter $\phi_\pi: L_F\to {}^L G$ of $\pi$ but for which we have the alternative, unconditional description in terms of the isobaric decompositions of the functorial lifts $\pi_W^{\GL}$, $\pi_V^{\GL}$. In particular, its order $\lvert S_\pi\rvert$ is always a power of $2$ (see \cite[\S 2]{IIk} for details).
	\end{conj}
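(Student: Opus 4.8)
The plan is to establish Conjecture~\ref{global Ichino-Ikeda conj} by a relative trace formula comparison, in the spirit of Jacquet's method and following the template already carried out for the unitary Gan-Gross-Prasad conjectures. The first step is to reformulate the sought identity spectrally. Since $\pi$ appears with multiplicity one and all its local components are tempered (hence, conjecturally, its global $A$-parameter is generic), the space of $H(\bA)$-invariant functionals on $\pi$ is at most one-dimensional, by the local multiplicity-one theorem of Aizenbud-Gourevitch-Rallis-Schiffmann \cite{AGRS} together with the local Gan-Gross-Prasad conjecture (\cite{WalGP2},\cite{MWGP}). Thus both the sesquilinear form $\varphi\otimes\overline\varphi\mapsto|\mathcal P_H(\varphi)|^2$ and the factorizable product of local periods on the right-hand side are proportional to the same invariant functional, and it suffices to compute the proportionality constant. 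Equivalently, writing $\mathcal P_H\otimes\overline{\mathcal P_H}$ as a global relative character $I^X_\pi$ attached to $X=H\backslash G$, one wants $I^X_\pi = |S_\pi|^{-1}\,\Delta_X^{*}\,\tfrac{L(1/2,\pi_W\times\pi_V)}{L(1,\pi,\Ad)}\cdot\prod_v \mathcal P^\natural_{H_v}$, where $\mathcal P^\natural_{H_v}$ is the local period $\mathcal P_{H_v}$ normalized by $L(1,\pi_v,\Ad)/(\Delta_{X,v}L(1/2,\pi_{W,v}\times\pi_{V,v}))$, so that it takes the value $1$ on spherical data by the unramified computation of Ichino-Ikeda \eqref{unr computation}.

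Second, I would run a global relative trace formula for the pair $(G,H)$ whose spectral side, after the work of Jacquet and its modern regularization by truncation, produces $\sum_\pi I^X_\pi(f)$ (plus continuous-spectrum contributions which, for a suitable choice of test functions in the generic-tempered range, can be isolated), and whose geometric side is a sum of global relative orbital integrals indexed by $\Gamma_{X\text{-}\elli}(G)$-type data, each factoring into the local relative orbital integrals appearing in Waldspurger's local expansion (Theorem~\ref{theo geom side Waldspurger}) and Theorem~\ref{theo spectral expansion strongly tempered}. In parallel one must set up a second, ``dual'' relative trace formula: for the orthogonal Gan-Gross-Prasad situation the natural candidate is the $\GL$-side attached to the Rankin-Selberg integral representation of $L(s,\pi_W^{\GL}\times\pi_V^{\GL})$ (cf.\ \cite{JPSS}), i.e.\ the orthogonal analogue of the Jacquet-Rallis relative trace formula, whose spectral side manifestly carries the central value $L(1/2,\pi_W\times\pi_V)$ together with the adjoint $L$-value coming from the Petersson normalization on the $\GL$-side.

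Third, the two trace formulas must be matched, which requires: (a) a smooth transfer of test functions, i.e.\ an identity of local relative orbital integrals between the $\SO\times\SO$-side and the $\GL$-side, together with the corresponding fundamental lemma at almost all places; (b) a comparison of the ramified and archimedean local factors, using the local Ichino-Ikeda identity, which in the non-archimedean case follows from Waldspurger's multiplicity formula (Theorem~\ref{theo multiplicity Waldspurger}) and the proof of the local conjecture, and which one would need to extend to archimedean places as in the unitary case. Once the global identity of distributions is in hand, a spectral separation argument—using linear independence of relative characters (cf.\ \S\ref{sect Plancherel}) and density of matrix coefficients—isolates the single representation $\pi$ and yields the factorization; the constant $|S_\pi|^{-1}$ then emerges from Arthur's multiplicity formula for $G$ (available when $G$ is quasi-split by \cite{artbook}) describing how $\pi$ sits inside $\mathcal A_{\cusp}(G)$ relative to its $\GL$-lift.

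The main obstacle, and the reason the conjecture remains open for general orthogonal groups, is step~(a): proving the existence of smooth transfer and the fundamental lemma for the relevant relative trace formulas. In the unitary Gan-Gross-Prasad case these were the deep inputs of W.\ Zhang (smooth transfer) and the fundamental lemma of Yun; the orthogonal analogue demands both a workable choice of ``dual'' trace formula and the corresponding harmonic-analytic transfer statements, including their archimedean counterparts. A secondary difficulty is the analytic control of the continuous spectrum in the global relative trace formula for $X$—absolute convergence and regularization of the geometric and spectral expansions—where one would adapt existing truncation techniques; and, at the very end, verifying that the rational constant produced by the comparison is exactly $|S_\pi|^{-1}$ and not merely a power of $2$ times it, which forces one to track the Tamagawa-measure normalizations and the normalization of the local periods with great care.
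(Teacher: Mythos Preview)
The statement you are addressing is a \emph{conjecture}, and the paper does not prove it; it is stated as Conjecture~\ref{global Ichino-Ikeda conj} and followed only by a remark discussing its context, special cases, and the fact that the analogous statement for unitary groups has now been established (see Remark~\ref{rem II conj}, especially item~(6)). There is therefore no ``paper's own proof'' to compare your proposal against.

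That said, your proposal is not a proof but an honest strategic outline, and you correctly identify it as such: you explicitly name the missing ingredients (smooth transfer and fundamental lemma for an orthogonal analogue of the Jacquet--Rallis relative trace formula, archimedean local comparison, regularization of the continuous spectrum) and acknowledge that these are precisely why the conjecture remains open for orthogonal groups. The paper's remark is consistent with your assessment: it records that the unitary case was settled in \cite{BCZ}, \cite{BPC}, \cite{BLX} via exactly the relative trace formula comparison you sketch, and refers the reader to Chaudouard's contribution for the method. One point worth sharpening in your outline: the phrase ``the orthogonal analogue of the Jacquet--Rallis relative trace formula'' hides a genuine structural difficulty, since the Jacquet--Rallis setup exploits the embedding of unitary groups into general linear groups in a way that has no direct orthogonal counterpart; identifying the correct ``dual'' comparison is itself part of the open problem, not merely a technical step awaiting execution.
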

	
	\begin{rem}\label{rem II conj}
		\begin{enumerate}[(1)]
			\item The formula does not depend on the choice of $S$ (as soon as it is sufficiently large) thanks to the unramified computation \eqref{unr computation}. In particular, it can formally (i.e.\ replacing partial $L$-values by the corresponding Euler products, even if not convergent) be rewritten as
			$$\displaystyle \left\lvert \mathcal{P}_H(\varphi)\right\rvert^2=\frac{1}{\lvert S_\pi\rvert}\prod'_{v} \mathcal{P}_{H_v}(\varphi_v,\varphi_v)$$
			where the notation $\prod_v'$ indicates that the product has to be regularized by interpreting the product over almost all places as a suitable ratio of special $L$-values.
			
			\item This conjecture is a direct generalization of Waldspurger's formula for toric periods on $\GL_2$ \cite{Waldtoric}, to which it essentially reduces when $G=\SO_2\times \SO_3$.
			
			\item The case of $\SO_3\times \SO_4$ is essentially equivalent to Ichino's triple product formula proven in \cite{Ichtriple}.
			
			\item Some cuspidal representation $\pi$ may appear with multiplicity $2$ in $\cA_{\cusp}(G)$. This can happen as soon as $n\geq 3$. More precisely, write $\{W,V\}=\{U,U'\}$ where $U$ is the even dimensional quadratic space and $U'$ the odd dimensional one. Then, $\pi_{U'}$ always appear with multiplicity one in $\cA_{\cusp}(\SO(U'))$ so that if $\pi$ has multiplicity two (in $\cA_{\cusp}(G)$) exactly when $\pi_U$ has multiplicity two (in $\cA_{\cusp}(\SO(U))$). The conjecture cannot hold as stated in this situation (as was pointed in \cite[Sect. 6]{Xue2}), because the period $\mathcal{P}_H\mid_\pi$ depends on the way $\pi$ is embedded in $\cA_{\cusp}(G)$. There are (at least) two ways around to circumvent this difficulty. First, an analysis based on Arthur's (conjectural) multiplicity formula for even special orthogonal groups suggests that $\pi_U$ appears with multiplicity two in $\cA_{\cusp}(\SO(U))$ exactly when there exists two equivalence classes of global $L$-parameters $\phi_1,\phi_2: L_F\to {}^L \SO(U)$ that are locally everywhere equivalent to the local $L$-parameter $\phi_{\pi_{U,v}}$ of $\pi_U$ but not globally equivalent. Thus, the $\pi_U$-isotypic component $\cA_{\cusp}(\SO(U))_{\pi_U}$ should admit the canonical decomposition $\cA_{\cusp}(\SO(U))_{\pi_U}=\pi_U\oplus \pi_U$ where the first (resp. second) copy of $\pi_U$ is contained in the conjectural subspace of cusp forms $\cA_{\cusp}(\SO(U))_{\phi_1}$ (resp. $\cA_{\cusp}(G)_{\phi_2}$) associated to the global parameter $\phi_1$ (resp. $\phi_2$). This should then give two canonical copies of $\pi$ inside $A_{\cusp}(G)$ and the conjecture should hold as stated for the restriction of the period $\mathcal{P}_H$ to any of these two particular copies of $\pi$. This particular case of the Arthur decomposition seems however still difficult to define in the number field case (for function fields however, the spaces of automorphic forms $\cA_{\cusp}(\SO(U))_{\phi_1}$, $\cA_{\cusp}(\SO(U))_{\phi_2}$ can be defined thanks to the work of V. Lafforgue). Another way to make a precise statement, that does not assume the full Arthur decomposition for even special orthogonal groups, has been proposed by H. Xue in \cite[Sect. 6]{Xue2}. It consists in considering a certain particular copy of $\pi_U$ inside $\cA_{\cusp}(\SO(U))$ that is moreover invariant by conjugation by the ($F$-points of) full orthogonal group $O(U)$. The above conjecture should then hold for the corresponding copy of $\pi$ inside $\cA_{\cusp}(G)$ up to an extra factor of $2$.
			
			\item This conjecture has been extended to unitary groups by Neal Harris \cite{NHar} and then more generally to groups of the form $\SO_{2n+1}\times \SO_{2m}$, $U_{2n+1}\times U_{2m}$ (where the relevant periods are often called {\em Bessel periods}) by Y.Liu \cite{LiuBessel}. Finally, in the remaining cases of product of unitary groups $U_n\times U_m$ with $n$, $m$ of the same parity as well as symplectic/metaplectic groups (the so-called {\em Fourier-Jacobi} cases), a similar statement was proposed by H. Xue \cite{Xue1},\cite{Xue2}.
			
			\item The conjecture for unitary groups has now been fully established in \cite{BCZ}, \cite{BPC} in the case of Bessel periods and more recently in \cite{BLX} in the case of Fourier-Jacobi periods. The basic method employed is that of a comparison of relative trace formulas. We refer the reader to Chaudouard's contribution in these proceedings for an introduction to this circle of ideas as well as to the introduction of the aforementioned papers for an overview of previous results and contributions on the global Gan-Gross-Prasad and Ichino-Ikeda conjectures (which are numerous).
			
			\item Part of the proof of the local Gross-Prasad conjecture \cite{WalGP2} requires to show that for any local tempered irreducible representation $\pi_v$ we have the equivalence
			$$\displaystyle \mathcal{P}_{H,v}\mid_{\pi_v}\neq 0 \Leftrightarrow \Hom_{H_v}(\pi_v,\bC)\neq 0.$$
			Therefore, at least under the seemingly stronger (but conjecturally not) assumption that $\pi$ is tempered everywhere, the (global) Gross-Prasad conjecture can be deduced from the Ichino-Ikeda conjecture.
		\end{enumerate}
	\end{rem}
\end{paragr}

\subsection{Normalized relative characters and unramified computations}\label{S normalized rc}

In order to generalize the Ichino-Ikeda conjecture we reinterpret, following Sakellaridis and Venkatesh, the local periods that it involves in terms of {\em local Plancherel formulas}. More precisely, as recalled in \S \ref{S strongly tempered}, the dual periods are ``dual'' to the relative characters $J_{\pi_v}^X: \mathcal{S}(X_v)\times \mathcal{S}(X_v)\to \bC$, where we have set $X=\SO(W)\backslash \SO(W)\times \SO(V)$, that appear in the Plancherel decomposition for $X_v$:\index{$J_{\pi_v}^X$}
\begin{equation}\label{eq Plancherel formula II}
\displaystyle \langle f_1,f_2\rangle_{X_v}=\int_{\Temp(G_v)} J_{\pi_v}^X(f_1,f_2) d\mu_{G_v}(\pi_v).
\end{equation}

\vspace{2mm}

\begin{paragr}\label{S theta series}
	The Ichino-Ikeda conjecture can then be completely reformulated in terms of these relative characters, a point of view that we will fully adopt here. More precisely, the reformulation requires to replace similarly the automorphic period $\mathcal{P}_H$ by its ``dual'' object, the {\em $X$-theta series map}, namely the automorphic realization morphism\index{$\Theta_X$}
	$$\displaystyle \Theta_X: \mathcal{S}(X(\bA))=\bigotimes_v'\mathcal{S}(X_v)\to C^\infty([G])$$
	given by summation over rational points:
	$$\displaystyle \Theta_{X}(f,g)=\sum_{x\in X(F)} f(xg),\;\;\; g\in G(\bA).$$
	
	For $\pi\subset \cA_{\cusp}(G)$ and $f\in \mathcal{S}(X(\bA))$, we let $\Theta_X(f)_\pi$\index{$\Theta_X(f)_\pi$} be the orthogonal projection of $\Theta_X(f)$ to $\pi$. Then, the Ichino-Ikeda conjecture can be reformulated as the identity, for $\pi$ tempered everywhere,
	\begin{equation}\label{eq reformulation II}
		\displaystyle \langle \Theta_X(f)_\pi,\Theta_X(f)_\pi\rangle_{\Pet}=\frac{1}{\lvert S_\pi\rvert} \Delta^S_X \frac{L^S(\frac{1}{2},\pi_W\times \pi_V)}{L^S(1,\pi,\Ad)}\prod_{v\in S} J_{\pi_v}^X(f_v,f_v)=\frac{1}{\lvert S_\pi\rvert}\prod'_{v} J_{\pi_v}^X(f_v,f_v)
	\end{equation}
	for every $f=\prod_v f_v\in \mathcal{S}(X(\bA))$ and where the last product has to be interpreted ``in the sense of $L$-functions'' as in Remark \eqref{rem II conj}.
\end{paragr}

\begin{paragr}
	Let us be a bit more precise on the implicit normalization of measures in \eqref{eq reformulation II}. Indeed, from the local Plancherel formula \eqref{eq Plancherel formula II}, we see that the relative characters $J_{\pi_v}^X$ actually depend on the choice of an invariant measure on $X_v$ (needed to fix the $L^2$-inner product $\langle .,.\rangle_{X_v}$) and of the Plancherel measure, which in turn is inversely proportional to the choice of a Haar measure on $G_v$.
	
	To fit with the Ichino-Ikeda normalization, we pick local invariant measures $dx_v$, $dg_v$ that factorize the Tamagawa measures
	$$\displaystyle d_{\Tam}x=\prod_v dx_v,\;\; d_{\Tam} g=\prod_v dg_v$$
	and are normalized such that $\vol(X(\mathcal{O}_v),dx_v)=\vol(G(\mathcal{O}_v),dg_v)=1$ for almost all places $v$. In particular, with these choices, the analog of the formula \eqref{unr computation} reads:
	\begin{equation*}
		\displaystyle J_{\pi_v}^{X}(\mathbf{1}_{X(\mathcal{O}_v)},\mathbf{1}_{X(\mathcal{O}_v)})=\Delta_{X,v}\frac{L(\frac{1}{2},\pi_{W,v}\times \pi_{V,v})}{L(1,\pi_v,\Ad)}
	\end{equation*}
	for almost all $v$.
	
	Note that, we could have used local Tamagawa measures $d_{\Tam}x_v$ $d_{\Tam}g_v$ instead. (Recall that, at least formally, they also factorize the global Tamagawa measure.) This would lead to a different normalization of the local relative characters $J_{\pi_v}^{X,\Tam}$ with unramified values
	\begin{equation}\label{unr computation local RC II}
		\displaystyle J_{\pi_v}^{X,\Tam}(\mathbf{1}_{X(\mathcal{O}_v)},\mathbf{1}_{X(\mathcal{O}_v)})=\Delta^{-1}_{G,v}\frac{L(\frac{1}{2},\pi_{W,v}\times \pi_{V,v})}{L(1,\pi_v,\Ad)}
	\end{equation}
	for almost all $v$. Note that this has just the effect of replacing the normalizing factor $\Delta_{X,v}$ by $\Delta^{-1}_{G,v}$. Finally, we could also decide to use the local Tamagawa measures $d_{\Tam}x_v$ on $X_v$ but normalized local measures $dg_v$ on $G_v$, in which case the normalizing factor will just disappear, but this seems a bit artificial.
\end{paragr}

\begin{paragr}[Normalized local relative characters.]
	More generally, for any unimodular homogeneous spherical variety $X=H\backslash G$ without type $N$ root, the local Conjecture \ref{conj1 SV} should give rise to relative characters\index{$J_{\phi_v}^X$}
	$$\displaystyle J_{\phi_v}^X: \mathcal{S}(X_v)\times \mathcal{S}(X_v)\to \bC,$$
	indexed by the tempered $L$-parameters $\phi_v: L_{F_v}\to {}^L G_X$, such that the Plancherel formula for $X_v$ reads
	$$\displaystyle \langle f_1,f_2\rangle_{X_v}=\int_{\Phi_{\temp, v}(X)} J_{\phi_v}^X(f_1,f_2) d\mu_X(\phi_v),\;\; f_1,f_2\in \mathcal{S}(X_v),$$
	where  $d\mu_X$ is in the natural class of measures on $\Phi_{\temp,v}(X)$ (as explained in \S \ref{S local conjecture weak form}). (To emphasize the dependence on the place $v$, we are denoting here by $\Phi_{\temp,v}(X)$ the space of tempered $L$-parameters $L_{F_v}\to {}^L G_X$.) We remind the reader that, although the $L$-parameters $\phi_v$ are tempered, the relative characters $J_{\phi_v}^X$ should be supported on the Arthur packet of $G$ associated to the $A$-parameter the composition
	$$L_{F_v}\times \SL_2(\bC)\xrightarrow{\phi_v\times \Id} {}^L G_X\times \SL_2(\bC)\xrightarrow{{}^L\iota_X} {}^L G$$
	where ${}^L\iota_X: {}^L G_X\times \SL_2(\bC)\xrightarrow{{}^L\iota_X} {}^L G$ is the strongly distinguished morphism introduced in \S \ref{S dual group}. In particular, if the restriction $\iota_X^{\SL_2}: \SL_2(\bC)\to {}^L G$ of the latter to the $\SL_2$-factor is nontrivial, the relative characters $J_{\phi_v}^X$ are usually not supported on tempered representations of $G(F_v)$.
	
	In order to normalize these relative characters precisely (at least for $\mu_X$-almost every $\phi_v$), we need to specify an invariant measure on $X_v$ as well as pick a specific Plancherel measure $d\mu_X$. The first point is easily dealt with: we will choose the local Tamagawa invariant measures $d_{\Tam}x_v$ on the $X_v$'s. They eventually depend on the choice of a global volume form $\omega_X$ together with a character $\psi:\bA/F\to \bC^\times$ and satisfy $\vol(X(\mathcal{O}_v),dx_v)=\Delta_{X,v}^{-1}$ for almost all $v$. For the Plancherel measures $d\mu_X$ on the other hand, the idea is to use the local Plancherel measures for the quasi-split group $G_X$ whose $L$-group is ${}^L G_X$. It turns out that, according to a conjecture of Hiraga, Ichino and Ikeda \cite{HII}, this Plancherel measure should naturally descend, up to certain rational factors, to the space of tempered Langlands parameters $\Phi_{\temp,v}(X)$ where it is given by an explicit formula involving adjoint gamma factors. 
	
	We will recall this expected description in the next paragraph but, before doing so, let us remark on one technical issue that arises. Namely, ${}^L G_X$ is not always an $L$-group (see \S \ref{S L-groups}) and therefore a quasi-split group $G_X$ as above does not necessarily exist. We can still define $G_X$ by looking at the outer Galois action on $\widehat{G}_X$ that ${}^L G_X$ induces, but there is no (conjectural) direct relation between tempered representations of $G_X(F_v)$ and $L$-parameters $L_{F_v}\to {}^L G_X$. This situation actually also arises in the theory of endoscopy and the way around is well-known: we should introduce a certain central extension $\tilde{G}_X\to G_X$ (a so-called $z$-extension) and work with representations of $\tilde{G}_X(F_v)$ with a fixed central character on the kernel $\omega: [Z]\to \mathbb{S}^1$. For our purpose, this means that we should consider the Plancherel measure for $L^2(Z_v\backslash \tilde{G}_{X,v},\omega_v)$ rather than the Plancherel measure for $G_{X,v}$. However, we will ignore this technical issue in the discussion below, not only because it can be dealt with very easily using such a $z$-extension, but also because the final formula for the resulting canonical choice of $d\mu_X$ will be completely uniform.
\end{paragr}

\begin{paragr}[Canonical Plancherel measures.]\label{S canonical Planch measure}
	Below we recall the conjectural description of the Plancherel measure $d\mu_{G_X}$\index{$d\mu_{G_X}$} for $G_{X,v}=G_X(F_v)$ from \cite{HII} but first let us comment on the normalization of measures (again!). As already pointed out, a Plancherel measure for $G_{X,v}$ a priori depends on the choice of a Haar measure on the latter (and is actually inversely proportional to this choice). We would like to take the local Tamagawa measure, but that depends on the choice of a volume form $\omega_{G_X}$ as well as that of an additive (unitary) character $\psi_v: F_v\to \bC^\times$, recall that $d_{\Tam}g_{X,v}=\lvert \omega_{G_X}\rvert_{\psi_v}$. However, it turns out that there is, up to a sign, a canonical choice of such a volume form (see \cite{HIIcorr}): namely that of a non-vanishing invariant volume form $\omega_{G_X}^{\can}$ on the split form $G_{X,\bZ}^s$ of $G_X$ over $\bZ$. Note that $\omega_{G_X}^{\can}$ can be transferred to $G_{X}$ over an algebraic closure $\overline{F}_v$ via the choice of an isomorphism $G_X\times \overline{F}_v\simeq G_{X,\bZ}\times \overline{F}_v$ and then turned into a Haar measure using Weil's construction, and the additive character $\psi_v$, namely by setting $d^{\can}_{\Tam}g_{X,v}=\lvert \lambda\rvert_{v}^{-1}\lvert \lambda \omega_{G_X}^{\can}\rvert_{\psi_v}$\index{$d^{\can}_{\Tam}g_{X,v}$} for any $\lambda\in \overline{F}_v^\times$ for which $\lambda \omega_{G_X}^{\can}$ is defined over $F_v$ and where $\lvert .\rvert_v$ denotes the (unique) extension of the normalized absolute value on $F_v$ to $\overline{F}_v$. This ``canonical'' local Tamagawa measure still depends on the additive character $\psi_v$ but as we will see below the spectral Plancherel density of Hiraga, Ichino and Ikeda also depends on such a choice.
	
	According to Harish-Chandra, the Plancherel measure $d\mu_{G_X}$ is supported on the tempered dual $\Temp(G_{X,v})$ and furthermore decomposes as a product
	$$\displaystyle d\mu_{G_X}(\pi_v)=\mu_{G_X}(\pi_v) d\pi_v$$
	where $\mu_{G_X}(\pi_v)$ is the so-called {\em Plancherel density} and $d\pi_v$ is a certain ``standard'' measure on $\Temp(G_{X,v})$, roughly coming from Haar measures on the groups of unitary unramified characters of Levi subgroups, but for which we would like to propose the following explicit normalization. Recall that any tempered representation $\pi_v$ can be embedded into the (normalized) parabolic induction $I_{M_v}(\sigma_v)$ of a discrete series $\sigma_v$ of a Levi subgroup $M_v\subset G_{X,v}$ with the pair $(M_v,\sigma_v)$ being unique up to conjugacy. The standard measure $d\pi_v$ then factors through the space of such ``discrete data'' $[M_v,\sigma_v]$ (where the brackets indicate that it is considered up to conjugacy) and decomposes on the latter as the sum of measures on inertial orbits
	$$\displaystyle \mathcal{O}=\{[M_v,\sigma_v\otimes \lambda]\mid \lambda\in \mathcal{X}^{\unit}(M_v)\},$$
	where $\mathcal{X}^{\unit}(M_v)$ stands for the group of unitary unramified characters on the {\em maximal split torus quotient} $A^{M_v}$ of $M_v$\footnote{Note that $\mathcal{X}^{\unit}(M_v)$ is not necessarily the same as the group of unitary unramified characters of $M_v$, essentially because the natural morphism $M_v\to A^{M_v}$ is not always surjective (on $F_v$-points). However, it is the torus of unramified characters of $A^{M_v}$ that seems the more natural on the dual side.} given by ``transferring'' a certain Haar measure $d\lambda$ on $\mathcal{X}^{\unit}(M_v)$ normalized as explained below. Let us be more precise on how to ``transfer'' the measure from $\mathcal{X}^{\unit}(M_v)$ to $\mathcal{O}$: fixing a base-point identifies the inertial orbit $\mathcal{O}$ with a quotient of $\mathcal{X}^{\unit}(M_v)$ by a finite group $W_{\pi_v}'$ of affine automorphisms and we take the restriction $d\pi_v\mid_{\mathcal{O}}$ to be the pushforward of the measure $d\chi$ on $\mathcal{X}^{\unit}(M_v)$ (still to be normalized) divided by $\lvert W_{\pi_v}'\rvert$. In other words, this is the unique measure in the natural measure class on $\mathcal{O}$ such that near a point in general position (corresponding to a pair $(M_v,\sigma_v)$ with normalizer $M_v$ in $G_{X,v}$) the action is locally measure preserving. Finally, the normalization we take for the Haar measure $d\chi$ on $\mathcal{X}^{\unit}(M_v)$ is as follows:
	\begin{itemize}
		\item If $v$ is non-Archimedean, we have a natural identification $\mathcal{X}^{\unit}(M_v)=A_{\widehat{M}_v}^1$, where $A_{\widehat{M}_v}$ denotes the connected center of the $L$-group ${}^L M_v$ (that is $(Z(\widehat{M}_v)^{\Gamma_v})^0$) and $A_{\widehat{M}_v}^1$ its maximal compact subgroup. We then equip $A_{\widehat{M}_v}^1$ with the Haar measure of total mass $1$.
		
		\item If $v$ is Archimedean, by differentiation at $1$ we get an identification $\mathcal{X}^{\unit}(M_v)=\Lie(A_{\widehat{M}_v}^1)$ and we transfer the Haar measure on $A_{\widehat{M}_v}^1$ to its Lie algebra by requiring that the Jacobian of the exponential map is $1$ at the origin.
	\end{itemize}
	
	The Plancherel density on the other hand, itself decomposes as a product of the formal degree of $\sigma_v$ with the inverse of a certain product of standard intertwining operators (see e.g. \cite{WaldPlanch}). According to a conjecture of Hiraga-Ichino-Ikeda on formal degrees and of Langlands on the normalization of standard intertwining operators, it should be given by the following formula
	\begin{equation}\label{HII conjecture}
		\displaystyle \mu_{G_X}(\pi_v)=\frac{\rho_{\pi_v}(1)}{\lvert S_{\pi_v}\rvert} \lvert \gamma^*(0,\pi_v,\Ad_{X},\psi_v)\rvert
	\end{equation}
	where:
	\begin{itemize}
		\item $\Ad_{X}$\index{$\Ad_{X}$} stands for the adjoint representation of ${}^LG_X\curvearrowright \widehat{\mathfrak{g}}_X$,
		$$\displaystyle \gamma(s,\pi_v,\Ad_{X},\psi_v)=\epsilon(s,\Ad_X\circ \phi_{\pi_v},\psi_v)\frac{L(1-s,\Ad_X\circ \phi_{\pi_v})}{L(s, \Ad_X\circ \phi_{\pi_v})}$$
		is the corresponding $\gamma$-factor (where the $\epsilon$-factor is normalized as in \cite[\S 5]{GGP}), with $\phi_{\pi_v}: L_{F_v}\to {}^L G_X$ the $L$-parameter of $\pi_v$, and, when $\pi_v\subset I_{M_v}(\sigma_v)$ with $\sigma_v$ a square-integrable re presentation of the Levi $M_v$,\index{$\gamma^*(0,\pi_v,\Ad_{X},\psi_v)$}
		$$\displaystyle \gamma^*(0,\pi_v,\Ad_{X},\psi_v):=\zeta_{F_v}(s)^{a_{M_v}}\gamma(s,\pi_v,\Ad_{X},\psi_v)\mid_{s=0}$$
		is a regularization of its value at $s=0$, $\zeta_{F_v}(s)$ being the local zeta factor of $F_v$ and $a_{M_v}$ the dimension of the split center of $M_v$, over $F_v$.
		
		\item $S_{\pi_v}=\pi_0(Z_{\widehat{G}_X}(\phi_{\pi_v}))$\index{$S_{\pi_v}$} denotes the component group of the centralizer of the $L$-parameter of $\pi_v$ and $\rho_{\pi_v}$ is the irreducible character of that finite group conjecturally parametrizing $\pi_v$ inside its $L$-packet. (This irreducible character a priori depends on an auxilliary choice, namely that of a Whittaker datum, but only up to a twist so that its degree $\rho_{\pi_v}(1)$ shouldn't depend on that choice.)
	\end{itemize}
	
	All the ingredients of the formula \eqref{HII conjecture}, except for $\rho_{\pi_v}(1)$, only depend on the $L$-parameter of $\pi_v$. This suggests defining the following measure on the set $\Phi_{\temp,v}(X)$ of tempered $L$-parameters:\index{$d\mu_{X}(\phi)$}
	$$\displaystyle d\mu_{X}(\phi):= \mu_{X}(\phi) d\phi$$
	where:
	\begin{itemize}
		\item $$\displaystyle \mu_{X}(\phi)=\lvert S_{\phi}\rvert^{-1} \lvert \gamma^*(0,\Ad_{X}\circ \phi,\psi_v)\rvert$$
		with $S_\phi=\pi_0(Z_{\widehat{G}_X}(\phi))$ and
		$$\displaystyle \gamma^*(0,\Ad_{X}\circ \phi,\psi_v)=\zeta_{F_v}(s)^{a_M}\gamma(s,\Ad_{X}\circ \phi,\psi_v)\mid_{s=0},$$
		$a_M$ denoting the dimension of the connected center $A_{\widehat{M}_v}$ of a minimal Levi subgroup ${}^L M_v\subset {}^L G_{X,v}$ through which $\phi$ factors;
		
		\item $d\phi$ is the ``standard'' measure on $\Phi_{\temp,v}(X)$ described as follows: in the neighborhood of any $\phi\in \Phi_{\temp,v}(X)$, picking a minimal Levi subgroup ${}^L M_v\subset {}^L G_{X,v}$ through which $\phi$ factors, $d\phi$ is given by transferring the normalized Haar measure on $A_{\widehat{M}_v}^1$ ($=$ the maximal compact subgroup in $A_{\widehat{M}_v}$) or its Lie algebra $\mathfrak{a}_{\widehat{M}_v}^1)$ (depending whether $v$ is non-Archimedean or Archimedean) to $\Phi_{\temp,v}(X)$ via the natural twisting action $\lambda\mapsto \phi_\lambda$.
	\end{itemize}
	
	As an illustration of the relevance of this measure, we note that by the conjectural formula \eqref{HII conjecture}, the Plancherel formula for $G_{X,v}$ (here considered as a spectral decomposition of the Dirac at $1$) can be rewritten in the stable form
	$$\displaystyle f(1)=\int_{\Phi_{\temp,v}(X)} \Theta_\phi(f) d\mu_X(\phi)$$
	where $\Theta_\phi=\sum_{\pi\in \Pi_\phi} \rho_\pi(1) \Theta_{\pi}$ is the stable  linear combination of characters from the $L$-packet $\Pi_\phi$ of $\phi$.
\end{paragr}

\begin{paragr}[Spectral density for the basic function.]
	Taking our choice of Plancherel measures $d\mu_X$ for the (conjectural) Plancherel decompositions of $L^2(X_v)$ parametrized by $\Phi_{\temp,v}(X)$ as in the previous paragraph, we get well-defined local relative characters $J_{\phi_v}^X$\index{$J_{\phi_v}^X$} for ($\mu_X$-almost) all $\phi_v\in \Phi_{\temp,v}(X)$. The last, but crucial, ingredient to formulate the generalization of Ichino-Ikeda formula due to Sakellaridis-Venkatesh, is that of an {\em unramified computation} similar to \eqref{unr computation local RC II}. More precisely, for almost all places $v$ the Plancherel decomposition of the basic characteristic function $f^\circ_v=\mathbf{1}_{X(\mathcal{O}_v)}$ should look like
	\begin{equation}\label{Plancherel decomposition basic vector}
		\displaystyle \langle f_v^\circ, f_v^\circ\rangle_{X_v}=\lvert W_{X,v}\rvert^{-1} \Delta_{G,v}^{-1}\int_{\widehat{A}_{X,v}^1} \Omega(\chi_v)d\chi_v,
	\end{equation}
	where:
	\begin{itemize}
		\item $\widehat{A}_{X,v}$\index{$\widehat{A}_{X,v}$} stands for the torus of unramified characters of $A_X(F_v)$ identified with the $\Frob_v$-coinvariants in $\widehat{A}_X$, that is $\widehat{A}_{X,v}=\widehat{A}_X/(1-\Frob_v)\widehat{A}_X$, in particular we are assuming that at the places $v$ considered the local Galois action $\Gamma_{v}\curvearrowright \widehat{G}_X$ factors through the unramified quotient $\Gamma_v/I_v=\overline{\langle \Frob_v\rangle }$, $\widehat{A}^1_{X,v}\subset \widehat{A}_{X,v}$\index{$\widehat{A}^1_{X,v}$} is its maximal compact subgroup and $d\chi_v$ the Haar measure on the latter with total mass $f^{-1}$ where $f$ denotes the cardinality of the kernel of the natural morphism $(\widehat{A}_X^{\Frob_v})^0\to \widehat{A}_{X,v}$\footnote{Note that, under the identification of $\widehat{A}_{X,v}/W_{X,v}$ with the space of unramified tempered $L$-parameters $W_v/I_v\to {}^L G_X$, $d\chi_v$ corresponds to the ``standard'' measure $d\phi$ on $\Phi_{\temp,v}(X)$.}.
		\item $W_{X,v}=W_X^{\Frob_v}$\index{$W_{X,v}$} is the subgroup of $\Frob_v$-fixed points in the Weyl group of $X$.
		\item $\Omega(\chi_v):=\Delta_{G,v} J_{\chi_v}^X(f_v^\circ,f_v^\circ) \mu_X(\chi_v)$\index{$\Omega(\chi_v)$} where we are identifying $\chi_v$ with the $L$-parameter $W_v/I_v\to {}^L G_X$ sending $\Frob_v$ to $(\chi_v, \Frob_v)$. We emphasize that the introduction of the Tamagawa factor $\Delta_{G,v}$ here is purely cosmetic as it eventually leads to a simpler expression, see Conjecture \ref{local unramified conjecture} below and more specifically comment \eqref{comment BZSV} after it. We emphasize that this constant eventually depends on our choice of local measures (recall that we are endowing  $X_v$ with local Tamagawa measures).
	\end{itemize}
	When the group $G$ is split over $F_v$, and under extra assumptions, such a spectral decomposition follows from the work of Sakellaridis \cite{SakSph} or Sakellaridis and Venkatesh \cite{SV} and can be described explicitly through the {\em Radon transform} of $f_v^\circ$ cf. \S \ref{S unramified Plancherel}. More precisely, $\Omega(\chi_v)$ is given, for a suitable normalization of measures, by the (square of the) Mellin transform of the push-forward of $f_v^\circ$ to $A_{X,v}=A_X(F_v)$ along the quotient map $X_{B,v}\to A_{X,v}$, where $X_{B,v}\subset X_v$ denotes the $F_v$-points of an open Borel orbit. Note that this push-forward is readily seen (for almost all $v$) to be $A_X(\mathcal{O}_v)$ invariant, which implies that its Mellin transform is indeed supported on the torus of unramified characters $\widehat{A}_{X,v}^1$ of $A_{X,v}$.
	
	
	When the map $\widehat{A}_{X,v}/W_{X,v}\to \widehat{A}_v/W_v$ (where, as above, $\widehat{A}_v$ denotes the torus of $\Frob_v$-coinvariants in $\widehat{A}$ and $W_v=W^{\Frob_v}$) is injective (implying in particular that $\widehat{G}_X\subset \widehat{G}$), the unramified measure $\Omega(\chi_v)d\chi_v$ can also be characterized by the following property: for every unramified Hecke operator $T\in \mathcal{H}(G_v,G(\mathcal{O}_v))$ we have
	$$\displaystyle \langle T\star f_v^\circ,f_v^\circ\rangle_{X_v}=\lvert W_{X,v}\rvert^{-1} \int_{\widehat{A}_{X,v}^1} \widehat{T}(\chi_v)\Omega(\chi_v)d\chi_v$$
	where $\widehat{T}$ denotes the Satake transform of $T$, identified with a regular function on $\widehat{A}_v$, and $T\star f_v^\circ$ the convolution of $f_v^\circ$ by $T$. In general, the unramified spectrum might not be multiplicity free and, although the above identity of course still holds, it does not characterize the measure $\Omega(\chi_v)d\chi_v$ anymore. Rather, as explained in \S \ref{S unramified Plancherel}, the decomposition \eqref{Plancherel decomposition basic vector} is directly related to the Bernstein map $\iota_\emptyset$ from the asymptotic cone and therefore should be characterized by the asymptotic of the corresponding decomposition of $f_v^\circ$ in eigenfunctions.
\end{paragr}

\begin{paragr}[The local unramified conjecture.]
	Of great importance for the global conjecture, is the expression of the local unramified density $\Omega(\chi_v)$ or, equivalently, of the unramified relative characters $J_{\chi_v}^X(f_v^\circ,f_v^\circ)$, in terms of special values of unramified $L$-factors. More specifically, we have:
	
	\begin{conj}[Sakellaridis-Venkatesh]\label{local unramified conjecture}
		Assume that $X$ is affine. Then, there exists a graded algebraic representation $\rho_X=\bigoplus_{d\geq 1} \rho_{X,d}$\index{$\rho_X$} of the $L$-group ${}^L G_X$ such that for almost all places $v$, we have
		\begin{equation}
			\displaystyle \Omega(\chi_v)=\frac{L(\chi_v,\rho_X)}{L(0,\chi_v,\overline{\Ad}_{X})},\;\; \chi_v\in \widehat{A}_{X,v}^1,
		\end{equation}
		the notation $L(\chi_v,\rho_X)$\index{$L(\chi_v,\rho_X)$} being a short-hand for the product of $L$-values
		$$\displaystyle L(\chi_v,\rho_X)=\prod_{d\geq 1} L(\frac{d}{2},\rho_{X,d}\circ \chi_v)=\prod_{d\geq 1}\det(1-q_v^{-d/2}\rho_{X,d}(\chi_v))^{-1},$$
		where in the middle term we have considered $\chi_v$ as an $L$-parameter $W_{F_v}/I_{F_v}\to {}^L G_X$, and $\overline{\Ad}_{X}$\index{$\overline{\Ad}_{X}$} denoting the adjoint representation of ${}^L A_{X,v}$ on $\widehat{\mathfrak{g}}_X/\widehat{\mathfrak{a}}^{\Gamma_{F_v}}_X$ so that
		$$\displaystyle L(0,\chi_v,\overline{\Ad}_{X})=L(0,\overline{\Ad}_{X}\circ \chi_v)=\det(1-\Ad(\chi_v)\mid\widehat{\mathfrak{g}}_X/\widehat{\mathfrak{a}}^{\Gamma_{F_v}}_X)^{-1}.$$
		Equivalently, since the Plancherel density $\mu_X$ on unramified $L$-parameters is given by
		$$\displaystyle \mu_X(\chi_v)=\frac{L(1,\chi_v,\Ad_{X})}{L(0,\chi_v,\overline{\Ad}_{X})},$$
		where $\Ad_X: {}^L G_X\to \GL(\widehat{\mathfrak{g}}_X)$\index{$\Ad_X$} denotes the full adjoint representation of ${}^L G_X$, we have the following expression for the local relative characters:
		\begin{equation}\label{formula unramified LRC}
			\displaystyle J^X_{\chi_v}(f_v^\circ,f_v^\circ)=\Delta_{G,v}^{-1}\frac{L(\chi_v,\rho_X)}{L(1,\chi_v,\Ad_{X})},\;\; \chi_v\in \widehat{A}_{X,v};
		\end{equation}
		
	\end{conj}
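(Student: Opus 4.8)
The plan is to prove this by following the strategy of Sakellaridis in \cite{SakSph}, i.e.\ to reduce the statement to an explicit geometric point-count on $X(\mathcal{O}_v)$ and then to match the resulting rational generating function with the prescribed ratio of $L$-factors; the affineness hypothesis is what makes both steps tractable, and in full generality the statement remains conjectural precisely because of the combinatorial input needed to close the argument (the ``two technical conditions of \cite[Theorem 7.2.1]{SakSph}'').

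First I would invoke the description of $\Omega(\chi_v)$ through the Radon transform recalled in \S\ref{S unramified Plancherel}: for almost all $v$ the unramified density $\Omega(\chi_v)$ is, up to the fixed normalisation of measures, the squared Mellin transform of the push-forward $\widetilde{f}^\circ_v$ of $f^\circ_v=\mathbf{1}_{X(\mathcal{O}_v)}$ to $A_X(F_v)$ along the quotient map $X_{B,v}\to A_{X,v}$ coming from \eqref{eq LST X}. Since for almost all $v$ this map is a morphism of smooth $\mathcal{O}_v$-schemes compatible with the $A_X(\mathcal{O}_v)$-action, the function $\widetilde{f}^\circ_v$ is $A_X(\mathcal{O}_v)$-invariant, hence (using that the Galois action on $A_X$ is unramified at $v$) descends to a function on $X_*(A_X)$, and the problem becomes: compute, for each cocharacter $\lambda\in X_*(A_X)$, the normalised volume $c_\lambda(q_v)$ of the fibre of $X(\mathcal{O}_v)\cap X_{B,v}$ over $\lambda(\varpi_v)A_X(\mathcal{O}_v)$, and show that the generating function $\sum_\lambda c_\lambda(q_v)\chi_v(\lambda)$, suitably squared and normalised, equals $L(\chi_v,\rho_X)/L(0,\chi_v,\overline{\Ad}_X)$.

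Second I would carry out the point-count. Affineness is essential here: by Lemma \ref{lem1 appendix} the ring $k[X]$ is finitely generated, and more importantly the monoid of $B$-eigenweights occurring in $k[X]$ is finitely generated, which controls which $\lambda$ contribute and bounds the denominators of the $c_\lambda(q_v)$. The combinatorial input is the description of $X(\mathcal{O}_v)$ near infinity by the valuation cone $\mathcal{A}_X^-$ and the colours of $X$: one stratifies $X(\mathcal{O}_v)\cap X_{B,v}$ according to which $B$-stable prime divisors the reduction modulo $\varpi_v$ lies on, and computes each stratum's contribution by a Hensel/smoothness argument together with rank-one reductions. The natural way to organise the recursion is through the boundary degenerations: $\Omega^X$ should factorise in a way governed by the parabolic inductions $X_\Theta=X^L_\Theta\times^{P_\Theta^-}G$ of \S\ref{S wavefront case} (matching, on the dual side, the inclusions ${}^LG_{X_\Theta}=\widehat{L}_\Theta\subset\widehat{G}_X$ of \S\ref{S invts Xtheta}), so that it would suffice to establish the formula for ``rank-one'' spherical varieties ($\lvert\Delta_X\rvert=1$, which are classified) and then glue. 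In each rank-one case the count is explicit and produces exactly a factor $L(\tfrac d2,\rho_{X,d}\circ\chi_v)$, the integer $d$ being dictated by the Sakellaridis--Venkatesh normalisation of the unique spherical root (\S\ref{S spherical roots}), while the denominator $L(0,\chi_v,\overline{\Ad}_X)$ is the universal Gindikin--Karpelevich-type factor coming from the horospherical direction, i.e.\ already present in the Plancherel density $\mu_X$ of \S\ref{S canonical Planch measure}. Having matched the generating function, one assembles $\rho_X$ by letting each generator of the monoid of $B$-eigenweights (modulo $\bZ\Delta_X$) give a highest weight for a ${}^LG_X$-representation $\rho_{X,d}$, graded by its defect against $\bZ\Delta_X$ as in \S\ref{S spherical roots}; functoriality under parabolic induction upgrades the rank-one pieces to a genuine ${}^LG_X$-representation, and one keeps track of the $\widehat{G}$-structure through the Knop--Schalke morphism $\iota_X$. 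Finally, one rewrites $\Omega(\chi_v)=\Delta_{G,v}J^X_{\chi_v}(f^\circ_v,f^\circ_v)\mu_X(\chi_v)$ and divides by $\mu_X(\chi_v)=L(1,\chi_v,\Ad_X)/L(0,\chi_v,\overline{\Ad}_X)$ to extract \eqref{formula unramified LRC}.

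The main obstacle is the combinatorial-geometric heart of the second and third steps: controlling the point-count \emph{uniformly} in $v$ (the colour conditions of \cite[Theorem 7.2.1]{SakSph} are exactly the hypotheses under which the recursion closes), handling spherical roots of type $G$ (where the factorisation through rank one is genuinely subtler and the compatibility with $\iota_X$ is delicate), and showing that the resulting generating function is the character of an \emph{honest} ${}^LG_X$-representation rather than a merely virtual one, i.e.\ positivity of the multiplicities of $\rho_X$. A complementary route worth keeping in mind is the Gaitsgory--Nadler/Ben-Zvi--Sakellaridis--Venkatesh picture \cite{BZSV}, in which $\rho_X$ should arise directly as (the cohomology of) the Hamiltonian dual datum and the point-count becomes a statement about the $\mathcal{O}_v$-points of the affine degeneration via the geometric Satake equivalence; but making that precise in the generality of the conjecture requires foundational geometric inputs that are themselves not yet available, which is why the statement above is still open outside the affine, favourable-colour case.
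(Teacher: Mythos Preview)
The statement is a \emph{conjecture}, not a theorem, and the paper contains no proof of it. What the paper offers instead is an extended commentary (the remarks of \S\ref{S rmks local unr conjecture}) on the known partial results, examples, and obstructions. Your proposal is likewise not a proof---and you say so yourself---but a sketch of the expected strategy and its sticking points; in that respect it is broadly consistent with the paper's own discussion.

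A few points of comparison. Your outline of the Radon-transform/Mellin-transform approach via \cite{SakSph} matches what the paper describes in remark~(3), including the residual problem that Sakellaridis's computation produces a $W_X$-invariant multiset of weights on $\widehat{A}_X$ which a priori only assembles into a \emph{virtual} $\widehat{G}_X$-representation. The paper also highlights (remark~(9)) the Sakellaridis--Wang result \cite{SakWang}, which in the strongly tempered case ${}^LG_X={}^LG$ upgrades the weight multiset to a Kashiwara crystal and thereby proves the conjecture whenever all highest weights are minuscule---in particular for every homogeneous $X=H\backslash G$ with $H$ reductive. This is a substantial known case that you do not mention. Your reference to the Ben-Zvi--Sakellaridis--Venkatesh Hamiltonian-duality picture matches remark~(10), where the paper records the expected decomposition $\rho_X=S_X\oplus\widehat{\mathfrak{g}}^e/\widehat{\mathfrak{g}}_X$.

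One correction to your framing: the ``rank-one reduction'' you propose via the boundary degenerations $X_\Theta$ is not how the known arguments proceed, and it is not clear it would work. The $X_\Theta$ control the asymptotics of functions on $X$ near infinity, whereas the unramified density $\Omega(\chi_v)$ is governed by the combinatorics of the $B$-stable divisors (colours) of $X$; Sakellaridis's recursion in \cite{SakSph} runs over the colours, not over the faces of $\mathcal{A}_X^+$, and it is precisely the colour data that determines the multiset of weights of $\rho_X$. Your identification of the remaining obstacles---positivity of multiplicities, type-$G$ spherical roots, uniformity in $v$---is otherwise accurate and agrees with what the paper flags as open.
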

	
\end{paragr}

\begin{paragr}[Remarks on the local unramified conjecture.]\label{S rmks local unr conjecture}
	\begin{enumerate}[(1)]
		\item The ``for almost all places $v$'' is not very explicit but makes the statement easier to write. There should be a set of natural conditions, automatically satisfied at almost all places, and under which the above formula should be verified e.g. we certainly need to impose that $G_v$ is unramified, $G(\mathcal{O}_v)$ is the group of integral point of a reductive smooth model over $\mathcal{O}_v$ and similarly that $X(\mathcal{O}_v)$ is the set of integral points of a ``nice enough'' integral model. We haven't tried to formulate such a list but see \cite{SakSph} for the case where $G$ is split.
		
		\item The condition ``$X$ affine'' is not easy to motivate but is what appears in practice. In the case at hand, where $X=H\backslash G$ is homogeneous, this just means that $H$ is reductive. Although we didn't include it systematically in our discussion, the conjecture should also holds for Whittaker induction of spherical varieties (in the sense of \S \ref{S Whittaker induction}), provided the notion of ``affine Whittaker induction'' is properly interpreted.
		
		\item The conjecture of course specializes to the unramified computation \eqref{unr computation local RC II} in the Gan-Gross-Prasad case $X=\SO_n\backslash \SO_n\times \SO_{n+1}$. In the Whittaker case, i.e.\ $X=(N,\psi)\backslash G$, such a formula is very closely related to the Casselman-Shalika computation of unramified Whittaker functions \cite{CasSha}. There are many other similar computations in the literature, to cite just a few \cite{Macdo}, \cite{HiSa}, \cite{KMSorth}, \cite{Offsph}, \cite{SakCS}. In \cite{SakSph}, Sakellaridis makes the first attempt to perform those unramified computations systematically using some combinatorial procedure based on the geometry of $B$-orbits in $X$ and, under some technical assumptions including that the group $G$ is split, he obtains a formula like \eqref{formula unramified LRC} but where $\rho_X$ is only a priori a graded representation of the maximal torus. Thus, the question becomes in this case of whether $\rho_X$ comes from a $\widehat{G}_X$-representation. In particular, the results from \cite{SakSph} describe explicitly the multiset of weights of $\rho_X$ in terms of the so-called {\em colors} of $X$ (that is the set of $B$-invariant divisors). This multiset is $W_X$-invariant from which it follows that $\rho_X$ extends at least to a {\em virtual} representation of $\widehat{G}_X$ and that the corresponding $L$-factor is a ratio of $L$-factors for $\widehat{G}_X$. 
		
		\item We refer the reader to the last page of \cite{SakSph} for a table with many examples of such unramified computation. There are at least two particular examples that are worth mentioning here. The first one is the {\em group case}, that is $G=H\times H\curvearrowright H$, where ${}^L G_X={}^L H$ and $\rho_X=\Ad_H$ so that the right hand side of \eqref{formula unramified LRC} reduces to a constant (which here is non-trivial but could be made so by another choice of Haar measures), which is mostly obvious once we remember that we have normalized $d\mu_X$ to be the Plancherel measure for $H_v$, so that $J^X_{\chi_v}(f_v^\circ,f_v^\circ)$ is in this case the Hilbert-Schmidt inner product for the action of $\mathbf{1}_{H(\mathcal{O}_v)}$ on the induced representation $I(\chi_v)$ i.e.\, because the subspace of spherical vectors is one-dimensional, just $\vol(H(\mathcal{O}_v))^2=\Delta_{G,v}^{-1}$.
		
		\item\label{rmk unr Whittaker} Another important instance is the {\em Whittaker case} where, by the Casselman-Shalika formula \cite{CasSha}, \eqref{formula unramified LRC} holds for $L(\chi_v,\rho_X)=1$ i.e.\ $\rho_X=0$. This in particular suggests that the Whittaker case is even more fundamental than the group case, being in some sense the greatest common divisor of all other unramified computations.
		
		\item The conjecture actually admits a generalization in another direction (see \cite{Sakunr}): namely it should extend to every, not necessarily homogeneous, affine spherical variety $X$. When $X$ is smooth, the statement is similar with $f_v^\circ$ the characteristic function $\mathbf{1}_{X(\mathcal{O}_v)}$ of its integral points up to one technical point: we need to take on $X_v$ a measure that is not necessarily $G_v$-invariant but only semi-invariant, that is transforming under the action of the group by a certain character $\eta_v: G_v\to \bR_{>0}$. For example, in the situation of Tate's thesis, that is $\mathbb{G}_m\curvearrowright \mathbb{A}^1$, we take the additive measure on $X_v=F_v$ and $\eta_v(t)=\lvert t\rvert_v$. We then have to twist the action of $G_v$ on functions by the square-root of $\eta_v$ to preserve unitarity of the representation on $L^2(X_v)$ (and thus to be able to talk about the local Plancherel decomposition). One classical instance of this setting is the unramified computation behind Godement and Jacquet's theory \cite{GodJac}, which directly relates to the case where $G=\GL_n\times \GL_n\curvearrowright \Mat_n$ and leads in particular a formula like \eqref{formula unramified LRC} with
		$$\displaystyle L(\chi_v,\rho_X)=\lvert L(\frac{1}{2},\chi_v,\Std)\rvert^2 L(1, \chi_v,\Ad_{\GL_n})=L(\frac{1}{2},\chi_v,\Std) L(\frac{1}{2},\chi_v,\Std^*)L(1, \chi_v,\Ad_{\GL_n})$$
		where $\Std$ stands for the standard representation of $\GL_n(\bC)$, and $\Std^*$ for its dual.
		
		\item For $X$ an affine but non-smooth spherical variety, we still expect to have an analog of \eqref{formula unramified LRC}. In the case where $X$ is a so-called {\em $L$-monoid} (a special case of reductive monoids i.e.\ affine equivariant embeddings of a reductive group), this is related to a proposal of Braverman-Kazhdan \cite{BravKaz} aiming to generalize Godement-Jacquet theory to all automorphic $L$-functions. To be more specific, in the case where $X$ is non-smooth, we expect to be able to introduce suitable local Schwartz spaces $\mathcal{S}(X_v)$ (replacing $C_c^\infty(X_v)$) as well as, for almost all $v$, a certain basic function $f_v^\circ\in \mathcal{S}(X_v)$ (in general different from the characteristic function of $X(\mathcal{O}_v)$). In the function field setting, the general expectation is that this basic function could be constructed by taking the Frobenius trace of a suitably defined IC sheaf on the formal arc space $\mathcal{L}X$ of $X$ (assumed, for simplification, to be defined over the field of constants $\mathbb{F}_q$). However, since $\mathcal{L}X$ is an infinite dimensional scheme it is not obvious how to make sense of this IC sheaf. However, thanks to a theorem of Grinberg-Kazhdan and Drinfeld according to which the singularities of $\mathcal{L}X$ at ``generic'' points are essentially finite dimensional, it is still possible to define an {\em IC function} $f_v^\circ$, see \cite{BNSak} or \cite{SakWang} for details.
		
		\item More precisely, in \cite{BNSak} a suitable basic function $f_v^\circ$ is introduced, following the procedure roughly described above, and an unramified computation of the form \eqref{formula unramified LRC} is performed in the case of $L$-monoids. Actually, they rather compute the trace of $f_v^\circ$ on unramified representations, which essentially corresponds to the square root of the relative characters, and the main result of {\em op.\ cit.}\ states that this is equal to the unramified $L$-function of a suitable irreducible representation of the dual group naturally associated to the $L$-monoid as conjectured previously in \cite{Ngomonoid}.
		
		\item The work of Sakellaridis-Wang \cite{SakWang} on the other hand deals with the case where $G$ is unramified (that is defined over the finite field of constants $\mathbb{F}_q$) and ${}^L G_X={}^L G$ (i.e.\ essentially when $X$ is strongly tempered in the sense of \S \ref{S strongly tempered}). This corresponds in some sense the most interesting case arithmetically, as we expect the special $L$-factors appearing in the unramified computation to be at the central point $\frac{1}{2}$. The main result of {\it op.\ cit.}\ is a formula like \eqref{formula unramified LRC} where $\rho_X$ is a representation of ${}^L A=\widehat{A}\rtimes \langle \Frob\rangle$ whose multiset of weights is explicitly described in terms of the {\em colors} of $X$. It also goes one step further towards showing that $\rho_X$ actually comes from a representation of ${}^L G$, by endowing the multiset of weights with the structure of a {\em Kashiwara crystal} isolating in particular precisely what the highest weights of $\rho_X$ are. In particular, they are able to deduce the conjecture when these highest weights are all minuscule, which is always the case when $X=H\backslash G$ (hence, in particular, $H$ is reductive). One striking consequence of this work is for $X$ the affine closure of the quotient of $\SL_2^n$ by the subgroup
		$$\displaystyle \left\{\begin{pmatrix} 1 & x_1 \\ 0 & 1 \end{pmatrix}\times \ldots \times \begin{pmatrix} 1 & x_n \\ 0 & 1 \end{pmatrix}\mid x_1+\ldots+x_n=0 \right\}$$
		considered as a spherical variety under the action of $G=(\mathbb{G}_m\times \SL_2^n)/\mu_2^{\diag}$, for which we obtain the analog of \eqref{formula unramified LRC} with $\rho_X$ the tensor product of standard representations of $\widehat{G}=\GL_2\times_{\det}\ldots\times_{\det} \GL_2$ see \cite[Example 1.1.3]{SakWang}. In particular, for $n=1,2,3$ this essentially recovers unramified computations behind the Hecke integral representation of the standard $L$-function for $\GL_2$, the Rankin-Selberg integral representation for $\GL_2\times \GL_2$ and Garrett's triple product integral representation for $\GL_2^3$ \cite{Garr} respectively.
		
		\item\label{comment BZSV} Following the recent work of Ben-Zvi, Sakellaridis and Venkatesh \cite{BZSV} we can be a little more precise on the graded representation $\rho_X$. Namely, it should admit a decomposition
		$$\displaystyle \rho_X=S_X\oplus \widehat{\mathfrak{g}}^e/\widehat{\mathfrak{g}}_X$$
		where:
		\begin{itemize}
			\item  $S_X$ is a {\em symplectic} representation of ${}^L G_X$, placed in degree $1$;
			
			\item $\widehat{\mathfrak{g}}^e$ denotes the centralizer, in $\widehat{\mathfrak{g}}=\Lie(\widehat{G})$, of the image of the nilpotent element $e=\begin{pmatrix}0 & 1 \\ 0 & 0 \end{pmatrix}\in \mathfrak{sl}_2(\bC)$ by the morphism $d\iota\mid_{\SL_2(\bC)}: \mathfrak{sl}_2(\bC)\to \widehat{g}$ ;
			
			\item The representation of ${}^L G_X$ on $\widehat{\mathfrak{g}}^e/\widehat{\mathfrak{g}}_X$ is induced by the adjoint action and the grading comes from the decomposition along highest weights for $\mathfrak{sl}_2(\bC)$, that is:
			$$\displaystyle (\widehat{\mathfrak{g}}^e/\widehat{\mathfrak{g}}_X)_d=(d-2)-\mbox{eigenspace for } \ad(d\iota(h))$$ 
			where $h=\begin{pmatrix}1 & 0 \\ 0 & -1 \end{pmatrix} \in \mathfrak{sl}_2(\bC)$.
		\end{itemize}
		This is actually related to the {\em Hamiltonian duality} proposal of {\it op.\ cit.}\ where starting from the symplectic representation $S_X$ and the dual group morphism $\iota: \widehat{G}_X\times \SL_2\to \widehat{G}$, the authors construct a dual $\widehat{G}$-Hamiltonian variety $\widehat{M}$ to $M=T^*X$. We refer the reader to the article of Venkatesh in this proceeding for an introduction to this circle of ideas.
	\end{enumerate}
\end{paragr}

\subsection{The Sakellaridis-Venkatesh global conjecture}\label{Sect global conjecture}

We are now in position to state the main global conjecture from \cite{SV}.

\vspace{2mm}

\begin{paragr}[The conjecture.]
Recall that for $f\in \mathcal{S}(X(\mathbb{A}))$, $\Theta_X(f)$ denotes the corresponding theta series (see \S \ref{S theta series}). For $\psi: L_F\times \SL(2)\to {}^L G$ a global $A$-parameter for $G$, we will denote by $\Theta_X(f)_\psi$\index{$\Theta_X(f)_\psi$} its orthogonal projection to the subspace of cuspidal automorphic forms $\cA_{\cusp}(G)_{\psi}$ associated to $\psi$.
	
We assume that the local Conjecture \ref{conj1 SV} holds for $X$. This yields in particular local relative characters $J^X_{\phi_v}$ for $\mu_X$-almost all $\phi_v\in \Phi_{\temp,v}(X)$ that we will normalize by choosing the Plancherel measure as in \S \ref{S canonical Planch measure}. Here we will need to assume that these relative characters can actually be defined precisely for every local parameter $\phi_v$, which is a consequence of the following natural expectation:
	
\begin{center}
\textbf{(REG)} (regularity of the local relative characters): For every $f_v\in \mathcal{S}(X_v)$, the function $\phi_v\in \Phi_{\temp,v}(X)\mapsto J^X_{\phi_v}(f_v)$ (a priori defined $\mu_X$-almost everywhere) is analytic.
\end{center}
	
	We also assume the local unramified Conjecture \ref{local unramified conjecture} for a certain graded representation $\rho_X=\bigoplus_{d\geq 1} \rho_{X,d}$ of ${}^L G_X$. For any finite set of places $S$ and global $L$-parameter $\phi: L_F\to {}^L G_X$, we can then form the product of partial $L$-functions\index{$ L^S(s,\phi,\rho_X)$}
	\begin{equation*}
		\displaystyle L^S(s,\phi,\rho_X)=\prod_{d\geq 1} L^S(s+\frac{d}{2},\rho_{X,d}\circ \phi).
	\end{equation*}
	We will assume below that this $L$-function has a meromorphic continuation that is holomorphic at $s=0$ whenever the corresponding $A$-parameter $\psi=\iota_X\circ (\phi\times \Id_{\SL(2)})$ is discrete, namely:
	
	\begin{center}
		\textbf{(MER)} (meromorphic extension of the $L$-function): For every tempered $L$-parameter $\phi: L_F\to {}^L G_X$ such that the corresponding $A$-parameter $\psi=\iota_X\circ (\phi\times \Id_{\SL(2)})$ is {\em discrete} (i.e.\ such that $Z_{\widehat{G}}(\psi)$ is finite modulo $Z(\widehat{G})^\Gamma$) and every sufficiently large finite set of places $S$, the partial $L$-function $L^S(s,\phi,\rho_X)$, a priori only defined for $\Re(s)\gg 0$, admits a meromorphic extension to $\bC$ that is holomorphic at $s=0$. Moreover, the adjoint partial $L$-function $L^S(s,\Ad_X\circ \phi)$ also admits a meromorphic continuation with a pole at $s=1$ of order $a_G=\dim(A_G)$.
	\end{center}
	Actually, looking at the conjectural description of $\rho_X$ from \S \ref{S rmks local unr conjecture} point \eqref{comment BZSV}, we see that the only source of poles for the $L$-function $L^S(s,\phi,\rho_X)$ should be from the trivial summand $\mathfrak{z}(\widehat{\mathfrak{g}})^\Gamma/\mathfrak{z}(\widehat{\mathfrak{g}}_X)^\Gamma$ of the adjoint representation of ${}^L G_X$ on $\widehat{\mathfrak{g}}^e/\widehat{\mathfrak{g}}_X$. (Here $\mathfrak{z}(\widehat{\mathfrak{g}})$, $\mathfrak{z}(\widehat{\mathfrak{g}}_X)$ denotes the center of the Lie algebras of $\widehat{\mathfrak{g}}$, $\widehat{\mathfrak{g}}_X$ and the $\Gamma$ superscript indicates that we are taking fixed points under the Galois action.) Thus, we expect the above condition to hold as soon as $\mathfrak{z}(\widehat{\mathfrak{g}})^\Gamma\subset \widehat{\mathfrak{g}}_X$ which is equivalent to the condition that the intersection $A_G\cap H$ is finite. Note that this condition can always readily be achieved, up to replacing $G$ by its quotient by the neutral component of $A_G\cap H$. Similarly, assuming that $(A_G\cap H)^0=1$, the condition on the adjoint $L$-function should be equivalent to the equality $\mathfrak{z}(\widehat{\mathfrak{g}}_X)^\Gamma=\mathfrak{z}(\widehat{\mathfrak{g}})^\Gamma$, which is automatically satisfied since otherwise the $A$-parameter $\psi$ wouldn't be discrete.

	Based on the unramified computation of Conjecture \ref{local unramified conjecture}, Sakellaridis and Venkatesh have made the following striking prediction generalizing the Ichino-Ikeda formula (see Conjecture \ref{global Ichino-Ikeda conj}).
	
	\begin{conj}[Sakellaridis-Venkatesh]\label{global SV conj}
		Assume that $X=H\backslash G$ is an affine spherical variety without type $N$ roots with $A_G\cap H$ finite. Assume also that the local Conjecture \ref{conj1 SV} as well as the local unramified Conjecture \ref{local unramified conjecture} both hold for $X$. Assume finally the two conditions \textbf{(REG)} and \textbf{(MER)} above.
		
		Let $\psi: L_F\times \SL(2)\to {}^L G$ be a global discrete $A$-parameter for $G$ whose restriction to $\SL(2)$ is the same (up to $\widehat{G}$-conjugacy) as that of the morphism $\iota_X: {}^L G_X\times \SL(2)\to {}^L G$. Then, for every function $f=\prod_v f_v\in \mathcal{S}(X(\bA))$, we have:
		\begin{equation}\label{factorization global periods}
			\displaystyle \langle \Theta_X(f)_\psi,\Theta_X(f)_\psi\rangle_{\Pet}=\sum_{\phi} q_\phi J^X_\phi(f,f)
		\end{equation}
		where the sum runs over $\widehat{G}_X$-conjugacy classes of tempered $L$-parameters $\phi: L_F\to {}^L G_X$ such that $\iota_X\circ(\phi\times \Id_{\SL(2)})$ is equivalent to $\psi$, $q_\phi$\index{$q_\phi$} are certain nonzero rational numbers and\index{$J^X_\phi$}
		$$\displaystyle J^X_\phi(f,f)=(\Delta_G^{S,*})^{-1}\frac{L^S(0, \rho_X\circ \phi)}{L^{S,*}(1,\Ad_X\circ \phi)} \prod_{v\in S} J^X_{\phi_v}(f_v,f_v)=\prod'_v J^X_{\phi_v}(f_v,f_v)$$
		stands for the regularized product of the local relative characters associated to the restrictions $\phi_v$ of $\phi$ to the local Langlands groups $L_{F_v}$. More precisely, in the above product, $S$ is any sufficiently large finite set of places, $\Delta_G^{S,*}$ denotes the global ($S$-partial) Tamagawa factor for $G$, $\rho_X$ is the global representation of ${}^L G_X$ appearing in Conjecture \ref{local unramified conjecture} and 
		$$\displaystyle L^{S,*}(1,\Ad_X\circ \phi):=\lim\limits_{s\to 1} (s-1)^{a_G}L^S(s,\Ad_X\circ \phi)$$
		is the leading term in the Taylor expansion of the adjoint $L$-function $L^S(s,\Ad_X\circ \phi)$ at $s=1$. We recall that the notation $\prod_v'$ means that the last product has to be interpreted ``in the sense of $L$-functions'' i.e.\ is just a convenient shorthand for the middle expression.
		
	\end{conj}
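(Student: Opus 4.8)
The plan is to establish \eqref{factorization global periods} through a relative trace formula for the variety $X$, in the spirit of the arguments that settled the Gan--Gross--Prasad and Ichino--Ikeda conjectures (cf. Conjecture \ref{global Ichino-Ikeda conj} and Chaudouard's contribution in these proceedings, as well as \cite{BCZ}, \cite{BPC}). The starting point is to make sense of a regularized Petersson pairing
\[
(f_1,f_2)\longmapsto \langle \Theta_X(f_1),\Theta_X(f_2)\rangle_{\Pet}^{\mathrm{reg}},\qquad f_1,f_2\in \mathcal{S}(X(\bA)),
\]
and to compute it in two different ways. On the spectral side, orthogonal projection onto the pieces $\cA_{\cusp}(G)_\psi$ (together with an analogous treatment of the residual and continuous spectrum) should express this pairing as a sum of global relative characters; isolating a fixed discrete $A$-parameter $\psi$ whose $\SL(2)$-part matches that of $\iota_X$ then produces the left-hand side of \eqref{factorization global periods}. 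On the geometric side, one unfolds $\Theta_X(f_1)\overline{\Theta_X(f_2)}$ along the $G(F)$-orbits on $X\times X$ and integrates over $[G]$; the quasi-affineness of $X$ and the convergence properties of theta series recalled in the introduction are what make this expansion, and its regularization, feasible in principle.

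The second step is a comparison of the geometric side of this trace formula with that of a simpler ``dual'' model dictated by the morphism $\iota_X\colon \widehat{G}_X\times \SL(2)\to \widehat{G}$ --- most naturally, the Whittaker (Kuznetsov) trace formula for the quasi-split group $G_X$ with $L$-group ${}^L G_X$, twisted by the Arthur $\SL(2)$. Matching the geometric sides term by term, after establishing the requisite smooth transfer and fundamental lemma at the finite places, would transport the essentially trivial Plancherel expansion on the Whittaker side (see Section \ref{S rmks local unr conjecture}, where Conjecture \ref{local unramified conjecture} holds with $\rho_X=0$) over to $X$. At almost every place the matching of unramified terms is precisely the content of Conjecture \ref{local unramified conjecture}, which is what produces the ratio $L^S(0,\rho_X\circ\phi)/L^{S,*}(1,\Ad_X\circ\phi)$; the analytic continuation and the orders of pole needed to make sense of this ratio are exactly what condition \textbf{(MER)} supplies.

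Once the global identity is in hand with some family of $G(F_v)$-invariant functionals on each $\mathcal{S}(X_v)$, the third step is purely local: invoking the local Plancherel decomposition of Conjecture \ref{conj1 SV} together with the analyticity condition \textbf{(REG)}, one shows that the only functionals that can occur are the normalized local relative characters $J^X_{\phi_v}$, supported on the Arthur packet attached to $\iota_X\circ(\phi_v\times\Id)$; a spectral-separation argument of the type used in the local trace formula manipulations of Section \ref{Sect local trace formula} then yields \eqref{factorization global periods} parameter by parameter. The rational constants $q_\phi$ should then be read off from the combination of the global component group $S_\psi$ in Arthur's multiplicity formula, the component-group degrees $\rho_{\phi_v}(1)$ that enter the local Plancherel densities through the formula \eqref{HII conjecture}, and the discrepancy between the Tamagawa normalization of the measure $d\mu_X$ and the $L^2$-normalization on $[G]$.

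The hard part will be the very first step: no general regularization and geometric expansion of the Petersson pairing of $X$-theta series is currently available, the theory of relative trace formulas having been developed almost entirely case by case. A close second obstacle is the comparison step --- smooth transfer and the fundamental lemma are open in the generality required, and outside the homogeneous, split, affine setting it is not even clear which target trace formula to compare with. Finally, pinning down the rational constants $q_\phi$ will demand a careful bookkeeping of local and global normalizations that the conjecture deliberately leaves unspecified, which, as the authors emphasize, is one reason the statement is ``not as precise as one would hope for''; here the recent work of Ben-Zvi--Sakellaridis--Venkatesh on Hamiltonian duality \cite{BZSV} may well provide the structural input needed to make these constants explicit.
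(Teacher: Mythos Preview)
The statement you are asked to prove is a \emph{conjecture}, not a theorem: the paper offers no proof of Conjecture~\ref{global SV conj}, only a list of remarks, known special cases, and structural expectations following it. So there is no ``paper's own proof'' to compare against, and the first thing to say is that you should not present this as a proof at all.

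What you have written is a research program, and you are candid about this: you yourself note that the regularization and geometric expansion of the relative trace formula is ``not currently available'' in general, that smooth transfer and the fundamental lemma are ``open in the generality required'', and that the target of the comparison is ``not even clear'' outside special settings. Each of these is a genuine gap, not a technicality; any one of them would, on its own, be a major theorem. The outline you give --- regularized RTF for $X$, comparison with a Kuznetsov-type formula on $G_X$, local spectral separation via Conjecture~\ref{conj1 SV} and \textbf{(REG)} --- is a reasonable heuristic and is in fact broadly consonant with the paper's own remarks (see especially remark~(6) following the conjecture, where the authors interpret $\langle\Theta_X(f)_\psi,\Theta_X(f)_\psi\rangle_{\Pet}$ as the $\psi$-contribution to the relative trace formula and speculate that the constants $q_\phi$ should come from a relative theory of endoscopy). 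But a program is not a proof, and the honest answer here is that the conjecture is open.
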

\end{paragr}

\begin{paragr}[Remarks on the global conjecture.]
	\begin{enumerate}[(1)]
		\item This in particular implies that $\Theta_X(f)_\psi=0$ should be zero unless $\psi$ factors through $\iota_X$ i.e.\ if the representations appearing in $\cA_{\cusp}(G)_\psi$ are in a certain sense functorial lifts from $G_X$. 
		
		\item The sum over $\phi$ in \eqref{factorization global periods} can in general have more than one term. However, it is often the case that $X$ verifies the following local multiplicity one property: $\dim \Hom_{H_v}(\pi_v,\bC)\leq 1$ for every place $v$ and every irreducible representation $\pi_v$ of $G_v$. (This for example happens in the Gan-Gross-Prasad setting.) In this case, the sum has always at most one term, i.e.\ $\psi$ factors in at most one way through $\iota_X$, and the above conjecture predicts in particular that the inner product $f\mapsto \langle \Theta_X(f)_\psi,\Theta_X(f)_\psi\rangle_{\Pet}$ is factorizable (canonically) into local inner products.
		

		\item The conjecture can also be stated for individual representations (rather than packets thereof). This requires decomposing the local relative characters $J_{\phi_v}^X$ as sums
		$$\displaystyle J_{\phi_v}^X=\sum_{\pi_v} J^X_{\pi_v}$$
		where $\pi_v$ runs over the local $A$-packet associated to $\psi_v:=\iota_X\circ (\phi_v\times \Id_{\SL(2)})$, and $J^X_{\pi_v}$ is a relative character, aka a $G_v$-invariant inner product on $\mathcal{S}(X_v)$, that factors through a quotient of the form $\mathcal{S}(X_v)\otimes \mathcal{S}(X_v)\to \pi_v\otimes \pi_v^\vee$. Note that the notation $J^X_{\pi_v}$ is ambiguous, because there might be more than one $L$-parameter $\phi_v: L_{F_v}\to {}^L G_X$ whose associated $A$-packet of $G_v$ contains $\pi_v$, but we will only use it whenever this $L$-parameter is clear from the context. Then, \eqref{factorization global periods} implies a similar decomposition
		$$\displaystyle \langle \Theta_X(f)_\pi,\Theta_X(f)_\pi\rangle_{\Pet}=\sum_{\phi} q_\phi J^X_\pi(f,f)$$
		for the Petersson norm of the orthogonal projection $\Theta_X(f)_\pi$ to the $\pi$-isotypic component in $\cA_{\cusp}(G)_{\psi}$. Here, the global relative character $J^X_\pi$ is defined in a similar way to $J^X_\phi$ i.e.\ as a regularized product of the local relative characters $J^X_{\pi_v}$.
		
		\item This conjecture can be restated directly in terms of the automorphic period
		$$\displaystyle \mathcal{P}_H: \varphi\in \cA_{\cusp}(G)\mapsto \int_{[H]} \varphi(h) d_{\Tam}h.$$
		More precisely, assuming for simplicity that we are in a multiplicity one setting, the conjecture implies that $\mathcal{P}_H$ vanishes identically in $\cA_{\cusp}(G)_\psi$ unless $\psi$ factors through $\iota_X$ in which case, for every $\pi\subset \cA_{\cusp}(G)_\psi$, we have an identity
		\begin{equation}\label{factorization periods2}
			\displaystyle \lvert \mathcal{P}_H(\varphi)\rvert^2=q (\Delta_{H}^{S,*})^{-2}\Delta_G^{S,*}\frac{L^S(\rho_X\circ \phi)}{L^{S,*}(1,\Ad_X\circ \phi)} \prod_{v\in S} \mathcal{P}_{H_v}(\varphi_v,\varphi_v)
		\end{equation}
		for every factorizable vector $\varphi=\otimes_v\varphi_v\in \pi$, where $q\in \mathbb{Q}^\times$ and the local periods $\mathcal{P}_{H_v}: \pi_v\otimes \overline{\pi}_v\to \bC$ are $H_v$-invariant sesquilinear forms that are obtain by composing the contragredient $\pi_v\otimes \overline{\pi}_v\to C^\infty(X_v)\otimes C^\infty(X_v)$ of the quotient map $\mathcal{S}(X_v)\otimes \mathcal{S}(X_v)\to \pi_v\otimes \overline{\pi}_v$ through which $J^X_{\pi_v}$ factors with evaluation at the base point $x_0=H1\in X_v$. Note that the factorization of $J_{\pi_v}^X$ through $\pi_v\otimes \overline{\pi}_v$ requires fixing an invariant inner product $\langle .,.\rangle_v$ on $\pi_v$, and they should be chosen to decompose the Petersson inner product:
		$$\displaystyle \langle .,.\rangle_{\Pet}=\prod_v \langle .,.\rangle_{v}.$$
		We emphasize that the Tamagawa factors appearing in \eqref{factorization periods2} are different from that in \eqref{factorization global periods}, but this comes from our normalization of local measures: since we are equipping the $X_v$'s with their local Tamagawa measure, at all unramified places, we have the following relation between the relative character and the local period
		$$\displaystyle P_{H_v}(\varphi_v^\circ,\varphi_v^\circ)=\Delta_{X,v}^{2} J_{\pi_v}(f_v^\circ,f_v^\circ)=\Delta_{H,v}^{-2}\Delta_{G,v}^2 J_{\pi_v}(f_v^\circ,f_v^\circ)$$
		where $\varphi_v^\circ\in \pi_v$ denotes a normalized spherical vector (i.e.\ such that $\langle \varphi_v^\circ,\varphi_v^\circ\rangle_v=1$). Note that, the factor $(\Delta_{H}^{S,*})^{-1}$ also appears in the normalization of the Tamagawa measure on $H(\bA)$, through which the period $\mathcal{P}_H$ is defined, whereas, by our assumption that $A_G\cap H$ is finite, the two $L$-functions $\Delta_G^S(s):=L(s,M_G^\vee(1))$ and $L^{S}(s+1,\Ad_X\circ \phi)$ both have a pole of order $a_G$ at $s=0$, so that in \eqref{factorization periods2} we may replace the quotient $\Delta_G^{S,*} L^{S,*}(1,\Ad_X\circ \phi)^{-1}$ by the limit
		$$\displaystyle \lim\limits_{s\to 0} \frac{\Delta_G^S(s)}{L^{S}(s+1,\Ad_X\circ \phi)}.$$

		\item As for the local unramified Conjecture \eqref{factorization global periods}, the conjecture should admit an extension to the non-homogeneous case (still assuming that $X$ is affine). This extension is essentially formal up to remplacing the theta series map $\Theta_X$ by a sum over the rational points {\em in the open $G$-orbit} $X^{\bullet}\subset X$. Note that in this case the relation to automorphic periods is less transparent. In the case of $G=\GL_n\times \GL_n\curvearrowright \Mat_n$, this essentially recovers the global theory of Godement-Jacquet \cite{GodJac}.
		
		\item Note that $\langle \Theta_X(f)_\psi,\Theta_X(f)_\psi\rangle_{\Pet}$ can be seen as the contribution of $\psi$ to the relative trace formula for $X\times X/G=H\backslash G/H$. Indeed, at a formal level (i.e.\ ignoring convergence issues), the relative trace formula for $X\times X$ is given by the scalar product of two theta series, that is:
		$$\displaystyle RTF_X(f_1,f_2)=\langle \Theta_X(f_1),\Theta_X(f_2)\rangle_{\Pet}$$
		where $f_1,f_2\in \mathcal{S}(X(\bA))$, see in particular the contribution of P.-H. Chaudouard in these proceedings. Going a bit further, we expect that the rational factors $q_\phi$ appearing in the conjecture could be understood via a suitable theory of endoscopy for the relative trace formula.  In particular, we can expect that, in  analogy with the Arthur-Selberg trace formula, the relative trace formula admits a ``stable'' form $SRTF_X$ whose {\em most tempered part}, i.e.\ the part corresponding to the automorphic spectrum of $G$ with Arthur $\SL(2)$ given by $\iota_X\mid_{\SL(2)}$, decomposes as a sum of global relative characters indexed by tempered $L$-parameters $\phi: L_F\to {}^L G_X$, in formula:
		$$\displaystyle SRTF_X^{\temp}(f_1,f_2)=\int_{\Phi_{\temp}(X)} \frac{1}{\lvert S_\phi\rvert}J^X_\phi(f_1,f_2) d\phi$$
		where $J^X_\phi$ denotes the regularized product of local relative characters as in the conjecture, $S_\phi=\pi_0(Z_{\widehat{G}_X}(\phi))$ the component group of the centralizer of $\phi$ and $d\phi$ a natural measure on $\Phi_{\temp}(X)$ (similar to the one considered locally in \S \ref{S canonical Planch measure}). A first step in to make this more precise would be to develop an analog of the theory of endoscopy in the relative setting. There has been interesting work of Spencer Leslie \cite{SLeslie}, in the case of symmetric varieties, providing some evidence in that direction.
		
		\item The conjecture should admit an extension to automorphic representations $\pi$ appearing in the continuous spectrum . A precise formulation would however require to define the analog of the orthogonal projection $\Theta_X(f)_\pi$, which is a delicate analytic issue e.g. because the theta series $\Theta_X(f)$ is usually not in $L^2([G])$ and thus cannot be spectrally decomposed in an obvious way. This is kind of related to the spectral expansion of the corresponding relative trace formula mentioned in the previous comment. A similar problem (dually) is the {\em regularizion} of the period integral $\mathcal{P}_H$ on Eisenstein series, see however \cite{Z4} for some recent work in that direction. Finally, we should mention that one important evidence for the above conjecture is the case of {\em principal Eisenstein series} for which a formula like \eqref{factorization periods2}, suitably interpreted (i.e.\ including a regularization of the period involved), can be established in large generality see \cite[\S 18.1]{SV}.
		
		\item Apart from the Ichino-Ikeda conjecture and its numerous variants, another important instance where a precise conjecture (including explicit determination of the constants $q_\phi$) has been formulated is that of the Whittaker model $X=N,\psi\backslash G$. This conjecture is due to Lapid and Mao \cite{LaMao}. The conjecture is known for $G=\GL_n$ (see \cite[Sect. 4]{LaMao} and \cite[\S 18.3]{SV}), where it can essentially been deduced from work of Jacquet and Shalika, as well as for the metaplectic group \cite{LaMao2} and unitary groups \cite{Morimoto24}.
		
		\item Concerning the determination of the rational constants $q_\phi$, a conjecture of Ben Zvi, Sakellaridis and Venkatesh \cite[Conjecture 14.2.1]{BZSV} relates it to a fixed points count on the dual side. See in particular Example 14.2.3 in {\em op.\ cit.}\ .
		
		\item Some of the best known instances of integral representations for automorphic $L$-functions can be used to verify Conjecture \ref{global SV conj} in particular cases. The main extra ingredient to do so is a local analog of the {\em global unfoldings} that are behind these integral representations. Namely, the {\em local unfolding} method, introduced by Sakellaridis and Venkatesh in \cite[\S 9.5]{SV} (see also \S \ref{S unfolding} where this makes a short appearance), allows to related the Plancherel decomposition of different spherical varieties via certain integral transforms, basically given as partial Fourier transforms, that mimick the mechanism of the corresponding global unfolding. See in particular \cite[\S 18.4]{SV}, for an explicit application of this method in the case of Rankin-Selberg integral representations of the tensor product $L$-functions on $\GL_n\times \GL_{n+1}$ and $\GL_n\times \GL_n$ 
		
		\item Conjecture \ref{global SV conj} has also been established in few more cases. This includes the Galois symmetric variety $X=\GL_n\backslash \Res_{E/F}\GL_n$, where $E/F$ is a quadratic extension, for which an explicit factorization of the period was known thanks to work of Rallis and Flicker \cite{Flicker}. More precisely, the $H$-period in this case can be essentially realized as the residue at $s=1$ of some Zeta integrals that represents Asai $L$-functions; in particular the $H$-period can only be nonzero on cuspidal representations that come by base-change from a unitary group. In \cite{BPPlanch}, a method is developed to obtain in a similar manner an explicit Plancherel decomposition for $X_v$ by taking the residue at $s=1$ of an expression that is closely related to the Flicker-Rallis local Zeta integrals. Combining this with the work of Flicker-Rallis then yields a special case of Conjecture \ref{global SV conj}. We note that similar methods have since been applied to obtain explicit Plancherel decompositions for the Shalika model \cite{Duh} as well as for the symmetric variety $X=\Sp_{2n}\backslash \GL_{2n}$ \cite{LaOffsymp}.
		
		\item Another interesting example of Conjecture \ref{global SV conj} is that of unitary periods on $\GL_n$. More precisely, let $E/F$ be a quadratic extension and let us consider the symmetric variety $X=U(n)\backslash \Res_{E/F} \GL_n$, where $U(n)$ stands for the unitary group of any Hermitian form of rank $n$ over $E$. (Note that the choice of the Hermitian form does not matter since they all lead to the same symmetric variety; more canonically we could define $X$ as the space of nondegenerate Hermitian forms of rank $n$). The $U(n)$-periods on $\Res_{E/F}\GL_n$ have been much studied by Jacquet (see inparticular \cite{Jacfactorization}) which have in particular proposed an approach via a comparison of relative trace formulas. The final steps of this program were achieved by Feigon, Lapid and Offen in \cite{FLO} leading to explicit decompositions of these unitary periods, not only for cuspidal representations but also for (generic) Eisenstein series. One notable feature here is that, for Eisentein series, these periods are not directly factorizable but rather {\em finite sums} of factorizable functionals. Indeed, this is related to the fact that the distinguished morphism $\iota_X:{}^L G_X\to {}^L G$ corresponds in this case to base-change functoriality from $\GL_n$ to $\Res_{E/F}\GL_n$ which is not one to one in general \cite{ACBC}. In \cite{BeuGLnU}, combining the work of Feigon-Lapid-Offen with a similar (but much simpler) comparison of local relative trace formulas, I showed that the (square of the) local functionals appearing in these factorizations also give a Plancherel decomposition for $X_v$ indexed by the tempered dual $\Temp(\GL_n(F_v))$ of $\GL_n(F_v)$ which verifies Conjecture \ref{global SV conj} in this case.
		
		\item A natural question is to extend the above conjecture to $A$-parameters $\psi: L_F\times \SL(2)\to {}^L G$ that don't have the same $\SL(2)$ type as $\iota_X$. When the $\SL(2)$ type of $\psi$ is ``more tempered'' than that of $\iota_X$, i.e.\ when this parametrizes representations that are closer of being tempered in a suitable sense\footnote{We won't try to make this precise here, it is possibly related to the partial order on conjugacy classes of morphisms $\SL(2)\to \widehat{G}$ induced by closure relations in the nilpotent cone.}, we certainly expect the corresponding automorphic representations $\pi$ to not be $H$-distinguished (i.e.\ to have vanishing $H$-periods) for purely representation-theoretic reasons (i.e.\ because the local components $\pi_v$ of irreducible representations $\pi\subset A_{\cusp}(G)_\psi$ are not even $H_v$-distinguished for almost all $v$). On the other hand, when the $\SL(2)$ type of $\psi$ is ``less tempered'' than $\iota_X$ (e.g. when $\iota_X$ is trivial on $\SL(2)$), we don't have yet a clear picture of what to expect even in the setting of the Gan-Gross-Prasad conjectures, see however \cite{GGPnontempered}.
	\end{enumerate}
\end{paragr}

	\bibliographystyle{amsplain}

\printindex	
	
\providecommand{\bysame}{\leavevmode\hbox to3em{\hrulefill}\thinspace}
\providecommand{\MR}{\relax\ifhmode\unskip\space\fi MR }
\providecommand{\MRhref}[2]{%
	\href{http://www.ams.org/mathscinet-getitem?mr=#1}{#2}
}
\providecommand{\href}[2]{#2}

\end{document}